\title{Double $L$-theory}
\author{P. H. Orson}
\date{2015}
\DeclareMathOperator{\Tot}{Tot}
\DeclareMathOperator{\im}{im}
\DeclareMathOperator{\coker}{coker}
\DeclareMathOperator{\Hom}{Hom}
\DeclareMathOperator{\Ext}{Ext}
\DeclareMathOperator{\Tor}{Tor}
\DeclareMathOperator{\ann}{ann}
\DeclareMathOperator{\Aut}{Aut}
\DeclareMathOperator{\Ch}{Ch}
\DeclareMathOperator{\ev}{ev}
\DeclareMathOperator{\cl}{cl}
\renewcommand{\emptyset}{\varnothing}
\def\A{\mathbb{A}}
\def\B{\mathbb{B}}
\def\C{\mathbb{C}}
\def\D{\mathbb{D}}
\def\FF{\mathbb{F}}
\def\H{\mathbb{H}}
\def\R{\mathbb{R}}
\def\Z{\mathbb{Z}}
\def\N{\mathbb{N}}
\def\Q{\mathbb{Q}}
\def\NN{{\mathfrak{N}}}
\def\p{{\mathfrak{p}}}
\def\lmat{\left(\begin{smallmatrix}}
\def\rmat{\end{smallmatrix}\right)}
\def\sm{\setminus}
\def\id{\operatorname{id}}
\def\HH{\mathcal{H}}
\theoremstyle{definition}
\newtheorem{theorem}{Theorem}[section]
\newtheorem{definition/proposition}[theorem]{Definition/Proposition}
\newtheorem{proposition}[theorem]{Proposition}
\newtheorem{lemma}[theorem]{Lemma}
\newtheorem{corollary}[theorem]{Corollary}
\newtheorem{definition}[theorem]{Definition}
\newtheorem*{remark}{Remark}
\newtheorem{example}[theorem]{Example}
\newtheorem{claim}[theorem]{Claim}
\newtheorem{question}[theorem]{Question}
\newcommand{\eps}{\varepsilon}
\begin{document}
\flushbottom
\pagenumbering{roman}
\maketitle


\begin{abstract}

This thesis is an investigation of the difference between metabolic and hyperbolic objects in a variety of settings and how they interact with cobordism and `double cobordism', both in the setting of algebraic $L$-theory and in the context of knot theory.

Let $A$ be a commutative Noetherian ring with involution and $S$ be a multiplicative subset. The Witt group of linking forms $W(A,S)$ is defined by setting metabolic linking forms to be 0. This group is well-known for many localisations $(A,S)$ and it is a classical fact that it forms part of a localisation exact sequence, essential to many Witt group calculations. However, much of the deeper `signature' information of a linking form is invisible in the Witt group. The beginning of the thesis comprises the first general definition and careful investigation of the \textit{double Witt group of linking forms} $DW(A,S)$, given by the finer equivalence relation of setting hyperbolic linking forms to be 0. The treatment will include invariants, structure theorems and localisation exact sequences for various types of rings and localisations. We also make clear the relationship between the double Witt groups of linking forms over a Laurent polynomial ring and the double Witt group of those forms over the ground ring that are equipped with an automorphism. In particular we prove the isomorphism between the double Witt group of Blanchfield forms and the double Witt group of Seifert forms.

In the main innovation of the thesis, we next define chain complex generalisations of the double Witt groups which we call the double $L$-groups $DL^n(A,S)$. In double $L$-theory, the underlying objects are the symmetric chain complexes of algebraic $L$-theory but the equivalence relation is now the finer relation of algebraic double-cobordism. In the main technical result of the thesis we solve an outstanding problem in this area by deriving a double $L$-theory localisation exact sequence. This sequence relates the $DL$-groups of a localisation to both the free $L$-groups of $A$ and a new group analogous to a `double' algebraic homology surgery obstruction group of chain complexes over the localisation. We investigate the periodicity of the double $L$-groups via skew-suspension and surgery `above and below the middle dimension'. We then reconcile the double $L$-groups with the double Witt groups, so that we also prove a double Witt group localisation exact sequence.

Finally, in a topological application of double Witt and double $L$-groups, we apply our results to the study of doubly-slice knots. A doubly-slice knot is a knot that is the intersection of an unknotted sphere and a plane. We show that the double knot-cobordism group has a well-defined map to the $DL$-group of Blanchfield complexes and easily reprove some classical results in this area using our new methods.
\end{abstract}

\tableofcontents
\newpage

\pagenumbering{arabic}

\chapter{Introduction}\label{chap:introduction}

Poincar\'{e} duality is the fundamental symmetry present in the structure of a closed, oriented topological manifold. The various algebraic expressions and consequences of this symmetry are basic to the programme of understanding the structure of manifolds via algebraic topology. For an even-dimensional manifold, the simplest algebraic invariant expressing Poincar\'{e} duality is the \textit{intersection form}, defined on a finitely generated free module. For an odd-dimensional manifold it is the \textit{linking form} on a finitely generated torsion module. In this thesis we shall be particularly concerned with the odd-dimensional case, studying linking forms and their chain complex generalisations, \textit{symmetric Poincar\'{e} complexes with torsion homology}.

In several interrelated settings within both algebra and topology there has been observed a subtle difference between what we call \textit{metabolic} and \textit{hyperbolic} objects. Roughly speaking, an object is metabolic if it is nullcobordant and it is hyperbolic if it is \textit{doubly nullcobordant} - that is it admits two nullcobordisms that glue together to make a trivial object\footnote{Various precise definitions will be given in the sequel!}.

\begin{figure}[h]\[\def\picnullcob{\resizebox{0.35\textwidth}{!}{ \includegraphics{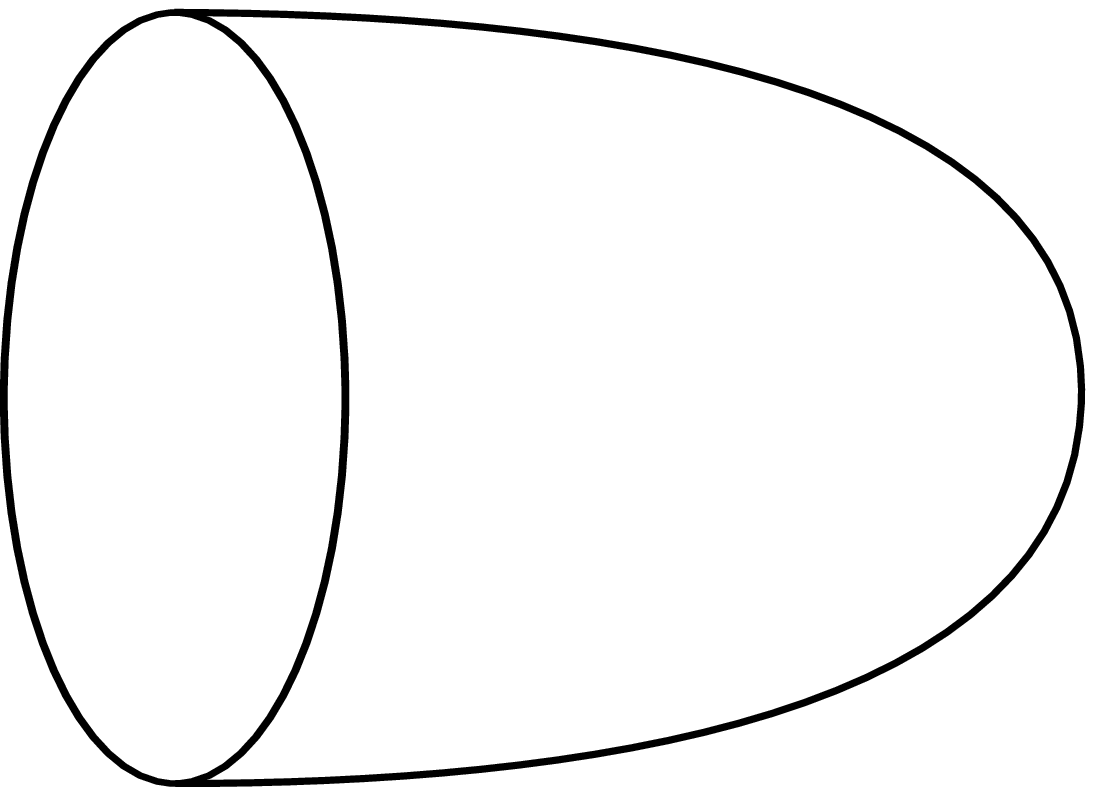}}}
\begin{xy} \xyimport(300,300){\picnullcob}
,!+<6pc,-1.5pc>*+!\txt{A nullcobordism of $N$}
,(35,148)*!L{N}
,(165,148)*!L{M}
\end{xy}
\qquad\qquad
\def\picsphere{\resizebox{0.3\textwidth}{!}{ \includegraphics{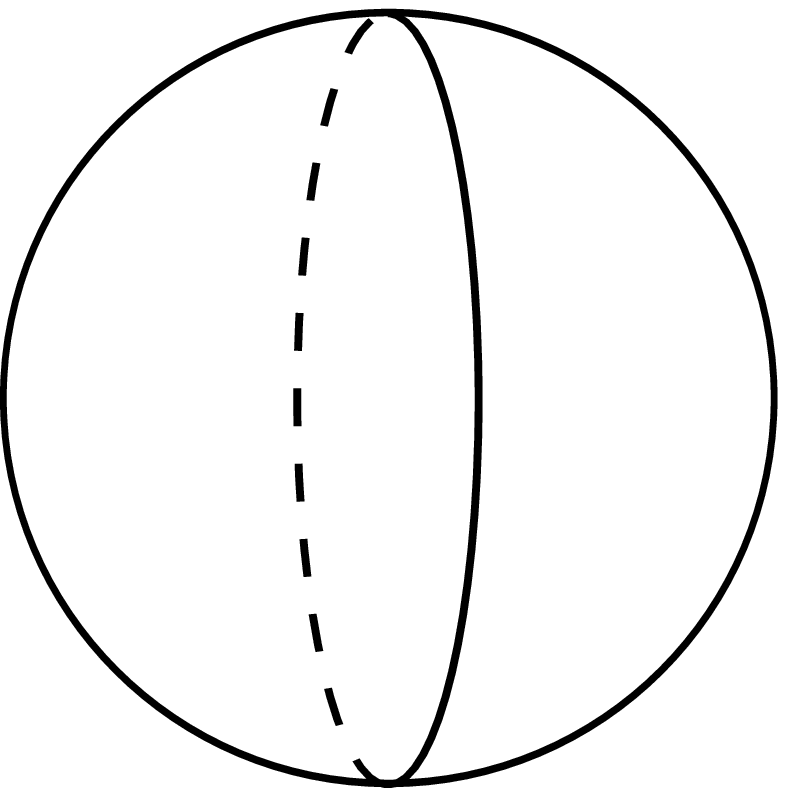}}}
\begin{xy} \xyimport(259,139){\picsphere}
,!CD+<0.3pc,-1pc>*+!CU\txt{A double-nullcobordism of $N$.}
,(30,70)*!L{M_+}
,(275,70)*!L{\cong 0}
,(119,72)*!L{N}
,(190,70)*!L{M_-}
\end{xy}\]\caption{$M$, $N$, $M_\pm$ can be either manifolds or chain complexes with duality.}
\end{figure}

The aim of this thesis is to make precise the metabolic/hyperbolic distinction and to define and develop invariants to measure it. This will be developed firstly, and most extensively, in the algebraic setting of linking forms and then of symmetric $L$-theory. The results will then be applied to construct new invariants in the motivating topology of double knot-cobordism.

The main topological intuition and motivation for the metabolic/hyperbolic distinction comes from knot theory. An $n$-knot $K:S^n\hookrightarrow S^{n+2}$ is called \textit{slice} if it is the boundary of a disc embedded in $D^{n+3}$, and is \textit{doubly slice} if it is the boundary of two discs in $D^{n+3}$ such that when you glue the discs along the knot, the resulting $(n+1)$-sphere is unknotted in $S^{n+3}$. The problem of detecting whether a knot is doubly knot-nullcobordant, called the \textit{doubly slice problem}, is a long-standing and difficult problem in knot theory that is yet to be effectively tackled.

We first investigate the metabolic/hyperbolic distinction in the case of linking forms. We make the first general definitions and calculations of what we call the \textit{double Witt groups} of a localisation of a ring with involution. The double Witt groups refine the classical Witt groups of linking forms and, working over a Dedekind domain, we calculate the kernel of the forgetful map to show precisely the `deeper' signature information that the new groups capture.

Inspired by the concept of a double-nullcobordism, we next develop the central innovation of this thesis -- a theory of chain complex double-cobordism, which we call \textit{double $L$-theory}. Similarly to the case of linking forms, our new groups refine the classical $L$-groups of a localisation of a ring with involution. The double $L$-groups are studied in some detail and in particular we prove our main technical result, (Theorem \ref{thm:DLLES} in the thesis):

\begin{theorem}[Double $L$-theory localisation exact sequence]Suppose $(A,S)$ defines a localisation of  a commutative Noetherian ring with involution $A$ containing a half-unit. For $n\geq0$, the sequence of group homomorphisms\[0\to L^{n}(A,\eps)\xrightarrow{Di} D\Gamma^{n}(A\to S^{-1}A,\eps)\xrightarrow{D\partial} DL^{n}(A,S,\eps)\]is exact.\end{theorem}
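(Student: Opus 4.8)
My plan is to follow the pattern of Ranicki's derivation of the classical localisation exact sequence, carrying the double-cobordism refinement through at each step; I shall take for granted the results of the preceding sections that $D\Gamma^{n}(A\to S^{-1}A,\eps)$ and $DL^{n}(A,S,\eps)$ are groups, that $Di$ and $D\partial$ are well-defined homomorphisms, and that algebraic surgery ``above and below the middle dimension'' is available in the double setting. As the sequence has three terms, exactness amounts to three things: $Di$ is injective, $D\partial\circ Di=0$, and $\ker(D\partial)\subseteq\im(Di)$. Forgetting the extra double-cobordism data gives canonical surjections $D\Gamma^{n}(A\to S^{-1}A,\eps)\to\Gamma^{n}(A\to S^{-1}A,\eps)$ and $DL^{n}(A,S,\eps)\to L^{n}(A,S,\eps)$ onto the classical groups, fitting into a commutative ladder over the classical localisation exact sequence, and the first two assertions will be read off from this comparison.

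That $D\partial\circ Di=0$ is immediate: an $\eps$-symmetric Poincar\'e complex over $A$ has a genuine chain equivalence for its duality map, so its algebraic boundary is contractible and represents $0$ in $DL^{n}(A,S,\eps)$. For injectivity of $Di$, I observe that the composite of $Di$ with the forgetful surjection onto $\Gamma^{n}(A\to S^{-1}A,\eps)$ is the classical map $i$, which is injective under the half-unit hypothesis (this being part of the classical localisation exact sequence); hence $Di$ is injective. One can also argue this without the classical sequence: a double-nullcobordism of an $A$-Poincar\'e complex $(C,\phi)$ provides two complementary $S^{-1}A$-Poincar\'e nullcobordisms over $A$ whose union is trivial, and one improves one of the halves from $S^{-1}A$-Poincar\'e to $A$-Poincar\'e by algebraic surgery rel $(C,\phi)$, the obstruction lying in an $S$-acyclic $L$-group and being killed by the triviality of the union together with the complementarity of the halves; it is here that the half-unit enters, to realise the surgery on an $\eps$-\emph{symmetric} rather than an $\eps$-quadratic complex.

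The substance of the theorem is exactness at $D\Gamma^{n}(A\to S^{-1}A,\eps)$. Let $(C,\phi)$ represent a class with $D\partial(C,\phi)=0$. Forgetting the double structure and applying exactness of the classical sequence at $\Gamma^{n}$, I obtain an $\eps$-symmetric Poincar\'e complex $(E,\theta)$ over $A$ and an $S^{-1}A$-Poincar\'e cobordism from $(C,\phi)$ to $i(E,\theta)$, via the usual procedure of capping off $(C,\phi)$ with a chosen nullcobordism of its boundary $\partial(C,\phi)$. The new ingredient is that $D\partial(C,\phi)=0$ means, by the definition of the double $L$-group, that $\partial(C,\phi)$ comes equipped with a \emph{second} nullcobordism complementary to the chosen one, the two gluing to a trivial $S$-acyclic Poincar\'e complex. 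The idea is to run the capping construction with both nullcobordisms in parallel: one half recovers $(E,\theta)$ together with the $\Gamma$-cobordism, while the other produces a $\Gamma$-cobordism from $(C,\phi)$ to a second $A$-Poincar\'e complex $(E',\theta')$; since $i(E,\theta)=i(E',\theta')=[C,\phi]$ in $\Gamma^{n}$ and $i$ is injective, $(E,\theta)$ and $(E',\theta')$ are cobordant over $A$, and after absorbing such a cobordism the two halves become two complementary $\Gamma$-cobordisms from $(C,\phi)$ to $(E,\theta)$. Finally, the triviality of the union of the two nullcobordisms of $\partial(C,\phi)$ should propagate through the capping construction to force the union of these two $\Gamma$-cobordisms to be trivial, which is precisely the assertion $[C,\phi]=Di(E,\theta)$ in $D\Gamma^{n}(A\to S^{-1}A,\eps)$.

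The hard part will be this last propagation. Capping off an $S^{-1}A$-Poincar\'e complex with an $S$-acyclic Poincar\'e pair does not directly produce an $A$-Poincar\'e complex: spurious homology appears and must be removed by algebraic surgery below and above the middle dimension, and it is at this point that the remaining hypotheses are used --- the half-unit to realise the surgeries $\eps$-symmetrically, the bound $n\geq0$ to supply the dimensional room, and the commutative Noetherian hypothesis to keep every complex finitely generated throughout. The delicate point is that the surgeries must be carried out compatibly on the two complementary halves, so that the ``union of the surgery data'' stays trivial at every stage; establishing this compatibility --- equivalently, that the capping construction respects the double-cobordism relation --- is the technical core of the argument, and is where I would expect the real work to lie.
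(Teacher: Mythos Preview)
Your proposal has two genuine gaps.

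\textbf{Injectivity of $Di$.} Your first argument appeals to injectivity of the classical map $i:L^n(A,\eps)\to\Gamma^n(A\to S^{-1}A,\eps)$, but this is not part of the classical localisation exact sequence: that sequence is a \emph{long} exact sequence, and $\ker(i)$ is the image of the previous map $j:L^{n+1}(A,S,\eps)\to L^n(A,\eps)$, which need not vanish. So injectivity of $Di$ is genuinely new and cannot be borrowed from the classical theory. The paper's argument is direct and much cleaner than your sketched alternative: if $(P,\theta)$ is Poincar\'e over $A$ then $\partial P\simeq 0$, and the $\partial$-complementary condition on a pair of $S^{-1}A$-nullcobordisms says precisely that $\partial P\to\partial(Q_+,P)\oplus\partial(Q_-,P)$ is a homotopy equivalence, forcing both relative boundaries to be contractible --- so both nullcobordisms are already Poincar\'e over $A$. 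No surgery is needed.

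\textbf{Exactness at $D\Gamma^n$.} You propose to cap off $(C,\phi)$ with each of the two complementary nullcobordisms of $\partial C$ in parallel, then repair the resulting pair of $\Gamma$-cobordisms by ``surgery above and below the middle dimension'' performed compatibly on the two halves. But surgery above and below the middle dimension in the \emph{double} setting is exactly what the paper shows (in the section immediately following this theorem) is \emph{not} generally available: it works only under severe dimension restrictions on the ring, and the paper conjectures it fails in general. The proof of the theorem does not use it. Instead, the paper isolates the step as a standalone technical proposition: if $\partial(P,\theta)$ admits two complementary $(A,S)$-nullcobordisms, then $(P,\theta)$ is $\partial$-double-cobordant to an honest $A$-Poincar\'e complex. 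The proof of that proposition builds the required $\partial$-double-cobordism explicitly via an ``algebraic trinity'' construction: one glues the algebraic thickening $(\partial P\to P^{n-*})$ together with the two given nullcobordisms $(\partial P\to D_\pm)$ into a single Poincar\'e pair (the trinity), and then reads off from this glued object two $S^{-1}A$-cobordisms from $(P,\theta)$ to $P^{n-*}\cup_{\partial P}D_+$ whose relative boundaries are $D_+$ and $D_-$ respectively --- hence $\partial$-complementary by hypothesis. Your parallel-capping idea is a reasonable first thought, but the mechanism that actually makes the two halves complementary is this trinity glueing, not compatible surgery.
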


This sequence is in the vein of the classical localisation exact sequences of Bass, Karoubi, Quillen, Ranicki, Vogel et al, and probes the structure of double $L$-theory by relating the torsion double $L$-groups to the classical projective $L$-groups via an algebraic `double homology surgery obstruction' group. As a corollary we obtain a localisation exact sequence for the double Witt groups, thus solving an outstanding problem in this area:

\begin{theorem}[Double Witt group localisation exact sequence]Suppose $(A,S)$ defines a localisation of  a commutative Noetherian ring with involution $A$ containing a half-unit. The sequence of group homomorphisms\[0\to W^\eps(A)\xrightarrow{Di} \widetilde{D\Gamma}^\eps(A\to S^{-1}A)\xrightarrow{D\partial} DW^\eps(A,S),\]is exact.\end{theorem}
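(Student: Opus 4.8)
\emph{Strategy of proof.} The plan is to specialise the double $L$-theory localisation exact sequence (Theorem~\ref{thm:DLLES}) to $n=0$ and to translate each of its three terms from chain complexes to forms, using the reconciliation of double Witt groups with double $L$-groups established above. In the conventions used here, $\eps$-symmetric forms over $A$ and $\eps$-symmetric linking forms over $(A,S)$ are the $0$-dimensional (torsion) Poincar\'e objects, so that Theorem~\ref{thm:DLLES} applies with $n=0$. Concretely I would (i) produce natural isomorphisms $W^\eps(A)\cong L^0(A,\eps)$, $\widetilde{D\Gamma}^\eps(A\to S^{-1}A)\cong D\Gamma^0(A\to S^{-1}A,\eps)$ and $DW^\eps(A,S)\cong DL^0(A,S,\eps)$; (ii) check that these isomorphisms intertwine the two versions of $Di$ and the two versions of $D\partial$; and (iii) deduce exactness of the stated sequence from exactness of the $n=0$ case of Theorem~\ref{thm:DLLES}.

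For (i): the isomorphism $W^\eps(A)\cong L^0(A,\eps)$ is the classical reconciliation of the Witt group of $\eps$-symmetric forms with $0$-dimensional symmetric $L$-theory -- a nonsingular $\eps$-symmetric form \emph{is} a $0$-dimensional symmetric Poincar\'e complex, and cobordisms of such complexes induce exactly metabolic (Witt) equivalences. The isomorphism $\widetilde{D\Gamma}^\eps(A\to S^{-1}A)\cong D\Gamma^0(A\to S^{-1}A,\eps)$ is the corresponding statement at the level of the double homology surgery obstruction group: $\widetilde{D\Gamma}^\eps$ is, by construction and by the reconciliation results above, the form-theoretic incarnation of $D\Gamma^0$, the passage from complexes to forms being effected by algebraic surgery away from the middle dimension, which the half-unit makes possible. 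The isomorphism $DW^\eps(A,S)\cong DL^0(A,S,\eps)$ is the central reconciliation: the map sending an $\eps$-symmetric linking form to the $0$-dimensional symmetric Poincar\'e complex over $(A,S)$ that it determines is an isomorphism onto the double $L$-group -- surjectivity amounting to reducing a complex to a linking form by double-surgery carried out compatibly on the complex and on a chosen algebraic double-nullcobordism of it, and injectivity to the reverse passage, from an algebraic double-nullcobordism of complexes to a double-nullcobordism of linking forms of the kind that defines $DW$.

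For (ii): the square comparing the two maps labelled $Di$ commutes because in each theory $Di$ is induced by the same operation of regarding an object over $A$ as an object over $(A\to S^{-1}A)$; the square comparing the two maps labelled $D\partial$ commutes because in each theory $D\partial$ is induced by the same algebraic boundary construction applied to the same underlying complexes. The vertical isomorphisms therefore fit into a commutative diagram comparing the stated sequence with the $n=0$ case of Theorem~\ref{thm:DLLES}; since the vertical maps are isomorphisms and the latter sequence is exact, so is the former. In particular $Di$ is injective and the sequence is exact at $\widetilde{D\Gamma}^\eps(A\to S^{-1}A)$, which is the assertion.

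The main obstacle is step (i), and inside it the injectivity half of the reconciliation $DW^\eps(A,S)\cong DL^0(A,S,\eps)$. For ordinary cobordism, surgery above and below the middle dimension reduces a Poincar\'e complex to a form, and a cobordism of forms to a Witt-type equivalence, with little friction; in the double setting the very same surgeries must be run on \emph{two} nullcobordisms simultaneously, subject throughout to the constraint that their union remain the trivial complex -- and this compatibility, which is precisely the extra structure the double $L$-groups are built to record, has to be maintained at each surgery step. The half-unit hypothesis is essential here, being what licenses the surgeries; one must also check the parity and degree bookkeeping, namely that the single value $n=0$ lies in the range $\ge 0$ demanded by Theorem~\ref{thm:DLLES}, has the parity appropriate to $\eps$-symmetric linking forms, and falls within whatever range the reconciliation argument requires -- for a ring in which the relevant degree were not $0$, one would first pass to the right degree by skew-suspension.
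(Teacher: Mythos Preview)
Your overall strategy is correct and matches the paper's proof exactly: specialise Theorem~\ref{thm:DLLES} to $n=0$, identify each of the three terms with the corresponding Witt-type group, and check the squares commute. The paper's proof is literally just the commuting diagram you describe in step~(ii), invoking the reconciliation isomorphisms.

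However, you have overestimated the difficulty of step~(i). Two of the three isomorphisms are much easier than you suggest. First, $\widetilde{D\Gamma}^\eps(A\to S^{-1}A)\cong D\Gamma^0(A\to S^{-1}A,\eps)$ holds \emph{by definition}: the paper defines $\widetilde{D\Gamma}^\eps$ as the quotient of $D\Gamma^\eps$ by the kernel of the surjection $G$ onto $D\Gamma^0$ (Definition~\ref{def:reduced}), so no surgery is involved. Second, and more importantly, the injectivity of $DW^\eps(A,S)\to DL^0(A,S,\eps)$ does \emph{not} require any compatible surgery on two nullcobordisms. The point is that an element of $DL^0(A,S,\eps)$ is already a $1$-dimensional $S$-acyclic Poincar\'e complex $(C,\phi)$, so $H^2(C)=0$; if $(f_\pm:C\to D_\pm,\ldots)$ are complementary $(A,S)$-nullcobordisms then the complementary condition gives $H^2(D_+)\oplus H^2(D_-)\cong H^2(C)=0$, whence $H^2(D_\pm)=0$ automatically, and Proposition~\ref{hypcx} then says the linking form is hyperbolic. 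So the ``main obstacle'' you identify is not an obstacle at all in dimension~$0$: the complementary condition does the work that would otherwise require surgery. The genuine difficulty with surgery above and below the middle dimension only arises when trying to establish periodicity $DL^n\cong DL^{n+2}$ for $n>0$ (see Section~\ref{sec:surgabove}), which is not needed here.
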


On the topological side, we apply the double Witt groups and the double $L$-groups to obtain invariants of the double knot-cobordism class of a knot. Our invariants are also robust enough to recover the main classical doubly slice invariants (including Blanchfield form, Seifert form, Farber-Levine pairing) in a single framework. Using our chain complex methods we recover some classical results in this area.

\section{A motivating example}

As much of the work of this thesis is algebraic in nature, we would like to start here with an easy and well-known geometric example to illustrate the main themes we intend to develop.

\begin{example}\label{ex:1}Suppose we work in a fixed category $Diff$, $PL$ or $TOP$ and that $N= S^3_\Q$ is an oriented manifold that is a rational homology 3-sphere ($\Q$HS). For any finite abelian group $T$ there is an isomorphism $\Ext^1_\Z(T,\Z)\cong \Hom_\Z(T,\Q/\Z)$. The groups $H_1(N)$ and $H^2(N)$ are finite, so \[\begin{array}{rcll}\qquad H_1(N)&\cong& \Ext^1_\Z(H^2(N),\Z)&\qquad\text{(Universal Coefficient Theorem)}\\&\cong&\Hom_\Z(H^2(N),\Q/\Z).\end{array}\] Hence the Poincar\'{e} duality isomorphism $D:H^2(N)\xrightarrow{\cong} H_1(N)$ can be expressed adjointly as a non-singular, skew-symmetric bilinear pairing \[\lambda:H^2(N)\times H^2(N)\to \Q/\Z\]which is called the \textit{linking form for $N$}. Geometrically the pairing between cocycles $x,y\in C^2(N)$ is described as follows. Because $H_1(N)$ is a torsion abelian group, there exists a positive integer $s\in\N$ such that $sD(y)= d a$ for some choice of $a\in C_2(N)$. It is possible to choose generically transversely intersecting submanifolds $X^1$ and $Z^2$ of $N$ such that $[X]=D[x]\in H_1(N)$ and $[Z]=[a]\in H_2(N)$. The linking form is then defined to be\[\lambda(x,y)=\frac{1}{s}\sum_{p\in X\cap Z}\eps_p\qquad\text{(where $\eps_p=\pm1$}),\] the signed count of the points of intersection of $X$ and $Z$, divided by the number $s$. It is a straightforward check that this is well-defined in $\Q/\Z$ regardless of the choices we have made. For rational homology 3-spheres the linking form encodes all non-trivial homological information.

Now suppose that $N=\partial M$ for a compatibly oriented manifold $M = D^4_\Q$ which is a rational homology 4-ball. By combining the naturality of the Poincar\'{e} duality maps (drawn vertically) with the homology and cohomology long exact sequences we obtain a diagram\[\xymatrix{0\ar[r]&H^2(M,N)\ar[r]\ar[d]^-{\cong}&H^2(M)\ar[r]^{i^*}\ar[d]^-{\cong}&H^2(N)\ar[r]\ar[d]^-{\cong}&H^3(M,N)\ar[r]\ar[d]^-{\cong}&0\\
0\ar[r]&H_2(M)\ar[r]&H_2(M,N)\ar[r]&H_1(N)\ar[r]&H_1(M)\ar[r]&0}\]showing that the submodule $i^*(H^2(M))\subset H^2(N)$ is maximally self-annihilating with respect to the linking form for $N$. We call a maximally self-annihilating submodule a \textit{lagrangian} and say  a linking form is \textit{metabolic} if it admits a lagrangian.

In fact, the set of skew-symmetric linking forms modulo the metabolic forms makes a group $W^{-}(\Z,\Z\sm\{0\})$ called the \textit{Witt group of linking forms}. The linking form of a $\Q$HS is metabolic if and only if it vanishes in the Witt group of linking forms (this is an algebraic statement related to $\Z$ being a principal ideal domain and is discussed in Chapter \ref{chap:DLtheory}) - hence calculating this Witt group would identify an obstruction to the existence of the proposed nullcobordism $M$ of $N$. Indeed, the set of $\Q$HS's modulo nullcobordism via rational homology balls also forms a group, denoted $\Theta^3_\Q$, and we have described a homomorphism of groups\[\Theta^3_\Q\to W^-(\Z,\Z\sm\{0\}).\](Actually this is a surjective morphism \cite{MR594531}.)
\end{example}

To motivate the next step in the example, we must now mention the wider vista of pairings on modules. It is very common to consider non-singular, symmetric (or skew-symmetric) bilinear pairings\[\alpha:K\times K\to A\]where $A$ is a commutative Noetherian ring and $K$ is a finitely generated projective $A$-module. Similarly to linking forms, these pairings arise as the basic algebraic invariant expressing Poincar\'{e} duality, but now for even-dimensional manifolds. Suppose for such an algebraic pairing that there exists a lagrangian $L\subset K$. Then $L$ is automatically a direct summand as $K$ is projective, and there exists a choice of splitting of $K$ with respect to which $\alpha$ is adjointly given by a \textit{metabolic matrix}\[\left(\begin{matrix}0&\ast\\ \ast& \ast\end{matrix}\right):K\to \Hom_A(K,A).\]If 2 is invertible in the ring $A$ then it is possible to improve the choice of splitting of $K$ so $\alpha$ is given by a \textit{hyperbolic matrix} \[\left(\begin{matrix}0&\ast\\ \ast&0\end{matrix}\right):K\to \Hom_A(K,A).\]Whenever we can find a complementary submodule to $L$, that is also a lagrangian, then we call the form itself \textit{hyperbolic}.

But in contrast, it is easy to think of examples of linking forms which admit lagrangians that are not direct summands. For instance the Lens space $L(4,1)$ is a $\Q$HS with linking pairing\[\Z/4\Z\times\Z/4\Z\to\Q/\Z;\qquad(x,y)\mapsto xy/4.\]The only possible lagrangian submodule is given by $2:\Z/2\Z\hookrightarrow \Z/4\Z$ but this is not a direct summand.

Combining everything so far motivates the following questions:
\begin{itemize}
\item When might a hyperbolic linking form example arise from geometry?
\item When might the distinction between metabolic and hyperbolic linking forms be geometrically significant?
\item Is the metabolic/hyperbolic distinction a shadow of a higher level phenomenon in algebra and topology?
\end{itemize}

\begin{example}\label{ex:2}We offer one partial answer to the first question now, by continuing with Example \ref{ex:1}. Extending the interpretation of a lagrangian as a nullcobordism, suppose there exist oriented manifolds $M_+$ and $M_-$ with common boundary $\partial M_\pm\cong N= S^3_\Q$ and consider glueing them together along $N$. Call the nullcobordisms $(M_\pm,N)$ \textit{complementary} if $M_+\cup_N M_-= S^4_\Z$, an integral homology 4-sphere.

\[\begin{xy} 
\def\picsphere{\resizebox{0.3\textwidth}{!}{ \includegraphics{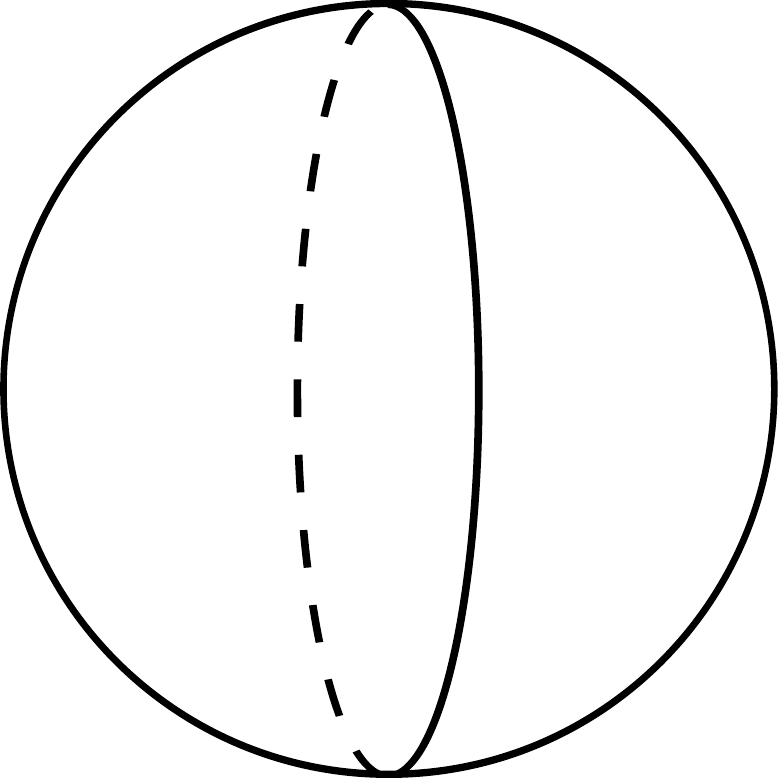}}}
\xyimport(259,139){\picsphere}
,(30,70)*!L{M_+}
,(275,70)*!L{= S^4_\Z}
,(279,80)*!L{?}
,(119,71)*!L{N}
,(190,70)*!L{M_-}
\end{xy}\]

If $(M_\pm,N)$ are complementary then an analysis of the Meier-Vietoris sequence shows that this actually forces $M_\pm$ to be rational homology 4-balls and that the map \[(i_+^*\,\,\,i_-^*):H^2(M_+)\oplus H^2(M_-)\xrightarrow{\cong} H^2(N)\] must be an isomorphism. Hence the linking form on $N$ is hyperbolic. In fact we claim another easy Meier-Vietoris argument shows the converse. So that:

\begin{claim}Suppose $\partial M_\pm=N$. Then $M_+\cup_N M_-=S^4_\Z$ if and only if $M_\pm= D_\Q^4$ and $M_\pm$ induce complementary lagrangian submodules for the linking form of $N$.
\end{claim}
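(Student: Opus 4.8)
The plan is to study the closed oriented $4$-manifold $X := M_+\cup_N M_-$ by running, in parallel, the Mayer--Vietoris sequences of this decomposition with $\Z$- and $\Q$-coefficients in homology and with $\Z$-coefficients in cohomology, and by comparing them through the Poincar\'{e}--Lefschetz duality ladder that relates the cohomology exact sequence of each pair $(M_\pm,N)$ to the homology exact sequence of that pair. Both implications then reduce to bookkeeping with these sequences. Throughout, $N = S^3_\Q$ gives $H_3(N)\cong H_0(N)\cong\Z$, $H_1(N)$ finite, $H_2(N)\cong H^1(N)=0$, and $H^2(N)\cong\Ext^1_\Z(H_1(N),\Z)\cong H_1(N)$; write $\theta\colon H^2(N)\xrightarrow{\cong}H_1(N)$ for the Poincar\'{e} duality isomorphism of $N$, so that $\lambda$ is its adjoint expression as in Example~\ref{ex:1}.

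For the forward implication, assume $X = S^4_\Z$, so that $H_k(X)=H^k(X)=0$ for $k=1,2,3$. The Mayer--Vietoris boundary map $H_4(X)\to H_3(N)$ sends the fundamental class $[X]$ to $\pm[N]$ and so is an isomorphism $\Z\xrightarrow{\cong}\Z$; together with $H_4(M_\pm)=0$ this forces first $H_3(M_+)\oplus H_3(M_-)=0$, then (using $H_2(N)=0$) $H_2(M_+)\oplus H_2(M_-)=0$, and then the isomorphism $H_1(N)\xrightarrow{\cong}H_1(M_+)\oplus H_1(M_-)$. In particular $H_1(M_\pm)$ is finite, so $M_\pm$ is a rational homology $4$-ball, $M_\pm = D^4_\Q$. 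Feeding the vanishing $H^2(X)=H^3(X)=0$ into the $\Z$-cohomology Mayer--Vietoris sequence yields an isomorphism $(i_+^*,\,i_-^*)\colon H^2(M_+)\oplus H^2(M_-)\xrightarrow{\cong}H^2(N)$, so $i_+^*(H^2(M_+))$ and $i_-^*(H^2(M_-))$ are complementary submodules of $H^2(N)$; that each is a lagrangian for $\lambda$ is exactly the computation of Example~\ref{ex:1} applied to $M = M_\pm$, whose hypotheses are now in force.

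For the converse, assume $M_\pm = D^4_\Q$ and that $L_\pm := i_\pm^*(H^2(M_\pm))$ are complementary lagrangians for $\lambda$. Mayer--Vietoris with $\Q$-coefficients (the three pieces having the rational homology of $D^4$, $D^4$, $S^3$) shows immediately that $X$ is a rational homology $4$-sphere; Poincar\'{e} duality and the universal coefficient theorem then give $H_3(X)\cong H^1(X)=0$ and $H_2(X)\cong H_1(X)$, so it suffices to prove $H_1(X;\Z)=0$. The reduced $\Z$-homology Mayer--Vietoris sequence identifies $H_1(X)$ with $\coker\phi$, where $\phi = ((i_+)_*,(i_-)_*)\colon H_1(N)\to H_1(M_+)\oplus H_1(M_-)$. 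The duality ladder carries the square comparing $i_\pm^*\colon H^2(M_\pm)\to H^2(N)$ with the connecting map $H_2(M_\pm,N)\to H_1(N)$, so $\theta$ identifies $L_\pm$ with $\ker\bigl((i_\pm)_*\bigr)$; hence $\ker\phi = \theta(L_+\cap L_-)=0$ by complementarity. Since $H_1(M_\pm,N)\cong H^3(M_\pm)=0$, each $(i_\pm)_*$ is onto, and comparing orders via $|H_1(M_\pm)| = |H_1(N)|/|L_\pm|$ together with $|L_+|\,|L_-| = |H^2(N)| = |H_1(N)|$ gives $|\coker\phi| = |H_1(M_+)|\,|H_1(M_-)|/|H_1(N)| = 1$. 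Therefore $H_1(X)=0$, hence $X$ is an integral homology $4$-sphere, $X = S^4_\Z$.

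The step I expect to be the main obstacle is the converse's deduction that $\phi$ is an isomorphism from the purely cohomological hypothesis that $L_+$ and $L_-$ are complementary lagrangians: this requires a sign- and orientation-careful use of the Poincar\'{e}--Lefschetz duality ladder together with the non-singularity of $\lambda$, and it also uses the vanishing $H_2(M_\pm;\Z)=0$ (equivalently $H^3(M_\pm)=0$) needed to make $(i_\pm)_*$ surjective. That vanishing is one of the conclusions of the forward implication, so the biconditional is clean provided ``$M_\pm = D^4_\Q$'' is read to include it --- which it does in the doubly-slice applications, where the relevant complements have vanishing $H_2$; otherwise it should be recorded among the standing hypotheses. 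The remaining ingredients --- the three Mayer--Vietoris computations and the final order count --- are routine.
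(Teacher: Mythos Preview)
Your argument is correct and follows precisely the Mayer--Vietoris route that the paper merely indicates (the paper's entire ``proof'' consists of the phrases ``an analysis of the Meier--Vietoris sequence'' for the forward direction and ``another easy Meier--Vietoris argument shows the converse''). Your flagged concern is well-founded and in fact pinpoints a genuine imprecision in the claim as literally stated: the long exact sequence of each pair together with Poincar\'e--Lefschetz duality gives $|H_1(M_\pm)| = |H_2(M_\pm)|\cdot|L_\pm|$, so after the injectivity of $\phi$ one finds $|\coker\phi| = |H_2(M_+)|\cdot|H_2(M_-)|$, which vanishes only when both $H_2(M_\pm;\Z)=0$. Since the claim sits in the introduction as motivation rather than as a result to be used later, the paper is evidently treating it informally; your suggested fix---either read $D^4_\Q$ as including the integral vanishing of $H_2$, or record it as an explicit hypothesis---is exactly right.
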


\end{example}

In one sense Example \ref{ex:2} is neater than Example \ref{ex:1}. Not only does the hyperbolic property give us a `Witt-type' obstruction to the geometric situation but we have an `if and only if' statement.

However, in several ways, Example \ref{ex:2} is a more delicate situation. Firstly, until this thesis, there was no general algebraic theory of linking forms modulo hyperbolic forms (although the theory has been developed in special cases). Secondly, Example \ref{ex:2} appears to be concerned with something we might call a `double-nullcobordism'. But there is not a general concept of `double-cobordism' or of a `double-cobordism group', so that at first sight there is no group homomorphism analogous to $\Theta^3_\Q\to W(\Z,\Z\sm\{0\})$ from Example \ref{ex:1}. Let's delve a little deeper into this second concern now.

\section{Geometric `double-cobordism'?}

One common definition of a \textit{cobordism category} $(\mathcal{C},\partial, i)$ does not require a great deal of structure: \begin{itemize}
\item $\mathcal{C}$ is a small category with a finite coproduct `$+$', and initial object, written `$\emptyset$',
\item $\partial:\mathcal{C}\to \mathcal{C}$ is an additive functor (with respect to $+$),
\item $i:\partial\to \text{Id}_\mathcal{C}$ is a natural transformation such that $\partial \partial M=\emptyset$ for all objects $M$ of $\mathcal{C}$.
\end{itemize}In such a setting, two objects $N$ and $N'$ are \textit{cobordism equivalent} if there exist $M$ and $M'$ such that $N+\partial M\cong N'+\partial M'$. The set of isomorphism classes of objects, modulo this equivalence relation forms a commutative monoid. But in many topological examples there is a natural notion of cobordism inverse (usually given by some notion of a cylinder) so that we have, moreover, a \textit{cobordism group}.

When might a concept of `double-cobordism' make sense and be useful? Desirable properties of a good notion of double-cobordism would be\begin{enumerate}
\item It is a refinement of the cobordism relation.
\item Doubly-nullcobordant is to nullcobordant as hyperbolic is to metabolic.
\item The set of objects modulo the double-cobordism relation forms an interesting and tractable algebraic object e.g.\ a group.
\end{enumerate}

Geometrically, desirable property 3 is a big ask and is the major obstruction to developing this concept in generality. Assuming we already have property 1, to see a group structure we would need to take a geometric object $M$ and its standard cobordism inverse $-M$, then show that $M+(-M)$ is doubly-nullcobordant. There is simply no general geometric reason this should be the case - it requires real geometric input.

Remarkably, this property \textit{does} hold in the theory of knot-cobordism. If $K:S^n\hookrightarrow S^{n+2}$ is an $n$-dimensional knot then $K\#-K$ is a doubly-slice knot. (This follows from a geometric result of Zeeman on `twist-spinning', for which we give a quick and elementary reproof in Appendix \ref{chap:twistspin}.) This observation led Stoltzfus (\cite{MR521738}) to make the first and (to this date) only definition of a geometric double-cobordism group, the \textit{$n$-dimensional double knot-cobordism group}  $\mathcal{DC}_n$. This group is still very poorly understood. This is even the case in `high dimensions' $n\geq 3$, where the powerful tools of classical surgery theory (\cite{MR0189052}, \cite{MR0179803}, \cite{MR0246314}, \cite{MR0467764}, \cite{MR620795}) have provided a complete description of the \textit{$n$-dimensional knot-cobordism group}. $\mathcal{C}_n=0$ when $n$ is even, and when $n$ is odd\begin{equation}\label{eq:concordance}\mathcal{C}_n\xrightarrow{\cong}L^{n+1}(\Z[z,z^{-1}],P)\cong\bigoplus_\infty\Z\oplus\bigoplus_\infty(\Z/2\Z)\oplus\bigoplus_\infty(\Z/4\Z).\end{equation}Indeed, there are very few examples of high-dimensional cobordism problems that have not been effectively tackled via surgery theory. We can even go so far as to say the classical surgery programme \textit{cannot} tackle the doubly-slice problem as it has been shown by Ruberman (\cite{MR709569}, \cite{MR933307}) that the technique of `surgery below the middle dimension' cannot be applied to a double knot-cobordism class $[K]\in\mathcal{DC}_n$ for any $n$.

One further complication, and perhaps the central issue with the double knot-cobordism group, is that it isn't known to be `transitive', in the sense that knot $K$ represents 0 in $\mathcal{DC}_n$ if and only if there exist doubly slice knots $J$ and $J'$ such that $K\#J\simeq J'$, but this does not necessarily mean that $K$ itself is doubly slice.

\section{Localisation and algebraic double-cobordism}

On the algebraic side, the story of double-cobordism is very rich. Suppose $A$ is a ring equipped with a unit and an involution. If $S\subset A$ is a multiplicatively closed subset containing the unit and closed under the involution then we may localise $A$ at $S$ to form the ring $S^{-1}A$ of fractions $a/s$ where $a\in A$, $s\in S$. Under some algebraic assumptions on our underlying ring $A$ (stated below), we will show how to take the classical algebraic cobordism theories of the \textit{Witt group of linking forms} $W(A)$ and of \textit{symmetric algebraic $L$-theory} and refine them to double-cobordism theories so that desirable properties 1, 2, and 3 are all satisfied. Our refined versions of these theories are the \textit{double Witt groups of linking forms} $DW(A,S)$ and the \textit{torsion double $L$-groups $DL^n(A,S)$} of $n$-dimensional symmetric Poincar\'{e} complexes over $A$ that have $S$-torsion homology.

For the assumptions on our underlying ring, first we will assume $A$ is Noetherian and commutative. In all geometric applications we have in mind, these conditions are satisfied. The conditions are mainly for convenience and the algebraic theory we develop will not really need them. All the constructions in the double Witt groups and double $L$-theory can be developed without these assumptions. However, we will further assume that there exists a \textit{half-unit} in the ring, that is an element $s\in A$ such that $s+\overline{s}=1$. This assumption is essential to ensure our theory has desirable property 3: a group structure. It is also intimately related to the geometry of knot exterior via Alexander duality. Given a knot $K$, the half-unit corresponds to the fact that if $z$ generates the group of deck transformations of the infinite cyclic cover of the knot complement $X_K$, then $1-z$ is an automorphism of the homology $H_*(X_K;\Z[z,z^{-1}])$. The standard involution on $\Z[z,z^{-1}]$, extended from $z\mapsto z^{-1}$, then makes $s=(1-z)^{-1}$ a half-unit.

Both linking forms and torsion symmetric $L$-theory are the historical tools directly used in the computation of equation \ref{eq:concordance}, so that certainly one end-result we have in mind for our double-cobordism groups is an application to the doubly-slice problem. However these double-cobordism theories are interesting in their own right and turn out to fit neatly into the general scheme in algebraic $K$- and $L$-theory of understanding the localisations of rings with involution.

Define categories\[\begin{array}{rcc}\A(A)&=&\{\text{finitely generated, projective $A$-modules}\},\\
\H(A,S)&=&\{\text{finitely generated, homological dimension 1, $S$-torsion $A$-modules}\}.\end{array}\]What we will call a \textit{localisation exact sequence} is a way of measuring the difference between the categories $\A(A)$ and of $\H(A,S)$.

The first example of such a sequence is in $K$-theory. The categories $\A(A)$ and $\H(A,S)$ are abelian and hence carry a natural \textit{exact structure} in the sense of Quillen so that one can define all higher algebraic $K$-groups $K_n(A)=K_n(\A(A))$ and $K_n(A,S)=K_{n-1}(\H(A,S))$. There is then a long exact sequence for $n>0$ in these $K$-groups \cite{MR0338129}:\[\xymatrix{\dots\ar[r]&K_n(A)\ar[r]&K_n(S^{-1}A)\ar[r]&K_n(A,S)\ar[r]&K_{n-1}(A)\ar[r]&\dots}.\]This specialises in low dimensions to the exact sequence of Bass \cite[XII]{MR0249491}:\[\xymatrix{K_1(A)\ar[r]&K_1(S^{-1}A)\ar[r]&K_1(A,S)\ar[r]&K_{0}(A)\ar[r]&K_0(S^{-1}A)\ar[r]&0}.\] Correspondingly in algebraic $L$-theory there is the Ranicki-Vogel long exact sequence in Wall's quadratic surgery obstruction $L$-groups (\cite{MR1687388}, \cite{MR620795}, \cite{MR585677}):\[\xymatrix{\dots\ar[r]&L_n(A)\ar[r]&L_n(S^{-1}A)\ar[r]&L_n(A,S)\ar[r]&L_{n-1}(A)\ar[r]&\dots}.\]Again, this specialises in low dimensions to the more classical localisation exact sequence of Milnor-Husemoller \cite{MR0506372} (for $A$ a Dedekind domain and $S=A\sm\{0\}$ so that $S^{-1}A$ is the fraction field), now relating the associated Witt groups $W(A)$ of symmetric sesquilinear forms on finitely generated projective modules over $A$ and $W(A,S)$ of symmetric sesquilinear linking forms on finitely generated torsion modules over $A$: \[\xymatrix{0\ar[r]&W(A)\ar[r]&W(S^{-1}A)\ar[r]&W(A,S)\ar[r]&0}.\]

Our double $L$-groups fit into a similar sequence. This is our main technical result of double $L$-theory, Theorem \ref{thm:DLLES}, the \textit{double $L$-theory localisation exact sequence}. It compares the symmetric $L$-groups of $A$ to the torsion double $L$-groups of a localisation $(A,S)$ via the comparison term $D\Gamma^n(A\to S^{-1}A)$. The objects in $D\Gamma^n(A\to S^{-1}A)$ agree with the objects in the groups $\Gamma^n(A\to S^{-1}A)\cong L^n(S^{-1}A)$, the $n$-dimensional algebraic homology surgery obstruction groups (see \cite[2.4]{MR620795}, also cf.\ \cite{MR0339216}), however a more delicate double-cobordism relation called \textit{$\partial$-double-cobordism} is required in our definition. We show that for $n\geq 0$, there is an exact sequence\[\xymatrix{0\ar[r]&L^n(A)\ar[r]&D\Gamma^n(A\to S^{-1}A)\ar[r]&DL^n(A,S)}.\]In the case that $n=2$ and for $A$ a Dedekind domain, we prove that this specialises to an exact sequence in our double Witt group setting:\[\xymatrix{0\ar[r]&W(A)\ar[r]&\widetilde{D\Gamma}(A\to S^{-1}A)\ar[r]&DW(A,S)},\]where the $\widetilde{D\Gamma}$-group in this sequence is a double Witt group for (singular) symmetric forms on an $A$-lattice inside a finitely generated projective $S^{-1}A$-module.

\section{Organisation}

In Chapter \ref{chap:linking} we develop enough classical language to precisely describe the metabolic/hyperbolic distinction in the context of linking forms. Some time will be spent carefully putting together well-known results in this area so that we can prove the \textit{Main Decomposition Theorem}, Theorem \ref{thm:levinePID}. Although this result is new, it essentially follows from several results already in the literature. We then make the first general definitions of the \textit{split Witt} and \textit{double Witt} groups. Next we use the Main Decomposition Theorem to determine the structure of these groups and the kernel of the forgetful map $DW(A,S)\to W(A,S)$ in many cases. Finally we develop the language required to state and understand our \textit{double Witt group localisation exact sequence}, although the proof is postponed for later.

In Chapter \ref{chap:laurent} we review the topological motivation for the study of linking forms, focussing on the case of odd-dimensional manifolds admitting an infinite cyclic cover. We review some results in the literature, including the passage between $\Z$-equivariant linking forms on the infinite cyclic cover and \textit{autometric forms} on a fundamental domain for the cover. We set the algebraic theory of Blanchfield and Seifert forms in this framework. In each case we analyse the associated double Witt groups and we prove isomorphisms between the double Witt group of Blanchfield forms and the double Witt group of Seifert forms.

In Chapter \ref{chap:algLtheory} we prepare for the development of the double $L$-groups by reviewing algebraic $L$-theory in some detail. This account, while not original material, is presented in an original way that is well-suited to our later applications. This chapter is intended as a self-contained introduction to $L$-theory that will prepare the reader for the techniques and constructions used in Chapter \ref{chap:DLtheory}.

In Chapter \ref{chap:DLtheory} we develop the main new tool of this thesis: double $L$-theory. We define the $DL$-groups and confirm basic properties. We then develop the necessary tools to write down and prove the double $L$-theory localisation exact sequence. After this we analyse the potential periodicity in double $L$-theory by investigating the possibility of algebraic surgery `above and below the middle dimension', although periodicity is not proven in general. Finally, we compare the $DL$-groups to the double Witt groups, proving isomorphisms in low dimensions and establishing the double Witt group localisation exact sequence.

In Chapter \ref{chap:blanchfield} we define the knot invariant we will be using, the \textit{Blanchfield complex of an $n$-knot}. This is a chain complex generalisation, originally defined by Ranicki \cite{MR620795} of the Blanchfield form. We establish basic properties of this invariant and recap the knot-cobordism classification of high-dimensional knots in the language of algebraic $L$-theory.

In Chapter \ref{chap:knots} we prove that the class of the Blanchfield complex inside a double $L$-group is an invariant of the double knot-cobordism class of the knot. We recover several classical results in this area using the techniques of double $L$-theory and finally outline some further areas of study that our research suggests.

\chapter{Linking forms}\label{chap:linking}

The difference between the concepts of hyperbolic and metabolic objects is a central theme of this thesis. In Chapter \ref{chap:linking} we set down enough basic, classical algebra to precisely define and explore the concepts and their difference in the context of linking forms. The set of linking forms modulo metabolic equivalence will form the classical Witt group of linking forms. Taking the set of linking forms modulo hyperbolic equivalence, we make the first general definition of the \textit{double Witt group of linking forms}. Using techniques of algebraic number theory we will show how to decompose linking forms and compute the Witt group and double Witt group for various rings and localisations.

We postpone the geometric interpretation of linking forms and the discussion of their significance to algebraic topology until Chapter \ref{chap:laurent}. Chapter \ref{chap:linking} is limited to their careful algebraic description. Many of the details included in this chapter can be found elsewhere and references will be given. However, this is the first time these details, ideas and proofs have been collected in one place and this description has been designed to best fit in with the extensions of the ideas in later chapters and our general methods for understanding ring localisations.

In the final section of Chapter \ref{chap:linking}, we discuss the idea of a localisation exact sequence, a sequence relating linking forms to forms on projective modules, in the double Witt setting. The existence of such a sequence in the classical setting is famously proved in Milnor and Husemoller's textbook \cite{MR0506372}. The existence of such a sequence in our double Witt group setting will be stated as a theorem and will be proved in Chapter \ref{chap:DLtheory} when we have developed more powerful techniques with which to tackle this problem.

\section{Some algebraic conventions}

In the following, $A$ will always be a commutative, Noetherian ring with a unit and involution. The involution is denoted\[\overline{\phantom{A}}:A\to A;\qquad a\mapsto \overline{a}.\]

As $A$ is commutative, we may switch between left and right $A$-module structures trivially. However, we wish to use the involution to define a more sophisticated way of switching, that extends to non-commutative rings with involution and permits an efficient way of describing sesquilinear pairings between left $A$-modules. 

A left $A$-module $P$ may be regarded as a right $A$-module $P^t$ by the action \[P^t\times A\to P^t;\qquad (x,a)\mapsto \overline{a}x.\]Similarly, a right $A$-module $P$ may be regarded as a left $A$-module $P^t$. Unless otherwise specified, henceforth the term `$A$-module' will refer to a left $A$-module. Given two $A$-modules $P,Q$, the tensor product is an abelian group denoted $P^t\otimes_A Q$. We will sometimes write simply $P\otimes Q$ to ease notation, but the right $A$-module structure $P^t$ is implicit, so that for example $x\otimes ay=\overline{a}x\otimes y$.

In the following, $S\subset A$ will always be a \textit{multiplicative} subset, that is

\begin{singlespace*}
\begin{enumerate}[i.]
\item $st\in S$ for all $s,t\in S$.
\item $sa=0\in A$ for some $s\in S$ and $a\in A$ only if $a=0\in A$.
\item $\overline{s}\in S$ for all $s\in S$.
\item $1\in S$.
\end{enumerate}
\end{singlespace*}

The \textit{localisation of $A$ away from $S$} is $S^{-1}A$, the $A$-module of equivalence classes of pairs $(a,s)\in A\times S$ under the relation $(a,s)\sim (b,t)$ if and only if there exists $c\neq0\in A$ such that $c(at-bs)=0\in A$. We say the pair $(A,S)$ \textit{defines a localisation} and denote the equivalence class of $(a,s)$ by $a/s\in S^{-1}A$.  If $P$ is an $A$-module denote $S^{-1}P:=S^{-1}A\otimes_AP$ and write the equivalence class of $(a/s)\otimes x$ as $ax/s$. Similarly, if $f:P\to Q$ is a morphism of $A$-modules then there is induced a morphism of $S^{-1}A$-modules $S^{-1}f=1\otimes f:S^{-1}P\to S^{-1}Q$. If $S^{-1}P\cong0$ then the $A$-module $P$ is \textit{$S$-torsion}. Using condition (ii) in the assumptions for $S$, the natural morphism $i:A\to S^{-1}A$ sending $a\mapsto a/1$ induces a short exact sequence of $A$-modules \[0\to A\to S^{-1}A\to S^{-1}A/A\to 0.\]$i:A\to S^{-1}A$ is moreover an injective ring morphism, but in general $S^{-1}A/A$ is not a ring. For an $A$-module $P$, the induced morphism $i:P\to S^{-1}P$ is not generally injective. This is measured by the exact sequence\[0\to \Tor^A_1(S^{-1}A,P)\to \Tor_1^A(S^{-1}A/A,P)\to P\to S^{-1} P\to (S^{-1}A/A)\otimes_A P.\]From this, we see that $i:P\to S^{-1}P$ is injective if and only if $\Tor^A_1(S^{-1}A/A,P)$ vanishes. This happens, for instance, when $P$ is a projective module. Define the \textit{$S$-torsion of $P$} to be\[TP:=\ker(P\to S^{-1}P).\]

\begin{example}$\,$
\begin{itemize}
\item If $A$ is an integral domain then $S=A\sm\{0\}$ is multiplicatively closed and the pair $(A,S)=(A,A\backslash \{0\})$ defines the \textit{quotient field of $A$}.

\item If $\p$ is a prime ideal of $A$ and $\overline{\p}=\p$ then $S=A\backslash\p$ is a multiplicative subset and we can define the \textit{localisation of $A$ at $\p$} by $A_\p:=S^{-1}A$. Recall a \textit{local ring} is a ring with a unique maximal ideal. $A_\p$ is a local ring, with unique maximal ideal corresponding to the image of $\p$ under the injective $A$-module morphism\[i:A\mapsto A_\p;\qquad a\mapsto a/1.\]
\end{itemize}
\end{example}

Here are some handy lemmas when working with localisations.

\begin{lemma}[Localisation is exact]\label{lem:locisexact}If \[C:\qquad\dots\to C_n\xrightarrow{d} C_{n-1}\to \dots\]is a chain complex of $A$-modules and $(A,S)$ defines a localisation, then for all $r\in\Z$\[S^{-1}H_r(C)\cong H_r(S^{-1}A\otimes_A C)\]
\end{lemma}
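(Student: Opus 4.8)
The plan is to prove this by a standard argument exploiting the flatness of localisation. First I would recall that $S^{-1}A$ is a flat $A$-module: localisation is an exact functor on the category of $A$-modules, since it is a filtered colimit of the free modules $s^{-1}A$ over $s \in S$ (ordered by divisibility), and filtered colimits commute with finite limits. Equivalently, flatness can be checked directly: if $0 \to P' \to P \to P'' \to 0$ is exact then applying $S^{-1}A \otimes_A -$ yields $0 \to S^{-1}P' \to S^{-1}P \to S^{-1}P'' \to 0$, the injectivity on the left being the only non-formal point, which follows because a fraction $x/s$ mapping to $0$ in $S^{-1}P$ is killed by some $t \in S$, forcing $x/s = 0$ already in $S^{-1}P'$ by condition (ii) on $S$.

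Next I would apply this flatness to the chain complex $C$. The key point is that homology is built entirely out of kernels, images and quotients, each of which is preserved by an exact functor. Explicitly, write $Z_r = \ker(d\colon C_r \to C_{r-1})$ and $B_r = \operatorname{im}(d\colon C_{r+1} \to C_r)$, so that $H_r(C) = Z_r/B_r$. Since $S^{-1}A \otimes_A -$ is exact, it commutes with taking the kernel of $d\colon C_r \to C_{r-1}$, giving $S^{-1}Z_r = \ker(S^{-1}d\colon S^{-1}C_r \to S^{-1}C_{r-1})$; likewise it commutes with images, giving $S^{-1}B_r = \operatorname{im}(S^{-1}d\colon S^{-1}C_{r+1} \to S^{-1}C_r)$; and it commutes with the quotient $Z_r \to Z_r/B_r$. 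Assembling these, $S^{-1}H_r(C) = S^{-1}(Z_r/B_r) \cong S^{-1}Z_r / S^{-1}B_r = H_r(S^{-1}A \otimes_A C)$, and one checks the isomorphism is the natural one induced by $i\colon C \to S^{-1}A \otimes_A C$.

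There is no serious obstacle here; the only thing requiring a little care is the exactness (equivalently flatness) of localisation, which is where condition (ii) in the definition of a multiplicative subset is used. In fact, since the remainder of the excerpt already treats $S^{-1}A \otimes_A -$ as behaving well with respect to exact sequences (e.g.\ the $\Tor$ sequence displayed earlier), it would be reasonable to cite that flatness as known and give only the short homological bookkeeping of the second paragraph. I would present the proof in that economical form, spelling out the kernel/image/quotient steps and remarking that naturality of the comparison map is immediate from the construction.
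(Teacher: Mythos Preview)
Your proposal is correct and follows essentially the same approach as the paper: the paper also proves that $S^{-1}A\otimes_A-$ is exact by showing it preserves injections via the same element-chasing argument using condition (ii), and then takes the ``exact functors commute with homology'' step as implicit. Your version is somewhat more detailed in spelling out the kernel/image/quotient bookkeeping, but the substance is identical.
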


\begin{proof}The functor $S^{-1}A\otimes_A-$ is right exact because it is a tensor product functor. We must show this functor is moreover left-exact. Suppose $f:P\to Q$ is an injective $A$-module morphism. If $x\in P$, $s\in S$ and $(S^{-1}f)(x/s):=f(x)/s=0$, then there exists $t\in S$ such that $0=tf(x)=f(tx)$. But $f$ is injective, meaning $tx=0$ so that $x/s=0$. Hence $S^{-1}f:S^{-1}P\to S^{-1}Q$ is injective.
\end{proof}

\begin{lemma}\label{lem:injective}Suppose the inclusion $i:P\to S^{-1}P$ is an injective morphism. If $f:P\to Q$ is an $A$-module morphism such that $S^{-1}f:S^{-1}P\to S^{-1}Q$ is an $S^{-1}A$-module isomorphism then $f$ is injective.
\end{lemma}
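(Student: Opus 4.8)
The plan is to prove $f$ is injective by showing directly that $\ker f=0$, using the naturality of the localisation maps. The key observation is that the square formed by $i_P\colon P\to S^{-1}P$, $i_Q\colon Q\to S^{-1}Q$, $f$, and $S^{-1}f$ commutes: for $x\in P$ we have $(S^{-1}f)(i_P(x))=(S^{-1}f)(x/1)=f(x)/1=i_Q(f(x))$, which is immediate from the defining formulas $i(x)=x/1$ and $(S^{-1}f)(x/s)=f(x)/s$. So the only preliminary step is to record this compatibility; nothing deeper is needed.

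Now I would argue as follows. Let $x\in\ker f$. Then $i_Q(f(x))=i_Q(0)=0$, so by the commuting square $(S^{-1}f)(i_P(x))=0$. Since $S^{-1}f$ is an $S^{-1}A$-module isomorphism it is in particular injective, hence $i_P(x)=x/1=0\in S^{-1}P$. Finally, the hypothesis that $i_P\colon P\to S^{-1}P$ is injective forces $x=0$. Therefore $\ker f=0$ and $f$ is injective. I do not expect any real obstacle here: the statement is a formal consequence of injectivity of $i_P$ plus injectivity of the localised map, and the proof is a two-line diagram chase — the "hard part", such as it is, is merely making explicit that $i$ and $S^{-1}f$ are compatible, which the notation already builds in.
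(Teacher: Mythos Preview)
Your proof is correct and is essentially identical to the paper's own argument: take $x\in\ker f$, observe $(S^{-1}f)(x/1)=f(x)/1=0$, use injectivity of $S^{-1}f$ to get $x/1=0$, then use injectivity of $i$ to conclude $x=0$. You have simply made the commutativity of the naturality square explicit, which the paper leaves implicit.
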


\begin{proof}Suppose $x\in P$ and $f(x)=0$. Then $(S^{-1}f)(x/1)=0$ and hence $x/1=0\in S^{-1}P$. But then $x=0$ as $P\to S^{-1}P$ is injective.
\end{proof}

\subsection*{Modules and localisation}

We now introduce categories for comparing projective modules over $A$ to projective modules over $S^{-1}A$. This comparison will be made precise later in the form of the various localisation exact sequences derived in \ref{sec:localisation}, \ref{subsec:localisation} and \ref{sec:localisation2}.

Define a category\[\A(A)=\{\text{finitely generated (\text{f.g.}), projective $A$-modules}\},\] with $A$-module morphisms.

\begin{definition}An $A$-module $Q$ has \textit{homological dimension $m$} if it admits a resolution of length $m$ by f.g.\ projective $A$-modules, i.e.\ there is an exact sequence\[0\to P_m\to P_{m-1}\to\dots\to P_0\to Q\to 0,\]with $P_i$ in $\A(A)$. If this condition is satisfied by all $A$-modules $Q$ we say $A$ is of homological dimension $m$.
\end{definition}

If $(A,S)$ defines a localisation, define a category \[\H(A,S)=\{ \text{f.g.\ $S$-torsion $A$-modules of homological dimension 1}\}\]with $A$-module morphisms.

\begin{definition}\label{def:cartmorph}For $s\in A$ denote $(s)=sA\unlhd A$ the ideal generated by $s$. If $(A,S)$ and $(B,T)$ define localisations then a morphism of rings with involution $f:A\to B$ restricting to a bijection $f|_S:S\to T$ is called \textit{cartesian} if, for each $s\in S$\[[f]:A/(s)\to B/(f(s))\]is an isomorphism of rings.
\end{definition}

For any multiplicative subset $S\subset A$ there is an isomorphism of $A$-modules\[\varinjlim_{s\in S}(A/(s))\xrightarrow{\cong} S^{-1}A\] and so a cartesian morphism $(A,S)\to (B,T)$ determines a cartesian square (hence the name) \[\xymatrix{A\ar[r]\ar[d]&S^{-1}A\ar[d]\\B\ar[r]&T^{-1}B}\]

\begin{theorem}\label{thm:cartmorph}{\cite[App.\ 5]{MR0384894}} A cartesian morphism $(A,S)\to (B,T)$ induces an equivalence between the categories \[\H(A,S)\longleftrightarrow\H(B,T).\]
\end{theorem}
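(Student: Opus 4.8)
The plan is to realise the equivalence as extension of scalars along $f$, namely the functor $F=B\otimes_A(-)$, whose quasi-inverse will be restriction of scalars through the cartesian ring isomorphisms $[f]$ of Definition~\ref{def:cartmorph}. The starting observation is that every object $M$ of $\H(A,S)$ is annihilated by a single element of $S$: $M$ is finitely generated, and since $S^{-1}M=0$ each generator is killed by some element of $S$, so their product $s\in S$ kills $M$. Hence $M$ is a module over $A/(s)$, and because $sM=0$ we have $F(M)=B\otimes_A M\cong (B/(f(s)))\otimes_{A/(s)}M$, which under the ring isomorphism $[f]:A/(s)\xrightarrow{\cong}B/(f(s))$ is just $M$ itself, now viewed as a $B/(f(s))$-module. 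In other words, on the full subcategory of $A$-modules killed by a fixed $s$ the functor $F$ agrees with the equivalence of module categories induced by $[f]$. Given any two objects $M,M'$ of $\H(A,S)$ we may choose one $s$ killing both, and then $\Hom_A(M,M')=\Hom_{A/(s)}(M,M')$ is carried isomorphically onto $\Hom_{B/(f(s))}(F(M),F(M'))=\Hom_B(F(M),F(M'))$; so $F$ is fully faithful.

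It remains to show $F$ takes values in $\H(B,T)$ and is essentially surjective onto it. For the first, fix $M\in\H(A,S)$ killed by $s$; then $F(M)$ is finitely generated over $B$ and is $T$-torsion, since $f(s)\in T$ annihilates it. For homological dimension, choose a length-$1$ resolution $0\to P_1\xrightarrow{d}P_0\to M\to0$ by finitely generated projective $A$-modules, which exists as $M$ has homological dimension $\le 1$. Here $S^{-1}d$ is automatically an isomorphism: it is onto because $S^{-1}M=0$, and injective by exactness of localisation (Lemma~\ref{lem:locisexact}). Applying $B\otimes_A(-)$ and using $\Tor^A_1(B,P_0)=0$, it suffices to see that $1\otimes d:B\otimes_A P_1\to B\otimes_A P_0$ is injective. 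But $B\otimes_A P_1$ is finitely generated projective over $B$, hence embeds into its localisation $T^{-1}B\otimes_B(B\otimes_A P_1)\cong T^{-1}B\otimes_{S^{-1}A}S^{-1}P_1$, under which $1\otimes d$ corresponds to $T^{-1}B\otimes_{S^{-1}A}S^{-1}d$, an isomorphism since $S^{-1}d$ is. So Lemma~\ref{lem:injective} gives injectivity, and $0\to B\otimes_AP_1\to B\otimes_AP_0\to F(M)\to0$ is a length-$1$ projective resolution, exhibiting $F(M)\in\H(B,T)$.

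Essential surjectivity is the heart of the matter and the only place the cartesian hypothesis is genuinely used: in general a $B$-module may have homological dimension $1$ over $B$ while, restricted to $A$ through an isomorphism of quotient rings, it has strictly larger homological dimension over $A$. Given $N\in\H(B,T)$, choose a length-$1$ projective resolution $0\to Q_1\xrightarrow{e}Q_0\to N\to 0$ over $B$; since $N$ is $T$-torsion, $T^{-1}e$ is an isomorphism. The cartesian square of Definition~\ref{def:cartmorph} is a pullback of rings, and I would invoke the module-patching theorem for such squares (\cite[App.\ 5]{MR0384894}), applied at the level of two-term complexes rather than single modules: glue $Q_1\xrightarrow{e}Q_0$ over $B$ with the $S^{-1}A$-side data — which may be taken to be an isomorphism, since the resulting $A$-module is to be $S$-torsion — to obtain finitely generated projective $A$-modules $P_0,P_1$ and a map $d:P_1\to P_0$ with $B\otimes_A(d)\cong e$ and $S^{-1}d$ an isomorphism. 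Then $d$ is injective by Lemma~\ref{lem:injective}, so $M':=\coker(d)$ is finitely generated, $S$-torsion, of homological dimension $\le 1$, hence lies in $\H(A,S)$, and $F(M')=\coker(B\otimes_A d)\cong\coker(e)=N$. I expect this patching step to be the main obstacle to write out carefully: one must choose the $S^{-1}A$-side complex so that the gluing is an isomorphism there and is compatible both with passing to cokernels and with the localisation functors $S^{-1}(-)$ and $T^{-1}(-)$, and one must check that the cartesian hypothesis supplies exactly the surjectivity and finiteness conditions that make the patching lemma applicable; that is where the real content lies.
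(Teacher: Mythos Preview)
The paper does not give its own proof of this theorem; it is stated with a bare citation to \cite[App.~5]{MR0384894}. Your sketch therefore goes well beyond what the paper offers, and the parts you write out in full are correct: the observation that every object of $\H(A,S)$ is annihilated by a single $s\in S$, the identification $B\otimes_A M\cong (B/(f(s)))\otimes_{A/(s)}M$, full faithfulness via the ring isomorphisms $[f]$, and the argument that $F$ lands in $\H(B,T)$ using Lemma~\ref{lem:injective} are all sound.

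For essential surjectivity you correctly isolate the only genuine difficulty---that homological dimension $\le 1$ over $B$ does not transparently pull back to homological dimension $\le 1$ over $A$---and you point to patching over the cartesian square, citing the very same Karoubi appendix. One caveat worth flagging: neither leg of the square $A\to B$, $A\to S^{-1}A$ is surjective, so the classical Milnor patching hypothesis is not directly available; what Karoubi's appendix supplies is a version tailored to localisation squares, where flatness of $A\to S^{-1}A$ replaces surjectivity. With that input your outline goes through. (Incidentally, the quasi-inverse $G$ is most cleanly described simply as restriction of scalars along $f:A\to B$, with no choice of $s$ required: since $f|_S:S\to T$ is a bijection, any $T$-torsion $B$-module is automatically $S$-torsion as an $A$-module.)
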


The main purpose of introducing the categories denoted by $\A$ and $\H$ is the heuristic that \[``\textit{$\H(A,S)$ is the difference between $\A(A)$ and $\A(S^{-1}A)$.''}\]

\begin{remark}One precise form that this heuristic takes is the classical algebraic $K$-theory localisation exact sequence of Bass. We briefly mention this for interest and for comparison with the $L$-theoretic versions which will be considered later. According to {\cite[IX.\ Theorem 6.3]{MR0249491}}, there is an exact sequence of abelian groups\[K_1(\A(A))\to K_1(\A(S^{-1}A))\to K_0(\H(A,S))\to K_0(\A(A))\to K_0(\A(S^{-1}A)).\]Where $K_0$ is the functor that returns the Grothendieck group of an abelian category ({\cite[VII.\ 1.2]{MR0249491}}). $K_1$ is the functor that returns the abelianisation (called `Whitehead group' in \cite{MR0249491}) of the category of automorphisms $(P,\alpha:P\to P)$ in an abelian category (\cite[VII.\ 1.4]{MR0249491}).
\end{remark}

\subsection*{Modules with duals}

Finitely generated projective modules have a good notion of duality, coming from the $\Hom$ functor. One consequence of the heuristic that $\H(A,S)$ is the difference between $\A(A)$ and $\A(S^{-1}A)$ is that $\H(A,S)$ has a correspondingly good notion of duality in a derived sense, as we now explain.

Given $A$-modules $P$, $Q$, we denote the additive abelian group of $A$-module homomorphisms $f:P\to Q$ by $\Hom_A(P,Q)$. The \textit{dual} of an $A$-module $P$ is the $A$-module \[P^*:=\Hom_A(P,A)\]where the action of $A$ is $(a,f)\mapsto (x\mapsto f(x)\overline{a})$. If $P$ is in $\A(A)$, then there is a natural isomorphism\[\sm-:P^t\otimes Q\xrightarrow{\cong}\Hom_A(P^*,Q);\qquad x\otimes y\mapsto (f\mapsto \overline{f(x)}y).\]In particular, using the natural $A$-module isomorphism $P\cong P^t\otimes A$, there is a natural isomorphism \[P\xrightarrow{\cong} P^{**};\qquad x\mapsto (f\mapsto \overline{f(x)}).\]Using this, for any $A$-module $Q$ in $\A(A)$ and $f\in\Hom_A(Q,P^*)$ there is a \textit{dual morphism}\[f^*:P\to Q^*;\qquad x\mapsto (y\mapsto \overline{f(y)(x)}).\]

\begin{lemma}\label{lem:ext1}If $T$ is a f.g.\ $A$-module with homological dimension 1 and $T^*=0$ then there is a natural isomorphism of $A$-modules $T\cong \Ext^1_A(\Ext^1_A(T,A),A)$.
\end{lemma}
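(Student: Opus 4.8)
The plan is to exploit the homological-dimension-1 hypothesis to get a short free resolution, dualize it, and track the duality carefully. Since $T$ is finitely generated of homological dimension $1$, fix a resolution $0 \to P_1 \xrightarrow{d} P_0 \to T \to 0$ with $P_0, P_1$ in $\A(A)$. Applying $\Hom_A(-,A)$ gives a cochain complex $0 \to P_0^* \xrightarrow{d^*} P_1^* \to 0$, and because $T^* = \Hom_A(T,A) = 0$ the map $d^*$ is injective; moreover $\Ext^1_A(T,A) = \coker(d^*)$. So we obtain a second length-1 resolution $0 \to P_0^* \xrightarrow{d^*} P_1^* \to \Ext^1_A(T,A) \to 0$ by modules in $\A(A)$ (using that the dual of a f.g.\ projective is f.g.\ projective).

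Next I would apply $\Hom_A(-,A)$ again to this second resolution. Using the natural double-dual isomorphism $P \xrightarrow{\cong} P^{**}$ available for $P$ in $\A(A)$ (recalled just before the statement), the resulting complex is canonically identified with $0 \to P_1^{**} \xrightarrow{d^{**}} P_0^{**} \to 0 \cong 0 \to P_1 \xrightarrow{d} P_0 \to 0$. Hence $\coker(d) \cong \Ext^1_A(\Ext^1_A(T,A),A)$, i.e.\ $T \cong \Ext^1_A(\Ext^1_A(T,A),A)$. I would also check along the way that $\Ext^1_A(T,A)$ itself has homological dimension $1$ and vanishing dual (it has a length-1 projective resolution by construction, and its dual is $\ker(d^{**}) = \ker(d) = 0$ since $d$ is injective), which makes the double application legitimate and shows the correspondence $T \leftrightarrow \Ext^1_A(T,A)$ is an involutive duality on the relevant subcategory — worth remarking even if not strictly needed for the stated isomorphism.

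The one point requiring care — and the main obstacle — is \emph{naturality}: the isomorphism must be independent of the chosen resolution and functorial in $T$. The cleanest way I would handle this is to observe that $\Ext^1_A(T,A)$ together with its resolution are obtained functorially (any two projective resolutions are chain homotopy equivalent, and $\Hom_A(-,A)$ preserves chain homotopies), so the composite double-dual is a natural transformation from the identity functor on $\{$f.g., hom.\ dim.\ $1$, $T^*=0\}$ to $\Ext^1_A(\Ext^1_A(-,A),A)$; the computation above shows it is an isomorphism on each object. Equivalently, one can phrase the whole argument in the derived category, where $\Ext^1_A(T,A)[-1] \simeq \mathrm{RHom}_A(T,A)$ (as $T$ is concentrated so that only the degree-$1$ Ext survives) and the statement becomes the standard biduality $T \simeq \mathrm{RHom}_A(\mathrm{RHom}_A(T,A),A)$ for a perfect complex, restricted to homology. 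I would present the explicit resolution argument as the main line and mention the derived-category reformulation as a remark, since later chapters work chain-complex-theoretically anyway.
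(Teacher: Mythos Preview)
Your proposal is correct and follows essentially the same approach as the paper: take a length-1 projective resolution, dualize to obtain a resolution of $\Ext^1_A(T,A)$ (using $T^*=0$), dualize again and invoke the natural double-dual isomorphism $P_i\cong P_i^{**}$ to recover the original resolution, whence the claimed isomorphism. The paper's proof is terser and omits your derived-category remark, but the argument is the same.
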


\begin{proof}Taking the dual of a projective resolution $P_1\xrightarrow{d} P_0\to T$ we obtain a projective resolution for $\Ext^1_A(T,A)$\[T^*=0\to P_0^*\xrightarrow{d^*} P_1^*\to \Ext^1_A(T,A)\to 0.\]Taking the dual again we obtain an exact sequence\[0\to (\Ext^1_A(T,A))^*\to P_1^{**}\xrightarrow{d^{**}} P_0^{**}\to \Ext^1_A(\Ext^1_A(T,A),A).\]The natural isomorphisms $P_i\cong P_i^{**}$ for $i=1, 2$ in particular imply that $(\Ext^1_A(T,A))^*=\ker{d^{**}}\cong\ker{d}=0$. A natural isomorphism of projective resolutions induces a natural isomorphism $T\cong \Ext^1_A(\Ext^1_A(T,A),A)$ (independent of the resolution chosen).
\end{proof}

\begin{lemma}\label{lem:ext2}If $(A,S)$ defines a localisation and $T$ is a f.g.\ $A$-module that is $S$-torsion, then $T^*=0$ and there is a natural isomorphism\[\Ext^1_A(T,A) \cong\Hom_A(T,S^{-1}A/A).\]
\end{lemma}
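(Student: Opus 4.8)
The plan is to prove the two claims separately, using the short exact sequence $0\to A\to S^{-1}A\to S^{-1}A/A\to 0$ of $A$-modules as the main input. For the vanishing $T^*=0$: since $T$ is $S$-torsion, every $x\in T$ satisfies $sx=0$ for some $s\in S$; given $f\in T^*=\Hom_A(T,A)$ we then have $s\,f(x)=f(sx)=0$ in $A$, and condition (ii) on the multiplicative set $S$ (namely $sa=0$ with $s\in S$ forces $a=0$) gives $f(x)=0$. Hence $f=0$, so $T^*=0$. Note this does \emph{not} require homological dimension $1$, consistent with the statement.

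For the natural isomorphism, I would apply $\Hom_A(T,-)$ to the short exact sequence above and look at the resulting long exact sequence in $\Ext$:
\[
0\to \Hom_A(T,A)\to \Hom_A(T,S^{-1}A)\to \Hom_A(T,S^{-1}A/A)\to \Ext^1_A(T,A)\to \Ext^1_A(T,S^{-1}A).
\]
Now $\Hom_A(T,A)=T^*=0$ by the first part. The key vanishing claims are $\Hom_A(T,S^{-1}A)=0$ and $\Ext^1_A(T,S^{-1}A)=0$; once these hold, exactness immediately yields the connecting map as an isomorphism $\Hom_A(T,S^{-1}A/A)\xrightarrow{\cong}\Ext^1_A(T,A)$, natural in $T$ because the long exact sequence is. For $\Hom_A(T,S^{-1}A)=0$: the same torsion argument as above works, since $S^{-1}A$ has no $S$-torsion — if $y\in S^{-1}A$ and $sy=0$ for $s\in S$ then $y=0$, because $s$ is a unit in $S^{-1}A$. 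For $\Ext^1_A(T,S^{-1}A)=0$: here I would use that $S^{-1}A\otimes_A -$ is exact (Lemma \ref{lem:locisexact}) together with the fact that $\Ext^1_A(T,-)$ commutes with the localisation in the appropriate sense when $T$ is finitely presented; concretely, taking a projective resolution $P_1\xrightarrow{d} P_0\to T$, we have $\Ext^1_A(T,S^{-1}A)=\coker(P_0^*\to P_1^*)\otimes$-localised $=S^{-1}(\coker(\Hom_A(P_0,A)\to\Hom_A(P_1,A)))=\Ext^1_{S^{-1}A}(S^{-1}T,S^{-1}A)$, and $S^{-1}T=0$ since $T$ is $S$-torsion.

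I expect the main obstacle to be the vanishing of $\Ext^1_A(T,S^{-1}A)$, specifically the interchange of $\Ext^1$ with localisation. This requires $T$ to be finitely presented (which holds: $T$ finitely generated over a Noetherian ring, or more directly $T$ of homological dimension $1$), and the identification $S^{-1}\Hom_A(P,A)\cong\Hom_{S^{-1}A}(S^{-1}P,S^{-1}A)$ for $P$ finitely generated projective, which is standard but should be stated carefully. Everything else — the two applications of the torsion-killing argument and the extraction of the connecting isomorphism from the long exact sequence — is routine and naturality is automatic from the naturality of the $\Ext$ long exact sequence.
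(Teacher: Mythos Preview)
Your proposal is correct and follows essentially the same approach as the paper: apply $\Hom_A(T,-)$ to $0\to A\to S^{-1}A\to S^{-1}A/A\to 0$, kill the outer terms by $S$-torsion arguments, and extract the connecting isomorphism. The paper is terser---it uses finite generation to pick a single $s\in S$ with $sT=0$ and simply asserts $\Ext^1_A(T,S^{-1}A)\cong S^{-1}\Ext^1_A(T,A)=0$ via exactness of localisation---whereas you are more explicit about why the $\Ext$-localisation interchange holds; but the substance is identical.
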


\begin{proof}As $T$ is f.g.\ there exists $s\in S$ such that $sT=0$, so clearly $T^*=0$. Similarly $\Hom_A(T,S^{-1}A)=0$. Using the short exact sequence \[0\to A\to S^{-1}A\to S^{-1}A/A\to 0,\] we obtain the long exact sequence for $\Ext$\[0\to \Hom_A(T,S^{-1}A/A)\to \Ext_A^1(T,A)\to \Ext_A^1(T,S^{-1}A)\]and as localisation is exact we have $\Ext^1_A(T,S^{-1}A)\cong S^{-1}\Ext^1_A(T,A)=0$ as $T$ is $S$-torsion.
\end{proof}

Lemmas \ref{lem:ext1} and \ref{lem:ext2} show that, proceeding as in the category $\A(A)$ we may define a duality and the same results will follow. Precisely, the \textit{torsion dual} of a module $T$ in $\H(A,S)$ is the module\[T^\wedge:=\Hom_A(T,S^{-1}A/A)\]in $\H(A,S)$ with the action of $A$ given by $(a,f)\mapsto (x\mapsto f(x)\overline{a})$. There is a natural isomorphism\[T\xrightarrow{\cong} T^{\wedge\wedge};\qquad x\mapsto (f\mapsto \overline{f(x)}),\]and for $R,T$ in $\H(A,S)$, $f\in\Hom_A(R,T^\wedge)$ there is a \textit{torsion dual morphism}\[f^\wedge:T\to R^\wedge;\qquad x\mapsto(y\mapsto\overline{f(y)(x)}).\]

\begin{remark}We will only need the `dual' and the `torsion dual' in the sequel but it is interesting to note that these results exist in much greater generality, provided one sets the correct conditions on the ring and modules. For instance, for a commutative Noetherian ring $A$ and a finitely generated $A$-module $M$ with $\Ext^i_A(M,A)=0$ for $i=0, 1,\dots,n-1$, there is a natural homomorphism $M\to \Ext^n_A(\Ext^n_A(M,A),A)$ \cite{MR0263806}. If we assume $\Ext^i_A(M,A)=0$ for all $i\neq n$ and that $M$ is a Cohen-Macaulay module of height $n$ (cf.\ \cite{MR750687}) then there is a natural isomorphism \cite[4.35]{MR0269685}\[M\cong \Ext^n_A(\Ext^n_A(M,A),A).\]
\end{remark}

\subsection*{Forms and linking forms}

From now on, let $\eps\in A$ be a unit such that $\eps\overline{\eps}=1$. For example, $\eps=\pm1$ is always a possible choice.

Standard references for the algebra of forms and classical Witt groups is Milnor-Husemoller \cite{MR0506372}. For a more expanded classical treatment, the canonical texts are Scharlau \cite{MR770063} and O'Meara \cite{MR1754311}. The language we use for the more general setting of rings with involution is based on that found in \cite[1.6, 3.4]{MR620795} although we caution that our terminology, particularly later on regarding lagrangian submodules, differs slightly. Also, our use of the word `split' in reference to forms and linking forms is entirely different to Ranicki's use.

\begin{definition}An \textit{$\eps$-symmetric form over $A$} is a pair $(P,\theta)$ consisting of an object $P$ of $\A(A)$ and an injective $A$-module morphism $\theta:P\hookrightarrow P^*$ such that $\theta(x)(y)=\eps\overline{\theta(y)(x)}$ for all $x,y\in P$ (equivalently $\theta=\eps\theta^*$). A form $(P,\theta)$ is \textit{non-singular} if $\theta$ is an isomorphism. A form induces a sesquilinear pairing also called $\theta$\[\theta:P\times P\to A;\qquad (x,y)\mapsto \theta(x,y):=\theta(x)(y).\]A morphism of $\eps$-symmetric forms $(P,\theta)\to (P',\theta')$ is an $A$-module morphism $f:P\to P'$ such that $\theta(x)(y)=\theta'(f(x))(f(y))$ (equivalently $\theta=f^*\theta' f$), it is an isomorphism when $f$ is an $A$-module isomorphism. The set of isomorphism classes of $\eps$-symmetric forms over $A$, equipped with the addition $(P,\theta)+(P',\theta')=(P\oplus P',\theta\oplus \theta')$ forms a commutative monoid \[\NN^\eps(A)=\{\text{$\eps$-symmetric forms over $A$}\}.\]
\end{definition}

\begin{definition}Suppose $(A,S)$ defines a localisation. An \textit{$\eps$-symmetric linking form over $(A,S)$} is a pair $(T,\lambda)$ consisting of an object $T$ of $\H(A,S)$ and an injective $A$-module morphism $\lambda:T\hookrightarrow T^\wedge$ such that $\lambda(x)(y)=\eps\overline{\lambda(y)(x)}$ for all $x,y\in T$ (equivalently $\lambda=\eps\lambda^\wedge$). A linking form $(T,\lambda)$ is \textit{non-singular} if $\lambda$ is an isomorphism. A linking form induces a sesquilinear pairing also called $\lambda$\[\lambda:T\times T\to S^{-1}A/A;\qquad (x,y)\mapsto \lambda(x,y):=\lambda(x)(y).\]A morphism of $\eps$-symmetric linking forms $(T,\lambda)\to (T',\lambda')$ is an $A$-module morphism $f:T\to T'$ such that $\lambda(x)(y)=\lambda'(f(x))(f(y))$ (equivalently $\lambda=f^\wedge\lambda' f$). $f$ is an isomorphism of forms when $f$ is an $A$-module isomorphism. The set of isomorphism classes of $\eps$-symmetric linking forms over $A$, equipped with the addition $(T,\lambda)+(T',\lambda')=(T\oplus T',\lambda\oplus \lambda')$ forms a commutative monoid \[\NN^\eps(A,S)=\{\text{$\eps$-symmetric linking forms over $(A,S)$}\}.\]
\end{definition}

\begin{definition}[Terminology]When $\eps=1$ we will omit $\eps$ from the terminology e.g.\ \textit{symmetric form} indicates $(+1)$-symmetric form.
\end{definition}

\section{Decomposition of linking forms}

In Section \ref{sec:witt} we will define and compute various groups of linking forms, called Witt groups, split Witt groups and (most importantly) double Witt groups. In order to compute these groups, it will be important to be able to reduce a linking form into its `elementary building blocks' and derive invariants from each block. This process is most effective when the $A$-module $T$ of the linking form is `primary', hence the success of our decomposition programme depends greatly on the underlying ring $A$ and how far it permits generic primary decomposition of modules.

\subsection{Decomposition of torsion modules}

Before we begin decomposing the entire linking form data $(T,\lambda)$, we discuss the structure of primary modules over unique factorisation domains, and then primary decomposition of torsion modules in Dedekind domains (all of which is well-known and classical).

\begin{definition}\label{def:rings}A \textit{unique factorisation domain (UFD)} is an integral domain in which every non-zero element can be written as a product of a unit and prime elements of $R$.\end{definition}

Suppose for now that $A$ is an integral domain, that $\p\subset A$ is a prime ideal and that $T$ is in $\H(A,A\sm\{0\})$. $T$ is called \textit{$\p$-primary} if $\p^dT=0$ for some $d>0$. Now suppose further that $p\in A$ is a prime such that $(p)=\p$. If we assume $\p=\overline{\p}$, then the subset\[\p^\infty:=\{up^k\,|\,\text{$u$ is a unit, }k>0\}\]is a multiplicative subset, and $T$ is $\p$-primary if and only if it is $\p^\infty$-torsion.

We say a general $A$-module $H$ has $\p$-only torsion if the obvious $A$-module morphism $H\to A_\p\otimes_A H$ is injective. Clearly, if our torsion $A$-module $T$ has $\p$-only torsion then it is $\p$-primary. Interestingly, the converse is not true over a general integral domain, or even a UFD. A module can be simultaneously primary in more than one way, as the next example shows, so that the definition of $\p$-only torsion is a necessary additional concept in general.

\begin{example}\label{ex:pionly}For the UFD $A=\Z[z,z^{-1}]$, the $A$-module $T=(\Z/2\Z)\otimes_A(A/(1-z)A)$ is both $2A$-primary and $(1-z)A$-primary.
\end{example}

Fix a choice of generator $(p)=\p$. For each $m\geq 0$, define an $A$-module\[K_m(T)=\ker(p^m:T\to T),\]easily seen to be independent of the choice of $p\in A$. Then there is a filtration\[\{0\}=K_0\subseteq K_1\subseteq K_2\subseteq\dots\subseteq K_{d-1}\subseteq K_d=K_{d+1}=\dots\]which terminates as $A$ was assumed Noetherian. Each $K_m$ is $\p$-primary, and $T$ is $\p$-primary if and only if $K_d=T$. The subquotients $J_{l}:=K_{l}/K_{l-1}$ are $A/\p$-modules and are used to define the \textit{$l$th auxiliary module} over $A/\p$\begin{equation}\label{eq:auxiliary}\Delta_l(T):=\coker(p:J_{l+1}\to J_{l})\cong\frac{K_l}{K_{l-1}+pK_{l+1}}\cong \frac{p^lK_{l+1}}{p^{l+1}K_{l+2}},\end{equation}which is clearly natural in $T$ and will form part of an important linking form invariant later.

\begin{example}\label{ex:auxil}Consider $T=(\Z/2\Z)\oplus(\Z/4\Z)\oplus (\Z/32\Z)$ in $\H(\Z,(2)^\infty)$. We have \[\begin{array}{rccrrcl}
K_1\qquad&(1&2&16):&\Z/2\Z\oplus \Z/2\Z\oplus \Z/2\Z&\hookrightarrow& T,\quad\\
K_2\qquad&(1&1&8):&\Z/2\Z\oplus \Z/4\Z\oplus \Z/4\Z&\hookrightarrow& T,\quad\\
K_3\qquad&(1&1&4):&\Z/2\Z\oplus \Z/4\Z\oplus \Z/8\Z&\hookrightarrow& T,\quad\\
K_4\qquad&(1&1&2):&\Z/2\Z\oplus \Z/4\Z\oplus \Z/16\Z&\hookrightarrow& T,\quad
\end{array}\]and $K_5=T$. The auxiliary modules are the vector spaces $\Delta_1\cong\Delta_2\cong\Delta_5\cong\Z/2\Z$, and $\Delta_l=0$ otherwise.
\end{example}

\subsection*{Dedekind domains}

Once we are already in a $\p$-primary module $T$, we will use our auxiliary modules $\Delta_l(T)$ as decomposition invariants. To ensure that a general torsion module first has a good decomposition with respect to its $\p$-primary submodules, we will eventually need to work over a Dedekind domain.

\begin{definition}A \textit{Dedekind domain} is an integral domain $R$ for which every $R$-module has homological dimension 1. An equivalent definition is that a Dedekind domain is an integral domain in which every non-zero proper ideal factors uniquely as a product of prime ideals. A \textit{principal ideal domain (PID)} is an integral domain in which every ideal is principal.
\end{definition}

Assume for the rest of this subsection that $A$ is a Dedekind domain. In particular, this means the homological dimension 1 assumption on the objects of $\H(A,S)$ (for any $S$) is redundant. As Dedekind domains are a common tool in commutative algebra we will use many details without justification. We refer the reader to \cite{MR0349811} for a detailed discussion of the properties of a Dedekind domain.

A Dedekind domain is a UFD if and only if its `ideal class group' (see \cite[p.9]{MR0349811}) is trivial and in this case it is also a PID. A PID is always a Dedekind domain but the converse is not true generally.

Over a Dedekind domain, a torsion $A$-module $T$ has annihilator $\ann(T ) = \p_1^{l_1} \p_2^{l_2} ...\p_m^{l_m}$ for some prime ideals $\p_i$, so that there is a natural isomorphism of $A$-modules\[T\cong\bigoplus_{j=1}^m T_{\p_j};\qquad T_{\p_j}:=A_{\p_j}\otimes_A T\cong \p_1^{l_1}... \p_{j-1}^{l_{j-1}}\p_{j+1}^{l_{j+1}} ...\p_m^{l_m}T.\]Hence $T$ is $\p$-primary if and only if $\ann(T)=\p^k$ for some $k>0$. Any element $p\in\p\sm\p^2$ is called a \textit{uniformiser} and if we assume $\p=\overline{\p}$ then $T$ is $\p$-primary if and only if it is torsion with respect to the multiplicative subset\[\p^\infty:=\{up^k\,|\,\text{$u$ is a unit, }k>0\}.\]If $\p$ is principal, we may choose the uniformiser $p$ to be the generator and we observe that this definition of $\p^\infty$ agrees with our previous one.

Over a Dedekind domain, if $T$ is $\p$-primary then it has $\p$-only torsion. In fact now the condition of being $\p$-primary is equivalent to the statement that there is an $A$-module isomorphism $T\cong T_{\p}:=A_{\p}\otimes_A T$. Assume $\p=\overline{\p}$ to ensure that $\p^\infty$ is a multiplicative subset, then the morphism $(A,\p^\infty)\to (A_{\p},\p^\infty)$ is cartesian so, by Theorem \ref{thm:cartmorph}, the isomorphisms $T\cong T_{\p}$ define an equivalence of categories\[\H(A,\p^\infty)\longleftrightarrow\H(A_{\p},\p^\infty),\](where we have used the fact that we can regard $T_\p$ as an $A_\p$-module). The cartesian morphism sends a uniformiser $p\in A$ of $\p$ to a generator of the unique maximal ideal (which we are also denoting $\p$) of the local ring $A_{\p}$. As $A$ is a Dedekind domain, $A_\p$ is also a PID. (A ring that is both a local ring and a PID, but is not a field, is often called a \textit{discrete valuation ring} but we won't be needing this terminology.)

For $T$ in $\H(A_\p,\p^\infty)$ and $\p=(p)$, we may choose a length 1, f.g.\ projective $A_\p$-module resolution and, as f.g.\ projective modules over local rings are free, we may take a resolution\[0\to \bigoplus_1^NA_\p\xrightarrow{d} \bigoplus_1^NA_\p \to T \to0.\]Choosing bases, we may diagonalise $d$ and assume without loss of generality that\[d=\left(\begin{array}{cccc}p^{i_1}&\,&\,&\,\\ \,&p^{i_2}&\,&\,\\ \,&\,&\ddots&\,\\ \, &\, &\, & p^{i_N}\end{array}\right).\]This allows us to identify summands $T_l$ of $T$ \[T_l\cong\bigoplus_{i_j=l}A_\p/(p^{i_j}A_\p),\qquad\text{and that}\qquad T\cong\bigoplus_{l=1}^\infty T_l.\]Note that $\ann(T_l)=\p^l$, $pT_l=\ker(p^{l-1}:T_l\to T_l)$ and $T_l$ can be regarded as a free $A_\p/(p^{l}A_\p)$-module. These $T_l$ will be our `elementary building blocks' of torsion modules. We now compare with our previous terminology. The reader can easily confirm that we may use the $T_l$ to express the $K_m$ filtration, the subquotients $J_l$ and our auxiliary modules $\Delta_l(T)$ as:\[K_m=\left(\bigoplus_{l=1}^{m} T_l\right)\oplus\left(\bigoplus_{l=m+1}^\infty p^{l-m}T_l\right),\qquad J_m=\bigoplus_{l=m}^\infty \frac{p^{l-m}T_l}{p^{l-m+1}T_l}, \qquad \Delta_l(T)\cong T_l/pT_l.\]

\begin{example}Recall Example \ref{ex:auxil}, $T=(\Z/2\Z)\oplus(\Z/4\Z)\oplus (\Z/32\Z)$ in $\H(\Z,(2)^\infty)$. Now $T_1=\Z/2\Z$, $T_2=\Z/4\Z$, $T_5=\Z/32\Z$ and $T_l=0$ otherwise.
\end{example}

\begin{example}[Principal ideal domains] If $A$ is moreover a PID, then given a prime ideal $\p\subset A$ we can and will choose our uniformiser $p$ to be a prime element that is a generator $(p)=\p$. If $T$ is a $\p$-primary $A$-module then there is a decomposition on the level of $A$-modules $T\cong\oplus_{l=1}^\infty T_l$ and $T_l$ is now a free $(A/p^lA)$-module for each $l>0$. \textit{We do not need to pass to the localisation $A_\p$ to obtain this decomposition.}
\end{example}

\subsection{Decomposition of linking forms}

How do linking forms interact with prime ideals? If $\p\subset A$ is a prime ideal and $(T,\lambda)$ is a non-singular $\eps$-symmetric linking form over $(A,A\sm\{0\})$ then, by considering our algebraic convention for the $A$-module structure of a torsion dual, we see the restriction of $\lambda_\p:=\lambda|_{T_\p}$ defines an isomorphism\[\lambda_{\p}:T_\p\xrightarrow{\cong} T_{\overline{\p}}^\wedge.\]So if $\p\neq \overline{\p}$ then there is a non-singular linking form\[\left(T_\p\oplus T_{\overline{\p}},\lambda|_{T_\p\oplus T_{\overline{\p}}}\right).\]And if $\p=\overline{\p}$ there is a non-singular linking form $(T_\p,\lambda_\p)$. The following is readily seen for Dedekind domains:

\begin{proposition}\label{prop:dedekinddecomp}If $A$ is a Dedekind domain and $(T,\lambda)$ is a non-singular $\eps$-symmetric linking form over $(A,A\sm\{0\})$, then the primary decomposition of $T$ induces a natural decomposition into non-singular linking forms\[(T,\lambda)\cong\left(\bigoplus_{\p=\overline{\p}} (T_\p,\lambda_\p)\right)\oplus\left(\bigoplus_{\p\neq\overline{\p}}\left(T_\p\oplus T_{\overline{\p}},\lambda|_{T_\p\oplus T_{\overline{\p}}}\right)\right)\]If $\p=\overline{\p}$ for all prime $\p\subset A$ there is induced a natural isomorphism of commutative monoids\[\NN^\eps(A,A\sm\{0\})\cong\bigoplus_{\p}\NN^\eps(A,\p^\infty).\]
\end{proposition}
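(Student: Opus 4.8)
The plan is to prove the proposition in two stages, first establishing the decomposition of a single linking form and then deducing the monoid isomorphism by assembling this over all primes.

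First I would recall from the preceding subsection that over a Dedekind domain $A$, any torsion module $T$ in $\H(A,A\sm\{0\})$ has a natural primary decomposition $T\cong\bigoplus_\p T_\p$, where $T_\p=A_\p\otimes_A T$ and only finitely many summands are nonzero (those $\p$ dividing $\ann(T)$). The key observation to make precise is the one already flagged in the text: for a non-singular $\eps$-symmetric linking form $(T,\lambda)$, the map $\lambda:T\xrightarrow{\cong}T^\wedge$ is an isomorphism of $A$-modules, and because the torsion dual is built from $\Hom_A(-,S^{-1}A/A)$ using the $A$-action $(a,f)\mapsto(x\mapsto f(x)\overline a)$, localisation at $\p$ on the source corresponds to localisation at $\overline\p$ on the target. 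Concretely, $(T^\wedge)_\p\cong (T_{\overline\p})^\wedge$ naturally; I would verify this by noting $\Hom_A(T,S^{-1}A/A)$ decomposes as $\bigoplus_\p\Hom_A(T_\p,S^{-1}A/A)$ since the $T_\p$ are supported at distinct maximal ideals, and then that $\Hom_A(T_\p,S^{-1}A/A)$ carries the $A$-module structure making it $\overline\p$-primary. Hence $\lambda$ restricts to isomorphisms $\lambda_\p:T_\p\xrightarrow{\cong}(T_{\overline\p})^\wedge$, and assembling the self-paired blocks ($\p=\overline\p$) and the hyperbolic-type blocks ($\{\p,\overline\p\}$ with $\p\neq\overline\p$) gives the claimed orthogonal direct sum decomposition into non-singular linking forms. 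Naturality in $T$ is inherited from naturality of the primary decomposition and of the torsion dual.

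Second, under the hypothesis that $\p=\overline\p$ for every prime $\p\subset A$, every block is of the self-paired type, so the decomposition reads $(T,\lambda)\cong\bigoplus_\p(T_\p,\lambda_\p)$ with $(T_\p,\lambda_\p)$ a non-singular linking form over $(A,\p^\infty)$ — here I would use that $T_\p$ is $\p$-primary hence $\p^\infty$-torsion, and that $A_\p\otimes_A(-)$ identifies it with an object of $\H(A,\p^\infty)$ via the cartesian-morphism equivalence of Theorem \ref{thm:cartmorph}. The assignment $(T,\lambda)\mapsto\bigl((T_\p,\lambda_\p)\bigr)_\p$ is then a monoid homomorphism $\NN^\eps(A,A\sm\{0\})\to\bigoplus_\p\NN^\eps(A,\p^\infty)$ (the target being a direct sum since only finitely many blocks are nonzero), additivity being clear from $(T\oplus T')_\p\cong T_\p\oplus T'_\p$. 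Its inverse sends a finite family $(T^{(\p)},\lambda^{(\p)})$ to the orthogonal sum $\bigoplus_\p(T^{(\p)},\lambda^{(\p)})$; that these are mutually inverse on isomorphism classes follows because primary decomposition is functorial and an isomorphism of linking forms must preserve the $\p$-primary pieces.

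The main obstacle is the bookkeeping around the involution and the torsion dual in the first stage: one must check carefully that the $A$-module structure on $\Hom_A(T,S^{-1}A/A)$, twisted by the involution, really does swap $\p$-support for $\overline\p$-support, and that $\lambda=\eps\lambda^\wedge$ restricts compatibly on the paired blocks so that each summand is again a bona fide $\eps$-symmetric \emph{non-singular} linking form rather than merely a duality pairing between two different modules. Everything else — finiteness of the decomposition, the monoid structure, naturality — is routine once this point is nailed down.
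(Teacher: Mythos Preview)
Your proposal is correct and follows exactly the approach the paper takes: the paper states the proposition as ``readily seen for Dedekind domains'' immediately after the paragraph establishing that $\lambda|_{T_\p}:T_\p\xrightarrow{\cong}T_{\overline{\p}}^\wedge$ (via the involution-twisted $A$-module structure on the torsion dual), and your argument is a careful elaboration of precisely that observation together with the standard primary decomposition of torsion modules over a Dedekind domain.
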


We now proceed to outline and carry out the programme for decomposing linking forms using prime ideals. Such a programme has already been achieved in the case $A=\Z$. The full analysis of the commutative monoid $\NN(\Z,\Z\sm\{0\})$ was begun by Wall in \cite{MR0156890} where all generators were computed and the relations were given for non-2-torsion generators. It was completed by Kawauchi-Kojima in \cite{MR594531} where missing relations were calculated and a complete set of invariants was given.

We include this example now in some detail, as an indication of a successful approach that we will follow to some extent. The example also illustrates the headaches that occur at the prime 2. We will avoid these added complications later.

\begin{example}\label{ex:KK}We wish to compute $\NN(\Z,\Z\sm\{0\})$. Using Proposition \ref{prop:dedekinddecomp}, it is sufficient to consider the commutative monoids $\NN(\Z,(p)^\infty)$ for $p\in\Z$ a prime.

For $p$ a prime, let $\NN_{p,l}\subset \NN(\Z,(p)^\infty)$ be the sub-monoid consisting of those forms that have annihilator $p^l$. Wall gives explicit generators for each $\NN_{p,l}$ and shows that for each odd $p$ we have \[\NN(\Z,(p)^\infty) \cong \bigoplus_{l=1}^\infty \NN_{p,l}.\]At the prime 2, we still have a sum of monoids \[\NN(\Z,(2)^\infty)\cong\sum_{l=1}^\infty\NN_{2,l},\]but this is not a direct sum as there are relations between the generators of the different $\NN_{2,l}$.

\begin{proof}[Proof (sketch)]Any linking form with underlying module a finite abelian group $G$ decomposes as a direct sum of linking forms into its $p$-primary summands, hence we analyse each prime in turn.

Fix a prime $p$. An element $(G,\lambda)\in\NN(\Z,(p)^\infty)$ can always be decomposed as $(G,\lambda)=\bigoplus_1^\infty(G_l,\lambda_l)$ where $G_l$ is a free $\Z/{p^l\Z}$-module (this is a nontrivial extension of the structure theorem for finite abelian groups). Use the notation $(T,a/p^N)$ for the standard linking form with pairing $(x,y)\mapsto axy/p^N$.

If $p$ is odd then Wall shows the linking form $(G_l, \lambda_l)$ decomposes linearly, into a sum of standard linking forms $(\Z/{p^l}\Z , a/p^l)$ for some non-zero $a\in\Z/{p^l}\Z$. Suppose $l=1$, then for fixed non-zero $a\in\Z/p\Z$, any automorphism of the linking form $(\Z/p\Z,a/p)$ is described by multiplication by a non-zero $c\in \Z/p\Z$, with resulting linking form $(\Z/p\Z,c^2a/p)$. As quadratic residues act transitively on quadratic non-residues, the forms $(\Z/p\Z,a/p)$ are thus classified up to isomorphism by whether or not $a$ is a quadratic residue modulo $p$. As every form over $\Z/p\Z$ decomposes as a direct sum of forms spanned by a single $\Z/p\Z$-summand $\NN_{p,1}$ is generated by two forms $A_p$ and $B_p$, corresponding to a choice of quadratic residue and a choice of quadratic non-residue respectively.

Recall that a consequence of Hensel's Lemma is that $a\in\Z$ is a quadratic residue mod $p$ if and only if $a$ is a quadratic residue mod $p^l$. This is used by Wall to lift our classification of $p$-torsion forms to a classification of forms $(\Z/p^l\Z,a/p^l)$ for all $l>0$. Again, for fixed $l>0$, there are only two possibilities for generators - when $a$ is a quadratic residue or when it is not and we call these forms $A_{p^l}$ and $B_{p^l}$ respectively. Wall shows they have the relation $2A_{p^l} = 2B_{p^l}$ for each $l>0$ but that this is the only relation among the generators $A_{p^l}$, $B_{p^l}$ for $p$ odd.

If $p = 2$, things get more complicated and there are more generators and relations because Hensel's Lemma does not give such neat results. $\NN(\Z,(2)^\infty)$ is generated by\begin{itemize}
\item $A^l(k):=(\Z/{2^l}\Z,k/2^l)$, for $l\geq1$, where $k=1(l=1), \pm1(l=2), \pm1\text{ or }\pm5(l>3)$,
\item $E_0^l :=\left(\Z/{2^l}\Z \oplus \Z/{2^l}\Z,\left(\begin{array}{cc}0&2^{-l}\\2^{-l}&0\end{array}\right)\right)$ for $l\geq1$,
\item $E_1^l :=\left(\Z/{2^l}\Z \oplus \Z/{2^l}\Z,\left(\begin{array}{cc}2^{1-l}&2^{-l}\\2^{-l}&2^{1-l}\end{array}\right)\right)$ for $l\geq1$.
\end{itemize}
For a full list of relations, see \cite{MR594531}. But, for example, for $l\geq 1$\[A^l(k_1)\oplus A^{l+1}(k_2)=A^l(k_1+2k_2)\oplus A^{l+1}(k_2+2k_1).\] So, significantly, \textit{there are relations between elements with different values of $l$}. 
\end{proof}

\end{example}

How can this proof be used as a model for decomposing linking forms in general? Suppose $A$ is a PID and a local ring, with involution invariant maximal ideal $(p)=\p=\overline{\p}$. Here is the programme we will now follow for decomposing a non-singular linking form $(T,\lambda)$ over $(A,\p^\infty)$.

\begin{itemize}
\item Firstly, avoid 2-torsion. In other words define the most general notion of `2 is invertible' in the residue field $A/\p$ and assume this. This will be the concept of a \textit{half-unit}.
\item Define the $l$th auxiliary form $(\Delta_l(T),b_l(\lambda))$, a way of isolating the $p^l$-torsion part of $(T,\lambda)$ \textit{in a natural way}.
\item Show that the set of auxiliary forms is a complete invariant of the isomorphism class of the linking form. This will be Theorem \ref{thm:levinePID}, the Main Decomposition Theorem.
\item Use the Main Decomposition Theorem to lift generators of $\NN^\eps(A/\p)$ to $\NN^\eps_{p,l}$ for each $l>0$ and show that this gives a direct sum decomposition of monoids\[\NN^\eps(A,\p^\infty)\cong\bigoplus_{l=1}^\infty\NN^\eps_{\p,l}.\]
\end{itemize}

\subsection*{Main Decomposition Theorem}

We assume for this subsection that $(T,\lambda)$ is an $\eps$-symmetric linking form over $(A,A\sm\{0\})$ and that $T$ is $\p$-primary, where $(p)=\p=\overline{\p}$ for some prime $p\in A$. 

\medskip

We now begin the decomposition programme outlined above. The main objective of this section is to prove the \textit{Main Decomposition Theorem} (Theorem \ref{thm:levinePID}). The build-up to this will lay out the necessary terminology and prove some useful lemmas in this direction. Much of the work of this section has been covered by other authors, particularly Levine \cite{MR564114}. However it is difficult to find the details required so we have included them here, with citations.

\medskip

The following definition generalises the idea of 2 being invertible in a ring and will be extremely important, not just in this section, but throughout this thesis.

\begin{definition}A \textit{half-unit} $s$ in a ring with involution $A$ is a unit such that\[s+\overline{s}=1.\]A basic example is the ring $A=\Z[\frac{1}{2}]$ with trivial involution and $s=1/2$, or generally any ring with trivial involution, in which 2 is invertible.
\end{definition}

\begin{claim}\label{clm:halfunit}If $A$ is a commutative ring with involution and $p\in A$ is any element such that there is a half unit $s\in A/(p)$ then for any $d>0$ there is a half unit in $A/(p)^d$.
\end{claim}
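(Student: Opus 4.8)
The plan is to lift a half-unit along the canonical surjection $A/(p)^d \to A/(p)$, whose kernel is the nilpotent ideal $(p)/(p)^d$. First I would recall the elementary fact that in any commutative ring, if $J$ is a nilpotent ideal and an element is a unit modulo $J$, then it is already a unit. So it suffices to produce \emph{some} element $s' \in A/(p)^d$ satisfying $s' + \overline{s'} = 1$ exactly (not just modulo $(p)$); its unit-ness will then be automatic because $s' + \overline{s'} = 1$ forces $s'$ to be a unit mod $(p)$ — indeed $s + \overline{s} = 1$ in a ring with involution always implies $s$ is a unit, since if $s$ lies in a maximal ideal $\m$ then so does $\overline{s}$ (the involution permutes maximal ideals) and then $1 = s + \overline{s} \in \m$, a contradiction. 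Wait — this last point shows the hypothesis is self-improving, so really the whole content is solving the equation $s' + \overline{s'} = 1$ exactly in $A/(p)^d$.

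The key step is therefore an approximation/Newton-type argument. Start with $s_1 \in A/(p)$ a half-unit, and pick any lift $s \in A/(p)^d$. Then $s + \overline{s} = 1 - x$ for some $x \in (p)/(p)^d$, and $x$ is fixed by the involution (since $1$ and $s + \overline{s}$ are). I would now look for a correction of the form $s' = s + y$ with $y$ chosen so that $s' + \overline{s'} = 1$, i.e.\ $y + \overline{y} = x$. A natural choice exploits the existence of a half-unit again at the level of $A/(p)^d$ itself is circular, so instead I set $y = s \cdot x$ where $s$ is our approximate half-unit: then $y + \overline{y} = sx + \overline{s}\,\overline{x} = sx + \overline{s} x = (s + \overline{s})x = (1-x)x = x - x^2$. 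Thus the new error is $x^2 \in (p)^2/(p)^d$, which has strictly smaller "defect". Iterating, after finitely many steps (at most $\lceil \log_2 d\rceil$, since the error lands in $(p)^{2^k}$) the error vanishes because $(p)^d = 0$ in $A/(p)^d$, and we obtain an exact solution $s' + \overline{s'} = 1$. By the remark above, $s'$ is automatically a unit, hence a half-unit in $A/(p)^d$.

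The main obstacle — really the only place requiring care — is checking that each correction term is still fixed appropriately by the involution so that the telescoping identity $y + \overline{y} = x - x^2$ goes through: this hinges on the error term $x$ being involution-invariant at every stage, which I would verify by induction ($x$ invariant $\Rightarrow$ $x^2$ invariant, trivially, since the involution is a ring homomorphism). One should also note $\overline{x} = x$ was used crucially; it holds because $x = 1 - (s + \overline{s})$ and both $1$ and $s + \overline{s}$ are fixed by the involution. Everything else is routine bookkeeping with the $(p)$-adic filtration on the finite-length ring $A/(p)^d$. I expect the cleanest writeup to phrase the induction as: given $s_k$ with $s_k + \overline{s_k} \equiv 1 \pmod{(p)^{2^k}}$, set $x_k = 1 - s_k - \overline{s_k}$ and $s_{k+1} = s_k + s_k x_k$; stop once $2^k \geq d$.
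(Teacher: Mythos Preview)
Your proof is correct and essentially identical to the paper's: both replace $s$ by $s(1+x)$ where $x = 1 - (s+\overline{s})$ is the involution-fixed error, compute $(s+\overline{s})(1+x) = (1-x)(1+x) = 1 - x^2$, and iterate until the error lands in $(p)^d$. One correction to a non-essential aside: the claim that $s+\overline{s}=1$ \emph{always} forces $s$ to be a unit via the maximal-ideal argument is false, since the involution need not fix a given maximal ideal (e.g.\ $s=x$ in $k[x]$ with involution $x\mapsto 1-x$); but you don't need this, since your final $s'$ reduces modulo $(p)$ to the original half-unit, which is a unit by hypothesis, and your nilpotent-kernel argument then applies directly.
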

\begin{proof}By assumption, there exists $x\in A$ such that $s+\overline{s}=1+px$, and note that this equation implies $px=\overline{px}$. Set $s'=s(1-px)$. Then \[s'+\overline{s'}=(s+\overline{s})(1-px)=(1+px)(1-px)\equiv 1\,\,\text{mod $p^2$}.\]The claim follows by induction.
\end{proof}

For now, we will assume $A$ is a UFD and we will continue our decomposition programme for a $\p$-primary $A$-module $T$ by extending the auxiliary modules $\Delta_l(T)$ to auxiliary forms.

\begin{definition}Suppose $(p)=\p$ is a principal ideal, then given an $\eps$-symmetric linking form $(T,\lambda)$ over $(A,\p^\infty)$, define for each $l>0$ the \textit{$l$th auxiliary form}, $(\Delta_l(T),b_l(\lambda))$ over $A/\p$ given by \[b_l([x],[y]):=p^{l-1}\lambda(x,y)\in A/\p\]where $x,y\in K_l$ are choices of representative of $[x],[y]\in J_l/pJ_{l+1}$ and $A/\p\subset S^{-1}A/A$ via $x\mapsto x/p$.
\end{definition}

\begin{lemma}\label{lem:invinv}Suppose $(p)=\p$ is a prime ideal and $T$ has $\p$-only torsion. Then if $\lambda:T\hookrightarrow T^\wedge$ is an injective morphism we have $p=u_p\overline{p}$ for some unit $u_p\in A$ with $\overline{u_p}=u_p^{-1}$ (and hence $\p=\overline{\p}$).
\end{lemma}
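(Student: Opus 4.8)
The plan is to use the injectivity of $\lambda$ to transport information about the torsion dual $T^\wedge$ back onto $T$, and then to exploit the hypothesis of $\p$-only torsion, which is genuinely stronger than mere $\p$-primariness (cf.\ Example~\ref{ex:pionly}, which shows that a module can be simultaneously primary for two unrelated primes). Two standing observations: first, $A$ is an integral domain, since condition (ii) in the definition of the multiplicative set $S=A\sm\{0\}$ is precisely the absence of zero-divisors; second, the statement is vacuous when $T=0$, so I may assume $T\neq 0$ and in particular $p\neq 0$.

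First I would record that $T$ is $\p$-primary, so $p^{d}T=0$ for some $d>0$. Then I would compute the annihilator of the torsion dual: for $f\in T^\wedge=\Hom_A(T,S^{-1}A/A)$ and $x\in T$, the twisted $A$-action on $T^\wedge$ gives $(\overline{p}^{d}\cdot f)(x)=f(x)\,\overline{\overline{p}^{d}}=f(x)\,p^{d}=f(p^{d}x)=0$, so $\overline{p}^{d}\cdot T^\wedge=0$. Since $\lambda\colon T\hookrightarrow T^\wedge$ is an injective $A$-module morphism for this twisted structure on the target, it follows that $\overline{p}^{d}\,x=0$ for every $x\in T$, i.e.\ $\overline{p}^{d}T=0$.

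Now comes the decisive step. By hypothesis the localisation map $T\to T_\p=A_\p\otimes_A T$ is injective. If $\overline{p}$ were not in $\p$, it would be a unit of the local ring $A_\p$, so multiplication by $\overline{p}^{d}$ would be an automorphism of $T_\p$; combined with $\overline{p}^{d}T=0$ and the injectivity of $T\hookrightarrow T_\p$ this forces $T=0$, contrary to assumption. Hence $\overline{p}\in\p=(p)$, say $\overline{p}=pa$ with $a\in A$. Applying the involution gives $p=\overline{p}\,\overline{a}$, and substituting back, $\overline{p}=\overline{p}\,\overline{a}\,a$; since $A$ is a domain and $\overline{p}\neq 0$, this yields $\overline{a}\,a=1$. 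Thus $a$ is a unit with $a^{-1}=\overline{a}$, and setting $u_p:=\overline{a}$ we obtain $p=u_p\overline{p}$ and $\overline{u_p}=a=u_p^{-1}$; finally $(\overline{p})=(pa)=(p)$ because $a$ is a unit, so $\overline{\p}=\p$.

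I expect the only real obstacle is the middle step: recognising that it is exactly injectivity of $\lambda$ that lets one push the $\overline{\p}$-primary behaviour of $T^\wedge$ back onto $T$, and that the hypothesis of $\p$-only torsion (rather than merely $\p$-primary) cannot be weakened — for a module that is simultaneously primary for two unrelated primes no conclusion relating $p$ and $\overline{p}$ is available. Everything else is routine arithmetic in the integral domain $A$.
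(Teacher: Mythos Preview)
Your proof is correct and follows essentially the same route as the paper: use injectivity of $\lambda$ together with the sesquilinear $A$-action on $T^\wedge$ to deduce $\overline{p}^{\,d}T=0$, then invoke $\p$-only torsion to conclude $\overline{p}\in\p$, and finish with the involution arithmetic giving $u_p\overline{u_p}=1$. The only cosmetic difference is that the paper phrases the middle step as ``$p$ divides $\overline{p}^{\,d}$, and $\overline{p}$ is prime'' (using the ambient UFD hypothesis), whereas you argue directly via the localisation $T\hookrightarrow T_\p$; your version is marginally more elementary since it needs only that $\p$ is principal and $A$ is a domain.
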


\begin{proof}Suppose $p^dT=0$, then $\lambda(\overline{p}^dx,y)=\lambda(x,p^dy)$=0. But then $\overline{p}^dx=0$ for all $x\in T$ as $\lambda$ is an injective morphism. $T$ has $\p$-only torsion and so $p$ divides $\overline{p}^d$. As $\overline{p}$ is a prime, we have $p=u\overline{p}$ for $u$ a unit.
\end{proof}

\begin{claim}\label{clm:auxwelldef}If $A$ is a PID with prime ideal $(p)=\p=\overline{\p}$ and $(T,\lambda)$ is a (non-singular) $\eps$-symmetric linking form over $(A,\p^\infty)$, then for each $l>0$ the $l$th auxiliary form $(\Delta_l(T),b_l(\lambda))$ is a well-defined (non-singular) $(u_p^l\eps)$-symmetric form over $A/\p$.
\end{claim}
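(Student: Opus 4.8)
The plan is to verify, in turn, that $b_l(\lambda)$ is a well-defined $(A/\p)$-valued sesquilinear pairing on $\Delta_l(T)$, that it is $(u_p^l\eps)$-symmetric, and that it is non-singular when $\lambda$ is. To set up: since $A$ is a PID the nonzero prime $\p$ is maximal, so $k:=A/\p$ is a field, and $\overline\p=\p$ makes it a field with involution. A $\p$-primary module over a PID has $\p$-only torsion, so Lemma~\ref{lem:invinv} applies and I may fix a unit $u_p\in A$ with $p=u_p\overline p$ and $\overline{u_p}=u_p^{-1}$; then $u_p^l\eps$ reduces mod $\p$ to a unit of $k$ with $\overline{u_p^l\eps}\cdot(u_p^l\eps)=1$, so it is an admissible symmetry unit over $k$. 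I will work throughout with the $A$-module isomorphism $\phi\colon k\xrightarrow{\cong}\tfrac1p A/A\subseteq S^{-1}A/A$, $\bar a\mapsto a/p$, and write $\mu_l(x,y):=p^{l-1}\lambda(x,y)$ for $x,y\in K_l$.

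First I would handle well-definedness. Since $p^l$ annihilates $K_l$, for $x,y\in K_l$ we get $p\mu_l(x,y)=p^l\lambda(x,y)=\lambda(x,p^l y)=0$, so $\mu_l(x,y)\in\tfrac1p A/A$ and I can set $b_l([x],[y])=\phi^{-1}(\mu_l(x,y))$; sesquilinearity and biadditivity are inherited from $\lambda$. To see $\mu_l$ descends to $\Delta_l(T)=K_l/(K_{l-1}+pK_{l+1})$: changing $x$ by $k_0\in K_{l-1}$ alters $\mu_l(x,y)$ by $p^{l-1}\lambda(k_0,y)=\lambda(\overline p^{\,l-1}k_0,y)$, and $\overline p^{\,l-1}k_0=u_p^{-(l-1)}p^{l-1}k_0=0$; changing $x$ by $pm$ with $m\in K_{l+1}$ alters it by $\overline p\cdot p^{l-1}\lambda(m,y)$, which vanishes because $p^{l-1}\lambda(m,y)\in\tfrac1p A/A$ is killed by $\overline p=u_p^{-1}p$. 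The same computations work in the second variable.

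Next, the symmetry. The point I would stress is that $\phi$ is $A$-linear but does \emph{not} intertwine the involutions: as $\overline p=u_p^{-1}p$ one has $1/\overline p=u_p/p$ in $S^{-1}A$, hence $\phi(\overline w)=u_p^{-1}\,\overline{\phi(w)}$ for $w\in k$, where on the right the bar is the involution of $S^{-1}A/A$. Feeding this relation, together with $\overline p^{\,l-1}=u_p^{-(l-1)}p^{l-1}$ and the form identity $\lambda(x,y)=\eps\,\overline{\lambda(y,x)}$, into $\phi\big(u_p^l\eps\,\overline{b_l([y],[x])}\big)$ makes all the unit factors cancel and leaves exactly $p^{l-1}\lambda(x,y)=\phi(b_l([x],[y]))$; that computation is precisely what forces the symmetry unit to be $u_p^l\eps$ rather than $\eps$.

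Finally, non-singularity, assuming $\lambda$ is an isomorphism. Since $\Delta_l(T)\cong T_l/pT_l$ is a finite-dimensional $k$-vector space carrying $b_l$ as the form induced by $\mu_l$, it is enough to show the radical of $\mu_l$ in $K_l$ equals $K_{l-1}+pK_{l+1}$ (the inclusion $\supseteq$ is the well-definedness computation), since then $b_l$ is non-degenerate and so has an adjoint isomorphism over the field $k$. Writing $\lambda(x,p^{l-1}y)=\lambda(\overline p^{\,l-1}x,y)$, an $x\in K_l$ is in the radical iff $\overline p^{\,l-1}x$ — equivalently $p^{l-1}x$ — lies in the orthogonal complement $K_l^{\perp}:=\{z\in T:\lambda(z,v)=0\ \forall v\in K_l\}$. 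I would then prove $K_l^{\perp}=p^lT$: one has $p^lT\subseteq K_l^{\perp}$ directly (for $v\in K_l$, $\lambda(p^l t,v)=\overline p^{\,l}\lambda(t,v)=u_p^{-l}p^l\lambda(t,v)=0$ since $\lambda(t,v)\in\tfrac1{p^l}A/A$), and both submodules have the same length — equal to the length of $T$ minus that of $K_l$; for $p^lT$ this is immediate, and for $K_l^{\perp}$ it is the standard additivity of length under passage to the orthogonal complement for a non-singular form — so they coincide. Hence the radical of $\mu_l$ is $\{x\in K_l:p^{l-1}x\in p^lT\}$, and a direct computation with the decomposition $T\cong\bigoplus_jT_j$, each $T_j$ free over $A/\p^{j}$ (the structure theory from the preceding subsections), identifies this submodule with $K_{l-1}+pK_{l+1}$. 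The step I expect to be the real obstacle is this last one — pinning down how $K_\bullet^{\perp}$, the $p$-power filtration, and the cyclic decomposition $\bigoplus_jT_j$ fit together, in particular establishing $K_l^{\perp}=p^lT$ and then matching $\{x\in K_l:p^{l-1}x\in p^lT\}$ with $K_{l-1}+pK_{l+1}$ by careful bookkeeping; the $u_p$-twist in the symmetry step is the other place where a unit is easily dropped.
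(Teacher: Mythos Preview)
Your proposal is correct, and the overall shape matches the paper's: well-definedness, then the $(u_p^l\eps)$-symmetry computation, then non-singularity via the orthogonality relation $K_m^\perp = p^mT$. The differences are in packaging. The paper works with the alternative model $\Delta_l\cong p^lK_{l+1}/p^{l+1}K_{l+2}$ for the injectivity step, so that $b_l(p^lx,p^ly)=\lambda(p^lx,y)$ and the kernel condition reduces immediately to $K_{l+1}^\perp=p^{l+1}T$, which is quoted from Lemma~\ref{lem:perp}; that lemma is proved not by a length count but by lifting a homomorphism via Lemma~\ref{lem:extend}. Your length argument for $K_l^\perp=p^lT$ is valid but implicitly uses the same surjectivity of $j^\wedge\colon T^\wedge\to K_l^\wedge$, so it is really the same lemma in different clothing. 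For non-singularity the paper takes a shorter route than your structure-theorem bookkeeping: it invokes the (non-natural) splitting of Proposition~\ref{prop:nonnatural} to reduce without loss of generality to $T=T_l$, identifies $\lambda$ with an isomorphism $T\cong T^*$ of free $A/p^lA$-modules, and then simply reduces mod $p$ using the natural isomorphism $p^{l-1}\colon T^*/pT^*\cong (T/pT)^*$. Your single radical computation $\{x\in K_l : p^{l-1}x\in p^lT\}=K_{l-1}+pK_{l+1}$ handles injectivity and non-singularity at once, which is tidy, but it trades the paper's short WLOG reduction for a component-by-component check against the decomposition $T\cong\bigoplus_j T_j$.
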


We take this opportunity to examine the kind of example we will be thinking about in Chapter \ref{chap:laurent}.

\begin{example}\label{lem:andrewripoff}Let $\FF$ be a field with involution and our principal ideal domain is the Laurent polynomial ring $A=\FF[z,z^{-1}]$ with involution extended linearly from $\overline{z}=z^{-1}$. Monic polynomials $p(z)\in\FF[z,z^{-1}]$ are of the type $p=u_p\overline{p}$ if and only if $p(z)=(z-a)$ where $\overline{a}a=1$ and $u_p=-az$. For any such $a$ and corresponding $p$, there is a linking form $(A/p^lA,\lambda)$ with \[\lambda(x,y)= x\overline{y}/(z-a)^l\in \FF(z)/\FF[z,z^{-1}].\]This is easily checked to be $(u_p)^l$-symmetric. The corresponding auxilliary form $(\Delta_l(A/p^lA),b_l(\lambda))$ is the standard inner product on the field $A/(z-a)A$ given by the symmetric pairing $(x,y)\mapsto x\overline{y}\in A/pA$.
\end{example}

In order to prove Claim \ref{clm:auxwelldef} we will need to some more terminology and a few little lemmas.

\begin{definition}Suppose $j:L\hookrightarrow T$ is the inclusion of a submodule, then the submodule \[L^\perp:=\ker(j^\wedge\lambda:T\to L^\wedge)\subset T\]is called the \textit{orthogonal of $j(L)$} with respect to $(T,\lambda)$.
\end{definition}

\begin{lemma}Suppose $(T,\lambda)$ is a non-singular $\eps$-symmetric linking form and $j:L\hookrightarrow T$ is the inclusion of a submodule, $i:L^\perp\hookrightarrow T$ the inclusion of the orthogonal. If the restricted form $(L,\lambda|_{L})$ is a non-singular linking form then $(L^\perp,\lambda|_{L^\perp})$ is a non-singular linking form and there is an isomorphism of linking forms\[(j\,\,i):(L,\lambda|_{L})\oplus(L^\perp,\lambda|_{L^\perp})\xrightarrow{\cong}(T,\lambda).\]\end{lemma}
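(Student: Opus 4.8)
The plan is to exhibit $L^\perp$ as a complement to $L$ inside $T$ via the splitting map $j\oplus i$, and then check this map is a morphism of linking forms and an isomorphism of underlying modules. First I would observe that because $(L,\lambda|_L)$ is non-singular, the adjoint $\lambda|_L\colon L\xrightarrow{\cong} L^\wedge$ is an isomorphism, so the composite $\rho := (\lambda|_L)^{-1}\circ j^\wedge\lambda \colon T\to L$ is a retraction of $j$ (i.e.\ $\rho j=\id_L$); this uses the naturality of the torsion dual and the definition of the orthogonal, since $j^\wedge\lambda j=\lambda|_L$. The kernel of $\rho$ is exactly $L^\perp$ by definition, and since $\rho j=\id_L$ we get an internal direct sum decomposition $T=j(L)\oplus L^\perp$ of $A$-modules, with $L^\perp$ automatically in $\H(A,S)$ (it is a quotient of $T$, hence f.g.\ $S$-torsion, and a submodule of $T$, hence of homological dimension $\leq 1$; being a direct summand of $T$ it has homological dimension exactly that of a summand, so it lies in $\H(A,S)$). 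The key point making this work without worrying about projectivity-type subtleties is that $\rho j = \id_L$ is forced purely by non-singularity of the restricted form.

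Next I would verify that $(j\;i)\colon (L,\lambda|_L)\oplus(L^\perp,\lambda|_{L^\perp})\to (T,\lambda)$ is a morphism of linking forms, i.e.\ that the pairing $\lambda$ restricted to $j(L)\oplus L^\perp$ is the orthogonal direct sum. By construction $\lambda(x,y)=0$ whenever $x\in L^\perp$ and $y\in L$ — this is literally the defining condition $j^\wedge\lambda(x)=0$. For the other mixed term, $\lambda(y,x)$ with $y\in L$, $x\in L^\perp$, I use $\eps$-symmetry: $\lambda(y,x)=\eps\overline{\lambda(x,y)}=0$. Hence the Gram pairing is block diagonal and $(j\;i)$ is a form morphism, with the two diagonal blocks being $(L,\lambda|_L)$ and $(L^\perp,\lambda|_{L^\perp})$ by definition of the restricted forms.

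It then remains to show $(j\;i)$ is an isomorphism of $A$-modules and that $(L^\perp,\lambda|_{L^\perp})$ is itself non-singular. The isomorphism claim is exactly the internal direct sum statement $T=j(L)\oplus L^\perp$ established in the first step. For non-singularity of $(L^\perp,\lambda|_{L^\perp})$: since $(j\;i)$ is a form isomorphism onto $(T,\lambda)$ and $\lambda$ is an isomorphism $T\xrightarrow{\cong}T^\wedge$, the adjoint of the direct sum form is an isomorphism; but the adjoint of a block-diagonal form is block diagonal, so each diagonal block — in particular $\lambda|_{L^\perp}\colon L^\perp\to (L^\perp)^\wedge$ — is an isomorphism. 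Concretely one can see the adjoint of $(L^\perp,\lambda|_{L^\perp})$ as the composite $L^\perp\hookrightarrow T\xrightarrow{\lambda}T^\wedge\twoheadrightarrow (L^\perp)^\wedge$ where the last map is dual to the inclusion; chasing the decomposition shows this is an isomorphism with inverse assembled from $\lambda^{-1}$ and the splitting. I expect the main obstacle to be the bookkeeping of torsion duals: one must be careful that $(\,\cdot\,)^\wedge$ sends the internal direct sum $T=j(L)\oplus L^\perp$ to $T^\wedge=j(L)^\wedge\oplus (L^\perp)^\wedge$ compatibly with the restriction-of-$\lambda$ adjoints, which requires invoking the natural isomorphisms $T\cong T^{\wedge\wedge}$ and the functoriality of $(\,\cdot\,)^\wedge$ on the inclusions and projections of the splitting — all routine given Lemmas~\ref{lem:ext1} and~\ref{lem:ext2}, but needing attention to the $A$-module structure conventions on duals.
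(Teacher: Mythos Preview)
Your proof is correct and is precisely the kind of routine check the paper has in mind: the paper's own proof reads in its entirety ``Easy verification.'' Your construction of the retraction $\rho=(\lambda|_L)^{-1}\circ j^\wedge\lambda$ to obtain the internal direct sum, followed by the $\eps$-symmetry argument for block-diagonality and the block-wise deduction of non-singularity of $(L^\perp,\lambda|_{L^\perp})$, is the standard verification being alluded to.
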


\begin{proof}Easy verification.\end{proof}

\begin{lemma}\label{lem:extend}Suppose $A$ is a PID, $T$ is in $\H(A,\p^\infty)$ and $j:L\hookrightarrow T$ is the inclusion of a submodule. Then $j^\wedge:T^\wedge \to L^\wedge$ is surjective.
\end{lemma}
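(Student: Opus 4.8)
The plan is to reduce the claim to an $\Ext$-vanishing statement and then exploit that a PID is a Dedekind domain, hence has homological dimension $\leq 1$. First observe that, on underlying abelian groups, $j^\wedge\colon T^\wedge\to L^\wedge$ is nothing but the restriction map $\Hom_A(T,S^{-1}A/A)\to\Hom_A(L,S^{-1}A/A)$, $f\mapsto f\circ j$ (the conjugation built into the $\wedge$-conventions only twists the $A$-module structure on the targets and is irrelevant to surjectivity). Since $A$ is Noetherian and $T$ is finitely generated, the submodule $L$ is again finitely generated, and both $L$ and $T$ are $S$-torsion with $S=\p^\infty$ because $T\in\H(A,\p^\infty)$; so Lemma \ref{lem:ext2} applies to both.

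Next I would feed the short exact sequence $0\to L\xrightarrow{j}T\to T/L\to 0$ into the contravariant functor $\Hom_A(-,A)$. As $L,T,T/L$ are all torsion, their $A$-duals vanish (Lemma \ref{lem:ext2}), so the long exact $\Ext$-sequence begins $0\to\Ext^1_A(T/L,A)\to\Ext^1_A(T,A)\xrightarrow{j^*}\Ext^1_A(L,A)\to\Ext^2_A(T/L,A)$. Now a PID is a Dedekind domain, hence every $A$-module has homological dimension at most $1$, so $\Ext^2_A(T/L,A)=0$ and the map $j^*\colon\Ext^1_A(T,A)\to\Ext^1_A(L,A)$ is surjective.

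Finally, the isomorphisms $\Ext^1_A(T,A)\cong T^\wedge$ and $\Ext^1_A(L,A)\cong L^\wedge$ of Lemma \ref{lem:ext2} are natural in the module argument (they arise from applying $\Hom_A(-,-)$ to the sequence $0\to A\to S^{-1}A\to S^{-1}A/A\to 0$), so the square relating $j^*$ and $j^\wedge$ commutes; surjectivity of $j^*$ therefore gives surjectivity of $j^\wedge$, as required.

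This lemma carries little conceptual weight, and I do not expect a real obstacle; the one point to handle with care is that $S^{-1}A/A$ is \emph{not} an injective $A$-module, so one cannot simply quote injectivity to extend homomorphisms. What rescues the argument is exactly the homological-dimension hypothesis: over a PID $\Ext^2_A(-,A)=0$, which is the homological shadow of the fact that multiplication by $p$ acts surjectively on $S^{-1}A/A$. If one prefers a hands-on argument, the same extension can be built directly: writing $T/L\cong\bigoplus_i A/p^{n_i}A$ by the structure theorem for finitely generated $\p$-primary modules over a PID, a given $g\colon L\to S^{-1}A/A$ is extended over $T$ one cyclic quotient summand at a time, using $p$-divisibility of $S^{-1}A/A$ to choose the value on each new generator and the description of annihilators to check well-definedness.
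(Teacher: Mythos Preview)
Your argument is correct, but it is genuinely different from the paper's. The paper, following Levine, gives an explicit inductive extension: it filters $T$ by the submodules $L(m)=L+K_m(T)$ and, at each stage, chooses a free $(A/p^{m+1}A)$-module $J$ with $L(m+1)=L(m)+J$ and $J\cap L(m)=pJ$, then extends $f$ over $J$ by hand using that any homomorphism out of $pJ$ extends to $J$. This is precisely the ``hands-on'' alternative you sketch in your last paragraph. Your main argument instead packages the whole extension problem into the vanishing of $\Ext^2_A(T/L,A)$, which is immediate from $A$ being a PID. Your route is cleaner and makes the role of the hypothesis transparent: nothing beyond global dimension $\leq 1$ is used, so the statement holds verbatim over any Dedekind domain. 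The paper's approach, by contrast, is more constructive and stays closer to the $\p$-primary filtration language used in the surrounding arguments (the $K_m$, $J_l$, $\Delta_l$ machinery), which may be why it was preferred there.
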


\begin{proof}(Cf.\ \cite[Lemma 19.2]{MR564114}) Let $f\in L^\wedge$ and set $L(0)=L$, $L(m)=L(m-1)+K_m(T)$ for $m>1$. We will show how to lift $f$ to $T^\wedge$ by induction on $m$. Suppose $f\in L(m)^\wedge$. Observe that as $K_{m+1}/K_m$ is a vector space over $A/\p$, so is $L(m+1)/L(m)$. Hence there is some $A$-module $J$ such that there is a vector space isomorphism $g:J/pJ\cong L(m+1)/L(m)$. We have a lot of choice in $J$, and we may take $J$ to be a free $(A/p^{m+1}A)$-module with the right number of summands. Therefore we have $L(m+1)=L(m)+J$ and $J\cap L(m)=\ker(p^m:J\to J)=pJ$. But we can always extend a homomorphism $f|_{pJ}\in (pJ)^\wedge$ to a homomorphism on the whole of $J$. Hence we can lift $f\in L(m)^\wedge$ to $\tilde{f}\in L(m+1)^\wedge$.
\end{proof}

\begin{lemma}\label{lem:perp}If $A$ is a PID and $(T,\lambda)$ is a non-singular $\eps$-symmetric linking form over $(A,\p^\infty)$ then for each $m>0$:\begin{enumerate}[(i)]
\item $K_m(T)^\perp=p^mT$,
\item $(p^mT)^\perp=K_m(T)$,
\item $(p^mK_{m+1}(T))^\perp=K_m(T)+pT$.
\end{enumerate}
\end{lemma}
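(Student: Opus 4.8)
The strategy is to reduce all three identities to the adjoint formula for $\lambda$ together with Lemma \ref{lem:invinv} and the extension Lemma \ref{lem:extend}. Since $\lambda(ax,y)=\overline a\,\lambda(x,y)$ and $\lambda(x,ay)=a\,\lambda(x,y)$, one obtains the basic identity $\lambda(p^kx,y)=\lambda(x,\overline p^k y)$ for all $k\ge0$ and $x,y\in T$. Moreover $T$ is $\p$-primary over the PID $A$ and hence has $\p$-only torsion, so --- $\lambda$ being injective --- Lemma \ref{lem:invinv} gives $p=u_p\overline p$ for a unit $u_p$; in particular $\overline p$ and $p$ are associates, $\overline p^kT=p^kT$ and $\ker(\overline p^k\colon T\to T)=K_k(T)$ for every $k$. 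I would use these facts freely below.

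Statement (ii) is the quickest. Recalling $(p^mT)^\perp=\ker(\iota^\wedge\lambda)$ for $\iota\colon p^mT\hookrightarrow T$, an element $x$ lies in $(p^mT)^\perp$ if and only if $\lambda(x,p^m y)=0$ for all $y\in T$, i.e.\ (by the adjoint identity) if and only if $\lambda(\overline p^m x,y)=0$ for all $y\in T$, i.e.\ if and only if $\overline p^m x\in\ker\lambda$. Since $\lambda$ is injective this says $\overline p^m x=0$, that is $x\in K_m(T)$; reading the chain of equivalences backwards gives $(p^mT)^\perp=K_m(T)$.

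For (i), write $j_m\colon K_m(T)\hookrightarrow T$. The inclusion $p^mT\subseteq K_m(T)^\perp$ is immediate, since for $l\in K_m(T)$ and $y\in T$ one has $\lambda(p^my,l)=\lambda(y,\overline p^m l)=\lambda(y,0)=0$. For the reverse inclusion I would first identify the submodule $\ker\bigl(j_m^\wedge\colon T^\wedge\to K_m(T)^\wedge\bigr)$ with $p^mT^\wedge$: one inclusion is the same one-line computation, and for the other, a functional $f\in T^\wedge$ vanishing on $K_m(T)$ factors through the quotient $T/K_m(T)$, which multiplication by $p^m$ identifies with $p^mT$; precomposing the factored map with the inverse of this isomorphism and then extending over $T$ by Lemma \ref{lem:extend} exhibits $f$ as $\overline p^m\cdot h$ for some $h\in T^\wedge$, so $f\in\overline p^mT^\wedge=p^mT^\wedge$. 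Since $\lambda\colon T\to T^\wedge$ is an $A$-module isomorphism, $K_m(T)^\perp=\lambda^{-1}(\ker j_m^\wedge)=\lambda^{-1}(p^mT^\wedge)=p^mT$, proving (i).

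Finally, for (iii): $x\in(p^mK_{m+1}(T))^\perp$ if and only if $\lambda(x,p^m z)=0$ for all $z\in K_{m+1}(T)$, equivalently $\lambda(\overline p^m x,z)=0$ for all such $z$, equivalently $\overline p^m x\in K_{m+1}(T)^\perp$. By part (i) applied with $m+1$ in place of $m$ this means $\overline p^m x\in p^{m+1}T$, hence $p^m x\in p^{m+1}T$, i.e.\ $p^m x=p^{m+1}w$ for some $w\in T$; this is equivalent to $p^m(x-pw)=0$ for some $w$, i.e.\ to $x\in K_m(T)+pT$. Running the equivalences backwards gives $(p^mK_{m+1}(T))^\perp=K_m(T)+pT$. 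The only step that is more than bookkeeping is the reverse inclusion in (i): that is where the PID hypothesis is genuinely used, via Lemma \ref{lem:extend}, to guarantee that $\ker j_m^\wedge$ is no larger than $p^mT^\wedge$; everything else is formal manipulation of the adjoint identity together with the fact that $p$ and $\overline p$ are associates.
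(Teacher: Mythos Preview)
Your proof is correct and follows essentially the paper's approach: parts (i) and (ii) are argued the same way (the nontrivial inclusion in (i) via Lemma~\ref{lem:extend} to extend a functional from $p^mT$ to $T$, and (ii) via injectivity of $\lambda$). For (iii) the paper takes a slightly slicker route, observing that $p^mK_{m+1}=p^mT\cap K_1$ and invoking the general identity $(K\cap L)^\perp=K^\perp+L^\perp$ together with (i) and (ii), rather than your direct reduction to (i) at level $m+1$; both arguments are equally valid.
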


\begin{proof}For (i), it is clear that $p^mT\subseteq K_m^\perp$. For the reverse inclusion, consider that by definition $K_m=\ker(p^m:T\to T)$ so we have $p^m:T/K_m\xrightarrow{\cong}p^mT$. Let $x\in K_m^\perp$. From this there is a well-defined morphism $\mu_x:p^mT\to (\p^\infty)^{-1}A/A$ defined by $\lambda(x,y)=\mu_x(p^my)$. But by Lemma \ref{lem:extend}, $\mu_x$ lifts to $\tilde{\mu}_x\in T^\wedge$. But if $z:=\lambda^{-1}(\tilde{\mu}_x)$, then for any $y\in T$\[\lambda(x,y)=\mu_x(p^my)=\lambda(z,p^my)=\lambda(\overline{p}^mz,y).\]But $\lambda$ is injective, hence $x=\overline{p}^mz\in p^mT$ by Lemma \ref{lem:invinv}.

For (ii), it is clear that $K_m\subseteq (p^mT)^\perp$. For the reverse inclusion, suppose $x\in (p^mT)^\perp$. Then $0=\lambda(x,p^my)=\lambda(\overline{p}^mx,y)$ for all $y\in T$. But $\lambda$ is injective, so $x\in\ker(\overline{p}^m:T\to T)=K_m$ by Lemma \ref{lem:invinv}.

For (iii), it is a standard result (or one may easily confirm) that for general submodules $K,L\hookrightarrow T$, we have $(K\cap L)^\perp=K^\perp+ L^\perp$. But as $p^mK_{m+1}=p^mT\cap K_1$ the result follows from i. and ii.
\end{proof}

We can finally prove Claim \ref{clm:auxwelldef}.

\begin{proof}[Proof (of Claim \ref{clm:auxwelldef})]Recall the isomorphism $\Delta_l(T)\cong p^lK_{l+1}/p^{l+1}K_{l+2}$ of equation \ref{eq:auxiliary}. Using this isomorphism, the pairing $b_l$ is given for $x,y\in K_{l+1}$ by $b_l(p^lx,p^ly)=p^l\lambda(x,y)=\lambda(p^lx,y)$. In other words $b_l:\Delta_l\to \Delta_l^*=\Hom_{A/\p}(\Delta_l,A/\p)$ is injective if and only if the kernel of\[\lambda:p^lK_{l+1}\to K_{l+1}^\wedge\]is a submodule of $p^{l+1}K_{l+2}$. But if $x\in p^lK_{l+1}$ and $\lambda(x)=0\in K_{l+1}^\wedge$, then $x\in K_{l+1}^\perp=p^{l+1}T$ by Lemma \ref{lem:perp}.

For the symmetry claim, simply observe that for $x,y\in K_{l+1}$ \[\begin{array}{rcl}b_l(p^lx,p^ly)&=
&p^l\lambda(x,y)\\&=&p^{l}\eps\overline{\lambda(y,x)}\\&=&u_p^l\eps\overline{p^l\lambda(y,x)}\\&=&u_p^l\eps\overline{b_l(p^ly,p^lx)}.\end{array}\]

For non-singularity we must show $b_l$ is an isomorphism whenever $\lambda$ is. Without loss of generality we may assume $T=T_l$. But then there is a natural $A$-module isomorphism $\Hom_A(T,S^{-1}A/A)\cong\Hom_{A/p^lA}(T,A/p^lA)=:T^*$ so we consider $\lambda$ to be the isomorphism of $A/p^lA$-modules $T\cong T^*$. This induces an isomorphism of $A/pA$-modules $T/pT\cong T^*/pT^*$. Moreover, there is a natural isomorphism $p^{l-1}:T^*/pT^*\cong(T/pT)^*$. As $b_l([x]):=([y]\mapsto p^{l-1}\lambda(x)(y))$, the result follows.
\end{proof}

The main aim of this subsection is to prove the following theorem:

\begin{theorem}[Main Decomposition Theorem]\label{thm:levinePID}Suppose $A$ is a PID, $\p=(p)$, and $A/\p$ contains a half-unit $s$. If $(T,\lambda), (T',\lambda')$ are non-singular $\eps$-symmetric linking forms over $(A,\p^\infty)$, then there is an isomorphism of linking forms $(T,\lambda)\cong(T',\lambda')$ if and only if there is an isomorphism of $A$-modules $T\cong T'$ that induces an isomorphism of $(u_p^l\eps)$-symmetric forms $(\Delta_l(T),b_l(\lambda))\cong(\Delta_l(T'),b_l(\lambda'))$ over $A/\p$ for each $l$.
\end{theorem}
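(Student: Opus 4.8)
The forward direction is immediate from naturality: if $f:(T,\lambda)\xrightarrow{\cong}(T',\lambda')$ is an isomorphism of linking forms, then $f$ is in particular an $A$-module isomorphism, and the auxiliary forms $(\Delta_l(T),b_l(\lambda))$ are natural in $(T,\lambda)$ by construction (the modules $\Delta_l$ are natural by equation \ref{eq:auxiliary}, and the pairings $b_l$ are defined directly from $\lambda$), so $f$ induces the required isomorphisms of $(u_p^l\eps)$-symmetric forms over $A/\p$. The content is the converse, and the plan is to reconstruct $(T,\lambda)$ from the collection $\{(\Delta_l(T),b_l(\lambda))\}_l$ by an induction that peels off one summand $T_l$ at a time, using the half-unit to control the splitting.

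The strategy is induction on the annihilator exponent $d$ with $\ann(T)=\p^d$. Using the $A$-module isomorphism $T\cong\bigoplus_{l=1}^d T_l$ (with $T_l$ a free $A/p^lA$-module) from the structure theory over the PID $A$, I would first show that the top piece splits off as an orthogonal summand: the submodule $L\subset T$ generated by a lift of $T_d$ (equivalently, a complement to $K_{d-1}$) can be chosen so that $(L,\lambda|_L)$ is a non-singular linking form over $(A,\p^\infty)$ with $\ann=\p^d$. The key point here is that $\lambda|_L$ is modelled on the auxiliary form $(\Delta_d(T),b_d(\lambda))$: since $T_d$ is free over $A/p^dA$ and $\Delta_d(T)\cong T_d/pT_d$, an $\eps$-symmetric $\lambda|_L:L\to L^\wedge$ whose "leading coefficient" is the non-singular form $b_d$ is itself non-singular — and the half-unit in $A/\p$ (hence in $A/p^dA$, by Claim \ref{clm:halfunit}) is exactly what lets one diagonalise/adjust a symmetric matrix over $A/p^dA$ so that its reduction mod $p$ agrees with a prescribed non-singular form, avoiding the $2$-torsion pathologies visible in Example \ref{ex:KK}. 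Once $(L,\lambda|_L)$ is non-singular, the orthogonal splitting lemma (the unnamed lemma just before Lemma \ref{lem:extend}) gives $(T,\lambda)\cong(L,\lambda|_L)\oplus(L^\perp,\lambda|_{L^\perp})$ with $L^\perp$ of strictly smaller exponent, and the naturality of the $\Delta_l$ identifies $(\Delta_l(L^\perp),b_l)$ with $(\Delta_l(T),b_l)$ for $l<d$ and $(\Delta_l(L),b_l)$ with $(\Delta_d(T),b_d)$ for $l=d$. Doing the same on the primed side and invoking the induction hypothesis on $L^\perp$ versus $(L')^\perp$ (whose auxiliary data match by hypothesis), plus the fact that the auxiliary forms are a complete invariant in the base case $d=1$ — where a linking form over $(A,\p^\infty)$ with $T=T_1$ is literally the same data as an $\eps$-symmetric form over the field $A/\p$, matching $b_1$ — assembles the desired isomorphism $(T,\lambda)\cong(T',\lambda')$.

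The main obstacle I anticipate is the splitting step: showing that a complement $L$ to $K_{d-1}$ can be chosen \emph{orthogonally}, i.e.\ so that $(L,\lambda|_L)$ is non-singular. A naive complement need not be self-dual under $\lambda$; one must correct it by adding elements of lower filtration, solving a congruence that at each stage requires inverting $2$ in $A/p^kA$ — this is where the half-unit hypothesis is essential and where the argument departs from the classical $A=\Z$ story at the prime $2$. I would handle this by an inner induction: assuming $L$ chosen so that $\lambda|_L$ is non-singular modulo $p^{d-1}$, use the half-unit to modify a basis of $L$ and kill the next-order error term, exactly as in the proof of Claim \ref{clm:halfunit}. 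The matrix bookkeeping here (and the verification that the $(u_p^l\eps)$-symmetry is preserved throughout, using Lemma \ref{lem:invinv} to know $p=u_p\overline p$) is routine but is the technical heart; I would cite Levine \cite{MR564114} for the parallel computations where convenient rather than reproduce them in full.
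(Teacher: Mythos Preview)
Your strategy is genuinely different from the paper's, and while it can be made to work, you have misdiagnosed where the real obstacle lies and where the half-unit enters.

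The paper does not decompose $(T,\lambda)$ orthogonally at all. Instead it keeps the given module isomorphism $f:T\to T'$ and \emph{improves} it to a linking-form isomorphism. Via Claim~\ref{clm:rephrase} the hypothesis becomes: the defect $\mu:=f^\wedge\lambda'f-\lambda$ vanishes on $p^lK_{l+1}\to K_{l+1}^\wedge$ for each $l$. The paper then runs two inductions that enlarge the submodule on which $\mu$ vanishes until it vanishes on $K_d=T$. At each step one replaces $f$ by $f(1+h)^{-1}$ with $h:=s\lambda^{-1}\mu$ (here $s$ is the half-unit, lifted to $A/\p^d$ by Claim~\ref{clm:halfunit}); Lemma~\ref{lem:perp} is used to show $h(T)$ lands in $K_m^\perp=p^mT$ (or $(p^mK_{m+1})^\perp=K_m+pT$), hence $h$ is nilpotent and $1+h$ invertible, and a short calculation shows the new defect vanishes on the next submodule. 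Induction~(1) moves from $p^{d-1}K_d$ down to $K_1$; induction~(2) moves from $K_1$ up to $K_d=T$. No orthogonal splitting is ever invoked.

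Your ``peel off the top summand'' route can succeed, but the step you flag as the main obstacle --- choosing $L$ so that $(L,\lambda|_L)$ is non-singular --- is not an obstacle: this is exactly Proposition~\ref{prop:nonnatural}, proved with no half-unit hypothesis (any $T_d$-complement works, since $[\lambda_d]=[\lambda]$ on $T/pT$). What you have not addressed is the step \emph{after} splitting: having orthogonal summands $(L,\lambda|_L)$ and $(L',\lambda'|_{L'})$ with $L\cong L'\cong T_d$ and isomorphic auxiliary forms $b_d$, why are they isomorphic as linking forms? This is the $T=T_d$ special case of the very theorem you are proving, and it is \emph{here} that the half-unit is essential --- one needs a Hensel-type lifting of a form isomorphism from $A/\p$ to $A/\p^d$. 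Your phrase about ``adjusting a symmetric matrix over $A/p^dA$ so that its reduction mod $p$ agrees with a prescribed form'' is the right idea for this lifting, but you have attached it to the wrong step. Redirect your inner induction from the splitting of $L$ to the lifting of the form isomorphism on $L$, and your outer induction on $d$ goes through. The tradeoff: your route is more modular (and is in spirit how the paper later proves Theorem~\ref{thm:MDT2} \emph{from} the Main Decomposition Theorem), while the paper's direct improvement of $f$ handles all $l$ uniformly and avoids relying on Proposition~\ref{prop:nonnatural}.
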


This is actually a corollary of the much stronger theorem of Levine:

\begin{theorem}[{\cite[Theorem 20.1]{MR564114}}]\label{thm:levine} Suppose $A$ is a UFD, $\p=(p)$, $p=\overline{p}$ and $A/\p$ is a Dedekind domain that contains a half-unit $s$. If $(T,\lambda)$, $(T',\lambda')$ are non-singular $\eps$-symmetric linking forms over $(A,\p^\infty)$, then there is an isomorphism of linking forms $(T,\lambda)\cong(T',\lambda')$ if and only if there is an isomorphism of $A$-modules $T\cong T'$ that induces an isomorphism of forms $(\Delta_l(T),b_l(\lambda))\cong(\Delta_l(T'),b_l(\lambda'))$ for each $l$.
\end{theorem}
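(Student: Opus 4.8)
For the ``only if'' direction there is nothing to do: an isomorphism of linking forms $f\colon (T,\lambda)\xrightarrow{\cong}(T',\lambda')$ is in particular an $A$-module isomorphism, and since the filtration $\{K_m\}$, the subquotients $J_l$, the auxiliary modules $\Delta_l(-)$ of \eqref{eq:auxiliary} and the pairings $b_l(-)$ are all natural in the linking form, $f$ automatically induces isomorphisms $(\Delta_l(T),b_l(\lambda))\cong(\Delta_l(T'),b_l(\lambda'))$ for every $l$. All the content is in the ``if'' direction, which I would prove by induction on the \emph{length} $L$ of $T$, meaning the largest $l$ with $K_l\neq K_{l-1}$.

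The first step is to recover the module-theoretic picture familiar from the PID case: a finitely generated $\p$-primary $A$-module of homological dimension $1$ should decompose as $T\cong\bigoplus_{l\geq1}T_l$ with $T_l$ a free $A/p^lA$-module, so that $\Delta_l(T)\cong T_l/pT_l$ as in the principal-ideal case. This is where the homological-dimension-$1$ hypothesis and the assumption that $A/\p$ is a Dedekind domain do their work — morally via a mild Smith-normal-form statement for the presentation matrix of $T$, after reducing the relevant lifting problems modulo $p$ where $A/\p$ is hereditary. The crucial second step is to split the top layer off orthogonally: using that $A/p^LA$ contains a half-unit (Claim~\ref{clm:halfunit}), I would produce a direct summand $M\subseteq T$, free over $A/p^LA$, on which $\lambda$ restricts to a non-singular linking form realising the auxiliary form $b_L(\lambda)$. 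Given such an $M$, the orthogonal-complement lemma established above yields $(T,\lambda)\cong(M,\lambda|_M)\oplus(M^\perp,\lambda|_{M^\perp})$ with $M^\perp$ of length $\leq L-1$; by additivity of $\Delta_l$ the auxiliary forms of $M^\perp$ agree with those of $T$ in degrees $<L$.

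The third step is a rigidity statement: a non-singular $\eps$-symmetric linking form on a free $A/p^lA$-module is determined, up to isomorphism, by its auxiliary form $(\Delta_l,b_l)$ — a non-singular $\eps$-symmetric form over the Dedekind domain $A/\p$ (with no twisting, since $p=\overline{p}$). This is a Hensel-type lifting argument, the exact analogue of the fact used for odd primes in Example~\ref{ex:KK} that a form over $\Z/p^l\Z$ is detected by its reduction modulo $p$ once $2$ is invertible, and it again leans on the half-unit (via Claim~\ref{clm:halfunit}). With these in place the induction closes: writing $(T,\lambda)=(M,\lambda|_M)\oplus(N,\lambda|_N)$ and $(T',\lambda')=(M',\lambda'|_{M'})\oplus(N',\lambda'|_{N'})$ as in step two, the hypothesis $b_L(\lambda)\cong b_L(\lambda')$ and step three give $(M,\lambda|_M)\cong(M',\lambda'|_{M'})$; the complements $N$, $N'$ have length $\leq L-1$ and, after routine bookkeeping with the $T_l$-decompositions (lift the degreewise form-isomorphisms to a single $A$-module isomorphism $N\cong N'$), satisfy the inductive hypothesis, giving $(N,\lambda|_N)\cong(N',\lambda'|_{N'})$; summing the two isomorphisms proves $(T,\lambda)\cong(T',\lambda')$.

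I expect the genuine obstacle to be the second step — producing the orthogonal top-layer summand. The ``only if'' direction and the induction itself are formal, and steps one and three are standard (if laborious) applications of the Dedekind and half-unit hypotheses; but pinning down exactly how the half-unit trivialises the obstruction to lifting the auxiliary form $b_L(\lambda)$ to an honest non-singular sub-linking-form on a direct summand of $T$ — while keeping track of the interplay between the $A$-module structure, the torsion duality $T^\wedge$, and the $\p$-adic filtration — is the delicate heart of the matter. (In practice, of course, one simply invokes \cite[Theorem~20.1]{MR564114}.)
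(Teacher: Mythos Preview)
The paper does not prove this theorem; it simply cites Levine and then proves only the PID specialisation (Theorem~\ref{thm:levinePID}), describing that proof as a simplification of Levine's. That proof takes a route genuinely different from yours. It never decomposes $(T,\lambda)$. Instead, using Claim~\ref{clm:rephrase} it reinterprets the hypothesis as the vanishing of the defect $\mu=f^\wedge\lambda'f-\lambda$ on each $p^lK_{l+1}$, and then \emph{deforms} the given module isomorphism $f$ into a form isomorphism: one sets $h=s\lambda^{-1}\mu$ with $s$ the half-unit, checks $h$ is nilpotent, replaces $f$ by $f(1+h)^{-1}$, and runs two nested inductions (first shrinking the ``top'' index in $p^mK_{m+1}$, then growing the vanishing locus from $K_1$ up to $K_d=T$) controlled by Lemma~\ref{lem:perp}. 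The orthogonal decomposition of Proposition~\ref{prop:nonnatural} and the rigidity statement you call step three appear only \emph{after} the Main Decomposition Theorem, in the proof of Theorem~\ref{thm:MDT2}; you are in effect proposing to reverse the paper's logical order.

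For the PID case your strategy is viable and arguably more conceptual, but for the full UFD hypotheses of Theorem~\ref{thm:levine} your first step is a genuine gap. There is no Smith normal form over a UFD, and the paper is explicit that the comfortable picture $T\cong\bigoplus_l T_l$ with $T_l$ free over $A/p^lA$ is not available: see Example~\ref{ex:pionly} and the surrounding discussion of $\p$-primary versus $\p$-only torsion, and the remark immediately after Theorem~\ref{thm:levine} that Levine's argument is long precisely because of ``the concepts required to handle the distinction between $\p$-primary and $\p$-only torsion modules''. Your step two (orthogonal top-layer splitting) is likewise only established in the paper for local rings (Proposition~\ref{prop:nonnatural}). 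The deformation approach sidesteps all of this by working directly with the filtration $\{K_m\}$ and Lemma~\ref{lem:perp}, which hold at the stated generality; that is what it buys over your decomposition route.
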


\begin{remark}In fact, Levine shows how to prove Theorem \ref{thm:levine} in the additional case that $A=\Z[z,z^{-1}]$ and $p\in A$ is any prime such that $p(1)=\pm 1$.\end{remark}

The general proof for Theorem \ref{thm:levine} given in \cite{MR564114} is very long and there are many small details to consider - especially relating to the concepts required to handle the distinction between $\p$-primary and $\p$-only torsion modules. We will only use the PID version in this thesis so we have made simplifications to Levine's proof where possible. Even our proof is not short! We begin by rephrasing the hypothesis.

\begin{claim}\label{clm:rephrase}An isomorphism $f:T\cong T'$ induces an isomorphism of the auxiliary $l$-form if and only if\[(f^\wedge\lambda' f-\lambda)=0:p^lK_{l+1}\to (K_{l+1})^\wedge.\]
\end{claim}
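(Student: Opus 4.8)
The plan is to unwind both sides of the asserted equivalence into the same pointwise condition over $K_{l+1}$, using the description of the auxiliary form recorded in the proof of Claim \ref{clm:auxwelldef}. First note that since $f\colon T\to T'$ is an $A$-module isomorphism it carries each $K_m(T)=\ker(p^m\colon T\to T)$ isomorphically onto $K_m(T')$, hence restricts to isomorphisms $p^lK_{l+1}(T)\xrightarrow{\cong}p^lK_{l+1}(T')$ and $p^{l+1}K_{l+2}(T)\xrightarrow{\cong}p^{l+1}K_{l+2}(T')$; under the identification $\Delta_l\cong p^lK_{l+1}/p^{l+1}K_{l+2}$ of \ref{eq:auxiliary}, the map these induce is the natural isomorphism $\bar f:=\Delta_l(f)\colon\Delta_l(T)\xrightarrow{\cong}\Delta_l(T')$. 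Since $\bar f$ is already bijective, ``$f$ induces an isomorphism of the auxiliary $l$-form'' means precisely that $b_l(\lambda')(\bar f u,\bar f v)=b_l(\lambda)(u,v)$ for all $u,v\in\Delta_l(T)$.

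Next I would take representatives $u=[p^lx]$, $v=[p^ly]$ with $x,y\in K_{l+1}(T)$, so that $\bar f u=[p^lf(x)]$ and $\bar f v=[p^lf(y)]$. By the formula for the auxiliary form in the proof of Claim \ref{clm:auxwelldef}, $b_l(\lambda)(u,v)$ and $b_l(\lambda')(\bar f u,\bar f v)$ are, respectively, the elements $\lambda(p^lx)(y)$ and $\lambda'(f(p^lx))(f(y))$ of $A/\p$, where $A/\p\hookrightarrow S^{-1}A/A$ is the inclusion $a\mapsto a/p$ onto the $p$-torsion subgroup (both elements are killed by $p$ since $p^{l+1}x=0$). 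Hence ``$\bar f$ intertwines the forms'' is equivalent to the requirement that $\lambda'(f(p^lx))(f(y))=\lambda(p^lx)(y)$ in $S^{-1}A/A$ for all $x,y\in K_{l+1}(T)$.

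On the other hand, writing $\nu:=f^\wedge\lambda'f-\lambda\in\Hom_A(T,T^\wedge)$ we have $\nu(z)(w)=\lambda'(f(z))(f(w))-\lambda(z)(w)$ for $z,w\in T$, and the map $T^\wedge\to K_{l+1}^\wedge$ appearing in the statement is restriction of functionals. Thus ``$\nu=0\colon p^lK_{l+1}\to K_{l+1}^\wedge$'' says that $\nu(z)(w)=0$ in $S^{-1}A/A$ for every $z\in p^lK_{l+1}(T)$ and every $w\in K_{l+1}(T)$; writing $z=p^lx$ with $x\in K_{l+1}(T)$ — every element of $p^lK_{l+1}$ has this form — this is exactly $\lambda'(f(p^lx))(f(w))=\lambda(p^lx)(w)$ in $S^{-1}A/A$ for all $x,w\in K_{l+1}(T)$, i.e.\ the condition of the previous paragraph. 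This proves the equivalence.

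I do not expect a genuine obstacle, since the statement is a bookkeeping reformulation; the two points that call for care are that $\Delta_l(f)$ really is the map induced by $f$ on $p^lK_{l+1}/p^{l+1}K_{l+2}$ (immediate from $f$ preserving the $K_m$-filtration together with the naturality of $\Delta_l$), and the distinction between the target $A/\p$ of the auxiliary form and the target $S^{-1}A/A$ of $\lambda,\lambda'$, which are matched by the inclusion $A/\p\hookrightarrow S^{-1}A/A$ identifying $A/\p$ with the $p$-torsion subgroup of $S^{-1}A/A$; keeping that identification in view is what makes the two unwound conditions literally the same equation.
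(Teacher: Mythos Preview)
Your proof is correct and follows essentially the same approach as the paper: both unwind the auxiliary form via the identification $\Delta_l\cong p^lK_{l+1}/p^{l+1}K_{l+2}$ and the formula $b_l(p^lx,p^ly)=\lambda(p^lx,y)$, reducing the equivalence to a direct comparison of pointwise values. The paper's proof is a two-line ``the result clearly follows'', while you spell out the bookkeeping (including the identification of $A/\p$ with the $p$-torsion in $S^{-1}A/A$) that justifies this.
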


\begin{proof}Recall the isomorphism $\Delta_l(T)\cong p^lK_{l+1}/p^{l+1}K_{l+2}$ of Equation \ref{eq:auxiliary}. Using this isomorphism, the pairing $b_l$ is given for $x,y\in K_{l+1}$ by $b_l(p^lx,p^ly)=p^l\lambda(x,y)=\lambda(p^lx,y)$. The result clearly follows.
\end{proof}

We are now in a position to prove Theorem \ref{thm:levinePID}.

\begin{proof}[Proof (of Theorem \ref{thm:levinePID})]One direction of the statement is clear as the auxiliary forms are naturally defined.

For the reverse statement, we will rephrase the hypothesis using Claim \ref{clm:rephrase}. Our hypothesis (denoted H$(l)$) is now that there is an isomorphism $f:T\cong T'$ such that for each $l>0$ we have \[(f^\wedge\lambda'f-\lambda)=0:p^lK_{l+1}\to (K_{l+1})^\wedge \tag{$\text{H}(l)$}\label{hypothesis}.\]As $T$ is $\p$-torsion, there is a least $d>0$ such that $K_d=T$, so that H$(d-1)$ gives the morphism\[(f^\wedge\lambda'f-\lambda)=0:p^{d-1}K_{d}\to (K_{d})^\wedge=T^\wedge\]Define $\mu=f^\wedge\lambda'f-\lambda:T\to T^\wedge$ and note that $\mu=\eps\mu^\wedge$. We will improve $f$ by 2 separate inductions that successively increase the submodule on which $\mu$ vanishes until the theorem is proved: \[\begin{array}{rl}\left.
{\begin{array}{rcl}
p^{d-1}K_d&\to&T^\wedge,\\
p^{d-2}K_{d-1}&\to&T^\wedge,\\
&\vdots&\\
p^0K_1&\to&T^\wedge,
\end{array}}
\right\}&\text{Induction (1)}\\

\left.
\begin{array}{rcl}
K_1&\to&T^\wedge,\\
K_2&\to&T^\wedge,\\
&\vdots&\\
K_d=T&\to&T^\wedge.
\end{array}\right\}&\text{Induction (2)}
\end{array}\]

As induction (2) is easier and contains the main idea, we begin here. Suppose we have the base case that $\mu$ restricted to $K_1$ vanishes. Suppose, for the induction, that $\mu$ vanishes on $K_m$ for some $m<d$. We use Claim \ref{clm:halfunit} to improve our given half-unit to an element $s\in A$ such that the class of $s$ is a half-unit in $A/\p^d$. Define a morphism\[h=s\lambda^{-1}\mu:T\to T.\]Now $h(T)\subseteq (K_m)^\perp$ (orthogonal with respect to $\lambda$) by construction. But $(K_m)^\perp=p^mT$ by Lemma \ref{lem:perp}, and by inspection $h^j(T)\subseteq p^{jm}T$ so $h$ is nilpotent. We use the isomorphism $(1+h):T\to T$ to improve $f$ to $f':=f(1+h)^{-1}$. This improves $\mu$ to $\mu':=(f')^\wedge\lambda' f'-\lambda$. We must check that $\mu'$ vanishes on $K_{m+1}$. First calculate that \[\begin{array}{rcl}
(1+h)^\wedge\lambda(1+h)&=&\lambda+h^\wedge\lambda+\lambda h+h^\wedge\lambda h\\
&=&\lambda+\eps(\lambda^\wedge h)^\wedge+s\mu+h^\wedge s\mu\\
&=&\lambda+\eps(s\mu)^\wedge+s\mu+sh^\wedge\mu\\
&=&\lambda+\mu+s\mu+sh^\wedge\mu\\
&=&f^\wedge\lambda' f+sh^\wedge\mu.
\end{array}\]So we need only check that $h^\wedge\mu$ vanishes on $K_{m+1}$. Writing the inclusion as $j_m:K_m\hookrightarrow T$, we can dualise $h^\wedge\mu j_{m+1}$ and alternatively check the statement that $j_{m+1}^\wedge\mu h$ vanishes on $T$. $h(T)\subset p^mT$, so let $y\in T$ and $j^\wedge_{m+1}\mu(p^my)=(j_{m+1}^\wedge p)\mu(p^{m-1}y)$. So we consider the vanishing of $j_{m+1}^\wedge p\mu$, or dually $\mu\overline{p}j_{m+1}$. But, using that $p=u\overline{p}$, we note that $pj_{m+1}(K_{m+1})\subseteq j_m(K_m)$, and we know that $\mu$ vanishes on $K_{m}$ already. So we are done.

\medskip

For induction (1) we will modify the argument from induction (2). We proceed as before, noting along the way that our submodule for induction is now $p^mK_{m+1}$ so our analogous $h$ has $h(T)\subseteq (p^mK_{m+1})^\perp=K_m+pT$ by Lemma \ref{lem:perp}. We will now need to use hypothesis H$(l)$ at various stages, which means we need to carry it with us at each stage of the induction. We will assume for now that H$(l)$ holds for our $f:T\to T'$ and prove at the end that H$(l)$ also holds after we improve $f$.

The first thing to check is that $h$ is nilpotent. Firstly, for each $i\geq0$ we have $h(K_{i+1})\subseteq (p^iK_{i+1})^\perp=K_i+pT$. This follows from the definitions and application of hypothesis H$(i)$, or rather its dual. This tells us that repeated application of $h$ gives\[h^{d-m}(K_m)\subseteq K_{0}+pT=pT.\]Hence $h^{d(d-m)}=0$ and so that $(1+h)$ is an isomorphism. Define $f'=f(1+h)^{-1}$. 

Next we must check that $f'$ has the desired property that the corresponding improved $\mu$ vanishes on $p^{m-1}K_{m}$. This check is reduced (by a similar calculation as in induction (2)) to checking the statement that $\mu h=0$ on $p^{m-1}K_m$. It is sufficient to check that $\mu(x)(y)=0$ for $x\in K_m+pT$, $y\in p^{m-1}K_m$. If $x\in pT$ then this is true by sesquilinearlity and if $x\in K_m$ then the result follows from use our hypothesis H$(m)$.

Finally we must check that $f'$ has property H$(l)$ whenever $f$ does. In other words we need that for each $l>0$ \begin{eqnarray*}0&=&\lambda((1+h)x,(1+h)y)-\lambda'(f(x),f(y))\\
&=&\lambda(hx,hy)+\lambda(hx,y)+\lambda(x,hy)\end{eqnarray*}whenever $x\in p^lK_{l+1}$, $y\in K_{l+1}$. But the latter two summands vanish as it was already noted that $h(K_{l+1})\subseteq (p^iK_{l+1})^\perp$. We must show that $\mu(x)(h(y))=0$. Write $x=p^lw$ with $w\in K_{l+1}$. But again, either $h(y)\in K_{l}$, in which case $\mu(p^lz)(h(y))=0$, or $h(y)=pz$ for some $z\in T$, in which case $\mu(p^lw)(pz)=\mu(p^{l+1}w)(z)=0$.
\end{proof}

\subsection*{Decomposition in a principal ideal domain}

Assume for the rest of this section that $A$ is a PID.

\medskip

We now consider our previous decomposition of a $\p$-primary $T$ into its $T_l$-summands, but with the extra structure of a non-singular linking form $(T,\lambda)$. Unfortunately the decomposition \textit{does not} extend naturally to a decomposition of the form. But here is a way to do it non-naturally:

\begin{proposition}\label{prop:nonnatural}Let $A$ be a local ring with maximal ideal $\p=\overline{\p}$. Suppose $(T,\lambda)$ is an $\eps$-symmetric linking form over $(A,\p^\infty)$, then there exist $\eps$-symmetric linking forms $(T_l,\lambda_l)$ over $(A,\p^\infty)$ such that there is a (non-natural) isomorphism\[(T,\lambda)\cong\bigoplus_{l=1}^\infty(T_l,\lambda_l).\]
\end{proposition}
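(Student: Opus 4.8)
The plan is to build the splitting one torsion-degree at a time, working down from the top, using the orthogonal complement lemma to peel off a non-singular summand at each stage. Since $A$ is a local PID with maximal ideal $\p=\overline{\p}$, every $\p$-primary $T$ decomposes on the level of $A$-modules as $T\cong\bigoplus_{l=1}^\infty T_l$ with $T_l$ a free $(A/p^lA)$-module, and only finitely many $T_l$ are nonzero; let $d$ be the largest index with $T_d\neq 0$. First I would locate a copy of $T_d$ inside $T$ as a submodule $L\hookrightarrow T$ on which the restricted form $(L,\lambda|_L)$ is \emph{non-singular}: concretely, $T$ contains a direct summand $L$ isomorphic to the free $(A/p^dA)$-module $T_d$, and because $T$ has annihilator exactly $\p^d$ while $L$ has annihilator $\p^d$, the pairing $\lambda$ restricted to $L$ must remain injective after the quotient $A$-module structure is taken into account. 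The key point is that no element of $L$ can be orthogonal to all of $L$ without being orthogonal to all of $T$ (using that $L$ carries the full $p^{d}$-torsion), so $\lambda|_L: L\to L^\wedge$ is an isomorphism.

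Once $(L,\lambda|_L)$ is a non-singular linking form, I apply the orthogonal-complement lemma proved just above (the one stating that if $(L,\lambda|_L)$ is non-singular then so is $(L^\perp,\lambda|_{L^\perp})$ and $(j\;\,i):(L,\lambda|_L)\oplus(L^\perp,\lambda|_{L^\perp})\xrightarrow{\cong}(T,\lambda)$). This splits off a non-singular $(T_d,\lambda_d)$, and the complement $L^\perp$ is a $\p$-primary module with annihilator dividing $\p^{d-1}$. Then I induct on $d$: the complement $(L^\perp,\lambda|_{L^\perp})$ is a non-singular $\eps$-symmetric linking form over $(A,\p^\infty)$ whose underlying module is $\bigoplus_{l=1}^{d-1}T_l$ (the module-level decomposition of $L^\perp$ agrees with the truncation of that of $T$, since $L^\perp$ is a direct summand complementary to the free $(A/p^dA)$-module $L$), so by the inductive hypothesis it splits as $\bigoplus_{l=1}^{d-1}(T_l,\lambda_l)$. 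Reassembling gives the desired (non-natural) isomorphism $(T,\lambda)\cong\bigoplus_{l=1}^\infty(T_l,\lambda_l)$. The base case $d=1$ is immediate since then $T=T_1$ and we take $\lambda_1=\lambda$.

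The main obstacle — and the only step requiring real care — is the first one: producing a submodule $L\cong T_d$ on which $\lambda$ restricts to a \emph{non-singular} linking form. A naive choice of the ``$T_d$-summand'' coming from a diagonalisation of a resolution need not be orthogonally well-placed. The fix is to argue at the level of the top torsion quotient: $p^{d-1}T$ is precisely the $\p$-torsion submodule $K_1$ intersected with $p^{d-1}T$, and the form $\lambda$ induces on $\Delta_d(T)\cong p^{d-1}T_d$ a non-singular $(u_p^{d-1}\eps)$-symmetric form over the field $A/\p$ by Claim~\ref{clm:auxwelldef} (here $A/\p$ is a field since $A$ is a local PID and $\p$ its maximal ideal). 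Non-singularity over the residue field, combined with Nakayama's lemma applied to the $(A/p^dA)$-module map $\lambda|_L\colon L\to L^\wedge$, upgrades to non-singularity of $(L,\lambda|_L)$ for a suitable lift $L$ of a basis of $\Delta_d(T)$ to a free $(A/p^dA)$-submodule of $T$. Everything after that is a routine application of the orthogonal-splitting lemma and induction, and needs no further commentary.
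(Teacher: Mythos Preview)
Your proof is correct and follows essentially the same strategy as the paper: peel off the top-torsion summand $T_d$ by showing the restricted form is non-singular, apply the orthogonal-complement lemma, and descend by induction on $d$. The only difference is in how you justify the key non-singularity claim for $(L,\lambda|_L)$: the paper argues directly that the $(A/p^dA)$-linear map $\lambda_d$ is an isomorphism if and only if its reduction mod $p$ is, and identifies that reduction with $[\lambda]$; you instead invoke the non-singularity of the auxiliary form $b_d$ on $\Delta_d(T)$ from Claim~\ref{clm:auxwelldef} and then use Nakayama to lift. These are equivalent packagings of the same fact (a map of free modules over the local ring $A/p^dA$ is an isomorphism iff it is so over the residue field), and your version has the virtue of plugging directly into machinery already established. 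One cosmetic slip: the symmetry sign on $b_d$ should be $u_p^{d}\eps$, not $u_p^{d-1}\eps$, but this is irrelevant to the argument.
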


\begin{proof} Choose a generator $p\in A$ of $\p$ and let $d$ be the largest integer such that $p^{d-1} T\neq 0$. Write $j:T_d\hookrightarrow T$ for the inclusion. We claim that the restriction $\lambda_d:=j^\wedge\lambda j$ of $\lambda$ to $T_d$ defines a non-singular linking form $(T_d,\lambda_{d})$ and that there is a (non-natural) choice of isomorphism $(T, \lambda) \cong(T_d, \lambda_d)\oplus(T',\lambda_{T'})$ where $T':=T/T_d$. Note that if $d'$ is the largest integer such that $p^{d'-1}T'=0$ then $d'<d$ so the Lemma is proved by iteration.

There is an $A$-module isomorphism $T_d^\wedge\cong T^*_d:=\Hom_{A/p^dA}(T_d,A/p^dA)$ so that both $T_d$ and $T_d^\wedge\cong T^*_d$ have the structure of free $(A/p^dA)$-modules. $\lambda_d\in\Hom_A(T_d,T_d^\wedge)\cong\Hom_{A/p^dA}(T_d,T_d^*)$ is an $A$-module isomorphism if and only if it is an $(A/p^dA)$-module isomorphism. An element $a \in A/p^dA$ is a unit if and only if $[a] \in A/pA$ is a unit, hence the $(A/p^dA)$-module morphism $\lambda_d$ is an isomorphism if and only if the $(A/pA)$-module morphism \[[\lambda_d]:T_d/pT_d \to T_d^*/pT_d^*\] is an isomorphism. But as $d$ is the largest integer for which $p^{d-1}T \neq 0$, we have $T_d/pT_d = T/pT$, so in fact \[[\lambda_d] = [\lambda] : T/pT \to T^\wedge/pT^\wedge.\] As $\lambda:T\to T^\wedge$ is an $A$-module isomorphism, so $[\lambda]$ is an $(A/pA)$-module isomorphism. Therefore, tracing the line of reasoning backwards, we see that $(T_d,\lambda_d)$ is non-singular.

As $j$ was originally a split injection we make a choice of splitting so that $T'=T/T_d$ and $T\cong T_d\oplus T'$. Restricting $\lambda$ to $T'$ via this isomorphism defines a non-singular linking form $(T',\lambda')$ as claimed.
\end{proof}

The module $T_l$ in the decomposition $(T,\lambda)\cong\bigoplus_{l=1}^\infty(T_l,\lambda_l)$ is uniquely determined up to isomorphism by the isomorphism class of $T$. However a \textit{choice of splitting of the form} was made when we wrote $(T, \lambda) \cong(T_d, \lambda_d)\oplus(T', \lambda')$. 

In general, it is not even true that the monoid $\NN^\epsilon(A,\p^\infty)$ decomposes accordingly - the forms $(T_l,\lambda_l)$ are not uniquely determined up to isomorphism. For instance within $\NN^\epsilon(\Z,\Z\sm\{0\})$, the localisation away from the prime 2 has forms which interrelate among the different values of $l$.

What can be salvaged? As the Main Decomposition Theorem \ref{thm:levinePID} suggests, the only problems occur when there is no half-unit.

\begin{theorem}\label{thm:MDT2}Let $A$ be a local ring with maximal ideal $(p)=\p=\overline{\p}$ such that $A/\p$ contains a half-unit. Define $\NN^\eps_{\p,l}\subset \NN^\eps(A,\p)$ to be the submonoid consisting of those linking forms $(T,\lambda)$ with $T=T_l$. Then for each $l>0$ there is an isomorphism of commutative monoids\[\NN^\eps_{\p,l}\cong \NN^{v_p}(A/\p)\qquad v_p:=\left\{\begin{array}{lll}\eps&&\text{$l$ even,}\\u_p\eps&&\text{$l$ odd.}\end{array}\right.\]Furthermore, there is a natural isomorphism of commutative monoids\[\NN^\eps(A,\p^\infty)\cong\bigoplus_{l=1}^\infty\NN^{v_p}(A/\p).\]\end{theorem}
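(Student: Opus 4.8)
The plan is to realise both isomorphisms through the auxiliary forms, using the Main Decomposition Theorem~\ref{thm:levinePID} for injectivity and a half-unit (via Claim~\ref{clm:halfunit}) for surjectivity. First define $\Phi_l\colon\NN^\eps_{\p,l}\to\NN^{v_p}(A/\p)$ to be $(T,\lambda)\mapsto(\Delta_l(T),b_l(\lambda))$ followed by the rescaling isomorphism $\NN^{u_p^l\eps}(A/\p)\xrightarrow{\cong}\NN^{v_p}(A/\p)$; the latter exists because multiplying a form by a unit $c$ replaces its symmetry factor by $\overline c c^{-1}$ times the old one, and $u_p^l\eps$ and $v_p$ differ by such a factor (take $c$ a suitable power of $u_p$ and use $\overline{u_p}=u_p^{-1}$ from Lemma~\ref{lem:invinv}). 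By Claim~\ref{clm:auxwelldef} the form $(\Delta_l(T),b_l(\lambda))$ is non-singular and $(u_p^l\eps)$-symmetric, so $\Phi_l$ is a well-defined monoid homomorphism (additivity of $\Delta_l$ and $b_l$). To see $\Phi_l$ is injective I would argue: if $\Phi_l(T,\lambda)\cong\Phi_l(T',\lambda')$ then $T$ and $T'$ are free $A/p^lA$-modules of the same rank $\dim_{A/\p}\Delta_l(T)$, so choosing bases and lifting the invertible matrix of the given form isomorphism $\Delta_l(T)\cong\Delta_l(T')$ from $A/\p$ to an invertible matrix over $A/p^lA$ produces an $A$-module isomorphism $T\cong T'$ inducing it; since every other auxiliary form of $T$ and of $T'$ vanishes, Theorem~\ref{thm:levinePID} gives $(T,\lambda)\cong(T',\lambda')$.

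For surjectivity of $\Phi_l$ I would build a preimage by hand, on the model of Example~\ref{lem:andrewripoff}. Given a non-singular $(u_p^l\eps)$-symmetric form $(V,\nu)$ over $A/\p$, take $T$ a free $A/p^lA$-module with $T/pT\cong V$, lift $\nu$ to some $\mu_0\colon T\to T^*$ by lifting a defining matrix entrywise, and symmetrise $\mu_0$ using a half-unit $s$ of $A/p^lA$ --- which exists by Claim~\ref{clm:halfunit} --- to get $\lambda_0$. One checks that $\lambda_0$ is $(u_p^l\eps)$-symmetric over $A/p^lA$ and that $\lambda_0\equiv\nu\bmod p$ (using $s+\overline s=1$ and the symmetry of $\nu$), so $\lambda_0$ is invertible; then the morphism $\lambda\colon T\to T^\wedge$ corresponding to $\lambda_0/p^l$ under $T^\wedge\cong\Hom_{A/p^lA}(T,A/p^lA)$ is a non-singular \emph{$\eps$-symmetric} linking form --- the change of symmetry factor is exactly accounted for by $p^l=u_p^l\,\overline{p}^{\,l}$ --- with $(\Delta_l(T),b_l(\lambda))\cong(V,\nu)$. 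Hence $\Phi_l$ is bijective.

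For the second isomorphism, set $\Phi\colon\NN^\eps(A,\p^\infty)\to\bigoplus_{l\ge1}\NN^{v_p}(A/\p)$, $(T,\lambda)\mapsto\big((\Delta_l(T),b_l(\lambda))\big)_{l\ge1}$ (again rescaled to $v_p$-symmetry). This lands in the direct sum because a finitely generated $\p$-primary module has bounded exponent, so only finitely many $\Delta_l(T)\cong T_l/pT_l$ are non-zero; it is a homomorphism by additivity of $\Delta_l,b_l$; and it is manifestly canonical, hence natural, unlike the splitting of Proposition~\ref{prop:nonnatural}. Surjectivity is immediate from the previous paragraph: given a tuple, take the direct sum of the linking forms produced by the various $\Phi_l^{-1}$ and note that $\Delta_m$ of a direct sum $\bigoplus_l T_l$ of $A/p^lA$-free modules sees only the $m$-th summand. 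For injectivity, if $\Phi(T,\lambda)=\Phi(T',\lambda')$, write $T\cong\bigoplus_l T_l$, $T'\cong\bigoplus_l T_l'$ with $T_l,T_l'$ free over $A/p^lA$; the isomorphisms of auxiliary forms force $T_l\cong T_l'$ and can be realised by $A$-module isomorphisms $\phi_l\colon T_l\to T_l'$ (lift the matrix, as before); then $\bigoplus_l\phi_l\colon T\to T'$ induces all the auxiliary-form isomorphisms simultaneously, and Theorem~\ref{thm:levinePID} gives $(T,\lambda)\cong(T',\lambda')$.

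The substantial step is the construction in the second paragraph: producing a linking form over $(A,\p^\infty)$ that realises a prescribed form over the residue field. This is exactly where the half-unit hypothesis is indispensable --- without it the lift $\mu_0$ cannot be symmetrised, and the conclusion genuinely fails (the prime $2$ over $\Z$, as in Example~\ref{ex:KK}). A secondary point, easy to overlook, is that Theorem~\ref{thm:levinePID} requires a \emph{single} module isomorphism inducing \emph{all} the auxiliary-form isomorphisms at once, so in the injectivity argument the per-degree isomorphisms $\phi_l$ must be assembled before it can be invoked.
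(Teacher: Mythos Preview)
Your proof is correct and follows essentially the same route as the paper: auxiliary forms define the maps, injectivity comes from the Main Decomposition Theorem combined with lifting an invertible matrix from $A/\p$ to $A/p^lA$, surjectivity comes from building a linking form on a free $A/p^lA$-module with prescribed reduction mod $p$, and the symmetry adjustment $u_p^l\eps\rightsquigarrow v_p$ is done by rescaling. Your injectivity argument for the global map (assemble $\bigoplus_l\phi_l$ and invoke Theorem~\ref{thm:levinePID} once) is a clean variant of the paper's ``no interrelations between generators for different $l$'' argument.

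One place where you are in fact more careful than the paper: in constructing a preimage for $\Phi_l$, the paper simply writes $\lambda(x,y):=p^{l-1}b(x,y)$ after lifting a basis, without addressing why the lifted pairing can be taken $(u_p^l\eps)$-symmetric over $A/p^lA$. You handle this correctly by lifting the matrix of $\nu$ arbitrarily and then symmetrising with a half-unit in $A/p^lA$ (supplied by Claim~\ref{clm:halfunit}); this is exactly where the half-unit hypothesis enters, and your remark that the construction fails without it (as at the prime $2$ over~$\Z$) is the right diagnosis. A tiny slip: multiplying a $\delta$-symmetric form by a unit $c$ gives a $(c\overline{c}^{-1}\delta)$-symmetric form, not $(\overline{c}c^{-1}\delta)$-symmetric --- but since you only need a suitable power of $u_p$ (with $\overline{u_p}=u_p^{-1}$) this is harmless.
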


\begin{proof}Suppose $T=T_l$, $T'=T'_l$ for some $l>0$. Then non-singular linking forms $(T,\lambda)$, $(T',\lambda')$ over $(A,\p^\infty)$ have naturally defined auxiliary forms $(\Delta_l,b_l)$, $(\Delta'_l,b'_l)$ (respectively) over $A/\p$. Choose a basis $e_1,\dots,e_n$ of $\Delta_l$ and $e_1',\dots,e_m'$ of $\Delta_l'$, so that a morphism $f:\Delta_l\to\Delta_l'$ is a matrix $(f_{ij})$ with respect to these bases. We may lift these to bases $[e_i]$ and $[e_i']$ of $T$ and $T'$ respectively (regarded as free $A/p^lA$-modules). This allows us to define an $A/p^lA$-module morphism $[f]:T\to T'$ by $[f]([e_i])=\sum_{j}f_{ij}[e_j]$. If $m=n$, then $f$ is an isomorphism if and only if the determinant of $f$ is a unit in $A/\p$. But if $u\in A$, then $u\in A/\p$ is a unit if and only if $u\in A/\p^k$ is a unit. Hence if there is an isomorphism $f:(\Delta_l,b_l)\cong(\Delta_l',b_l')$ then it is induced by an isomorphism $T\cong T'$. But this argument, taken together with the Main Decomposition Theorem, shows there is then a well-defined injective morphism of monoids \begin{equation}\label{eq:monoidmorph}\NN^\eps_{\p,l}\to \NN^{u_p^l\eps}(A/\p);\qquad(T,\lambda)\mapsto (\Delta_l(T),b_l(\lambda)).\end{equation}To show it is surjective, take any non-singular $\eps$-symmetric form $(\Delta,b)$ over $A/\p$ with a choice of basis $e_1,\dots, e_n$ of $\Delta$. Using this basis, we define a free $A/\p^l$-module $T$ and an $A/\p^l$-module morphism $\lambda:T\to T^*=\Hom_{A/\p^l}(T,A/\p^l)$ by $\lambda(x,y):=p^{l-1}b(x,y)$. Then $(\Delta_l(T),b_l(\lambda))=(\Delta,b)$ as required. Therefore the morphism (\ref{eq:monoidmorph}) is an isomorphism of monoids.

The Main Decomposition Theorem gives a well-defined, natural morphism of commutative monoids\[\NN^\eps(A,\p^\infty)\to\bigoplus_{l=1}^\infty\NN^{u_p^l\eps}(A/\p);\qquad (T,\lambda)\mapsto \bigoplus_{l=1}^\infty (\Delta_l(T),b_l(\lambda)).\]By Proposition \ref{prop:nonnatural}, for any non-singular $\eps$-symmetric linking form $(T,\lambda)$ over $(A,\p^\infty)$ we can make a choice of decomposition $(T,\lambda)\cong\bigoplus_{l=1}^\infty(T_l,\lambda_l)$, with $(T_l,\lambda_l)$ in $\NN^\eps_{\p,l}$. By the Main Decomposition Theorem, this isomorphism induces an isomorphism $(\Delta_l(T),b_l(\lambda))\cong(\Delta_l(T_l),b_l(\lambda_l))$ for each $l>0$. We showed earlier how to make a choice of lift of elements in $\NN^{u_p\eps}(A/\p)$ to elements in $\NN^\eps_{\p,l}\subset \NN^\eps(A,\p^\infty)$ and our choice of decomposition $(T,\lambda)\cong\oplus_{l=1}^\infty(T_l,\lambda_l)$ shows that $\NN^\eps(A,\p^\infty)$ is generated by such lifted elements.

We only need to show that there are no interrelations between the lifted generators for different values of $l$. But suppose we have a finite sum \[\sum_{l=1}^\infty\sum_k F_{l,k}=\sum_{l=1}^\infty\sum_k F'_{l,k}\in \NN^\eps(A,\p^\infty),\]where $F_{l,k},F'_{l,k}\in\NN^\eps_{\p,l}$ are non-singular linking forms for each $k$. Then writing the auxiliary forms of $F_{l,k}$ and $F'_{l,k}$ as $\Delta_{l,k}$ and $\Delta'_{l,k}$ respectively we must have for each $l>0$\[\sum_k\Delta_{l,k}=\sum_k\Delta'_{l,k}\in \NN^{u_p^l\eps}(A/\p).\]So any relation in $\NN^\eps(A,\p^\infty)$ results in a finite set of relations, indexed by $l$, each relation within a respective copy of $\NN^{u_p^l\eps}(A/\p)$. Conversely, any such finite set of relations on $\NN^\eps(A/\p)$, indexed by $l$, will recover a relation in $\NN^\eps(A,\p^\infty)$ by the Main Decomposition Theorem. Hence there is a relation in $\NN^\eps(A,\p^\infty)$ if and only if it is generated by a finite set of relations within respective $\NN^{u_p\eps}(A/\p)$, independent for each $l>0$. 

We finally have to modify the symmetries to the $v_p$ in the statement of the theorem, but we do this by a standard trick. If $u=\overline{u}^{-1}$ is a unit in a ring $R$ and $(K,\alpha)$ is a $(u^2\eps)$-symmetric form over $R$ then the pairing $\alpha_u(x,y):=\alpha(\overline{u}x,y)=\overline{u}\alpha(x,y)=u\eps\overline{\alpha(y,x)}=\eps\overline{\alpha(\overline{u}y,x)}=\eps\overline{\alpha_u(y,x)}$ is $\eps$-symmetric. The assignment $(K,\alpha)\mapsto (K,\alpha_u)$ defines an isomorphism $\NN^{u^2\eps}(R)\cong \NN^\eps(R)$. Hence we may reduce our symmetries to the $v_p$ claimed. This completes the proof.\end{proof}

Our proof says nothing about what any of the generators of the monoids \textit{are}. We have not computed the monoids involved, hence the conspicuous lack of something like Hensel's Lemma. Here is a silly corollary suggesting that we have bypassed Hensel's Lemma and can actually recover it (to some extent) from Theorem \ref{thm:MDT2}.

\begin{corollary}Let $A=\Z$ and $p\in\Z$ be an odd prime. Fix $l>0$ and suppose we know that modulo $p^k$, multiplication by quadratic residues is transitive on quadratic non-residues for $k=1,l$. Then $a\in\Z$ is a quadratic residue modulo $p$ if and only if it is a quadratic residue modulo $p^l$.
\end{corollary}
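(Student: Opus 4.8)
The plan is to feed two especially simple rank-one linking forms through Theorem \ref{thm:MDT2} and read off the conclusion. Since $p$ is an odd prime, $A=\Z$ is a PID, the involution is trivial so $(p)=\p=\overline{\p}$ with $u_p=1$, and $\Z/\p=\Z/p\Z$ contains a half-unit (namely $2^{-1}$, which exists as $p$ is odd). Hence Theorem \ref{thm:MDT2} applies with $\eps=1$; because $u_p=1$ we have $v_p=1$ for every $l$, so for the given $l>0$ the $l$th auxiliary form defines an isomorphism of commutative monoids $\NN^1_{\p,l}\xrightarrow{\cong}\NN^1(\Z/p\Z)$, in particular an injection.

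Next I would, for each $a\in\Z$ coprime to $p$, introduce the non-singular symmetric linking form $F^{(l)}_a:=(\Z/p^l\Z,\lambda_a)$ with $\lambda_a(x,y)=axy/p^l$. Its underlying module is $T_l$, so $F^{(l)}_a\in\NN^1_{\p,l}$, and exactly as in Example \ref{lem:andrewripoff} one checks that its $l$th auxiliary form is the rank-one form $\langle a\bmod p\rangle$ over $\Z/p\Z$, i.e.\ $(x,y)\mapsto axy\in\Z/p\Z$, which depends on $a$ only through its residue mod $p$ and not on $l$. Now a morphism $F^{(l)}_a\to F^{(l)}_{a'}$ of linking forms is multiplication by some $c\in\Z/p^l\Z$, which is an isomorphism precisely when $c$ is a unit and $a\equiv a'c^2\pmod{p^l}$; taking $a'=1$ this says $F^{(l)}_a\cong F^{(l)}_1$ if and only if $a$ is a square — hence a quadratic residue — mod $p^l$. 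Likewise $\langle a\bmod p\rangle\cong\langle 1\rangle$ over $\Z/p\Z$ if and only if $a$ is a quadratic residue mod $p$.

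The corollary then follows by chaining these through Theorem \ref{thm:MDT2}:
\[
a\text{ a QR mod }p^l\ \Longleftrightarrow\ F^{(l)}_a\cong F^{(l)}_1\ \Longleftrightarrow\ \langle a\bmod p\rangle\cong\langle 1\rangle\ \Longleftrightarrow\ a\text{ a QR mod }p,
\]
where the middle equivalence is precisely the injectivity of the auxiliary-form isomorphism. The $(\Leftarrow)$ direction is the trivial reduction of a square mod $p^l$ to a square mod $p$, so the whole content lies in $(\Rightarrow)$, which is the instance of Hensel's Lemma being "recovered".

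There is no genuine obstacle; the only care needed is in the auxiliary-form bookkeeping — getting the embedding $\Z/p\Z\hookrightarrow S^{-1}A/A$, $x\mapsto x/p$, and the factor $p^{l-1}$ right so that $b_l$ really is $\langle a\bmod p\rangle$ and not a twist of it — together with keeping the direction of the form-isomorphism relation $\lambda=f^\wedge\lambda'f$ straight. One notices along the way that the stated transitivity hypotheses are in fact more than is needed: ``$a$ is a QR mod $p^k$'' already means ``$a$ is a nonzero square mod $p^k$''. The real point is that Theorem \ref{thm:MDT2} and the Main Decomposition Theorem were proved with no input from Hensel's Lemma, so the statement drops out purely from the naturality of the auxiliary-form construction.
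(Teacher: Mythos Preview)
Your proof is correct and uses the same key tool as the paper---the monoid isomorphism $\NN^1_{\p,l}\cong\NN^1(\Z/p\Z)$ from Theorem~\ref{thm:MDT2}---together with the explicit rank-one forms $(\Z/p^l\Z,\,axy/p^l)$. The paper's argument differs only in packaging: rather than comparing $F^{(l)}_a$ directly to $F^{(l)}_1$, it first invokes the transitivity hypotheses to say there are exactly two isomorphism classes of rank-one forms at each level (QR-type and QNR-type), then matches them up via the monoid isomorphism. Your direct comparison to the reference form $F^{(l)}_1$ is cleaner and, as you correctly observe, makes the transitivity hypotheses redundant---a point the paper's presentation obscures by baking them into its two-class setup.
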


\begin{proof}Any non-singular form $(\Delta,b)$ over $\Z/p\Z$ decomposes into a direct sum of linear forms $\Delta(a):=(\Z/p\Z,a/p)$ for some $a=1,2,\dots,p-1$. There is an isomorphism $\Delta(a)\cong\Delta(a')$ if and only if $a\equiv c^2a' \text{ mod }p$ for some $c=1,2,\dots p-1$. Using the assumption that quadratic residues act transitively on quadratic non-residues modulo $p$, there are exactly two isomorphism classes of linear forms, given by $\Delta(1)$ and $\Delta(n)$ where $n$ is a choice of fixed quadratic non-residue modulo $p$.

Our isomorphism $\NN(\Z/\p)\cong\NN_{\p,l}$ tells us that $\NN_{\p,l}$ is generated by choices of lift of $\Delta(1)$ and $\Delta(n)$. Moreover, our analysis of isomorphism classes of linear forms mod $p$ holds mod $p^l$ verbatim. We may choose $\Delta(1)$ to lift to $(\Z/p^l\Z,1/p^l)$, and as $1$ is a quadratic residue mod $p^l$ this forces any choice of lift of $n$ to be a quadratic non-residue mod $p^l$. Using our assumption that quadratic residues act transitively on quadratic non-residues modulo $p^l$, the result follows.
\end{proof}

\section{Split and double Witt groups}\label{sec:witt}

We will now define several ways in which an $\eps$-symmetric form or linking form can be considered trivial, that all involve the idea a lagrangian submodule.

\subsection{Lagrangians}

\begin{definition}A \textit{lagrangian} for a non-singular $\eps$-symmetric form $(P,\theta)$ over $A$ is a submodule $j : L \hookrightarrow P$ in $\A(A)$ such that the sequence\[0\to L\xrightarrow{j}P\xrightarrow{j^*\theta} L^*\to0\]is exact. As modules in the category $\A(A)$ are projective, all surjective morphisms split, and a lagrangian is always a direct summand. If $(P,\theta)$ admits a lagrangian it is called \textit{metabolic}. If $(P,\theta)$ admits two lagrangians $j_\pm:L_\pm\hookrightarrow P$ (labelled ``$+$'' and ``$-$'') such that they are complementary as submodules\[\left(\begin{matrix}j_+\\j_-\end{matrix}\right):L_+\oplus L_-\xrightarrow{\cong} P,\]then the form is called \textit{hyperbolic}.
\end{definition}

\begin{remark}If $j:L\hookrightarrow P$ is a submodule, then $L$ is a lagrangian if and only if $L=L^\perp$. In this case the form $\theta$ vanishes on $L$ and $L$ is referred to as a `maximally self-annihilating submodule'. The reader might prefer to define a lagrangian in these terms.\end{remark}

\begin{lemma}\label{lem:splitishyp1}If $j:L\hookrightarrow P$ is a lagrangian for a non-singular $\eps$-symmetric form $(P,\theta)$ over $A$ then for any splitting $k:L^*\to P$ of $j^*\theta$, there is an isomorphism of forms\[(j\,\,k):\left(L\oplus L^*,\left(\begin{matrix}0&1\\\eps &k^*\theta k\end{matrix}\right)\right)\xrightarrow{\cong}(P,\theta).\] If $A$ contains a half-unit then we may choose a splitting $k'$ such that there is an isomorphism of forms\[(j\,\, k'):\left(L\oplus L^*,\left(\begin{matrix}0&1\\\eps&0\end{matrix}\right)\right)\xrightarrow{\cong}(P,\theta),\] in particular all metabolic $\eps$-symmetric forms $(P,\theta)$ over $A$ are hyperbolic in this case.
\end{lemma}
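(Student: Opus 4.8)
The plan is to read the two matrix presentations straight off a choice of splitting of the lagrangian sequence, and then, in the half-unit case, to improve that splitting.

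For the first assertion, observe that since $L^*$ lies in $\A(A)$ it is projective, so the defining exact sequence $0\to L\xrightarrow{j}P\xrightarrow{j^*\theta}L^*\to 0$ of the lagrangian splits; any splitting $k\colon L^*\to P$ of $j^*\theta$ (so $j^*\theta\, k=\id_{L^*}$) makes $(j\,\,k)\colon L\oplus L^*\to P$ an isomorphism of $A$-modules, with inverse $p\mapsto\big(j^{-1}(p-k\, j^*\theta(p)),\,j^*\theta(p)\big)$ --- here $p-k\,j^*\theta(p)\in\ker(j^*\theta)=\im(j)$ by exactness, so $j^{-1}$ makes sense. I would then pull $\theta$ back and compute the block matrix $(j\,\,k)^*\theta\,(j\,\,k)=\lmat j^*\theta j & j^*\theta k\\ k^*\theta j& k^*\theta k\rmat$ entry by entry. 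Exactness of the lagrangian sequence gives $\ker(j^*\theta)=\im(j)$, i.e.\ $L=L^\perp$, so the $(1,1)$ entry $j^*\theta j$ vanishes; the $(1,2)$ entry is $\id_{L^*}$ by the splitting property; and the $(2,2)$ entry is $k^*\theta k$ tautologically. Finally, the pullback of an $\eps$-symmetric form is again $\eps$-symmetric, and the $\eps$-symmetry relation for a $2\times2$ matrix whose $(1,1)$ entry is $0$ and whose $(1,2)$ entry is $\id_{L^*}$ forces the $(2,1)$ entry to be $\eps$ (under the canonical identification $L^{**}\cong L$). This yields the first isomorphism of forms.

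For the second assertion, I would vary the splitting. Any two splittings $k,k'$ of $j^*\theta$ differ by a map through $\ker(j^*\theta)=\im(j)$, so $k'=k+j\gamma$ for a unique $\gamma\in\Hom_A(L^*,L)$, and such a $k'$ is again a splitting. Using the three entries just computed ($j^*\theta j=0$, $j^*\theta k=\id$, $k^*\theta j=\eps$), one gets $(k')^*\theta k'=\mu+\eps\gamma+\gamma^*$, where $\mu:=k^*\theta k=\eps\mu^*$. With the half-unit $s$ I would take $\gamma:=-\overline{s}\,\overline{\eps}\,\mu$; then $\eps\gamma=-\overline{s}\mu$ and $\gamma^*=-s\eps\mu^*=-s\mu$, so $\eps\gamma+\gamma^*=-(\overline{s}+s)\mu=-\mu$ and hence $(k')^*\theta k'=0$. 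Thus $(j\,\,k')$ identifies $(P,\theta)$ with $\big(L\oplus L^*,\lmat 0&1\\ \eps&0\rmat\big)$, in which $L\oplus 0$ and $0\oplus L^*$ are visibly complementary lagrangians; transporting them back through $(j\,\,k')$ exhibits $(P,\theta)$ as hyperbolic. In particular, when $A$ has a half-unit every metabolic $\eps$-symmetric form over $A$ is hyperbolic.

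There is no deep obstacle here: the whole argument is block-matrix bookkeeping organised around the exactness property of a lagrangian and the projectivity of $L^*$. The one place to be careful is tracking the effect of the duality functor on scalars --- whether $(\eps h)^*$ and $(sh)^*$ are $\eps h^*,\,sh^*$ or $\overline{\eps}h^*,\,\overline{s}h^*$ in the conventions of the ``Modules with duals'' subsection --- since this is exactly what justifies that the $(2,1)$ entry equals $\eps$ and that the chosen $\gamma$ solves $\eps\gamma+\gamma^*=-\mu$. Once those conventions are fixed, both verifications are immediate.
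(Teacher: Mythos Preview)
Your proof is correct and follows essentially the same route as the paper's. The paper is much terser: it declares the first part ``easily checked'' (citing Ranicki), and for the second part writes the modified splitting as $k'=k-\overline{\eps}\,j\,k^*\nu k$ with $\nu=s\theta$, which amounts to your $k'=k+j\gamma$ with $\gamma=-s\overline{\eps}\mu$ rather than your $\gamma=-\overline{s}\,\overline{\eps}\,\mu$; since $\overline{s}$ is equally a half-unit, this is the same argument up to that harmless swap.
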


\begin{proof}\cite[2.2]{MR560997} The first part is easily checked. Now assume there is a half-unit $s\in A$. To modify the splitting to $k'$, first define $\nu=s\theta$ so that $\theta=\nu+\eps\nu^*$. This allows us to define the $A$-module isomorphism\[g=(j\,\,(k-\overline{\eps}jk^*\nu k)):L\oplus L^*\to P,\]which results in the required isomorphism of forms.
\end{proof}

\begin{definition}Suppose $(A,S)$ defines a localisation. A \textit{(split) lagrangian} for a non-singular $\eps$-symmetric linking form $(T,\lambda)$ over $(A,S)$ is a submodule $j : L \hookrightarrow T$ in $\H(A,S)$ such that the sequence\[0\to L\xrightarrow{j}T\xrightarrow{j^\wedge\lambda} L^\wedge\to0\]is (split) exact. If $(T,\lambda)$ admits a (split) lagrangian it is called \textit{(split) metabolic}. If $(T,\lambda)$ admits two lagrangians $j_\pm:L_\pm\hookrightarrow T$ such that they are complementary as submodules\[\left(\begin{matrix}j_+\\j_-\end{matrix}\right):L_+\oplus L_-\xrightarrow{\cong} T,\]then the form is called \textit{hyperbolic}.
\end{definition}

\begin{lemma}\label{lem:splitishyp2}If $A$ contains a half-unit then all split metabolic $\eps$-symmetric linking forms $(T,\lambda)$ over $(A,S)$ are hyperbolic.
\end{lemma}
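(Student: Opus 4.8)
The plan is to transcribe, essentially verbatim, the proof of Lemma \ref{lem:splitishyp1} into the setting of linking forms, systematically replacing the dual $*$ by the torsion dual $\wedge$. This works because, by the discussion following Lemma \ref{lem:ext2}, the torsion dual on $\H(A,S)$ enjoys all the formal properties of the ordinary dual on $\A(A)$ used in Lemma \ref{lem:splitishyp1}: a natural double-duality isomorphism $T\cong T^{\wedge\wedge}$, dual morphisms $f^\wedge$, and the identity $\lambda=\eps\lambda^\wedge$ for an $\eps$-symmetric linking form. The one genuine difference from the form case is that objects of $\H(A,S)$ are not projective, so surjections need not split; this is exactly why the hypothesis must say \emph{split} metabolic rather than merely metabolic (contrast Lemma \ref{lem:splitishyp1}, where metabolic automatically implies split and hence hyperbolic).

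First I would unwind the split exact sequence $0\to L\xrightarrow{j}T\xrightarrow{j^\wedge\lambda}L^\wedge\to 0$ defining a split lagrangian. Choosing a splitting $k:L^\wedge\to T$ of $j^\wedge\lambda$ gives an $A$-module isomorphism $(j\ \ k):L\oplus L^\wedge\xrightarrow{\cong}T$; note $L^\wedge$ lies in $\H(A,S)$ (the torsion dual of an object of $\H(A,S)$ is again there), so $L\oplus L^\wedge$ is a legitimate object on which to carry a linking form. Pulling $\lambda$ back along this isomorphism, the direct check $j^\wedge\lambda j=0$, $j^\wedge\lambda k=\id$, $k^\wedge\lambda j=\eps$, together with $\beta:=k^\wedge\lambda k$ being $\eps$-symmetric, identifies $(T,\lambda)$ with the linking form $\big(L\oplus L^\wedge,\lmat 0&1\\\eps&\beta\rmat\big)$. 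This is the linking-form analogue of the first assertion of Lemma \ref{lem:splitishyp1} and is proved the same way.

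Next, invoking the half-unit $s\in A$, I would imitate the second half of Lemma \ref{lem:splitishyp1}: set $\nu=s\lambda$, so that $\lambda=\nu+\eps\nu^\wedge$, and use the $A$-module isomorphism $g=(j\ \ (k-\overline{\eps}\,jk^\wedge\nu k)):L\oplus L^\wedge\xrightarrow{\cong}T$ to cancel the $\beta$-entry, yielding an isomorphism of linking forms $(T,\lambda)\cong\big(L\oplus L^\wedge,\lmat 0&1\\\eps&0\rmat\big)$ onto the standard hyperbolic linking form. Finally I would observe that this standard form carries the two evident complementary lagrangians $L\oplus 0$ and $0\oplus L^\wedge$: each summand lies in $\H(A,S)$, the pair of inclusions is an isomorphism onto $L\oplus L^\wedge$ by construction, and the two defining sequences are exact because $\eps$ and $1$ are units. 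Transporting these lagrangians back through the isomorphisms shows $(T,\lambda)$ is hyperbolic.

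I do not anticipate a real obstacle here: every step mirrors Lemma \ref{lem:splitishyp1}, and once a splitting $k$ is handed to us by the "split" hypothesis the non-projectivity of objects in $\H(A,S)$ plays no further role. The only points deserving a moment's care are bookkeeping ones — confirming at each stage that the modules produced still belong to $\H(A,S)$, and that the torsion-dual manipulations (adjunction, symmetry, double duality) are exactly those licensed by Lemmas \ref{lem:ext1}--\ref{lem:ext2} — after which the argument is purely formal.
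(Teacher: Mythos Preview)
Your proposal is correct and is precisely the approach the paper takes: its entire proof reads ``The proof of Lemma \ref{lem:splitishyp1} carries through exactly the same, using torsion duals.'' You have simply spelled out in detail what the paper leaves implicit, including the observation that the split hypothesis is what supplies the section $k$ that projectivity would otherwise give for free.
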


\begin{proof}The proof of Lemma \ref{lem:splitishyp1} carries through exactly the same, using torsion duals.
\end{proof}

\begin{remark}The terms `lagrangian', `hyperbolic' etc. are often used to mean slightly different things in the literature. What we have called a lagrangian of a linking form is often known as a `metaboliser' and the term `lagrangian' is sometimes reserved for what we have called a split lagrangian. A split lagrangian is also classically called a `maximal isotropic subspace'. The literature does not usually focus on linking forms and our terminology was selected to better tackle this more subtle situation.

In general for a \textbf{linking form} there is a hierarchy:\[\text{hyperbolic}\,\,\subsetneq\,\,\text{split metabolic}\,\,\subsetneq\,\,\text{metabolic}.\]And for a \textbf{form}\[\text{hyperbolic}\,\,\subsetneq\,\,\text{split metabolic}\,\,=\,\,\text{metabolic}.\]Moreover, Lemmas \ref{lem:splitishyp1} and \ref{lem:splitishyp2} show that the presence of a half-unit destroys the distinction between split metabolic and hyperbolic for both forms and linking forms. More generally, if the form or linking form admits a `quadratic extension' \cite{MR620795}, this distinction is destroyed.
\end{remark}

\begin{example}

Some examples to illustrate the differences between the types. \begin{enumerate}[i.]
\item A symmetric form over $\Z$ that is metabolic but not hyperbolic: \[(P,\theta)=\left(\Z\oplus\Z,\left(\begin{array}{cc}0&1\\1&1\end{array}\right)\right)\] with lagrangian $\Z\oplus 0$. The property of taking only even values is preserved under isomorphisms of forms over $\Z$ (cf. `Type I' inner product spaces \cite[4.2]{MR0506372}), so as $\theta((x,y),(x,y))=2xy+y^2$, we can deduce this is not isomorphic to the only possible hyperbolic form here, which is $\left(\Z\oplus \Z,\lmat0&a\\b&0\rmat\right)$ for $a,b\in\{\pm1\}$. 
\item A symmetric linking form over $(\Z,\Z\sm\{0\})$ that is metabolic but not split metabolic: $(T,\lambda)=(\Z/4\Z,(a,b)\mapsto ab/4)$ with lagrangian $\Z/2\Z$. This is the only possible lagrangian and it is not a direct summand.
\item To see a symmetric linking form over $(\Z,\Z\sm\{0\})$ that is split metabolic but not hyperbolic we refer the reader to Banagl-Ranicki \cite{MR2189218}.
\end{enumerate}
\end{example}

\subsection{Witt groups: single or double?}

We will make much use of the following basic construction.

\begin{definition}[Monoid construction]\label{def:monoid} Let $(M, +)$ be an commutative monoid and let $N$ be a submonoid of $M$ (i.e. $0 \in N$ and $N + N \subset N$). Consider the equivalence relation: for $m_1, m_2 \in M$, define $m_1 \sim m_2$ if there exists $n_1, n_2 \in N$ such that $m_1 + n_1 = m_2 + n_2$. Then the set of equivalence classes $M/\sim$ inherits a structure of abelian monoid via $[m] + [m'] := [m + m']$. It is denoted by $M/N$. Assume that for any element $m \in M$ there is an element $m' \in M$ such that $m + m' \in N$, then $M/N$ is an abelian group with $-[m] = [m']$. \textit{It is then canonically isomorphic to the quotient of the Grothendieck group of $M$ by the subgroup generated by $N$.}
\end{definition}

\begin{remark}We wish to emphasise that even if the monoid construction $M/N$ is a group, the group does not necessarily have transitivity, in the sense that $[m]=0\in M/N$ does not imply $m\in N$ in general. We will often prove this as an additional property, but it does not come for free!
\end{remark}

\begin{lemma}If $(P,\theta)$ is an $\eps$-symmetric form over $A$ then the form \[(P,\theta)+(P,-\theta)=(P\oplus P,\theta\oplus-\theta)\]is split metabolic.
\end{lemma}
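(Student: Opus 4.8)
The plan is to exhibit an explicit lagrangian submodule for the form $(P\oplus P,\theta\oplus-\theta)$, namely the diagonal copy of $P$. Concretely, I would take $L=\{(x,x)\mid x\in P\}$ with inclusion $j=\left(\begin{smallmatrix}1\\1\end{smallmatrix}\right):P\hookrightarrow P\oplus P$. This is clearly a submodule in $\A(A)$ since $P$ is finitely generated projective. The task is then to verify that the sequence
\[0\to P\xrightarrow{j}P\oplus P\xrightarrow{j^*(\theta\oplus-\theta)}P^*\to0\]
is split exact, where the composite middle map sends $(u,v)\mapsto \theta(u)-\theta(v)$ (more precisely, $j^*(\theta\oplus-\theta)=(\theta\,\,{-\theta})$ as a map $P\oplus P\to P^*$).

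First I would check exactness on the left: $j$ is visibly injective. Next, exactness in the middle: if $\theta(u)=\theta(v)$ then since $\theta$ is injective (it is part of the data of a form, and the form is non-singular so $\theta$ is even an isomorphism) we get $u=v$, so $(u,v)=(u,u)\in\im(j)$; conversely $j^*(\theta\oplus-\theta)\circ j=\theta-\theta=0$. For exactness on the right (surjectivity of $j^*(\theta\oplus-\theta)$): given $\phi\in P^*$, non-singularity of $\theta$ gives $w\in P$ with $\theta(w)=\phi$, and then $(w,0)\mapsto\theta(w)=\phi$. Finally, the sequence splits because $P^*$ is projective (it lies in $\A(A)$, as $P$ does), so the surjection admits a section; this also re-proves that $L$ is a direct summand, consistent with the remark after the definition of lagrangian. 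Hence $(P\oplus P,\theta\oplus-\theta)$ is (split) metabolic, which is what was to be shown — and indeed by Lemma \ref{lem:splitishyp1}, if $A$ has a half-unit it is even hyperbolic, though the statement only asks for split metabolic.

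There is essentially no hard part here; the only thing to be slightly careful about is bookkeeping the sign and the identification of $j^*(\theta\oplus-\theta)$ with the row map $(\theta\,\,{-\theta}):P\oplus P\to P^*$, and confirming the image of the diagonal under this map is zero — which is exactly where the opposite signs on the two copies of $\theta$ are used. One could alternatively phrase the proof via the remark characterising lagrangians as maximally self-annihilating submodules: the form $(\theta\oplus-\theta)$ evaluated on $(x,x),(y,y)$ gives $\theta(x)(y)-\theta(x)(y)=0$, so $L\subseteq L^\perp$, and a rank/non-singularity count (or the surjectivity argument above) forces $L=L^\perp$. Either route is a routine verification.
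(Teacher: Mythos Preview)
Your proof is correct and follows the same route as the paper: both exhibit the diagonal $j=\left(\begin{smallmatrix}1\\1\end{smallmatrix}\right):P\to P\oplus P$ as a lagrangian. The paper's proof is one line, simply naming the diagonal and giving the explicit retraction $(1\,\,0):P\oplus P\to P$ as the splitting, whereas you verify exactness in detail and obtain the splitting via projectivity of $P^*$; these are cosmetic differences. One small remark: you invoke non-singularity of $\theta$ for surjectivity of $(\theta\,\,-\theta)$, which is not stated in the lemma but is implicit since ``metabolic'' is only defined for non-singular forms in the paper --- so your assumption is justified by context.
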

\begin{proof}The diagonal $\lmat 1\\1\rmat:P\to P\oplus P$ is a lagrangian with splitting $(1\,\,0) :P\oplus P\to P$.
\end{proof}

\begin{lemma}\label{lem:halfunithyp}If $s\in A$ is a half-unit and $(P,\theta)$ is an $\eps$-symmetric form over $A$ then \[(P,\theta)+(P,-\theta)=(P\oplus P,\theta\oplus-\theta)\]is hyperbolic.
\end{lemma}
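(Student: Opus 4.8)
The plan is to read the statement off from the two results immediately preceding it. By the lemma just above, $(P,\theta)\oplus(P,-\theta)=(P\oplus P,\theta\oplus-\theta)$ is split metabolic, with the diagonal $\lmat 1\\1\rmat\colon P\hookrightarrow P\oplus P$ as a lagrangian and $(1\,\,0)\colon P\oplus P\to P$ as a splitting. For $\eps$-symmetric forms (as opposed to linking forms) split metabolic and metabolic agree, so this form is metabolic. Now Lemma \ref{lem:splitishyp1} applies directly: since $A$ contains the half-unit $s$, every metabolic $\eps$-symmetric form over $A$ is hyperbolic, hence so is $(P\oplus P,\theta\oplus-\theta)$. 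This is the whole argument.

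If one instead wants the two complementary lagrangians exhibited by hand, they come from transcribing the proof of Lemma \ref{lem:splitishyp1} to the case at hand. Take $L_+=\{(x,x):x\in P\}$, and as splitting of $j^*(\theta\oplus-\theta)\colon P\oplus P\to L_+^*\cong P^*$ take $k\colon\phi\mapsto(\theta^{-1}\phi,0)$ (using that $\theta$ is an isomorphism). Set $\nu=s(\theta\oplus-\theta)$, so that $\theta\oplus-\theta=\nu+\eps\nu^*$ because $s+\overline{s}=1$, and let $L_-$ be the image of $0\oplus P^*$ under the isomorphism $g=(j\,\,(k-\overline{\eps}\,jk^*\nu k))\colon P\oplus P^*\to P\oplus P$ from that proof. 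Then $L_+$ and $L_-$ are complementary lagrangians for $\theta\oplus-\theta$.

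I do not expect a genuine obstacle here — the statement is essentially a corollary. The only point worth care is that the half-unit is truly needed: the naive guess for a second lagrangian complementary to the diagonal is the anti-diagonal $\{(x,-x):x\in P\}$, which is self-annihilating but spans a complement to $L_+$ only when $2\in A$ is invertible; over rings like $\Z[z,z^{-1}]$ (with $s=(1-z)^{-1}$) this fails, and one genuinely needs the modified splitting above, i.e. the identity $\theta\oplus-\theta=\nu+\eps\nu^*$.
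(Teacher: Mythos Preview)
Your first argument is correct: the previous lemma gives the diagonal as a split lagrangian, and Lemma~\ref{lem:splitishyp1} upgrades metabolic to hyperbolic in the presence of a half-unit. Nothing is missing.

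The paper, however, does not argue by citation but instead writes down the second lagrangian directly: it takes the diagonal $(1\ \ 1):P\to P\oplus P$ together with the \emph{twisted} anti-diagonal $(\bar s\ \ -s):P\to P\oplus P$, and checks complementarity via the single matrix identity
\[
\begin{pmatrix} s & 1\\ \bar s & -1\end{pmatrix}
\begin{pmatrix} 1 & 1\\ \bar s & -s\end{pmatrix}=I.
\]
Your second paragraph would eventually produce \emph{some} complementary lagrangian by running the $g=(j\ \ k-\overline\eps\, jk^*\nu k)$ machinery of Lemma~\ref{lem:splitishyp1}, but the output is messier and uses $\theta^{-1}$ explicitly, whereas the paper's $(\bar s\ \ -s)$ never refers to $\theta$ at all. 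That clean formula is the real payoff: exactly this pair of lagrangians is reused verbatim later in the paper (e.g.\ in the proof that double-cobordism is reflexive, Lemma~\ref{welldef}, and in the $\partial$-hyperbolic lemma for the $D\Gamma$-groups), so the explicit approach is not just shorter here but sets up recurring computations downstream. Your diagnosis of why the naive anti-diagonal $(1\ \ -1)$ fails without invertible $2$ is spot on, and the paper's $(\bar s\ \ -s)$ is precisely the half-unit-weighted fix for it.
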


\begin{proof}$(P\oplus P, \theta\oplus -\theta)$ has lagrangians \[\begin{array}{lccrcl}(&1&1&)&:&(P,0)\to (P\oplus P,\theta\oplus -\theta),\\(&\bar{s}&-s&)&:&(P,0)\to (P\oplus P,\theta\oplus -\theta).\end{array}\]They are complementary as \[\left(\begin{matrix}s & 1\\\bar{s} & -1\end{matrix}\right)\left(\begin{matrix}1&1\\\bar{s}& -s\end{matrix}\right)=\left(\begin{matrix}1&1\\\bar{s}& -s\end{matrix}\right)\left(\begin{matrix}s & 1\\\bar{s} & -1\end{matrix}\right)=I.\]\end{proof}

The preceding lemmas hold completely analogously for linking forms. 

\medskip

This justifies the following definitions:

\begin{definition}Suppose $(A,S)$ defines a localisation. The monoid constructions\[\begin{array}{rcl}
W^\eps(A)&=&\NN^\eps(A)/\{\text{metabolic forms}\}\\
W^\eps(A,S)&=&\NN^\eps(A,S)/\{\text{metabolic linking forms}\}
\end{array}\]are abelian groups called the \textit{$\eps$-symmetric Witt group} of $A$ and of $(A,S)$ respectively.

The monoid construction\[\begin{array}{rcl}
SW^\eps(A,S)&=&\NN^\eps(A,S)/\{\text{split metabolic linking forms}\}
\end{array}\]is an abelian group called the \textit{$\eps$-symmetric split Witt group} of $(A,S)$.

If $A$ contains a half-unit then the monoid constructions\[\begin{array}{rcl}
DW^\eps(A)&=&\NN^\eps(A)/\{\text{hyperbolic forms}\}\\
DW^\eps(A,S)&=&\NN^\eps(A,S)/\{\text{hyperbolic linking forms}\}
\end{array}\] are abelian groups called the \textit{$\eps$-symmetric double Witt group} of $A$ and of $(A,S)$ respectively.
\end{definition}

\begin{remark}In the case that $A$ is a local ring with maximal ideal $\p$, the double Witt group $DW^\eps(A,\p^\infty)$ is still a group if we replace the hypothesis that there is a half-unit in $A$ with the hypothesis that there is a half-unit in the residue class field $A/\p$.
\end{remark}

We would, of course, like to know that these similar looking Witt groups are different to one-another!  To that end, here are some obvious consequences of the definitions:

\begin{claim}\label{clm:DWvsW}There is a surjective forgetful functor\begin{eqnarray*}SW^\eps(A,S)&\twoheadrightarrow& W^\eps(A,S).\end{eqnarray*}If $A$ contains a half-unit, then by \ref{lem:splitishyp1} and \ref{lem:splitishyp2}, the forgetful functors are isomorphisms \begin{eqnarray*}DW^\eps(A)&\xrightarrow{\cong}& W^\eps(A),\\DW^\eps(A,S)&\xrightarrow{\cong}& SW^\eps(A,S).\end{eqnarray*}
\end{claim}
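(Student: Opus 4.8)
The plan is to prove each of the three claimed maps is well-defined and has the stated property, handling the surjective forgetful functor first and then the two isomorphisms.

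\textbf{The surjection $SW^\eps(A,S)\twoheadrightarrow W^\eps(A,S)$.} First I would observe that since every split metabolic linking form is in particular metabolic, the submonoid of split metabolic forms inside $\NN^\eps(A,S)$ is contained in the submonoid of metabolic forms. By the monoid construction (Definition \ref{def:monoid}), quotienting by a larger submonoid factors the quotient by the smaller one, so there is a canonical surjection of the quotient groups $SW^\eps(A,S)=\NN^\eps(A,S)/\{\text{split metabolic}\}\twoheadrightarrow \NN^\eps(A,S)/\{\text{metabolic}\}=W^\eps(A,S)$ sending $[(T,\lambda)]\mapsto[(T,\lambda)]$. Surjectivity is immediate since the same generators $(T,\lambda)$ appear in both. (One should note both constructions really are groups: by the analogue for linking forms of Lemma \ref{lem:halfunithyp}, or already of the split metabolic lemma, $(T,\lambda)+(T,-\lambda)$ is split metabolic, hence a fortiori metabolic, so the hypothesis of Definition \ref{def:monoid} guaranteeing a group structure is satisfied in both cases.)

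\textbf{The isomorphism $DW^\eps(A)\xrightarrow{\cong}W^\eps(A)$ when $A$ has a half-unit.} Here the containment of submonoids goes the other way at the level of the \emph{defining} relations, so I would argue directly. When $A$ contains a half-unit, Lemma \ref{lem:splitishyp1} shows that \emph{every} metabolic $\eps$-symmetric form over $A$ is hyperbolic; conversely every hyperbolic form is metabolic by definition. Therefore the submonoid of metabolic forms equals the submonoid of hyperbolic forms inside $\NN^\eps(A)$, and the two monoid constructions $W^\eps(A)=\NN^\eps(A)/\{\text{metabolic}\}$ and $DW^\eps(A)=\NN^\eps(A)/\{\text{hyperbolic}\}$ are literally the same quotient. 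The forgetful map $[(P,\theta)]\mapsto[(P,\theta)]$ is then the identity, hence an isomorphism. The group structure on both is guaranteed by Lemma \ref{lem:halfunithyp} (using the half-unit), which makes $(P,\theta)+(P,-\theta)$ hyperbolic.

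\textbf{The isomorphism $DW^\eps(A,S)\xrightarrow{\cong}SW^\eps(A,S)$ when $A$ has a half-unit.} This is the case I expect to require the most care, because the hierarchy for linking forms is strict: hyperbolic $\subsetneq$ split metabolic $\subsetneq$ metabolic in general. The point is that Lemma \ref{lem:splitishyp2} tells us that in the presence of a half-unit, every \emph{split} metabolic $\eps$-symmetric linking form is hyperbolic; and every hyperbolic linking form is split metabolic by inspection (two complementary lagrangians give a split exact sequence). So the submonoid of split metabolic linking forms coincides with the submonoid of hyperbolic linking forms in $\NN^\eps(A,S)$, the two quotients agree, and the forgetful map is again the identity, hence an isomorphism. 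The group structure holds on both sides by the linking-form analogue of Lemma \ref{lem:halfunithyp} (or just the split metabolic statement). The main obstacle, such as it is, is simply being careful to invoke \emph{split} metabolicity rather than ordinary metabolicity throughout — the half-unit does not collapse the distinction between metabolic and split metabolic for linking forms, only the distinction between split metabolic and hyperbolic — so the isomorphism is with $SW^\eps(A,S)$, not with $W^\eps(A,S)$.
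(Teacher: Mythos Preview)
Your proposal is correct and is essentially the argument the paper has in mind: the claim is stated in the paper with its justification embedded in the references to Lemmas~\ref{lem:splitishyp1} and~\ref{lem:splitishyp2} and no separate proof is given, it being described as an ``obvious consequence of the definitions''. You have unpacked this correctly, including the key point that for linking forms the half-unit only collapses the split metabolic/hyperbolic distinction, not the metabolic/split metabolic one.
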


\begin{remark}The first appearance of a split Witt group of a localisation $(A,S)$ is in \cite[3.3]{MR521738}. Stoltzfus defines the `hyperbolic algebraic concordance groups of $\eps$-symmetric isometric structures' over a Dedekind domain $A$, which he denotes $CH^\eps(A)$ when the underlying modules are projective and denotes $CH^\eps(K/A)$ (for $K$ the fraction field of $A$) when the underlying modules are torsion. These groups are defined using a split metabolic definition but Stoltzfus shows \cite[3.2]{MR521738} that the resulting groups are double Witt groups in our sense. Indeed, for the projective case, we show in Chapter \ref{chap:laurent} that Stoltzfus' group $CH^\eps(A)$ is isomorphic to the double Witt group $DW^\eps(A[z,z^{-1}],P)$ where $P=\{p(z)\in A[z,z^{-1}]\,|\,\text{$p(1)\in A$ is a unit}\}$. The essential tool used is the presence of the half-unit $(1-z)^{-1}$ in the localisation $(A[z,z^{-1},(1-z)^{-1}],P)$ (cf.\ Chapter \ref{chap:laurent}) which is a geometric expression of the fact that a knot exterior (cf.\ Chapter \ref{chap:blanchfield}) has the homology of a circle.
\end{remark}

For later applications, especially those in $L$-theory and in knot theory, we will mainly be interested in rings with a half-unit. Hence, assuming $A$ contains a half-unit, we now come to the main question of Chapter \ref{chap:linking}:
\begin{question}\label{q:DWkernel}What is the kernel of the surjective forgetful morphism\[DW^\eps(A,S)\twoheadrightarrow W^\eps(A,S)\text{?}\]\end{question}

\subsection*{Calculating Witt groups of linking forms}

Assume for the rest of this section that $A$ is a Dedekind domain. 

\medskip

An obvious consequence of Proposition \ref{prop:dedekinddecomp} is that when $A$ is a Dedekind domain and $\p\neq\overline{\p}$ are prime ideals, then if $(T,\lambda)$ is a non-singular linking form over $(A,A\sm\{0\})$, the non-singular linking form $(T_\p\oplus T_{\overline{\p}},\lambda|_{T_\p\oplus T_{\overline{\p}}})$ is hyperbolic. The decomposition in Proposition \ref{prop:dedekinddecomp} then becomes a decomposition of Witt groups.

\begin{proposition}\label{prop:dedekinddecomp2}For $A$ a Dedekind domain, there are natural isomorphisms of abelian groups\begin{eqnarray*}W^\eps(A,A\sm\{0\})&\cong&\bigoplus_{\p=\overline{\p}}W^\eps(A,\p^\infty),\\
SW^\eps(A,A\sm\{0\})&\cong&\bigoplus_{\p=\overline{\p}}SW^\eps(A,\p^\infty).\end{eqnarray*}
When we also assume $A$ contains a half-unit, there is a natural isomorphism of abelian groups\[
DW^\eps(A,A\sm\{0\})\cong\bigoplus_{\p=\overline{\p}}DW^\eps(A,\p^\infty).\]
\end{proposition}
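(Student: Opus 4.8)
The plan is to deduce Proposition \ref{prop:dedekinddecomp2} directly from the monoid decomposition in Proposition \ref{prop:dedekinddecomp} together with the observation that the mixed summands $(T_\p\oplus T_{\overline\p},\lambda|_{T_\p\oplus T_{\overline\p}})$ are always metabolic (in fact hyperbolic). The general principle at work is that a natural isomorphism of commutative monoids descends to a natural isomorphism of the associated monoid quotients (Definition \ref{def:monoid}), provided the distinguished submonoids (metabolic, split metabolic, hyperbolic) correspond on both sides; so the work is entirely in checking this correspondence of submonoids summand-by-summand.

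First I would note that, since $\p=\overline\p$ for all primes is not being assumed here, Proposition \ref{prop:dedekinddecomp} gives only the decomposition of individual forms, not of the monoid $\NN^\eps(A,A\sm\{0\})$. So the first step is to record that for $\p\ne\overline\p$ the linking form $(T_\p\oplus T_{\overline\p},\lambda|_{T_\p\oplus T_{\overline\p}})$ has lagrangian $T_\p$: indeed $\lambda$ restricts to an isomorphism $T_\p\xrightarrow{\cong}T_{\overline\p}^\wedge$ (as observed just before Proposition \ref{prop:dedekinddecomp}), so the sequence $0\to T_\p\to T_\p\oplus T_{\overline\p}\to T_\p^\wedge\to 0$ is exact; this lagrangian is automatically a direct summand, hence split, and by Lemma \ref{lem:splitishyp2} it is even hyperbolic when $A$ contains a half-unit. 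Consequently, in each of $W^\eps$, $SW^\eps$, $DW^\eps$, every mixed summand is trivial.

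Next I would combine this with Proposition \ref{prop:dedekinddecomp}: for a non-singular linking form $(T,\lambda)$ over $(A,A\sm\{0\})$ we get $(T,\lambda)\cong\bigoplus_{\p=\overline\p}(T_\p,\lambda_\p)\oplus(\text{metabolic})$, so the class of $(T,\lambda)$ in any of the three groups is $\sum_{\p=\overline\p}[T_\p,\lambda_\p]$. This shows the natural map $\bigoplus_{\p=\overline\p}W^\eps(A,\p^\infty)\to W^\eps(A,A\sm\{0\})$ induced by the inclusions $\NN^\eps(A,\p^\infty)\hookrightarrow\NN^\eps(A,A\sm\{0\})$ is surjective, and similarly for $SW^\eps$ and (with a half-unit) $DW^\eps$. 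For injectivity, I would use that a lagrangian (resp.\ split lagrangian, resp.\ pair of complementary lagrangians) for a linking form $(T,\lambda)$ over $(A,A\sm\{0\})$ respects the primary decomposition: any submodule $L\subseteq T$ decomposes as $\bigoplus L_\p$ with $L_\p\subseteq T_\p$, the orthogonal complement is computed summand-wise since $\lambda$ is block-diagonal across distinct primary components, and the exactness of $0\to L\to T\to L^\wedge\to 0$ (and splitness, and complementarity) holds iff it holds in each primary block. Hence a class in $\bigoplus_{\p=\overline\p}$ that dies in the total group must die in each summand, giving injectivity; naturality in $A$ is then automatic from the naturality of the primary decomposition.

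The main obstacle I anticipate is the bookkeeping needed to verify that metabolicity (and its split and hyperbolic refinements) genuinely localise to the primary summands, i.e.\ that a single lagrangian for the whole form is equivalent to a family of lagrangians for the summands. This is the place where one must be careful that the module $T_\p$ lies in $\H(A,\p^\infty)$ and not merely in $\H(A,A\sm\{0\})$, so that a primary-component lagrangian really is a lagrangian in the correct category; for a Dedekind domain this is exactly the content of the equivalence $\H(A,\p^\infty)\longleftrightarrow\H(A_\p,\p^\infty)$ and the fact that $A_\p$ is a PID, so there is no homological-dimension obstruction. Everything else is formal manipulation of the monoid construction of Definition \ref{def:monoid}.
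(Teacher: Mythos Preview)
Your proof is correct and follows essentially the same route as the paper, which treats the proposition as an immediate consequence of Proposition~\ref{prop:dedekinddecomp} together with the observation (stated in the sentence preceding the proposition) that the mixed summands $(T_\p\oplus T_{\overline\p},\lambda|_{T_\p\oplus T_{\overline\p}})$ are hyperbolic; you have simply filled in the details that the paper leaves implicit, particularly the check that lagrangians (and their split and complementary refinements) decompose along the primary decomposition.

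One small remark: you obtain hyperbolicity of the mixed summand via split metabolic plus Lemma~\ref{lem:splitishyp2}, which requires the half-unit. The paper's observation is slightly more direct: both $T_\p$ and $T_{\overline\p}$ are lagrangians (each is the orthogonal of the other, since $\lambda$ carries $T_\p$ isomorphically onto $T_{\overline\p}^\wedge$ and vice versa), and they are visibly complementary, so the mixed summand is hyperbolic outright with no appeal to a half-unit. This does not affect the argument, since for $DW^\eps$ the half-unit is assumed anyway and for $W^\eps$, $SW^\eps$ you only need metabolic and split metabolic respectively; but it is a cleaner way to see the vanishing.
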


In light of this, we will now specialise to $\p$-primary Witt groups for $\p=\overline{\p}$. First we mention a standard technique for manipulating Witt groups of linking forms over $(A,\p^\infty)$ called \textit{devissage}.

\begin{proposition}[Devissage]\label{prop:devissage}Let $A$ be a local ring with involution invariant maximal ideal $\p$. Suppose $(T,\lambda)$ is a non-singular $\eps$-symmetric linking form over $(A,\p^\infty)$ and that $T=T_l$ for some $l>0$, then
\[(T,\lambda)\sim\left\{\begin{array}{lcl}0&&\text{$l$ even}\\ (T',\lambda')&&\text{$l$ odd}\end{array}\right. \quad\in W^\eps(A,\p^\infty),\]where $(T',\lambda')$ is such that $p T'=0$.
\end{proposition}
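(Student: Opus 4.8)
The plan is to reduce the linking form $(T,\lambda)$ to a residue-field form via the auxiliary-form machinery and then exploit the half-unit hypothesis (available, since $A$ is local with maximal ideal $\p$ and $A/\p$ a field, and the half-unit requirement is phrased for the residue field in the Remark). Since $T=T_l$, the only non-trivial auxiliary form is $(\Delta_l(T),b_l(\lambda))$, a non-singular $(u_p^l\eps)$-symmetric form over the field $A/\p$ by Claim \ref{clm:auxwelldef}, and by Theorem \ref{thm:MDT2} the monoid $\NN^\eps_{\p,l}$ is isomorphic to $\NN^{v_p}(A/\p)$ where $v_p=\eps$ for $l$ even and $v_p=u_p\eps$ for $l$ odd. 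So the whole question is pushed down to a question about forms over the field $A/\p$, together with understanding how Witt-triviality (metabolicity) interacts with that correspondence.

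First I would treat the case $l$ even. Here $v_p=\eps$. I claim that \emph{every} non-singular $\eps$-symmetric linking form $(T,\lambda)$ with $T=T_l$ and $l$ even is metabolic. The idea is to produce a lagrangian inside $T$: take $L=p^{l/2}T$. Since $T$ is a free $A/p^lA$-module, $L\cong T/p^{l/2}T$ is free over $A/p^{l/2}A$, it sits in $\H(A,\p^\infty)$, and one checks $L=L^\perp$ directly from the definition of the orthogonal (using $pT_l=\ker(p^{l-1})$ and Lemma \ref{lem:invinv} to see $\overline p$ behaves like $p$ up to a unit): the pairing $\lambda(p^{l/2}x,p^{l/2}y)=p^l\lambda(x,y)=0$, so $L\subseteq L^\perp$, and a rank/length count using non-singularity of $\lambda$ forces equality. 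Hence $(T,\lambda)$ is metabolic and $(T,\lambda)\sim 0$ in $W^\eps(A,\p^\infty)$.

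Next the case $l$ odd. Now $v_p=u_p\eps$. By the isomorphism $\NN^\eps_{\p,l}\cong\NN^{u_p\eps}(A/\p)$ of Theorem \ref{thm:MDT2}, I can choose $(T',\lambda')$ with $pT'=0$, i.e.\ $T'=T'_1$ a form over $A/\p$ itself; the symmetry can be corrected from $u_p\eps$ back to $\eps$ by the standard unit-rescaling trick used at the end of the proof of Theorem \ref{thm:MDT2} (writing $\lambda'_u(x,y)=\overline u\lambda'(x,y)$ with $u=\overline u^{-1}$ a square root of $u_p$ — here one uses that $u_p=\overline{u_p}^{-1}$, and passing to a quadratic extension of the residue field if needed, then checking the resulting $\eps$-symmetric form over $A/\p$ has the same Witt class). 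Concretely, lift a diagonalisation of the auxiliary form $(\Delta_l(T),b_l(\lambda))$ over $A/\p$, via the explicit section $(T,\lambda)\mapsto(\Delta_l(T),b_l(\lambda))$ described in the proof of Theorem \ref{thm:MDT2}, to produce $(T',\lambda')$ over $(A,\p^\infty)$ with $pT'=0$ and $(\Delta_l(T),b_l(\lambda))\cong(\Delta_1(T'),b_1(\lambda'))$; by the Main Decomposition Theorem \ref{thm:levinePID} the Witt-relevant data agrees, and I would then argue $(T,\lambda)-(T',\lambda')$ is metabolic. The cleanest way to see the last point: both sides have the same auxiliary form, so by Theorem \ref{thm:levinePID} there is an isomorphism of $A$-modules $T\cong T'\oplus(\text{something metabolic})$... — more precisely, $(T,\lambda)\oplus(T',-\lambda')$ has auxiliary form $(\Delta,b)\oplus(\Delta,-b)$, which is metabolic over the field $A/\p$ (diagonal lagrangian), and I would lift that field-lagrangian through the section to a lagrangian of $(T,\lambda)\oplus(T',-\lambda')$, using the half-unit in $A/\p$ and Lemma \ref{lem:splitishyp2} to promote split-metabolicity where convenient.

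The main obstacle I anticipate is the bookkeeping in the odd case: keeping track of the twist between $\eps$- and $u_p\eps$-symmetry across the auxiliary-form correspondence, and verifying that a lagrangian of the auxiliary form over $A/\p$ genuinely lifts to a lagrangian of the linking form over $(A,\p^\infty)$ — the lift must remain a submodule in $\H(A,\p^\infty)$ (homological dimension one) and still satisfy $L=L^\perp$ after lifting, which is where Lemma \ref{lem:extend} and Lemma \ref{lem:perp} do the real work. The even case is essentially immediate once one spots the lagrangian $p^{l/2}T$; the odd case is where I'd spend the effort, and it is really just an application of Theorem \ref{thm:MDT2} and Theorem \ref{thm:levinePID} combined with the classical fact that over a field a form and its twist add to a metabolic form.
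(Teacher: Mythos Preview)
Your even case is correct and matches the paper. The odd case has a real gap. First, the proposition carries no half-unit hypothesis---the Remark you cite says the double Witt group makes sense \emph{if} $A/\p$ has a half-unit, not that one always exists---so Theorems~\ref{thm:levinePID} and~\ref{thm:MDT2} are unavailable as stated. More fundamentally, even granting a half-unit, the auxiliary-form machinery cannot collapse levels: your $(T,\lambda)$ has its only nonzero auxiliary form at level~$l$, while any $(T',\lambda')$ with $pT'=0$ lives at level~$1$, so $(T,\lambda)\oplus(T',-\lambda')$ has auxiliary forms $(\Delta,b)$ at level~$l$ and $(\Delta,-b)$ at level~$1$, not ``$(\Delta,b)\oplus(\Delta,-b)$'' at a single level as you write. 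There is therefore no diagonal field-lagrangian to lift, and by Theorem~\ref{thm:multisignature} these auxiliary invariants show the sum is not even split-metabolic. It \emph{is} metabolic in the ordinary Witt sense, but the lagrangian witnessing this is not split and does not arise from the residue field; producing it is precisely the content of devissage, so the route through the Main Decomposition Theorem is circular.

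The paper instead uses the elementary sublagrangian reduction, needing no half-unit: for $l=2k-1$ take $L=p^kT\subseteq L^\perp=K_k(T)=p^{k-1}T$ (Lemma~\ref{lem:perp}), form the induced non-singular form on $L^\perp/L$, and observe $(T,\lambda)\sim(L^\perp/L,\lambda)$ via the graph lagrangian $x\mapsto(x,[x])$ inside $(T,\lambda)\oplus(L^\perp/L,-\lambda)$. Since $L^\perp/L=p^{k-1}T/p^kT$ is annihilated by $p$, this is the required $(T',\lambda')$.
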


\begin{proof}Choose a generator $p=\overline{p}$ of $\p$. For $2k\geq l$, define a submodule $L=p^{k} T\subset T$, then $L^\perp = K_k(T)$ by Lemma \ref{lem:perp}. Then we have\[L\subseteq L^\perp\subseteq p^{l-k}T\] and it is standard for linking forms (or easily checked) that $\lambda$ restricts to a non-singular $\eps$-symmetric linking form on the quotient $(L^\perp/L,\lambda)$ when $L\subseteq L^\perp$. There is a lagrangian\[L^\perp\hookrightarrow (T\oplus (L^\perp/L),\lambda\oplus -\lambda);\qquad x\mapsto (x,[x]),\]so that $(T,\lambda)\sim (L/L^\perp,\lambda)\in W^\eps(A,\p^\infty)$. In particular, when $l=2k$ we have $L^\perp/L$=0, and when $l=2k-1$ we have $p(L^\perp/L)\subset p(p^{k-1}T/L)=0$ so the result follows.
\end{proof}

\begin{proposition}\label{prop:residuefield}Let $A$ be a local ring with involution invariant maximal ideal $\p=(p)$ such that $u_p\overline{p}=p$ for some unit $u_p=\overline{u_p}^{-1}$. Then there is an isomorphism of abelian groups\[W^{u_p\eps}(A/\p)\cong W^\eps(A,\p).\]
\end{proposition}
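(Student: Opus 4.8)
The plan is to produce mutually inverse homomorphisms between $W^{u_p\eps}(A/\p)$ and $W^\eps(A,\p^\infty)$. I first record the setup: $A$, being a Dedekind domain that is local with principal maximal ideal, is a discrete valuation ring; $k:=A/\p$ is a field; since $\overline\p=(\overline p)=(u_p^{-1}p)=\p$ the involution of $A$ descends to $k$; and the $p$-torsion submodule $\p^{-1}A/A\subseteq(\p^\infty)^{-1}A/A$ equals $\{a/p+A:a\in A\}$, with $a/p+A\mapsto a+\p$ a $k$-linear isomorphism $\p^{-1}A/A\xrightarrow{\cong}k$.

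First I would construct $\Phi\colon W^{u_p\eps}(A/\p)\to W^\eps(A,\p^\infty)$. Given a non-singular $(u_p\eps)$-symmetric form $(\Delta,b)$ over $k$, view $\Delta$ as an $A$-module killed by $p$ and set $\lambda_b(x,y):=\widetilde{b(x,y)}/p+A\in\p^{-1}A/A$, where $\widetilde{b(x,y)}\in A$ is any lift of $b(x,y)\in k$ (the class mod $A$ is independent of the lift). Because $p\Delta=0$ we have $\Delta^\wedge=\Hom_A(\Delta,\p^{-1}A/A)$, which the identification above turns into $\Delta^*=\Hom_{A/\p}(\Delta,A/\p)$ with $\lambda_b$ carried to $b$; hence $\lambda_b\colon\Delta\to\Delta^\wedge$ is an isomorphism. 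Using $1/\overline p=u_p/p$, the $\eps$-symmetry of $\lambda_b$ is equivalent to the congruence $\widetilde{b(x,y)}\equiv u_p\eps\,\overline{\widetilde{b(y,x)}}\pmod\p$, that is, to the $(u_p\eps)$-symmetry of $b$ --- this is the one substantive verification, and it is exactly here that the hypothesis $p=u_p\overline p$ is used. Moreover a lagrangian $L\subseteq\Delta$ for $b$ is a $k$-subspace, hence an object of $\H(A,\p^\infty)$, and the exact sequence $0\to L\to\Delta\xrightarrow{j^\wedge\lambda_b}L^\wedge\to0$ is, under the same identifications, the defining sequence for $L$ as a lagrangian of $b$; so $\Phi$ sends metabolic forms to metabolic linking forms. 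Since $(\Delta,b)\mapsto(\Delta,\lambda_b)$ is additive, $\Phi$ is a well-defined homomorphism of abelian groups.

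Surjectivity of $\Phi$ follows from devissage. Given a non-singular linking form $(T,\lambda)$ over $(A,\p^\infty)$, Proposition \ref{prop:nonnatural} gives a (non-natural) splitting $(T,\lambda)\cong\bigoplus_l(T_l,\lambda_l)$, and Proposition \ref{prop:devissage} shows each summand is Witt-equivalent to $0$ when $l$ is even and to a linking form killed by $p$ when $l$ is odd; so $(T,\lambda)$ is Witt-equivalent to a $p$-torsion linking form $(T',\lambda')$, and under $\p^{-1}A/A\cong A/\p$ the form $(T',\lambda')$ is $\Phi$ of the non-singular $(u_p\eps)$-symmetric form over $k$ it determines, whence $[T,\lambda]$ lies in the image of $\Phi$.

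The main obstacle is injectivity of $\Phi$, i.e.\ the construction of a left inverse $\partial\colon W^\eps(A,\p^\infty)\to W^{u_p\eps}(A/\p)$. The idea is to promote Propositions \ref{prop:nonnatural} and \ref{prop:devissage} to a genuine residue homomorphism --- the analogue in this setting of the Milnor--Husemoller second residue map --- sending $(T,\lambda)$ to the Witt class of the $(u_p\eps)$-form over $k$ attached to a $p$-torsion devissage representative. The real work is to show this is independent of the non-natural decomposition of Proposition \ref{prop:nonnatural} and of the isotropic submodules chosen in Proposition \ref{prop:devissage}, and that it kills metabolic linking forms; the grading of $\H(A,\p^\infty)$ by $p$-exponent and the careful bookkeeping of the unit $u_p$ through each devissage step are what make this work, and when $k$ has characteristic $2$ one additionally needs that stable metabolicity of a $p$-torsion linking form can be realised by $p$-torsion metabolic forms, which reduces matters to Witt cancellation over the field $k$. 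Once $\partial$ is in hand, $\partial\Phi=\id$ is immediate, since $\Phi(\Delta,b)$ is already $p$-torsion and devissage returns it unchanged; together with surjectivity this yields $W^{u_p\eps}(A/\p)\cong W^\eps(A,\p^\infty)$.
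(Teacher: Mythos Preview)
You have misread the target group: the statement is about $W^\eps(A,\p)$, not $W^\eps(A,\p^\infty)$. Here $(A,\p)$ denotes linking forms on modules $T$ with $pT=0$; this proposition is meant to be combined with devissage (Proposition~\ref{prop:devissage}) to obtain Theorem~\ref{thm:karoubi}, rather than to subsume devissage itself. The paper's proof is correspondingly much shorter than yours. There is an equivalence between f.g.\ $A$-modules killed by $p$ and finite-dimensional $A/\p$-vector spaces, and under it $\Hom_{A/\p}(T,A/\p)\xrightarrow{\cong}\Hom_A(T,\p^{-1}A/A)$ via $f\mapsto(x\mapsto f(x)/p)$. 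Your own symmetry check (using $1/\overline p=u_p/p$) then shows this induces an isomorphism of \emph{monoids} $\NN^{u_p\eps}(A/\p)\cong\NN^\eps(A,\p)$ carrying lagrangians to lagrangians, which descends to Witt groups. That is the whole proof.

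Your map $\Phi$ is precisely this monoid isomorphism in one direction, so your first paragraph already contains everything needed. Everything thereafter---surjectivity via Propositions~\ref{prop:nonnatural} and~\ref{prop:devissage}, and injectivity via a second-residue homomorphism---is aimed at the stronger statement $W^{u_p\eps}(A/\p)\cong W^\eps(A,\p^\infty)$, and even for that your injectivity argument is incomplete: you name the obstacle (independence of the non-natural splitting and of the isotropic choices in devissage) but do not overcome it. Note also that you inserted ``$A$ is a Dedekind domain'' into the setup; the statement assumes only that $A$ is local with principal, involution-invariant maximal ideal, and the paper's monoid-level argument needs nothing more.
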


\begin{proof}There is an equivalence of categories between the category of finitely generated $A$-modules $T$ such that $pT=0$ and the category of finite dimensional vector spaces over $A/\p$. Under which, for any f.g.\ $\p$-torsion $A$-module $T$, we observe that there is an isomorphism of $A$-modules \[\Hom_{A/\p}(T,A/\p)\xrightarrow{\cong}\Hom_A(T,\p^{-1}A/A);\qquad f\mapsto (x\mapsto f(x)/p).\]But consider that this interacts with symmetry in the following way. If $(T,\theta)$ is a $(u_p\eps)$-symmetric form over $A/\p$, then the corresponding $(T,\lambda)$ has \[\begin{array}{rcl}\eps\lambda^\wedge(x)(y)&=&\eps\overline{\lambda(y,x)}\\&=&\eps\overline{\theta(y,x)}/\overline{p}\\&=&u_p\eps\theta^*(x,y)/p=\lambda(x)(y).\end{array}\]This induces an isomorphism of commutative monoids $\NN^{u_p\eps}(A/\p)\cong \NN^\eps(A,\p)$. As the submonoids of metabolic forms are preserved under the isomorphism $\NN^{u_p\eps}(A/\p)\cong \NN^\eps(A,\p)$, there is an isomorphism of Witt groups after the respective monoid constructions.
\end{proof}

Combining Propositions \ref{prop:devissage} and \ref{prop:residuefield} we obtain the following theorem first proved in the case that $1/2\in A$ by Karoubi {\cite{MR0384894} and first proved generally by Ranicki \cite[4.2.1]{MR620795}.

\begin{theorem}\label{thm:karoubi}Let $A$ be a Dedekind domain. Then there is an isomorphism of abelian groups\[\sigma^W:W^\eps(A,A\sm\{0\})\xrightarrow{\cong} \bigoplus_{\p=\overline{\p}}W^{u_p\eps}(A/\p).\]The image $\sigma^W(T,\lambda)$ of a non-singular $\eps$-symmetric linking form is called the \textit{Witt multisignature}.
\end{theorem}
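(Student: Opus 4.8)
The plan is to assemble this theorem directly from the two preceding propositions, treating it as essentially a formal consequence. The isomorphism $\sigma^W$ should be defined as the composite of the Witt-group decomposition from Proposition \ref{prop:dedekinddecomp2} and a prime-by-prime application of devissage combined with the residue field identification. So first I would invoke Proposition \ref{prop:dedekinddecomp2}: since $A$ is a Dedekind domain, there is a natural isomorphism $W^\eps(A,A\sm\{0\})\cong\bigoplus_{\p=\overline{\p}}W^\eps(A,\p^\infty)$, reducing the problem to a fixed involution-invariant prime $\p=\overline\p$. Note that $A_\p$ is a local ring with involution-invariant maximal ideal, and $W^\eps(A,\p^\infty)\cong W^\eps(A_\p,\p^\infty)$ via the equivalence of categories induced by the cartesian morphism (Theorem \ref{thm:cartmorph}), so we may assume we are working over the local ring.

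Next, for each such $\p$ I would combine Proposition \ref{prop:devissage} and Proposition \ref{prop:residuefield}. By Proposition \ref{prop:nonnatural} every non-singular $\eps$-symmetric linking form over $(A_\p,\p^\infty)$ decomposes (non-naturally) as $\bigoplus_l (T_l,\lambda_l)$ with $pT_l$ annihilated by $p^l$; devissage then kills the even-$l$ pieces in $W^\eps$ and replaces each odd-$l$ piece by a form $(T',\lambda')$ with $pT'=0$, i.e.\ an element of $\NN^\eps(A_\p,\p)$. By Lemma \ref{lem:invinv} the hypothesis $u_p\overline p=p$ holds automatically for non-singular linking forms, so Proposition \ref{prop:residuefield} applies to give $W^\eps(A_\p,\p)\cong W^{u_p\eps}(A_\p/\p)=W^{u_p\eps}(A/\p)$. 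The composite $W^\eps(A,\p^\infty)\to W^{u_p\eps}(A/\p)$ is then well-defined; devissage shows it is surjective (every form with $pT=0$ is in the image), and I would check injectivity by observing that a form hitting $0$ in $W^{u_p\eps}(A/\p)$ is, after the devissage reductions, represented by a metabolic form over $A/\p$, whose lagrangian lifts through Proposition \ref{prop:residuefield}'s monoid isomorphism to exhibit metabolicity back in $W^\eps(A,\p^\infty)$ — here I would also use that over a Dedekind (indeed local PID) domain one can pass between metabolic forms over $A/\p$ and linking forms, and that devissage-type lagrangians assemble to a genuine lagrangian for the original form.

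Finally I would define $\sigma^W$ on a general non-singular $\eps$-symmetric linking form $(T,\lambda)$ over $(A,A\sm\{0\})$ by decomposing $T=\bigoplus_\p T_\p$ and sending $(T,\lambda)\mapsto \bigoplus_\p [\text{devissage reduction of }(T_\p,\lambda_\p)]$, landing in $\bigoplus_{\p=\overline\p}W^{u_p\eps}(A/\p)$; naturality follows from the naturality in Propositions \ref{prop:dedekinddecomp2} and \ref{prop:residuefield} together with the fact that, although the $T_l$-decomposition itself is non-natural, the induced class in the Witt group is independent of choices (this is exactly what devissage provides, since any two decompositions differ by a metabolic correction).

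The main obstacle I expect is the well-definedness and injectivity at the single-prime stage: verifying that devissage genuinely descends to a \emph{homomorphism} $W^\eps(A,\p^\infty)\to W^{u_p\eps}(A/\p)$ independent of the chosen $T_l$-splitting, and that the composite is injective. Surjectivity and the global assembly over all primes are then straightforward bookkeeping using Proposition \ref{prop:dedekinddecomp2}. One technical point to be careful about is that Propositions \ref{prop:devissage} and \ref{prop:residuefield} are stated for local rings, so the reduction from the Dedekind domain $A$ to the localisations $A_\p$ (via Theorem \ref{thm:cartmorph}) must be made explicitly before those propositions can be applied, and one must confirm that $A_\p/\p \cong A/\p$ as rings with involution so the target groups match.
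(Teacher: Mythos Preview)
Your proposal is correct and follows essentially the same route as the paper. The paper does not give an explicit proof block for this theorem at all: it simply says ``Combining Propositions \ref{prop:devissage} and \ref{prop:residuefield} we obtain the following theorem\ldots'' and cites Karoubi and Ranicki for the original arguments. Your writeup is in fact \emph{more} careful than the paper's one-line treatment, since you make explicit the localisation step $W^\eps(A,\p^\infty)\cong W^\eps(A_\p,\p^\infty)$ via Theorem \ref{thm:cartmorph} (needed because Propositions \ref{prop:devissage} and \ref{prop:residuefield} are stated for local rings), and you flag the well-definedness and injectivity of the devissage map as the only genuine content. The paper silently absorbs both points into its citation of \cite[4.2.1]{MR620795}.
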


\begin{example}There is a decomposition of the Witt group $W^\eps(\Z,\Z\sm\{0\})\cong\bigoplus_{\p=\overline{\p}} W^\eps(\Z,\p^\infty)$ and for each individual summand $W^\eps(\Z,\p^\infty)\cong W^\eps(\Z_\p,\p^\infty)\cong W^\eps(\Z_\p/\p)\cong W^\eps(\Z/\p)$. The computation of these Witt groups is well known (e.g.\ \cite[IV.2]{MR0506372}); $W^-(\Z/p\Z)=0$ for all primes $p\in\Z$ and \[W(\Z/p\Z)\cong\left\{\begin{array}{lcl}\Z/2\Z\oplus\Z/2\Z&&p\equiv 1\,\,\,(\text{mod 4}),\\ \Z/4\Z&&p\equiv 3 \,\,\,(\text{mod 4}),\\
\Z/2\Z&&p=2.\end{array}\right.\]
\end{example}

What about split Witt groups of linkings? The following examples discusses the only computation ever made of these groups prior to this thesis (to the best of our knowledge).

\begin{example}Using the decomposition of $\NN^\eps(\Z,\Z\sm\{0\})$ obtained in Example \ref{ex:KK}, Kawauchi-Kojima \cite[Proposition 5.2]{MR594531} compute the corresponding split Witt group (which they refer to as the `Witt group of linkings').

For $p$ odd, Kawauchi-Kojima use a system of invariants to compute that $SW(\Z,(p)^\infty)=\bigoplus_{l=1}^\infty SW_p^l(\Z)$, where the individual groups $SW_p^l(\Z)$ are the monoids $\NN_p^l$ modulo the split metabolic relation. The calculation of these reduces to the calculation of the single Witt groups as there is no difference between split metabolic and metabolic forms for vector spaces $SW_p^l(\Z)\cong W(\Z/p\Z)$.

For $p = 2$, by imposing the split metabolic relation on the generators of $\NN(\Z,(2)^\infty)$ Kawauchi-Kojima obtain that the group $SW(\Z,(2)^\infty)$ is isomorphic to a direct sum of infinite copies of $\Z/2\Z$, generated by the elements $A^l(1)$ for $l \geq 1$. However, as the generators are interrelated for different $l$ (see Example \ref{ex:KK}), this direct sum is subject to relations. It is computed that\[ SW_p^l(\Z):=\left\{\begin{array}{lll}
\Z/2\Z\oplus\Z/2\Z&\,&p\equiv 1\,\,\text{(mod 4)},\\
\Z/4\Z&\,&p\equiv 3\,\,\text{(mod 4)},\\
\Z/2\Z&\,&p=2, \,l=1,\\
\Z/8\Z&\,&p=2,\, l=2,\\
\Z/8\Z\oplus\Z/2\Z&\,&p=2, \,l\geq 3.\end{array}\right.\]\begin{eqnarray*}
SW(\Z,(p)^\infty)&=&\bigoplus_{l=1}^\infty SW_p^l(\Z),\quad\text{for $p$ an odd prime,}\\
SW(\Z,(2)^\infty)&=&\bigoplus_1^\infty\Z/2\Z,\quad\text{but subject to relations.}
\end{eqnarray*}
\end{example}

The previous example shows that there can be no such thing as devissage for the split Witt groups in general. In fact we show now that there is \textit{never} devissage in the split or double Witt groups over Dedekind domains. In other words, the split and double Witt relations will respect the decomposition of Theorem \ref{thm:MDT2}.

\begin{definition}If $A$ is a Dedekind domain and $(T,\lambda)$ is a non-singular $\eps$-symmetric linking form over $(A,A\sm\{0\})$ then for each involution invariant prime ideal $\p\subset A$, with choice of uniformiser $p\in \p\sm\p^2$, and $l>0$ define the $(\p,l)$-signature of $(T,\lambda)$ to be the Witt class \[\sigma_{\p,l}(T,\lambda):=[(\Delta_l(T_\p),b_l(\lambda_\p))]\in W^{v_p}(A/\p).\]
\end{definition}

\begin{theorem}\label{thm:multisignature}Let $A$ be a Dedekind domain such that for each involution invariant prime ideal $\p\subset A$, there exists a half-unit $s(\p)\in A/\p$. Then there are isomorphisms of abelian groups\[\begin{array}{rcl}SW^\eps(A,A\sm\{0\})&\xrightarrow{\cong}& \bigoplus_{\p=\overline{\p}} SW^\eps(A,\p^\infty)\\
&\xrightarrow{\cong}& \bigoplus_{\p=\overline{\p}}\bigoplus_{l=1}^\infty W^{v_p}(A/\p).\end{array}\]The composite isomorphism is given by the collection of $(\p,l)$-signatures $\sigma_{\p,l}(T,\lambda)$ and is called the \textit{split Witt multisignature} \[\sigma^{SW}:SW^\eps(A,A\sm\{0\})\xrightarrow{\cong}\bigoplus_{\p=\overline{\p}}\bigoplus_{l=1}^\infty W^{v_p}(A/\p).\]
\end{theorem}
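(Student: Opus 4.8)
The plan is to peel the group apart in two stages: first reduce to a single involution-invariant prime, then split the $\p$-primary summand into its graded pieces using the Main Decomposition Theorem. The first isomorphism $SW^\eps(A,A\sm\{0\})\cong\bigoplus_{\p=\overline\p}SW^\eps(A,\p^\infty)$ is already Proposition \ref{prop:dedekinddecomp2}, so the real content is, for a fixed involution-invariant (hence maximal) prime $\p=(p)$ of the Dedekind domain $A$, to establish $SW^\eps(A,\p^\infty)\cong\bigoplus_{l=1}^\infty W^{v_p}(A/\p)$ with the isomorphism given by $(T,\lambda)\mapsto\big([(\Delta_l(T),b_l(\lambda))]\big)_l$, and then to check that the resulting composite is the collection of $(\p,l)$-signatures.

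First I would localise. The cartesian morphism $(A,\p^\infty)\to(A_\p,\p^\infty)$ and Theorem \ref{thm:cartmorph} give an equivalence $\H(A,\p^\infty)\simeq\H(A_\p,\p^\infty)$ compatible with torsion duality, hence an isomorphism of commutative monoids $\NN^\eps(A,\p^\infty)\cong\NN^\eps(A_\p,\p^\infty)$ carrying split metabolic linking forms to split metabolic linking forms (an equivalence preserves direct sums and split exact sequences), so $SW^\eps(A,\p^\infty)\cong SW^\eps(A_\p,\p^\infty)$. Since $A_\p$ is a local PID with residue field $A_\p/\p\cong A/\p$ containing the half-unit $s(\p)$, Theorem \ref{thm:MDT2} applies and produces a monoid isomorphism $\NN^\eps(A_\p,\p^\infty)\cong\bigoplus_{l=1}^\infty\NN^{v_p}(A/\p)$, $(T,\lambda)\mapsto(\Delta_l(T,\lambda))_l$, restricting to $\NN^\eps_{\p,l}\cong\NN^{v_p}(A/\p)$ on each graded piece.

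The heart of the argument is the claim that, under this monoid isomorphism, the submonoid of split metabolic linking forms corresponds exactly to $\bigoplus_l\{\text{metabolic forms over }A/\p\}$. Granting this, the monoid construction commutes with the direct sum (a relation $m+n=m'+n'$ with $n,n'\in\bigoplus_lN_l$ holds coordinatewise, and supports stay finite), so $SW^\eps(A_\p,\p^\infty)\cong\bigoplus_l\big(\NN^{v_p}(A/\p)/\{\text{metabolic}\}\big)=\bigoplus_l W^{v_p}(A/\p)$; unwinding the chain of identifications and the definition of $\sigma_{\p,l}$ then gives the stated composite, and naturality is inherited at each step. For the forward inclusion of the claim I would use that $\Delta_l$ is additive: a split lagrangian $j:L\hookrightarrow T$ gives a module isomorphism $T\cong L\oplus L^\wedge$, hence $\Delta_l(T)\cong\Delta_l(L)\oplus\Delta_l(L^\wedge)$ with $\dim_{A/\p}\Delta_l(L^\wedge)=\dim_{A/\p}\Delta_l(L)$ (as $L^\wedge\cong L$ as $A_\p$-modules), while $b_l(\lambda)$ vanishes on $\Delta_l(L)$ because $\lambda$ vanishes on $L=L^\perp$; thus $\Delta_l(L)$ is a self-annihilating subspace of exactly half the dimension of the non-singular form $(\Delta_l(T),b_l(\lambda))$, hence a lagrangian. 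For the reverse inclusion, Proposition \ref{prop:nonnatural} lets me write $(T,\lambda)\cong\bigoplus_l(T_l,\lambda_l)$ with $(T_l,\lambda_l)\in\NN^\eps_{\p,l}$, and since $\Delta_j(T_l)=0$ for $j\neq l$ this splitting is diagonal with respect to auxiliary forms, reducing to the case $T=T_l$; there, if $(\Delta_l(T_l),b_l(\lambda_l))$ is metabolic then, \emph{because $A/\p$ has a half-unit}, Lemma \ref{lem:splitishyp1} makes it hyperbolic, $\cong H(\overline L)$ say, so lifting a basis of $\overline L$ to a free $A_\p/p^l$-summand $L$ of $T_l$ and forming the hyperbolic linking form $H_l(L)=(L\oplus L^\wedge,\lmat 0&1\\ \eps&0\rmat)$ one computes $\Delta_l(H_l(L))\cong H(\overline L)\cong(\Delta_l(T_l),b_l(\lambda_l))$, whence Theorem \ref{thm:levinePID} forces $(T_l,\lambda_l)\cong H_l(L)$, which is split metabolic (indeed hyperbolic).

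I expect the reverse inclusion to be the main obstacle --- promoting ``every auxiliary form is metabolic'' to ``the linking form is split metabolic over $(A_\p,\p^\infty)$''. The half-unit hypothesis does the real work there, both via Theorem \ref{thm:MDT2} and via the identification metabolic $=$ hyperbolic over the residue field $A/\p$, which is precisely what lets one exhibit a concrete hyperbolic linking form carrying the prescribed auxiliary data and then appeal to the classification of Theorem \ref{thm:levinePID}. Everything else --- additivity and exactness of $\Delta_l$, the dimension counts, the symmetry bookkeeping $u_p^l\eps\leftrightarrow v_p$, and compatibility of the composite with the $(\p,l)$-signatures --- is routine.
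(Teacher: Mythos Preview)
Your proposal is correct and follows the same overall architecture as the paper: reduce to a single involution-invariant prime via Proposition~\ref{prop:dedekinddecomp2}, invoke Theorem~\ref{thm:MDT2}, and isolate as the key claim that $(T,\lambda)$ is split metabolic over $(A_\p,\p^\infty)$ if and only if every auxiliary form $(\Delta_l(T),b_l(\lambda))$ is metabolic over $A/\p$. Your forward direction agrees with the paper's.

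The genuine difference is in the reverse direction. The paper lifts a split lagrangian of $(\Delta_l(T_l),b_l(\lambda_l))$ to a split summand $L\hookrightarrow T_l$, writes $\lambda_l$ as a $2\times 2$ matrix $\lmat f&g\\ \eps g^*&h\rmat$ with $f$ divisible by $p$, and then runs an inductive half-unit ``clearing'' argument to make $f$ divisible by $p^{2k}$, then $p^{4k}$, etc., until $f=0$ and $L$ is an honest split lagrangian. You instead exploit that Theorem~\ref{thm:MDT2} has \emph{already} established the monoid isomorphism $\NN^\eps_{\p,l}\cong\NN^{v_p}(A/\p)$: since $(\Delta_l(T_l),b_l(\lambda_l))$ is hyperbolic (half-unit in the field $A/\p$), you build a hyperbolic linking form $H_l(L)$ with the same auxiliary form and conclude $(T_l,\lambda_l)\cong H_l(L)$ directly from the classification. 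This is shorter and entirely valid; the inductive work you are bypassing is essentially the same induction already carried out inside the proof of Theorem~\ref{thm:levinePID}, so the paper's separate matrix-clearing is in a sense redundant. What the paper's approach buys is an explicit recipe for modifying a given lifted lagrangian into a genuine one (useful for the subsequent Corollary on double Witt groups, where one must improve two lagrangians simultaneously without disturbing each other), whereas your approach buys brevity and a cleaner logical dependence on results already in hand.
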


\begin{proof}The proof essentially falls out of our decompositions so far, the Main Decomposition Theorem and Theorem \ref{thm:MDT2}. The final isomorphism in the statement of the theorem uses the fact that $DW^{v_p}(A/\p)\cong W^{v_p}(A/\p)$ as $A/\p$ is a field.

The only thing we must still check is that a non-singular linking form $(T,\lambda)$ over $(A,\p^\infty)$ is split metabolic if and only if $(\Delta_l(T),b_l(\lambda))$ is split metabolic for each $l>0$. The proof of this is essentially given in \cite[1.5]{MR1004605}. The proof is as follows.

Clearly if $T\cong L\oplus T/L$ for a lagrangian $L$ then $\Delta_l(T)\cong\Delta_l(L)\oplus\Delta_l(T/L)$ and $\Delta_l(L)$ is a lagrangian for $(\Delta_l(T),b_l(\lambda))$ for all $l>0$.

For the converse, suppose $(T,\lambda)$ is a non-singular linking form over $(A,\p^\infty)$ and $(\Delta_l(T),b_l(\lambda))$ is split metabolic for each $l>0$. Fix $l>0$ and make a choice of lift of $(\Delta_l(T),b_l(\lambda))$ to some $(T_l,\lambda_l)\in \NN^\eps_{\p,l}$ as in the proof of Theorem \ref{thm:MDT2}. As in the proof of Proposition \ref{prop:nonnatural} we prefer to think of this not as a linking form, but equivalently as a form over the ring $A/(p)^l$. We may lift a split lagrangian to a split injection $j:L\hookrightarrow T$ likewise, so that there is an isomorphism of $A/(p)^l$-modules $L\oplus T_l/L\cong T_l$, under which identification we may write the form as \[\left(\begin{matrix}f&g\\\eps g^*&h\end{matrix}\right):L\oplus (T/L)\to L^*\oplus (T/L)^*\cong (L\oplus (T/L))^*\](with $-^*=\Hom_{A/(p)^l}(-,A/(p)^l)$), where $h=\eps h^*$ and $f=\eps f^*$, $f$ is divisible by $p$, and $g$ must be an isomorphism (by non-singularity of $(T_l,\lambda_l)$). We assume without loss of generality that $T/L=L^*$ and that $g=1$ as we may modify the submodule $i:T/L\hookrightarrow T$ to $ig^{-1}:L^*\hookrightarrow T$. 

We use the fact that $f$ is divisible by $p$ as the base case of an induction on $l>k\geq 1$. Assume that $f=p^k\Phi$ for some $A$-module morphism $\Phi:L\to L^*$ and pick $s\in A$ such that $s+\overline{s}=1\in A/(p)^l$ as in Claim \ref{clm:halfunit}. We use this to modify the inclusion of the submodule $j:L\hookrightarrow T$ to the split injection \[\left(j\,\,\, -i(\eps sp^k\Phi)^*\right):L\oplus L\hookrightarrow T.\]This new lagrangian submodule is still complementary to $i:L^*\hookrightarrow T$ and the modification does not affect $h$ in our matrix. However, $f$ is modified to
\begin{eqnarray*}
&&f-(p^ks\Phi+\eps(p^ks\Phi)^*)+(p^{k}s\Phi)h(p^ks\Phi)^*\\
&=&f-(sf+\eps \overline{s}f^*)+p^{k}\overline{p}^ks\overline{s}\Phi h\Phi^*\\
&=&u_p^kp^{2k}s\overline{s}\Phi h\Phi^*,
\end{eqnarray*}
where we have used $\overline{p}^l=u_pp^k$. So the modification of $f$ is divisible by $p^{2k}$. By induction we may assume $f$ is divisible by $p^l$ and $j:L\hookrightarrow T$ is a split lagrangian.
\end{proof}

\begin{corollary}Let $A$ be a Dedekind domain containing a half-unit. Then there are isomorphisms of abelian groups \[\begin{array}{rcl}DW^\eps(A,A\sm\{0\})&\xrightarrow{\cong}& \bigoplus_{\p=\overline{\p}} DW^\eps(A,\p^\infty)\\
&\xrightarrow{\cong}& \bigoplus_{\p=\overline{\p}}\bigoplus_{l=1}^\infty DW^{v_p}(A/\p)\\
&\xrightarrow{\cong}& \bigoplus_{\p=\overline{\p}}\bigoplus_{l=1}^\infty W^{v_p}(A/\p)\end{array}\]The composite isomorphism is given by the collection of $(\p,l)$-signatures $\sigma_{\p,l}(T,\lambda)$ and is called the \textit{double Witt multisignature} \[\sigma^{DW}:DW^\eps(A,A\sm\{0\})\xrightarrow{\cong}\bigoplus_{\p=\overline{\p}}\bigoplus_{l=1}^\infty W^{v_p}(A/\p).\]
\end{corollary}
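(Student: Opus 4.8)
The plan is to deduce the Corollary directly from the corresponding statement for split Witt groups, Theorem \ref{thm:multisignature}, by using the forgetful isomorphisms of Claim \ref{clm:DWvsW} to trade every split Witt group for the corresponding double Witt group. The result is therefore essentially formal; the only real work is a short descent observation and a compatibility check.

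First I would record that the hypothesis of Theorem \ref{thm:multisignature} is satisfied: for an involution-invariant prime $\p=\overline{\p}$, the involution on $A$ descends to $A/\p$, so a half-unit $s\in A$ maps to a half-unit $\overline{s}\in A/\p$; hence every such $A/\p$ contains a half-unit $s(\p)$. The first isomorphism $DW^\eps(A,A\sm\{0\})\xrightarrow{\cong}\bigoplus_{\p=\overline{\p}}DW^\eps(A,\p^\infty)$ is then exactly the last assertion of Proposition \ref{prop:dedekinddecomp2}. For the remaining terms I would invoke Claim \ref{clm:DWvsW}: since $A$ contains a half-unit, the forgetful map $DW^\eps(A,\p^\infty)\xrightarrow{\cong}SW^\eps(A,\p^\infty)$ is an isomorphism for each $\p$ (and similarly before taking primary parts), so Theorem \ref{thm:multisignature} applies and yields
\[DW^\eps(A,A\sm\{0\})\xrightarrow{\cong}\bigoplus_{\p=\overline{\p}}DW^\eps(A,\p^\infty)\xrightarrow{\cong}\bigoplus_{\p=\overline{\p}}\bigoplus_{l=1}^\infty W^{v_p}(A/\p),\]
with composite the collection of classes $[(\Delta_l(T_\p),b_l(\lambda_\p))]$, i.e.\ the $(\p,l)$-signatures. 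To insert the intermediate term $\bigoplus_{\p,l}DW^{v_p}(A/\p)$ I would note that each $A/\p$ is a field in which $2$ is invertible, so by Lemma \ref{lem:splitishyp1} every metabolic form over $A/\p$ is hyperbolic and the forgetful map $DW^{v_p}(A/\p)\xrightarrow{\cong}W^{v_p}(A/\p)$ is an isomorphism (this is precisely the observation already used at the start of the proof of Theorem \ref{thm:multisignature}). Splicing this isomorphism into the chain produces the three-step tower in the statement, and the composite is unchanged, hence equal to $\sigma^{DW}$.

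I would also make explicit that all the maps in sight are induced by the identity on the underlying monoids $\NN^\eps$ (the forgetful maps), by the primary decomposition $T\cong\bigoplus_\p T_\p$, or by the auxiliary-form assignment $(T,\lambda)\mapsto(\Delta_l(T),b_l(\lambda))$; since each of these is natural, they fit together into a commuting diagram and the composite of the Corollary genuinely is the double Witt multisignature rather than merely some isomorphism. Along the way it is worth confirming that all the $DW$-groups appearing on the right are in fact well-defined groups, which is immediate here since $A$ and each residue field $A/\p$ contain half-units (cf.\ the Remark following the definition of $DW$).

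I do not expect a serious obstacle: the content has all been done in Theorem \ref{thm:multisignature}. The step requiring the most care is the compatibility bookkeeping in the previous paragraph — checking that the forgetful isomorphisms, the decomposition of Proposition \ref{prop:dedekinddecomp2}, and the auxiliary-form maps really do commute, so that the composite is $\sigma^{DW}$ as claimed.
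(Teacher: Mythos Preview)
Your approach is correct and in fact cleaner than the paper's. You reduce to Theorem \ref{thm:multisignature} by invoking Claim \ref{clm:DWvsW} (with a half-unit, the forgetful map $DW^\eps(A,S)\to SW^\eps(A,S)$ is an isomorphism) together with Proposition \ref{prop:dedekinddecomp2} and the field-level collapse $DW^{v_p}(A/\p)\cong W^{v_p}(A/\p)$. The paper instead goes back into the inductive machinery of the proof of Theorem \ref{thm:multisignature} and reruns it in the hyperbolic setting: it shows directly that a linking form over $(A,\p^\infty)$ is hyperbolic if and only if every auxiliary form $(\Delta_l(T),b_l(\lambda))$ is hyperbolic, by lifting a pair of complementary lagrangians and observing that the improvement procedure for one does not disturb the other (because the morphism $h$ in that proof is left untouched). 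Your route is shorter and entirely formal; the paper's route has the mild advantage of yielding the form-level characterisation ``hyperbolic $\Leftrightarrow$ all $\Delta_l$ hyperbolic'' without passing through the split-metabolic intermediary, though of course Lemma \ref{lem:splitishyp2} recovers that too. One small correction: you write that $A/\p$ is a field ``in which $2$ is invertible'', but what you actually need (and have) is that $A/\p$ inherits a half-unit from $A$; Lemma \ref{lem:splitishyp1} only requires a half-unit, not that the involution be trivial.
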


\begin{proof}As in Theorem \ref{thm:multisignature}, the only thing we must still check is that a non-singular linking form $(T,\lambda)$ over $(A,\p^\infty)$ is hyperbolic if and only if $(\Delta_l(T),b_l(\lambda))$ is hyperbolic for each $l>0$.

Clearly if $T\cong L_+\oplus L_-$ for lagrangians $L_\pm$ then $\Delta_l(T)\cong\Delta_l(L_+)\oplus\Delta_l(L_-)$ and $\Delta_l(L_\pm)$ are lagrangians for $(\Delta_l(T),b_l(\lambda))$ for all $l>0$.

For the converse, use the same method as the proof of Theorem \ref{thm:multisignature} but now lift both split lagrangians to complementary split submodules of $T$. Each split submodule can now be improved to a split lagrangian using the method of \ref{thm:multisignature}. To see that we can modify one submodule without affecting the other, we need only recall that the morphism $h$ in the proof of \ref{thm:multisignature} was unaffected by the modifications made to the submodule $L$.
\end{proof}

\begin{corollary}\label{cor:forget}If $A$ is a Dedekind domain, such that for each involution invariant prime ideal $\p$ there is a half unit $s(\p)\in A/\p$, then the forgetful functor is given by \[SW^\eps(A,A\sm\{0\})\to W^\eps(A,A\sm\{0\});\qquad(T,\lambda)\mapsto \bigoplus_{\p=\overline{\p}}\bigoplus_{l \text{ odd}}\sigma_{\p,l}(T,\lambda).\]If $A$ is a Dedekind domain containing a half unit then the forgetful functor is given by \[DW^\eps(A,A\sm\{0\})\to W^\eps(A,A\sm\{0\});\qquad(T,\lambda)\mapsto \bigoplus_{\p=\overline{\p}}\bigoplus_{l \text{ odd}}\sigma_{\p,l}(T,\lambda).\]
\end{corollary}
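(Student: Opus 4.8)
The plan is to read off the forgetful map on generators, using the multisignature decompositions already established. By Theorem \ref{thm:multisignature} the split Witt multisignature $\sigma^{SW}$ is an isomorphism onto $\bigoplus_{\p=\overline{\p}}\bigoplus_{l\geq1}W^{v_p}(A/\p)$, so (via Theorem \ref{thm:MDT2} and the proof of Theorem \ref{thm:multisignature}) $SW^\eps(A,A\sm\{0\})$ is generated as a group by the classes of the building-block linking forms $(T_l,\lambda_l)\in\NN^\eps_{\p,l}$, namely those with $T_l$ a free $A/\p^l$-module, as $\p$ ranges over the involution-invariant primes and $l$ over positive integers; moreover the forgetful map respects the primary decomposition of Proposition \ref{prop:dedekinddecomp2} since it is the identity on underlying forms. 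Since $\sigma^W$, the forgetful map and the $(\p,l)$-signatures are all additive, it suffices to compute the Witt multisignature of the image under the forgetful map of each such generator.

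The key point is that $\sigma^W$ is, by construction (Theorem \ref{thm:karoubi}), the composite of devissage (Proposition \ref{prop:devissage}) with the residue-field isomorphism $W^\eps(A,\p)\cong W^{u_p\eps}(A/\p)$ (Proposition \ref{prop:residuefield}). Applied to the generator $(T_l,\lambda_l)$, devissage gives $0$ when $l=2k$ is even (there $L=p^kT_l=K_k(T_l)=L^\perp$), and when $l=2k-1$ is odd it produces the non-singular linking form on $L^\perp/L=p^{k-1}T_l/p^kT_l$, which is annihilated by $p$. The one genuine computation is to identify this with the auxiliary form: multiplication by $p^{k-1}$ is an $A/\p$-module isomorphism $T_l/pT_l\xrightarrow{\cong}p^{k-1}T_l/p^kT_l$, one has $\Delta_l(T_l)\cong T_l/pT_l$, and $\lambda_l(p^{k-1}x,p^{k-1}y)=p^{l-1}\lambda_l(x,y)=b_l([x],[y])$, so $(L^\perp/L,\lambda_l)\cong(\Delta_l(T_l),b_l(\lambda_l))$ as $(u_p\eps)$-symmetric forms over $A/\p$ (the symmetry type matching because $u_p^l\eps=u_p\eps$ for $l$ odd). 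Hence under $\sigma^W$ the class $[(T_l,\lambda_l)]$ maps to $0$ if $l$ is even, and to $\sigma_{\p,l}(T_l,\lambda_l)=[(\Delta_l(T_l),b_l(\lambda_l))]$ in the $\p$-summand if $l$ is odd; since $\sigma_{\p',l'}(T_l,\lambda_l)$ vanishes for $(\p',l')\neq(\p,l)$, this is precisely the part of $\bigoplus_{\p'=\overline{\p'}}\bigoplus_{l'\text{ odd}}\sigma_{\p',l'}$ contributed by this generator, and additivity finishes the $SW^\eps$ case.

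For the double Witt group, the half-unit hypothesis gives the forgetful isomorphism $DW^\eps(A,A\sm\{0\})\xrightarrow{\cong}SW^\eps(A,A\sm\{0\})$ of Claim \ref{clm:DWvsW} (via Lemmas \ref{lem:splitishyp1} and \ref{lem:splitishyp2}), through which the forgetful functor $DW^\eps\to W^\eps$ factors, and this isomorphism preserves the $(\p,l)$-signatures; so the $SW^\eps$ formula transports verbatim. I expect the only real obstacle to be bookkeeping in the odd-block computation — keeping careful track of the $p$-divisibility, of the multiplication-by-$p^{k-1}$ identification, and of the passage through Proposition \ref{prop:residuefield} so that the symmetry parameter comes out as $v_p=u_p\eps$ rather than $\eps$ — everything else being a formal consequence of the decomposition theorems already proved.
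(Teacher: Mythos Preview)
Your approach is correct and is precisely the argument the paper leaves implicit: the corollary is stated without proof, as an immediate consequence of combining the multisignature isomorphisms (Theorems \ref{thm:karoubi} and \ref{thm:multisignature}) with devissage (Proposition \ref{prop:devissage}), which is exactly what you do by computing on the building-block generators $(T_l,\lambda_l)$. One small point to tighten: sesquilinearity gives $\lambda_l(p^{k-1}x,p^{k-1}y)=\overline{p}^{\,k-1}p^{k-1}\lambda_l(x,y)=u_p^{-(k-1)}p^{l-1}\lambda_l(x,y)$ rather than $p^{l-1}\lambda_l(x,y)$ on the nose, but this extra unit (with $u_p\overline{u_p}=1$) is exactly what the ``standard trick'' at the end of the proof of Theorem \ref{thm:MDT2} absorbs when passing to $W^{v_p}(A/\p)$, so the Witt classes agree and your conclusion stands.
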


\subsection*{Examples}

We now offer some examples of rings and localisations to which we can apply Corollary \ref{cor:forget}.

\begin{example}We start with the example of Wall-Kawauchi-Kojima away from the prime 2. Set $(A,S)=(\Z[\frac{1}{2}],A\sm\{0\})$, $\eps=1$. Then $(T,\lambda)\in\ker(DW(A,S)\to W(A,S))$ if and only if for each prime $p\equiv 1\,\,\text{mod $4$}$ we have \[\sum_{l\,\,\text{odd}}\sigma_{(p),l}=(0,0)\in\Z/2\Z\oplus \Z/2\Z,\]and for each prime $p\equiv 3\,\,\text{mod $4$}$ we have \[\sum_{l\,\,\text{odd}}\sigma_{(p),l}=0\in\Z/4\Z.\]
\end{example}

We now move onto the example of Laurent polynomial rings over fields. All the terminology and results below concerning (single) Witt groups are well-known and borrowed from \cite[39C]{MR1713074}. Only the statements concerning double Witt groups in our examples are original.

Suppose $\FF=\R$ or $\C$ where $\R$ has the trivial involution and $\C$ has the involution given by complex conjugation. The \textit{laurent polynomial ring} $A=\FF[z,z^{-1}]$ has the involution extended linearly from $\FF$ by $\overline{z}=z^{-1}$. Define the set of involution invariant units in a commutative Noetherian ring $R$ to be \[U(R)=\{a\in R\,|\,\overline{a}a=1\}.\]Write the set of \textit{irreducible monic polynomials over $\FF$} as $\mathcal{M}(\FF)$ and define a subset as\[\{p(z)\in \mathcal{M}(\FF)\,|\,\overline{(p)}=(p)\}=:\overline{\mathcal{M}}(\FF)\subseteq\mathcal{M}(\FF).\]In other words, an irreducible monic polynomial $p(z)$ is in $\overline{\mathcal{M}}(\FF)$ if and only if there exists $u_p\in U(\FF[z,z^{-1}])$ such that $u_p\overline{p}=p$. It is well-known that the prime ideals of $\FF[z,z^{-1}]$ are in 1:1 correspondence with the elements of $\mathcal{M}(\FF)$. Define multiplicative subsets \[P:=\{p(z)\in\FF[z,z^{-1}]\,|\,p(1)\neq 0\},\]and \[Q:=\FF[z,z^{-1}]\sm \{0\}=P\cup (z-1)^\infty.\]Then $Q^{-1}\FF[z,z,^{-1}]=\FF(z)$, the fraction field.

Now consider Example \ref{lem:andrewripoff} where $p=(z-a)$ for $a\overline{a}=1$ and note that if we had taken an arbitrary $\eps$-symmetric linking form $(T,\lambda)$ over $(A,p^\infty)$, the $l$th auxiliary form $(\Delta_l(T),b_l(\lambda))$ over the field $A/pA$ is $v_p$-symmetric. The isomorphism \[f:A/pA\xrightarrow{\cong}\FF;\qquad z\mapsto a,\]affects $v_p$ in the following way\[f:\left\{\begin{array}{lll}\eps&&\text{}\\u_p\eps&&\text{}\end{array}\right.\mapsto \qquad\left\{\begin{array}{lll}f(\eps)&&\text{$l$ even,}\\-a^2f(\eps)&&\text{$l$ odd.}\end{array}\right.\]But now by our standard trick from the end of the proof of Theorem \ref{thm:MDT2} we can modify the $-a^2f(\eps)$-symmetry to $-f(\eps)$-symmetry. As single Witt groups are blind to auxiliary forms when $l$ is even, one consequence is the isomorphism:\[W^\eps(\FF[z,z^{-1}],(a-z)^\infty)\cong W^{-f(\eps)}(\FF).\]

\begin{example}\label{ex:C}Note that for any algebraically closed $\FF$, we have a 1:1 correspondence of sets \[U(\FF)\to\overline{\mathcal{M}}(\FF);\qquad a\mapsto (z-a).\]

If $\FF=\C$ then recall (\cite[39.22]{MR1713074}) that for $\eps=\pm1$ there is an isomorphism\[W^\eps(\C)\xrightarrow{\cong}\Z ,\]given by the signature of the hermitian pairing in the case $\eps=1$ and in the case $\eps=-1$ we send the skew-hermitian pairing $\theta(x,y)$ to the hermitian pairing $\theta(x,iy)$ and take the signature of this. By the discussion above and Example \ref{lem:andrewripoff} we have \[\begin{array}{rcl}W^\eps(\C[z,z^{-1}],Q)&\xrightarrow{\cong}&\bigoplus_{a\in S^1}\Z,\\
&&\\
DW^\eps(\C[z,z^{-1}],Q)&\xrightarrow{\cong}&\bigoplus_{a\in S^1}\bigoplus_{l=1}^\infty\Z.\end{array}\]and $(T,\lambda)$ is in the kernel of the forgetful map if and only if for each $a\in S^1$ we have\[\sum_{l\,\,\text{odd}}\sigma_{a,l}(T,\lambda)=0\in\Z.\]
\end{example}

\begin{example}\label{ex:R}If $\FF=\R$ then (by \cite[39.23]{MR1713074}) we have\[\overline{\mathcal{M}}(\R)=\{(z-1)\}\cup\{(z+1)\}\cup\{p_\theta(z)\,|\,0<\theta<\pi\},\]where \[p_\theta(z)=(z- e^{i\theta})(z-e^{-i\theta}).\]The corresponding residue class fields are given by the rings with involution\[\begin{array}{rclr}\R[z,z^{-1}]/(z\pm 1)&\xrightarrow{\cong}&\R;&z\mapsto \mp1,\\
\R[z,z^{-1}]/(p_\theta(z))&\xrightarrow{\cong}&\C;&z\mapsto e^{i\theta}.\end{array}\]Hence we have\[\begin{array}{rcl}W^\eps(\R[z,z]^{-1},Q)&\cong&\left\{\begin{array}{lll}\bigoplus_{0<\theta<\pi}\Z&&\eps=1,\\
\Z\oplus\Z\oplus\bigoplus_{0<\theta<\pi}\Z&&\eps=-1.\end{array}\right.\\
&&\\
DW^\eps(\R[z,z]^{-1},Q)&\cong&\left\{\begin{array}{lll}\bigoplus_{0<\theta<\pi}\bigoplus_{l=1}^\infty\Z&&\eps=1,\\
(\bigoplus_{l=1}^\infty\Z)\oplus(\bigoplus_{l=1}^\infty\Z)\oplus(\bigoplus_{0<\theta<\pi}\bigoplus_{l=1}^\infty\Z)&&\eps=-1.\end{array}\right.\end{array}\]Again, $(T,\lambda)$ is in the kernel of the forgetful map if and only if \[\begin{array}{rcll}\eps=1:&&&\sum_{l\,\,\text{odd}}\sigma_{\theta,l}(T,\lambda)=0\in\Z \qquad\text{for each $0<\theta<\pi$},\\
&&&\\
\eps=-1:&&&\sum_{l\,\,\text{odd}}\sigma_{\pm1,l}(T,\lambda)=0\in\Z,\\
&&\text{and}& \sum_{l\,\,\text{odd}}\sigma_{\theta,l}(T,\lambda)=0\in\Z\qquad\text{for each $0<\theta<\pi$}.\end{array}
\]
\end{example}

\begin{example}\label{ex:Q}The Witt group for $\FF=\Q$ is also well-known. We refer the reader to \cite[39.24]{MR1713074} for details of this group. In particular (and using the terminology of \cite[39.24]{MR1713074}), for each $(z\pm 1)\neq p(z)\in \overline{\mathcal{M}}(\Q)$ there is defined a natural number $t_{p(z)}$, and for each of $\eps=\pm1$ there are integer-valued signature invariants $\sigma^i_{p}$ for $i=1,\dots, t_{p(z)}$. Hence a non-singular $(\pm1)$-symmetric linking form $(T,\lambda)$ over $(\Q[z,z^{-1}],Q)$ determines an element of the kernel of the forgetful map only if for each $(z\pm 1)\neq p(z)\in \overline{\mathcal{M}}(\Q)$ and each $i=1,\dots,t_{p(z)}$\[\sum_{l\,\,\text{odd}}\sigma^i_{p,l}=0.\]
\end{example}

\begin{remark}If we restrict to the multiplicative subset $P$ in Example \ref{ex:R}, we recover the doubly-slice obstructions obtained in \cite[1.7]{MR1004605} coming from the $\R$-coefficient Blanchfield form (see Chapters \ref{chap:blanchfield} and \ref{chap:knots} for definitions). Restricting to the multiplicative subset $P$ in Examples \ref{ex:C} and \ref{ex:Q} we obtain doubly-slice obstructions similarly. As far as we can tell, our obstructions over the coefficient rings $\C$ and $\Q$ are new, although we have not included them as a theorem in Chapter \ref{chap:knots} as they are very similar to those of \cite{MR1004605}.
\end{remark}

\section{Localisation exact sequences}\label{sec:localisation}

Recall our heuristic that \[``\textit{$\H(A,S)$ is the difference between $\A(A)$ and $\A(S^{-1}A)$.''}\]A precise form that this statement takes is the well-known result of Milnor-Husemoller, which is our first example of what we will call a \textit{localisation exact sequence}:

\begin{theorem}For $A$ a Dedekind domain with trivial involution and fraction field $F$ there is an exact sequence of abelian groups \[0\to W(A)\to W(F)\to \bigoplus_{\p}W(A/\p),\]where $\p$ runs over all prime ideals (\cite[IV.3.3]{MR0506372}). In the case that $A=\Z$, there is moreover a split short exact sequence of abelian groups\[0\to W(\Z)\to W(\Q)\to \bigoplus_{p}W(\Z/p\Z)\to 0,\]where $p$ runs over all primes $p\in \Z$ (\cite[IV.2.1]{MR0506372}).
\end{theorem}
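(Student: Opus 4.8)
The plan is to realise the final map as the composite of a boundary homomorphism with the d\'evissage computation of Theorem~\ref{thm:karoubi}. Since the involution is trivial we have $\eps=1$, every prime ideal is involution-invariant, and each $u_p$ may be taken to be $1$, so Theorem~\ref{thm:karoubi} identifies $W(A,A\sm\{0\})\cong\bigoplus_{\p}W(A/\p)$. It therefore suffices to produce a homomorphism $\partial\colon W(F)\to W(A,A\sm\{0\})$ and prove exactness of $0\to W(A)\to W(F)\xrightarrow{\partial}W(A,A\sm\{0\})$. To define $\partial$: given a non-singular symmetric form $(V,\phi)$ over $F$, choose a full $A$-lattice $L\subset V$ (a finitely generated $A$-submodule with $FL=V$) and, after rescaling $L$, assume $L\subseteq L^\#$ where $L^\#:=\{v\in V\mid\phi(v,L)\subseteq A\}$ is the dual lattice. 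Over the Dedekind domain $A$ the lattices $L\subseteq L^\#$ are finitely generated and torsion-free, hence projective, so $0\to L\to L^\#\to L^\#/L\to 0$ places $L^\#/L$ in $\H(A,A\sm\{0\})$, and $\phi$ descends to a non-singular symmetric linking form $(L^\#/L,\overline\phi)$ valued in $F/A$. One checks, by comparing two lattices via a common refinement and using the lemma from the proof of Proposition~\ref{prop:devissage} that $(T,\lambda)\sim(M^\perp/M,\lambda)$ whenever $M\subseteq M^\perp$, that the Witt class $[(L^\#/L,\overline\phi)]$ is independent of $L$, so $\partial$ is well defined and additive.

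First I would prove exactness at $W(A)$. If a non-singular symmetric form $(P,\theta)$ over $A$ becomes metabolic over $F$, with lagrangian $M\subset V:=FP$, set $L:=M\cap P$. Then $L$ is a finitely generated projective $A$-module (a submodule of a projective module over a Dedekind domain) and $P/L$ is torsion-free, hence projective, so $L$ is a direct summand of $P$; using $\theta\colon P\xrightarrow{\cong}P^*$ and exactness of localisation one identifies the orthogonal $L^\perp$ with $M^\perp\cap P=M\cap P=L$, so $L$ is a lagrangian of $(P,\theta)$ and $[(P,\theta)]=0$. The composite $W(A)\to W(F)\xrightarrow{\partial}W(A,A\sm\{0\})$ vanishes because for $(V,\phi)=F\otimes_A(P,\theta)$ one may take the lattice $L=P$, whence $L^\#=P$ (as $\theta$ is an isomorphism) and $L^\#/L=0$.

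The crux is exactness at $W(F)$. Suppose $\partial[(V,\phi)]=0$. By the monoid construction (Definition~\ref{def:monoid}) this means $(L^\#/L,\overline\phi)$ is stably metabolic, i.e.\ $(L^\#/L,\overline\phi)\perp P\cong Q$ for metabolic linking forms $P,Q$; enlarging the lattice $L$ so as to absorb the stabilising summand one reduces to the case that $(L^\#/L,\overline\phi)$ itself is metabolic, with lagrangian $N/L$ for some intermediate lattice $L\subseteq N\subseteq L^\#$. Self-annihilation of $N/L$ gives $\phi(N,N)\subseteq A$, i.e.\ $N\subseteq N^\#$; and since the orthogonal of $N/L$ in $(L^\#/L,\overline\phi)$ is precisely $N^\#/L$, the lagrangian condition $N/L=(N/L)^\perp$ forces $N=N^\#$. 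Thus $(N,\phi|_N)$ is a non-singular symmetric form over $A$ with $F\otimes_A(N,\phi|_N)\cong(V,\phi)$, so $[(V,\phi)]$ lies in the image of $W(A)\to W(F)$, as required.

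For the case $A=\Z$ one must additionally prove that $W(\Q)\to\bigoplus_p W(\Z/p\Z)$ is onto and that the sequence splits. Surjectivity reduces to hitting the generators $\langle a\rangle$ of each $W(\Z/p\Z)$: the rank-one linking form $(\Z/p\Z,(x,y)\mapsto axy/p)$ corresponds under $\sigma^W$ to $\langle a\rangle\in W(\Z/p\Z)$ and equals $\partial\langle ap\rangle$ where $\langle ap\rangle$ denotes the rank-one form over $\Q$; finite direct sums then give all of $\bigoplus_p W(\Z/p\Z)$, so the sequence is short exact. For the splitting, use the signature homomorphism $\sigma\colon W(\Q)\to\Z$: since $W(\Z)\cong\Z$ is generated by $\langle 1\rangle$ and $\sigma\langle 1\rangle=1$, the composite $W(\Z)\hookrightarrow W(\Q)\xrightarrow{\sigma}\Z$ is an isomorphism, so $W(\Q)=W(\Z)\oplus\ker\sigma$ and the restriction of $W(\Q)\to\bigoplus_p W(\Z/p\Z)$ to $\ker\sigma$ is the required section. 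I expect the main obstacle to be the bookkeeping in the exactness step at $W(F)$: upgrading ``the $\partial$-class vanishes in the Witt group'' to ``$L^\#/L$ is genuinely metabolic'' via the lattice stabilisation, and then correctly identifying the orthogonal complement $(N/L)^\perp$ with $N^\#/L$ so that the equality of lagrangians promotes to the equality of lattices $N=N^\#$. Everything else is routine once the dual-lattice formalism and Theorem~\ref{thm:karoubi} are in hand.
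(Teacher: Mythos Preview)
The paper does not give a proof of this theorem: it is stated with citations to Milnor--Husemoller \cite[IV.3.3, IV.2.1]{MR0506372}, after which the paper rephrases the sequence via $W(A,S)$ and moves on to the stronger Ranicki result (Theorem~\ref{thm:WLES}), explicitly remarking that it ``will not reprove the exactness of this sequence here.'' The surrounding discussion (Definition~\ref{def:algbound}, Lemma~\ref{lem:boundarylag}, Proposition~\ref{prop:boundarywelldef}) does construct the boundary map $\partial$---in the cokernel formulation rather than your dual-lattice formulation---and checks it is well defined on Witt classes, but exactness itself is never argued in the paper. So there is no ``paper's own proof'' to compare against; what you have written is essentially the classical Milnor--Husemoller argument the paper is citing.

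Your outline is sound, with one genuinely under-justified step: the reduction at $W(F)$ from ``$[(L^\#/L,\overline\phi)]=0$ in the Witt group'' to ``$(L^\#/L,\overline\phi)$ is metabolic on the nose.'' The phrase ``enlarging the lattice $L$ so as to absorb the stabilising summand'' is not obviously legitimate, because an arbitrary metabolic linking form need not arise from a change of lattice inside the \emph{same} $(V,\phi)$. Two standard fixes: either invoke the fact that for linking forms over a Dedekind domain ``stably metabolic $\Rightarrow$ metabolic'' (this follows from primary decomposition plus d\'evissage, and the paper establishes it much later---as a corollary following Proposition~\ref{prop:welldeflagrang2}---under its standing half-unit hypothesis); or, closer to what Milnor--Husemoller actually do, define $W(F)\to\bigoplus_\p W(A/\p)$ prime by prime via second residue homomorphisms, so that exactness is checked over each DVR $A_\p$ separately and no global stabilisation argument is needed. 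Once that gap is closed, the rest of your argument (in particular the identification $(N/L)^\perp=N^\#/L$ forcing $N=N^\#$) is exactly the right mechanism.
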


In light of Theorem \ref{thm:karoubi} we prefer to rephrase the first of these exact sequences as \[0\to W(A)\to W(S^{-1}A)\to W(A,S),\]for $S=A\sm\{0\}$. In fact a stronger version of the theorem due to Ranicki is proved using algebraic $L$-theory.

\begin{theorem}[{\cite[3.4.7]{MR620795}}]\label{thm:WLES}If $A$ contains a half-unit and $(A,S)$ defines a localisation, then there is an exact sequence of abelian groups\[M^\eps(A,S)\to W^\eps(A)\to W^\eps(S^{-1}A)\to W^\eps(A,S)\to M^{-\eps}(A),\]where the reader is referred to \cite[1.6, 3.5]{MR620795} for the definitions of the groups of so-called `formations' denoted `$M$' (although, of course, it is the case that $M^\eps(A,S)=0$ for $A$ a Dedekind domain, and that $M^{-\eps}(\Z)=0$).
\end{theorem}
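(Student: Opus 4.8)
The plan is to obtain this as a fragment of the algebraic $L$-theory localisation exact sequence of Ranicki and Vogel, which is the chain-complex refinement developed in the cited reference. Taking $\eps$-decorated symmetric $L$-groups and the degree $n=0$ piece, that sequence reads
\[L^{1}(A,S,\eps)\to L^{0}(A,\eps)\to L^{0}(S^{-1}A,\eps)\to L^{0}(A,S,\eps)\to L^{-1}(A,\eps),\]
where $L^{n}(A,S,\eps)$ is the cobordism group of $S$-torsion symmetric Poincar\'e complexes over $A$, and the right-hand term is rewritten via skew-suspension $L^{n}(A,\eps)\cong L^{n+2}(A,-\eps)$ so that $L^{-1}(A,\eps)\cong L^{1}(A,-\eps)$. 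The whole task is then to identify these five groups with the five groups appearing in the statement, and to check that the maps agree.

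On the projective side I would use the standard dictionary: a $0$-dimensional $\eps$-symmetric Poincar\'e complex over $A$ is exactly a non-singular $\eps$-symmetric form, metabolic forms are precisely those that bound, and hence $W^\eps(A)\xrightarrow{\cong}L^{0}(A,\eps)$ and likewise $W^\eps(S^{-1}A)\xrightarrow{\cong}L^{0}(S^{-1}A,\eps)$; a $1$-dimensional $\eps$-symmetric Poincar\'e complex is a formation, so $L^{1}(A,-\eps)\cong M^{-\eps}(A)$. On the torsion side, a non-singular $\eps$-symmetric linking form $(T,\lambda)$ over $(A,S)$ with $T$ of homological dimension $1$ determines an $S$-torsion symmetric Poincar\'e complex, assembled from a length-$1$ projective resolution of $T$ together with the duality isomorphism $T\cong T^\wedge$, $T^\wedge=\Hom_A(T,S^{-1}A/A)$ (cf.\ Lemmas \ref{lem:ext1} and \ref{lem:ext2}), with metabolic linking forms bounding; this gives $W^\eps(A,S)\cong L^{0}(A,S,\eps)$, and the adjacent odd torsion group is the Witt group $M^\eps(A,S)$ of linking formations. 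One further checks that the maps of the chain-level sequence restrict to the expected Witt-group maps: $W^\eps(A)\to W^\eps(S^{-1}A)$ is induced by $i\colon A\to S^{-1}A$, and $W^\eps(S^{-1}A)\to W^\eps(A,S)$ is the geometric boundary, sending a non-singular form over $S^{-1}A$ carried by an $A$-lattice to the induced linking form on the torsion quotient.

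Throughout, the half-unit hypothesis is doing the essential work: it collapses the distinction between metabolic and hyperbolic objects (Lemmas \ref{lem:splitishyp1} and \ref{lem:splitishyp2}), so the naive monoid constructions $W^\eps$ and $M^\eps$ genuinely compute the chain-level $L$-groups, and it is the hypothesis under which Ranicki's $L$-theory machinery runs. The main obstacle is the torsion identification $W^\eps(A,S)\cong L^{0}(A,S,\eps)$: one must perform algebraic surgery below the middle dimension on an $S$-torsion symmetric Poincar\'e complex to concentrate it in a single degree --- precisely where the homological-dimension-$1$ hypothesis on $\H(A,S)$ enters --- and then verify that the resulting linking form is well defined modulo the metabolic relation, with cobordant complexes giving Witt-equivalent forms. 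The remaining work --- tracking signs through skew-suspension, matching the two formation terms at the ends, and checking the boundary map --- is routine but lengthy, and is exactly what is carried out in Chapters 1--3 of the cited reference. Finally, over a Dedekind domain every linking formation is hyperbolic, so $M^\eps(A,S)=0$, and $M^{-\eps}(\Z)=0$, which recovers the classical short exact sequences of Milnor--Husemoller.
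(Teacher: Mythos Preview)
The paper does not prove this theorem: immediately after stating it, the author writes ``We will not reprove the exactness of this sequence here'' and instead goes on to unpack the maps for motivational purposes. So there is no in-paper proof to compare against; the result is simply cited from \cite[3.4.7]{MR620795}.

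Your sketch is correct and is precisely the strategy carried out in that reference: one takes the $L$-theory localisation exact sequence (Theorem~\ref{thm:LES} in this paper), then in low degrees identifies $L^0(A,\eps)\cong W^\eps(A)$, $L^0(S^{-1}A,\eps)\cong W^\eps(S^{-1}A)$, $L^0(A,S,\eps)\cong W^\eps(A,S)$, and the adjacent odd-degree terms with the formation groups, via algebraic surgery below the middle dimension. One small refinement to your account of the half-unit: while it does collapse metabolic/hyperbolic for forms, its more essential role here is to make the skew-suspension $\overline{S}:L^n(A,\eps)\to L^{n+2}(A,-\eps)$ an isomorphism (Theorem~\ref{thm:algsurgery}), which is what allows one to perform surgery and thereby identify each $L$-group with the appropriate Witt or formation group.
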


We will not reprove the exactness of this sequence here. But as our main intention is to extend this to the context of the double Witt groups it is enlightening for us to understand the groups and maps involved. In particular, we wish to change perspective on the central term $W(S^{-1}A)$ in order to understand the map $W(S^{-1}A)\to W(A,S)$. The classical approach to this is to start by making a choice of `$A$-lattice' for a non-singular $\eps$-symmetric form $(P,\theta)$ over $S^{-1}A$, an $A$-submodule $K\subset P$ such that $(K,\theta|_K)$ is an $\eps$-symmetric form over $A$ and $S^{-1}K\cong P$. The perspective we want to take is that, actually, the pairings on these `$A$-lattices' are a more useful way to define the group $W(S^{-1}A)$ in the first place. The heuristic is that if we work over the same ring at all stages in the localisation exact sequence, comparison becomes easier. Now we describe how to change perspective in order to do so.

\begin{definition} Suppose $(K,\alpha)$ is an $\eps$-symmetric form over $A$ such that $S^{-1}(K,\alpha):=(S^{-1}K,S^{-1}\alpha)$ is a non-singular $\eps$-symmetric form over $S^{-1}A$, then $(K,\alpha)$ is called \textit{$S$-non-singular}. A submodule (not necessarily a direct summand) $j:L\hookrightarrow K$ is called an \textit{$S$-lagrangian} if $j^*\alpha j=0$ and $S^{-1}j:S^{-1}L\hookrightarrow S^{-1}K$ is a lagrangian for $S^{-1}(K,\alpha)$. $(K,\alpha)$ is called \textit{$S$-metabolic} if it admits an $S$-lagrangian.
\end{definition}

\begin{definition}
The \textit{$\eps$-symmetric Witt $\Gamma$-group of $i:A\to S^{-1}A$} is the abelian group defined by the monoid construction\[\Gamma^\eps(A\to S^{-1}A)=\{\text{$S$-non-singular, $\eps$-symmetric forms over $A$}\}/\{\text{$S$-metabolic forms}\}.\]
\end{definition}

\begin{proposition}\label{prop:gammachar}There is an isomorphism of groups\[S^{-1}A\otimes-:\Gamma^\eps(A\to S^{-1}A)\xrightarrow{\cong} W^\eps(S^{-1}A);\qquad(K,\alpha)\mapsto S^{-1}(K,\alpha).\]
\end{proposition}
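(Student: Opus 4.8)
The plan is to verify that the additive assignment $\Phi\colon(K,\alpha)\mapsto S^{-1}(K,\alpha)$ is a well-defined group homomorphism and then prove it is bijective. Well-definedness is immediate: if $j\colon L\hookrightarrow K$ is an $S$-lagrangian of $(K,\alpha)$ then, by definition of $S$-lagrangian, $S^{-1}j\colon S^{-1}L\hookrightarrow S^{-1}K$ is a lagrangian for $S^{-1}(K,\alpha)$, so every $S$-metabolic form is sent to a metabolic (hence Witt-trivial) form over $S^{-1}A$. Since $S^{-1}(-)$ is additive, the universal property of the monoid construction (Definition \ref{def:monoid}) makes $\Phi$ a group homomorphism $\Gamma^\eps(A\to S^{-1}A)\to W^\eps(S^{-1}A)$.

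For surjectivity, given a non-singular $\eps$-symmetric form $(P,\theta)$ over $S^{-1}A$, I would first invoke the standard existence of a finitely generated projective $A$-lattice, i.e.\ a $K\in\A(A)$ with $S^{-1}K\cong P$ (see \cite{MR620795}; this is where Noetherianity is used). Transporting $\theta$ and using that localisation commutes with $\Hom_A(K,-)$ over the Noetherian ring $A$, write $\theta=\psi/s$ with $\psi\in\Hom_A(K,K^*)$ and $s\in S$. The symmetry $\theta=\eps\theta^*$ then forces $\bar s\psi=\eps s\psi^*$ inside $\Hom_A(K,K^*)$ --- this equation descends from $S^{-1}A$ because $\Hom_A(K,K^*)$ is $S$-torsion-free, $K^*$ being projective --- so $\alpha:=\bar s\psi$ satisfies $\alpha=\eps\alpha^*$. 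As $S^{-1}\alpha=(s\bar s)\theta$ is an isomorphism, $\alpha$ is injective by Lemma \ref{lem:injective}, so $(K,\alpha)$ is an $S$-non-singular form over $A$; and multiplication by the unit $s$ is an isometry $(P,(s\bar s)\theta)\xrightarrow{\cong}(P,\theta)$, giving $\Phi[(K,\alpha)]=[(P,\theta)]$.

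For injectivity, suppose $S^{-1}(K,\alpha)$ vanishes in $W^\eps(S^{-1}A)$; by Definition \ref{def:monoid} there are metabolic $M_1,M_2$ over $S^{-1}A$ with $S^{-1}(K,\alpha)\oplus M_1\cong M_2$, and since $s/1$ is a half-unit in $S^{-1}A$, Lemma \ref{lem:splitishyp1} identifies each $M_i$ with the hyperbolic form $H^\eps(\Lambda_i):=(\Lambda_i\oplus\Lambda_i^*,\lmat 0&1\\ \eps&0\rmat)$ on a finitely generated projective $S^{-1}A$-module $\Lambda_i$. Pick $\Lambda_1'$ with $\Lambda_1\oplus\Lambda_1'\cong(S^{-1}A)^k$ free and set $H:=(A^k\oplus(A^k)^*,\lmat 0&1\\ \eps&0\rmat)$, a metabolic --- hence $S$-metabolic --- form over $A$ with $S^{-1}H\cong H^\eps(\Lambda_1)\oplus H^\eps(\Lambda_1')$. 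Then $(K',\alpha'):=(K,\alpha)\oplus H$ is $S$-non-singular, has the same class as $(K,\alpha)$ in $\Gamma^\eps(A\to S^{-1}A)$ (as $H$ is $S$-metabolic), and $S^{-1}(K',\alpha')\cong M_2\oplus H^\eps(\Lambda_1')$ is hyperbolic over $S^{-1}A$, hence has a lagrangian $\Lambda\subseteq S^{-1}K'$. The crux is now to pull $\Lambda$ back to an $S$-lagrangian: the inclusion $K'\hookrightarrow S^{-1}K'$ is injective ($K'$ projective; Lemma \ref{lem:locisexact} applied to $0\to A\to S^{-1}A$), so put $L:=\{x\in K'\mid x/1\in\Lambda\}$. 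Then $L$ is finitely generated ($A$ Noetherian); for $x,y\in L$ the element $\alpha'(x)(y)\in A$ has image $(S^{-1}\alpha')(x/1)(y/1)=0$ in $S^{-1}A$ since $\Lambda$ is isotropic, so $\alpha'(x)(y)=0$ and $L$ is isotropic; and $S^{-1}L=\Lambda$ inside $S^{-1}K'$, the inclusion $\subseteq$ being clear, while any $\xi\in\Lambda$ has the form $x/s$ with $x\in K'$, $s\in S$, and $x/1=s\xi\in\Lambda$ forces $x\in L$. Hence $L$ is an $S$-lagrangian of $(K',\alpha')$, so $[(K,\alpha)]=[(K',\alpha')]=0$ and $\Phi$ is injective.

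The step I expect to be the main obstacle is the injectivity argument, and within it the necessity of the stabilisation by $H$: one cannot simply intersect the lattice $K$ with a lagrangian of $S^{-1}(K,\alpha)$, because the monoid construction is not automatically transitive (cf.\ the remark after Definition \ref{def:monoid}) --- a priori ``Witt-trivial over $S^{-1}A$'' does not hand one a lagrangian --- so one must first upgrade it to ``hyperbolic over $S^{-1}A$'' using the half-unit and Lemma \ref{lem:splitishyp1}, after which the lattice intersection $L=K'\cap\Lambda$ finishes the job. The only non-formal external input, needed only for surjectivity, is the existence of a finitely generated projective $A$-lattice, which I would quote from \cite{MR620795}.
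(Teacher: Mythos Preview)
Your argument is correct and self-contained at the Witt-group level, but it is a genuinely different route from the paper's. The paper does not argue directly: it simply invokes the chain-complex isomorphism $\Gamma^n(A\to S^{-1}A,\eps)\cong L^n(S^{-1}A,\eps)$ from Ranicki \cite[3.2.3(i)]{MR620795} and specialises to $n=0$. What your approach buys is an elementary proof that stays inside the world of forms --- you never leave $\A(A)$ or $\A(S^{-1}A)$ --- at the price of needing the lattice-existence input you flag. What the paper's citation buys is uniformity in $n$ and no separate Witt-level work; the cost is that the reader must accept the $L$-theoretic machinery as a black box at this point in the exposition.

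One point to tighten: in your injectivity step you appeal to a half-unit $s\in A$ to turn the metabolic $M_i$ into hyperbolic forms via Lemma~\ref{lem:splitishyp1}, but Proposition~\ref{prop:gammachar} carries no half-unit hypothesis. You can remove this assumption cleanly by recycling your own surjectivity argument: realise $M_1$ itself as $S^{-1}(N_1,\nu_1)$ for some $S$-non-singular $(N_1,\nu_1)$ over $A$ (lattice plus denominator-clearing exactly as before), then pull back a lagrangian of $M_1$ to $N_1$ by the same $L:=N_1\cap\Lambda_1$ trick, showing $(N_1,\nu_1)$ is $S$-metabolic. Now $S^{-1}\big((K,\alpha)\oplus(N_1,\nu_1)\big)\cong S^{-1}(K,\alpha)\oplus M_1\cong M_2$ already has a lagrangian, and your intersection argument finishes as written. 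With that adjustment the proof needs no half-unit, matching the proposition as stated.
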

\begin{proof}This follows, for instance, from taking $n=0$ in the isomorphism $\Gamma^n(A\to S^{-1}A,\eps)\cong L^n(S^{-1}A,\eps)$ from the proof of \cite[3.2.3(i)]{MR620795}.
\end{proof}

Using \ref{prop:gammachar} we may now rewrite the Milnor-Husemoller localisation exact sequence for $A$ a Dedekind domain in our preferred form \[0\to W^\eps(A)\xrightarrow{i} \Gamma^\eps(A\to S^{-1}A)\xrightarrow{\partial} W^\eps(A,S).\]The map $i: W^\eps(A)\xrightarrow{i} \Gamma^\eps(A\to S^{-1}A)$ is simply the map that forgets the non-singularity of the form. The map $\partial$ is classically called the `dual lattice construction' (\cite{MR770063}), although we will not make use of this construction here. Actually we prefer the following well-known formulation (which is simpler and amounts to the same thing).

\begin{definition}\label{def:algbound}If $(K,\alpha)$ is an $S$-non-singular $\eps$-symmetric form over $A$ then by Lemma \ref{lem:injective} there is a short exact sequence of $A$-modules \[0\to K\xrightarrow{\alpha} K^*\to T:=\coker(\alpha)\to 0,\]and $T$ is $S$-torsion. The \textit{boundary of $(K,\alpha)$} is the non-singular $\eps$-symmetric linking form $\partial(K,\alpha)=(T,\lambda)$, where \[\lambda:T\to T^\wedge;\qquad [x]\mapsto ([y]\mapsto x(z)/s),\]where $x,y\in K^*$, $s\in S$ and $\alpha(z)=sy$. The definition is easily checked to be independent of the choices of $x,y,z$ and $s$.
\end{definition}

The following lemma tells us how to build lagrangians for the boundary linking forms associated to $S$-non-singular $\eps$-symmetric forms.

\begin{lemma}\label{lem:boundarylag}Suppose $f:(K,\alpha)\to(K',\alpha')$ is a morphism of $\eps$-symmetric forms over $A$ that becomes an isomorphism of forms over $S^{-1}A$. Suppose that $(K',\alpha')$ is non-singular and that $(K,\alpha)$ is $S$-non-singular. Then the induced injective morphism\[j:L=\coker(f)\hookrightarrow \coker(\alpha)=T\] is a lagrangian for the boundary $\partial(K,\alpha)=(T,\lambda)$.
\end{lemma}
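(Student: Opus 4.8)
The plan is to produce an explicit chain of short exact sequences relating $L = \coker(f)$, $T = \coker(\alpha)$, and the relevant duals, and then verify directly that the inclusion $j : L \hookrightarrow T$ satisfies the defining property of a lagrangian for $(T,\lambda)$, namely that $0 \to L \xrightarrow{j} T \xrightarrow{j^\wedge \lambda} L^\wedge \to 0$ is exact. First I would set up the commutative diagram with rows the defining sequences $0 \to K \xrightarrow{\alpha} K^* \to T \to 0$ and $0 \to K' \xrightarrow{\alpha'} (K')^* \to 0 \to 0$ (the latter using that $\alpha'$ is an isomorphism), and vertical maps $f : K \to K'$ and $(f^*)^{-1}\!\circ(\text{something})$ — more precisely, since $S^{-1}f$ is an isomorphism of forms we have $\alpha = f^* \alpha' f$, so the square with $\alpha$, $\alpha'$, $f$ on the left and $f^*$ on the right commutes. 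Lemma \ref{lem:injective} guarantees $f$ is injective (its localisation is an isomorphism and $K \hookrightarrow S^{-1}K$ since $K$ is projective), and likewise $f^*$ is injective. The snake lemma applied to this map of short exact sequences then yields an exact sequence relating $\coker(f)$, $\coker(f^*)$, and a connecting map; since the right-hand cokernel $\coker(\alpha') = 0$, I expect this to collapse to an identification $L = \coker(f) \cong \coker(f^*)$ up to the maps in play, together with the injection $j : L \hookrightarrow T$.

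Next I would identify $L^\wedge$. Since $f : K \to K'$ is a morphism of projective modules with $S^{-1}f$ an isomorphism, dualising gives $f^* : (K')^* \to K^*$ with $\coker(f^*)$ an $S$-torsion module, and one checks $\coker(f^*) \cong (\coker f)^\wedge = L^\wedge$ using Lemma \ref{lem:ext2} (the torsion dual of $\coker f$ is computed from the dual of a presentation, and $K \xrightarrow{f} K'$ is such a presentation of $L$). Under these identifications the composite $j^\wedge \lambda : T \to L^\wedge$ should be exactly the map $T = \coker(\alpha) \to \coker(f^*) = L^\wedge$ induced by $f^* : (K')^* \to K^*$ (post-composed appropriately), which is visibly surjective with kernel the image of $\coker(f) = L$ in $T$. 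Concretely: given $[x] \in T$ with $x \in K^*$, the value $(j^\wedge \lambda)([x])$ sends $[y'] \in L$ (lift $y' \in K'$) to $x(z)/s$ where $\alpha(z) = s\cdot(\text{lift of } j(y'))$; unwinding the relation $\alpha = f^*\alpha' f$ and that $f(z) = s y'$ in $S^{-1}K'$, this should simplify so that the kernel of $j^\wedge\lambda$ is precisely $j(L)$ and the image is all of $L^\wedge$.

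The main obstacle will be the bookkeeping in the last step — tracking the element $z \in K$ with $\alpha(z) = s y$ through the two short exact sequences and the identification $L^\wedge \cong \coker(f^*)$, and confirming the formula for $\lambda$ in Definition \ref{def:algbound} is compatible with the snake-lemma connecting map. In particular one must check well-definedness (independence of the lifts $y'$, $x$, $z$ and the scalar $s$), which is where sesquilinearity and the symmetry $\alpha = \eps\alpha^*$ get used. I would handle this by choosing, for each generator of $L$, a coherent system of lifts in $K'$, $K$, $K^*$, $(K')^*$ and then verifying the two containments $j(L) \subseteq \ker(j^\wedge\lambda)$ and $\ker(j^\wedge\lambda) \subseteq j(L)$ separately: the first is essentially $j^*\alpha j = 0$ transported to the torsion setting, i.e. that $L$ is self-annihilating, and follows formally because $f^*\alpha' f = \alpha$ kills anything pulled back from $\coker f$; the second is a diagram chase using exactness of the bottom row and injectivity of $f^*$. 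Exactness on the left ($j$ injective) and on the right ($j^\wedge\lambda$ surjective) then drop out of the snake lemma, completing the proof that $j : L \hookrightarrow T$ is a lagrangian.
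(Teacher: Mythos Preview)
Your approach is correct and essentially the same as the paper's: the paper organises your identifications $L=\coker(f)$, $T=\coker(\alpha)$, $T^\wedge\cong\coker(\alpha^*)$, $L^\wedge\cong\coker(f^*)$ into a single four-row commutative diagram (with vertical maps $f^*\alpha':K'\to K^*$, $\eps:K\to K$, $(\alpha')^*f:K\to (K')^*$ between the left columns and identities elsewhere), views this as a projective resolution of the column $L\xrightarrow{j} T\xrightarrow{\lambda} T^\wedge\xrightarrow{j^\wedge} L^\wedge$, and checks exactness by a one-line chase using that $\alpha'$ is an isomorphism. Your momentary confusion about the direction of $f^*$ disappears once you use $f^*\alpha':K'\to K^*$ as the vertical map (as the paper does) rather than trying to invert $f^*$.
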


\begin{proof}There is a diagram with commuting squares\[\xymatrix{
0\ar[r]&K\ar[r]^-{f}\ar[d]^-{1}&K'\ar[r]\ar[d]^-{f^*\alpha'}&L\ar[r]\ar[d]^-{j}&0\\
0\ar[r]&K\ar[r]^-{\alpha}\ar[d]^-{\eps}&K^*\ar[r]\ar[d]^-{1}&T\ar[r]\ar[d]^{\lambda}&0\\
0\ar[r]&K\ar[r]^-{\alpha^*}\ar[d]^-{(\alpha')^*f}&K^*\ar[r]\ar[d]^-{1}&T^\wedge\ar[r]\ar[d]^-{j^\wedge}&0\\
0\ar[r]&(K')^*\ar[r]^-{f^*}&K^*\ar[r]&L^\wedge\ar[r]&0}\]which is a projective resolution of the rightmost vertical column. The exactness of this rightmost column must be checked to establish the lemma. $j$ is injective, $j^\wedge$ is surjective and $j^\wedge\lambda j=0$. Suppose $x=f^*(y)\in K^*$, then as $\alpha'$ is an isomorphism of $A$-modules, $y=\alpha'(z)$ and $x\in \im(f^*\alpha')$. 
\end{proof}

\begin{remark}There is no reason for the injective morphism $j:L\hookrightarrow T$ in the proof of \ref{lem:boundarylag} to be split. This is the crucial point for explaining the historical interest in the Witt group of linking forms and the lack of historical interest in something like the split Witt group of linking forms. The classical Witt group of linking forms is the correct term for the Milnor-Husemoller localisation exact sequence and that the morphism $j$ might not split is \textit{precisely} why.
\end{remark}

\begin{proposition}\label{prop:boundarywelldef}If $(K,\alpha)$ is an $S$-metabolic, $S$-non-singular $\eps$-symmetric form over $A$ then $[\partial(K,\alpha)]=0\in W^\eps(A,S)$. Hence the map\[\partial:\Gamma^\eps(A\to S^{-1}A)\to W^\eps(A,S);\qquad[(K,\alpha)]\mapsto [\partial(K,\alpha)]\]is well-defined.
\end{proposition}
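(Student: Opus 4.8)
The plan is to deduce both assertions from Lemma~\ref{lem:boundarylag}. First observe that the boundary construction of Definition~\ref{def:algbound} is additive: the defining short exact sequence of $\partial((K,\alpha)\oplus(K'',\alpha''))$ is the direct sum of those of $\partial(K,\alpha)$ and $\partial(K'',\alpha'')$, so $\coker(\alpha\oplus\alpha'')=\coker(\alpha)\oplus\coker(\alpha'')$ and a direct check shows the linking pairing splits accordingly, giving an isomorphism of linking forms $\partial((K,\alpha)\oplus(K'',\alpha''))\cong\partial(K,\alpha)\oplus\partial(K'',\alpha'')$. Granting the first assertion, well-definedness of the induced map on the monoid construction of Definition~\ref{def:monoid} is then formal: if $[(K,\alpha)]=[(K'',\alpha'')]$ in $\Gamma^\eps(A\to S^{-1}A)$ then $(K,\alpha)\oplus N\cong(K'',\alpha'')\oplus N'$ for some $S$-metabolic $N,N'$; applying $\partial$ and passing to $W^\eps(A,S)$, where $[\partial N]=[\partial N']=0$ by the first assertion, yields $[\partial(K,\alpha)]=[\partial(K'',\alpha'')]$.

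So everything rests on the first assertion: if $(K,\alpha)$ is $S$-non-singular and $S$-metabolic then $\partial(K,\alpha)=(T,\lambda)$ is metabolic. Fix an $S$-lagrangian $j:L\hookrightarrow K$. By Lemma~\ref{lem:boundarylag} it is enough to produce a non-singular $\eps$-symmetric form $(K',\alpha')$ over $A$ together with a morphism of forms $f:(K,\alpha)\to(K',\alpha')$ that becomes an isomorphism after applying $S^{-1}A\otimes_A-$; the lemma then realises $\coker(f)\hookrightarrow T$ as a lagrangian for $(T,\lambda)$, so that $[\partial(K,\alpha)]=0\in W^\eps(A,S)$. To obtain $(K',\alpha')$ I would pass to $S^{-1}A$: since $(K,\alpha)$ is $S$-non-singular, $\beta:=S^{-1}\alpha$ is an isomorphism $S^{-1}K\xrightarrow{\cong}(S^{-1}K)^*$, and $S^{-1}L$ is a lagrangian of the non-singular form $(S^{-1}K,\beta)$. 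As $S^{-1}A$ contains the half-unit of $A$, Lemma~\ref{lem:splitishyp1} shows $(S^{-1}K,\beta)$ is hyperbolic: there is an isomorphism $g:(S^{-1}L\oplus(S^{-1}L)^{*},\left(\begin{smallmatrix}0&1\\\eps&0\end{smallmatrix}\right))\xrightarrow{\cong}(S^{-1}K,\beta)$ restricting to $S^{-1}L\hookrightarrow S^{-1}K$ on the standard lagrangian. I would then choose, inside $(S^{-1}K,\beta)$, an $A$-lattice $K'$ on which $\beta$ restricts to a non-singular $\eps$-symmetric form $\alpha'$ over $A$ and which contains the image of $K$; such a lattice exists because a hyperbolic form admits non-singular $A$-lattice models and, by the off-diagonal corner-killing trick of Lemma~\ref{lem:splitishyp1}, one of them may be arranged to contain $K$. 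Taking $f:K\hookrightarrow K'$ the inclusion --- a morphism of forms because $\beta|_{K}=\alpha$, and an $S^{-1}A$-isomorphism because $S^{-1}K'=S^{-1}K$ --- then completes the argument via Lemma~\ref{lem:boundarylag}.

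The main obstacle is precisely the construction of this lattice $K'$ inside $\A(A)$: one must check that $K'$ is finitely generated (immediate from the Noetherian hypothesis, since once the lattice is chosen to span $S^{-1}K$ it is squeezed between $K$ and $s_0^{-1}K$ for a suitable $s_0\in S$), that $K'$ is finitely generated \emph{projective} (automatic when $A$ is a Dedekind domain, which is the case relevant to the Milnor--Husemoller sequence, and in general to be arranged after a harmless stabilisation of $(K,\alpha)$ by a hyperbolic form, which leaves $\partial(K,\alpha)$ unchanged by additivity), and that $\alpha':K'\to(K')^*$ is a genuine $A$-module isomorphism --- this last point being where the half-unit does its essential work, by the same mechanism as in Lemma~\ref{lem:splitishyp1}. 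Granting $(K',\alpha')$ and $f$, the proposition follows formally by combining Lemma~\ref{lem:boundarylag} with the additivity of $\partial$ recorded at the outset.
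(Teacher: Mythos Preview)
Your argument has a genuine gap at exactly the step you flag as ``the main obstacle'': the existence of a non-singular $A$-lattice $K'\subseteq S^{-1}K$ containing $K$. The appeal to Lemma~\ref{lem:splitishyp1} does not help here---that lemma modifies a splitting over a \emph{fixed} ring to kill the diagonal block, but says nothing about descending an $S^{-1}A$-level hyperbolic splitting to $A$-lattices. The tension is structural: under a hyperbolic decomposition of $(S^{-1}K,\beta)$, a non-singular $A$-lattice looks like $M\oplus M^*$, and enlarging $M$ to absorb one projection of $K$ \emph{shrinks} $M^*$, so the other projection need not fit. You are in effect claiming that $\partial(K,\alpha)$ is metabolic \emph{on the nose}; the remark immediately following the proof in the paper, together with the discussion in Section~\ref{sec:comparison}, explains that this is not expected to hold in general and that stably metabolic is the sharp conclusion. (A secondary issue: you invoke a half-unit in $A$, which is not among the hypotheses of the proposition and is not used in the paper's proof.)

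The paper takes a different route and proves only that $\partial(K,\alpha)$ is \emph{stably} metabolic, which is all that is needed in $W^\eps(A,S)$. From the $S$-lagrangian $j:L\hookrightarrow K$ it clears denominators on a splitting of $S^{-1}(j^*\alpha)$ to obtain $k:L^*\to K$, builds an auxiliary $S$-non-singular (but not non-singular) form $(K',\alpha')=\big(L\oplus L^*,\left(\begin{smallmatrix}0&g\\ \eps g^*&k^*\alpha k\end{smallmatrix}\right)\big)$ with $g=j^*\alpha k$, and then applies Lemma~\ref{lem:boundarylag} \emph{twice}: once to a morphism from $(K',\alpha')$ to a genuinely non-singular form, showing $F:=\partial(K',-\alpha')$ is metabolic, and once to a morphism from $(K,\alpha)\oplus(K',-\alpha')$ to a non-singular form, showing $\partial(K,\alpha)\oplus F$ is metabolic. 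No non-singular overlattice of $K$ is ever required.
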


\begin{proof}Let $j:L\hookrightarrow K$ be an $S$-lagrangian. We claim that there are metabolic linking forms $F$ and $F'$ such that $\partial(K,\alpha)\oplus F\cong F'$. 

There is a splitting of the $S^{-1}A$-module morphism $S^{-1}(j^*\alpha):S^{-1}K\to S^{-1}L^*$. By clearing denominators we may assume the splitting is given by $S^{-1}k$ for an $A$-module morphism $k:L^*\to K$ and hence $g:=j^*\alpha k:L^*\to L^*$ is an isomorphism over $S^{-1}A$. Using the approach of Lemma \ref{lem:splitishyp1} the morphism\[(j\,\,k):\left(L\oplus L^*,\left(\begin{matrix}0&g\\\eps g^* &k^*\alpha k\end{matrix}\right)\right)=(K',\alpha')\to(K,\alpha)\]becomes an isomorphism after passing to $S^{-1}A$. $\partial(K',-\alpha')$ will be our form $F$.

In the first instance there is a morphism of $S$-non-singular forms \[\left(\begin{matrix}\alpha'f&0\\\eps f&1\end{matrix}\right):(K,\alpha)\oplus(K',-\alpha')\to\left( (K')^*\oplus K',\left(\begin{matrix}0&1\\ \eps&-\alpha\end{matrix}\right)\right),\]which becomes an isomorphism of forms over $S^{-1}A$, and the latter form is non-singular. Applying Lemma \ref{lem:boundarylag}, we see that $F'$ is metabolic.

But there is also a morphism of $S$-non-singular forms \[\left(\begin{matrix}g^*&0\\0&1\end{matrix}\right):\left(L\oplus L^*,\left(\begin{matrix}0&g\\\eps g^* &k^*\alpha k\end{matrix}\right)\right)\to\left(L\oplus L^*,\left(\begin{matrix}0&1\\\eps  &k^*\alpha k\end{matrix}\right)\right),\]which becomes an isomorphism of forms over $S^{-1}A$. Hence $F$ is metabolic by Lemma \ref{lem:boundarylag}.
\end{proof}

\begin{remark}Note that given a morphism of $S$-non-singular forms $g:(K,\alpha)\to (K',\alpha)$ over $A$ that becomes an isomorphism of forms over $S^{-1}A$, we can `invert' the associated morphism of modules by clearing denominators on $(S^{-1}g)^{-1}$. However, the resulting morphism of modules may not be a morphism of forms over $A$.

In general, we do not expect this proof can be simplified so that $\partial(K,\alpha)$ is metabolic on the nose. This is a subtle but significant point. A full discussion of why one shouldn't expect this will require us to develop much more language, and is included in the $D\Gamma$-group discussion in Section \ref{sec:comparison}.
\end{remark}

\subsection{Double Witt group localisation exact sequence}\label{subsec:DWLES}

We will now state a localisation exact sequence in the context of our double Witt groups.

\medskip

We will need to define something like a `double Witt $\Gamma$-group'. Naively, we might define an $S$-non-singular form $(K,\alpha)$ over $A$ to be `$S$-hyperbolic'  if there exists a pair of $S$-complementary $S$-lagrangians. But if this naive approach is attempted, one immediately runs into trouble with the boundary map $\partial$ no longer being well-defined. In other words, there is no guarantee that the boundaries of a pair of $S$-complementary $S$-lagrangians for an $S$-non-singular form $(K,\alpha)$ over $A$ are complementary lagrangians for the linking form $\partial(K,\alpha)$.

To resolve this issue, the well-definedness of `$\partial$' in a double Witt group context must be ensured by the definition of the double Witt $\Gamma$-group itself. The following definition may seem a little contrived at first sight, but will be seen to be very natural later in the context of the algebraic $L$-theory of Chapter \ref{chap:DLtheory}.

\begin{definition}Let $(K,\alpha)$ be an $S$-non-singular $\eps$-symmetric form over $A$. Two $S$-lagrangians $j_\pm:L_\pm\hookrightarrow K$ are \textit{$\partial$-complementary} (read: `boundary complementary') if the sequence \[\xymatrix{0\ar[r]&L_+\oplus L_-\ar[rr]^-{\left(\begin{smallmatrix}j_+&j_-\\0&\alpha j_-\end{smallmatrix}\right)}&&K\oplus K^*\ar[rr]^-{\left(\begin{smallmatrix}-j_+^*\alpha&j_+^*\\0&j_-^*\end{smallmatrix}\right)}&&L_+^*\oplus L_-^*\ar[r]&0}\]is exact. In other words, if the inclusion\[\left(\begin{matrix}j_+&j_-\\0&\alpha j_-\end{matrix}\right):(L_+\oplus L_-,0)\to (K\oplus K^*,\left(\begin{smallmatrix}-\alpha&1\\\eps&0\end{smallmatrix}\right))\]is the inclusion of a lagrangian for the non-singular $\eps$-symmetric form $(K\oplus K^*,\left(\begin{smallmatrix}-\alpha&1\\\eps&0\end{smallmatrix}\right))$ over $A$. If $(K,\alpha)$ admits a pair of $\partial$-complementary $S$-lagrangians it is called is \textit{$\partial$-hyperbolic}.
\end{definition}

\begin{lemma}Suppose there exists a half-unit $s\in A$. If $f:(K,\alpha)\xrightarrow{\cong} (K',\alpha')$ is an isomorphism of $S$-non-singular $\eps$-symmetric forms over $A$ then $(K,\alpha)\oplus(K',-\alpha')$ is $\partial$-hyperbolic with $\partial$-complementary $S$-lagrangians \[\begin{array}{lccrcl}(&1&f&)&:&(K,0)\to (K\oplus K',\alpha\oplus -\alpha'),\\(&\bar{s}&-sf&)&:&(K,0)\to (K\oplus K',\alpha\oplus -\alpha').\end{array}\]
\end{lemma}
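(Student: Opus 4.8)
The plan is to reduce to the case $f=\id_K$ (so $K'=K$ and $\alpha'=\alpha$) and then to recognise the two prescribed $S$-lagrangians as those produced by Lemma \ref{lem:halfunithyp}. The reduction uses the isomorphism of $\eps$-symmetric forms over $A$
\[g=\lmat1&0\\0&f\rmat:(K\oplus K,\alpha\oplus-\alpha)\xrightarrow{\ \cong\ }(K\oplus K',\alpha\oplus-\alpha'),\]
which is a morphism of forms because $g^*(\alpha\oplus-\alpha')g=\alpha\oplus-f^*\alpha'f=\alpha\oplus-\alpha$, which localises to an isomorphism over $S^{-1}A$, and which carries $\lmat1\\1\rmat\mapsto j_+$ and $\lmat\bar s\\-s\rmat\mapsto j_-$. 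Both ``being an $S$-lagrangian'' and ``being a $\partial$-complementary pair'' are preserved by isomorphisms of $S$-non-singular $\eps$-symmetric forms over $A$ --- the first because each of its two defining conditions is, the second because an isomorphism $g$ of forms over $A$ induces an isomorphism $\lmat g&0\\0&(g^*)^{-1}\rmat$ between the boundary forms $\bigl(K\oplus K^*,\lmat-\alpha&1\\\eps&0\rmat\bigr)$ under which the lagrangian $\lmat j_+&j_-\\0&\alpha j_-\rmat$ corresponds to the analogous one for $(K',\alpha')$, and lagrangians transport along isomorphisms of forms. Hence it suffices to treat $j_+=\lmat1\\1\rmat$, $j_-=\lmat\bar s\\-s\rmat$ as submodules of $(M,\beta):=(K\oplus K,\alpha\oplus-\alpha)$.

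I would then check that $j_\pm$ are $S$-lagrangians. The identities $j_\pm^*\beta j_\pm=0$ are the one-line scalar computations $\alpha-\alpha=0$ and $s\bar s\alpha-\bar s s\alpha=0$ (scalars being central and commuting with $\alpha$). For the remaining condition, apply Lemma \ref{lem:halfunithyp} over $S^{-1}A$ --- which contains the (image of the) half-unit $s$ --- to the non-singular form $S^{-1}(K,\alpha)$; it produces exactly $\lmat1\\1\rmat$ and $\lmat\bar s\\-s\rmat$ as complementary lagrangians of $S^{-1}(M,\beta)$, in particular as lagrangians. Thus $S^{-1}j_\pm$ are lagrangians and $j_\pm$ are $S$-lagrangians.

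For $\partial$-complementarity I must show that $J:=\lmat j_+&j_-\\0&\beta j_-\rmat:K\oplus K\to M\oplus M^*$ is a lagrangian of the non-singular $\eps$-symmetric form $\Theta:=\lmat-\beta&1\\\eps&0\rmat$ on $M\oplus M^*$ (non-singular since the off-diagonal blocks are units, $\eps$-symmetric since $\eps\beta^*=\beta$). A block computation using $j_\pm^*\beta j_\pm=0$ and $\eps\beta^*=\beta$ gives $J^*\Theta J=0$. The crucial point is that $p:=(j_+\ \ j_-)=\lmat1&\bar s\\1&-s\rmat:K\oplus K\to M$ is an isomorphism over $A$, with inverse the scalar matrix $\lmat s&\bar s\\1&-1\rmat$ (the ``determinant'' of $\lmat1&\bar s\\1&-s\rmat$ being $-(s+\bar s)=-1$). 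Writing $J=\lmat p\\q\rmat$ with $q=(0\ \ \beta j_-)$, and letting $k=\lmat0\\1\rmat:M^*\hookrightarrow M\oplus M^*$ be the tautological lagrangian (it is one since $k^*\Theta k=0$ and $k^*\Theta=(\eps\ \ 0)$ is split surjective), the map $(J\ \ k)=\lmat p&0\\q&1\rmat$ is an isomorphism. Hence $\im J$ is a direct summand of $M\oplus M^*$, complementary to the lagrangian $\im k$ and isotropic; the standard argument for non-singular forms then gives $\im J=(\im J)^\perp$ (if $w=J(u)+k(\phi)$ lies in $(\im J)^\perp$ then $0=J^*\Theta w=J^*\Theta k(\phi)=p^*(\phi)$, so $\phi=0$ as $p^*$ is an isomorphism), so $J$ is a lagrangian by the Remark following the definition of a lagrangian. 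This shows $j_+,j_-$ are $\partial$-complementary, completing the proof.

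I do not expect a genuine obstacle here; the only care needed is in the dual-and-involution bookkeeping inside the block matrices and in checking that $\partial$-complementarity really is transported by the isomorphism $\lmat g&0\\0&(g^*)^{-1}\rmat$ of boundary forms. All the actual content is carried by Lemma \ref{lem:halfunithyp} together with the invertibility of the scalar matrix $\lmat1&\bar s\\1&-s\rmat$.
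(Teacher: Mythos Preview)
Your proof is correct. The reduction to $f=\id$ via the form isomorphism $g=\lmat 1&0\\0&f\rmat$ and the check that $\partial$-complementarity transports along $\lmat g&0\\0&(g^*)^{-1}\rmat$ are both sound, and your verification that $J$ is a lagrangian of $\Theta$ goes through as written (the one point you leave implicit---surjectivity of $J^*\Theta$---follows immediately from $J^*\Theta k=p^*$ being an isomorphism).

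The paper proceeds differently in the $\partial$-complementarity step: rather than reducing to $f=\id$ or invoking the lagrangian characterisation, it writes down the defining four-term sequence for the general $f$ and checks exactness directly by an element chase in the middle term. Your argument is more structural---you exhibit $\im J$ as an isotropic direct-summand complement to the tautological lagrangian $0\oplus M^*$ of the non-singular boundary form, and read off that $J$ is a lagrangian from that. Both arguments ultimately rest on the same computation, namely that the half-unit identity $s+\bar s=1$ makes $\lmat 1&\bar s\\1&-s\rmat$ invertible over $A$. Your route has the advantage of isolating exactly where the half-unit is used and of avoiding the element chase; the paper's route avoids the transport-of-structure bookkeeping and stays closer to the raw definition.
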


\begin{proof}By Lemma \ref{lem:halfunithyp} the submodules are indeed $S$-lagrangians. We need only show exactness of the sequence\[\xymatrix{0\ar[r]&K\oplus K\ar[rr]^-{\left(\begin{smallmatrix}1&\overline{s}\\f&-sf\\0&\overline{s}\alpha \\0&s\alpha'f\end{smallmatrix}\right)}&&(K\oplus K')\oplus (K\oplus K')^*\ar[rrr]^-{\left(\begin{smallmatrix}-\alpha&f^*\alpha'&1&f^*\\0&0&s&-\overline{s}f^*\end{smallmatrix}\right)}&&&K^*\oplus K^*\ar[r]&0}.\]The composition of the matrices is 0 and the sequence is exact at $K\oplus K$ and $K^*\oplus K^*$. To check exactness in the central term suppose \[\left(\begin{matrix}-\alpha&f^*\alpha'&1&f^*\\0&0&s&-\overline{s}f^*\end{matrix}\right)((x,y),(g,h))=0\]then $sg=(sf)^*h$ and $\alpha(x)=f^*\alpha'(y)+g+f^*h$, so $g=\overline{s}(\alpha(x)- \alpha f^*\alpha'(y))$ and, as $f^*$ is an isomorphism, $h=s(\alpha'f(x)-\alpha'(y))$. Consider\[\left(\begin{matrix}1&\overline{s}\\f&-sf\\0&\overline{s}\alpha \\0&s\alpha'f\end{matrix}\right)\left(\begin{matrix}sx+\overline{s}f^{-1}(y)\\x-f^{-1}(y)\end{matrix}\right)=(x,y,\overline{s}\alpha(x-f^{-1}(y)),s\alpha'(f(x)-y)).\]As $\alpha'=f^*\alpha f$, we are done.
\end{proof}

\begin{definition}Suppose $(A,S)$ defines a localisation. The set of isomorphism classes (over $A$) of $S$-non-singular $\eps$-symmetric forms over $A$, equipped with the addition $(K,\alpha)+(K',\alpha')=(K\oplus K',\alpha\oplus \alpha')$ forms a commutative monoid\[\mathfrak{G}^\eps(A\to S^{-1}A)=\{\text{$S$-non-singular $\eps$-symmetric forms over $A$}\}.\]If $A$ contains a half-unit then the monoid construction\[D\Gamma^\eps(A\to S^{-1}A)=\mathfrak{G}^\eps(A\to S^{-1}A)/\{\text{$\partial$-hyperbolic}\}\]is an abelian group called the \textit{$\eps$-symmetric double Witt $\Gamma$-group of $(A,S)$}.
\end{definition}

\begin{proposition}\label{prop:boundaryhypwelldef}Suppose an $S$-non-singular $\eps$-symmetric form $(K,\alpha)$ over $A$ is $\partial$-hyperbolic, then $\partial(K,\alpha)$ is hyperbolic. Hence the map\[D\partial:D\Gamma^\eps(A\to S^{-1}A)\to DW^\eps(A,S);\qquad[(K,\alpha)]\mapsto[\partial(K,\alpha)]\]is well-defined.
\end{proposition}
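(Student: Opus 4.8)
The plan is to show directly that if $(K,\alpha)$ admits a pair of $\partial$-complementary $S$-lagrangians $j_\pm:L_\pm\hookrightarrow K$, then the two boundary submodules $\overline{j}_\pm:\overline{L}_\pm\hookrightarrow T$ constructed from them are complementary lagrangians for $\partial(K,\alpha)=(T,\lambda)$. First I would recall the precise construction: from the exact sequence $0\to K\xrightarrow{\alpha}K^*\to T\to 0$, the form $\lambda:T\to T^\wedge$ is $[x]\mapsto([y]\mapsto x(z)/s)$ where $\alpha(z)=sy$. The key observation is that the $\partial$-complementary condition says precisely that $j_+\oplus j_-$ together with the `shifted' copy $\alpha j_-$ assemble into a lagrangian $\binom{j_+\ j_-}{0\ \alpha j_-}$ for the non-singular form $H=(K\oplus K^*,\left(\begin{smallmatrix}-\alpha&1\\\eps&0\end{smallmatrix}\right))$ over $A$. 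Note $H$ is the standard hyperbolic-type form whose own boundary linking form is canonically $(T,\lambda)$ (or rather $(T\oplus 0,\lambda)$, since $\partial H = 0$ as $H$ is non-singular); more usefully, the cokernel of $\alpha$ appearing in the definition of $\partial(K,\alpha)$ sits inside the cokernel data for $H$.

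The cleanest route I see is to invoke Lemma~\ref{lem:boundarylag} twice. Observe that each inclusion $j_\pm:L_\pm\hookrightarrow K$ gives a morphism of $\eps$-symmetric forms $(L_\pm,0)\to(K,\alpha)$, but more to the point, the $\partial$-complementarity of $j_+$ and $j_-$ means that there are maps exhibiting $(K,\alpha)$ as `$S$-built' from each $L_\pm$ in the sense required: specifically, from the exact sequence defining $\partial$-complementarity one extracts, for each sign, a morphism of $S$-non-singular forms from $(K,\alpha)$ to a non-singular form which becomes an isomorphism over $S^{-1}A$ and whose cokernel is $L_\pm^\wedge$ (dualising the relevant half of the exact sequence). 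Applying Lemma~\ref{lem:boundarylag} to each then shows $\overline{j}_\pm:\overline{L}_\pm=\coker(L_\pm\to K)\hookrightarrow T$ is a lagrangian for $(T,\lambda)$. It then remains to check that $\binom{\overline{j}_+}{\overline{j}_-}:\overline{L}_+\oplus\overline{L}_-\to T$ is an isomorphism. This I would get by passing the middle exact row of the $\partial$-complementarity sequence down to cokernels: the exactness of $0\to L_+\oplus L_-\to K\oplus K^*\to L_+^*\oplus L_-^*\to 0$, combined with the exact columns $0\to K\xrightarrow{\alpha}K^*\to T\to 0$ and the identification of $L_\pm^\wedge$ with the relevant Ext, yields by a diagram chase (or the snake lemma applied to a $3\times 3$ diagram) that the induced map on cokernels $\overline{L}_+\oplus\overline{L}_-\to T$ is an isomorphism, as $K\oplus K^*$ maps onto $T$ with kernel precisely $\alpha(K)$, which is matched up with the images of the $L_\pm$.

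The main obstacle I anticipate is the bookkeeping to match the asymmetric roles of $L_+$ and $L_-$ in the defining sequence (one appears as $j_-$, the other `shifted' by $\alpha$) with the symmetric final statement that $\overline{L}_+$ and $\overline{L}_-$ are on equal footing as complementary lagrangians of $(T,\lambda)$. Concretely, one must verify that the boundary linking form structure induced on $\coker$ via the $H$-picture agrees (up to the canonical identifications) with $\lambda$ itself, and that $\overline{j}_-$ really does land as a lagrangian and not merely as a sublagrangian. I expect this to come down to the same kind of explicit chase as in the proof of Proposition~\ref{prop:boundarywelldef}, using that $H$ is non-singular so $\partial H$ is trivial, hence the $L_\pm$-images inside $T\oplus T^\wedge$ (under the two projections from the $H$-lagrangian) are forced to be orthogonal complements. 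Once the diagram is set up correctly, exactness is a formal consequence; the real work is choosing the diagram so that $\partial$-complementarity becomes visibly equivalent to complementarity of the boundary lagrangians, which is exactly the reason the definition of $\partial$-complementary was framed as it was.
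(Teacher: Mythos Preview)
Your approach is genuinely different from the paper's. The paper explicitly defers this proof and handles it in Chapter~\ref{chap:DLtheory} by passing to chain complexes: the form $(K,\alpha)$ is regarded as a $0$-dimensional $\eps$-symmetric $S$-Poincar\'{e} complex $(P,\theta)$ (concentrated in degree~$1$ as $K^*$, with $\theta_0=\alpha$), and each $S$-lagrangian $j_\pm$ gives an $S^{-1}A$-nullcobordism $(f_\pm:P\to Q_\pm,(0,\theta))$ with $Q_\pm=L_\pm^*$, $f_\pm=j_\pm^*$. The paper then checks that the cone on $\lmat\partial f_+\\\partial f_-\rmat:\partial P\to\partial(Q_+,P)\oplus\partial(Q_-,P)$ is exactly the sequence $0\to L_+\oplus L_-\to K\oplus K^*\to L_+^*\oplus L_-^*\to 0$ defining $\partial$-complementarity, so the chain-level nullcobordisms are $\partial$-complementary. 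The well-definedness of $D\partial:D\Gamma^0\to DL^0$ then gives complementary $(A,S)$-nullcobordisms of $\overline{S}(\partial P,\partial\theta)$, and Proposition~\ref{hypcx} together with Proposition~\ref{iso} finishes. So the paper buys uniformity (one argument for all $n$) at the cost of the chain-complex machinery; your direct form-level route would be more elementary but one-off.

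That said, your execution has a gap. Lemma~\ref{lem:boundarylag} requires a morphism $f:(K,\alpha)\to(K',\alpha')$ to a \emph{non-singular} form that becomes an isomorphism over $S^{-1}A$, and you never produce such an $f$ from a single $j_\pm$; the paper's own remark in Section~\ref{sec:comparison} explains that for one $S$-lagrangian alone the obvious boundary submodule need not be a lagrangian, because $H^2(\partial(Q,P))=L^*/j^*(K^*)$ can be nonzero. The point you are missing is that the $\partial$-complementary exact sequence forces \emph{both} $j_\pm^*:K^*\to L_\pm^*$ to be surjective (read off surjectivity onto $L_+^*\oplus L_-^*$, noting both component maps factor through $j_\pm^*$). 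Once you have that, the correct candidate lagrangians are the images in $T=\coker(\alpha)$ of $\ker(j_\pm^*)\subset K^*$, not $\coker(L_\pm\to K)$ as you wrote (the latter has no natural injection into $T$). With surjectivity of $j_\pm^*$ in hand, Proposition~\ref{prop:3.4.5} shows each is a lagrangian, and complementarity follows from your $3\times3$ chase. So your snake-lemma instinct at the end is right; it is the detour through Lemma~\ref{lem:boundarylag} and the misidentified submodules that do not connect.
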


\begin{theorem}[Double Witt group localisation exact sequence]\label{thm:DWLES}Suppose $A$ is a Dedekind domain and $A$ contains a half unit. Then there is an exact sequence of group homomorphisms\[0\to DW^\eps(A)\xrightarrow{Di} \widetilde{D\Gamma}^\eps(A\to S^{-1}A)\xrightarrow{D\partial} DW^\eps(A,S),\]where $\widetilde{D\Gamma}^{\eps}(A\to S^{-1}A)\subseteq D\Gamma^\eps(A\to S^{-1}A)$ is a certain subgroup defined in \ref{def:reduced}. \end{theorem}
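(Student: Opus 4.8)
The plan is to build the sequence one arrow at a time, following the template of the Milnor--Husemoller/Ranicki localisation sequence but everywhere replacing ``$S$-metabolic'' by ``$\partial$-hyperbolic''. Well-definedness of $D\partial$ is Proposition \ref{prop:boundaryhypwelldef}. For the left-hand arrow, a non-singular $\eps$-symmetric form $(K,\alpha)$ over $A$ is automatically $S$-non-singular, with $\partial(K,\alpha)=(\coker\alpha,\lambda)=0$; hence $D\partial\circ Di=0$, so $\im(Di)\subseteq\ker(D\partial)$. That $Di$ descends to the double Witt group requires checking that a hyperbolic form over $A$, with complementary lagrangians $j_\pm\colon L_\pm\hookrightarrow K$, is $\partial$-hyperbolic: since $\alpha$ is an isomorphism, the six-term sequence defining $\partial$-complementarity collapses by a short diagram chase to the ordinary complementarity of $L_+\oplus L_-$ in $K$. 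One also records that these forms lie in the reduced subgroup of Definition \ref{def:reduced}, so that $Di$ takes values in $\widetilde{D\Gamma}^\eps(A\to S^{-1}A)$ as claimed.

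Injectivity of $Di$ follows from a commuting square. A $\partial$-hyperbolic form has an $S$-lagrangian, hence is $S$-metabolic, so localisation $(K,\alpha)\mapsto S^{-1}(K,\alpha)$ defines a homomorphism $\widetilde{D\Gamma}^\eps(A\to S^{-1}A)\to W^\eps(S^{-1}A)$; using the identification $DW^\eps(A)\cong W^\eps(A)$ of Claim \ref{clm:DWvsW} one gets a commuting square
\[\xymatrix{DW^\eps(A)\ar[r]^-{Di}\ar[d]_-{\cong}&\widetilde{D\Gamma}^\eps(A\to S^{-1}A)\ar[d]\\ W^\eps(A)\ar[r]&W^\eps(S^{-1}A).}\]
The bottom arrow is injective by Theorem \ref{thm:WLES} (here $M^\eps(A,S)=0$ because $A$ is Dedekind), so $Di$ is injective.

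The substantive point is the reverse inclusion $\ker(D\partial)\subseteq\im(Di)$, i.e.\ reversing the boundary construction compatibly with the hyperbolic structure. Given $(K,\alpha)$ representing a class of $\widetilde{D\Gamma}^\eps$ with $\partial(K,\alpha)=(T,\lambda)$ trivial in $DW^\eps(A,S)$, one first adds to $(K,\alpha)$ a $\partial$-hyperbolic form whose boundary supplies the stabilising hyperbolic summand --- such forms exist over the Dedekind domain $A$, since $\mathrm{hd}_A\le 1$ lets one present the relevant torsion modules and the half-unit makes the added form $\partial$-hyperbolic --- reducing to the case that $(T,\lambda)$ is honestly hyperbolic, $T=L_+\oplus L_-$. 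Then one runs the classical ``preimage of a lagrangian'' construction: writing $T=\coker(\alpha\colon K\hookrightarrow K^*)$, the preimages $K_\pm^*\subseteq K^*$ of $L_\pm$ satisfy $K_+^*\cap K_-^*=\alpha(K)$ and $K_+^*+K_-^*=K^*$, and inherit forms $(K_\pm^*,\beta_\pm)$ that are non-singular over $A$ and become isomorphic to $S^{-1}(K,\alpha)$ over $S^{-1}A$. The aim is to prove that $(K,\alpha)$ and $(K_+^*,\beta_+)$ differ by a $\partial$-hyperbolic form, the two lagrangians $L_\pm$ furnishing exactly its two $\partial$-complementary $S$-lagrangians.

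Making this coherence precise --- and verifying that the output lands in the reduced group $\widetilde{D\Gamma}^\eps$ --- is where I expect the real difficulty, and it is essentially forced to be hard: the remark after Proposition \ref{prop:boundarywelldef} already warns that one cannot expect $\partial(K,\alpha)$ to be metabolic on the nose in the $\Gamma$-picture, and the same phenomenon here means the module-level bookkeeping has to be promoted to the chain level. Accordingly I would ultimately deduce the theorem as the $n=2$ instance of the double $L$-theory localisation exact sequence (Theorem \ref{thm:DLLES}): identify $L^2(A,\eps)\cong W^\eps(A)\cong DW^\eps(A)$; identify $DL^2(A,S,\eps)\cong DW^\eps(A,S)$ using that $\mathrm{hd}_A\le1$ lets algebraic surgery concentrate the relevant $2$-dimensional symmetric Poincar\'e complexes into a single degree; and identify $D\Gamma^2(A\to S^{-1}A,\eps)$ with $\widetilde{D\Gamma}^\eps(A\to S^{-1}A)$, which is in effect the content of Definition \ref{def:reduced}. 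The coherence of the two lagrangian reductions above then becomes algebraic surgery below the middle dimension on a union complex, carried out in Chapter \ref{chap:DLtheory}.
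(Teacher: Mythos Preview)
Your final strategy --- deduce the sequence from Theorem \ref{thm:DLLES} by identifying each term with a chain-complex group --- is exactly the paper's proof. But the correct dimension is $n=0$, not $n=2$. At $n=0$ the identifications are immediate and require no surgery: $L^0(A,\eps)\cong W^\eps(A)$ because $0$-dimensional symmetric Poincar\'e complexes \emph{are} non-singular forms; $DL^0(A,S,\eps)\cong DW^\eps(A,S)$ is Proposition \ref{iso}; and $D\Gamma^0(A\to S^{-1}A,\eps)\cong\widetilde{D\Gamma}^\eps(A\to S^{-1}A)$ is literally Definition \ref{def:reduced}, which quotients by the kernel of the surjection $G$ into $D\Gamma^0$, not $D\Gamma^2$. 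With these in hand the paper's proof is the one-line observation that the vertical isomorphisms commute with the horizontal maps. Your $n=2$ route would force you through skew-suspension periodicity, but for the $D\Gamma$ and torsion $DL$ groups the paper only establishes that skew-suspension is \emph{injective} (Proposition \ref{skewsusp}), not surjective --- surjectivity is precisely the unresolved ``surgery above and below the middle dimension'' question of Section \ref{sec:surgabove}. Your signs are also off at $n=2$: one gets $L^2(A,\eps)\cong W^{-\eps}(A)$, for instance.

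Your elementary injectivity argument for $Di$, via the forgetful square down to the classical sequence $W^\eps(A)\hookrightarrow W^\eps(S^{-1}A)$, is correct and is a nice alternative, though it becomes redundant once Theorem \ref{thm:DLLES} is invoked. The module-level sketch for $\ker(D\partial)\subseteq\im(Di)$ is in the right spirit, and you correctly diagnose that the stabilisation step and the coherence of the two lagrangian lifts are where the real work lives; in the paper this content is carried by Proposition \ref{prop:technical} at the chain level, and there is no shortcut at the level of forms.
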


We will not prove Proposition \ref{prop:boundaryhypwelldef} and Theorem \ref{thm:DWLES} at this stage. The proofs of these are handled much more efficiently in Chapter \ref{chap:DLtheory} as corollaries of the double $L$-theory localisation exact sequence of Theorem \ref{thm:DLLES}. We have stated them here for completeness, and as an advertisement for the chain complex methods of later chapters!

\begin{remark}It is interesting to note at this stage that if $A$ is a Dedekind domain with a half-unit then there are surjective forgetful maps resulting in the following commutative diagram\[\xymatrix{0\ar[r] &DW^\eps(A)\ar[r]\ar[d]^-{\cong}&\widetilde{D\Gamma}^\eps(A\to S^{-1}A)\ar[r]\ar[d]&DW^\eps(A,S)\ar[d]&\\0\ar[r] &W^\eps(A)\ar[r]&\Gamma^\eps(A\to S^{-1}A)\ar[r]&W^\eps(A,S)\ar[r] &M^{-\eps}(A)}\]
\end{remark}

\begin{remark}For historical interest, we remark that this is not the first attempt at a localisation exact sequence for something like double Witt groups. Stoltzfus (\cite[3.6]{MR521738}) offers a kind of localisation exact sequence in the restricted setting of the split Witt groups he defined. In our terminology, and using the identifications of the Chapter \ref{chap:laurent}, he shows the following is exact for a Dedekind domain $A$ with fraction field $F$\[DW^\eps(A[z,z^{-1}],P)\to DW^\eps(F[z,z^{-1}],P')\to C^\eps(F/A),\]where $P=\{p(z)\in A[z,z^{-1}]\,|\,\text{$p(1)\in A$ is a unit}\}$, $P'$ is given similarly but with $F$-coefficients, and $C^\eps(F/A)$ is a certain Witt group of `$\eps$-symmetric isometric forms' where the underlying modules are torsion.

Our exact sequence is more closely related to the localisation exact sequence of Milnor-Husemoller and fits into the general scheme of classical localisation sequences in \cite{MR0249491}, \cite{MR0384894}, \cite{MR620795} et al.
\end{remark}

\chapter{Laurent polynomial rings}\label{chap:laurent}

Chapter \ref{chap:laurent} will review some standard topological motivation for the study of linking forms. For any cover of a space with Poincar\'{e} duality we will associate a series of \textit{linking pairings} between torsion cohomology modules. When the duality is odd-dimensional we discuss when it is possible to build a linking form from the middle-dimensional pairing, and under what conditions this form is non-singular.

Next we consider the simplest example of such a geometrically defined linking form for which the deck transformation group of the cover is not finite. If $R$ is a field, and an odd-dimensional manifold $M^{2k+1}$ has a map to the circle $S^1$ we obtain a non-singular linking form for the localisation of the group ring $R[\Z]$ to its fraction field. This example also gives rise to an `autometric form' - a form over $R$ equipped with an automorphism. We recall the geometric relationship between these linking and autometric forms.

On the algebraic side, we will set the following algebraic conventions for this chapter:\begin{itemize}
\item $R$ is a commutative Noetherian ring with unit and involution.
\item $R[\Z]=R[z,z^{-1}]$ with $\overline{z}=z^{-1}$, the \textit{Laurent polynomials with $R$ coefficients}.
\item $Q$ is the \textit{set of characteristic polynomials}\[Q=\left\{p(z)=\sum_{M}^Na_kz^k\,|\,a_{M},a_N\in R^\times\right\}\subset R[z,z^{-1}].\] If $R$ is a field then $(R[z,z^{-1}],Q)$ describes the full localisation to the field of fractions of $R[z,z^{-1}]$. Also, note that any $p\in Q$ is equivalent up to multiplication by units in $R[z,z^{-1}]$ to a \textit{bionic polynomial}, that is a polynomial $q(z)=a_0+a_1z+\dots+a_{L-1}z^{L-1}+z^{L-1}\in R[z]$ with $a_0\in R^\times$, a unit.
\item $P$ is the \textit{set of Alexander polynomials}\[P=\left\{p(z)=\sum_{M}^Na_kz^k\,|\, p(1)\in R^\times\right\}\subset R[z,z^{-1}].\]
\end{itemize}

Chapter \ref{chap:laurent} is largely intended as a motivation chapter for the study of linking forms, consisting of well-known background results in topology. However, we will prove some new algebraic results. The most important of which are the following isomorphisms:\[\begin{array}{rcll}DW^\eps(R[z,z^{-1}],Q)&\cong&DAut^{-\eps}(R),&\text{the \textit{double Witt group of autometric forms over $R$}},\\
DW^\eps(R[z,z^{-1}],P)&\cong& \widehat{DW}^{-\eps}(R),&\text{the \textit{double Witt group of Seifert forms over $R$}}.\end{array}\]Each isomorphism is given by the algebraic \textit{covering} operation, the inverse for the autometric case is the algebraic \textit{monodromy} operation.

\section{Linking forms from geometry}

\subsection{Motivation}\label{subsec:motivation}The motivation for the geometric linking forms we will consider is easiest to imagine in low dimensions, so this is where we will begin. Unfortunately the kind of `linking' that it measures in low dimensions is not as subtle as one might like because ultimately we will be defining a linking pairing on (co)homology classes.

A \textit{link} in a 3-manifold $M$ is the image in $M$ of an embedding of finitely many disjoint copies of $S^1$. How can we describe `linking' of two links $L,L'\subset M$ (for example as pictured below)?

\[\begin{xy} 
\def\piclinkone{\resizebox{0.5\textwidth}{!}{ \includegraphics{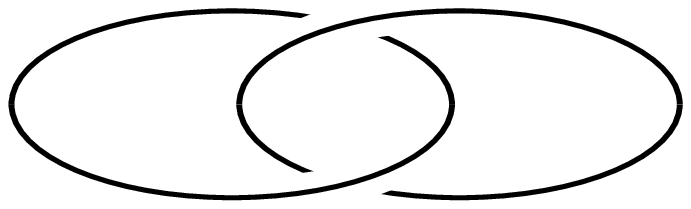}}}
\xyimport(259,139){\piclinkone}
,(-10,72)*!L{L}
,(260,72)*!L{L'}
\end{xy}\]

One way to detect linking would be to assign orientations to the links and find an oriented surface cobounding one of the links, e.g.\ $F\subset M$ such that $\partial F=L$, and count (with signs) the points of intersection $p\in F\cap L'$ to get $l=\sum_p(\pm1)$.

\[\begin{xy} 
\def\piclinktwo{\resizebox{0.5\textwidth}{!}{ \includegraphics{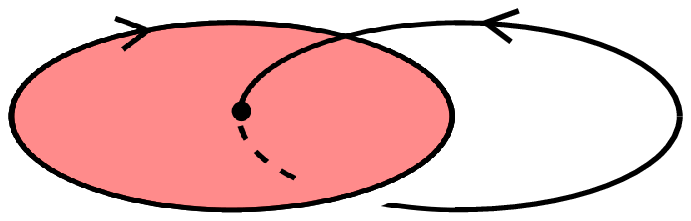}}}
\xyimport(259,139){\piclinktwo}
,(-10,72)*!L{L}
,(260,72)*!L{L'}
,(330,72)*!L{l=1}
,(40,72)*!L{F}
,(103,72)*!L{+1}
\end{xy}\]

This quantity tells you something, but in what way is it a topological invariant? For the sake of this motivational section, we will now do something fairly brutal - consider 2 links $L,L'$ to be equivalent if and only if they determine the same class $[L]=[L']\in H_1(M)$. This equivalence will allow us to generalise our example readily to high dimensions. However, this relationship is so crude that we can force our signed count to be anything we like. For example within a homology class we can make loops locally as in the picture below, and introduce as many extra intersections as we care to.

\[\begin{xy} 
\def\piclinkthree{\resizebox{0.5\textwidth}{!}{ \includegraphics{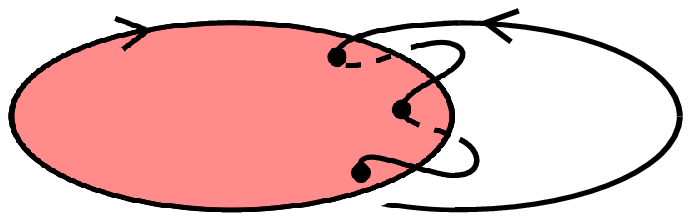}}}
\xyimport(259,139){\piclinkthree}
,(-10,72)*!L{L}
,(260,72)*!L{L'}
,(40,72)*!L{F}
,(330,72)*!L{l=3}
,(103,105)*!L{+1}
,(130,72)*!L{+1}
,(113,40)*!L{+1}
\end{xy}\]

So the quantity we have written down is not even well-defined in $\Z$, certainly not an invariant of a pair $[L],[L']\in H_1(M)$! But something can be salvaged from this discussion if we restrict to \textit{torsion} homology classes as we now see.

We now open the example up to a more general setting by extending to higher dimensions. Consider a closed, oriented, $n$-dimensional topological manifold $M^n$ and fix $0\leq r\leq n-1$. Suppose $U^r$ and $V^{n-1-r}$ are closed, oriented topological manifolds embedded as submanifolds $U,V\subset M$. Suppose $W$ is an oriented $(n-r)$-manifold with boundary and that it is embedded $W\subset M$ in such a way that for some $s\in\Z$, $[\partial W]=s[V]\in H_{n-1-r}(M)$. (For example the boundary might be a disjoint union $\partial W=\bigsqcup_{i=1}^s V_i$ and $[V_i]=[V]\in H_{n-1-r}(M)$ for each $i$.) Assume the embeddings are all in general position, so that $U\cap \partial W\neq\emptyset$ lies in the interior of $W$ and is a finite set of points of transverse intersection. We assign each intersection point $p$ a sign $\eps_p=\pm 1$ in the usual way, as the sign of the determinant of $T_pU\oplus T_pW\xrightarrow{\cong} T_pM$. We then define the \textit{geometric linking} of $U$ and $V$ to be \[\text{link}(U,V)=\frac{1}{s}\sum_p\eps_p\in \Q/\Z.\]As this sum takes values in $\Q/\Z$, the indeterminacy of the quantity $l$ modulo $\Z$ we observed above is no longer an issue. This is because introducing $n$ `loops' as before will introduce $n$ intersections for each homology copy of $V$ in $\partial W$ and \[\frac{1}{s}\sum\eps_p=\frac{1}{s}(\sum\eps_p+ns)\in \Q/\Z.\]Of course there is a lot more work now needed to show that our geometric linking is a well-defined invariant of our pair of homology classes $[U],[V]$ (independent of choice of $W$ etc.). We now turn to a precise description of this linking invariant and a proof of the claimed properties.

\subsection{Linking pairings from Poincar\'{e} duality}

Let $X$ be a topological space and $R$ be a commutative Noetherian ring with unit and involution. (Usually we will be thinking of the cases $R=\Z,\Z/2\Z,\Q,\R,\C$ and of this list only $\C$ has a non-trivial involution, given by complex conjugation). There are singular chain and cochain complexes with $R$-coefficients \[C_*(X;R),\qquad C^*(X;R)=\Hom_R(C_{*},R)\]defined in the usual way, giving (respectively) homology and cohomology with $R$-coefficients. A covering space $\overline{X}$ for $X$ with group of deck transformations $\pi$ is called a \textit{$\pi$-cover}. The ring with involution $R[\pi]$ is the group ring, consisting of finite sums $\sum r_gg$ where $r_g\in R$, $g\in \pi$ and the involution is given by\[\sum r_g g\mapsto \sum\overline{r_g} g^{-1}.\]The deck transformation group of a $\pi$-cover $\overline{X}$ acts on the left, so there are defined chain complexes of $R[\pi_1(X)]$-modules\[C_*(\overline{X};R),\qquad \Hom_{R[\pi]}(C_*(\overline{X};R),R[\pi]),\]with respective homology and cohomology written $H_*(\overline{X};R)$ and $H^*(\overline{X};R)$. When $R=\Z$ we will write $H_*(\overline{X})$ and $H^*(\overline{X})$. In particular, the universal cover $\widetilde{X}$ of $X$ is a $\pi_1(X)$-cover and we write\[C_*(X;R[\pi_1(X)]):=C_*(\widetilde{X};R),\qquad C^*(X;R[\pi_1(X)]):=\Hom_{R[\pi_1(X)]}(C_*(\widetilde{X};R),R[\pi_1(X)]).\]

For any class $[x]\in H_q(X;R)$ there is defined a \textit{chain level cap product} (see section \ref{sec:symmetric}, in particular apply $\Z^t\otimes -$ to equation \ref{eq:adjoint})\begin{equation}\label{eq:capproduct}-\cap [x]: \Hom_{R[\pi]}(C_p(\overline{X};R),R[\pi])\to C_{q-p}(\overline{X};R).\end{equation}

\begin{remark}This setup is an efficient way of encoding several classical systems for non-compact spaces that works particularly well when the non-compact space is a cover for a compact space. Here is how it compares:

Given a $\pi$-cover $\overline{X}$ of $X$ our definitions mean that the cohomology groups we get are in fact what is usually called the \textit{singular cohomology with compact supports}\[H^r(\overline{X};R):=H^r(\Hom_{R[\pi]}(C_*(\overline{X};R),R[\pi]))= H^r_{\text{cpt}}(\overline{X};R).\]We wish to emphasise that the lack of the decoration indicating compact support \textit{is the only non-standard part of our notation system}. In particular, for $\pi$ finite the singular cohomology with compact supports (and hence our cohomology groups of a space) are identified with the usual singular cohomology with $R$-coefficients, equipped with the induced $R[\pi]$-action:\[H^*(\Hom_R(C_*(\overline{X};R),R))\qquad\text{with $R[\pi]$-action.}\]

For any (non-compact) space $Y$, there is a cap product between compactly supported cochains and \textit{infinite but locally finite} singular chains. This product has values in the ordinary singular chain group:\[-\cap-:C_{\text{cpt}}^p(Y;R)\times C^{lf}_q(Y;R)\to C_{q-p}(Y;R).\]If $X$ is a finite $CW$ complex and there is a $\pi$-cover $p:\overline{X}\to X$, then we may choose a lift of the CW structure to the space $\overline{X}$. The \textit{transfer map} is the map $p^!:H_*(X;R)\to H_*(\overline{X};R)$ in homology induced by the cellular chain map sending a cell to the locally finite cellular chain consisting of the sum of all possible lifts of that cell. The cap product \ref{eq:capproduct} factors as\[C^p(\overline{X};R)\times C_q(X;R)\xrightarrow{\text{id}\otimes p^!}C_{\text{cpt}}^p(\overline{X};R)\times C^{lf}_q(\overline{X};R)\to C_{q-p}(\overline{X};R).\]
\end{remark}

\begin{definition}\label{def:universal}A topological space $X$ is called an \textit{$n$-dimensional Poincar\'{e} space} if there exists a homology class $[X]\in H_n(X;\Z)$, called a \textit{fundamental class}, such that\[-\cap[X]:H^{n-r}(\widetilde{X};\Z)\xrightarrow{\cong} H_r(\widetilde{X};\Z)\]is an isomorphism of $\Z[\pi_1(X)]$-modules for each $r\in\Z$. This version of Poincar\'{e} duality is called \textit{universal Poincar\'{e} duality} (\cite[4.5]{MR2061749}). \end{definition}

Given a $\pi$-cover $\overline{X}$ of $X$, the natural projection\[\Z[\pi]^t\otimes_{\Z[\pi_1(X)]}C_*(\widetilde{X};\Z)\to C_*(\overline{X};\Z)\]is well-known to be a chain equivalence (\cite[p207]{MR566491}) so that if $X$ is an $n$-dimensional Poincar\'{e} space then for each $\pi$-cover $\overline{X}$ (including the trivial one) there is induced a homology class $[X]\in H_n(\overline{X};\Z)$ and there are isomorphisms of $\Z[\pi]$-modules \[-\cap[X]:H^{n-r}(\overline{X};\Z)\xrightarrow{\cong} H_r(\overline{X};\Z).\]

\begin{example}\label{ex:compactman}A closed, oriented $n$-dimensional topological manifold has a distinguished generator $[M]\in H_n(M;\Z)\cong\Z$ determined by the choice of orientation. There is an isomorphism \[-\cap[M]:H^{n-r}(M;R)\xrightarrow{\cong} H_r(M;R).\]For any (now possibly non-compact) oriented $\pi$-cover $\overline{M}$ of $M$, the Poincar\'{e} duality is of the form\[-\cap[\overline{M}]^{lf}:H^{n-r}_{\text{cpt}}(\overline{M};R)\xrightarrow{\cong} H_r(\overline{M};R), \]where $[\overline{M}]^{lf}\in H^{lf}_n(\overline{M};R)$ is the locally finite fundamental class of $\overline{M}$ and the cap product is the one between locally finite chains and compactly supported cochains. But by the previous remark we have that $[\overline{M}]^{lf}$ is the transfer of $[M]$ (here we are defining the transfer using the fact that a compact manifold $M$ has the homotopy type of a finite $CW$ complex). In particular all of the above is true for the universal cover and $M$ is an $n$-dimensional Poincar\'{e} space with fundamental class $[M]$. See, for instance, \cite[3.1]{MR1366538} for a good account of this non-compact Poincar\'{e} duality.\end{example}

Now fix an $n$-dimensional Poincar\'{e} space $X$ and a $\pi$-cover $\overline{X}$. As we have a fixed $\pi$-cover in mind now, we will write $A=R[\pi]$ and for any $A$-module $G$ we can unambiguously write the chain complexes of $R[\pi]$-modules\[C_*(X;G):=G^t\otimes_AC_*(\overline{X};R),\qquad C^*(X;G):=G^t\otimes_A\Hom_A(C_*(\overline{X};R),A),\]with corresponding homology and cohomology written $H_*(X;G)$ and $H^*(X;G)$. There is then induced a Poincar\'{e} duality isomorphism of $A$-modules \[-\cap[X]=(1\otimes(-\cap[X]))_*:H^{n-r}(X;G)\xrightarrow{\cong} H_r(X;G).\]

Suppose $(A,S)$ defines a localisation so that there is a short exact coefficient sequence of $A$-modules\[0\to A\to S^{-1}A\to S^{-1}A/A\to 0.\]Now as $C_*(X;A)$, $C^*(X;A)$ are chain complexes of free $R[\pi]$-modules (using that the dual of a free module is free), and as free modules are flat, there are induced short exact sequences of chain complexes\[0\to C_*(X;A)\to C_*(X;S^{-1}A)\to C_*(X;S^{-1}A/A)\to 0\] and \[0\to C^*(X;A)\to C^*(X;S^{-1}A)\to C^*(X;S^{-1}A/A)\to 0.\]There is hence a commuting Poincar\'{e} duality diagram for the associated Bockstein long exact sequences\[\xymatrix{...\ar[r]&H^{r-1}(X;S^{-1}A/A)\ar[ddd]^{-\cap[X]}_\cong\ar[dr]\ar[rr]^{\delta^*}&&H^{r}(X;A)\ar[r]\ar[ddd]^{-\cap[X]}_\cong&H^{r}(X;S^{-1}A)\ar[ddd]^{-\cap[X]}_\cong\ar[r]&...\\
&&\im{\delta^*}\ar[ur]\ar[d]^{-\cap[X]}_\cong&&&\\
&&\im{\delta_*}\ar[dr]&&&\\
...\ar[r]&H_{n-r+1}(X;S^{-1}A/A)\ar[ur]\ar[rr]^{\delta_*}&&H_{n-r}(X;A)\ar[r]&H_{n-r}(X;S^{-1}A)\ar[r]&...}\]Recall that $S^{-1}H^r(X;A)\cong H^r(X;S^{-1}A)$ (by Lemma \ref{lem:locisexact}), and that the $S$-torsion submodule is \[TH^r(M;A):=\ker(H^r(M;A)\to S^{-1}H^r(M;A)),\]which is precisely $\im(\delta^*)$ by exactness of the sequence. A similar statement holds for the homological Bockstein sequence.

There is a non-singular bilinear coefficient pairing\[S^{-1}A/A\times A\to S^{-1}A/A;\qquad (a,b)\mapsto a\overline{b}\]inducing an isomorphism of $A$-modules $(S^{-1}A/A)^t\otimes_A A\cong S^{-1}A/A$ and hence for each $i,j$ there is a cup-product\[\cup:H^i(X;S^{-1}A/A)\times H^j(X;A)\to H^{i+j}(X;S^{-1}A/A).\]Using this cup-product we make the following definition.

\begin{definition}For each $r\in\Z$, the \textit{cohomology linking pairing} for the $\pi$-cover $\overline{X}$ is the pairing\[\lambda_X:TH^r(X;A)\times TH^{(n+1)-r}(X;A)\to S^{-1}A/A;\qquad(x,y)\mapsto \langle w\cup y,[X]\rangle\in S^{-1}A/A,\]where $w\in H^{r-1}(X;S^{-1}A/A)$ is a choice of element such that $\delta^*(w)=x$, and the angle brackets denote the Kronecker pairing.
\end{definition}

\begin{lemma}For each $r\in\Z$, the cohomology linking pairing for the $\pi$-cover $\overline{X}$ is well defined and can equivalently be described as\[\lambda_X:TH^r(X;A)\times TH^{(n+1)-r}(X;A)\to S^{-1}A/A;\qquad([x],[y])\mapsto \frac{1}{s}[\tilde{x}\cup y]\cap[X]\in S^{-1}A/A,\]where now $x\in C^r(X;A), y\in C^{(n+1)-r}(X;A),\tilde{x}\in C^{r+1}(X;A), s\in S$ and $d^*w=sx$.
\end{lemma}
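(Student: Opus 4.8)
The plan is to establish two things: that the pairing is well-defined (independent of the choices of $w$, and of the choice of $\delta^*$-preimage), and that the chain-level formula agrees with the cohomology-level formula. Both reduce to standard Bockstein manipulations together with the properties of the cup product and cap product already built into the setup.

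\textbf{Well-definedness.} First I would observe that the only ambiguity in the definition is the choice of $w\in H^{r-1}(X;S^{-1}A/A)$ with $\delta^*(w)=x$. By exactness of the Bockstein sequence, any two such choices differ by an element $w'\in\im(H^{r-1}(X;A)\to H^{r-1}(X;S^{-1}A/A))$, i.e.\ $w'$ is the reduction mod $A$ of an integral (meaning $A$-coefficient) cohomology class $u$. Then $\langle w'\cup y,[X]\rangle$ is the reduction in $S^{-1}A/A$ of $\langle u\cup y,[X]\rangle\in S^{-1}A/A$; but $u\cup y$ lies in the image of $H^n(X;A)\to H^n(X;S^{-1}A/A)$, and since $y$ is $S$-torsion, a quick diagram chase (clearing the denominator of $y$ and using that the coefficient pairing $S^{-1}A/A\times A\to S^{-1}A/A$ lands in the torsion) shows $\langle u\cup y,[X]\rangle$ maps to $0$ in $S^{-1}A/A$. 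Hence $\lambda_X$ is well-defined. Sesquilinearity over $A$ is immediate from the corresponding properties of $\cup$ and the Kronecker pairing.

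\textbf{The chain-level formula.} Next I would unwind the cohomology formula at the chain level. Choose a cocycle representative $x\in C^r(X;A)$; since $[x]\in TH^r(X;A)=\im(\delta^*)$, there is $\tilde x\in C^{r+1}(X;A)$ and $s\in S$ with $d^*\tilde x=sx$, so that $x/s\in C^r(X;S^{-1}A)$ and $[x/s]\in C^r(X;S^{-1}A/A)$ is a cocycle representing a Bockstein preimage $w$ of $[x]$ (this is exactly how the connecting homomorphism $\delta^*$ is computed from the short exact coefficient sequence). Then $w\cup y$ is represented by the cocycle $(x/s)\cup y\in C^{n+1-r}(\dots)$ — wait, by degree reasons $w\cup y$ lives in $H^n(X;S^{-1}A/A)$ — represented by $(x/s)\cup y = \tfrac1s(x\cup y)$, using sesquilinearity of the cup product and the coefficient pairing. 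Pairing against the fundamental class via the Kronecker pairing, which is $\langle -, [X]\rangle = -\cap[X]$ in degree $0$, gives $\langle w\cup y,[X]\rangle = \tfrac1s[x\cup y]\cap[X]$. Finally I would note that replacing $x$ by $\tilde x$ in the formula in the statement is a harmless renaming (the statement writes $d^*w=sx$ with $w$ playing the role of $\tilde x$), so the two formulas coincide.

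\textbf{Anticipated obstacle.} The main thing to be careful about is not a deep obstruction but a bookkeeping issue: matching conventions for the connecting map $\delta^*$, the direction of the involution in the coefficient pairing $S^{-1}A/A\times A\to S^{-1}A/A$, $(a,b)\mapsto a\overline b$, and the sign/variance of the cup product, so that the factor $\tfrac1s$ and the bar land in the right places and the formula genuinely takes values in $S^{-1}A/A$ (not just $S^{-1}A$). I would also want to double-check that $[x\cup y]\cap[X]$ is indeed $S$-torsion-valued, which again follows since $y$ (equivalently $[y]$) is $S$-torsion so $s'y$ is integral for some $s'\in S$ and $\tfrac{s'}{s}(x\cup y)\cap[X]$ already lies in $A$ up to coboundaries. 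None of this requires genuinely new ideas beyond the exact sequences and pairings set up in the preceding pages.
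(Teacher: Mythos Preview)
Your approach matches the paper's (which is equally brief: one simply exhibits a chain-level Bockstein preimage of $[x]$), but there is a genuine slip in your execution of the chain-level formula. The cochain whose reduction mod $A$ represents $w$ is $\tilde{x}/s\in C^{r-1}(X;S^{-1}A)$, \emph{not} $x/s$: you have $d^*(\tilde{x}/s)=x$, which is integral, so $\tilde{x}/s$ becomes a cocycle mod $A$ and $\delta^*[\tilde{x}/s]=[x]$. By contrast $x/s$ is already a cocycle in $S^{-1}A$ coefficients and has $\delta^*[x/s]=0$. Consequently $w\cup y$ is represented by $\tfrac{1}{s}(\tilde{x}\cup y)$, not $\tfrac{1}{s}(x\cup y)$, and the formula in the statement really does involve $\tilde{x}$ --- your claim that this is a ``harmless renaming'' is wrong, since $x$ and $\tilde{x}$ live in different degrees and play different roles ($x$ is the cocycle, $\tilde{x}$ is a primitive of $sx$). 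The statement itself contains typos that may have misled you: one should have $\tilde{x}\in C^{r-1}$, and the condition should read $d^*\tilde{x}=sx$.

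A smaller issue in your well-definedness argument: the Bockstein sequence gives $\ker\delta^*=\im\big(H^{r-1}(X;S^{-1}A)\to H^{r-1}(X;S^{-1}A/A)\big)$, not the image from $A$ coefficients (the composite $A\to S^{-1}A\to S^{-1}A/A$ is zero, so that image is trivial). With the correct source, your argument still goes through: if $w'$ is the reduction of $v\in H^{r-1}(X;S^{-1}A)$ then $v\cup y$ factors through the image of $[y]$ in $S^{-1}A$-coefficients, which vanishes since $[y]$ is $S$-torsion.
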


\begin{proof}To show the pairing is well-defined is a straightforward diagram chase to show independence of choices, which we omit here.

The homology class $[s^{-1}\tilde{x}]\in H^{r+1}(X;S^{-1}A/A)$ is in the preimage of $[x]\in H^r(X;A)$ under the Bockstein $\delta^*$.
\end{proof}

\begin{proposition}The cohomology linking pairing for the $\pi$-cover $\overline{X}$ has the property that for $[x]\in TH^r(X;A), [y]\in TH^{(n+1)-r}(X;A)$\[\lambda_X([x],[y])=(-1)^{r(n+1-r)}\overline{\lambda_X([y],[x])}.\]
\end{proposition}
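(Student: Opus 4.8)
The plan is to establish the symmetry of the cohomology linking pairing by reducing it to two classical symmetries: the graded-commutativity of the cup product and the symmetry of the coefficient pairing on $S^{-1}A/A$, together with a naturality property of the Bockstein. First I would fix $[x]\in TH^r(X;A)$ and $[y]\in TH^{(n+1)-r}(X;A)$ and choose Bockstein preimages $[w]\in H^{r-1}(X;S^{-1}A/A)$ with $\delta^*[w]=[x]$ and $[v]\in H^{(n-r)}(X;S^{-1}A/A)$ with $\delta^*[v]=[y]$. Then $\lambda_X([x],[y])=\langle w\cup y,[X]\rangle$ while $\lambda_X([y],[x])=\langle v\cup x,[X]\rangle$, and the task becomes comparing $w\cup y$ with $v\cup x$ at the level of $H^n(X;S^{-1}A/A)$, after which one pairs against $[X]$.

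The key computation will be a ``transfer of the Bockstein across the cup product'' identity: for cochain-level representatives, $d^*(\tilde{x}\cup v)=\pm(d^*\tilde{x})\cup v \pm \tilde{x}\cup(d^*v)$, where $\tilde{x}\in C^{r-1}(X;S^{-1}A)$ lifts a multiple of $w$ and the signs are the Koszul signs $(-1)^{\deg}$. Since $d^*v$ represents (a multiple of) $y$ and $d^*\tilde{x}$ represents (a multiple of) $x$, rearranging this yields, in $H^n(X;S^{-1}A/A)$, an equality of the form $[x]\cup v = \pm\, x \cup [\text{lift of } y] = \pm\, (-1)^{?}\,[\text{lift of }x]\cup y$ up to a coboundary, i.e.\ $v\cup x$ and $w\cup y$ agree up to the graded-commutativity sign for cup product of classes of degrees $(n-r)$ and $r$, which is $(-1)^{r(n-r)}$, combined with the sign coming from commuting the coefficient pairing $(a,b)\mapsto a\bar b$ versus $(b,a)\mapsto b\bar a = \overline{a\bar b}$, which accounts for the bar and an extra sign. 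Tracking these carefully should produce exactly the factor $(-1)^{r(n+1-r)}$ and the conjugation. I would organise this as: (i) recall the chain-level cup product and its Leibniz rule with respect to $d^*$; (ii) recall graded-commutativity up to chain homotopy at the level of the coefficient-twisted cup product, being careful that the coefficient pairing $S^{-1}A/A \otimes A \to S^{-1}A/A$ is not symmetric but satisfies $a\bar b = \overline{\bar a\cdot?}$, so the swap introduces the involution; (iii) combine with the equivalent ``$\frac{1}{s}$'' description from the preceding Lemma to move the Bockstein onto whichever factor is convenient; (iv) evaluate against $[X]$ using that $\langle -,[X]\rangle$ is defined via cap product with the fundamental class, which is symmetric in the relevant sense.

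The main obstacle I expect is bookkeeping the signs and the involution simultaneously: the Koszul sign from the Leibniz rule for $d^*$ across $\cup$, the sign from graded-commutativity of $\cup$ on classes of complementary degree, and the bar introduced when swapping the arguments of the sesquilinear coefficient pairing all interact, and one must also be careful that the ``preimage under $\delta^*$'' is only defined up to the image of $H^{r-1}(X;A)\to H^{r-1}(X;S^{-1}A/A)$, so independence of that choice (already granted by the previous Lemma's well-definedness) must be invoked at the right moment. A clean way to avoid mistakes is to first prove the identity $\delta^*(w\cup y)=0$ and $\delta^*(v\cup x)=0$ (both lie in the torsion, in fact in the image of $\delta^*$), then show $w\cup y \mp (-1)^{r(n+1-r)}\overline{v\cup x}$ lifts to $H^n(X;A)$ in a way that is killed by $\langle-,[X]\rangle$; this reduces the sign chase to a single comparison in the $S^{-1}A$-coefficient complex where everything is strict and the Leibniz rule has no homotopy-theoretic subtleties. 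Finally, I would double check the degree $r=0$ or $r=n+1$ edge behaviour is consistent, though for the statement as written the generic case suffices.
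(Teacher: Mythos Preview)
Your proposal is correct and is essentially the same argument as the paper's: lift to $S^{-1}A$-coefficients via the $\frac{1}{s}\tilde{x}$, $\frac{1}{t}\tilde{y}$ description, apply graded-commutativity of the cup product (picking up the conjugation and the sign $(-1)^{r(n-r)}$), then use the Leibniz rule $d^*(\tilde{x}\cup\tilde{y}) = sx\cup\tilde{y} + (-1)^{r-1}t\,\tilde{x}\cup y$ to exhibit the difference as a coboundary in $C^n(X;S^{-1}A)$, so that its image in $S^{-1}A/A$ vanishes after capping with $[X]$. The paper's write-up is terser but the two key ingredients you identify---graded-commutativity and the derivation property of $d^*$ over $\cup$---are exactly what it uses.
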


\begin{proof}Write representative cochains $x,y$, then choose $\tilde{x}\in C^{r-1}(X;A)$ and $\tilde{y}\in C^{n-r}(X;A)$ such that $d^*(\tilde{x})=sx, d^*(\tilde{y})=ty$ for some $s,t\in S$. We wish to prove that \[\left[\frac{1}{s}\tilde{x}\cup y-(-1)^{r(n+1-r)}\frac{1}{t}\overline{\tilde{y}\cup x}\right]\cap([X])\in A.\]Cup products in our setup are signed skew-commutative on the level of cohomology (consider the cup product of chains $u,v\in C^*(X)$ is given by $\Delta_0^*(u\otimes v )$ where $\Delta_0:C_*(X;A)\to C_*(X;A)^t\otimes_A C_*(X;A)$ is a choice of chain diagonal approximation, see \ref{sec:symmetric} for full description), so some rearranging of the cochains on the left hand side gives\[\frac{1}{s}\tilde{x}\cup y-(-1)^{r(n+1-r)+r(n-r)}\frac{1}{t}x\cup \tilde{y}+d^*z=\frac{1}{s}\tilde{x}\cup y+(-1)^{r-1}\frac{1}{t}x\cup \tilde{y}+d^*z,\]for some cochain $z\in C^{n-1}(X;A)$. But consider that the (co)differential is well-known to be a derivation for the cup product (see e.g.\ \cite[p326]{MR1224675}), viz.:\[\frac{1}{s}x\cup \tilde{y}+ \frac{1}{t}(-1)^{r-1}(\tilde{x}\cup y)=\frac{1}{st}d^*(\tilde{x}\cup \tilde{y})\in C^{n}(X;S^{-1}A).\]So we have \[\left[d^*(\frac{1}{st}\tilde{x}\cup\tilde{y}+z)\right]=0\in H^n(X;S^{-1}A).\]and the proposition follows.
\end{proof}

\begin{example}When $\pi$ is trivial and $(A,S)=(\Z,\Z\sm\{0\})$ we recover the linking quantity discussed in the motivation section above. Suppose $M^n$ is a closed oriented topological manifold with fundamental class $[M]\in H_n(M;\Z[\pi_1(X)])$. Then write the inverse of the Poincar\'{e} duality isomorphism\[D=(-\cap[M])^{-1}:H_r(M;\Z)\xrightarrow{\cong} H^{n-r}(M;\Z).\]Let $U^r,V^{n-1-r}\subset M$ be closed, oriented topological manifolds embedded as submanifolds, and $Z$ be an oriented $(n-r)$-manifold with boundary, embedded $Z\subset M$ in such a way that $[\partial Z]=s[V]\in H_{n-1-r}(M)$, for some $s\in\Z$. Suppose all embeddings are in general position, so in particular $U\cap \partial Z=\emptyset$, and intersections are transverse. Then it is clear that\[\begin{array}{rcl}\text{link}(U,V)=s^{-1}\sum_{p\in U\cap Z}\eps_p&=&s^{-1}D([U])\cap[Z]\\&=&s^{-1}(D([U])\cup D([Z]))\cap[M]\\&=&\pm\lambda_M(D([U]),D([V]))\in \Q/\Z.\end{array}\]
\end{example}

\begin{example}[Intersection on the interior is linking on the boundary] Suppose $\pi$ is trivial and $(A,S)=(\Z,\Z\sm\{0\})$. The following is a geometric motivation for the various localisation exact sequences considered in this thesis.

Let $(M^{n+1},\partial M)$ be a $(n+1)$-dimensional topological manifold with boundary. Suppose there are topological manifolds with boundary $(Y^{r+1},\partial Y),(Z^{n-r},\partial Z)\subset (M,\partial M)$ properly embedded as submanifolds in general position (so assume $\partial Y, \partial Z\subset \partial M$ and $\partial Y\cap \partial Z=\emptyset$). Assume that $Y$ intersects $Z$ transversely, so there is a well-defined intersection quantity\[\text{intersection}(Y,Z)=\sum_{p\in Y\cap Z}\eps_p\in\Z.\]Now assume that there exists a homotopy of the embedded $(Z,\partial Z)$ such that at each time in the homotopy $(Z,\partial Z)$ is properly embedded as a submanifold in general position and intersects $Y$ transversely. Suppose after this homotopy that $Z\subset \partial M$. A continuity argument shows the count of the points of intersection is unaffected by this homotopy so that\[\text{intersection}(Y,Z)=\sum_{p\in Y\cap Z}\eps_p=\sum_{p\in \partial Y\cap Z}\eps_p=\text{link}(\partial Y,\partial Z)\in\Z.\]Of course, as observed in the motivation section, this linking quantity is not well-defined by the homology classes of $\partial Y$ and $\partial Z$ in $H_*(\partial M;\Z)$. In fact, our homotopy shows the class $[\partial Z]\in H_{n-r-1}(\partial M;\Z)$ vanishes.

So now suppose $[\partial Y]=s[U]\in H_{r}(M;\Z)$ and  $[\partial Z]=t[V]\in H_{n-r-1}(M;\Z)$. In other words, suppose $\partial Y$ and $\partial Z$ realise torsion homology classes. Then, up to homology representatives of $U$ and $V$ in $\partial M$, there is a well-defined quantity\[\left(\frac{1}{st}\right)\cdot\text{intersection}(Y,Z)=\text{link} (U,V)\in \Q/\Z.\]

We can interpret this as the geometric-level reasoning behind the algebraic boundary map of monoids\[\partial:\{\text{$\eps$-symmetric forms over $\Z$ that are non-singular over $\Q$}\}\to \NN^\eps(\Z,\Z\sm\{0\}).\]To do so we need to assume that $n=2k+2$ and that $\partial M$ is a rational homology sphere. Then, defining $FH^{2k+1}(M;\Z)= H^{2k+1}(M;\Z)/TH^{2k+1}(M;\Z)$, the pairing\[b:FH^{2k+1}(M;\Z)\times FH^{2k+1}(M;\Z)\to \Z;\qquad([x],[y])\mapsto \langle [x\cup y],[M]\rangle\]defines a form over $\Z$ that is non-singular over $\Q$. The associated algebraic boundary as defined in \ref{def:algbound} then agrees with the geometric linking form by considering the commutative diagram\[\xymatrix{
FH^{k+1}(M)\ar[d]^-{\cong}_-{g}\ar[r]^-{b}&(H^{k+1}(M))^*\ar[r] \ar[d]^-{\cong}_-{\text{UCT}}&T\\
FH^{k+1}(M;\partial M)\ar[r]&FH^{k+1}(M)\ar[r]& H^{k+1}(\partial M)= TH^{k+1}(\partial M)}\]In this diagram, the lower row comes from the long exact  cohomology sequence of the pair $(M,\partial M)$, UCT denotes the isomorphism coming from the Universal Coefficient Theorem, and $g$ is the composition of isomorphisms\[FH^{k+1}(M)\xrightarrow{\text{UCT}}(H_{k+1}(M))^*\xrightarrow{(-\cap[M])^*} (H^{k+1}(M;\partial M))^*\xrightarrow{\text{UCT}}FH^{k+1}(M;\partial M).\]
\end{example}

\begin{example}Now a more concrete example. Let $U:S^1\hookrightarrow S^3=\partial D^4$ be the standard unknot. Push the unknot off along a trivially framed normal bundle to obtain $l$, the longitude. Choose a 2-disc transversely intersecting the unknot in a single point and define the meridian $m$ as the boundary of this disc choice. Define the Lens space \[L(p,q)=S^1\times D^2\cup_fD^2\times S^1\] by the diffeomorphism of the boundary $f:S^1\times S^1\to S^1\times S^1\cong \partial(\overline{S^3\sm U\times D^2})$, induced from the diffeomorphism\[\C\times \C\to \C\times \C;\qquad (x,y)\mapsto (x^uy^p,x^vy^q),\]where $u,v,p,q\in\Z$, $uq-vp=1$ and $p\neq0$. Suppose the open tubular neighbourhood was small enough so that $l$ and $m$ survived the construction, then the class $[l]$ of our longitude generates $\pi_1(L(p,q))\cong \Z/p\Z$, and note that we have $H_1(L(p,q);\Z)\cong H^2(L(p,q);\Z)\cong \Z/p\Z$. A copy $D\subset L(p,q)$ of  the disc of the glued-in solid torus, i.e.\ the first factor of $D^2\times S^1$, bounds $p$ concatenated longitudes $l$ in $L(p,q)$. Let $l'\subset L(p,q)$ be another homology copy of the longitude such that $l\cap l'=\emptyset$ and $l'$ intersects $D$ transversely. The diffeomorphism $f$ ensures that \[[\partial D]=p[l]+q[m]\in H_1(\partial(\overline{S^3\sm U\times D^2})),\] and so by the definition of $m$, we have that $l'$ meets $D$ in $q$ positive, transverse intersections. Hence\[\text{link}(l,l')=\frac{q}{p}.\]Now define $D'$ to be a 2-disc cobounding $p$ concatenated loops $l'$.

It is a standard fact that every closed, oriented 3-manifold is the boundary of an oriented 4-manifold ($\Omega^{\text{SO}}_3=0$) and hence $L(p,q)$ bounds a 4-manifold $M^4$. Push the 2-discs $D,D'$ into the interior of $M$ (via isotopies rel.\ boundary in general position) and assume the push-ins are in general position and transversely intersecting. By the previous examples, we know that the pushed-in discs intersect $q$ times positively transversely.
\end{example}

\subsection{Linking forms from Poincar\'{e} duality}\label{subsec:linkPD}

Suppose for this subsection that $n=2k+1$ and $r=k+1$, then the cohomology linking pairing is a $(-1)^{k+1}$-symmetric pairing on the module $TH^{k+1}(X;A)$ and the adjoint determines an injective morphism\[\lambda_X:TH^{k+1}(X;A)\hookrightarrow (TH^{k+1}(X;A))^\wedge.\]It is very tempting at this stage to call this a linking form over $(A,S)$, but we caution that the module is not necessarily in $\H(A,S)$.

\begin{definition}Suppose $\overline{X}$ is a $\pi$-cover of a $(2k+1)$-dimensional Poincar\'{e} space $X$ and that $TH^{k+1}(X;A)$ is an object of $\H(A,S)$. Then the \textit{linking form of the cover $\overline{X}$} is the $(-1)^{k+1}$-symmetric linking form over $(A,S)$ given by $(TH^{k+1}(X;A),\lambda_X)$
\end{definition}

One possible reason for the linking form to be defined is entirely algebraic:

\begin{definition}For $m\geq0$, the pair $(A,S)$ has \textit{dimension $m$} if every finitely generated $S$-torsion $A$-module $K$ admits a projective $A$-module resolution of length $m+1$\[0\to P_{m+1}\to P_m\to\dots\to P_1\to P_0\to K\to 0.\]
\end{definition}

Note that if $T$ is a f.g.\ $S$-torsion $A$-module and $(A,S)$ has dimension 0 then $T$ is in $\H(A,S)$. If $A$ has h.d.\ $m+1\geq 1$ then, for any possible $S$, this definition means the localisation pair $(A,S)$ has dimension $m$.

\begin{remark}If $(A,S)$ has dimension $m$ then it is possible for $A$ to have homological dimension greater than $m+1$. For instance $A=\Z\times\Q[x_1,x_2,\dots,x_n]$ has homological dimension $n$ and $(A,\Z\sm\{0\})$ has dimension 0. However we do not know if such examples can arise as group rings $A=R[\pi]$.
\end{remark}

\begin{example}Suppose $\overline{X}$ is a $\pi$-cover of a $(2k+1)$-dimensional Poincar\'{e} space $X$ and that $(A,S)$ defines a localisation of $A=R[\pi]$. If $(A,S)$ has dimension 0 then the linking form of $\overline{X}$ is defined. For example this happens when $A$ itself has homological dimension 1 (e.g.\ a Dedekind domain):
\begin{itemize}
\item If $\pi$ is trivial and $(A,S)=(\Z,\Z\sm\{0\})$ then we recover the classical linking form from the motivation section above\[\lambda_X:TH^{k+1}(X;\Z)\times TH^{k+1}(X;\Z)\to \Q/\Z.\]
\item If $\pi=\Z$ and $R$ is a field then $A=R[z,z^{-1}]$ is a principal ideal domain and \[\lambda_X:TH^{k+1}(X;R[z,z^{-1}])\times TH^{k+1}(X;R[z,z^{-1}])\to R(z)/R[z,z^{-1}].\] This is the field-coefficient case of an \textit{infinite cyclic cover}, which we investigate in the next section.
\end{itemize}
\end{example}

We now make a shallow foray into the problem of determining whether the linking form of a $\pi$-cover is non-singular.  In other words, we are searching for a reason that the adjoint of the linking form\[\lambda_X:TH^{k+1}(X;A)\to \Hom_A(TH^{k+1}(X;A),S^{-1}A/A)\cong\Ext^1_A(TH^{k+1}(X;A),A)\]is an isomorphism. The difficulty of this `universal coefficient problem' explodes when $A$ has homological dimension greater than 1, hence we will not be able to treat many cases.

Following the exposition in \cite{MR0461518}, we now briefly recall the standard spectral sequence governing the universal coefficient problem. Suppose we have a chain complex $(C,d)$ of projective $A$-modules such that $C_r=0$ for $r<0$, with associated cochain complex $(C^*,\delta)$, and that there is fixed an injective $A$-module resolution \[A\to Q_0\xrightarrow{\partial} Q_1\xrightarrow{\partial} Q_2\to\dots\]Then the \textit{universal coefficient spectral sequence} $\{E_r, d_r\}$ is one of the two first-quadrant (cohomology) spectral sequences associated to the bicomplex $K^{p,q}=\Hom_A(C_p,Q_q)$, with total complex $(\Tot K,D)$ where $\Tot^mK=\bigoplus_{p+q=m} K^{p,q}$ and differential $D=\delta+\partial$. By filtering in the two different homological grading `directions', there are 2 graded filtrations of this total complex\[0=F_{m+1}^m\subseteq F^m_m\subseteq \dots\subseteq F_1^m\subseteq F^m_0=\Tot^mK,\qquad\text{where } F^m_k=\bigoplus_{q\geq a}\bigoplus_{p+q=m}K^{p,q}.\]
\[0=\widetilde{F}_{m+1}^m\subseteq \widetilde{F}^m_m\subseteq\dots\subseteq \widetilde{F}_1^m\subseteq \widetilde{F}^m_0=\Tot^mK,\qquad\text{where } \widetilde{F}^m_k=\bigoplus_{p\geq a}\bigoplus_{p+q=m}K^{p,q}.\]
In a standard way \cite[\textsection 14]{MR658304}, these filtrations determine spectral sequences $\{E_r,d_r\}$ and $\{\widetilde{E}_r, \widetilde{d}_r\}$ respectively. If the sequences both converge, then $E^{p,q}_r,\widetilde{E}^{p,q}_r\implies H^{p+q}(\Tot K)$.

The latter filtration has\[\begin{array}{rcll}\widetilde{E}^{p,q}_1=H_{\partial}(K)^{p,q}&=&\Ext^q_A(C_p,A)\quad&\text{(as $Q$ is an injective resolution),}\\
&=&\left\{\begin{array}{lcl}\Hom_A(C_p,A)&&q=0,\\0&&q\neq 0\end{array}\right.\quad&\text{(as $C_p$ is projective)}.\\
&&&\\
\widetilde{E}^{p,q}_2=H_{\delta}H_\partial(K)^{p,q}&=&\left\{\begin{array}{lcl} H^p(C;A)&&q=0,\\0&&q\neq 0.\end{array}\right.\end{array}\]The differential $\widetilde{d}_r$ is of bidegree $(r-1,r)$, so the spectral sequence collapses on the second page and we have $H^m(\Tot K)\cong H^m(C;A)$.

The former filtration has
\[\begin{array}{rcll}E^{p,q}_1=H_{\delta}(K)^{p,q}&=&H^p(\Hom_A(C,Q_q)),&\\
&=&\Hom_A(H_p(C;A),Q_q)\qquad&\text{(as $Q_q$ is injective)}.\\
&&&\\
E^{p,q}_2=H_{\partial}H_\delta(K)^{p,q}&=&\Ext_A^q(H_p(C;A),A)&\text{(as $Q$ is an injective resolution)}.\end{array}\]Now consider that for each $m$, the filtration on $\Tot^mK$ induces \textit{some} filtration on the homology\[0=J_{-1,m+1}\subseteq J_{0,m}\subseteq J_{1,m-1}\subseteq\dots\subseteq J_{m-1,1}\subseteq J_{m,0}=H^m(\Tot K).\]Although it is generally far from obvious what the modules $J_{m-a,a}$ in this filtration are, it is a standard result (see e.g.\ \cite[\textsection 14]{MR658304}) that the subquotients of this filtration are determined by the $E_\infty$ page of the spectral sequence. More precisely, for each $m, a$, there is a short exact sequence\begin{equation}\label{eq:JSES}0\to J_{m-a-1,a-1}\to J_{m-a,a}\to E^{m-a,a}_\infty\to 0,\end{equation}where recall that $E_\infty^{p,q}$ denotes the stationary module in the $(p,q)$ position $E^{p,q}_r=E^{p,q}_{r+1}=\dots$. As a consequence, we always have that $J_{0,m}\cong E^{0,m}_\infty$. More carefully, we can draw similar consequences from vanishing of the $E_\infty^{p,q}$ terms along the diagonal $p+q=m$ as follows. Suppose there is some $t$, such that for $a\geq t$ we have $E^{m-a,a}_\infty=0$, then \[J_{m-t,t}=J_{m-t-1,t+1}=\dots=J_{-1,m+1}=0\qquad\text{and}\qquad J_{m-t+1,t-1}=E^{m-t+1,t-1}_\infty.\]

\begin{proposition}\label{prop:UCSS}Let $\overline{X}$ be a $\pi$-cover of a $(2k+1)$-dimensional Poincar\'{e} space $X$. Suppose one or both of the hypotheses\begin{enumerate}[(H1)]
\item $(A,S)$ has dimension 0 and for $r=0,1,\dots,k$,  $H_r(X;A)$ is $S$-torsion,
\item $A$ is a PID and $S=A\sm\{0\}$,
\end{enumerate}is the case. Then the linking form of the cover is well-defined and non-singular.
\end{proposition}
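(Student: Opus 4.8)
The plan is to establish the two assertions of Proposition \ref{prop:UCSS} -- well-definedness and non-singularity of the linking form of the cover -- by analysing the universal coefficient spectral sequence $\{E_r,d_r\}$ recalled above, converging to $H^*(\Tot K) \cong H^*(X;A)$, together with the Poincar\'{e} duality isomorphisms $-\cap[X]:H^{n-r}(X;A)\xrightarrow{\cong} H_r(X;A)$. Recall $n=2k+1$ and we are interested in degree $r=k+1$. First I would observe that in either hypothesis (H1) or (H2), the relevant modules have projective resolutions of length at most two, so the spectral sequence is concentrated in the columns $q=0,1$ (and, a priori, $q=2$ when $A$ has homological dimension $2$; but under (H1) the pair $(A,S)$ has dimension $0$, so $S$-torsion modules resolve in length $1$, killing the $q=2$ row after the $S$-torsion part is isolated, and under (H2) $A$ is a PID so $q\ge 2$ vanishes outright). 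This forces the filtration of $H^{k+1}(X;A)$ to have at most two nontrivial subquotients, namely $E_\infty^{k+1,0}$ and $E_\infty^{k,1}$.

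Next I would identify these two subquotients via the $E_2$-page formula $E^{p,q}_2 = \Ext^q_A(H_p(X;A),A)$. The $q=0$ term is $E^{k+1,0}_2 = \Hom_A(H_{k+1}(X;A),A)$, which is the ``free part'' contribution; under hypothesis (H1) the assumption that $H_r(X;A)$ is $S$-torsion for $r=0,\dots,k$ needs to be leveraged -- via Poincar\'{e} duality $H_{k+1}(X;A)\cong H^{k}(X;A)$, and a further universal-coefficient argument on the dual side -- to control this term, while under (H2) one argues directly over the PID. The $q=1$ term is $E^{k,1}_2 = \Ext^1_A(H_k(X;A),A)$; since $H_k(X;A)$ is $S$-torsion (either by hypothesis in (H1), or automatically in the PID case after restricting to the torsion submodule), Lemma \ref{lem:ext2} gives $\Ext^1_A(H_k(X;A),A)\cong \Hom_A(H_k(X;A),S^{-1}A/A)$. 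After checking that the differentials $d_r$ into and out of these spots vanish for degree reasons (the only possibly nonzero differential is $d_2: E_2^{k-1,1}\to E_2^{k+1,0}$ and $d_2: E_2^{k,0}\to E_2^{k+2,-1}=0$; the image of the former lands in the free part and, on passing to the $S$-torsion submodule $TH^{k+1}(X;A)$, is quotiented away), one concludes that the $S$-torsion submodule $TH^{k+1}(X;A)$ is exactly the image of the edge map identifying it with $E_\infty^{k,1}=E_2^{k,1}=\Hom_A(H_k(X;A),S^{-1}A/A) \cong \Ext^1_A(H_k(X;A),A)$. In particular $TH^{k+1}(X;A)$ is in $\H(A,S)$ (it is $S$-torsion of homological dimension $1$, using Lemma \ref{lem:ext2} or the dimension-$0$ hypothesis), so the linking form is well-defined.

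For non-singularity, I would then trace through the diagram carefully to show the adjoint $\lambda_X: TH^{k+1}(X;A)\to \Hom_A(TH^{k+1}(X;A),S^{-1}A/A)$ coincides, up to sign and the canonical identifications above, with an isomorphism built out of Poincar\'{e} duality and the edge isomorphisms. Concretely: Poincar\'{e} duality gives $H^{k+1}(X;A)\cong H_k(X;A)$, the torsion submodules correspond, and on $S$-torsion submodules the composite $TH^{k+1}(X;A)\cong TH_k(X;A)\cong \Ext^1_A(H_{k+1}(X;A),A)$ (the last step by the UCSS for homology, or rather its cohomological dual, under hypothesis (H1) using that $H_r$ is $S$-torsion for $r\le k$; under (H2) this is the standard structure theory over a PID) together with Lemma \ref{lem:ext2}'s identification $\Ext^1_A(-,A)\cong\Hom_A(-,S^{-1}A/A)$ and the natural isomorphism $T\cong T^{\wedge\wedge}$ assembles into $\lambda_X$ being an isomorphism. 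The main obstacle I anticipate is precisely this last identification: namely, verifying that the pairing defined via the cup product and the Bockstein $\delta^*$ in the Definition of $\lambda_X$ genuinely agrees (not merely up to an abstract isomorphism of modules, but as the adjoint map) with the composite of edge homomorphisms and Poincar\'{e} duality coming from the spectral-sequence machinery -- this is the kind of ``the geometrically defined pairing is the algebraically natural one'' statement that requires a somewhat delicate chain-level diagram chase, of the sort deferred as ``a straightforward diagram chase'' in the lemma preceding this proposition. A secondary subtlety is handling hypothesis (H1) versus (H2) uniformly: under (H1) one must genuinely use the torsion hypothesis on $H_r(X;A)$ for $r\le k$ to kill the free part $\Hom_A(H_{k+1}(X;A),A)$ obstruction and to make the homology-side universal coefficient argument work, whereas under (H2) the PID structure theorem does this for free, so the cleanest exposition treats the two hypotheses in parallel short arguments feeding into a common conclusion.
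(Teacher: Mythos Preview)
Your approach is essentially the paper's: analyse the UCSS to extract the short exact sequence
\[0\to \Ext^1_A(H_k(X;A),A)\to H^{k+1}(X;A)\to \Hom_A(H_{k+1}(X;A),A)\to 0,\]
use Poincar\'e duality and Lemma~\ref{lem:ext2} to identify $TH^{k+1}(X;A)$ with its torsion dual, and then verify chain-level that this abstract isomorphism is the adjoint of $\lambda_X$.

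One point of confusion in your differential analysis: the paper's filtration is by the $q$-coordinate, so $d_2$ has bidegree $(-1,2)$, not $(2,-1)$ as your write-up implicitly assumes. With the correct bidegree the argument is cleaner than you make it: every $d_r$ out of rows $q=0,1$ lands in $q\geq 2$, which is the ``red region'' $\Ext^{\geq 2}_A(H_p,A)$ with $p\leq k$ --- zero under (H1) because $(A,S)$ has dimension $0$ and $H_p$ is $S$-torsion, and zero under (H2) because $A$ is a PID. All incoming differentials to rows $q=0,1$ come from $q<0$. So $E_2=E_\infty$ at the two spots with no further work; there is no ``possibly nonzero differential landing in the free part'' to argue around. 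In your convention you also omit consideration of $d_2\colon E_2^{k,1}\to E_2^{k+2,0}$, which is the one that would actually threaten the identification $E_\infty^{k,1}=E_2^{k,1}$; it does vanish (under (H1) because $H_{k+2}\cong H^{k-1}$ is $S$-torsion so the target is zero, under (H2) because the source is torsion and the target torsion-free), but this requires the same Poincar\'e-duality-plus-UCT argument you outline elsewhere. Your identification of the main obstacle --- matching the cup-product-and-Bockstein definition of $\lambda_X$ with the edge-map isomorphism --- is exactly right, and the paper handles it with the chain-level check you anticipate.
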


\begin{proof}For (H1), $H^{k+1}(X;A)\cong H_k(X;A)= TH_k(X;A)\cong TH^{k+1}(X;A)$ and $(A,S)$ is dimension 0, so the linking form is well-defined. Now consider the second page of the universal coefficient spectral sequence\[\arraycolsep=1.4pt\def\arraystretch{2.2}\begin{array}{ccccc}
\textcolor{red}{\Ext^{k+1}_A(H_0(X;A),A)}&\textcolor{red}{\cdots}&\textcolor{red}{\Ext^{k+1}_A(H_{k-1}(X;A),A)}&\textcolor{red}{\Ext^{k+1}_A(H_{k}(X;A),A)}&\\

\textcolor{red}{\vdots}&\textcolor{red}{\ddots}&\textcolor{red}{\vdots}&\textcolor{red}{\vdots}&\\

\textcolor{red}{\Ext^2_A(H_0(X;A),A)}&\textcolor{red}{\cdots}&\textcolor{red}{\Ext^{2}_A(H_{k-1}(X;A),A)}&\textcolor{red}{\Ext^{2}_A(H_{k}(X;A),A)}&\\

\Ext^{1}_A(H_{0}(X;A),A)&\cdots&\Ext^{1}_A(H_{k-1}(X;A),A)&\Ext^{1}_A(H_{k}(X;A),A)&\Ext^1_A(H_{k+1}(X;A),A)\\
\Hom_A(H_0(X;A),A)&\cdots&\Hom_A(H_{k-1}(X;A),A)&\Hom_A(H_{k}(X;A),A)&\Hom_A(H_{k+1}(X;A),A)\\
p=0&&p=k-1&p=k&p=k+1
\end{array}\]

By hypothesis, we have $\Ext^q_A(H_p(X;A),A)=0$ for the highlighted red region. Considering the the differential $d_2$ on this page has bidegree $(-1,2)$, the 0's in this region imply $E_2^{p,q}=E_\infty^{p,q}$ for the range written into the diagram above (but not necessarily for the entries in the $p=k+1$ column that we have left blank as they may be non-zero and be in the target of a differential). Thus the short exact sequence\[0\to J_{k,1}\to J_{k+1,0}\to E^{k+1,0}_\infty\]becomes\begin{equation}\label{eq:UCT}0\to \Ext_A^1(H_k(X;A),A)\to H^{k+1}(X;A)\to \Hom_A(H_{k+1}(X;A),A)\to 0.\end{equation}But the latter group vanishes because $H_{k+1}(X;A)$ is $S$-torsion and so, using also the natural isomorphism of Lemma \ref{lem:ext2} and Poincar\'{e} duality, we obtain an isomorphism \begin{equation}\label{eq:UCT2}\begin{array}{rcl}\Hom_A(TH^{k+1}(X;A);S^{-1}A/A)&\cong&\Ext_A^1(TH^{k+1}(X;A),A)\\&\cong&  \Ext_A^1(H_k(X;A),A)\\&\cong& TH^{k+1}(X;A).\end{array}\end{equation}

Before identifying this isomorphism as the adjoint of the linking form, we turn to (H2). In this case we obtain the exact sequence \ref{eq:UCT} for the same vanishing reasons on the $E_2$ page. This time the sequence is moreover split. This is because we are working over a PID and the central term splits as its free and torsion parts. The first term of the sequence is entirely torsion and the last term in the sequence is free. Hence we obtain exactly the same isomorphism as equation \ref{eq:UCT2} (but for slightly different reasons).

We now show our isomorphism agrees with the adjoint of the linking form. Let $x\in C^{k+1}(X)$ represent a class in $TH^{k+1}(X;A)$. There exists $s\in S$ and $\tilde{x}\in C^k$ such that $d^*(\tilde{x})=sx$ and we will take this to be our choice of element in the preimage of the cohomological Bockstein. We need to show that the cochain $s^{-1}\tilde{x}\in C^k(X;S^{-1}A/A)$ corresponds to $x:C_{k+1}\to A$ under our series of isomorphisms in \ref{eq:UCT2}. But unravelling the definitions of the isomorphisms in the series, this amounts to lifting the former cochain to $s^{-1}\tilde{x}\in C^k(X;S^{-1}A)$ and observing that \[d^*(s^{-1}\tilde{x})=i(x)\in C^{k+1}(X;S^{-1}A),\] where $i$ is the natural map $i:C^{k+1}(X;A)\to C^{k+1}(X;S^{-1}A)$.
\end{proof}

\subsubsection{Blanchfield forms from Poincar\'{e} duality}\label{subsec:blanchdual}

In Proposition \ref{prop:UCSS}, the spectral sequence collapsed because the $E_2$ page was trivial except at $q=0,1$. In the case of (H2), this happened for the entirely algebraic reason of the coefficient choice.
In \cite{MR0461518} Levine considers linking forms coming from a geometric example that are non-singular for a more geometric reason than we have so far considered. (This geometric reason will essentially be the `Alexander duality of a knot', see Chapter \ref{chap:blanchfield} and particularly, \ref{cor:Pacyclic}.)

On the spectral sequence level the example in question results in the $E_2$ page being trivial except when $q=2,3$. In this case, an analysis of the short exact sequences \ref{eq:JSES} returns a short exact sequence\[0\to \Ext^2_A(H_{r-2}(C;A),A)\to H^r(C;A)\to \Ext^1_A(H_{r-1}(C;A),A)\to 0.\]

Now suppose $T$ is an $A=\Z[z,z^{-1}]$-module and that $\Hom_A(T,A)=0$. Set $t(T)$ to be the \textit{$\Z$-torsion} \[t(T)=\ker(T\to \Q[z,z^{-1}]\otimes_A T)\qquad \text{and} \qquad f(T)=T/t(T).\]Note that $f(T)$ may still have torsion with respect to the multiplicative subset $\Z[z,z^{-1}]^\times$.

Set $P$ to be the set of Alexander polynomials. Levine (\cite{MR0461518}) shows that for any module $T$ in $\H(A,P)$\[\begin{array}{rcl}\Ext_A^2(T,A))&\cong& t(T),\\
\Ext_A^1(T,A))&\cong& f(T).\\
\end{array}\]Now suppose $C$ is a chain complex of f.g.\ free $A$-modules, that $P^{-1}A\otimes C$ is acyclic and that there exists a Poincar\'{e} duality isomorphism $H_{r-1}(C;A)\cong H^{n-(r-1)}(C;A)$. Then we obtain an isomorphism\[\begin{array}{rcl}f(H^r(C;A))&\xrightarrow{\cong}& H^r(C;A)/\Ext^2_A(H_{r-2}(C;A),A)\\&\xrightarrow{\cong}&\Ext_A^1(H_{r-1}(C;A,A))\\ &\xrightarrow{\cong}& \Ext_A^1(H^{(n+1)-r}(C;A,A))\cong \Hom_A(f(H^{(n+1)-r}(C;A)),P^{-1}A/A).\end{array}\]This isomorphism is adjoint to the following pairing:

\begin{theorem}[{\cite{MR0461518}}]\label{thm:levblanch}Let $(C,d)$ be a chain complex of f.g.\ free $A=\Z[z,z^{-1}]$-modules equipped with a chain homotopy equivalence $\phi:C^{n-*}\xrightarrow{\simeq} C_*$. Suppose $P^{-1}A\otimes C$ is acyclic. Then the \textit{Blanchfield pairing} is the pairing \[Bl:f(H^r(C;A))\times f(H^{(n+1)-r}(C;A))\to P^{-1}A/A;\qquad (x,y)\mapsto p^{-1}\overline{\tilde{y}(\phi(x))},\]where $x\in C^r$, $y\in C^{(n+1)-r}$, $\tilde{y}\in C^{n-r}$ and $p\in P$ such that $d^*\tilde{y}=py$. If $n=2k+3$ and $r=k+2$, the Blanchfield pairing is a non-singular linking form called the \textit{Blanchfield form}\[(f(H^{k+2}(C;A)),Bl).\] If $C$ is equipped with a `symmetric structure' (see Chapter \ref{chap:algLtheory}) then the linking form is moreover $(-1)^{k}$-symmetric (see Proposition \ref{prop:welldeflagrang2}).
\end{theorem}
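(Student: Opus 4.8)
The plan is to reduce the theorem to the chain of natural isomorphisms assembled in the discussion preceding the statement, and then to identify that abstract duality with the explicit cochain formula.

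First I would check that the hypotheses feeding into that discussion are in place. Since $P^{-1}A\otimes C$ is acyclic, Lemma~\ref{lem:locisexact} shows $H_r(C;A)$ is $P$-torsion for every $r$, and the chain homotopy equivalence $\phi\colon C^{n-*}\xrightarrow{\simeq}C_*$ both supplies a Poincar\'{e} duality isomorphism $H_{r-1}(C;A)\cong H^{(n+1)-r}(C;A)$ and makes every $H^r(C;A)$ $P$-torsion. In the universal coefficient spectral sequence of Proposition~\ref{prop:UCSS} the row $\Hom_A(H_p(C;A),A)$ therefore vanishes, so (the homological dimension of $A=\Z[z,z^{-1}]$ being $2$) the $E_2$-page is concentrated in the two rows $\Ext^1_A(H_\ast(C;A),A)$ and $\Ext^2_A(H_\ast(C;A),A)$; by Levine's computation of these groups (quoted before the theorem) the first row is $\Z$-torsion-free and the second is $\Z$-torsion, so the only differential that could connect them is a map from a $\Z$-torsion module to a $\Z$-torsion-free one, hence zero, and the sequence degenerates to
\[0\to\Ext^2_A(H_{r-2}(C;A),A)\to H^r(C;A)\to\Ext^1_A(H_{r-1}(C;A),A)\to 0 .\]
Its sub-object is $\Z$-torsion and its quotient $\Z$-torsion-free, so the sub-object is exactly $t(H^r(C;A))$ and $f(H^r(C;A))\cong\Ext^1_A(H_{r-1}(C;A),A)$. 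Composing with $\phi_\ast$ and with the isomorphism $\Ext^1_A(M,A)\cong\Hom_A(M,P^{-1}A/A)=\Hom_A(f(M),P^{-1}A/A)$ of Lemma~\ref{lem:ext2} (here one uses that $P^{-1}A/A$ is itself $\Z$-torsion-free, so maps out of $M$ factor through $f(M)$) yields a natural isomorphism
\[f(H^r(C;A))\xrightarrow{\cong}\Hom_A\!\big(f(H^{(n+1)-r}(C;A)),\,P^{-1}A/A\big),\]
i.e.\ the adjoint of a sesquilinear pairing $Bl$. When $n=2k+3$ and $r=k+2$ one has $(n+1)-r=r$, so the two slots coincide and the adjoint is an isomorphism; together with the fact (again part of Levine's $\Ext$-analysis) that $f(H^{k+2}(C;A))$ lies in $\H(A,P)$, this exhibits $(f(H^{k+2}(C;A)),Bl)$ as a non-singular linking form over $(A,P)$.

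Next I would verify that this abstract pairing is computed by the stated formula. One checks directly that $(x,y)\mapsto p^{-1}\overline{\tilde y(\phi(x))}$ is independent of all choices: changing $x$ or $y$ by a coboundary alters $\tilde y(\phi(x))$ by an element of $pA$ (using that $\phi$ is a chain map and $x$ a cocycle), and independence of $\tilde y$ and of $p$ follows because the difference of two choices is a cocycle representing a $P$-torsion cohomology class, which pairs into $A$ trivially since $A$ is a domain; the formula then descends to $f$ in each variable because $P^{-1}A/A$ is $\Z$-torsion-free. It remains to unwind the edge homomorphisms of the spectral sequence exactly as in the last part of the proof of Proposition~\ref{prop:UCSS}: a cocycle $x\in C^r$, after clearing a denominator $p\in P$, lifts through the connecting maps, and evaluating the result against $\phi$ of the second class reproduces $p^{-1}\overline{\tilde y(\phi(x))}$. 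The $(-1)^k$-symmetry in the presence of a symmetric structure on $C$ is a separate computation which I would take from Proposition~\ref{prop:welldeflagrang2}.

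The genuine obstacle is this last identification: tracking the abstract connecting and edge maps of the universal coefficient spectral sequence, composed with Poincar\'{e} duality and with the $\Ext$-to-$\Hom(\,\cdot\,,P^{-1}A/A)$ isomorphism, precisely enough to recognise that together they compute the elementary expression $p^{-1}\overline{\tilde y(\phi(x))}$. Everything else is either a citation to Levine's $\Ext$-computations or a diagram chase of the kind already performed for the linking pairing in Proposition~\ref{prop:UCSS}.
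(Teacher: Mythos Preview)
Your proposal is correct and follows precisely the approach the paper outlines in the discussion immediately preceding the theorem (which is itself a summary of Levine's argument in \cite{MR0461518}): degenerate the universal coefficient spectral sequence to the two nonvanishing $\Ext$-rows, identify the resulting filtration with the $\Z$-torsion/$\Z$-torsion-free decomposition via Levine's computation $\Ext^1_A(T,A)\cong f(T)$ and $\Ext^2_A(T,A)\cong t(T)$, compose with Poincar\'{e} duality and Lemma~\ref{lem:ext2}, and then unwind the connecting maps to recover the cochain formula as in the last paragraph of the proof of Proposition~\ref{prop:UCSS}. The paper does not supply a self-contained proof beyond this sketch and the citation, so what you have written is already more detailed than the source.
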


\begin{corollary}If $\overline{X}$ is a $\Z$-cover of a $(2k+3)$-dimensional Poincar\'{e} space $X$ and $H^*(X;A)$ is $P$-acyclic, where $P$ is the set of Alexander polynomials, then the restriction to $f(H^{k+2}(X;A))$ of the linking form of the cover is a well-defined, non-singular $(-1)^{k}$-symmetric linking form over $(A,P)$, called the \textit{Blanchfield form} of the cover.
\end{corollary}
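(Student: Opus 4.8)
The plan is to obtain this as a direct application of Theorem~\ref{thm:levblanch}, taking for the complex $C$ the chain complex of $X$ itself. Here $\pi=\Z$ and $A=\Z[z,z^{-1}]$, and since Theorem~\ref{thm:levblanch} is phrased for finite free complexes I would assume, as is standard for Poincar\'{e} spaces, that $X$ is a finite $CW$ complex, so that $C:=C_*(X;A)=A^t\otimes_A C_*(\overline{X};\Z)$ is a bounded chain complex of finitely generated free $A$-modules whose cohomology is $H^*(X;A)$.

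First I would assemble the inputs demanded by Theorem~\ref{thm:levblanch}. \emph{(i)} Universal Poincar\'{e} duality (Definition~\ref{def:universal}) gives a chain map $-\cap[X]\colon C^{n-*}\to C_*$, $n=2k+3$, inducing isomorphisms on homology; as both sides are bounded complexes of finitely generated projective $A$-modules, this is automatically a chain homotopy equivalence $\phi\colon C^{n-*}\xrightarrow{\simeq}C_*$ with $\phi_0=-\cap[X]$. \emph{(ii)} The symmetric structure called for in the last clause of Theorem~\ref{thm:levblanch} is the canonical one carried by a Poincar\'{e} complex: the equivariant Alexander--Whitney diagonal on $C_*(\wti{X};\Z)$ gives a symmetric structure over $\Z[\pi_1(X)]$ whose $0$-th term is $-\cap[X]$, and base change along $\Z[\pi_1(X)]\to A$ transports it to $C$. \emph{(iii)} $P$-acyclicity of $H^*(X;A)$ says $P^{-1}A\otimes_A H^r(C;A)=0$ for all $r$; by Lemma~\ref{lem:locisexact} this is $H^r\big(P^{-1}A\otimes_A\Hom_A(C,A)\big)=0$, and applying $\Hom_{P^{-1}A}(-,P^{-1}A)$, which is exact on bounded complexes of finitely generated projectives and recovers $P^{-1}A\otimes_A C$ up to the double dual, shows $P^{-1}A\otimes_A C$ is acyclic (equivalently, use $H^r(X;A)\cong H_{n-r}(X;A)$ and Lemma~\ref{lem:locisexact}). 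Feeding these into Theorem~\ref{thm:levblanch} with $n=2k+3$, $r=k+2$ produces the non-singular $(-1)^{k}$-symmetric linking form $(f(H^{k+2}(X;A)),Bl)$ over $(A,P)$.

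Finally I would identify $(f(H^{k+2}(X;A)),Bl)$ with the restriction of the linking form of the cover. As $H^*(X;A)$ is $P$-acyclic, $H^{k+2}(X;A)$ is $P$-torsion, so $TH^{k+2}(X;A)=H^{k+2}(X;A)$ and the cohomology linking pairing $\lambda_X$ of $\overline{X}$ is defined on all of it. Running the universal coefficient spectral sequence as in the proof of Proposition~\ref{prop:UCSS}, but with $E_2$-page now concentrated in the rows $q=1,2$ --- exactly Levine's identifications $\Ext^1_A(T,A)\cong f(T)$ and $\Ext^2_A(T,A)\cong t(T)$ for $T$ in $\H(A,P)$ recalled before Theorem~\ref{thm:levblanch} --- the edge map presents the adjoint of $\lambda_X$ as the quotient $H^{k+2}(X;A)\twoheadrightarrow f(H^{k+2}(X;A))$ followed by the adjoint of a non-singular pairing on $f(H^{k+2}(X;A))$, with kernel the $\Z$-torsion $t(H^{k+2}(X;A))$. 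Comparing the formula $([x],[y])\mapsto\frac{1}{s}[\wti{x}\cup y]\cap[X]$ for $\lambda_X$ with Levine's formula $(x,y)\mapsto p^{-1}\overline{\wti{y}(\phi(x))}$, and using that the chain-level cup product is evaluation against the chain diagonal while $\phi_0=-\cap[X]$, identifies the induced pairing with $Bl$. I expect this last reconciliation to be the only real work: verifying that the geometrically defined $\lambda_X$ (Bocksteins, cup products, the fundamental class) descends along the $\Z$-torsion quotient to Levine's algebraically defined Blanchfield pairing, and in particular that its radical is precisely the $\Z$-torsion so that the descended form is genuinely non-singular. Everything else is the bookkeeping needed to feed Theorem~\ref{thm:levblanch}.
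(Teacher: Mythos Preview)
Your proposal is correct and is exactly the intended argument: the paper states this corollary without proof, as an immediate specialisation of Theorem~\ref{thm:levblanch} to $C=C_*(X;A)$ with the chain-level cap product as $\phi$. The details you supply---finiteness of $C$, passing from homology-level Poincar\'{e} duality to a chain equivalence, deducing acyclicity of $P^{-1}A\otimes_A C$ from $P$-acyclicity of $H^*(X;A)$ (most cleanly via the parenthetical route through Poincar\'{e} duality), and the identification of the descended pairing with $\lambda_X$ modulo $\Z$-torsion---are the natural ones and go somewhat beyond what the paper spells out.
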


It is straightforward to see that if $A\hookrightarrow \mathbb{F}[z,z^{-1}]$ is the change of rings induced by the change of coefficient systems $\Z\hookrightarrow \mathbb{F}=\Q,\R$, the Blanchfield form of a cover (where defined) reduces to the linking form of the cover\[(f(H^{k+2}(C;A)),Bl)\mapsto (TH^{k+2}(\overline{X};\mathbb{F}[z,z^{-1}]),\lambda_X).\]

\section{Autometric forms and monodromy}

For this section, let $A=R[z,z^{-1}]$.

\medskip

As we saw in the last section, linking forms over $(A,S)$ arise from $\Z$-covers of Poincar\'{e} spaces (at least when $R$ is a field). We will show that these linking forms can also be described by the data of an `autometric form', that is form over $R$ together with an automorphism of this form. We show how this structure arises from the geometry using a cut-and-paste construction (this is well-known and we base our explanation on that in \cite{MR544156}). We then analyse the new perspective algebraically.

\subsection{Infinite cyclic covers via cut-and-paste}\label{subsec:infinite}

An \textit{infinite cyclic cover} of a space $X$ is a principal $\Z$-bundle over $X$, in other words it is a covering space for $X$ with group of deck transformations $\Z$. The pair $(X,f:X\to S^1)$ determines an infinite cyclic cover $\overline{X}$ of $X$ as the pullback of the standard infinite cyclic cover of $S^1\subset \C$\[\xymatrix{\overline{X}=f^*\R\ar[r]\ar[d]&\R\ar[d]^-{\exp(2\pi i x)}\\ X\ar[r]^-{f}&S^1}\] Recalling that $S^1$ is an Eilenberg-Maclane space $K(\Z,1)$, the set of homotopy classes of maps $[X,S^1]$ is in 1:1 correspondence with the elements of $H^1(X;\Z)$. Suppose now that $X$ has the homotopy type of a CW complex. Under this assumption, the pullback described above defines a 1:1 correspondence between the the set of homotopy classes of maps $f:X\to S^1$ and the set of isomorphism classes of infinite cyclic covers of $X$. (This last fact is an instance of a general theorem for principal $G$-bundles over a $CW$ complex.)

Now suppose $M$ is a smooth, closed, $n$-manifold with a map $f:M\to S^1$. We will describe the \textit{cut-and-paste} method for building the associated infinite cyclic cover. By Sard's Theorem, there is a point $p\in S^1$ at which $f$ is regular. Pulling this back we obtain an embedded smooth closed oriented submanifold $N^{n-1}=f^{-1}(p)\subset M$. (Observe that $[N]\in H_{n-1}(M)$ is Poincar\'{e} dual to the class $[f]\in H^1(M;\Z)$ determined by $f:M\to S^1$.) We now `cut $M$ open at $N$' to obtain a manifold with boundary.

\begin{figure}[h]\[\def\piccutone{\resizebox{0.5\textwidth}{!}{ \includegraphics{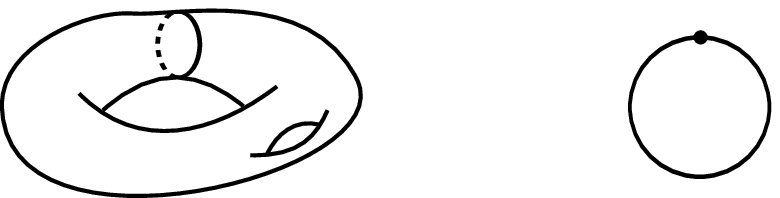}}}
\begin{xy} \xyimport(259,139){\piccutone}
,!+<7.2pc,2pc>*+!\txt{}
,(-15,72)*!L{M}
,(60,160)*!L{N}
,(265,72)*!L{S^1}
,(235,140)*!L{p}
,(130,90)*+{}="A";(205,90)*+{}="B"
,{"A"\ar@/^/"B"}
,(165,65)*!L{f}
\end{xy}\]
\end{figure}

More precisely, take a sufficiently small closed neighbourhood $p\in U\subset S^1$ so that the pullback $f^{-1}(U)$ is diffeomorphic to $N\times [0,1]\subset M$. The closure of $M\sm (N\times [0,1])$ is a manifold with boundary $(M',N_+\sqcup N_-)$ (where there is a diffeomorphism $N_\pm\cong N$). Take countably many copies of this manifold with boundary, labelled $z^r(M',N_+\sqcup N_-)$ for $r\in\Z$. It is now clear that the space obtained by identifying the boundaries of these copies so that $z^rN_+=z^{r+1}N_-$ is a smooth manifold that is isomorphic as an infinite cyclic cover to the pullback of the standard cover of $S^1$, i.e.\ $\overline{M}:=f^*(\R\to S^1)$.

\begin{figure}[h]\[\def\piccuttwo{\resizebox{0.95\textwidth}{!}{ \includegraphics{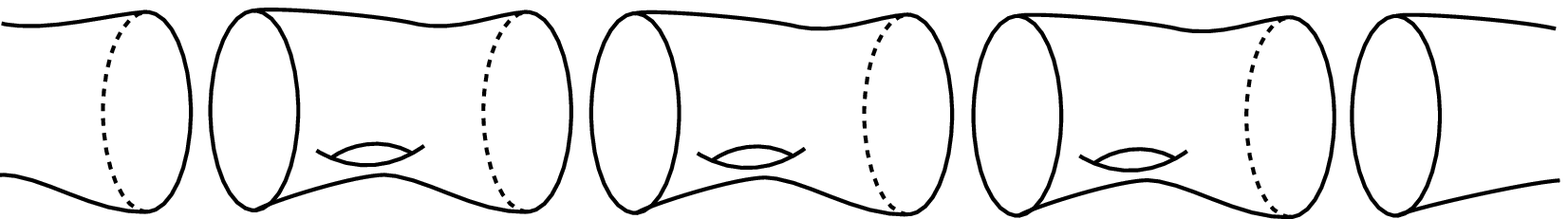}}}
\begin{xy} \xyimport(259,139){\piccuttwo}
,!+<7.2pc,2pc>*+!\txt{}
,(125,90)*!L{M'}
,(187,90)*!L{zM'}
,(57,90)*!L{z^{-1}M'}
,(103,-20)*!L{N_-}
,(150,-20)*!L{N_+}
,(165,-20)*!L{zN_-}
,(210,-20)*!L{zN_+}
,(35,-20)*!L{z^{-1}N_-}
,(80,-20)*!L{z^{-1}N_+}
\end{xy}\]
\end{figure}

\begin{remark}We have described the cut-and-paste method in the smooth category for convenience. The method is still valid in the topological category, although in this category it is extremely technical to describe, so would require too much of a departure to be described in this text. In the topological category we must in particular switch from normal vector bundles to normal microbundles \cite{MR0161346} and appeal to the notion of \textit{topological transversality}. The results of topological transversality were proved by Kirby-Siebenmann for the case of dimension not equal to 4 \cite{MR0645390} and by Quinn \cite{MR929089} in the remaining case of dimension 4.
\end{remark}

Denote by $i^+,i^-:N_\pm\hookrightarrow M'$ the inclusions (also thought of as pushing $N$ in the $\pm$ normal directions). Note that by removing small open neighbourhoods of each translate $z^kN$, $k\in\Z$ in $\overline{M}$, we obtain a Meier-Vietoris sequence\[\dots\to H_r(N;R)\otimes_RA\xrightarrow{\alpha} H_r(M';R)\otimes_RA\xrightarrow{\beta} H_r(\overline{M})\xrightarrow{\delta} H_{r-1}(N;R)\otimes_RA\to\dots\] The maps are $\alpha(u,\lambda)=i^+_*(u)\otimes(z\lambda)-i^-_*(u)\otimes\lambda$, and $\beta$ simply by inclusion.

\medskip

Now we wish to study particularly favourable circumstances (both algebraic or geometric) which result in $H_r(N;R)\cong H_r(M';R)$ and $\delta=0$. For instance, when $M$ is a fibre bundle over $S^1$, with fibre $N$, then this is the case. When $M$ is a knot exterior and $N$ is a Seifert surface then we also obtain these conditions (see Chapter \ref{chap:blanchfield}, particularly \ref{subsec:blanseif}). 

Under these favourable circumstances, there are short exact sequences\[0\to H_r(N;R)\otimes_RA\to H_r(N;R)\otimes_RA\to H_r(\overline{M})\to 0.\]The algebra of this viewpoint allows us to develop a second perspective on the linking forms of infinite cyclic covers. It is to the algebra of this situation that we dedicate the remainder of Chapter \ref{chap:laurent}.

As further motivation for the algebraic viewpoint we will consider in the next few subsections we recall the following example of Neumann \cite{MR544156}. Fix $n=2k+1$ and consider $(M^{2k+1},f)$ and $N^{2k}$ as above. Choose an isomorphism $R\cong H^1(\R;R)$ and define a homology class $[N_0]\in H_{2k}(\overline{M};R)$ as the image of $1\in R$ under the composition\[R\cong H^1(\R;R)\xrightarrow{\overline{f}^*} H^1(\overline{M};R)\xrightarrow{-\cap[M]}H_{2k}(\overline{M};R);\qquad 1\mapsto [N_0].\]There is defined a $(-1)^k$-symmetric pairing over $R$\[b':H^k(\overline{M};R)\times H^k(\overline{M};R)\to R;\qquad\langle x\cup y,[N_0]\rangle.\]In general this pairing is degenerate, but setting $\text{Rad}(b')=\ker(b':H^k(\overline{M};R)\to (H^k(\overline{M};R))^*)$ and defining $(H,b)=(H^k(\overline{M};R)/\text{Rad}(b'),b'|_{H})$ there is the following result.

\begin{lemma}[{\cite[2.1]{MR544156}}]If $R$ is a field then $(H,b)$ is a non-singular $(-1)^k$-symmetric form over $R$ (in particular, $H$ is finitely generated). The deck transformation $z:\overline{M}\to\overline{M}$ induces an automorphism $h:(H,b)\to (H,b)$ \end{lemma}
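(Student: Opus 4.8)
The plan is to exhibit $(H,b)$ as a quotient of a pairing that is \emph{already} the boundary (in the sense of Definition \ref{def:algbound}) of an $S$-non-singular form over $A=R[z,z^{-1}]$, and to identify the induced linking form with the middle-dimensional linking form $\lambda_X$ of the cover. Concretely, I would first invoke the hypotheses on $(M,f)$ (the ``favourable circumstances'' giving $H_r(N;R)\cong H_r(M';R)$ and $\delta=0$) to obtain the short exact sequence $0\to H_k(N;R)\otimes_R A\xrightarrow{\alpha} H_k(M';R)\otimes_R A\to H_k(\overline M)\to 0$, where $\alpha$ is adjoint (after picking bases for the finitely generated $R$-modules involved, using that $R$ is a field) to a square matrix over $A$ of the form $zV - V'$ built from the two inclusion-induced maps $i^\pm_*$; here $V,V'$ are the two ``Seifert-type'' pairings of $N$ inside $M'$. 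Dualising this resolution and using Poincar\'e duality on $M$ to compare $V$ and $\eps V'^*$ realises $H_k(\overline M)$ (modulo its radical) as $\coker(\alpha)$ with a linking pairing, i.e.\ as the boundary $\partial(K,\beta)$ of an $S$-non-singular $\eps$-symmetric form $(K,\beta)$ over $A$ with $\eps=(-1)^k$, $S=Q$ the set of characteristic polynomials; non-singularity over $Q=S^{-1}A$ is exactly the statement $\delta=0$ combined with finite generation of $H_k(\overline M)$ over $R$.

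Next I would match this algebraically-produced linking form with the geometric one. By the Poincar\'e duality/Bockstein diagram of Section \ref{subsec:linkPD} (and the identification of $\lambda_X$ via cup products and the Kronecker pairing), the linking form on $TH^{k+1}(\overline M;R[z,z^{-1}])$ coincides with $\partial(K,\beta)$; the $(-1)^{k+1}$-symmetry is Proposition of Section \ref{subsec:linkPD}. Then the key point is a devissage-type statement at the prime $(z-1)$: the form $b'$ on $H^k(\overline M;R)$ with respect to the class $[N_0]$ is precisely the reduction ``$\bmod(z-1)$'' of the $A$-linking form, because $[N_0]$ is the image of the generator of $H^1(\R;R)\cong R$ and the map $\overline f^*$ followed by $-\cap[M]$ is exactly the Bockstein-cum-duality construction evaluated at $z=1$. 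Killing the radical $\Rad(b')$ is then killing exactly the part of the torsion module on which the $(z-1)$-primary pairing degenerates, so $(H,b)=(H^k(\overline M;R)/\Rad(b'),\,b'|_H)$ is a non-singular $(-1)^k$-symmetric form over the field $R$; in particular $H$ is finitely generated since the whole of $H_k(\overline M)$ is finitely generated over $R$ by hypothesis.

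For the automorphism: the deck transformation $z\colon\overline M\to\overline M$ acts on $H^k(\overline M;R)$ as an $R$-linear automorphism $h_0$, and it commutes with everything in sight except that it shifts $[N_0]$ to $z\cdot[N_0]$. One checks $b'(h_0 x, h_0 y)=\langle h_0^*(x\cup y),[N_0]\rangle=\langle x\cup y, (h_0)_*[N_0]\rangle=\langle x\cup y,[N_0]\rangle=b'(x,y)$, using that $(h_0)_*[N_0]=[N_0]$ in $H_{2k}(\overline M;R)$ (the class $[N_0]$ is $z$-invariant even though the cochain-level translate is shifted; this is where $R$ being a field and the finite generation are used to ensure $H_{2k}$ is $z$-invariant in the relevant degree, via $(z-1)$ acting as an automorphism off a single primary component). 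Hence $h_0$ preserves $b'$, so it preserves $\Rad(b')$, and descends to an isometry $h\colon(H,b)\to(H,b)$, which is an automorphism since $h_0$ is.

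The step I expect to be the main obstacle is the careful bookkeeping that identifies $b'$ (defined geometrically via $[N_0]$ and cup products on the \emph{infinite} cyclic cover) with the reduction modulo $(z-1)$ of the $A$-coefficient linking form, \emph{and} the verification that $(h_0)_*[N_0]=[N_0]$ rather than merely $z[N_0]$ — i.e.\ pinning down precisely why passing from the $\Z[z,z^{-1}]$-level form to the $R$-level form at $z=1$ both destroys the $z$-equivariance twist and leaves a non-degenerate form after quotienting by the radical. Everything else (the matrix form of $\alpha$, the symmetry computation, the descent to the quotient) is routine given the machinery of Sections \ref{subsec:linkPD} and \ref{subsec:infinite}.
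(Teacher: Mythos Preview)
The paper does not give its own proof of this lemma; it is quoted directly from Neumann \cite[2.1]{MR544156} and immediately followed by a definition. So there is nothing in the paper to compare against, and I will assess your proposal on its own terms.

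There are two genuine problems.

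First, you import the ``favourable circumstances'' ($\delta=0$ and $H_r(N;R)\cong H_r(M';R)$) as hypotheses, but the lemma does not assume them. Neumann's statement is for an arbitrary closed oriented $M^{2k+1}$ with a map to $S^1$. In that generality $H^k(\overline M;R)$ is typically \emph{not} finite-dimensional over $R$ (it is only finitely generated over $R[z,z^{-1}]$), so your line ``$H$ is finitely generated since the whole of $H_k(\overline M)$ is finitely generated over $R$ by hypothesis'' appeals to something that is not given.

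Second, and more structurally, your route through the algebraic boundary construction and the $(A,S)$-linking form is an attempt to prove the lemma via the content of the \emph{next} result in the paper, Neumann's Theorem \cite[11.1]{MR544156}, which is exactly the identification of $(H,b,h)$ with the algebraic monodromy of the linking form of the cover. The ``main obstacle'' you flag, matching $b'$ with a reduction of the $A$-linking form at $z=1$, is not bookkeeping: it \emph{is} that theorem. You are trying to deduce the lemma from the theorem rather than the other way around.

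A direct argument is short and avoids all of this. The $(-1)^k$-symmetry is graded commutativity of the cup product. Non-singularity of $(H,b)=(H^k(\overline M;R)/\Rad(b'),\,b'|_H)$ is automatic over a field once you quotient by the radical. For finite generation, note that $[N_0]$ is represented by an embedded fibre $j:N\hookrightarrow \overline M$, so
\[
b'(x,y)=\langle x\cup y,\,j_*[N]\rangle=\langle j^*x\cup j^*y,\,[N]\rangle,
\]
whence $\ker(j^*)\subset\Rad(b')$ and $H$ is a subquotient of the finite-dimensional $H^k(N;R)$. Your argument for the isometry is essentially correct and does not need the linking-form detour: the deck transformation sends one fibre to a homologous fibre, so $z_*[N_0]=[N_0]$ in $H_{2k}(\overline M;R)$, giving $b'(z^*x,z^*y)=b'(x,y)$ and hence a well-defined automorphism of $(H,b)$. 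There is no ``$z[N_0]$'' ambiguity here: $H_{2k}(\overline M;R)$ carries the $z$-action only through $z_*$, not through multiplication by a ring element.
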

\begin{definition}The triple $(H,b,h)$ is called the \textit{monodromy} of $(M,f)$.\end{definition}

In fact, over a field, the monodromy and the linking form of the infinite cyclic cover carry the same information up to isomorphism.

\begin{theorem}[{\cite[11.1]{MR544156}}]With coefficients in a field $R$, the algebraic monodromy (definition \ref{def:monodromy}) of the linking form of the infinite cyclic cover determined by $(M,f)$ is isomorphic to the monodromy of $(M,f)$:\[\HH(TH^{k+1}(M;A),\lambda_M)\cong (H,b,h).\]In particular, there is a short exact sequence\[0\to H[z,z^{-1}]\xrightarrow{h-z} H[z,z^{-1}]\to TH^{k+1}(M;R[z,z^{-1}])\to 0.\]
\end{theorem}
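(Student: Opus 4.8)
The plan is to realise $(TH^{k+1}(M;A),\lambda_M)$ as the \emph{algebraic covering} of the geometric monodromy $(H,b,h)$, and then to invoke the fact, established earlier in the chapter, that the algebraic covering operation and the algebraic monodromy operation $\HH$ are, on non-singular forms, mutually inverse. Applying $\HH$ to an isomorphism of linking forms $\mathrm{cov}(H,b,h)\cong (TH^{k+1}(M;A),\lambda_M)$ then gives $\HH(TH^{k+1}(M;A),\lambda_M)\cong (H,b,h)$, and the displayed short exact sequence is nothing but the defining presentation of the underlying $A$-module of an algebraic covering, written out in this case.

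First I would pin down the underlying module. Under the favourable circumstances the Meier--Vietoris sequence of the cut-and-paste decomposition of $\overline{M}$ has $\delta=0$ and $\alpha$ injective (because $i^\pm_*\colon H_r(N;R)\to H_r(M';R)$ are isomorphisms), so it breaks into short exact sequences
\[
0\to H_k(N;R)\otimes_R A\xrightarrow{\ \alpha\ } H_k(N;R)\otimes_R A\to H_k(\overline{M})\to 0 .
\]
Identifying $H_k(M';R)$ with $H_k(N;R)$ via $i^+_*$ turns $\alpha$ into $z-h$ with $h=(i^+_*)^{-1}i^-_*$, i.e.\ the monodromy automorphism, so this is exactly the presentation $0\to H[z,z^{-1}]\xrightarrow{h-z}H[z,z^{-1}]\to H_k(\overline{M})\to 0$ of the theorem. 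On the other side, universal Poincar\'e duality gives $TH^{k+1}(M;A)=H^{k+1}(M;A)\cong H_k(M;A)=H_k(\overline{M})$ as $A$-modules (all of $H^{k+1}(M;A)$ is $Q$-torsion, since it is finitely generated over the field $R$), and under this isomorphism multiplication by $z\in A$ is the deck transformation, which on $\coker(h-z)$ agrees with $h$. Thus the underlying module-with-automorphism of $(TH^{k+1}(M;A),\lambda_M)$ coincides with that of $\mathrm{cov}(H,h)$; a standard Poincar\'e-duality identification on the fibre (valid because $R$ is a field, so that $\Rad(b')=0$ and $(H,b)$ is non-singular as in the quoted Lemma) matches the module $H$ on the two sides.

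The remaining and main task is to match the pairings: one must check that the cohomology linking pairing $\lambda_M$, valued in $R(z)/R[z,z^{-1}]$ and defined via a Bockstein and a cup product with $[M]$, coincides under the identification above with the pairing that the covering operation builds out of $b$ and the resolvent $(h-z)^{-1}$ (an automorphism over $R(z)$, as $z$ is transcendental over $R$). This is the substance of Neumann's Theorem 11.1: one traces a class of $TH^{k+1}(M;A)$ through the cut-and-paste model of $\overline{M}$, chooses a representative whose preimage under the Bockstein can be computed, and compares $\langle w\cup y,[M]\rangle$ with the intersection form $b$ on $N$ weighted by $(h-z)^{-1}$. I expect the cochain-level bookkeeping here --- the signs, the role of $[N_0]=\overline{f}^*(1)\cap[M]$, and the cleared-denominator normalisation in the definition of $\lambda_M$ --- to be the genuine obstacle; since the statement is due to Neumann, at this point I would either cite \cite[11.1]{MR544156} for the pairing comparison or reproduce it, the only new ingredient being the repackaging of the data as an algebraic covering.

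Finally, having shown $\mathrm{cov}(H,b,h)\cong (TH^{k+1}(M;A),\lambda_M)$ as linking forms over $(A,Q)$, I apply the inverse equivalence $\HH$ to obtain $\HH(TH^{k+1}(M;A),\lambda_M)\cong (H,b,h)$, and the short exact sequence is the one recorded in the second paragraph.
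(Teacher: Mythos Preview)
The paper does not supply its own proof of this theorem; it is quoted from Neumann \cite[11.1]{MR544156} with attribution and no argument, and the paper then remarks that a converse will follow as a corollary from the purely algebraic results of the next subsection. There is therefore no proof in the paper to compare your proposal against.

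Your outline is a reasonable roadmap, but it does not yield an independent proof: the step you correctly flag as ``the genuine obstacle'' --- matching $\lambda_M$ with the covering pairing built from $b$ and the resolvent $(h-z)^{-1}$ --- \emph{is} the content of Neumann's theorem, and you propose to resolve it by citing the same reference. The module-level Meier--Vietoris presentation you give is genuine and does establish the displayed short exact sequence, but the pairing comparison is where the actual work lies and you have not reduced it to anything simpler. Two smaller remarks. First, the inverse equivalence between covering and algebraic monodromy (Proposition~\ref{prop:covmonod2}) appears \emph{after} this theorem in the paper, not before; it is logically independent, so this is only an organizational forward reference, but you should not describe it as ``established earlier in the chapter''. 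Second, your parenthetical ``$\Rad(b')=0$ because $R$ is a field'' is not correct in general: $b'$ is defined on $H^k(\overline{M};R)$, which is typically infinite-dimensional over $R$, and the pairing can be degenerate there; $H$ is by definition the quotient by the radical. Identifying $H$ with (a suitable piece of) the fibre cohomology under the favourable circumstances is part of what Neumann's argument supplies.
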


As a corollary, in the next section we will show that a type of reverse of this theorem is also true for algebraic reasons. Specifically, given the monodromy of $(M,f)$ we can recover the linking form of the cover up to isomorphism in an entirely algebraic way.

\subsection{Algebraic viewpoint}

For the remainder of this subsection, set \[(A,S)=(R[z,z^{-1}],Q),\] where recall $Q$ is the set of characteristic polynomials.

\medskip

First we will set up a precise language for comparing $R$-modules to $R[z,z^{-1}]$-modules (this is essential - failure to do so leads to much confusion!). Write the natural ring morphism\[i:R\to R[z,z^{-1}].\]This induces two functors:\[\begin{array}{ccrcl}
\textit{induction}&\qquad&i_!:\text{$R$-modules}&\to&\text{$R[z,z^{-1}]$-modules,}\\
&&K&\mapsto&K[z,z^{-1}]=K\otimes_RR[z,z^{-1}].\\
&&&&\\
\textit{restriction}&&i^!:\text{$R[z,z^{-1}]$-modules}&\to&\text{$R$-modules,}\\
&&L&\mapsto& L\quad\text{(forget the action of $z$).}\end{array}\]It can be shown that induction and restriction are adjoint with respect to the $\Hom$ functor in the sense that if $K$ is a f.g.\ $R$-module and $L$ is a f.g.\ $R[z,z^{-1}]$-module, then there is a group isomorphism\[\Hom_{R[z,z^{-1}]}(i_!K,L)\cong \Hom_R(K,i^!L).\]Both induction and restriction preserve the property of being projective. Now, whenever $L$ is an $R[z,z^{-1}]$-module, consider the action of $z$ on $L$ as a morphism of $R[z,z^{-1}]$-modules, and set \[\zeta(L)=i^!(z):i^!L\to i^!L.\]So instead of just forgetting the action of $\Z=\langle z\rangle$ on $L$, we remember it as an $R$-module morphism after the restriction.

\begin{example}Taking $R$ and $R[z,z^{-1}]$ as modules over themselves:\[i_!R=R[z,z^{-1}]\qquad \text{and}\qquad i^!R[z,z^{-1}]=\bigoplus_{-\infty}^{\infty}R,\]with one $R$-summand for each $z^{k}R$. Indeed generally, induction preserves finite generation but restriction turns a finitely generated module into a (countably) infinitely generated one.
\end{example}

\begin{lemma}\label{lem:MV}Suppose $K$ is in $\A(R)$ and $h:K\to K$ is an automorphism. Then $K$ admits the structure of a f.g.\ $R[z,z^{-1}]$-module of homological dimension 1.
\end{lemma}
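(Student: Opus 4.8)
Give $K$ the structure of an $R[z,z^{-1}]$-module $\widetilde K$ by letting $z$ act as $h$; this is legitimate precisely because $h$ is an \emph{automorphism} (so $z$ acts invertibly), which is where that hypothesis is used. Since $K$ is finitely generated over $R$ it is a fortiori finitely generated over the larger ring $A=R[z,z^{-1}]$. It remains to produce a length-$1$ resolution of $\widetilde K$ by f.g.\ projective $A$-modules. The resolution I would use is the standard one,
\[
0\to K[z,z^{-1}]\xrightarrow{\ h\otimes 1-z\ } K[z,z^{-1}]\xrightarrow{\ \epsilon\ }\widetilde K\to 0,
\]
where $K[z,z^{-1}]=i_!K=K\otimes_R R[z,z^{-1}]$, the first map is the $A$-linear endomorphism $h\otimes 1-z$ (difference of the induced automorphism $i_!h$ and multiplication by $z$), and $\epsilon(x\otimes z^k)=h^k(x)$.

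\textbf{Carrying it out.} First, by the adjointness/preservation properties of induction recalled just above, $i_!K=K[z,z^{-1}]$ is finitely generated and projective over $A$ because $K$ is finitely generated projective over $R$; thus both nonzero terms of the sequence lie in $\A(A)$. Next one checks the formal identities: $\theta:=h\otimes 1-z$ is $A$-linear (both summands are); $\epsilon$ is $A$-linear for the $h$-twisted structure on $\widetilde K$, since $\epsilon(z\cdot(x\otimes z^k))=h^{k+1}(x)=z\cdot\epsilon(x\otimes z^k)$; $\epsilon$ is surjective because $\epsilon(x\otimes 1)=x$; and $\epsilon\circ\theta=0$ since $\epsilon(h(x)\otimes z^k-x\otimes z^{k+1})=h^{k+1}(x)-h^{k+1}(x)=0$.

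\textbf{The one delicate point: exactness.} The substantive content is that $\theta$ is injective and $\ker\epsilon=\operatorname{im}\theta$, and I would prove both by a ``top-degree'' argument on the $R$-module decomposition $K[z,z^{-1}]=\bigoplus_{k\in\Z}(K\otimes z^k)$. For injectivity, if $\xi=\sum_{k=m}^n x_k\otimes z^k$ with $\theta(\xi)=0$, comparing the coefficient of $z^k$ gives $h(x_k)=x_{k-1}$ for all $k$; looking at the lowest nonzero degree forces $h(x_m)=0$, hence $x_m=0$ since $h$ is injective, and then inductively all $x_k=0$. For $\ker\epsilon\subseteq\operatorname{im}\theta$, given $\xi=\sum_{k=m}^n x_k\otimes z^k$ with $\sum_k h^k(x_k)=0$, subtract $\theta(-x_n\otimes z^{n-1})$ to strictly lower the top degree while staying in $\ker\epsilon$; iterating reduces $\xi$ modulo $\operatorname{im}\theta$ to a single term $x'\otimes z^m$ with $h^m(x')=0$, i.e.\ $0$, so $\xi\in\operatorname{im}\theta$. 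This establishes the resolution and hence that $\widetilde K$ has homological dimension $\le 1$ over $A$; I would close with a one-line remark that when $K\neq 0$ the dimension is exactly $1$, since the characteristic polynomial of $h$ (monic with unit constant term, by Cayley--Hamilton and invertibility of $h$) lies in $Q$, so $\widetilde K$ is $Q$-torsion and therefore cannot be $A$-projective.
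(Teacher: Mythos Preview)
Your proof is correct and takes essentially the same approach as the paper: both give $K$ the $A=R[z,z^{-1}]$-module structure via $z\cdot x=h(x)$ and resolve it by the two-term complex $i_!K\xrightarrow{z-h}i_!K$ (your map $h\otimes 1-z$ differs only by a sign) with surjection $\sum x_kz^k\mapsto\sum h^k(x_k)$. The paper simply asserts this is a resolution without verification, whereas you supply the degree-reduction argument for exactness; your closing remark on homological dimension exactly $1$ is a nice addition not in the original.
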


\begin{proof}Regard $K$ as an $A$-module denoted $K_A$ by defining the action of $z$ on $K_A$ as \[z:K_A\to K_A;\qquad x\mapsto h(x).\]Next, define a morphism of $A$-modules $f:i_!K\to K_A$ by sending $\sum_{M}^Nx_kz^k\in i_!K$ (where the coefficients $x_k\in K$) to $\sum_M^Nh^k(x_k)$. There is then a resolution of $K_A$ by f.g.\ projective $A$-modules\[\xymatrix{0\ar[r]&i_!K\ar[r]^-{z-h}&i_!K\ar[r]^-{f}&K_A\ar[r]&0.}\]
\end{proof}

When does restriction result in something finitely generated?

\begin{claim}If $R$ is an integral domain and $L$ is a f.g.\ projective $A$-module, then $i^!L$ is in $\A(R)$ (i.e.\ is f.g.\ projective) if and only if $L$ is in $\H(A,Q)$ i.e.\ if $L$ is $Q$-torsion and admits a length 1 resolution by projective $A$-modules.
\end{claim}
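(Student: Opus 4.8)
The plan is to prove both implications via the Meier-Vietoris type resolution that restriction and induction furnish for $A=R[z,z^{-1}]$-modules. For the ``only if'' direction, suppose $L$ is a f.g.\ projective $A$-module with $i^!L$ in $\A(R)$. Consider the action of $z$ on $L$; after restriction this is an automorphism $\zeta(L):i^!L\to i^!L$ of a f.g.\ projective $R$-module (an automorphism because $z$ is a unit in $A$). I would then run the argument of Lemma \ref{lem:MV} in reverse: the natural $A$-module morphism $f:i_!(i^!L)\to L$ (adjoint to the identity, sending $\sum x_k z^k\mapsto \sum z^k\cdot x_k$) fits into an exact sequence
\[\xymatrix{0\ar[r]&i_!(i^!L)\ar[r]^-{z-\zeta(L)}&i_!(i^!L)\ar[r]^-{f}&L\ar[r]&0,}\]
exactly as in Lemma \ref{lem:MV} with $K=i^!L$, $h=\zeta(L)$. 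Since $i^!L$ is f.g.\ projective over $R$, induction preserves projectivity, so $i_!(i^!L)$ is f.g.\ projective over $A$; hence this is a length-1 f.g.\ projective resolution, so $L$ has homological dimension $\leq 1$. It remains to show $L$ is $Q$-torsion: after tensoring the resolution with $Q^{-1}A$ (localisation is exact by Lemma \ref{lem:locisexact}), one must check $z-\zeta(L)$ becomes invertible over $Q^{-1}A$. This is where the choice of $Q$ as the set of characteristic polynomials is essential: the characteristic polynomial $\chi(t)=\det(tI-\zeta(L))$ of the automorphism $\zeta(L)$ of the f.g.\ projective $R$-module $i^!L$ has leading coefficient $1$ and constant term $\pm\det(\zeta(L))\in R^\times$, so $\chi(z)\in Q$, and $\chi(z)$ annihilates $\coker(z-\zeta(L))=L$ by Cayley--Hamilton; therefore $Q^{-1}L=0$ and $L$ lies in $\H(A,Q)$.

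For the converse, suppose $L$ is in $\H(A,Q)$. Choose a length-1 resolution $0\to P_1\xrightarrow{d}P_0\to L\to 0$ by f.g.\ projective $A$-modules. Applying the (exact, since everything in sight is projective hence flat) restriction functor $i^!$ gives a short exact sequence $0\to i^!P_1\xrightarrow{i^!d}i^!P_0\to i^!L\to 0$ of $R$-modules. The point is that $i^!P_0$ and $i^!P_1$, while no longer finitely generated, are still projective $R$-modules, and I need to show $i^!L$ is both finitely generated and projective over $R$. Projectivity is immediate: $i^!L$ is the cokernel of an injection of projective $R$-modules, so it has homological dimension $\leq 1$ over $R$, and I would combine this with finite generation to conclude — actually, more directly, once finite generation is established, a f.g.\ module of projective dimension $\leq 1$ over $R$ need not be projective in general, so I will instead produce the splitting directly from the $Q$-torsion condition. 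For finite generation: pick $q\in Q$ with $qL=0$; after possibly multiplying by a unit in $A$ we may take $q$ to be bionic, $q(z)=a_0+a_1z+\dots+a_{L-1}z^{L-1}+z^{L}$ with $a_0\in R^\times$. Then in $L$ the relation $z^{L}x=-(a_0+\dots+a_{L-1}z^{L-1})x$ lets us express every $z^k x$ ($k\geq L$), and using $a_0\in R^\times$ to invert, also every $z^{-k}x$, as an $R$-linear combination of $x, zx,\dots,z^{L-1}x$. Since $L$ is f.g.\ over $A$, say by $x_1,\dots,x_m$, the finite set $\{z^j x_i : 0\leq j\leq L-1,\ 1\leq i\leq m\}$ generates $i^!L$ over $R$. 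Finally, to get projectivity of $i^!L$ over $R$: tensoring the $A$-resolution with $Q^{-1}A$ and using $Q^{-1}L=0$ shows $Q^{-1}d$ is an isomorphism, which (after clearing denominators as in the remark following Proposition \ref{prop:boundarywelldef}, and using that $R$ is a domain so $R\hookrightarrow Q^{-1}A$) provides, up to multiplication by an element of $Q$, a section; restricting and using $a_0\in R^\times$ one extracts an honest $R$-linear splitting of $i^!P_0\to i^!L$.

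The main obstacle I anticipate is the ``only if'' direction's $Q$-torsion claim — specifically, being careful that the determinant/characteristic-polynomial argument works for a f.g.\ \emph{projective} (not necessarily free) $R$-module: one should either pass to a localisation/stably-free situation where $\det$ makes literal sense, or invoke the exterior-power form of Cayley--Hamilton valid for endomorphisms of arbitrary f.g.\ projective modules over a commutative ring, noting that the leading and constant coefficients of $\chi$ are a unit precisely because $\zeta(L)$ is an automorphism. The other delicate point is matching up the two notions of ``homological dimension $1$'' (the length-$1$ resolution we construct versus membership in $\H$), but this is bookkeeping once the resolution $0\to i_!(i^!L)\xrightarrow{z-\zeta(L)}i_!(i^!L)\to L\to 0$ is in hand. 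I expect the finite-generation half of the converse to be entirely routine given the bionic-polynomial reduction already recorded in the chapter's conventions.
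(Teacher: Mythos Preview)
Your proposal and the paper agree on the architecture (Lemma~\ref{lem:MV} for the resolution in the forward direction; the bionic-polynomial reduction for finite generation in the converse), but differ on the $Q$-torsion step and on projectivity.

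For $Q$-torsion in the ``only if'' direction, the paper does \emph{not} use Cayley--Hamilton. It argues directly: since $i^!L$ is finitely generated over the Noetherian ring $R$, so is the cyclic submodule $i^!(Ag_j)$ for each $A$-generator $g_j$, whence a relation $z^{N+1}g_j=\sum_{-N}^N a_k z^k g_j$ produces a monic $p_j$ killing $g_j$; the product $\prod p_j$ then annihilates $L$. Your Cayley--Hamilton route is a genuine alternative---and it is precisely the argument the paper itself deploys a few lines later to show $B(K,h)\in\H(A,Q)$. Your version is also tighter: the paper's direct construction, as written, only secures a \emph{nonzero} constant term for the annihilator, whereas membership in $Q$ requires that term to be a unit; your appeal to $\det(\zeta)\in R^\times$ delivers this immediately. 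Your caution about Cayley--Hamilton for projective (not free) modules is well placed and correctly handled.

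For projectivity of $i^!L$ in the converse, the paper simply does not address it---its proof stops at finite generation. Your ``clear denominators to extract an $R$-splitting'' sketch is not a proof: an $A$-map $e\colon P_0\to P_1$ with $d\circ e=q\cdot\mathrm{id}$ does not yield an honest $R$-linear section of $i^!P_0\to i^!L$. A clean way to close this gap is to check flatness locally. For each prime $\mathfrak{p}\subset R$ with residue field $k$, base change gives $\bar d\colon k[z,z^{-1}]\otimes_A P_1\to k[z,z^{-1}]\otimes_A P_0$, a map between free $k[z,z^{-1}]$-modules of equal rank (ranks agree since $R$ is a domain and $Q^{-1}d$ is an isomorphism) whose cokernel is torsion (the image of any $p\in Q$ annihilating $L$ is nonzero in $k[z,z^{-1}]$ because its extremal coefficients are units); over the PID $k[z,z^{-1}]$ this forces $\bar d$ to be injective, so $\mathrm{Tor}_1^R(k,i^!L)\cong\mathrm{Tor}_1^A(k[z,z^{-1}],L)=0$, and $i^!L$ is locally free, hence projective.
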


\begin{proof}This is a standard idea from algebraic number theory. Assume $L$ is generated by $g_1,\dots, g_n$. 

Now suppose $i^!L$ is in $\A(R)$. As $\zeta(L)$ is an automorphism of $i^!L$, we apply Lemma \ref{lem:MV} (and using the terminology in the proof of that lemma) we have that $(i^!L)_A$ is a f.g.\ $A$-module of homological dimension 1. Moreover, in this case we can identify $(i^!L)_A=L$. We must show $L$ is $Q$-torsion. As $i^!L$ is f.g.\ there exists a positive integer $N$ such that $\zeta^{N+1}(g_1)\in\langle \zeta^kg_1,\dots,\zeta^kg_n\,|\,k=-N,\dots,N\rangle$. But furthermore, we must have $\zeta^{N+1}(g_1)=\sum_{-N}^Na_k\zeta^kg_1$. Therefore\[p_1(z):=z^{N+1}-\sum_{-N}^Na_kz^k\in\text{Ann}(g_1)\subseteq L.\]Repeating for the other generators, form the Laurent polynomial $p'(z)=p_1(z)p_2(z)\dots p_n(z)$. Now there exists $M>0$ so that there is a bionic polynomial\[p(z)=z^Mp'(z)=\sum_{k=0}^Lb_kz^k\in R[z]\qquad\text{and}\qquad b_0\neq0.\]

Conversely, if $L$ is in $\H(A,Q)$ then it is annihilated by some degree $N+1$ monic polynomial $p(z)$ with constant term a unit in $R$ (i.e.\ a bionic polynomial). Thus, the basis for $i^!L$ given by $\{\zeta^kg_l\,|\,k\in\Z,l=1,\dots,n\}$ can be reduced to a finite one.
\end{proof}

Now we add structure to projective modules over $R$ so that we may use the restriction without losing information. Firstly, define a category\[\Aut(R)=\{\text{Pairs $(K,h)$ where $K$ is in $\A(R)$ and $h:K\to K$ an automorphism.}\}\]with morphisms $f:(K,h)\to (K',h')$ such that $f:K\to K$ is a morphism in $\A(R)$ and $fh=h'f$. The \textit{dual} of $(K,h)$ is given by $(K^*,(h^*)^{-1})$.

In light of which, define a functor of additive categories\[\textit{monodromy}\qquad\HH:\H(A,Q)\to \Aut(R);\qquad T\mapsto (i^!T,\zeta(T)).\]We also define a functor going the other way, one that takes an object $(K,h)$ of $\Aut(R)$ and returns an object $T$ of $\H(A,Q)$. Consider the sequence\[0\to i_!K\xrightarrow{z-h}i_!K\to T:=\coker(z-h)\to 0.\]\begin{claim} $T$ is in $\H(A,Q)$.
\end{claim}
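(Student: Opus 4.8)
The plan is to verify directly that $T = \coker(z-h)$ lies in $\H(A,Q)$, which requires checking three things: that $T$ is finitely generated, that it has homological dimension $1$, and that it is $Q$-torsion. The first two are immediate from the defining sequence
\[0\to i_!K\xrightarrow{z-h}i_!K\to T\to 0,\]
since $i_!K = K[z,z^{-1}]$ is a f.g.\ projective $A$-module (induction preserves both finite generation and projectivity, as noted in the text just before Lemma \ref{lem:MV}), and $z-h$ is injective: if $\sum x_k z^k \in \ker(z-h)$ with top term $x_N z^N$, comparing coefficients of $z^{N+1}$ forces $x_N = 0$, so by downward induction the element vanishes. Hence the displayed sequence is a length $1$ projective resolution of $T$, giving homological dimension $\leq 1$, and $T$ is f.g.\ as a quotient of a f.g.\ module.

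The substantive point, and the one I expect to be the main obstacle, is showing $T$ is $Q$-torsion, i.e.\ that $T$ is annihilated by some characteristic polynomial $p(z)\in Q$. The natural candidate is the characteristic polynomial of the automorphism $h:K\to K$. Since $K$ is a f.g.\ projective $R$-module, by adding a complementary summand we may find $K'$ with $K\oplus K'$ free; extending $h$ by the identity on $K'$ gives an automorphism $\tilde h$ of a f.g.\ free module, and the Cayley--Hamilton theorem provides a monic polynomial $p(z) = \det(z - \tilde h)$ with $p(\tilde h) = 0$, whose constant term $\pm\det(\tilde h)$ is a unit in $R$ because $\tilde h$ is invertible; thus $p\in Q$. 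Restricting back, $p(h)=0$ on $K$. I would then check that $p(z)$ annihilates $T$: in $i_!K$ we have $z x \equiv h(x)$ modulo $\im(z-h)$ for all $x\in K$, hence inductively $z^j x \equiv h^j(x)$, so $p(z)\cdot x \equiv p(h)(x) = 0$ in $T$ for every generator $x\in K\subset i_!K$. Since these generate $T$ over $A$, we get $p(z)T = 0$, so $T$ is $Q$-torsion.

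One point requiring a little care is that the relation $z x \equiv h(x) \pmod{\im(z-h)}$ must be applied to elements of the form $z^j x$ for $j$ possibly negative as well as positive; this is fine because $z-h$ is an $A$-module map and $z$ is a unit in $A$, so $z^{-1}x \equiv h^{-1}(x)$ as well, and $p(h)$ still kills $K$ since $p(h)=0$ as an honest identity of $R$-module endomorphisms. Assembling these observations: $T$ is f.g., of homological dimension $1$ via the explicit resolution above, and $Q$-torsion, hence $T$ is an object of $\H(A,Q)$, as claimed. This also sets up the expected next step in the text, namely promoting $\HH$ and this construction to mutually inverse equivalences of categories and then to an isomorphism of (double) Witt groups.
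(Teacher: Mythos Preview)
Your proof is correct and follows essentially the same route as the paper: both establish the first two conditions briefly and then use the characteristic polynomial of $h$ together with the Cayley--Hamilton theorem to produce an element of $Q$ annihilating $T$. The paper invokes a version of Cayley--Hamilton valid directly for f.g.\ projective modules (citing \cite[11.12]{MR1713074}), whereas you reduce to the free case by stabilising with a complement $K'$ and extending $h$ by the identity; you also spell out the injectivity of $z-h$ and the congruence $z\equiv h$ on $T$ that the paper leaves implicit.
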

\begin{proof}It is clear that T is a f.g.\ $A$-module and has homological dimension 1. We claim it is also $Q$-torsion. To see this, observe that as $K$ is f.g.\ as an $R$-module and that $h$ is an endomorphism of $K$ there exists a monic polynomial\[p_h(z)=\det(z-h:i_!K\to i_!K),\]called the \textit{characteristic polynomial of $h$}. The constant coefficient of the characteristic polynomial is also called the determinant of $h$. As $h$ is an automorphism, the determinant is a unit in $R$ and hence $p_h(z)\in Q$. Now we apply the Cayley-Hamilton Theorem for automorphisms of f.g.\ projective modules (see \cite[11.12]{MR1713074}) to conclude that $p_hT=0$.
\end{proof}

Putting all this together, there is a functor of additive categories\[\textit{covering}\qquad B:\Aut(R)\to \H(A,Q);\qquad K\mapsto T.\]Combining the considerations so far in this section, we have proved:

\begin{proposition}[{\cite[13.16(i)]{MR1713074}}]\label{prop:covmonod1}The covering and monodromy operations are functors defining an equivalence of categories\[\xymatrix{\H(A,Q)\ar@<1ex>[rr]^-\HH&\,&\Aut(R).\ar@<1ex>[ll]^-{\quad\quad B\quad\quad}}\] 
\end{proposition}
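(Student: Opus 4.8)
The plan is to verify that the two functors $\HH$ and $B$ are mutually quasi-inverse, i.e.\ to produce natural isomorphisms $B\circ\HH\Rightarrow\id$ and $\HH\circ B\Rightarrow\id$. Both functors are already known to be well-defined and additive (this was established in the paragraphs immediately preceding the statement), so the only remaining work is to exhibit the natural isomorphisms on objects and check naturality.

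First I would handle $\HH\circ B$. Given $(K,h)$ in $\Aut(R)$, the module $B(K,h)=T$ is defined by the short exact sequence $0\to i_!K\xrightarrow{z-h} i_!K\to T\to 0$ of $A$-modules. Applying $\HH=(i^!(-),\zeta(-))$, I need to identify $(i^!T,\zeta(T))$ with $(K,h)$. The key observation is the one already used in the proof of Lemma~\ref{lem:MV}: the $A$-module map $f:i_!K\to K_A$ sending $\sum x_kz^k\mapsto\sum h^k(x_k)$ (where $K_A$ denotes $K$ with $z$ acting as $h$) has kernel exactly $\im(z-h)$, so $T\cong K_A$ as $A$-modules. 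Under this identification $i^!T$ is just $K$ as an $R$-module and $\zeta(T)$ is precisely multiplication by $z$ on $K_A$, which is $h$. Naturality in $(K,h)$ is then a routine diagram chase using that $f$ is natural (a morphism $g:(K,h)\to(K',h')$ intertwines the respective maps $z-h$ and $z-h'$ by the condition $gh=h'g$).

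Next I would handle $B\circ\HH$. Given $T$ in $\H(A,Q)$, set $(K,h)=\HH(T)=(i^!T,\zeta(T))$, so $K$ is $T$ regarded as an $R$-module (finitely generated projective, by the Claim preceding the statement) and $h$ is multiplication by $z$. Then $B(K,h)=\coker(z-h:i_!K\to i_!K)$, and I claim the natural $A$-module surjection $i_!K=i_!(i^!T)\to T$ (counit of the induction/restriction adjunction, i.e.\ $\sum t_k z^k\mapsto \sum z^k t_k$) descends to an isomorphism $\coker(z-h)\xrightarrow{\cong} T$. Indeed this counit map is surjective because $T$ is generated over $A$ by a set of $R$-module generators of $i^!T$, and its kernel contains $\im(z-h)$ since $z\cdot t - h(t) = z\cdot t - z\cdot t = 0$ in $T$; a count of generators (or the fact that $z-h$ already exhibits a length-$1$ resolution, together with $T$ having homological dimension $1$) shows the induced map on the cokernel is an isomorphism. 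Naturality is again a short diagram chase with the counit.

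Finally I would note that compatibility with duality is essentially automatic and can be mentioned in a line: the functor $\HH$ sends the torsion dual $T^\wedge$ to $(i^!T)^*$ with the transposed-inverse automorphism $(\zeta(T)^*)^{-1}$, matching the prescribed dual $(K^*,(h^*)^{-1})$ on $\Aut(R)$, and similarly $B$ intertwines the dualities; this is the content of the cited reference \cite[13.16(i)]{MR1713074} and I would either reproduce the short verification or defer to it. The main obstacle is not conceptual but bookkeeping: one must be careful about the direction of the $A$-module structures (left vs.\ right, and the role of the involution $\overline{z}=z^{-1}$) when checking that the isomorphisms respect the forms/duals, since a sign or inversion error there is easy to make; the underlying-module equivalence itself is a clean adjunction argument.
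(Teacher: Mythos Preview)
Your proposal is correct and follows essentially the same approach as the paper. The paper's own proof is merely the sentence ``Combining the considerations so far in this section, we have proved:'' --- the content is distributed across the preceding Lemma~\ref{lem:MV} (giving $\HH\circ B\cong\id$ via the map $f:i_!K\to K_A$) and the Claim about restriction (where the identification $(i^!L)_A=L$ is exactly your $B\circ\HH\cong\id$ via the adjunction counit). Your final paragraph on duality is slightly misplaced: Proposition~\ref{prop:covmonod1} is the module-level statement only, and compatibility with duals/forms is deferred to Proposition~\ref{prop:covmonod2}, so you need not address it here.
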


\subsubsection*{Covering and monodromy for forms}

For the most part, everything so far was just language and basic algebraic number theory. We now want to extend our covering and monodromy category equivalences to the setting of forms, where some actual work will need to be done as the duality for the Autometric category does not quite match the duality in $\H(A,Q)$ in the way one might naively expect. We begin by defining a form in the context of $\Aut(R)$.

\begin{definition}An \textit{$\eps$-symmetric autometric form} over $R$ is a triple $(K,\theta, h)$, where $(K,\theta)$ is an $\eps$-symmetric form over $R$ and $h:(K,\theta)\to (K,\theta)$ is an automorphism. $(K,\theta,h)$ is called \textit{non-singular} if $(K,\theta)$ is non-singular.
\end{definition}

We now describe how to extend the covering and monodromy operations to the full data of forms. First we describe the covering of an autometric form.

\begin{definition}
The \textit{covering} of a (non-singular) $\eps$-symmetric autometric form $(K,h,\theta)$ over $R$ is the (non-singular) $(-\eps)$-symmetric linking form $B(K,h,\theta)=(T,\lambda)$ over $(A,Q)$ given by $T=B(K,h)$ and with $\lambda$ induced by the following chain map of resolutions of $T$

\[\xymatrix{0\ar[r]&K[z,z^{-1}]\ar[rrr]^{z-h}\ar[d]^{\theta}&&&K[z,z^{-1}]\ar[r]\ar[d]^-{\theta}&B(K,h)=T\ar[r]\ar[d]_\cong^{B(\theta)}&0\\
0\ar[r]&K^*[z,z^{-1}]\ar[d]^{(h^{-1})^*}\ar[rrr]^{z-(h^*)^{-1}}&&&K^*[z,z^{-1}]\ar[d]^{-z^{-1}}\ar[r]&B((K,h)^*)\ar[r]\ar[d]_\cong^\xi&0\\
0\ar[r]&K^*[z,z^{-1}]\ar[d]_\cong^{F}\ar[rrr]^{-(z-(h^*)^{-1})}&&&K^*[z,z^{-1}]\ar[d]_\cong^{F}\ar[r]&B((K,h)^*)\ar[r]\ar[d]_\cong&0\\
0\ar[r]&(K[z,z^{-1}])^*\ar[rrr]^{-(z-h)^*}&&&(K[z,z^{-1}])^*\ar[r]&\Ext^1_A(T,A)\cong T^\wedge\ar[r]&0}\]


where the natural isomorphism $F$ is defined in Claim \ref{clm:funnyF}.  This linking form has associated pairing \[T\times T\to Q^{-1}A/A;\qquad ([x],[y])\mapsto -z^{-1}\theta(x,(z-h)^{-1}(y))\]where $(z-h)^{-1}(y)\in Q^{-1}A\otimes K[z,z^{-1}]$ is a choice of element in the preimage \textit{after localisation} (otherwise $z-h$ is not invertible).
\end{definition}

\begin{remark}Why is this the correct definition to take? One explanation requires methodology we haven't yet developed, but we mention this now anyway for future reference. We are using $\theta$ and $h$ to construct a 1-dimensional $\eps$-symmetric $Q$-acyclic Poincar\'{e} complex (see Proposition \ref{prop:correspondence}) with $C_0=C_1=K^*[z,z^{-1}]$ and \[\begin{array}{rrcl}\phi_0:&C^0&\xrightarrow{\theta h}& C_1,\\
\phi_0:&C^1&\xrightarrow{-z^{-1}\theta}& C_0,\\
\phi_1:&C^1&\xrightarrow{-\theta} &C_1.\end{array}\]The signs here are coming from the convention that the $\phi_0$ must be a chain map and $d_{C^{n-*}}=(-1)^r(d_C)^*$ (see conventions in Chapter \ref{chap:algLtheory}). The linking form we get is simply the linking form of this $\eps$-symmetric Poincar\'{e} complex (see Proposition \ref{prop:correspondence}).
\end{remark}

%
%

To extend our monodromy $\HH:\H(A,Q)\to \Aut(R)$ to the full data of a linking form, we will need the concept of a \textit{trace function}. For details of the trace function in this context, see Appendix \ref{chap:trace}. The trace function is a homomorphism of $R$-modules\[\chi:Q^{-1}A/A\to R\]with the properties\begin{enumerate}
\item For any $T$ in $\H(A,Q)$ there is naturally defined an isomorphism (of $R$-modules) \[\chi_*:\Hom_A(T,Q^{-1}A/A)\to \Hom_R(i^!T,R).\]
\item For any $x\in Q^{-1}A/A$, we have $\chi(\overline{x})=\overline{\chi(x)}$.
\end{enumerate}

\begin{definition}\label{def:monodromy}If $(T,\lambda)$ is a (non-singular) $\eps$-symmetric linking form over $(A,Q)$, the \textit{monodromy} is the $(-\eps)$-symmetric  autometric form $\HH(T,\lambda)=(i^!T,\zeta(T),\theta)$ over $R$, with $\theta$ given by the composition\[i^{!}T\xrightarrow{B}T\xrightarrow{\lambda}T^\wedge\xrightarrow{\chi_*}(i^!T)^*.\]
\end{definition}

\begin{proposition}[{\cite[28.27(iii)]{MR1713074}}]\label{prop:covmonod2}The functors $B$ and $\HH$ (now considered on the full data of the forms) are inverses of one another and define a natural isomorphism of commutative monoids (the monoid operation is via direct sum) \[\xymatrix{\left\{\text{(non-singular) $\eps$-symmetric linking forms over $(A,Q)$}\right\}\ar@<1ex>[d]^\HH\\ \left\{\text{(non-singular) $(-\eps)$-symmetric autometric forms over $R$}\right\}.\ar@<1ex>[u]^B}\]
\end{proposition}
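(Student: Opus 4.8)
The plan is to lean on Proposition \ref{prop:covmonod1}, which already furnishes an equivalence of additive categories $\HH : \H(A,Q) \leftrightarrow \Aut(R) : B$ on the level of underlying modules and morphisms; in particular the assignments $T \mapsto (i^!T,\zeta(T))$ and $(K,h) \mapsto \coker(z-h)$ are mutually inverse, visibly additive with respect to $\oplus$, and natural. What is left is to check three things about the extra form data: (i) the covering $B$ carries a non-singular $\eps$-symmetric autometric form over $R$ to a non-singular $(-\eps)$-symmetric linking form over $(A,Q)$, and the monodromy $\HH$ does the reverse; (ii) $B$ and $\HH$ remain mutually inverse once the pairing is included; (iii) the induced bijection of isomorphism classes is additive and natural. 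Item (iii) is immediate from the explicit resolution formulas (which commute with $\oplus$) together with functoriality in Proposition \ref{prop:covmonod1}, so the real content is (i) and (ii), two sesquilinear bookkeeping arguments.

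First I would treat $\HH$. Given a non-singular $\eps$-symmetric linking form $(T,\lambda)$, set $\theta = \chi_*\,\lambda\, B : i^!T \to (i^!T)^*$ as in Definition \ref{def:monodromy}. That $\theta$ is $(-\eps)$-symmetric follows by combining $\lambda = \eps\lambda^\wedge$ with property 2 of the trace function, $\chi(\bar x) = \overline{\chi(x)}$, once one checks $\chi_*$ is compatible with the torsion-dual involution; the flip from $\eps$ to $-\eps$ is forced by the $z\mapsto z^{-1}$ built into the identification $F$ of Claim \ref{clm:funnyF}. Non-singularity of $\theta$ is property 1 of $\chi$ (that $\chi_*$ is an isomorphism) together with the fact that $B : i^!T \to T$ and $\lambda$ are isomorphisms. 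Finally $\zeta(T)=i^!(z)$ is an isometry of $\theta$: since $\lambda$ is $A$-linear one has $\lambda(x)(y)=\lambda(zx)(zy)$ in $Q^{-1}A/A$ (the factors $\bar z = z^{-1}$ and $z$ cancel), and since $B$ intertwines $\zeta(T)$ with multiplication by $z$ on $T$, applying $\chi$ gives $\theta(\zeta v,\zeta w)=\theta(v,w)$. So $\HH(T,\lambda)$ is a well-defined non-singular $(-\eps)$-symmetric autometric form, naturally in $(T,\lambda)$.

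For well-definedness of $B$ on forms I would give the symmetry identity directly from the pairing formula $\lambda([x],[y]) = -z^{-1}\theta(x,(z-h)^{-1}y)$, using that $h$ is an isometry of $\theta$ (so that $z-h$ has a computable adjoint with respect to the $\theta$-sesquilinear extension to $Q^{-1}K[z,z^{-1}]$), the $\eps$-symmetry $\theta = \eps\theta^*$, and reduction modulo $A$; alternatively, and more cleanly, I would invoke the Remark after the covering definition: $(T,\lambda)$ is by construction the linking form of the $1$-dimensional $\eps$-symmetric $Q$-acyclic Poincaré complex with $C_0=C_1=K^*[z,z^{-1}]$ and structure maps $\phi_0=\theta h$ on $C^0$, $\phi_0=-z^{-1}\theta$ on $C^1$, $\phi_1=-\theta$, so its $(-\eps)$-symmetry is a formal consequence of the $L$-theory of Chapter \ref{chap:algLtheory} (Proposition \ref{prop:welldeflagrang2}). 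With (i) in hand both ways, statement (ii) — that $\HH B$ and $B\HH$ are the identity on forms — reduces, via Proposition \ref{prop:covmonod1}, to the single scalar identity $\theta'(v,w)=\theta(v,w)$, where $\theta'$ is the monodromy pairing of the covering $B(K,\theta,h)$; this is a diagram chase down the four-row resolution in the definition of covering, tracking $\theta$, $-z^{-1}$, $\xi$ and $F$, and then applying $\chi$ and the properties $\chi_*\colon T^\wedge\cong (i^!T)^*$.

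I expect the main obstacle to be exactly this last diagram chase together with the $(-\eps)$-versus-$\eps$ sign bookkeeping: the duality on $\Aut(R)$, $(K,h)^*=(K^*,(h^*)^{-1})$, does not match the derived duality $T\mapsto T^\wedge$ on $\H(A,Q)$ on the nose, and the discrepancy — a composite of the $z$-inversion $-z^{-1}$, the isomorphism $\xi$, the non-obvious natural identification $F : K^*[z,z^{-1}]\cong (K[z,z^{-1}])^*$ of Claim \ref{clm:funnyF}, and the trace twist $\chi$ — must be shown to cancel precisely, signs included. The conceptual route through the symmetric Poincaré complex of the Remark removes the need to verify $(-\eps)$-symmetry of $B$ by hand, but it still leaves the $\chi$-and-$F$ computation for (ii) and it depends on Chapter \ref{chap:algLtheory}; I would therefore present the hands-on version here and merely remark on the shortcut.
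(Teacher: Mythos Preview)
Your outline matches the paper's: lean on Proposition~\ref{prop:covmonod1} for the module-level equivalence and additivity, then verify the form data is preserved. The paper's proof is two sentences, deferring the only nontrivial check --- that $\HH B$ recovers the original autometric pairing --- to Proposition~\ref{prop:traceisgood} in Appendix~\ref{chap:trace}. The one substantive difference is the technique for that computation: rather than the diagram chase through the four-row resolution (tracking $\xi$, $F$, and the $-z^{-1}$ twist) that you sketch, the appendix computes $\chi_*\lambda([x],[y])$ directly by expanding $(z-h)^{-1}$ as a formal difference of geometric series in the Novikov rings $A_\pm$, namely $f_+=h^{-1}+h^{-2}z+h^{-3}z^2+\cdots$ and $f_-=-(z^{-1}+hz^{-2}+\cdots)$, substituting into the pairing formula, and reading off the relevant coefficient. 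This sidesteps exactly the sign bookkeeping you flag as the main obstacle and is the cleaner route. The paper does not separately argue your point~(i); the symmetry and non-singularity of $B$ and $\HH$ on forms are taken as part of the constructions (with the $(-\eps)$-symmetry of the covering explained in the remarks after Definition of covering), so your alternative appeal to Proposition~\ref{prop:welldeflagrang2} --- a forward reference to Chapter~\ref{chap:DLtheory} --- is unnecessary and best avoided here.
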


\begin{proof}Both $B$ and $\HH$ are natural morphisms of the underlying categories and each preserve the monoid structure and extend the equivalence of \ref{prop:covmonod1}. Moreover, they are inverses of each other up to isomorphism as shown in Proposition \ref{prop:traceisgood}.
\end{proof}

\begin{definition}A \textit{(split) lagrangian} for a non-singular $\eps$-symmetric autometric form $(K,h,\theta)$ over $R$ is an $\Aut(R)$-submodule $j : (L,h_L) \hookrightarrow (K,h)$ such that the sequence\[0\to (L,h_L)\xrightarrow{j}(K,h)\xrightarrow{j^*\theta} (L,h_L)^*\to0\]is (split) exact in $\Aut(R)$. If $(K,h,\theta)$ admits a (split) lagrangian it is called \textit{(split) metabolic}. If $(K,h,\theta)$ admits two lagrangians $j_\pm:(L_\pm,h_\pm)\hookrightarrow (K,h)$ such that they are complementary as $\Aut(R)$-submodules\[\left(\begin{matrix}j_+\\j_-\end{matrix}\right):(L_+,h_+)\oplus (L_-,h_-)\xrightarrow{\cong} (K,h),\]then the form is called \textit{hyperbolic}. The $\eps$-symmetric \textit{Witt} and \textit{double Witt} groups of non-singular autometric forms over $R$ are defined as usual and respectively written\[WAut^\eps(R),\qquad DWAut^\eps(R).\]For the double Witt group to be well-defined, we must, for the usual reasons, assume that there is a half-unit $s\in R[z,z^{-1}]$.
\end{definition}

\begin{remark}As modules in the category $\A(A)$ are projective, all surjective morphisms split, and a lagrangian is always a direct summand. However in the category $\Aut(R)$, such a splitting on the module level might not respect the automorphisms. Hence the possible existence of non-split lagrangians in this setting.\end{remark}

\begin{lemma}\label{lem:algtrans2}If $j:(L,h,0)\hookrightarrow (K,h,\theta)$ is (split) lagrangian of a non-singular $\eps$-symmetric autometric form then\[B(L,h,0)\hookrightarrow B(K,h,\theta)=(T,\lambda)\]is a (split) lagrangian.
\end{lemma}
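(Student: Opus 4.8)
The plan is to leverage the functoriality of the covering operation $B$ established in Proposition \ref{prop:covmonod2}, together with the exactness properties it inherits from the equivalence of categories $\H(A,Q)\leftrightarrow \Aut(R)$. First I would observe that, since $j:(L,h,0)\hookrightarrow(K,h,\theta)$ is a lagrangian of an autometric form, the defining short exact sequence in $\Aut(R)$
\[0\to (L,h_L)\xrightarrow{j}(K,h)\xrightarrow{j^*\theta}(L,h_L)^*\to 0\]
holds. Applying the covering functor $B$ — which is an equivalence of additive categories and hence exact (a short exact sequence in $\Aut(R)$ is one whose underlying sequence of $R$-modules is exact, and $B$ respects this because the cokernel presentations $0\to i_!K\xrightarrow{z-h}i_!K\to T\to 0$ fit into a commuting ladder via the $3\times 3$-style diagram) — yields a short exact sequence in $\H(A,Q)$:
\[0\to B(L,h_L)\xrightarrow{B(j)} B(K,h)\xrightarrow{B(j^*\theta)} B((L,h_L)^*)\to 0.\]
The key identification to make is that, under the covering construction for forms, $B((L,h_L)^*)\cong (B(L,h_L))^\wedge$ and $B(j^*\theta)$ corresponds to $B(j)^\wedge B(\lambda)$, where $(T,\lambda)=B(K,h,\theta)$; this is precisely the content of how $B$ interacts with duality, packaged in the chain-map-of-resolutions diagram in the definition of the covering of an autometric form and in the naturality clause of Proposition \ref{prop:covmonod2}.

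With that identification in hand, the exact sequence above reads
\[0\to B(L,h_L)\xrightarrow{B(j)} T\xrightarrow{B(j)^\wedge\lambda} (B(L,h_L))^\wedge\to 0,\]
which is exactly the condition that $B(j):B(L,h_L)\hookrightarrow T$ is a lagrangian for $(T,\lambda)$. The only remaining point is the parenthetical \emph{(split)} assertion: if the original sequence in $\Aut(R)$ is split — i.e.\ there is a section $k:(L,h_L)^*\to(K,h)$ in $\Aut(R)$ — then applying the functor $B$ produces a section $B(k):B((L,h_L)^*)\to T$, and via the duality identification this is a section of $B(j)^\wedge\lambda$, so $B(j)$ is a split lagrangian. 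Functoriality of $B$ makes this immediate: a split exact sequence is a direct-sum decomposition, which any additive functor preserves.

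I do not expect any serious obstacle here; the statement is essentially a formal consequence of the category equivalence together with the compatibility of $B$ with the duality functors. The one step that needs genuine care rather than being purely formal is verifying that $B$ sends the dual morphism $j^*\theta$ (formed using the $\Aut(R)$-duality $(K,h)\mapsto (K^*,(h^*)^{-1})$) to the torsion-dual morphism $B(j)^\wedge\lambda$ (formed using the $\H(A,Q)$-duality $T\mapsto T^\wedge=\Hom_A(T,Q^{-1}A/A)$). This is where the trace function $\chi$ and the natural isomorphism $F$ of Claim \ref{clm:funnyF} enter, since the two dualities differ by exactly the trace/$F$ data; however, all of this bookkeeping has already been carried out in establishing Proposition \ref{prop:covmonod2}, so I would simply cite that the identifications are natural in the form and apply them to the submodule inclusion $j$. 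The proof should therefore be short: state the exact sequence in $\Aut(R)$, apply $B$, invoke naturality of the duality identification from Proposition \ref{prop:covmonod2}, and read off the lagrangian (resp.\ split lagrangian) condition.
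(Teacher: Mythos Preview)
Your proposal is correct and matches the paper's approach in spirit. The paper defers the proof to the analogous Seifert/Blanchfield case (Lemma~\ref{lagrangians}, used in Theorem~\ref{algtrans}), where the argument is carried out by writing down the explicit ladder of resolutions, observing that the underlying sequence $0\to L\to K\to L^*\to 0$ is exact, and then checking by hand that the duality-comparison isomorphism $\xi$ (the map relating $B$ of the dual to the torsion dual of $B$) commutes with $j^*$ and hence does not disturb exactness. You are doing the same thing one level of abstraction higher: exactness of $B$ via the snake lemma on the $(z-h)$ presentations, then the duality identification via naturality. The only caution is that Proposition~\ref{prop:covmonod2} as stated gives the monoid isomorphism on \emph{forms}, not quite the natural-in-$j$ compatibility $B(j^*\theta)=B(j)^\wedge\lambda$ you need; that compatibility is exactly the ``$j^*$ commutes with $\xi$'' step the paper checks explicitly in Lemma~\ref{lagrangians}, so you should either verify it directly from the resolution diagram in the definition of the covering or point to that lemma rather than to Proposition~\ref{prop:covmonod2}.
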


\begin{proof}Entirely analogous to the proof of Proposition \ref{algtrans} so we defer the proof until then.
\end{proof}

\begin{theorem}The covering functor $B$ defines isomorphisms of groups\[\begin{array}{rcl}WAut^{\eps}(R)&\xrightarrow{B}&W^{-\eps}(A,Q),\\ DWAut^{\eps}(R)&\xrightarrow{B}&DW^{-\eps}(A,Q),\end{array}\]each with inverse defined by the monodromy functor $\HH$.
\end{theorem}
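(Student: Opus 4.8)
The strategy is to bootstrap off the already-established category equivalence between $\H(A,Q)$-linking forms and $\Aut(R)$-autometric forms (Proposition \ref{prop:covmonod2}), and to check that $B$ and $\HH$ are compatible with the three flavours of lagrangian (split metabolic, and hyperbolic). Since Proposition \ref{prop:covmonod2} already gives a natural isomorphism of commutative monoids
\[
\mathfrak{N}' := \{\text{non-singular }\eps\text{-symm.\ autometric forms over }R\}\;\cong\;\mathfrak{N}^{-\eps}(A,Q),
\]
the only remaining work is to show that the submonoids one quotients by correspond under $B$ and $\HH$. Concretely, I would prove:
\begin{itemize}
\item[(i)] $(K,h,\theta)$ is (split) metabolic $\iff$ $B(K,h,\theta)$ is (split) metabolic;
\item[(ii)] $(K,h,\theta)$ is hyperbolic $\iff$ $B(K,h,\theta)$ is hyperbolic.
\end{itemize}
Given (i) and (ii), the isomorphisms of Witt and double Witt groups follow immediately by the Monoid construction (Definition \ref{def:monoid}): a natural monoid isomorphism carrying one distinguished submonoid onto another descends to an isomorphism of the quotient groups, and since $B$ and $\HH$ are mutually inverse on the level of the underlying monoids, the induced maps on Witt groups are mutually inverse too. (One should also note that the half-unit hypothesis ensuring $DWAut^\eps(R)$ is defined is exactly the hypothesis that there is a half-unit in $A = R[z,z^{-1}]$, which is also what makes $DW^{-\eps}(A,Q)$ defined, so the two double Witt groups are simultaneously well-posed.)

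\textbf{The key steps, in order.} First I would establish the forward implication of (i): this is precisely Lemma \ref{lem:algtrans2} (whose proof is deferred in the text to the analogous Proposition \ref{algtrans}), which says that if $j:(L,h,0)\hookrightarrow(K,h,\theta)$ is a (split) lagrangian then $B$ applied to the sequence $0\to(L,h,0)\to(K,h,\theta)\to(L,h,0)^*\to 0$ yields a (split) exact sequence exhibiting $B(L,h,0)$ as a (split) lagrangian for $B(K,h,\theta)=(T,\lambda)$. The content here is just that $B$, being (by Proposition \ref{prop:covmonod1}) an equivalence of additive categories, sends short exact sequences in $\Aut(R)$ to short exact sequences in $\H(A,Q)$ — and that the covering of the dual $(L,h,0)^*$ is the torsion dual $B(L,h,0)^\wedge$, which is built into the definition of the covering of a form via the natural isomorphism $F$ of Claim \ref{clm:funnyF} and the identification $\Ext^1_A(T,A)\cong T^\wedge$. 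For the converse direction of (i), I would run the same argument through the monodromy functor $\HH$: a (split) lagrangian $j:L\hookrightarrow T$ for $(T,\lambda)$ is a (split) exact sequence $0\to L\to T\to L^\wedge\to 0$ in $\H(A,Q)$, and applying $\HH$ (again an additive equivalence, and sending torsion duals to $\Aut(R)$-duals compatibly, via the trace function $\chi$ and Proposition \ref{prop:traceisgood}) gives a (split) lagrangian for $\HH(T,\lambda)$. Since $\HH B\cong \id$ and $B\HH\cong\id$, (i) is an equivalence.

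For (ii), I would proceed identically but carry \emph{two} submodules through simultaneously. If $j_\pm:(L_\pm,h_\pm)\hookrightarrow(K,h)$ are complementary lagrangians, i.e.\ $(j_+\ j_-)^{\mathrm{t}}\colon (L_+,h_+)\oplus(L_-,h_-)\xrightarrow{\cong}(K,h)$ is an $\Aut(R)$-isomorphism, then applying the additive functor $B$ gives an isomorphism $B(L_+,h_+,0)\oplus B(L_-,h_-,0)\xrightarrow{\cong} B(K,h,\theta)=T$, and by the forward part of (i) each summand is a lagrangian for $(T,\lambda)$; hence $(T,\lambda)$ is hyperbolic. The reverse is the same argument with $\HH$. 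Finally I would assemble everything: (i) and (ii) show that $B$ and $\HH$ identify the submonoid of (split) metabolic, resp.\ hyperbolic, objects on one side with the corresponding submonoid on the other, so the Monoid construction produces the claimed isomorphisms
\[
WAut^\eps(R)\xrightarrow{\;B\;}W^{-\eps}(A,Q),\qquad DWAut^\eps(R)\xrightarrow{\;B\;}DW^{-\eps}(A,Q),
\]
with inverses induced by $\HH$; one checks these are group homomorphisms (they are, being induced by monoid homomorphisms on quotients that happen to be groups), and that $\HH\circ B$ and $B\circ\HH$ are the identity on Witt classes because they are naturally isomorphic to the identity functors.

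\textbf{Main obstacle.} The genuinely delicate point — and the reason the text defers Lemma \ref{lem:algtrans2} — is the bookkeeping in the forward direction of (i): verifying that the covering functor applied to the exact sequence defining a lagrangian really does produce a (split) exact sequence of resolutions whose induced map on torsion duals is the right $\lambda$-restriction, keeping track of the sign conventions ($-z^{-1}$, the $F$-twist, the $(h^{-1})^*$) that appear in the large diagram defining $B(K,h,\theta)$. In particular one must check the "duality mismatch" noted in the text — that the $\Aut(R)$-dual $(K,h)^* = (K^*,(h^*)^{-1})$ does \emph{not} match $\H(A,Q)$-duality naively — is exactly corrected by the trace function $\chi$ and the isomorphism $F$ of Claim \ref{clm:funnyF}, so that $B$ of a dual is the torsion dual of $B$. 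Once that single compatibility (essentially: $B$ is a duality-preserving additive equivalence, with the duality correction absorbed into the definitions) is nailed down, (i), (ii), and the theorem are formal consequences of the Monoid construction.
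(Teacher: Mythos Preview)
Your proposal is correct and takes essentially the same approach as the paper: the paper's proof is a one-liner citing Proposition \ref{prop:covmonod2} and Lemma \ref{lem:algtrans2}, and you have spelled out in detail how those two ingredients combine, including the reverse direction via $\HH$ and the hyperbolic case, both of which the paper leaves implicit as formal consequences of the categorical equivalence.
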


\begin{proof}This follows from Proposition \ref{prop:covmonod2} and Lemma \ref{lem:algtrans2}.
\end{proof}

\section{Blanchfield and Seifert forms}

We now extend the algebraic correspondence of the previous section to the case that arises from knot theory. We will prove the double Witt group of Seifert forms is isomorphic to the double Witt group of Blanchfield forms. The duality in the linking forms is now called \textit{Blanchfield duality} and will correspond to the \textit{Seifert duality} in forms with endomorphism via a covering construction as before. Reversing the covering construction is no longer so easy and we will rely more on our references than before.

The algebraic language we use for Seifert/Blanchfield correspondence is based on the paper \cite{MR2058802} and we recap this in the necessary detail.

\subsection{Seifert forms}

The following definitions are due to Ranicki \cite{MR2058802}. Suppose $R$ is a commutative Noetherian ring with involution and unit.

\begin{definition}A \textit{Seifert $R$-module} is a pair $(K,e)$ where $K$ is a f.g.\ projective $R$-module and $e:K\to K$ is a morphism. The (additive) category of Seifert $R$-modules has objects $(K,e)$ and morphisms $g:(K,e)\to (K',e')$ such that $g:K\to K'$ an $R$-module morphism with $e'g=ge$. The \textit{dual} of a Seifert $R$-module $(K,e)$ is the Seifert $R$-module $(K^*,1-e^*)$. The \textit{dual} of a morphism $g$ of Seifert $R$-modules is a morphism $g^*:(K^*,1-e^*)\to ((K')^*,1-e'^*)$.
\end{definition}

\begin{definition}An \textit{$\eps$-symmetric Seifert form} $(K,e,\psi)$ over $R$ is a morphism of Seifert $R$-modules\[\psi:(K,e)\to(K^*,1-e^*)\]such that $\psi=(\psi+\eps\psi^*)e$. The form $(K,e,\psi)$ is called \textit{non-singular} if $\psi+\eps\psi^*:K\to K^*$ is an isomorphism, in which case there is redundancy in the data $(K,e,\psi)$ and we may specify the form by $(K,\psi)$ and recover $e=(\psi+\eps\psi^*)^{-1}\psi$ (note that this gives $1-e=\eps(\psi+\eps\psi^*)^{-1}\psi^*$). 

A morphism of Seifert forms $g:(K,e,\psi)\to(K',e',\psi')$ is a morphism of Seifert $R$-modules $g:(K,\psi)\to(K',\psi')$ such that $g^*\psi'g=\psi$.
\end{definition}

\begin{remark}
If $(K,e,\psi)$ is an $\eps$-symmetric Seifert form over $R$ then $(K,\psi)$ is an $\eps$-quadratic form over $R$ (see \cite{MR560997} for definition) and the symmetrisation $(K,\psi+\eps\psi^*)$ is the associated $\eps$-symmetric form over $R$.
\end{remark}

From now on, when $\eps$-symmetric Seifert forms over $R$ are non-singular, we will often specify them by $(K,\psi)$.

\begin{definition}A \textit{(split) lagrangian} for a non-singular $\eps$-symmetric autometric form $(K,\psi)$ over $R$ is a Seifert submodule $j : (L,e_L) \hookrightarrow (K,e_K)$ such that the sequence in the category of Seifert modules\[0\to (L,e_L)\xrightarrow{i}(K,e_K)\xrightarrow{j^*(\psi+\eps\psi^*)} (L,e_L)^*\to0\]is (split) exact. If $(K,\psi)$ admits a (split) lagrangian it is called \textit{(split) metabolic}. If $(K,\psi)$ admits two lagrangians $j_\pm:(L_\pm,e_\pm)\hookrightarrow (K,e)$ such that they are complementary as Seifert submodules\[\left(\begin{matrix}j_+\\j_-\end{matrix}\right):(L_+,e_+)\oplus (L_-,e_-)\xrightarrow{\cong} (K,e),\]then the form is called \textit{hyperbolic}.
\end{definition}

\begin{proposition}[{\cite[Proof of Prop.\ 2.1]{MR1004605}}]A Seifert submodule described by $f:(L,e_L)\to(K,e_K)$ is a (split) lagrangian of $(K,e_K,L)$ if and only if the sequence of Seifert modules \[\xymatrix{0\ar[r] & (L,e_L)\ar[r]^{f} &(K,e_K)\ar[r]^-{f^*\psi} & (L,e_L)^*\ar[r] & 0}\]is (split) exact.
\end{proposition}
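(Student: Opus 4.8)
The plan is to play the sequence in the statement off against the one used to \emph{define} a (split) lagrangian of the Seifert form $(K,e_K,\psi)$, namely
\[0\to (L,e_L)\xrightarrow{f}(K,e_K)\xrightarrow{f^*(\psi+\eps\psi^*)}(L,e_L)^*\to 0,\]
and to prove the two sequences are simultaneously (split) exact. Write $\sigma:=\psi+\eps\psi^*$. By non-singularity $\sigma\colon K\xrightarrow{\cong}K^*$ is an isomorphism, and the Seifert relation $\psi=(\psi+\eps\psi^*)e_K$ gives $f^*\psi=(f^*\sigma)\circ e_K$ and, dually, $f^*(\eps\psi^*)=(f^*\sigma)\circ(1-e_K)$; since $f$ is a morphism of Seifert modules, $e_Kf=fe_L$ and hence $(1-e_L^*)(f^*\sigma)=(f^*\sigma)e_K$, so both $f^*\sigma$ and $f^*\psi$ are morphisms of Seifert modules $(K,e_K)\to(L^*,1-e_L^*)$. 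First I would check that the two sequences are chain complexes under the same hypothesis: $f^*\psi f=(f^*\sigma)e_Kf=(f^*\sigma f)e_L$, while $f^*\sigma f=f^*\psi f+\eps(f^*\psi f)^*$, so $f^*\sigma f=0\iff f^*\psi f=0$.

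The implication ``the $f^*\psi$-sequence is (split) exact $\Rightarrow$ $f$ is a (split) lagrangian'' is essentially formal. From $f^*\psi=(f^*\sigma)\circ e_K$ one gets $\ker(f^*\sigma)\subseteq\ker(f^*\psi)$ and $\im(f^*\psi)\subseteq\im(f^*\sigma)$; exactness of the $f^*\psi$-sequence then forces $\im(f)\subseteq\ker(f^*\sigma)\subseteq\ker(f^*\psi)=\im(f)$ (the first inclusion being the complex property just established) and $L^*=\im(f^*\psi)\subseteq\im(f^*\sigma)$, which gives exactness of the $\sigma$-sequence; moreover a Seifert-module splitting $s$ of $f^*\psi$ produces the Seifert-module splitting $e_Ks$ of $f^*\sigma$, using $f^*\sigma e_Ks=f^*\psi s=\id$ together with $e_K(e_Ks)=(e_Ks)(1-e_L^*)$.

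The substantial direction is the converse. Assuming $f$ is a (split) lagrangian of $(K,e_K,\psi)$, the map $f^*\sigma$ descends to a Seifert-module isomorphism $(K/f(L),\bar e_K)\xrightarrow{\cong}(L^*,1-e_L^*)$, under which $f^*\psi=(f^*\sigma)\circ e_K$ is identified with the composite $(K/f(L),\bar e_K)\xrightarrow{\bar e_K}(K/f(L),\bar e_K)\cong(L^*,1-e_L^*)$. Thus exactness (resp. split exactness) of the $f^*\psi$-sequence comes down to controlling the endomorphism $\bar e_K$ induced by $\psi$ on the quotient $K/f(L)\cong L^*$ --- concretely, to showing it is an automorphism (a Seifert-module automorphism in the split case), after which one transports a section of $f^*\psi$ through the same quotient identification. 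Extracting this invertibility from non-singularity of the Seifert form together with the Seifert-submodule hypothesis --- rather than merely from injectivity of $\psi+\eps\psi^*$ --- is the main obstacle, and is the heart of the argument in \cite[Proof of Prop.\ 2.1]{MR1004605}: one uses the $\sigma$-self-annihilation $f(L)=f(L)^\perp$ and the $e_K$-invariance of $f(L)$ to pin down $\bar e_K$ on the quotient. Finally I would note that the $\psi\leftrightarrow\eps\psi^*$, $e\leftrightarrow 1-e$ symmetry of the whole set-up yields the analogous statement with $f^*(\eps\psi^*)$ in place of $f^*\psi$ for free.
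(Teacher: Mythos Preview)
The paper does not supply its own proof of this proposition; it simply attributes the statement to Levine and moves on. So there is nothing to compare your argument against on the paper's side.

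That said, your write-up has a genuine problem in the ``substantial'' direction. Your reduction is correct: assuming the $\sigma$-sequence is exact, exactness of the $\psi$-sequence is equivalent to the induced $\bar e_K$ on $K/f(L)\cong L^*$ being an automorphism, and under the identification $\bar{f^*\sigma}$ this endomorphism is exactly $1-e_L^*$. But you then assert that this invertibility can be ``extracted'' from non-singularity plus the Seifert-submodule hypothesis, deferring to the reference. It cannot: take $R$ any ring with trivial involution, $\eps=1$, $K=R^2$, $\psi=\left(\begin{smallmatrix}0&1\\0&0\end{smallmatrix}\right)$, so $\sigma=\left(\begin{smallmatrix}0&1\\1&0\end{smallmatrix}\right)$ and $e_K=\left(\begin{smallmatrix}0&0\\0&1\end{smallmatrix}\right)$. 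The Seifert submodule $f=\left(\begin{smallmatrix}0\\1\end{smallmatrix}\right):(R,1)\hookrightarrow(K,e_K)$ has $f^*\sigma=(1\ 0)$, so the $\sigma$-sequence is exact and $f$ is a lagrangian in the paper's sense; but $f^*\psi=0$, so the $\psi$-sequence is not exact. Here $e_L=1$ and $1-e_L^*=0$, exactly the obstruction you isolated. (Taking a direct sum with the analogous $e_L=0$ example produces a $\sigma$-lagrangian for which neither the $f^*\psi$- nor the $f^*(\eps\psi^*)$-sequence is exact, so your closing symmetry remark does not rescue the claim either.)

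Your $\Leftarrow$ direction is fine, though note that the inclusion $\ker(f^*\sigma)\subseteq\ker(f^*\psi)$ follows from the factorisation $f^*\psi=(1-e_L^*)\circ(f^*\sigma)$ that you derived, not from $f^*\psi=(f^*\sigma)\circ e_K$ as you wrote. What your correct half does establish is that any $\psi$-lagrangian is a $\sigma$-lagrangian; combined with the symmetric statement for $\eps\psi^*$, this is enough for the only use the paper makes of the proposition, namely that one \emph{could} have defined (split) metabolic and hyperbolic via the $\psi$-sequence. The pointwise ``if and only if'' for a fixed submodule, as literally stated, is too strong.
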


Consequently we could have defined the notions of (split) lagrangian, (split) metabolic and hyperbolic using this characterisation.

\begin{lemma}\label{doublyslice}For any non-singular $\eps$-symmetric Seifert form $(K,\psi)$, the form $(K\oplus K,\psi\oplus-\psi)$ is hyperbolic.
\end{lemma}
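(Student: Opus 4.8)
The plan is to exhibit two explicit complementary (split) lagrangians for the Seifert form $(K\oplus K,\psi\oplus-\psi)$, mirroring the proof of Lemma \ref{lem:halfunithyp} for ordinary forms. The natural candidates are the ``diagonal'' and ``antidiagonal'' submodules. First I would check that the diagonal $\Delta\colon K\to K\oplus K$, $x\mapsto(x,x)$, underlies a lagrangian: with respect to the symmetrised form $(\psi+\eps\psi^*)\oplus-(\psi+\eps\psi^*)$ we have $\Delta^*\big((\psi+\eps\psi^*)\oplus-(\psi+\eps\psi^*)\big)\Delta = (\psi+\eps\psi^*)-(\psi+\eps\psi^*)=0$, and the sequence $0\to K\xrightarrow{\Delta} K\oplus K\xrightarrow{(\psi+\eps\psi^*,\,-\psi-\eps\psi^*)} K^*\to 0$ is split exact since $(\psi+\eps\psi^*)$ is an isomorphism (non-singularity), a splitting of the quotient map being $y\mapsto\big((\psi+\eps\psi^*)^{-1}y,0\big)$ composed appropriately. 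The subtlety particular to the Seifert setting is that $\Delta$ must also be a morphism of \emph{Seifert modules}, i.e. it must intertwine the endomorphisms; but on $K\oplus K$ the relevant endomorphism for the non-singular form $(K,\psi)\oplus(K,-\psi)$ is $e\oplus e$ where $e=(\psi+\eps\psi^*)^{-1}\psi$ (the sign cancels: $(-\psi-\eps\psi^*)^{-1}(-\psi)=(\psi+\eps\psi^*)^{-1}\psi=e$), so $\Delta$ visibly commutes with $e\oplus e$ and is a Seifert submodule. Hence $\Delta$ is a (split) lagrangian.

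Next I would produce a second lagrangian complementary to $\Delta$. The obvious choice over a general ring with involution is the antidiagonal $\nabla\colon K\to K\oplus K$, $x\mapsto(\bar s x,-sx)$ for a half-unit $s$ — but here we are \emph{not} assuming a half-unit in $R$, so I would instead try $\nabla\colon x\mapsto(x,-x)$, or more safely observe that the graph $\Gamma_\phi\colon x\mapsto(x, x + \phi(x))$ of a suitable $R$-module endomorphism $\phi$ gives a lagrangian complementary to $\Delta$ whenever $\phi$ is an isomorphism intertwining the endomorphisms. Taking $\phi=-2\cdot\mathrm{id}$ would recover the antidiagonal; to avoid $2$ I would instead use the general principle (cf. the remark after Lemma \ref{lem:splitishyp1}) that for the hyperbolic-type form $(K,\psi)\oplus(K,-\psi)$ the two lagrangians $x\mapsto(x,x)$ and $x\mapsto(x,-x)$ are complementary precisely because $\left(\begin{smallmatrix}1&1\\1&-1\end{smallmatrix}\right)$ is invertible over $R$ as soon as $2$ is, and when $2$ is not invertible one must check complementarity more carefully. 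I expect the cleanest route is actually the ``graph of an isometry'' argument: since $\mathrm{id}\colon(K,\psi)\to(K,\psi)$ is an isomorphism of Seifert forms, apply the Seifert-form analogue of the lemma preceding the statement (the lemma asserting $f\colon(K,\alpha)\xrightarrow{\cong}(K',\alpha')$ makes $(K,\alpha)\oplus(K',-\alpha')$ have two complementary lagrangians $(1\ f)$ and $(\bar s\ -sf)$ — which in the Seifert setting requires a half-unit), or, lacking a half-unit, note that $(1\ 1)$ and $(1\ {-1})$ span $K\oplus K$ iff the map $K\oplus K\to K\oplus K$, $(x,y)\mapsto(x+y,x-y)$ is an isomorphism, which holds iff $2\in R^\times$.

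The main obstacle is therefore the absence of a half-unit: the statement as written claims hyperbolicity of $(K,\psi)\oplus(K,-\psi)$ for \emph{any} non-singular $\eps$-symmetric Seifert form, with no half-unit hypothesis, so one cannot simply cite the half-unit trick. I expect the resolution is that for Seifert (equivalently $\eps$-quadratic) forms one has genuine \emph{quadratic} refinement data $\psi$ (not just its symmetrisation), and the correct pair of complementary lagrangians is built using $e$ and $1-e$ directly: take $L_+ = \mathrm{graph}$ of $e$-type data and $L_- = \mathrm{graph}$ of $(1-e)$-type data inside $K\oplus K$, so that complementarity follows from $e + (1-e) = 1$ being invertible — no $2$ needed. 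Concretely I would set $j_+ = \left(\begin{smallmatrix}1\\1\end{smallmatrix}\right)$ and $j_- = \left(\begin{smallmatrix}1-e^* & \ast\\ -e^* & \ast\end{smallmatrix}\right)$-style matrices dictated by the Seifert duality $(K^*,1-e^*)$, verify each is a Seifert-module lagrangian for $\psi\oplus-\psi$ (using $\psi=(\psi+\eps\psi^*)e$ and its consequences), and verify the combined map $L_+\oplus L_-\to K\oplus K$ is an isomorphism by computing its determinant/inverse explicitly in terms of $e$, $1-e$, and $(\psi+\eps\psi^*)^{-1}$. Writing down the exact matrices and checking the three conditions (Seifert-morphism, isotropy, the short exact/iso condition) is the routine part; identifying the \emph{right} matrices so that no half-unit is invoked is the crux, and I would model them on \cite[Proof of Prop.\ 2.1]{MR1004605} which is cited for the analogous lagrangian characterisation.
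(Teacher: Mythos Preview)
Your approach is essentially the paper's, and your diagnosis of the crux is exactly right: the diagonal $(1\ 1):(K,e)\to(K\oplus K,e\oplus e)$ is one lagrangian, and the second must be built from $e$ and $1-e$ rather than a half-unit. You just stop short of writing down the correct second map. It is simply
\[
\left(\begin{matrix}1-e\\-e\end{matrix}\right):(K,e)\longrightarrow(K\oplus K,e\oplus e),
\]
a map $K\to K\oplus K$ (not a $2\times2$ matrix), and with $e$ rather than $e^*$ --- this is a Seifert morphism because $e$ commutes with itself. Complementarity is immediate: the combined map $\left(\begin{smallmatrix}1&1-e\\1&-e\end{smallmatrix}\right)$ row-reduces to $\left(\begin{smallmatrix}1&1-e\\0&-1\end{smallmatrix}\right)$, which is invertible over any $R$. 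The lagrangian condition for $(1-e,-e)$ is checked via the sequence
\[
0\to K\xrightarrow{\left(\begin{smallmatrix}1-e\\-e\end{smallmatrix}\right)} K\oplus K\xrightarrow{\left(\begin{smallmatrix}(1-e)^*(\psi+\eps\psi^*)&\ e^*(\psi+\eps\psi^*)\end{smallmatrix}\right)} K^*\to 0,
\]
and exactness at $K\oplus K$ uses $e=(\psi+\eps\psi^*)^{-1}\psi$ exactly as you anticipated: if $(a,b)$ is in the kernel then $\psi a+\eps\psi^* b=0$, whence $b=-e(a-b)$ and $a=(1-e)(a-b)$.
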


\begin{proof}The Seifert submodules\[\begin{array}{lccrcl}(&1&1&)&:&(K,e,0)\to (K\oplus K,e\oplus e,\psi\oplus-\psi)\\(&(1-e)&-e&)&:&(K,e,0)\to (K\oplus K,e\oplus e,\psi\oplus-\psi)\end{array}\]are complementary. The diagonal submodule is well known to be a lagrangian. To see that the latter submodule is a lagrangian, consider the associated sequence:\[\xymatrix{0\ar[r] & K\ar[rr]^-{\lmat 1-e\\-e\rmat} && K\oplus K\ar[rrrr]^-{\lmat (1-e)^*(\psi+\eps\psi^*)&& e^*(\psi+\eps\psi)\rmat} &&&& K^*\ar[r] &0}\]It is easy to see that the sequence is exact at $K$ and $K^*$. To see it is exact at $K\oplus K$, note that if $(a,b)\in\ker((1-e)^*(\psi+\eps\psi^*)\,\,\,\,e^*(\psi+\eps\psi))$ then, substituting the fact that $e=(\psi+\eps\psi^*)^{-1}\psi$, we have \begin{eqnarray*}\psi a+\eps\psi^*b=0&\implies&b=(\psi+\eps\psi^*)^{-1}\psi(b-a),\\&\implies&b=-e(a-b).\end{eqnarray*}A similar calculation shows $(a,b)=((1-e)(a-b),-e(a-b))$. To complete the proof, a simple calculation shows that ${\lmat 1-e\\-e\rmat}{\lmat (1-e)^*(\psi+\eps\psi^*)&& e^*(\psi+\eps\psi)\rmat}=0$.\end{proof}

\begin{remark}Note that we \textit{did not} need a half-unit in the ring to prove Lemma \ref{doublyslice}.
\end{remark}

\begin{definition}
The monoid construction (see Definition \ref{def:monoid})\[\widehat{W}_\eps(R):=\{\text{non-sing, $\eps$-symm.\ Seifert forms over $R$}\}/\{\text{metabolic}\}\]is an abelian group called the \textit{$\eps$-symmetric Witt group of Seifert forms over $R$}. The monoid construction \[\widehat{DW}_\eps(R):=\{\text{non-sing, $\eps$-symm Seifert forms over $R$}\}/\{\text{hyperbolic}\}\]is an abelian group called the \textit{$\eps$-symmetric double Witt group of Seifert forms over $R$}.
\end{definition}

\subsection{Blanchfield forms}

Recall that $P\subset R[z,z^{-1}]$ is the set of Alexander polynomials.

\begin{definition}A \textit{Blanchfield $R[z,z^{-1}]$-module} $B$ is an object of $\H(R[z,z^{-1}],P)$.
\end{definition}

\begin{lemma}[{\cite[10.21(iv)]{MR1713074}}]\label{halfunit}There is an equivalence of exact categories\[\H(R[z,z^{-1}],P)\xrightarrow{\cong} \H(R[z,z^{-1},(1-z)^{-1}],P).\]Under this equivalence, a Blanchfield $R[z,z^{-1}]$ module $T$ is a homological dimension 1, f.g.\ $R[z,z^{-1}]$-module $T$ such that $1-z:T\to T$ is an isomorphism (where we consider `$z$' to be the endomorphism of $T$ given by multiplication by $z\in R[z,z^{-1}]$). 
\end{lemma}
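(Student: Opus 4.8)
The statement to prove is Lemma \ref{halfunit}: the natural inclusion of coefficient rings $R[z,z^{-1}]\hookrightarrow R[z,z^{-1},(1-z)^{-1}]$ induces an equivalence of exact categories $\H(R[z,z^{-1}],P)\xrightarrow{\cong}\H(R[z,z^{-1},(1-z)^{-1}],P)$, and that under this equivalence a Blanchfield module is identified with a homological dimension $1$, f.g.\ module on which $1-z$ acts as an isomorphism. The plan is to recognise this as an instance of Theorem \ref{thm:cartmorph} on categories of torsion modules, applied to a suitable cartesian morphism of localisation pairs in the sense of Definition \ref{def:cartmorph}.

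First I would set up the localisation data. Write $A=R[z,z^{-1}]$ and $B=R[z,z^{-1},(1-z)^{-1}]$, noting $B=N^{-1}A$ where $N=\{(1-z)^k\mid k\geq 0\}$ (closed under the involution since $\overline{1-z}=1-z^{-1}=-z^{-1}(1-z)$, so $\overline{(1-z)^k}$ is a unit times $(1-z)^k$; one should record that $N$, or rather the multiplicative set it generates together with units, satisfies the axioms (i)--(iv) for a multiplicative subset). The key observation is that $(1-z)\in P$: indeed $p(z)=1-z$ has $p(1)=0$, wait — one must be careful here, the defining condition for $P$ is $p(1)\in R^\times$, and $(1-z)$ evaluated at $1$ is $0$, so $(1-z)\notin P$. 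This is exactly the point: inverting $(1-z)$ does not change the category of $P$-torsion modules precisely because $(1-z)$ is coprime to every element of $P$ in the appropriate sense. So instead I would show that the inclusion $f\colon (A,P)\to (B,P_B)$ (where $P_B$ is the image multiplicative set, which one checks maps bijectively onto a multiplicative subset of $B$ since elements of $P$ become units nowhere new) is \emph{cartesian}: for each $p\in P$ one must verify that $A/(p)\to B/(p)$ is a ring isomorphism. Surjectivity amounts to $(1-z)$ being a unit in $A/(p)$, which holds because $p(1)\in R^\times$ means $p(z)\equiv (1-z)q(z)+u$ is not quite it — rather, since $p(1)$ is a unit, $1-z$ and $p(z)$ generate the unit ideal in $A$ (as $(1-z)A+pA$ contains $p(1)\in R^\times$ by substituting... concretely, $p(z)-p(1)$ is divisible by $(1-z)$, so $p(1)=p(z)-(1-z)r(z)\in pA+(1-z)A$), hence $1-z$ is invertible modulo $p$; injectivity follows since the kernel of $A/(p)\to B/(p)$ consists of $(1-z)$-power-torsion elements of $A/(p)$, of which there are none as $1-z$ is a unit there.

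With the cartesian property established, Theorem \ref{thm:cartmorph} immediately gives the equivalence $\H(A,P)\xrightarrow{\cong}\H(B,P_B)$. It remains to identify $\H(B,P_B)$ with the category described in the statement: a f.g.\ $B$-module $T$ of homological dimension $1$ with $1-z\colon T\to T$ an isomorphism. But over $B$, $1-z$ is already a unit of the ring, so the condition "$1-z$ acts isomorphically" is automatic, and conversely any $P_B$-torsion f.g.\ homological-dimension-$1$ $B$-module satisfies it; one checks that $P_B$-torsion as a $B$-module is the same as $P$-torsion when viewed over $A$, and that homological dimension is preserved since $B$ is a localisation of $A$ (localisation is exact, Lemma \ref{lem:locisexact}, so projective resolutions localise to projective resolutions). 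The exactness of the categories and the fact that the equivalence is exact (preserves short exact sequences) follows because the equivalence is induced by the exact functor $B\otimes_A-$ with exact quasi-inverse.

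\textbf{Main obstacle.} The genuinely delicate step is verifying the cartesian condition, specifically that $A/(p)\to B/(p)$ is an isomorphism for all $p\in P$ — i.e.\ that inverting $(1-z)$ is "invisible" to $P$-torsion modules. This rests on the arithmetic fact that every Alexander polynomial $p$ is coprime to $1-z$ in $R[z,z^{-1}]$, which needs the hypothesis $p(1)\in R^\times$ in an essential way and the identity $p(z)-p(1)\in (1-z)A$. Once this coprimality is in hand the rest is a routine application of the cited theorems, but I would want to state the coprimality as a small separate lemma and prove it carefully, since it is the crux of why the half-unit $(1-z)^{-1}$ can be adjoined freely — a fact used repeatedly elsewhere in the thesis.
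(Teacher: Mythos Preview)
Your approach is essentially identical to the paper's: show that the inclusion $(R[z,z^{-1}],P)\to (R[z,z^{-1},(1-z)^{-1}],P)$ is a cartesian morphism in the sense of Definition~\ref{def:cartmorph}, using the coprimality of $(1-z)$ with every $p\in P$ (via $p(z)-p(1)\in (1-z)A$ and $p(1)\in R^\times$), and then invoke Theorem~\ref{thm:cartmorph}. The paper's proof is only a sketch and states exactly this strategy; you have supplied the details the paper omits.
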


\begin{proof}[Proof (sketch)]The inclusion\[(R[z,z^{-1}],P)\to(R[z,z^{-1},(1-z)^{-1}],P)\]is a cartesian morphism (see Definition \ref{def:cartmorph}). The proof of this essentially follows from the fact that the set $P$ is coprime to the element $(1-z)$. Now apply Theorem \ref{thm:cartmorph}.
\end{proof}

\begin{remark}The reason that this is such an excellent idea is that $(1-z)^{-1}\in R[z,z^{-1},(1-z)^{-1}]$ is a half unit. Lemma \ref{halfunit} allows us to formally adjoin the half-unit $(1-z)^{-1}$ without affecting the category or indeed the (double) Witt group of linkings (as we shall see).

\end{remark}

\begin{definition}The \textit{covering} of a Seifert $R$-module $(K,e)$ is the Blanchfield $R[z,z^{-1}]$-module \[B(K,e):=\coker((1-e)+ez:K[z,z^{-1}]\to K[z,z^{-1}]).\](To see that $1-z$ is an automorphism of $B(K,e)$ we note the equivalent fact that $B(K,e)$ vanishes under the augmentation $R[z,z^{-1}]\to R$ sending $z\mapsto 1$.) The covering operation is a functor of additive categories \[B:\left\{\text{Seifert $R$-modules}\right\}_{\text{\big{/iso}}}\to \left\{\text{Blanchfield $R[z,z^{-1}]$-modules}\right\}_{\text{\big{/iso}}}\]
\end{definition}

\begin{lemma}[{\cite[2.3]{MR2058802}}]\label{lem:kerofB}If $B(K,e)=0$ then the the morphism $e(1-e):K\to K$ is nilpotent and we say $(K,e)$ is a \textit{near projection}. This is equivalent to the condition that\[(K,e)\cong (K_+,e_+)\oplus (K_-,e_-),\]with $e_-$, $1-e_+$ both nilpotent.
\end{lemma}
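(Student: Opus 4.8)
The plan is to establish the two halves of the statement in turn: (1) that $B(K,e)=0$ forces $e(1-e)$ to be nilpotent, and (2) that nilpotence of $e(1-e)$ is equivalent to the asserted orthogonal splitting. I would also remark that the splitting conversely forces $B(K,e)=0$, so all three conditions are in fact equivalent.

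For (1), the key idea is to repackage the data as a module over a commutative Noetherian ring and apply the determinant trick. By definition $B(K,e)=\coker\big((1-e)+ez\colon K[z,z^{-1}]\to K[z,z^{-1}]\big)$, so $B(K,e)=0$ says exactly that this $R[z,z^{-1}]$-linear map is surjective. Let $\hat R=R[e]\subseteq\operatorname{End}_R(K)$ be the commutative subring generated by $e$; since $K$ is f.g.\ projective over the Noetherian ring $R$, $\operatorname{End}_R(K)$ is a f.g.\ $R$-module, whence $\hat R$ is a commutative Noetherian ring over which $K$ is finitely generated and faithful. Then $M=K[z,z^{-1}]$ is finitely generated and faithful over $S=\hat R[z,z^{-1}]$, and the map above is multiplication by the element $(1-e)+ez\in S$. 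A surjective endomorphism of a finitely generated module over a commutative ring is invertible, with inverse a polynomial in the endomorphism (Cayley--Hamilton/Nakayama); combined with faithfulness of $M$ this shows $(1-e)+ez$ is a \emph{unit} of $S=\hat R[z,z^{-1}]$. Finally I would read this off: reducing modulo the nilradical $\mathfrak N$ of $\hat R$, and then modulo each minimal prime, the image of $(1-e)+ez$ becomes a unit of a Laurent polynomial ring over an integral domain, hence a monomial $cz^j$; as it has only $z^0$ and $z^1$ terms, at each minimal prime either $e$ or $1-e$ maps to $0$, so $e(1-e)$ maps to $0$ there. Therefore $e(1-e)\in\mathfrak N$, i.e.\ $e(1-e)$ is nilpotent in $\hat R$, hence in $\operatorname{End}_R(K)$.

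For (2), if $(e(1-e))^N=0$ then $e^N(1-e)^N=0$ since $e$ and $1-e$ commute, so $K$ is a module over $R[x]/(x^N(1-x)^N)$ with $x$ acting by $e$. Because $x$ and $1-x$ are comaximal in $\Z[x]$, so are $x^N$ and $(1-x)^N$, giving $R[x]/(x^N(1-x)^N)\cong R[x]/(x^N)\times R[x]/((1-x)^N)$; the two idempotents split $K=K_-\oplus K_+$ with $e_-:=e|_{K_-}$ satisfying $e_-^N=0$ and $e_+:=e|_{K_+}$ satisfying $(1-e_+)^N=0$. Each $K_\pm$ is an $R[e]$-module direct summand of the f.g.\ projective $R$-module $K$, hence f.g.\ projective, and the decomposition is one of Seifert modules. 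Conversely, given such a splitting, $e(1-e)$ restricts to $e_+(1-e_+)$ on $K_+$ (nilpotent because $1-e_+$ is) and to $e_-(1-e_-)$ on $K_-$ (nilpotent because $e_-$ is), so $e(1-e)$ is nilpotent on $K$. For the closing remark, additivity of $B$ gives $B(K,e)=B(K_+,e_+)\oplus B(K_-,e_-)$, and with $n=1-e_+$ and $m=e_-$ nilpotent one computes $(1-e_+)+e_+z=z\big(1+n(z^{-1}-1)\big)$ and $(1-e_-)+e_-z=1+m(z-1)$, both invertible over $R[z,z^{-1}]$, so $B(K_\pm,e_\pm)=0$.

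The main obstacle is step (1): the temptation is to argue ``fibrewise over $\Spec R$'' directly on $K$, but over a non-reduced base this does not by itself yield nilpotence of $e(1-e)$; the clean route is to pass to the auxiliary commutative Noetherian ring $\hat R=R[e]$, where both the determinant trick and the description of units of a Laurent polynomial ring over a domain are available. Everything else is routine bookkeeping.
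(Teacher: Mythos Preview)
Your proof is correct. The paper does not actually prove this lemma: it is stated as a citation of \cite[2.3]{MR2058802} with no argument given, so there is no ``paper's proof'' to compare against directly.

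That said, your approach to the splitting in part (2) matches what the paper invokes later. In the proof of Theorem~\ref{algtrans}(ii) the paper quotes from \cite[1.8(i)]{MR2058802} the explicit idempotent $p_e=(e^k+(1-e)^k)^{-1}e^k$ with $K_+=\im(p_e)$ and $K_-=\im(1-p_e)$. Your Chinese Remainder argument in $\Z[x]/(x^N(1-x)^N)\cong \Z[x]/(x^N)\times\Z[x]/((1-x)^N)$ produces exactly this idempotent: the CRT idempotent congruent to $0$ mod $x^N$ and $1$ mod $(1-x)^N$ is $(x^N+(1-x)^N)^{-1}x^N$, which specialises to $p_e$ under $x\mapsto e$. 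So the two constructions coincide, though your packaging via comaximality in $\Z[x]$ makes it transparent that the splitting is by an $R$-linear idempotent and hence that $K_\pm$ are f.g.\ projective.

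Your argument for part (1) --- passing to the commutative Noetherian ring $\hat R=R[e]$, using faithfulness and the determinant trick to promote surjectivity of $(1-e)+ez$ on $K[z,z^{-1}]$ to invertibility in $\hat R[z,z^{-1}]$, and then reading off nilpotence of $e(1-e)$ by reducing modulo minimal primes --- is a clean and complete proof of the direction the paper leaves entirely to the reference. The care you take to reduce modulo each minimal prime (rather than just the nilradical) is exactly right, since units in $A[z,z^{-1}]$ for reduced but non-domain $A$ need not be monomials globally.
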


\begin{definition}A  \textit{(non-singular) $\eps$-symmetric Blanchfield form} over $R$ is a (non-singular) $\eps$-symmetric linking form over $(R[z,z^{-1}],P)$.
\end{definition}

\begin{definition}\label{def:coveringsief}
The \textit{covering} of a (non-singular) $\eps$-symmetric Seifert form $(K,e,\psi)$ over $R$ is the (non-singular) $(-\eps)$-symmetric Blanchfield form $B(K,e,\psi)=(T,\lambda)$ over $R$ given by $T=B(K,e)$ and\[\lambda: T\times T\to P^{-1}R[z,z^{-1}]/R[z,z^{-1}];\qquad ([x],[y])\mapsto-(1-z^{-1})(\psi+\eps\psi^*)(x,((1-e)+ez)^{-1}(y))\]so that $\lambda$ is resolved by the following chain map\[\xymatrixcolsep{4pc}\xymatrix{
0\ar[r]&K[z,z^{-1}]\ar[rr]^{(1-e)+ez}\ar[d]^{(1-z)(\psi+\eps\psi^*)}&&K[z,z^{-1}]\ar[r]\ar[d]^-{-(1-z^{-1})(\psi+\eps\psi^*)}&T\ar[r]\ar[d]^-{\lambda}&0\\
0\ar[r]&K^*[z,z^{-1}]\ar[rr]^{((1-e)+ez)^*}&&K^*[z,z^{-1}]\ar[r]&T^\wedge\ar[r]&0}\]and the $(1-z)$ factor forces $(-\eps)$-symmetry.
\end{definition}

\begin{remark}Another way to look at this is to note that, although the covering of $(K^*,1-e^*)$ is isomorphic to the covering of $(K,e)$, it is not the dual Blanchfield module (resolved by the dual chain). We have explicitly forced this situation:\[\xymatrix{
0\ar[r]&K[z,z^{-1}]\ar[rrr]^{(1-e)+ez}\ar[d]^{\psi+\eps\psi^*}&&&K[z,z^{-1}]\ar[r]\ar[d]^-{\psi+\eps\psi^*}&B(K,e)\ar[r]\ar[d]^\alpha&0\\
0\ar[r]&K^*[z,z^{-1}]\ar[d]^z\ar[rrr]^{(1-(1-e^*))+(1-e^*)z}&&&K^*[z,z^{-1}]\ar[d]^1\ar[r]&B(K^*,1-e^*)\ar[r]\ar[d]^\beta&0\\
0\ar[r]&K^*[z,z^{-1}]\ar[rrr]^{((1-e)+ez)^*}&&&K^*[z,z^{-1}]\ar[r]&B(K,e)^\wedge\ar[r]&0}\] so that \[\lambda=-(1-z^{-1})\circ\beta\circ\alpha:T\to T^\wedge.\]The added $-(1-z^{-1})$ ensures $\lambda$ defines a $(-\eps)$-symmetric form and not a $(\eps z^{-1})$-symmetric form. Compare with the `modified' Blanchfield form of \cite[5.6]{MR0358796}.
\end{remark}

\begin{theorem}[{\cite[1.8(i)]{MR2058802}}] Up to isomorphism, every Blanchfield module $T$ admits a resolution \[0\to K[z,z^{-1}]\xrightarrow{(1-e) +ez} K[z,z^{-1}]\to T\to 0,\] where $(K,e)$ is a Seifert $R$-module i.e. $T$ is the covering of $(K,e)$.

{\cite[3.10]{MR2058802}} If $(T,\lambda)$ is an $\eps$-symmetric Blanchfield form, then the Seifert module $(K,e)$ above can be improved so that $(T,\lambda)$ is the covering of a $(-\eps)$-symmetric Seifert form $(K',e',\psi')$.\end{theorem}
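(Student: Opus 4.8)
The first part of the theorem is a purely module-theoretic statement: every Blanchfield module $T$ over $R[z,z^{-1}]$ arises as a covering $B(K,e)$ of some Seifert module. The plan is to use the equivalence of categories $\H(R[z,z^{-1}],P)\simeq\H(R[z,z^{-1},(1-z)^{-1}],P)$ from Lemma \ref{halfunit}, so that we may assume $1-z\colon T\to T$ is an isomorphism. Choose a length-one resolution $0\to R[z,z^{-1}]^N\xrightarrow{d} R[z,z^{-1}]^N\to T\to 0$ of $T$ by f.g.\ free $R[z,z^{-1}]$-modules (possible since $T$ has homological dimension $1$, and the two free modules can be taken of the same rank by a standard stabilisation since $T$ is $P$-torsion, hence of trivial rank in $K_0$). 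The matrix $d$ is a Laurent-polynomial matrix; after multiplying by a suitable power of $z$ (which does not change the cokernel up to isomorphism) we may assume $d\in R[z]^{N\times N}$ with $d(1)$ invertible over $R$ (the latter because $T$ is $P$-torsion, so $\det d\in P$, i.e. $\det d(1)\in R^\times$; one should check this carefully, possibly after further elementary column operations to control the constant term). Then write $d = A + Bz + (\text{higher order terms})$; the goal is to reduce, by enlarging $K$ (block sums with identity/companion-type blocks, the standard linearisation-of-a-polynomial-matrix trick), to the case where $d$ is linear in $z$, say $d = A_0 + A_1 z$ with $A_0 + A_1 = d(1)$ invertible over $R$. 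Setting $e := (A_0+A_1)^{-1}A_1$ (so $1-e = (A_0+A_1)^{-1}A_0$) and $K := R^N$ (rescaling the resolution by $(A_0+A_1)^{-1}$) realises $T\cong\coker((1-e)+ez\colon K[z,z^{-1}]\to K[z,z^{-1}]) = B(K,e)$. This is essentially \cite[1.8(i)]{MR2058802}, which we may simply cite, but I would at least sketch the linearisation step since it is the technical heart of the first claim.

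For the second part, start from a non-singular $\eps$-symmetric Blanchfield form $(T,\lambda)$ and, by the first part, a resolution exhibiting $T = B(K,e)$ for some Seifert module $(K,e)$. The form $\lambda\colon T\to T^\wedge$ is then resolved by a chain map between the resolution of $T$ and the (dualised, twisted) resolution of $T^\wedge$; unpacking the $A$-module structures and the identification $\Ext^1_{R[z,z^{-1}]}(T,R[z,z^{-1}])\cong T^\wedge$ of Lemma \ref{lem:ext2} (together with the natural isomorphism $F$ analogous to Claim \ref{clm:funnyF} in the autometric case) produces an $R$-module morphism $\phi\colon K\to K^*$ over $R$. The symmetry $\lambda = \eps\lambda^\wedge$ and the $(1-z^{-1})$-twist built into the covering construction (cf.\ Definition \ref{def:coveringsief}) translate into the Seifert identity relating $\phi$, $e$ and $\eps$. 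The point is that a priori this $\phi$ need not be of the form $\psi + \eps\psi^*$ for a Seifert form $(K,e,\psi)$ — i.e.\ it need not satisfy the non-singularity/compatibility condition $e = (\psi+\eps\psi^*)^{-1}\psi$ on the nose — and one must \emph{improve} $(K,e)$, replacing it by a new $(K',e',\psi')$ giving an isomorphic covering. The improvement is carried out by modifying the Seifert module within its covering-isomorphism class: block-summing with a near-projection $(K_0,e_0)$ (which by Lemma \ref{lem:kerofB} has trivial covering, so does not change $T$) and applying row/column operations to the resolution matrix to arrange both that $\phi + \eps\phi^*$ becomes an isomorphism over $R$ and that $e' = (\phi'+\eps\phi'^*)^{-1}\phi'$. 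This is the content of \cite[3.10]{MR2058802}.

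The main obstacle — and the step I would flag as genuinely substantive rather than bookkeeping — is precisely this last improvement: given the raw data extracted from the resolution, showing one can modify $(K,e)$ (up to adding near-projections and applying isomorphisms of resolutions) so that the induced pairing is an \emph{honest} non-singular Seifert form with the correct relation between $e$ and the symmetrisation. The difficulty is that the covering functor on forms is not obviously essentially surjective onto non-singular Blanchfield forms without this maneuver, because the naive $\phi$ coming from an arbitrary resolution carries the Witt-class information but not the normalised form data. The cleanest route is to invoke \cite[3.10]{MR2058802} directly; if a self-contained argument is wanted, I would phrase the improvement as an algebraic surgery on the resolution matrix, using the half-unit $(1-z)^{-1}\in R[z,z^{-1},(1-z)^{-1}]$ (available after Lemma \ref{halfunit}) to split off the degenerate part — mirroring how the half-unit is used in Lemma \ref{lem:splitishyp1} to pass from metabolic to hyperbolic — and then checking that the near-projection summand so produced has vanishing covering by Lemma \ref{lem:kerofB}. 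The remaining verifications (that $\lambda$ is then exactly the covering of the improved Seifert form, that symmetry types match up with the $-\eps$ twist) are routine diagram chases of the kind already done in the autometric case and can be left to the reader or cited.
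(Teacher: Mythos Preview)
The paper does not prove this theorem at all: it is stated with attributions to \cite[1.8(i)]{MR2058802} and \cite[3.10]{MR2058802} and then immediately used, with no proof or sketch offered. So there is nothing substantive to compare your argument against --- the paper takes precisely the ``cleanest route'' you yourself identify, namely to invoke Ranicki's results directly. Your outline of the linearisation trick for the first part and the near-projection improvement for the second is a plausible sketch of what lies behind those citations, but for the purposes of this paper the entire content of your proposal is replaced by the two bracketed references in the theorem heading.
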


\begin{lemma}\label{lagrangians}The covering functor sends a lagrangian of a non-singular Seifert form $j:(L,e_L,0)\to(K,e_K,\psi)$ to a lagrangian of the covering Blanchfield form\[B(j):(B(L,e_L),0)\to B(K,e_K,\psi)=(T,\lambda).\]
\end{lemma}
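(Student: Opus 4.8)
The plan is to verify directly that the map $B(j)$ fits into a short exact sequence of the required shape, using the resolution-level description of the covering functor from Definition \ref{def:coveringsief}. First I would recall the hypothesis: $j:(L,e_L)\hookrightarrow(K,e_K)$ is a lagrangian of the non-singular $\eps$-symmetric Seifert form $(K,e_K,\psi)$, so by the characterisation quoted from \cite{MR1004605} the sequence of Seifert modules $0\to (L,e_L)\xrightarrow{j}(K,e_K)\xrightarrow{j^*\psi}(L,e_L)^*\to 0$ is exact. Applying the covering functor $B$ (which is a functor of additive categories, and which, being $\coker((1-e)+ez:-\otimes_R R[z,z^{-1}]\to -)$, is right exact) to this exact sequence of Seifert modules yields the central row of a commutative diagram of $R[z,z^{-1}]$-modules. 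The target is $B(L,e_L)^\wedge$: since $B$ intertwines the Seifert dual $(L,e_L)\mapsto(L^*,1-e_L^*)$ with the torsion dual (this is exactly what the remark after Definition \ref{def:coveringsief} makes precise, via the auxiliary isomorphisms $\alpha,\beta$ and the twist by $-(1-z^{-1})$), we have $B((L,e_L)^*)\cong B(L,e_L)^\wedge$ as the modules underlying $\lambda$.

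Next I would write down the map $B(j):(B(L,e_L),0)\to (T,\lambda)$ explicitly at the level of resolutions: it is induced by the chain map $j\otimes\id:L[z,z^{-1}]\to K[z,z^{-1}]$ between the two rows $0\to L[z,z^{-1}]\xrightarrow{(1-e_L)+e_Lz}L[z,z^{-1}]\to B(L,e_L)\to 0$ and $0\to K[z,z^{-1}]\xrightarrow{(1-e_K)+e_Kz}K[z,z^{-1}]\to T\to 0$. The square commutes because $j$ is a morphism of Seifert modules ($e_K j=j e_L$). I would then check the three conditions for a split lagrangian of $(T,\lambda)$: (a) $B(j)$ is injective; (b) the composite $B(j)^\wedge\lambda\, B(j)=0$; (c) $0\to B(L,e_L)\xrightarrow{B(j)} T\xrightarrow{B(j)^\wedge\lambda} B(L,e_L)^\wedge\to 0$ is exact. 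Condition (b) is immediate from functoriality: under the identification $B((L,e_L)^*)\cong B(L,e_L)^\wedge$ the composite $B(j)^\wedge\lambda B(j)$ is (up to the $-(1-z^{-1})$ twist) $B(j^*(\psi+\eps\psi^*)j)=B(0)=0$, since $j^*(\psi+\eps\psi^*)j=\eps^{-1}\cdot 0$ — here I use that $j$ is a lagrangian of the symmetrised form, equivalently $j^*\psi j=0$. For (a) and (c), I would splice the exact sequence $0\to L[z,z^{-1}]\to K[z,z^{-1}]\to L^*[z,z^{-1}]\to 0$ obtained by applying $-\otimes_R R[z,z^{-1}]$ (which is exact) to the Seifert-module lagrangian sequence, against the three resolutions, obtaining a $3\times 3$ commuting diagram of short exact columns/rows; the $3\times 3$ lemma (or a direct diagram chase of the associated long exact $\Tor$-sequence, noting every module in sight has homological dimension $1$ over $R[z,z^{-1}]$) then forces the rightmost column $0\to B(L,e_L)\to T\to B(L^*,1-e_L^*)\to 0$ to be exact. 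Finally, if the original lagrangian is split, the splitting of the Seifert-module sequence induces, via the additive functor $B$, a splitting of the covering sequence, so $B(j)$ is a split lagrangian.

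The main obstacle I anticipate is bookkeeping the duality twist correctly: one must be careful that $B((L,e_L)^*)$, resolved by the \emph{dual} chain of the resolution of $B(L,e_L)$, really is $B(L,e_L)^\wedge$ and not some $(z^{-1})$-twisted variant — this is precisely the subtlety flagged in the remark after Definition \ref{def:coveringsief}, where the correction factor $-(1-z^{-1})$ is introduced to repair exactly this discrepancy. Once that identification is pinned down (and it is, by the cited computation), the rest of the argument is the formal exactness manipulation above, with no further input needed; in particular, as the remark after Lemma \ref{doublyslice} emphasises, no half-unit hypothesis is required here, since we are only transporting a lagrangian and not improving a metabolic form to a hyperbolic one. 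I would present the proof as a short diagram chase, deferring to Proposition \ref{algtrans} (as Lemma \ref{lem:algtrans2} already does) for the details of the duality identification if a fuller treatment appears there.
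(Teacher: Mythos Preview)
Your proposal is correct and follows essentially the same route as the paper: both write down the resolution-level commutative diagram, use exactness of the Seifert lagrangian sequence $0\to L\to K\to L^*\to 0$ (tensored up to $R[z,z^{-1}]$) to get exactness of $0\to B(L,e_L)\to T\to B(L^*,1-e_L^*)\to 0$, and both isolate the duality twist $\xi$ as the one genuine issue. The paper's resolution of that issue is the concrete observation you leave implicit: since $j^*$ commutes with the scalar multiplications by $(1-z)$ and $-(1-z^{-1})$ that define $\xi$, one has $\ker(B(j^*)\circ\xi)=\ker(B(j^*))$, so inserting $\xi$ does not disturb exactness --- this is the precise content behind your ``once that identification is pinned down'' remark.
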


\begin{proof}We wish to show that the sequence\[0\to B(L,e_L)\xrightarrow{B(j)} T\xrightarrow{B(j)^\wedge\circ\lambda} B(L,e_L)^\wedge\to 0\] is exact. This sequence is resolved by the commutative diagram\[\xymatrix{
0\ar[r]&L[z,z^{-1}]\ar[rrr]^{(1-e)+ez}\ar[d]^{j}&&&L[z,z^{-1}]\ar[r]\ar[d]^-{j}&B(L,e)\ar[r]\ar[d]^{B(j)}&0\\
0\ar[r]&K[z,z^{-1}]\ar[rrr]^{(1-e)+ez}\ar[d]^{\psi+\eps\psi^*}&&&K[z,z^{-1}]\ar[r]\ar[d]^-{\psi+\eps\psi^*}&B(K,e)\ar[r]\ar[d]_\cong^{B(\psi+\eps\psi^*)}&0\\
0\ar[r]&K^*[z,z^{-1}]\ar[d]^{(1-z)}\ar[rrr]^{e^*+(1-e^*)z}&&&K^*[z,z^{-1}]\ar[d]^{-(1-z^{-1})}\ar[r]&B(K^*,1-e^*)\ar[r]\ar[d]_\cong^\xi&0\\
0\ar[r]&K^*[z,z^{-1}]\ar[d]^{j^*}\ar[rrr]^{((1-e)+ez)^*}&&&K^*[z,z^{-1}]\ar[d]^{j^*}\ar[r]&B(K,e)^\wedge\ar[r]\ar[d]^{B(j)^\wedge=B(j^*)}&0\\
0\ar[r]&L^*[z,z^{-1}]\ar[rrr]^{((1-e)+ez)^*}&&&L^*[z,z^{-1}]\ar[r]&B(L,e)^\wedge\ar[r]&0}\]where $\xi$ is defined to make the diagram commute. As the sequence of $A$-modules \[0\to L\xrightarrow{j} K\xrightarrow{j^*\circ(\psi+\eps\psi^*)} L^*\to 0\]is exact, we need only show that $\ker(B(j^*))=\ker(B(j^*)\circ\xi)$ so that inserting the isomorphism $\xi$ preserves exactness of the sequence\[0\to B(L,e)\xrightarrow{B(j)} B(K,e) \xrightarrow{B(j^*)\circ B(\psi+\eps\psi^*)} B(L,e)\to 0.\]But $j^*$ commutes with the maps $(1-z)$ and $-(1-z^{-1})$, so the result follows.
\end{proof}

\begin{theorem}\label{algtrans}$\,$
\begin{enumerate}[(i)]\item \underline{Excision isomorphisms:} The inclusion $i:(R[z,z^{-1}],P)\to(R[z,z^{-1},(1-z)^{-1}],P)$ induces isomorphisms of Witt groups
\begin{eqnarray*}W^\eps(R[z,z^{-1}],P)&\cong&W^\eps(R[z,z^{-1},(1-z)^{-1}],P),\\ DW^\eps(R[z,z^{-1}],P)&\cong&DW^\eps(R[z,z^{-1},(1-z)^{-1}],P).\end{eqnarray*}These isomorphisms are referred to as `excision' isomorphisms because of the formal resemblance to geometric excision.
\item \underline{Covering isomorphisms:} The covering operation induces isomorphisms of Witt groups
\begin{eqnarray*}\widehat{W}_\eps(R)&\cong&W^\eps(R[z,z^{-1}],P),\\ \widehat{DW}_\eps(R)&\cong&DW^\eps(R[z,z^{-1}],P).\end{eqnarray*}
\end{enumerate}
\end{theorem}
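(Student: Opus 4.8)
The plan is to reduce both (i) and (ii) to the algebraic machinery already built up in Section 2 (Witt and double Witt groups, lagrangians) and the covering/monodromy functors developed for Seifert and Blanchfield modules. I would treat (i) first, since it is the cleaner of the two and (ii) will be stated in terms of the target ring $R[z,z^{-1},(1-z)^{-1}]$ where a half-unit genuinely exists. For (i): the morphism $i\colon (R[z,z^{-1}],P)\to(R[z,z^{-1},(1-z)^{-1}],P)$ is cartesian (the set $P$ of Alexander polynomials is coprime to $(1-z)$, as noted in the proof sketch of Lemma \ref{halfunit}), so Theorem \ref{thm:cartmorph} gives an equivalence of categories $\H(R[z,z^{-1}],P)\xrightarrow{\cong}\H(R[z,z^{-1},(1-z)^{-1}],P)$. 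I would check this equivalence is compatible with torsion duals $T^\wedge$ — i.e.\ that it sends $\Hom_{R[z,z^{-1}]}(T,P^{-1}R[z,z^{-1}]/R[z,z^{-1}])$ to $\Hom_{R[z,z^{-1},(1-z)^{-1}]}(i_*T, P^{-1}R[z,z^{-1},(1-z)^{-1}]/R[z,z^{-1},(1-z)^{-1}])$ naturally, which follows because localising at $(1-z)$ is exact and $(1-z)$ already acts invertibly on any $P$-torsion module. Hence it carries non-singular $\eps$-symmetric linking forms to non-singular $\eps$-symmetric linking forms and preserves the submonoids of metabolic and of hyperbolic forms (a lagrangian $L\hookrightarrow T$ maps to a lagrangian, a pair of complementary lagrangians to a complementary pair). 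Passing to the monoid constructions of Definition \ref{def:monoid} gives the two claimed isomorphisms $W^\eps\cong W^\eps$ and $DW^\eps\cong DW^\eps$.

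For (ii): the covering functor $B$ from Seifert $R$-modules to Blanchfield $R[z,z^{-1}]$-modules is already known to be essentially surjective on objects with symmetric structure by the cited Theorem of \cite{MR2058802} (every Blanchfield form $(T,\lambda)$ is the covering of a $(-\eps)$-symmetric Seifert form), and Lemma \ref{lem:kerofB} identifies the Seifert modules killed by $B$ as the near-projections. On the level of forms, I would argue that $B$ induces a well-defined map on monoids $\{\text{non-sing.\ }\eps\text{-symm.\ Seifert forms over }R\}\to\{\text{non-sing.\ }(-\eps)\text{-symm.\ Blanchfield forms}\}$ sending direct sums to direct sums (immediate from the definition of the resolution in Definition \ref{def:coveringsief}), and then apply Lemma \ref{lagrangians}: $B$ carries a lagrangian of a Seifert form to a lagrangian of its covering Blanchfield form, and — reading the same commutative diagram with both members of a complementary pair — carries a pair of complementary lagrangians to a complementary pair. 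Thus $B$ descends to group homomorphisms $\widehat{W}_\eps(R)\to W^{-\eps}(R[z,z^{-1}],P)$ and $\widehat{DW}_\eps(R)\to DW^{-\eps}(R[z,z^{-1}],P)$. (Note the sign: the covering construction reverses $\eps$ to $-\eps$, exactly as in Definition \ref{def:coveringsief} and as in the autometric case of Proposition \ref{prop:covmonod2}; I would state the theorem with the $(-\eps)$ correction or absorb it into the involution convention consistently.) Surjectivity is the cited structure theorem of \cite{MR2058802}.

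Injectivity is where the work lies, and I expect it to be the main obstacle. One must show: if $(K,e,\psi)$ is a non-singular Seifert form whose covering $B(K,e,\psi)$ is metabolic (resp.\ hyperbolic), then $(K,e,\psi)$ is itself metabolic (resp.\ hyperbolic) — or at least trivial in the Witt (resp.\ double Witt) group. The natural route is to build a quasi-inverse to $B$: reverse the covering construction to recover a Seifert form from a Blanchfield form together with a lagrangian of it, using the splitting of $(K,e)$ into $(K_+,e_+)\oplus(K_-,e_-)$ from Lemma \ref{lem:kerofB} to control the near-projection ambiguity. The subtlety is that $B$ is not injective on the nose (near-projections have vanishing covering), so one must check that the Seifert-form representatives of a given Blanchfield Witt class differ only by near-projections, and that near-projection Seifert forms are metabolic (indeed split metabolic) — this last point is plausible since $e_-$ and $1-e_+$ nilpotent should let one write down an explicit lagrangian. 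For the double Witt version I would additionally invoke the excision isomorphism of part (i) to pass to $R[z,z^{-1},(1-z)^{-1}]$, where the half-unit $(1-z)^{-1}$ is available, so that split-metabolic and hyperbolic coincide there (Lemmas \ref{lem:splitishyp1}, \ref{lem:splitishyp2}), and then one only needs the split-metabolic statement. The cleanest exposition probably cites \cite[Proof of Prop.\ 2.1]{MR1004605} and \cite{MR2058802} for the reverse construction rather than redeveloping it; as the excerpt warns, ``we will rely more on our references than before''.
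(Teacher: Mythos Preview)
Your plan for part~(i) and for the well-definedness and surjectivity in part~(ii) is essentially the paper's: the cartesian morphism gives an equivalence of categories compatible with torsion duals and (split) lagrangians, and $B$ is surjective on isomorphism classes of forms by \cite{MR2058802} while carrying Seifert lagrangians to Blanchfield lagrangians by Lemma~\ref{lagrangians}.

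The divergence is in the injectivity argument for~(ii), and there your double Witt plan has a genuine gap. You propose to show near-projection Seifert forms are (split) metabolic and then invoke excision to obtain the half-unit $(1-z)^{-1}$ and upgrade split metabolic to hyperbolic via Lemmas~\ref{lem:splitishyp1} and~\ref{lem:splitishyp2}. But excision operates on the Blanchfield side: it supplies a half-unit in $R[z,z^{-1},(1-z)^{-1}]$, not in $R$. Those lemmas therefore upgrade split-metabolic \emph{linking} forms to hyperbolic ones, but say nothing about Seifert forms over $R$, which is where you need hyperbolicity to conclude vanishing in $\widehat{DW}_\eps(R)$. The paper closes this gap by showing that near-projection Seifert forms are hyperbolic on the nose, with no half-unit required. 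The mechanism you almost reach is this: the Seifert condition $\psi e=(1-e^*)\psi$ says $\psi$ is a morphism of Seifert modules $(K,e)\to(K^*,1-e^*)$; the idempotent $p_e=(e^k+(1-e)^k)^{-1}e^k$ giving the splitting $K=K_+\oplus K_-$ from Lemma~\ref{lem:kerofB} satisfies $p_e^*=p_{e^*}=1-p_{1-e^*}$, so the $\pm$ decomposition of the Seifert dual $(K^*,1-e^*)$ swaps the labels, $(K^*,1-e^*)_\pm\cong(K_\mp)^*$. Hence $\psi$ is forced to be block off-diagonal in the $K_+\oplus K_-$ decomposition, exhibiting $K_+$ and $K_-$ as complementary Seifert lagrangians. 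This single observation handles the Witt and double Witt cases simultaneously and makes your excision detour unnecessary.
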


\begin{proof}To ease notation, write $R[z,z^{-1}]=R_z$, $R[z,z^{-1},(1-z)^{-1}]=R_{z,1-z}$
\begin{enumerate}[(i)]
\item Suppose $M,N$ are $(R_z,P)$-modules. The cartesian morphism $i$ induces an isomorphism of exact categories with involution $\H(R_z,P)\cong \H(R_{z,1-z},P)$ and there are induced isomorphisms of abelian groups\[\begin{array}{rclc}
M&\xrightarrow{\cong}& R_{z,1-z}\otimes_{R_z}M;\qquad\quad\,\,1\mapsto1\otimes x,\\
M^\wedge&\xrightarrow{\cong}&(R_{z,1-z}\otimes_{R_z}M)^\wedge;\qquad g\mapsto(b\otimes x\mapsto b\otimes i(g(x))),\\
\Hom_{R_z}(M,N)&\xrightarrow{\cong}& \Hom_{R_{z,1-z}}(R_{z,1-z}\otimes_{R_z}M,R_{z,1-z}
\otimes_{R_z}N);\\ &&\phantom{(R_{z,1-z}\otimes_{R_z}M)^\wedge;\qquad } \,g\mapsto(b\otimes x\mapsto b\otimes g(x))
\end{array}\](see \cite[3.13(i)]{MR620795} for proof). Thus there is also an isomorphism of categories\[\xymatrix{\{\text{non-sing, $\eps$-symm linking forms over $(R_z,P)$}\}\ar[d]^{\cong}\\
\{\text{non-sing, $\eps$-symm linking forms over $(R_{z,1-z},P)$}\}.}\]This isomorphism sends (split) lagrangians to (split) lagrangians, hence the isomorphisms of Witt groups claimed.

\item The covering map $B$ is surjective on the level of isomorphism classes of forms. Lemma \ref{lagrangians} shows that the covering operation preserves lagrangians so on the level of Witt- and double Witt-groups, the covering morphism is well-defined.

To show that the covering map on Witt and double Witt groups is moreover injective, we will show that the Seifert forms with vanishing covering are hyperbolic. It is shown in {\cite[1.8(i)]{MR2058802}} that there exists $k>0$ such that, defining a projection \[p_e:=(e^k+(1-e)^k)^{-1}e^k:K\to K,\] the direct sum decomposition in Lemma \ref{lem:kerofB} may be taken to be $K_+=\im(p_e)$, $K_-=\im(1-p_e)$.

Now note that $(K,e)$ being a near-projection is equivalent to any one of $(K,1-e)$, $(K^*,e^*)$, $(K^*,1-e^*)$ being a near projection. Hence we may similarly decompose the Seifert dual module\[(K,e)^*=(K^*,1-e^*)=(K^*,1-e^*)_+\oplus (K^*,1-e^*)_-\]using the projection $p_{1-e^*}:K^*\to K^*$. But as \[p_e^*= ((e^*)^k+(1-e^*)^k)^{-1}(e^*)^k=p_{e^*}=1-p_{1-e^*}\] and $(\im(p_e))^*=\im(p_e^*)$ we must swap the summands (with respect to the previous decomposition)\[\begin{array}{rcl}(K^*,1-e^*)_+&=&((K_-)^*,(1-e^*)|_{(K_-)^*})\\ (K^*,1-e^*)_-&=&((K_+)^*,(1-e^*)|_{(K_+)^*})\end{array}\]Now if $(K,e,\psi)$ is a Seifert form such that $B(K,e)=0$, then as $\psi e=(1-e^*)\psi$ we have an isomorphism of Seifert forms \[\lmat p\\ 1-p\rmat:(K,e_K,\psi)\xrightarrow{\cong}\left(K_+\oplus K_-,e_+\oplus e_-,\lmat 0&\psi|_{K_-}\\ \psi|_{K_+}&0\rmat\right)\] so that $(K,e,\psi)$ is hyperbolic.
\end{enumerate}
\end{proof}

We say that two non-singular $\eps$-symmetric Seifert forms over  $R$ are \textit{$S$-equivalent} if they have isomorphic coverings. This condition has been alternatively expressed by many authors in terms of a series of moves that can be performed on a matrix representation of the adjoint of the Seifert form, see for example \cite{MR0420639}. The proof of the previous theorem shows the following:

\begin{corollary}\label{cor:sequiv}If two non-singular $\eps$-symmetric Seifert forms over $R$ are $S$-equivalent then they are moreover double Witt-equivalent.
\end{corollary}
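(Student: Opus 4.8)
The statement to prove is Corollary \ref{cor:sequiv}: if two non-singular $\eps$-symmetric Seifert forms over $R$ are $S$-equivalent then they are double Witt-equivalent.

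\medskip

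The plan is to deduce this directly from the covering isomorphism in Theorem \ref{algtrans}(ii). Recall that two non-singular $\eps$-symmetric Seifert forms $(K,e,\psi)$ and $(K',e',\psi')$ over $R$ are \emph{$S$-equivalent} precisely when their coverings $B(K,e,\psi)$ and $B(K',e',\psi')$ are isomorphic as $(-\eps)$-symmetric Blanchfield forms over $(R[z,z^{-1}],P)$. First I would observe that isomorphic linking forms represent the same class in any quotient monoid of $\NN^{-\eps}(R[z,z^{-1}],P)$, so in particular\[[B(K,e,\psi)]=[B(K',e',\psi')]\in DW^{-\eps}(R[z,z^{-1}],P).\]Then, since Theorem \ref{algtrans}(ii) provides an isomorphism of groups\[B:\widehat{DW}_\eps(R)\xrightarrow{\cong} DW^{-\eps}(R[z,z^{-1}],P)\](wait --- note the sign convention: the covering of an $\eps$-symmetric Seifert form is a $(-\eps)$-symmetric Blanchfield form, as in Definition \ref{def:coveringsief}, so the target is $DW^{-\eps}$; I should match this carefully with the statement of \ref{algtrans}(ii), which as written reads $\widehat{DW}_\eps(R)\cong DW^\eps(R[z,z^{-1}],P)$ and presumably uses the $\widehat{DW}_\eps$ indexing to absorb the sign). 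In any case, applying the inverse of this group isomorphism to the equality $[B(K,e,\psi)]=[B(K',e',\psi')]$ yields\[[(K,e,\psi)]=[(K',e',\psi')]\in \widehat{DW}_\eps(R),\]which is exactly the assertion that the two Seifert forms are double Witt-equivalent.

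\medskip

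To spell out the one genuine point that needs care: injectivity of $B$ on the double Witt group was established in the proof of Theorem \ref{algtrans}(ii) by showing that a Seifert form with vanishing covering is hyperbolic (using Lemma \ref{lem:kerofB} and the near-projection decomposition). So the statement $[B(K,e,\psi)] = [B(K',e',\psi')]$ does \emph{not} merely give $[(K,e,\psi)]=[(K',e',\psi')]$ up to the ambiguity of the monoid construction --- it gives it on the nose in $\widehat{DW}_\eps(R)$, because $B$ is an honest group isomorphism. Thus the only ingredients are: (1) isomorphic forms are equal in the double Witt group, and (2) the already-proven bijectivity of the covering map on double Witt groups.

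\medskip

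I expect essentially no obstacle here; the corollary is a formal consequence of Theorem \ref{algtrans}. The only thing to double-check is that the sign conventions line up between the definition of $S$-equivalence (phrased in terms of $(-\eps)$-symmetric coverings) and the indexing in the statement of the covering isomorphism, and that one is careful to invoke the \emph{injectivity} part of \ref{algtrans}(ii) rather than just surjectivity. There is no need to reprove hyperbolicity of near-projection Seifert forms, as that was done inside the proof of \ref{algtrans}. Hence the proof is a single short paragraph in the final write-up.
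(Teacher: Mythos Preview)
Your proposal is correct and matches the paper's intent: the corollary is a formal consequence of the isomorphism in Theorem \ref{algtrans}(ii), and you have correctly identified that the key point is the \emph{injectivity} of $B$ on double Witt groups (which was the content of the near-projection argument in that proof). Your caution about the sign convention is warranted but harmless for the logic of the argument.
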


\chapter{Algebraic $L$-theory}\label{chap:algLtheory}

We now intend to shift up a gear and develop our chain complex generalisation of the ideas considered so far, which we call \textit{double $L$-theory}. As well as being a way to extend the concept of `hyperbolic versus metabolic' to a richer setting, this new theory has a natural application to (high-dimensional) knot theory in the problem of detecting doubly-slice knots.

Chapter \ref{chap:algLtheory} will review the concepts from the Algebraic Theory of Surgery necessary to develop double $L$-theory in Chapter \ref{chap:DLtheory} and the subsequent topological applications. Our intention in this chapter is to present a self-contained and conceptually coherent account of the necessary ideas from symmetric $L$-theory, focussing on the chain homotopy theoretic reasons the constructions work. As algebraic $L$-theory is not a common technique we have included a level of detail we hope will be helpful to a beginner in this subject, however the nuances included will satisfy a more experienced reader as well.

Our account is largely based on the foundational \cite{MR560997}, \cite{MR566491}, \cite{MR620795} and also \cite{MR1211640}. Some use of the homotopy theoretic techniques from \cite{MR788854}, \cite{MR794111} is made to clarify the details of some constructions. We are also greatly indebted to Tibor Macko for offering us a preview of his, as yet unpublished, chapter of a forthcoming surgery book \cite{TIBOR}. We have directly built on Macko's description of the reversal of the algebraic Thom construction to analyse triads.

While the homotopy theory provides pleasing explanations of the ideas, it will be necessary in later chapters to do calculations using specific choices within homotopy classes. At that stage, our account will become less self-contained as we borrow calculations from our source texts rather than derive everything from first principles.

We will begin with a description of `stable' $L$-theory of \cite{MR1211640} but from Section \ref{sec:connective} onwards, we will restrict ourselves to the original `non-stable' or `connective' description of the theory from \cite{MR560997}, which coincides with Mischencko's earlier formulation \cite{mischencko}. The difference between the original description and that in \cite{MR1211640} is the assumption in the connective case of a positivity condition on the underlying chain complex with which we work. One major consequence is that the eventual symmetric $L$-groups will not generally be periodic in the connective theory (see \cite[3.18, 15.8]{MR1211640}). However, for practical purposes, connective $L$-theory has the advantage of corresponding more directly to $\eps$-symmetric forms and to the associated geometry. Connective $L$-theory has several major algebraic disadvantages, one of which will be the need to carry around various other connectivity conditions in order to make the mapping cone and desuspension constructions of boundary, surgery etc.\ well-defined.

We will organise this review by first presenting the basic objects purely algebraically, then showing that the geometry naturally gives rise to the algebraic structures described. Next we will define the constructions and groups needed to understand double $L$-theory and the double $L$-theory localisation exact sequence of Chapter \ref{chap:DLtheory}.

\section{Algebraic symmetric structures}

\subsection{Conventions and standard resolutions}\label{subsec:res}
Given chain complexes $(C,d_C),(D,d_D)$ of $A$-modules a \textit{chain map of degree $n$} is a collection of morphisms $f_r:C_r\to D_{r+n}$ with $d_Df_r=(-1)^nf_{r-1}d_C$. The category of chain complexes of $A$-modules with morphisms degree 0 chain maps is denoted $\Ch(A)$. A chain complex $C$ in $\Ch(A)$ is \textit{finite} if it is concentrated in finitely many dimensions. The category of finite chain complexes of projective, finitely generated (\text{f.g.}) $A$-modules is denoted $\B(A)\subset \Ch(A)$. If $C$ is in $\Ch(A)$, let $C^t$ denote the chain complex of f.g.\ projective, right $A$-modules $(C^t)_r:=(C_r)^t$. The \textit{dual chain complex} of $C$ in $\Ch(A)$ is $C^{-*}$ in $\Ch(A)$ with modules $(C^{-*})_r:=(C_{-r})^*=:C^{-r}$ and differential $(-1)^rd^*_C:C^{-r}\to C^{-r+1}$. The \textit{suspension} of $C$ in $\Ch(A)$ is the chain complex $\Sigma C$ in $\Ch(A)$ with modules $(\Sigma C)_r=C_{r-1}$ and differential $d_{\Sigma C}=d_C$. The \textit{desuspension} $\Sigma^{-1}C$ is defined by $\Sigma(\Sigma^{-1}C)=C$. Morphisms $f,f':C\to D$ are \textit{homotopy equivalent} if there exists a collection of $A$-module morphisms $g=\{g_r:C_r\to D_{r+1}\,|\,r\in \Z\}$ so that $f-f'=d_Dg+gd_C$, in which case the collection is called a \textit{chain homotopy} and we write $f\simeq f'$. A morphism $f:C\to D$ is a \textit{chain homotopy equivalence} if there exists a morphism $g:D\to C$ such that $fg\simeq 1_D$ and $gf\simeq 1_C$.

For $C,D$ in $\Ch(A)$, there are chain complexes of $\Z$-modules \[(C^t\otimes_A D)_r:=\bigoplus_{p+q=r}C^t_p\otimes_A D_q;\quad d(x\otimes y)=x\otimes d_D(y)+(-1)^qd_C(x)\otimes y,\]\[(\Hom_A(C,D))_r:=\prod_{q-p=r}\Hom_A(C_p,D_q);\quad d(f)=d_D(f)-(-1)^rfd_C,\]and the \textit{slant map} is defined as \[\setminus-:C^t\otimes_A D\to \Hom_A(C^{-*},D);\quad x\otimes y\mapsto (f\mapsto \overline{f(x)}y).\]In the sequel we will often write $C\otimes D$ in place of $C^t\otimes_AD$ in order to ease notation. If $C,D$ are (chain homotopy equivalent to) objects of $\B(A)$ then the slant map is a chain (homotopy) equivalence. When $C,D$ are chain homotopy equivalent to objects of $\B(A)$, there is an isomorphism of groups\[\{\text{$n$-cycles in $\Hom_A(C,D)$}\}\cong\{\text{chain maps of degree $n$ from $C$ to $D$}\}.\]

A morphism $f:C\to D$ in $\Ch(A)$ is a \textit{cofibration} if it is degreewise split injective and a \textit{fibration} if it degreewise split surjective. A sequence of morphisms in $\Ch(A)$ is a \textit{(co)fibration sequence} if each morphism in the sequence is a (co)fibration. The \textit{algebraic mapping cone} of $f$ is the chain complex $C(f)$ in $\Ch(A)$ with $C(f)_r=D_r\oplus C_{r-1}$ and \[d_{C(f)}=\left(\begin{matrix} d_D&(-1)^{r-1}f\\0&d_C\end{matrix}\right):D_r\oplus C_{r-1}\to D_{r-1}\oplus C_{r-2}.\]There is an obvious inclusion morphism $e:D\to C(f)$ and the composite $ef:C\to C(f)$ is easily seen to be nullhomotopic. Moreover the mapping cone has the universal property that if there is a sequence of morphisms and a nullhomotopy \[\xymatrix{C\ar@/_1pc/[rr]_-{j:gf\simeq 0}\ar[r]^-f&D\ar[r]^g&E}\]there exists a morphism $\Phi_j:C(f)\to E$ factoring $\Phi_je=g$. If two nullhomotopies $j_1$ and $j_2$ are homotopic (in the sense that there exists a collection of $A$-module morphisms $h=\{h_r:C_r\to D_{r+2}\,|\,r\in\Z\}$ such that $j_1-j_2=d_Dh+hd_C$) then $h$ can be shown to induce $\Phi_{j_1}\simeq\Phi_{j_2}$.

A \textit{homotopy cofibration sequence} is a sequence of morphisms in $\Ch(A)$ such that any two successive morphisms \[\xymatrix{C\ar[r]^-{f}&D\ar[r]^g&E}\] have nullhomotopic composition and such that any choice of nullhomotopy $j$ induces a chain equivalence $\Phi_j:C(f)\simeq E$. A sequence of morphisms in $\Ch(A)$ is a \textit{homotopy fibration sequence} if the dual sequence of morphisms is a homotopy cofibration sequence. Using the obvious projection morphisms $\text{proj}:C(f)\to \Sigma C$, every morphism $f:C\to D$ in $\Ch(A)$ has an associated \textit{Puppe sequence}\[\dots\to\Sigma^{-1}D\to \Sigma^{-1}C(f)\xrightarrow{\Sigma^{-1}\text{proj}} C\xrightarrow{f} D\xrightarrow{e} C(f)\xrightarrow{\text{proj}} \Sigma C\xrightarrow{\Sigma f} \Sigma D\to \dots\]which is both a homotopy fibration sequence and a homotopy cofibration sequence. In particular this shows that in $\Ch(A)$, homotopy fibration sequences agree with homotopy cofibration sequences.

Given diagrams\[\xymatrix{D&C\ar[l]_-{f}\ar[r]^-{f'}& D'}\quad\text{ and }\quad\xymatrix{D\ar[r]^-{g}&E&D'\ar[l]_-{g'}}\]define the \textit{homotopy pushout} as \[D\cup_C D':=C\left(\lmat -f\\f'\rmat :C\to D\oplus D'\right)\]and the \textit{homotopy pullback} as \[D\times_E D':= \Sigma^{-1}C((g\,\,-g'):D\oplus D'\to E).\]

\begin{remark}We have defined both pushout and pullback in terms of mapping cone. While this will be fine for our purposes as we generally will only need these up to homotopy equivalence, it is worth mentioning that a more pedagogically correct way of making these definitions would be to define pushout using mapping cone and pullback using `mapping fibre' as is carried out in \cite{TIBOR}.
\end{remark}

A \textit{homotopy commuting square} $\Gamma$ in $\Ch(A)$ is a diagram \[\xymatrix{C\ar@{~>}[dr]^-{h}\ar[r]^-{f'}\ar[d]_-{f}&D'\ar[d]^-{g'}\\D\ar[r]^{g}&E}\]consisting of a square of morphisms $f,f',g,g'$ in $\Ch(A)$ together with a homotopy $h:g'f'\simeq gf$. A homotopy commuting square induces the obvious maps of cones\[C(g',f):C(f')\to C(g),\qquad C(g,f'):C(f)\to C(g').\]Taking cones again, there is not just homotopy equivalence, but actual equality $C(C(g',f))=C(C(g,f'))$. We define the \textit{iterated cone on $\Gamma$} to be that chain complex \[C(\Gamma)=C(C(g',f))=C(C(g,f')).\]Note that, strictly speaking, the morphisms $C(g',f)$ and $C(g,f')$ in $\Ch(A)$, and hence the complex $C(\Gamma)$, depend on the choice of $h$ but this is suppressed from the notation.

A \textit{homotopy pushout square} is a homotopy commuting square $\Gamma$ such that the induced map $\Phi_h:D\cup_C D'\to E$ is a homotopy equivalence. A \textit{homotopy pullback square} is defined analogously using the homotopy pullback.

\subsubsection*{The $W^\%$ functor}

To package the symmetries in the homology of a manifold efficiently, we introduce a functor that will be geometrically motivated in Section \ref{sec:symmetric} when we build the `symmetric structure' for a topological space. For now, we describe the necessary algebra.

The cyclic group of order 2 is denoted $\Z/2\Z=\{1,T\}$, and let $\eps\in A$ be a unit such that $\eps^{-1}=\overline{\eps}$ (for instance $\eps=\pm1$). Let $C$ be in $\Ch(A)$ and define the standard $\eps$-involution\begin{eqnarray*}T=T_\eps:C_p^t\otimes_A C_q&\to& C_q^t\otimes_A C_p\\x\otimes y&\to& \eps(-1)^{pq}y\otimes x\end{eqnarray*}so that $C^t\otimes_A C$ in $\Ch(\Z)$ may moreover be regarded as a chain complex of $\Z[\Z/2\Z]$ modules.

The standard free $\Z[\Z/2\Z]$-resolution of $\Z$ is the chain complex\[W:\quad\dots\to\Z[\Z/2\Z]\xrightarrow{1-T} \Z[\Z/2\Z]\xrightarrow{1+T} \Z[\Z/2\Z]\xrightarrow{1-T} \Z[\Z/2\Z]\to 0\]and $W_0\hookrightarrow W$ denotes the inclusion at dimension 0 of the truncation $0\to \Z[\Z/2\Z]\to 0$. We use $W$ to find the `homotopy fixed points' of the involution $T_\eps$ on $C^t\otimes_A C$ and $W_0$ to find a `level 0 approximation' to these homotopy fixed points that will later be analogous to the intersection pairings associated to the chain complex of a topological space. The `homotopy fixed points' are in the form of the complex of $\Z$-modules\[W^\%C:=\Hom_{\Z[\Z/2\Z]}(W,C^t\otimes_A C).\]Given a morphism $f:C\to D$, the $W^\%$ construction induces a morphism of abelian groups\[f^\%:W^\%C\to W^\% D\]so that $W^\%$ is a functor $\Ch(A)\to \Ch(\Z)$. It is possible to show (see \cite[p.101]{MR560997}) that $W^\%$ is moreover a homotopy functor; given a homotopy $h: f_1 \simeq f_2:C\to D$ there exists a (non-canonical) choice of homotopy $h^\%:f_0^\%\simeq f_1^\%:W^\% C\to W^\% D$, and we will denote any such choice by $h^\%$.

It is sometimes the case that we will need to keep track of the choice of $\eps$ in the involution $T_\eps$ that is suppressed from the notation. In these cases we will write $W^\%_\eps C$ instead of $W^\%C$.

\begin{remark}If $C$ in $\Ch(A)$ is chain homotopy equivalent to an object of $\B(A)$ then $W^\%C$ is chain homotopy equivalent to an object of $\B(\Z)$.
\end{remark}

The $W^\%$ functor does not behave additively with respect to direct sum of chain complexes $C,C'$ in $\Ch(A)$. Indeed, by considering the identity \[(C\oplus C)\otimes (C\oplus C')= (C\otimes C)\oplus (C'\otimes C') \oplus (C\otimes C')\oplus (C'\otimes C),\] and the effect of the $\Z/2\Z$-action on the right hand side, we easily see that\[W^\%(C\oplus C')= W^\%C\oplus W^\% C'\oplus (C\otimes C').\]Carrying around these `cross-terms' $(C\otimes C')$ will become cumbersome in some of the constructions later. For this reason we introduce the following innovation according to \cite[\textsection 4.2]{Borodzik:2012fk}.

\begin{definition}For $A$-modules $P$, $P'$,  the set $P\oplus P'$ has the structure of an $A\times A$-module by the left action $((a,b),(x,y))\mapsto (ax,by)$. Define the \textit{disjoint union} of $P$ and $P'$ to be the $A\times A$-module \[P\sqcup P':= P\oplus P'\text{ regarded as an $A\times A$-module.}\] If $P,P'$ are f.g.\ projective $A$-modules then $P\sqcup P'$ will in general be a f.g.\ projective $A\times A$-module and the functor described by $(P,P')\mapsto P\sqcup P'$ is an equivalence of categories with involution for f.g.\ projective modules.

For complexes $C,C'$ in $\Ch(A)$ we define $C\sqcup C'$ in $\Ch(A\times A)$ in the obvious way.
\end{definition}

The following proposition shows our main reason for introducing this concept.

\begin{proposition}If $C,C'$ are in $\Ch(A)$, there is an equality in $\Ch(\Z)$\[W^\%(C\sqcup C')= W^\%C\oplus W^\% C'.\]
\end{proposition}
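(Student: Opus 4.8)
The plan is to unwind both sides as explicit chain complexes of $\Z$-modules and observe they are literally equal, term by term, with no identifications required. The key observation is that the disjoint union $C \sqcup C'$ is the same underlying graded abelian group as $C \oplus C'$, but with the $A \times A$-module structure in which the two factors act independently; consequently, when one forms the tensor product $(C \sqcup C')^t \otimes_{A \times A} (C \sqcup C')$, the "mixed" summands $C^t \otimes C'$ and $(C')^t \otimes C$ that appear in the ordinary $W^\%(C \oplus C')$ computation are killed, because an element of the form $x \otimes y$ with $x \in C$, $y \in C'$ satisfies $x \otimes y = x(1,0) \otimes y = x \otimes (0,1)y = x \otimes 0 = 0$ (using that $(1,0)$ and $(0,1)$ are the idempotents of $A \times A$). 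So the first step is to record the identity
\[(C \sqcup C')^t \otimes_{A \times A} (C \sqcup C') = (C^t \otimes_A C) \oplus ((C')^t \otimes_A C')\]
in $\Ch(\Z)$.

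Next I would check that this identification is $\Z/2\Z$-equivariant: the standard $\eps$-involution $T_\eps$ on $(C \sqcup C')^t \otimes_{A \times A} (C \sqcup C')$ restricts, under the above splitting, to the standard $\eps$-involutions on $C^t \otimes_A C$ and on $(C')^t \otimes_A C'$ respectively — this is immediate from the formula $x \otimes y \mapsto \eps(-1)^{pq} y \otimes x$, which preserves each summand. Hence as chain complexes of $\Z[\Z/2\Z]$-modules,
\[(C \sqcup C')^t \otimes_{A \times A} (C \sqcup C') = (C^t \otimes_A C) \oplus ((C')^t \otimes_A C').\]

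Finally, applying the functor $\Hom_{\Z[\Z/2\Z]}(W, -)$ to a direct sum of $\Z[\Z/2\Z]$-complexes commutes with the direct sum, since $\Hom$ out of a fixed object is additive in its second variable. Therefore
\[W^\%(C \sqcup C') = \Hom_{\Z[\Z/2\Z]}(W, (C^t \otimes_A C) \oplus ((C')^t \otimes_A C')) = W^\% C \oplus W^\% C',\]
which is the claimed equality in $\Ch(\Z)$.

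\textbf{Main obstacle.} There is no real obstacle here; this is a bookkeeping lemma. The only point requiring a moment's care is the very first step — verifying that the cross-terms genuinely vanish in the tensor product over $A \times A$ rather than merely being "absorbed up to isomorphism" — and making sure the equivariance in the second step is an honest equality of $\Z/2\Z$-actions (so that the conclusion is an equality in $\Ch(\Z)$, as stated, and not just a natural isomorphism). Both are straightforward once the idempotent argument is written out.
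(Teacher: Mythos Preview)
Your proof is correct and follows exactly the same approach as the paper; the paper's proof is a single line stating the identity $(C\sqcup C')^t\otimes_{A\times A}(C\sqcup C')=(C^t\otimes _A C)\oplus ((C')^t\otimes_A C')$ and leaving the $\Z/2\Z$-equivariance and the additivity of $\Hom_{\Z[\Z/2\Z]}(W,-)$ implicit. You have simply spelled out the details the paper leaves to the reader.
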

\begin{proof}Consider the identity\[(C\sqcup C')^t\otimes_{A\times A}(C\sqcup C')=(C^t\otimes _A C)\oplus ((C')^t\otimes_A C').\]
\end{proof}

\subsection{Symmetric complexes, pairs and triads}\label{subsec:definitions}

\subsubsection*{Complexes and pairs}

\begin{definition}An \textit{$n$-dimensional $\eps$-symmetric structure} on $C$ in $\Ch(A)$ is an $n$-dimensional cycle $\phi\in W^\%C_n$. The pair $(C,\phi)$ is called an \textit{$n$-dimensional $\eps$-symmetric complex}. Two $n$-dimensional $\eps$-symmetric complexes $(C,\phi)$, $(C',\phi')$ are \textit{homotopy equivalent} if there is a homotopy equivalence $h:C\xrightarrow{\simeq} C'$ such that $h^\%\phi=\phi'$.

An \textit{$(n+1)$-dimensional $\eps$-symmetric structure} on a morphism $f:C\to D$ in $\Ch(A)$ is an $(n+1)$-dimensional cycle $(\delta\phi,\phi)\in C(f^\%)_{n+1}\cong W^\%D_{n+1}\oplus W^\%C_{n}$. Note this implies $\phi$ is a cycle and $\delta\phi$ is a nullhomotopy of $f^\%(\phi)$. The pair $(f:C\to D,(\delta\phi,\phi))$ is called an \textit{$(n+1)$-dimensional $\eps$-symmetric pair}. Two $(n+1)$-dimensional $\eps$-symmetric pairs $(f:C\to D,(\delta\phi,\phi))$, $(f':C'\to D',(\delta\phi',\phi'))$ are \textit{homotopy equivalent} if there is a homotopy commuting square\[\xymatrix{C\ar@{~>}[rd]\ar[r]^-{f}\ar[d]_-{h}&D\ar[d]^-{k}\\C'\ar[r]^-{f'}&D'}\]inducing a homotopy equivalence $C(f)\simeq C(f')$ and such that $C(k^\%,h^\%)(\delta\phi,\phi)=(\delta\phi',\phi')$.
\end{definition}

\begin{figure}[h]\[\def\picnullcob{\resizebox{0.27\textwidth}{!}{ \includegraphics{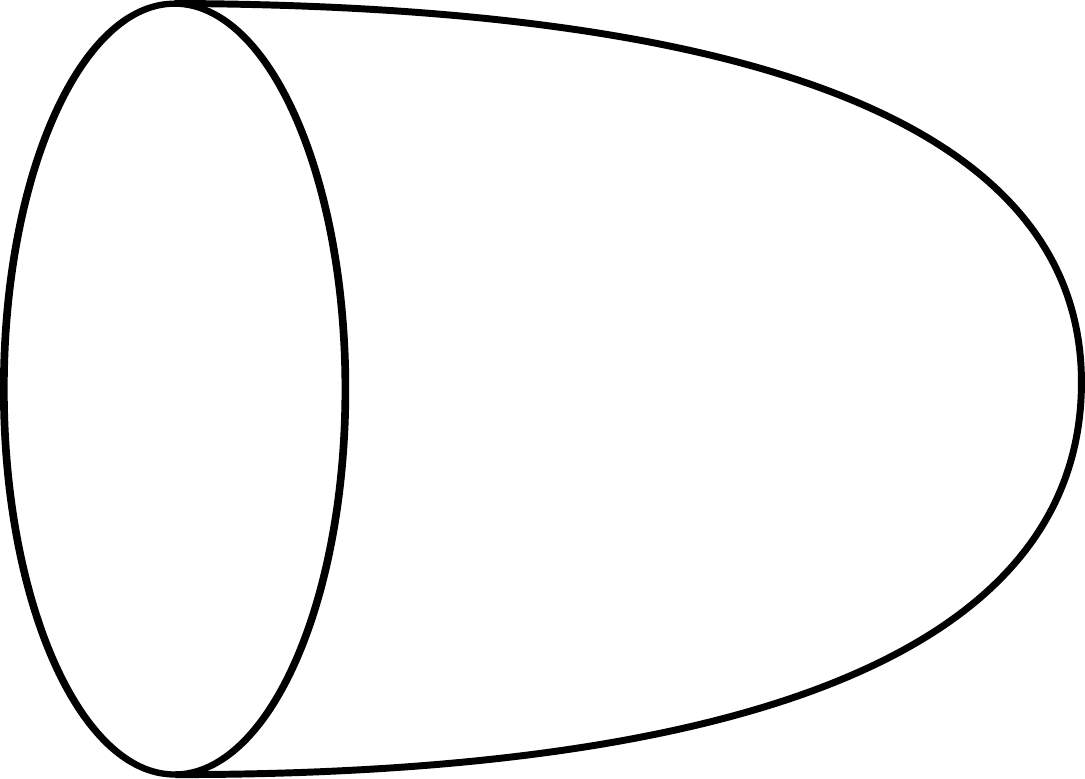}}}
\begin{xy} \xyimport(300,300){\picnullcob}
,!+<7.2pc,2pc>*+!\txt{}
,(35,148)*!L{C}
,(165,148)*!L{D}
\end{xy}\]\caption{A schematic of an $\eps$-symmetric pair}
\end{figure}


\begin{example}[$\eps$-symmetric forms are short $\eps$-symmetric chain complexes]If $C$ is a complex $0\to C_0\to 0$ concentrated in dimension 0 and \[\phi\in (W^\%C)_0=\Hom_{\Z[\Z/2\Z]}(W,C\otimes C)_0\cong\Hom_{\Z[\Z/2\Z]}(W_0,C\otimes C)_0,\]then $\phi$ is a chain homotopy class described by a single degree 0 chain map $\phi_0:C^{-*}\to C$ with the $\eps$-symmetry property $\phi_0\circ\text{switch}=T_\eps\phi_0$ where `switch' is the $\Z/2\Z$-action switching the factors in $C\otimes C$. Adjointly, it is an $\eps$-symmetric sesquilinear pairing\[C^0\times C^0\to A;\qquad (x,y)\mapsto y(\phi_0(x)).\]This correspondence is clearly reversible.
\end{example}

\begin{example}[$(-\eps)$-symmetric linking forms are short $\eps$-symmetric $S$-acyclic complexes] If $C$ is a complex $0\to C_1\to C_0\to 0$ whose cohomology is torsion with respect to some localisation $(A,S)$ and \[\phi\in (W^\%C)_1=\Hom_{\Z[\Z/2\Z]}(W,C\otimes C)_1\cong\Hom_{\Z[\Z/2\Z]}(0\to\Z[\Z/2\Z]\xrightarrow{1-T}\Z[\Z/2\Z]\to 0,C\otimes C)_1\] then $\phi$ is a chain homotopy class described by one degree 0 chain map $\phi_0:C^{1-*}\to C$ and one degree 1 chain map $\phi_1:C^{2-*}\to C$. The cohomology module $H^1(C)$ then admits a $(-\eps)$-symmetric linking form over $(A,S)$ given by \[H^1(C)\times H^1(C)\to S^{-1}A/A;\qquad([x],[y])\mapsto s^{-1}y(\phi_0(z)),\]where $x,y\in C^1$, $z\in C^0$ such that $d_C^*z=sx$ for some $s\in S$. In fact this correspondence is also reversible in some sense - of which more later! (See Proposition \ref{prop:correspondence}, onwards.)
\end{example}

More generally, the inclusion $W_0\hookrightarrow W$ induces a natural transformation of functors. So that for a given complex $C$ in $\Ch(A)$ there are `evaluation' morphisms \[\ev:W^\%C\to C\otimes C;\quad \phi\mapsto \ev(\phi)\]and the image of the evaluation $\ev(\phi)\in (C\otimes C)_n$ under the slant map is written\[\setminus-:C\otimes C\to \Hom_A(C^{-*},C);\qquad \phi_0:=\sm(\ev(\phi)):C^{n-*}\to C.\]If $f:C\to D$ is a morphism in $\Ch(A)$ then there is a relative evaluation morphism\[\ev:C(f^\%)\simeq\Hom_{\Z[\Z/2\Z]}(W,C(f\otimes f))\to C(f\otimes f);\quad (\delta\phi,\phi)\mapsto \ev(\delta\phi,\phi)\]and the image of the evaluation $\ev(\delta\phi,\phi)\in C(f\otimes f)_{n+1}=(D\otimes D)_{n+1}\oplus (C\otimes C)_n$ under the slant map is written\[\left(\begin{matrix}\delta\phi_0 &0\\0&\phi_0\end{matrix}\right):=\setminus(\ev(\delta\phi,\phi)),\quad \delta\phi_0:D^{n+1-*}\to D,\quad \phi_0:C^{n-*}\to C\]Moreover there is a homotopy cofibration sequence\[\xymatrix{C\ar@/_1pc/[rr]_{j:ef\simeq 0}\ar[r]^f&D\ar[r]^-e&C(f)}\] with $j$ a choice of nullhomotopy so that $j\otimes f:C\otimes C\to C(f)\otimes D$ and $f\otimes j:C\otimes C\to D\otimes C(f)$ induce morphisms \[\begin{array}{rcl}\ev_l:C(f\otimes f)&\to& C(f)\otimes D\\\ev_r:C(f\otimes f)&\to&D\otimes C(f)\end{array}\] respectively. Under the slant maps, the images of $(\delta\phi,\phi)\in C(f\otimes f)_{n+1}$ are\[\begin{array}{rccl}
(\delta\phi_0\,\,f\phi_0)=\sm(\ev_l(\delta\phi,\phi)):&C(f)^{n+1-*}&\to& D\\
\lmat \delta\phi_0\\\phi_0 f^* \rmat=\setminus(\ev_r(\delta\phi,\phi)):&D^{n+1-*}&\to& C(f)\end{array}\]respectively.

\begin{proposition}[{\cite{TIBOR}}]\label{prop:pairev}The following diagram is a homotopy pullback square\[\xymatrix{C(f\otimes f)\ar@{~>}[dr]^{\Phi_h}\ar[r]^{\ev_r}\ar[d]_{\ev_l}&D\otimes C(f)\ar[d]^{e\otimes\id}\\C(f)\otimes D\ar[r]_{\id\otimes e}&C(f)\otimes C(f)}\]with $h=j\otimes j:j\otimes ef\simeq ef\otimes j$ inducing the chain homotopy commutativity $\Phi_h$.
\end{proposition}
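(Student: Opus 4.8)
\textbf{Proof plan for Proposition \ref{prop:pairev}.}

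The plan is to verify directly that the square commutes up to the specified homotopy, and then to compute the associated map from the homotopy pushout of the top-left corner into $C(f)\otimes C(f)$ and show it is a chain homotopy equivalence. Recall that by definition the homotopy pullback $(C(f)\otimes D)\times_{C(f)\otimes C(f)}(D\otimes C(f))$ is $\Sigma^{-1}C$ of the mapping cone of $\bigl((\id\otimes e)\,\,\, -(e\otimes\id)\bigr):(C(f)\otimes D)\oplus(D\otimes C(f))\to C(f)\otimes C(f)$, so the claim is equivalent to the statement that the induced map $C(f\otimes f)\to (C(f)\otimes D)\times_{C(f)\otimes C(f)}(D\otimes C(f))$ is a homotopy equivalence. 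First I would unwind all the relevant mapping cones explicitly at the level of graded modules: $C(f\otimes f)_{n+1}=(D\otimes D)_{n+1}\oplus(C\otimes C)_n$, and similarly for $C(f)\otimes D$, $D\otimes C(f)$, $C(f)\otimes C(f)$, keeping careful track of the sign conventions in the cone differential $d_{C(f)}=\left(\begin{smallmatrix}d_D&(-1)^{r-1}f\\0&d_C\end{smallmatrix}\right)$ and in the tensor differential.

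The key computational step is to check that $(e\otimes\id)\circ\ev_r$ and $(\id\otimes e)\circ\ev_l$ agree up to the homotopy $\Phi_h$ built from $h=j\otimes j$. Here $\ev_l$ is induced by $j\otimes f\colon C\otimes C\to C(f)\otimes D$ together with $\id_{D\otimes D}$, and $\ev_r$ by $f\otimes j\colon C\otimes C\to D\otimes C(f)$ together with $\id_{D\otimes D}$; composing with $e\otimes\id$ and $\id\otimes e$ respectively sends the $(C\otimes C)_n$ summand to $(ej)\otimes f$ and to $f\otimes(ej)$. Since $j$ is a nullhomotopy of $ef$, we have $ef\simeq 0$ via $j$, and the two composites $(ej)\otimes f$ and $f\otimes(ej)$ differ by the chain homotopy $j\otimes j$ after accounting for the cone structure — this is precisely the content of $h=j\otimes j\colon j\otimes ef\simeq ef\otimes j$. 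On the $(D\otimes D)$ summand both composites send it into $C(f)\otimes C(f)$ by $e\otimes e$ in two a priori different ways which however literally coincide (or coincide up to an obvious sign that I would absorb), so no homotopy is needed there. Assembling these gives $\Phi_h$.

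Granting commutativity, to see it is a homotopy \emph{pullback} square I would exhibit the comparison map $C(f\otimes f)\to \Sigma^{-1}C\bigl((\id\otimes e)\,\, -(e\otimes\id)\bigr)$ and check it is a homotopy equivalence. The cleanest route is the Puppe-sequence argument: the Puppe sequence of $f$ gives homotopy (co)fibration sequences $C\xrightarrow{f}D\xrightarrow{e}C(f)$, and tensoring a homotopy cofibration sequence with a fixed complex preserves the property of being a homotopy (co)fibration sequence (since $\otimes$ preserves cofibrations and mapping cones up to the natural isomorphism $C(g)\otimes E\cong C(g\otimes\id_E)$). Applying this to the two variables of $C(f)\otimes C(f)$ one at a time, and using that in $\Ch(A)$ homotopy fibration sequences coincide with homotopy cofibration sequences, one identifies $C(f\otimes f)$ up to homotopy equivalence with the homotopy pullback of the cospan $C(f)\otimes D\to C(f)\otimes C(f)\leftarrow D\otimes C(f)$; comparing this identification with the map induced by $(\ev_l,\ev_r)$ and checking they agree (up to homotopy) completes the argument.

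The main obstacle I anticipate is bookkeeping of signs and of the various choices of nullhomotopy: $\ev_l$, $\ev_r$ and $\Phi_h$ all depend on the chosen nullhomotopy $j$ of $ef$, and one must verify that the homotopy class of the comparison map (hence the conclusion that the square is a homotopy pullback) is independent of $j$ — this uses the fact, recalled in Section \ref{subsec:res}, that homotopic nullhomotopies induce homotopic maps out of the mapping cone. Getting the Koszul signs in $j\otimes j$, in $d_{C(f\otimes f)}$, and in the cone differentials of the tensor products to line up so that $\Phi_h$ is genuinely a chain map (and not merely a chain map up to sign) is the delicate part; everything else is formal manipulation with Puppe sequences. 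I would cite \cite{TIBOR} for the model computation, since this proposition is attributed there, and present only the identification of the maps, leaving the routine sign verifications to the reader.
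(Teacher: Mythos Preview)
The paper does not give its own proof of this proposition --- it is stated with attribution to \cite{TIBOR} and no argument is supplied. So there is nothing in the paper to compare against.

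Your plan is correct and is essentially the standard way one would prove this. The verification that $h=j\otimes j$ furnishes the homotopy commutativity is right, and the Puppe-sequence argument is the clean route to the pullback property: since $C(g)\otimes E\cong C(g\otimes\id_E)$ naturally, tensoring with a fixed complex preserves homotopy cofibration sequences, and in $\Ch(A)$ a commuting square is a homotopy pullback iff the induced map on horizontal (or vertical) cofibres is an equivalence. Here the cofibre of $e\otimes\id\colon D\otimes C(f)\to C(f)\otimes C(f)$ is $\Sigma(C\otimes C(f))$, and the cofibre of $\ev_r\colon C(f\otimes f)\to D\otimes C(f)$ is computed (using the filtration of $C(f\otimes f)$ by $D\otimes D$ and $\Sigma(C\otimes C)$) to be the same, with the induced map the identity up to sign. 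Your caveat about sign bookkeeping is well placed but, as you say, routine; the independence-of-$j$ issue is handled exactly as you indicate by the fact that homotopic nullhomotopies induce homotopic maps out of the cone.
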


However, it is not true that $W^\%$ preserves homotopy fibrations and homotopy cofibrations in general. This will be significant (and a source of technical challenge) when we compare complexes to pairs and pairs to triads later.  The difference between $C(f^\%)$ and $W^\%C(f)$ is given by the following proposition.

\begin{proposition}[{\cite{TIBOR}}]\label{prop:pullback} Let $f:C\to D$ be a morphism in $\B(A)$ so that there is a homotopy cofibration sequence\[\xymatrix{C\ar@/_1pc/[rr]_{j:ef\simeq 0}\ar[r]^f&D\ar[r]^-e&C(f)}\] with $j$ a choice of nullhomotopy inducing $\Phi_{j^\%}:C(f^\%)\to W^\%C(f)$. Then there is a homotopy pullback square\[\xymatrix{C(f^\%)\ar@{~>}[dr]\ar[r]^{\ev_r}\ar[d]_{\Phi_{j^\%}}&D\otimes C(f)\ar[d]^{e\otimes{\textit{id}}}\\W^\%C(f)\ar[r]_{\ev}&C(f)\otimes C(f)}\]
\end{proposition}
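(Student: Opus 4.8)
The plan is to establish the homotopy pullback square by combining the known homotopy pullback square of Proposition \ref{prop:pairev} with a standard long-exact-sequence comparison argument. First I would recall that Proposition \ref{prop:pairev} already identifies $C(f\otimes f)$ as the homotopy pullback of $C(f)\otimes D$ and $D\otimes C(f)$ over $C(f)\otimes C(f)$, via the evaluations $\ev_l,\ev_r$ and the homotopy $h=j\otimes j$. On the other hand, applying the (relative) evaluation map $\ev\colon W^\%(-)\to (-)\otimes(-)$ together with the map $\Phi_{j^\%}\colon C(f^\%)\to W^\%C(f)$, we get a commuting diagram relating our proposed square to the square of Proposition \ref{prop:pairev}. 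Concretely, there is a map of squares from
\[\xymatrix{C(f^\%)\ar[r]^-{\ev_r}\ar[d]_-{\Phi_{j^\%}}&D\otimes C(f)\ar[d]^-{e\otimes\id}\\W^\%C(f)\ar[r]_-{\ev}&C(f)\otimes C(f)}\]
to the square of Proposition \ref{prop:pairev}, where the map on the top-left corner is $\ev\colon C(f^\%)\to C(f\otimes f)$, on the bottom-left corner is $\ev\colon W^\%C(f)\to C(f)\otimes C(f)$, on the top-right it is $\id$, and on the bottom-right it is again $\ev$. Commutativity of the resulting prism follows from naturality of the evaluation map together with the definitions of $\ev_l$, $\ev_r$ as recorded in the excerpt.

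Next I would reduce the claim to a statement about homotopy fibres. Forming the homotopy fibre of the vertical maps $e\otimes\id\colon D\otimes C(f)\to C(f)\otimes C(f)$ and $\ev\colon W^\%C(f)\to C(f)\otimes C(f)$ and $\ev_l\colon C(f\otimes f)\to C(f)\otimes C(f)$ (noting $e$ fits in the homotopy cofibration sequence so $e\otimes\id$ has homotopy fibre chain-equivalent to $C\otimes C(f)$, up to suspension), one is left to check that $\Phi_{j^\%}$ induces an equivalence on these fibres, or equivalently on the total homotopy fibres of the two composite maps $C(f^\%)\to C(f)\otimes C(f)$. A square is a homotopy pullback square precisely when the induced map on horizontal (equivalently vertical) homotopy fibres is a homotopy equivalence; so the task becomes: show the map of homotopy fibres
\[\mathrm{hofib}(\Phi_{j^\%}\text{-square})\longrightarrow \mathrm{hofib}(\text{Prop }\ref{prop:pairev}\text{-square})\]
is an equivalence. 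Since Proposition \ref{prop:pairev} tells us the target square \emph{is} a homotopy pullback, its total homotopy fibre is contractible, so I must show the total homotopy fibre of the proposed square is contractible too. This will follow if the map $\ev\colon C(f^\%)\to C(f\otimes f)$ of top-left corners and the map $\ev\colon W^\%C(f)\to C(f)\otimes C(f)$ of bottom-left corners have \emph{the same} homotopy fibre (naturally), because the right-hand corners and maps of the two squares literally agree.

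The key computational input, then, is to identify $\mathrm{hofib}(\ev\colon W^\%C(f)\to C(f)\otimes C(f))$ and $\mathrm{hofib}(\ev\colon C(f^\%)\to C(f\otimes f))$ and exhibit a natural equivalence between them compatible with $\Phi_{j^\%}$. For a single complex $E$ in $\B(A)$ the homotopy fibre of $\ev\colon W^\%E\to E\otimes E$ is $\widehat{W}^\%E := \Hom_{\Z[\Z/2\Z]}(\widehat W, E\otimes E)$ built from the \emph{complete} resolution $\widehat W$ (the Tate construction), since $W_0\hookrightarrow W$ has cofibre $\Sigma$ of the complex $W_1\to W_2\to\cdots$, and the relative version for a pair $f\colon C\to D$ gives the analogous Tate-type object for the mapping cone $f\otimes f$; both are computed from the same truncation of $W$ away from dimension $0$, applied respectively to $C(f)\otimes C(f)$ and $C(f\otimes f)$. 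But Proposition \ref{prop:pairev} says $C(f\otimes f)\simeq C(f)\otimes C(f)$ \emph{as} the homotopy pullback — and more precisely the comparison map $C(f\otimes f)\to C(f)\otimes C(f)$ (namely $(\id\otimes e)\circ\ev_l$) has homotopy fibre $\simeq D\otimes C(f)$ via $\ev_r$, which is exactly the discrepancy measured on the right-hand column of our proposed square. Chasing through the $\Z/2\Z$-equivariant bookkeeping, the Tate constructions match up and $\Phi_{j^\%}$ is seen to realize the identity on homotopy fibres. I expect \emph{this} equivariant fibre identification — keeping precise track of the complete resolution $\widehat W$, the signs in the standard involution $T_\eps$, and the compatibility of the relative evaluation $\ev\colon C(f^\%)\to C(f\otimes f)$ with $\Phi_{j^\%}$ — to be the main obstacle; everything else is formal manipulation of homotopy (co)fibration sequences of the kind already developed in Section \ref{subsec:res}. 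As a cross-check at the end I would verify the statement on homology in low degrees against the two ``short complex'' examples (forms and linking forms) discussed after the definition of $W^\%$, where the pullback square reduces to a short exact sequence one can write down by hand.
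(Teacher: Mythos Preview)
The paper does not prove this proposition; it is quoted from \cite{TIBOR} without argument, so there is no in-paper proof to compare against. That said, your proposal contains two genuine gaps.

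First, the map of squares you describe does not exist. The bottom-left vertex of the square in Proposition~\ref{prop:pairev} is $C(f)\otimes D$, not $C(f)\otimes C(f)$, and there is no natural map $W^\%C(f)\to C(f)\otimes D$ compatible with the bottom edges $\ev$ and $\id\otimes e$ (the evaluation lands in the $T_\eps$-homotopy-fixed part of $C(f)\otimes C(f)$, which has no reason to lift through $\id\otimes e$). So the cube you want cannot be assembled, and the subsequent ``same total homotopy fibre'' argument has no foundation. What \emph{is} true is that the two squares share the same right-hand column, so one may instead compare \emph{vertical} homotopy fibres --- but then the comparison is between the fibre of $\Phi_{j^\%}$ and $C\otimes C(f)$, not between two Tate objects.

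Second, your identification of the homotopy fibre of $\ev\colon W^\%E\to E\otimes E$ with the Tate construction $\Hom_{\Z[\Z/2\Z]}(\widehat W,E\otimes E)$ is incorrect. The cofibre of $W_0\hookrightarrow W$ is $\Sigma W$, so the fibre of $\ev$ is $\Sigma^{-1}W^\%E$; the Tate object arises instead as the cofibre of the norm map $1+T_\eps$ from the quadratic construction (see \S\ref{subsec:chainmaps}). The argument that actually works hinges not on $\widehat W$ but on the identification $C(f^\%)\simeq\Hom_{\Z[\Z/2\Z]}(W,C(f\otimes f))$ (each $W_s$ being free, $\Hom_{\Z[\Z/2\Z]}(W,-)$ commutes with cones) together with the $\Z/2\Z$-equivariance of Proposition~\ref{prop:pairev} under the involution swapping the off-diagonal entries; a Shapiro-type identification then computes the relevant fibre as $C\otimes C(f)$, which is precisely the statement of Corollary~\ref{cor:pullback}.
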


\begin{corollary}[{\cite[Prop. 18]{MR2189218}}]\label{cor:pullback}There is a long exact sequence\[\dots\to H_k(C\otimes C(f))\to H_k(C(f^\%))\xrightarrow{\Phi_{j^\%}} H_k(W^\%C(f))\to H_{k-1}(C\otimes C(f))\to\dots\]\end{corollary}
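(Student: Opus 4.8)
The plan is to derive the long exact sequence directly from the homotopy pullback square of Proposition \ref{prop:pullback}. Recall that a homotopy pullback square in $\Ch(A)$ is by definition $\Sigma^{-1}$ of a mapping cone, so saying that
\[\xymatrix{C(f^\%)\ar@{~>}[dr]\ar[r]^{\ev_r}\ar[d]_{\Phi_{j^\%}}&D\otimes C(f)\ar[d]^{e\otimes\id}\\W^\%C(f)\ar[r]_{\ev}&C(f)\otimes C(f)}\]
is a homotopy pullback square means precisely that the induced map $C(f^\%)\to W^\%C(f)\times_{C(f)\otimes C(f)}(D\otimes C(f))$ is a chain homotopy equivalence, where the right-hand side is $\Sigma^{-1}C\bigl((\ev\,\,-e\otimes\id):W^\%C(f)\oplus(D\otimes C(f))\to C(f)\otimes C(f)\bigr)$. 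So up to homotopy equivalence $C(f^\%)$ \emph{is} this mapping (de)cone.

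The main step is then purely formal homological algebra. First I would note that the map $e\otimes\id:D\otimes C(f)\to C(f)\otimes C(f)$ fits into a cofibration sequence: applying $-\otimes C(f)$ to the homotopy cofibration sequence $C\xrightarrow{f} D\xrightarrow{e} C(f)$ (which stays a homotopy cofibration sequence after tensoring, since $-\otimes C(f)$ preserves mapping cones up to homotopy when $C(f)$ is in $\B(A)$) gives a homotopy cofibration sequence
\[C\otimes C(f)\xrightarrow{f\otimes\id} D\otimes C(f)\xrightarrow{e\otimes\id} C(f)\otimes C(f).\]
Hence the mapping fibre of $e\otimes\id$ is homotopy equivalent to $C\otimes C(f)$. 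Taking the homotopy pullback of $\ev$ along $e\otimes\id$ therefore produces a homotopy fibration sequence
\[C\otimes C(f)\longrightarrow C(f^\%)\xrightarrow{\;\Phi_{j^\%}\;} W^\%C(f),\]
since the homotopy fibre of the pullback map $\Phi_{j^\%}$ agrees with the homotopy fibre of the map it is pulled back from, namely $e\otimes\id$. A homotopy fibration sequence in $\Ch(A)$ is the same as a homotopy cofibration sequence (via the Puppe sequence), so it induces a long exact sequence in homology, which is exactly
\[\dots\to H_k(C\otimes C(f))\to H_k(C(f^\%))\xrightarrow{\Phi_{j^\%}} H_k(W^\%C(f))\to H_{k-1}(C\otimes C(f))\to\dots\]
as claimed.

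The step I expect to be the main obstacle is being careful about what ``homotopy pullback square'' buys us at the level of homotopy fibres, and in particular verifying that the homotopy fibre of $\Phi_{j^\%}:C(f^\%)\to W^\%C(f)$ is genuinely $C\otimes C(f)$ and not merely something with the same homology. The cleanest route is to invoke the fact (standard for the triangulated/stable structure on $\Ch(A)$, and implicit in the Puppe-sequence discussion of Section \ref{subsec:res}) that in a homotopy pullback square the induced map on horizontal (or vertical) homotopy fibres is a homotopy equivalence; combined with the identification of the homotopy fibre of $e\otimes\id$ with $C\otimes C(f)$ coming from the tensored Puppe sequence, this pins down the homotopy fibre of $\Phi_{j^\%}$. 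Everything else — that a homotopy cofibration sequence yields a long exact homology sequence, and that the connecting map lands in degree $k-1$ with the stated indexing — is routine and can be quoted from the generalities on mapping cones established earlier. (Indeed the statement is cited as \cite[Prop.\ 18]{MR2189218}, so an alternative is simply to reproduce that argument; but the pullback-square derivation above is self-contained given Proposition \ref{prop:pullback}.)
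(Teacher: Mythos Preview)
Your proposal is correct and is exactly the argument the paper has in mind: the corollary is stated without proof immediately after Proposition~\ref{prop:pullback}, so the intended derivation is precisely the one you give --- identify the homotopy fibre of $e\otimes\id$ as $C\otimes C(f)$ via the tensored Puppe sequence, use that homotopy pullbacks preserve homotopy fibres, and read off the long exact sequence from the resulting homotopy (co)fibration $C\otimes C(f)\to C(f^\%)\to W^\%C(f)$.
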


\subsubsection*{Algebraic glueing}

Given morphisms $f:C\to D$ and $f':C\to D'$ in $\B(A)$, denote the homotopy pushout by \[D'':=D\cup_C D'=C\left(\lmat f\\ -f'\rmat:C\to D\oplus D'\right).\] There is correspondingly a homotopy pushout \[X:=W^\%D\cup_{W^\%C}W^\%D'=C\left(\lmat f^\%\\-(f')^\%\rmat:W^\%C\to W^\%(D\sqcup D')\right).\]and as fibration sequences agree with cofibration sequences in the homotopy category of $\Ch(A)$, we can alternatively describe $X$ as the homotopy pullback\[X=C(f^\%)\times_{\Sigma W^\%C}C((f')^\%).\]However, $X$ is not equal to $W^\%D''$. There is a a map \[G:X\to W^\% D''\]induced by a choice of nullhomotopy \[\xymatrix{C\ar@/_1pc/[rr]_{\simeq 0}\ar[r]&D\oplus D'\ar[r]&D''}\]and the difference between $X$ and $W^\% D''$ is precisely given by Proposition \ref{prop:pullback}. 

\begin{figure}[h]\[\def\picgluewhole{\resizebox{0.6\textwidth}{!}{ \includegraphics{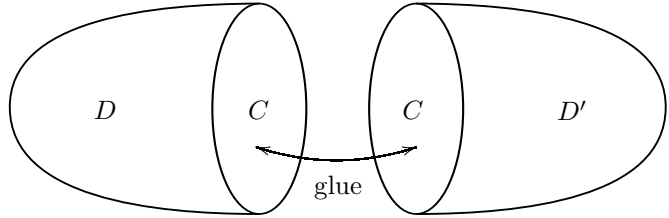}}}
\begin{xy} \xyimport(300,300){\picgluewhole}
,!+<7.2pc,2pc>*+!\txt{}
,(110,148)*!L{C}
,(180,148)*!L{C}
,(40,148)*!L{D}
,(250,148)*!L{D'}
,(110,100)*+{}="A";(190,100)*+{}="B"
,{"A"\ar@/_/"B"}
,{"B"\ar@/^/"A"}
,(140,40)*!L{\text{glue}}
\end{xy}\]\caption{A schematic of algebraic glueing along the whole boundary}
\end{figure}

\begin{definition}[Algebraic glueing along the whole boundary]\label{def:glue1} Given $(n+1)$-dimensional $\eps$-symmetric pairs \[x:=(f:C\to D,(\delta\phi,\phi)),\qquad x':=(f':C\to D',(\delta'\phi,\phi)),\] the \textit{algebraic glue} of the pairs is the $(n+1)$-dimensional $\eps$-symmetric complex \[x\cup x'=(D\cup_C D',\delta\phi\cup_\phi\delta'\phi)\] where $\delta\phi\cup_\phi\delta'\phi=G((\delta\phi,\phi,\delta'\phi))\in W^\% D''$.
\end{definition}

Now given morphisms $(f\,\,f'):C\oplus C'\to D$ and $(\tilde{f}'\,\,f''):C'\oplus C''\to D'$ in $\B(A)$, there is a homotopy commutative diagram\[\xymatrix{
C \oplus C' \oplus C''\ar[d]^-{\text{pr.}}\ar[rrr]^-{\lmat f &f'& 0\\ 0&-\tilde{f}' &-f''\rmat}&&&D\oplus D'\ar[d]\\
C\oplus C''\ar[rrr]^{(g\,\,g'')}&&&D\cup_{C'} D'}\]with $(g\,\,g'')$ defined to make the diagram commute. This diagram is a homotopy pushout (consider the homotopy fibres of the vertical maps). The diagram induces a homotopy commutative diagram (again a homotopy pushout) \[\xymatrix{
W^\%(C \sqcup C' \sqcup C'')\ar[d]^-{\text{pr.}}\ar[rrr]^-{\lmat f^\% &f'^\%& 0\\ 0&-\tilde{f}'^\% &-f''^\%\rmat}&&&W^\%(D\sqcup D')\ar[d]\\
W^\%(C\sqcup C'')\ar[rrr]^-{(g^\%\,\,g''^\%)}&&& X}\]Consider taking the cone on the bottom row of the previous diagram:\[\xymatrix{W^\%(C\sqcup C'')\ar[rr]^-{(g^\%\,\,g''^\%)}&& X\ar[r]& C((g^\%\,\,g''^\%))=:Y.}\]To understand this homotopy fibration sequence, we observe that up to homotopy, all three terms are homotopy pushouts of the respective columns of the following diagram, in which the rows are homotopy cofibration sequences\[\xymatrix{
W^\%C\ar[rr]^-{f^\%}&&W^\% D\ar[rr] &&C(f^\%)\\
0\ar[u]\ar[d]\ar[rr]&&W^\% C'\ar[u]^-{f'^\%}\ar[d]_-{\tilde{f}'^\%}\ar[rr]^-{\text{id.}}&&W^\%C'\ar[u]\ar[d]\\
W^\%C''\ar[rr]^-{f''^\%}&&W^\% D'\ar[rr] &&C(f''^\%)}\]So we may describe $Y$ as either a homotopy pushout or homotopy pullback:\[Y=C(f^\%)\cup_{W^\% C'}C(f''^\%)\simeq C((f^\%\,\,f'^\%))\times_{\Sigma W^\% C'}C((\tilde{f}'^\%\,\,f''^\%))\]with the map $X\to Y$ given accordingly.

\begin{figure}[h]\[\def\picgluepart{\resizebox{0.8\textwidth}{!}{ \includegraphics{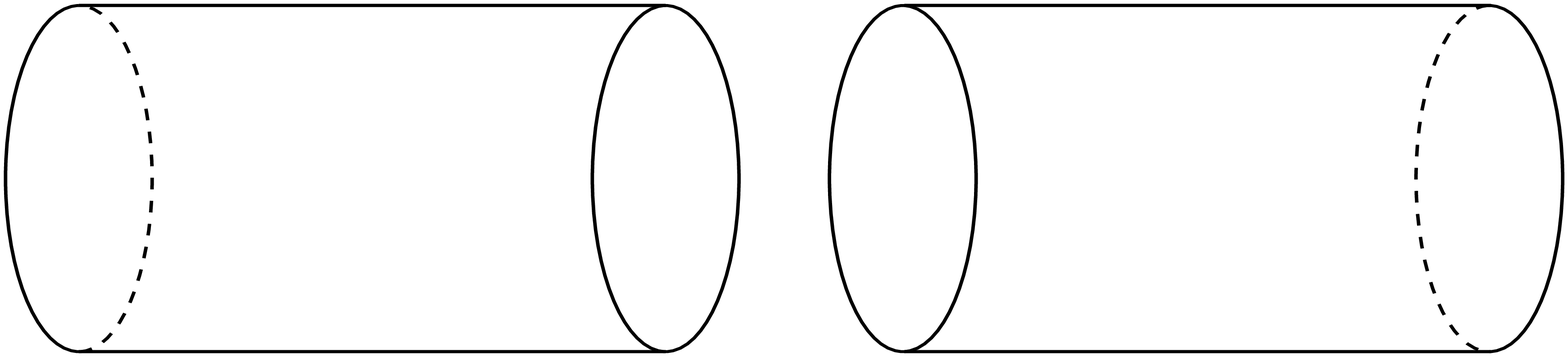}}}
\begin{xy} \xyimport(300,300){\picgluepart}
,!+<7.2pc,2pc>*+!\txt{}
,(125,148)*!L{C}
,(170,148)*!L{C}
,(12,148)*!L{C'}
,(280,148)*!L{C''}
,(65,148)*!L{D}
,(230,148)*!L{D'}
,(125,100)*+{}="A";(175,100)*+{}="B"
,{"A"\ar@/_/"B"}
,{"B"\ar@/^/"A"}
,(142,40)*!L{\text{glue}}
\end{xy}\]\caption{A schematic of algebraic glueing along a boundary component}
\end{figure}

\begin{definition}[Algebraic glueing along a boundary component]\label{def:glue2} Given $(n+1)$-dimensional $\eps$-symmetric pairs \[x:=((f\,\,f'):C\oplus C'\to D,(\delta\phi,\phi\oplus \phi')),\qquad x':=((\tilde{f}'\,\,f''):C'\oplus C''\to D',(\delta'\phi,\phi'\oplus \phi'')),\] the \textit{algebraic glue of $x$ and $x'$ along $(C',\phi')$} is the $(n+1)$-dimensional $\eps$-symmetric pair\[x\cup x':=((g\,\,g''):C\oplus C''\to D\cup_{C'}D',(\delta\phi\cup_{\phi'}\delta'\phi,\phi\oplus \phi'')),\]with the $(n+1)$-dimensional $\eps$-symmetric structure defined by the image of the $(n+1)$ dimensional cycle\[((\delta\phi,\phi\oplus\phi'),\Sigma\phi',(\delta'\phi,\phi'\oplus\phi''))\in Y=C((f^\%\,\,f'^\%))\times_{\Sigma W^\% C'}C((\tilde{f}'^\%\,\,f''^\%))\]in $C((g\,\,g'')^\%)$ using the homotopy commutative diagram defined by the universality of the pushouts along its top row \[\xymatrix{
W^\%(C\sqcup C'')\ar[d]^-{\text{incl.}}\ar[rr]^-{(g^\%\,\,g''^\%)}&&X\ar[d]^-{G}\ar[r] &Y\ar[d]\\
W^\%(C\oplus C'')\ar[rr]^-{(g\,\,g'')^\%}&&W^\%(D\cup_C D')\ar[r]&C((g\,\,g'')^\%)
}\]
\end{definition}

\subsubsection*{Triads}

Suppose we have a homotopy commuting square $\Gamma$ in $\Ch(A)$ given by\[\xymatrix{C\ar[r]^-{f'}\ar[d]_-{f}\ar@{~>}[dr]^{h}&D'\ar[d]^-{g'}\\D\ar[r]_-{g}&E}\]Then the homotopy commutativity induces a morphism $(g'')^\%:W^\%D''\to W^\%E$ and a homotopy commutative diagram\begin{equation}\label{eq:lift}\xymatrix{X\ar[r]^-F\ar[d]_-G& W^\%E\ar[r]\ar[d]_-{=}&C(F)\ar[d]\\W^\%D''\ar[r]_{(g'')^\%}&W^\%E\ar[r]&C((g'')^\%)}\end{equation}

\begin{definition}An \textit{$(n+2)$-dimensional $\eps$-symmetric structure} on a homotopy commuting square $\Gamma$ in $\Ch(A)$ is an $(n+2)$-dimensional cycle $(\Phi, \delta\phi,\delta'\phi,\phi)\in C(F)_{n+2}= W^\%E_{n+2}\oplus W^\%D_{n+1}\oplus W^\%D'_{n+1}\oplus W^\%C_n$. The pair $(\Gamma,(\Phi,\delta\phi,\delta'\phi,\phi))$ is called an \textit{$(n+2)$-dimensional $\eps$-symmetric triad}.
\end{definition}

\begin{figure}[h]
\def\picrelboundary{\resizebox{0.3\textwidth}{!}{ \includegraphics{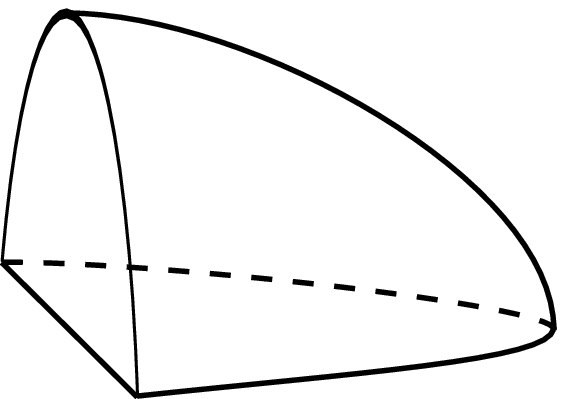}}}

\[\begin{xy} \xyimport(219.73,159.08){\picrelboundary}
,(20,90)*!L{D'}
,(100,87)*!L{E}
,(0,20)*!L{C}
,(83,29)*!L{D}

\end{xy}\]
                \caption{A schematic of an $\eps$-symmetric triad}
\end{figure}

There are several other equivalent ways of packaging the information of an $(n+2)$-dimensional $\eps$-symmetric triad. Consider that applying $W^\%$ to the square $\Gamma$ and then iterating cones yields a homotopy commuting diagram\begin{equation}\label{eq:level1}\xymatrix{
W^\%C\ar[rr]^-{(f')^\%}\ar[d]_-{f^\%}\ar@{~>}[drr]^{h^\%}&&W^\%D'\ar[d]^-{(g')^\%}\ar[r]&C((f')^\%)\ar[d]^{C((g')^\%,f^\%)}\\
W^\%D\ar[d]\ar[rr]_-{g^\%}&&W^\%E\ar[d]\ar[r]&C(g^\%)\ar[d]\\
C(f^\%)\ar[rr]_{C(g^\%,(f')^\%)}&&C((g')^\%)\ar[r]&C(F)}\end{equation}so that $C(F)$ is homotopy equivalent to the cone on $C(g^\%,(f')^\%)$ and to the cone on $C((g')^\%,f^\%)$. The homotopy cofibration sequences on the right-most vertical and the lower-most horizontal allow us to view the elements $(\delta\phi,\phi)\in C(f^\%)_{n+1}$ and $(\delta'\phi,\phi)\in C((f')^\%)_{n+1}$ as cycles, defining $(n+1)$-dimensional $\eps$-symmetric pairs. Furthermore, the diagram \ref{eq:lift} shows that a cycle in $C(F)$ induces an $(n+2)$-dimensional $\eps$-symmetric structure on the morphism $g'':D\cup_C D'\to E$. In fact the construction of these 3 data is reversible:

\begin{proposition}\label{prop:triple}An $(n+2)$-dimensional $\eps$-symmetric triad $(\Gamma,(\Phi,\delta\phi,\delta'\phi,\phi))$ determines and is determined by the triple consisting of two $(n+1)$-dimensional $\eps$-symmetric pairs \[(f:C\to D,(\delta\phi,\phi)),\quad(f':C\to D',(\delta'\phi,\phi))\] and one $(n+2)$-dimensional $\eps$-symmetric pair \[(g'':D\cup_CD'\to E,(\Phi,\delta\phi\cup_\phi\delta'\phi))\]
\end{proposition}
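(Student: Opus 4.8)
The plan is to exhibit a dictionary between the data of an $(n+2)$-dimensional $\eps$-symmetric triad and the promised triple, and to check that the two passages are mutually inverse. The essential input is the homotopy commutative diagram \eqref{eq:level1}, obtained by applying $W^\%$ to the square $\Gamma$ and iterating cones, together with the comparison diagram \eqref{eq:lift} relating $X = W^\%D\cup_{W^\%C}W^\%D'$ to $W^\%D''$ and $W^\%E$. The key observation is that $C(F)$ (the total complex whose $(n+2)$-cycles are triad structures) sits in homotopy cofibration sequences in three compatible ways: as the cone on $C(g^\%,(f')^\%):C(f^\%)\to C((g')^\%)$, as the cone on $C((g')^\%,f^\%):C((f')^\%)\to C(g^\%)$, and — via \eqref{eq:lift} — mapping onto $C((g'')^\%)$ with fibre measured by Proposition \ref{prop:pullback}.

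First I would spell out the forward map. Given a cycle $(\Phi,\delta\phi,\delta'\phi,\phi)\in C(F)_{n+2}$, reading off the subcycle $(\delta\phi,\phi)\in C(f^\%)_{n+1}$ (using the lower horizontal cofibration sequence of \eqref{eq:level1}) gives an $(n+1)$-dimensional $\eps$-symmetric pair $(f:C\to D,(\delta\phi,\phi))$; symmetrically, $(\delta'\phi,\phi)\in C((f')^\%)_{n+1}$ gives the pair $(f':C\to D',(\delta'\phi,\phi))$. Note the two pairs share the \emph{same} boundary structure $\phi$ on $C$, which is exactly the compatibility needed to form the algebraic glue $\delta\phi\cup_\phi\delta'\phi\in W^\%D''$ of Definition \ref{def:glue1}. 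Finally, pushing $(\Phi,\delta\phi,\delta'\phi,\phi)$ through the right-hand square of \eqref{eq:lift} — whose top row maps $X\to W^\%E\to C(F)$ and whose bottom row is $W^\%D''\xrightarrow{(g'')^\%}W^\%E\to C((g'')^\%)$, with the left vertical $G:X\to W^\%D''$ — produces a cycle in $C((g'')^\%)_{n+2}$, i.e. an $(n+2)$-dimensional $\eps$-symmetric pair $(g'':D\cup_CD'\to E,(\Phi,\delta\phi\cup_\phi\delta'\phi))$. Here one must check that the relative structure on this last pair is genuinely $\delta\phi\cup_\phi\delta'\phi$ and not merely homotopic to it; this follows by tracing the definition of $G$ (the nullhomotopy $C\to D\oplus D'\to D''$) and comparing with the universal-property map defining $\delta\phi\cup_\phi\delta'\phi$, since both are built from the same nullhomotopy data.

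For the reverse direction, given the triple, the two $(n+1)$-dimensional pairs furnish cycles in $C(f^\%)$ and $C((f')^\%)$ agreeing on $W^\%C$, hence a cycle $((\delta\phi,\phi),(\delta'\phi,\phi))$ in the homotopy pullback $X = C(f^\%)\times_{\Sigma W^\%C}C((f')^\%)$; applying $G$ sends this to the class $\delta\phi\cup_\phi\delta'\phi\in W^\%D''$, on which the $(n+2)$-dimensional pair $(g'':D''\to E,(\Phi,\delta\phi\cup_\phi\delta'\phi))$ supplies the nullhomotopy $\Phi$ of $(g'')^\%(\delta\phi\cup_\phi\delta'\phi)$. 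Feeding $((\Phi,\delta\phi,\delta'\phi,\phi))$ back through the top row $X\to W^\%E\to C(F)$ of \eqref{eq:lift} assembles a cycle in $C(F)_{n+2}$, i.e. a triad structure on $\Gamma$. That these two constructions invert each other up to the evident homotopy equivalences is then a diagram chase in \eqref{eq:lift} and \eqref{eq:level1}: both composites are identity on the nose on underlying complexes, and on structures they differ at worst by the indeterminacy in choices of nullhomotopy, which the functoriality of $W^\%$ up to homotopy (Section \ref{subsec:res}) absorbs.

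The main obstacle I anticipate is \emph{bookkeeping of the nullhomotopies}, not any conceptual difficulty: the complex $C(F)$, the maps $G$ and $F$, and the glue $\delta\phi\cup_\phi\delta'\phi$ each depend on an auxiliary choice of chain nullhomotopy ($h$ for $\Gamma$, $j$-type nullhomotopies for the cofibration sequences, the nullhomotopy $C\to D\oplus D'\to D''$), and one must verify that all the choices made on the two sides of the correspondence can be taken compatibly — or, failing that, that different compatible choices yield homotopy equivalent triads/triples. Concretely this means checking that the homotopy $h^\%$ appearing in \eqref{eq:level1} is the one induced by the chosen $h$ in $\Gamma$, and that the pushout/pullback identifications $C(F)\simeq C(C(g^\%,(f')^\%))\simeq C(C((g')^\%,f^\%))$ from the end of Section \ref{subsec:res} are used consistently when extracting versus reassembling the subcycles. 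Once the choices are pinned down, every verification reduces to unravelling the block-matrix differentials of the relevant mapping cones, which is routine.
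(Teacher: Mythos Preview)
Your proposal is correct and follows essentially the same approach as the paper: extract the two $(n+1)$-dimensional pairs from the cofibration sequences of diagram \eqref{eq:level1} and the glued $(n+2)$-dimensional pair via diagram \eqref{eq:lift}, and for the reverse direction use that $\delta\phi\cup_\phi\delta'\phi\in\im(G)$ by construction together with the homotopy commutativity of \eqref{eq:lift} to lift $(\Phi,\delta\phi\cup_\phi\delta'\phi)\in C((g'')^\%)$ back to a cycle in $C(F)$. Your account is considerably more explicit than the paper's about the pullback description of $X$, the mutual-inverse check, and the nullhomotopy bookkeeping, but the argument is the same.
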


\begin{proof}To reverse our construction above, note that the three pairs comprising the triple in the theorem define an obvious homotopy commuting square $\Gamma$. We only need to lift the cycle $(\Phi,\delta\phi\cup_\phi\delta'\phi)\in C((g'')^\%)$ to $C(F)$. But this is possible as $\delta\phi\cup_\phi\delta'\phi\in\im(G)$ by definition, and the homotopy commutativity of diagram \ref{eq:lift} ensures that the corresponding lift of $(\Gamma,\delta\phi\cup_\phi\delta'\phi)$ to $C(F)$ is a cycle.
\end{proof}

Two $(n+2)$-dimensional $\eps$-symmetric triads $(\Gamma,(\Phi,\delta\phi,\delta'\phi,\phi))$, $(\tilde{\Gamma},(\tilde{\Phi},\delta\tilde{\phi},\delta'\tilde{\phi},\tilde{\phi}))$ are \textit{homotopy equivalent} if there are morphisms $h:C\to \tilde{C}$, $k:D\to\tilde{D}$, $k':D'\to\tilde{D'}$, $l:E\to\tilde{E}$ commuting up to homotopy in the obvious way with the maps within $\Gamma$ and $\tilde{\Gamma}$ and inducing homotopy equivalences of the respective $\eps$-symmetric pairs determined by Proposition \ref{prop:triple}.

\subsubsection*{Symmetric Poincar\'{e} structures}

We are finally in a position to define the chain complex analogues of Poincar\'{e} duality and Poincar\'{e}-Lefschetz duality for our various $\eps$-symmetric structures.

\begin{definition}[Symmetric Poincar\'{e} structures]

An $n$-dimensional $\eps$-symmetric complex $(C,\phi)$ is \textit{Poincar\'{e}} if the image under the slant map $\phi_0:C^{n-*}\to C$ of $\ev(\phi)\in (C\otimes C)_n$ is a chain homotopy equivalence.

An $(n+1)$-dimensional $\eps$-symmetric pair $(f:C\to D,(\delta\phi,\phi))$ is \textit{Poincar\'{e}} if the image $(\delta\phi_0\,\,f\phi_0):C(f)^{n+1-*}\to D$ of $\ev_l(\delta\phi,\phi)\in (C(f)\otimes D)_{n+1}$ under the slant map is a chain homotopy equivalence. This is an equivalent condition to the image of $\ev_r(\delta\phi,\phi)\in (D\otimes C(f))_{n+1}$ under the slant map being a chain homotopy equivalence.

An $(n+2)$-dimensional $\eps$-symmetric triad $(\Gamma,(\Phi,\delta\phi,\delta'\phi,\phi))$ is \textit{Poincar\'{e}} if each of the associated pairs \[(f:C\to D,(\delta\phi,\phi)),\quad(f':C\to D',(\delta'\phi,\phi)),\quad(g'':D\cup_CD'\to E,(\Phi,\delta\phi\cup_\phi\delta'\phi))\] is Poincar\'{e}.
\end{definition}

In particular, this means that given an $(n+1)$-dimensional $\eps$-symmetric Poincar\'{e} pair $(f:C\to D,(\delta\phi,\phi))$, the $n$-dimensional $\eps$-symmetric complex $(C,\phi)$ is in fact always Poincar\'{e} as well. This is seen directly from the the definitions and by considering the homotopy commutative diagram \ref{eq:ladder}:\begin{equation}\label{eq:ladder}\xymatrix{C^{n-*}\ar[d]_{\setminus(ev(\phi))}\ar[r] &C(f)^{n+1-*}\ar@{~>}[drr]^-{\sm\Phi_h(\delta\phi,\phi)}\ar[rr]^-{e^*}\ar[d]_{\setminus(\ev_l(\delta\phi,\phi))} &&D^{n+1-*}\ar[r]\ar[d]^{\setminus(\ev_r(\delta\phi,\phi))} &C^{n+1-*}\ar[d]^{\Sigma(\setminus(\ev(\phi)))}\\
C\ar[r]&D\ar[rr]^-{e}&&C(f)\ar[r]&\Sigma C}\end{equation}To check that this is indeed homotopy commutative, start by applying the evaluation and slant maps to the homotopy pullback square of \ref{prop:pairev}. This will show that the central square of diagram \ref{eq:ladder} is a homotopy commuting square. The rows of \ref{eq:ladder} are clearly cofibration sequences and the remaining two squares are actually seen to commute on the nose.

\subsection{Collections of chain maps and the effect of a half-unit}\label{subsec:chainmaps}

In Chapter \ref{chap:DLtheory} we develop the chain complex analogue of the double Witt groups $DW^\eps(A,S)$. Just as in the case of linking forms, in order to construct a group using this chain complex analogue we will need to assume the ring $A$ contains a half-unit $s$. This will have the additional effect of permitting a dramatic simplification of the symmetric structures so far defined - to whit  we will be able to reduce all symmetric structure information of an $\eps$-symmetric complex $(C,\phi)$ to our `level 0' approximation $\phi_0$. In order to explain this we will quote another formulation for symmetric structures consisting of a collection of interrelated chain maps of various degrees.

\medskip

Suppose $(C,d)$ in $\B(A)$ and $\phi\in W^\%C_n$. Then, writing $W_s=\Z[1_s,T]$ we have the slant $\phi_s:=\sm(\phi(1_s))$ defining a collection of (degree $s$) chain maps, which we write as\[\{\phi_s:C^{n+s-r}\to C_r\,|\,s\geq0, r\in\Z\},\]and this is a complete description of $\phi$. Note that this description agrees with our previous definition of $\phi_0$. Now if we assume that $\phi$ is a cycle we must moreover insist that it is in the kernel of the differential \[d^\%:W^\%C_n\to W^\%C_{n-1}.\]This differential is calculated to be\[d^\%:\phi_s\mapsto d\phi_s+\phi_sd^*+(-1)^{n+s-1}(\phi_{s-1}+(-1)^sT_\eps\phi_{s-1}):C^{n-r+s-1}\to C_r,\]for $s\geq0$, $r\in\Z$ and $\phi_{-1}:=0$ by convention.

\begin{proposition}[{\cite[1.2, 1.4(i), 3.3]{MR560997}}]When there is a half unit $s\in A$, and $C$ is chain homotopy equivalent to an object of $\B(A)$, there is a chain homotopy equivalence $W^\%C\simeq W^\%_0C$. 
\end{proposition}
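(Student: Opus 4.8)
The plan is to exploit the half-unit $s$ to build an explicit chain homotopy equivalence between the two complexes $W^\%C = \Hom_{\Z[\Z/2\Z]}(W,C^t\otimes_A C)$ and $W^\%_0C = \Hom_{\Z[\Z/2\Z]}(W_0,C^t\otimes_A C)$, where the inclusion $W_0\hookrightarrow W$ induces the natural map $\ev\colon W^\%C\to W^\%_0C$ in one direction. Since $C$ is chain homotopy equivalent to an object of $\B(A)$, it suffices (by functoriality and homotopy invariance of $W^\%$ and $W^\%_0$) to treat the case $C\in\B(A)$; I would note this reduction at the outset. The substance of the argument is then to produce a chain map $\mu\colon W^\%_0C\to W^\%C$ in the other direction and homotopies $\ev\circ\mu\simeq \id$, $\mu\circ\ev\simeq\id$.

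First I would recall that $W_0 = (\Z[\Z/2\Z]\to 0\to\cdots)$ is the truncation of $W$ in degree $0$, so that an element of $W^\%_0C$ in degree $n$ is just a single chain map $\phi_0\colon C^{n-*}\to C$ with the $\eps$-symmetry $\phi_0 = T_\eps\phi_0$ holding up to the appropriate boundary; that is, $W^\%_0C$ records precisely the ``level $0$'' data. The map $\mu$ must take such a $\phi_0$ and produce a full cycle $\phi = \{\phi_s\}_{s\ge 0}$ in $W^\%C$ lifting it, naturally in $C$ and compatibly with the differential $d^\%$ written out in \cite[1.2]{MR560997}. The half-unit enters here exactly as in the proof of Lemma \ref{lem:splitishyp1} and Lemma \ref{lem:halfunithyp}: writing $\theta = \nu + \eps\nu^*$ with $\nu = s\theta$ allowed us there to ``split'' a symmetric object into an asymmetric half plus its conjugate, and the same device lets us contract the higher $W$-cells. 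Concretely, the standard fact (this is the content of \cite[1.6, 3.3]{MR560997}) is that when $2$ — or more generally a half-unit — is available, the $\Z[\Z/2\Z]$-chain complex $C^t\otimes_A C$ has the property that $\Hom_{\Z[\Z/2\Z]}(W,-)$ applied to it agrees up to homotopy with $\Hom_{\Z[\Z/2\Z]}(W_0,-)$, because multiplication by $s$ furnishes an explicit contracting homotopy for the acyclic complex $\Sigma^{-1}W/W_0 \simeq (\cdots\to\Z[\Z/2\Z]\xrightarrow{1+T}\Z[\Z/2\Z]\xrightarrow{1-T}\Z[\Z/2\Z])$ when one uses that $s+\bar s = 1$ makes $1-T$ (resp. $1+T$) invertible ``on average'' after tensoring. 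I would spell out that on each cell $W_s$ with $s\ge 1$ the operator $s\cdot(\text{the relevant }1\pm T)$ together with $\bar s\cdot(\text{the other }1\mp T)$ sums to the identity, giving the nullhomotopy of the identity on the truncation $W/W_0$; dualising and applying $\Hom_{\Z[\Z/2\Z]}(-,C^t\otimes_A C)$ then yields the desired homotopy between $W^\%C$ and $W^\%_0C$.

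Assembling: one gets a short exact (hence homotopy cofibration) sequence $W^\%_0C \to W^\%C \to \Hom_{\Z[\Z/2\Z]}(\Sigma^{-1}(W/W_0), C^t\otimes_A C)$, and the half-unit makes the third term chain contractible; therefore the first map is a chain homotopy equivalence, and composing with a chain homotopy inverse of $W^\%_0C\to W^\%C$ (in the direction of the natural map $\ev$ the composite is $\simeq\id$ by construction) gives $W^\%C\simeq W^\%_0C$ as claimed. The main obstacle I anticipate is purely bookkeeping: one must track the signs $(-1)^{n+s-1}$ and the $\eps$-twisted transposition $T_\eps$ through the formula for $d^\%$ recorded just above the statement, and verify that the contracting homotopy built from $s$ and $\bar s$ is genuinely $\Z[\Z/2\Z]$-equivariant (it is, precisely because $\bar s = 1-s$ is the image of $s$ under the involution, so the pair $(s,\bar s)$ is exchanged by $T$) — this equivariance is the one place where ``$1/2$'' would be cleaner and where the half-unit hypothesis does real work rather than cosmetic work. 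Everything else is the routine verification that $\ev\circ\mu\simeq\id$ and $\mu\circ\ev\simeq\id$, which I would not grind through in detail, citing \cite[1.6, 3.3]{MR560997} for the explicit formulas.
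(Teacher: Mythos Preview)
There is a genuine gap. Your key claim---that the half-unit makes $\Hom_{\Z[\Z/2\Z]}(W/W_0,\,C^t\otimes_A C)$ contractible---is false. Take $C=A$ concentrated in degree $0$ with $\eps=1$. Then $(C\otimes C)_0=A$ with $T$ acting by $a\mapsto\overline a$, and $\Hom_{\Z[\Z/2\Z]}(W/W_0,A)$ is the singly-infinite complex
\[
\cdots \xrightarrow{\,1-T\,} A \xrightarrow{\,1+T\,} A
\]
terminating in degree $-1$. Thus $H_{-1}=\ker(1+T)=\{a\in A:\,a+\overline a=0\}$, and the half-unit itself furnishes a nonzero class: $s-\overline s$ lies in this kernel and is nonzero whenever $2$ is not invertible (e.g.\ $A=\Z[z,z^{-1},(1-z)^{-1}]$). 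Your proposed contracting homotopy, alternating multiplication by $s$ and $\overline s$, does satisfy $dh+hd=\id$ at every \emph{interior} spot, but at the terminal degree $-1$ there is no incoming differential and one would need $(1+T)\circ(\text{mult.\ by }\overline s)=\id$, which fails.

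The paper avoids this by using a \emph{different} cofibre sequence: it introduces the doubly-infinite Tate complex $\widehat W$ sitting in $0\to\Sigma^{-1}W^{-*}\to\widehat W\to W\to 0$, yielding
\[
\Hom_{\Z[\Z/2\Z]}(W^{-*},C\otimes C)\xrightarrow{1+T_\eps}W^\%C\to\Hom_{\Z[\Z/2\Z]}(\widehat W,C\otimes C).
\]
Because $\widehat W$ has no boundary, the alternating $s,\overline s$ homotopy \emph{does} contract $\Hom_{\Z[\Z/2\Z]}(\widehat W,C\otimes C)$, so $1+T_\eps$ is a chain equivalence from the quadratic construction onto $W^\%C$; its image visibly has $\phi_s=0$ for $s\geq1$, which is how the reduction to the level-$0$ data is achieved. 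So the essential idea---use the half-unit to build a contracting homotopy---is right, but it must be applied to the Tate complex $\widehat W$, not to the truncation $W/W_0$.
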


\begin{proof}Write a $\Z[\Z/2\Z]$-module chain complex\[\widehat{W}:\qquad\dots\to\Z[\Z/2\Z]\xrightarrow{1-T_\eps}\Z[\Z/2\Z]\xrightarrow{1+T_\eps}\Z[\Z/2\Z]\xrightarrow{1-T_\eps}\Z[\Z/2\Z]\to\dots,\]homologically graded as $1+(-1)^iT_\eps:(\widehat{W})_i\to(\widehat{W})_{i-1}$. Then there is an obvious degree-wise split short exact sequence in $\Ch(\Z[\Z/2\Z])$\[0\to \Sigma^{-1}W^{-*}\to \widehat{W}\to W\to 0, \]and there is a corresponding homotopy cofibration sequence\[\Hom_{\Z[\Z/2\Z]}(W^{-*},C\otimes C)\xrightarrow{1+T_\eps} \Hom_{\Z[\Z/2\Z]}(W,C\otimes C) \to \Hom_{\Z[\Z/2\Z]}(\widehat{W},C\otimes C),\]where $((1+T_\eps)\psi)_s:=0$ for $s\neq 0$ and $((1+T_\eps)\psi)_0:=(1+T_\eps)\psi_0)$, so in fact $1+T_\eps$ factors through $W^\%_0C$. When there is a half-unit in $A$, we have $\Hom_{\Z[\Z/2\Z]}(\widehat{W},C\otimes C)\simeq 0$ (see \cite[3.3]{MR560997}). Hence, there is a chain homotopy equivalence $1+T_\eps:\Hom_{\Z[\Z/2\Z]}(W^{-*},C\otimes C)\xrightarrow{\simeq} W^\%C$. But the image of this map is contained in $W^\%_0C$ so indeed $W^\%C\simeq W^\%_0C$ in this case.
\end{proof}

\subsection{Skew-suspension}\label{subsec:skewsusp}

The idea of `skew-suspending' will allow us to compare symmetric chain complexes of different dimensions. This will be exploited in Chapter \ref{chap:DLtheory} to analyse dimensional periodicity in the groups of symmetric Poincar\'{e} complexes we shall introduce later.

\medskip

Given a chain complex $C$ in $\B(A)$, the suspension $\Sigma C$ contains the same information as $C$. In some sense, this fact is true for chain complexes equipped with an $\eps$-symmetric structure. The naive approach of taking an $n$-dimensional $\eps$-symmetric complex $(C,\phi)$ and hoping for an $(n+1)$-dimensional $\eps$-symmetric structure on $\Sigma C$ does indeed work, and it is possible (\cite[11.54]{TIBOR}) to construct a map \[S:\Sigma W^\%C\to W^\%(\Sigma C)\]The problem is that $S$ is not in general a chain homotopy equivalence, so $W^\%C$ does not carry the same information as $W^\%\Sigma C$, as hoped.

The interactions between chain complexes and their suspensions via the map $S$ is an important part of the surgery programme for manifold classification due to the idea of using suspensions of spaces to obtain `quadratic refinements' of symmetric structure. We will not need this in the sequel so we simply quote the following result (which we \textit{will} need later).

\begin{proposition}[{\cite[Case $p=1$ of 1.1.3]{MR620795}}]\label{prop:ultra}Suppose $\overline{\eps}=\eps^{-1}$ and $C$ is in $\B(A)$. Then there is a short exact sequence of chain complexes\[0\to \Sigma W^\% C\xrightarrow{S} W^\%\Sigma C \to \Sigma C\otimes \Sigma C\to 0,\]and the connecting morphism in the associated long exact homology sequence is induced by a chain map\[1+T_\eps:\Hom_A(C^{-*},C))\to W^\% C;\qquad (\hat{\psi}+\eps\hat{\psi}^*)=(\hat{\psi}+\eps\hat{\psi}^*)_0.\]
\end{proposition}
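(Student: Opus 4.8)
\textbf{Proof plan for Proposition \ref{prop:ultra}.}

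The plan is to exhibit the short exact sequence of $\Z[\Z/2\Z]$-module chain complexes at the level of resolutions and then apply $\Hom_{\Z[\Z/2\Z]}(-, C^t\otimes_A C)$ to it, identifying all three terms. The starting point is the observation that the standard resolution $W$ of $\Z$ over $\Z[\Z/2\Z]$ admits, for each $p\geq 0$, a natural degree-shifting comparison with its own desuspension; in the case $p=1$ relevant here, there is a degree-wise split short exact sequence
\[0\to \Sigma^{-1}W^{-*}\xrightarrow{\,\iota\,}\widehat{W}\xrightarrow{\,\pi\,}W\to 0\]
already recorded in the proof of the preceding proposition (the one showing $W^\%C\simeq W^\%_0 C$ under a half-unit), where $\widehat{W}$ is the full (Tate) complex $\cdots\to\Z[\Z/2\Z]\xrightarrow{1\pm T_\eps}\Z[\Z/2\Z]\to\cdots$. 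First I would re-examine this sequence, being careful with the grading conventions and the sign $\eps$ in $T_\eps$, and twist/reindex it so that after applying $\Hom_{\Z[\Z/2\Z]}(-,\Sigma C\otimes \Sigma C)$ — equivalently, after using the natural identification $\Sigma C\otimes\Sigma C\cong \Sigma^2(C\otimes C)$ together with the sign rearrangement coming from the Koszul rule on $T_\eps$ — the three terms become exactly $\Sigma W^\% C$, $W^\%\Sigma C$, and $\Sigma C\otimes \Sigma C$ respectively, with the map $S$ appearing as the middle arrow. Concretely: $\Hom_{\Z[\Z/2\Z]}(W, \Sigma C\otimes\Sigma C)$ is $W^\%\Sigma C$ by definition; $\Hom_{\Z[\Z/2\Z]}(\Sigma^{-1}W^{-*},\Sigma C\otimes\Sigma C)$ is, after dualising $W$ and shifting, identified with $\Sigma\,\Hom_{\Z[\Z/2\Z]}(W,C\otimes C)=\Sigma W^\% C$; and $\Hom_{\Z[\Z/2\Z]}(\widehat{W},\Sigma C\otimes\Sigma C)$ receives a contribution only in the "norm'' degree, which after the shift is precisely $\Sigma C\otimes\Sigma C$ (because $\Hom_{\Z[\Z/2\Z]}(\Z[\Z/2\Z],M)\cong M$ in a single degree, and the higher $\widehat{W}$-terms cancel in pairs against the $W$ and $\Sigma^{-1}W^{-*}$ pieces). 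Since the sequence of $\Z[\Z/2\Z]$-modules is degree-wise split, applying $\Hom_{\Z[\Z/2\Z]}(-,C^t\otimes_A C)$ preserves exactness, giving the asserted short exact sequence of chain complexes.

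Next I would identify the connecting homomorphism. The long exact homology sequence of the short exact sequence of chain complexes has connecting map $\partial:H_*(\Sigma C\otimes\Sigma C)\to H_{*-1}(\Sigma W^\% C)$; after unwinding the two shifts this is a map $H_{*-2}(C\otimes C)\to H_{*-2}(W^\% C)$, i.e.\ it is induced by a chain map $C\otimes C\to W^\% C$. Using the slant identification $C\otimes C\simeq \Hom_A(C^{-*},C)$ valid since $C$ is in $\B(A)$, I would trace the snake-lemma zig-zag through the explicit splitting of $\widehat{W}$: an element of the norm degree lifts to $\widehat W$, and applying the differential lands it in the image of $\Sigma^{-1}W^{-*}$; computing what this produces in $W^\% C$ and reading off the $s=0$ component shows the connecting map is exactly $\hat\psi\mapsto (\hat\psi+\eps\hat\psi^*)$, concentrated in the $s=0$ slot, i.e.\ the symmetrisation map $1+T_\eps$. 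This matches the known fact that the quadratic-to-symmetric symmetrisation is the obstruction to $S$ being a homotopy equivalence.

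The main obstacle will be bookkeeping of signs and gradings: the definition of $C^{-*}$ carries the sign $(-1)^r d^*$, the definition of $T_\eps$ carries $\eps(-1)^{pq}$, the suspension shifts all degrees by one so $\Sigma C\otimes\Sigma C$ versus $\Sigma^2(C\otimes C)$ differs by a sign that depends on the parities involved, and the differential $d^\%$ on $W^\%C$ already mixes $\phi_s$ and $\phi_{s-1}$ with sign $(-1)^{n+s-1}$. Getting these to align so that the middle map is literally the $S$ of \cite[11.54]{TIBOR} and the connecting map is literally $1+T_\eps$ with the stated sign convention requires patience but no new ideas; in practice I would pin down conventions by testing on $C$ concentrated in a single degree (recovering the classical "$\eps$-symmetric versus $\eps$-quadratic form'' short exact sequence $0\to \Z\to\Z\to\Z/2\to 0$ appropriately decorated) before asserting the general case. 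Since the proposition is quoted verbatim from \cite[1.1.3]{MR620795} with $p=1$, I would also simply cite that source for the sign-accurate statement and present the above as the conceptual derivation.
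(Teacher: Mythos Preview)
The paper does not prove this proposition at all: it is stated with a citation to Ranicki \cite[1.1.3, $p=1$]{MR620795} and no proof is given. So there is no ``paper's proof'' to compare against, and your closing remark that you would ultimately cite that source is the right instinct.

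That said, your conceptual derivation contains a genuine error that you should not carry forward. Applying $\Hom_{\Z[\Z/2\Z]}(-,\Sigma C\otimes\Sigma C)$ to the Tate sequence $0\to\Sigma^{-1}W^{-*}\to\widehat W\to W\to 0$ does \emph{not} produce the sequence in the proposition. The middle term $\Hom_{\Z[\Z/2\Z]}(\widehat W,\Sigma C\otimes\Sigma C)$ is the hyperquadratic construction $\widehat W^\%\Sigma C$, which is not $\Sigma C\otimes\Sigma C$; your claim that ``the higher $\widehat W$-terms cancel in pairs against the $W$ and $\Sigma^{-1}W^{-*}$ pieces'' is simply false, since $\widehat W$ has a free $\Z[\Z/2\Z]$-module in every degree and $\Hom_{\Z[\Z/2\Z]}(\Z[\Z/2\Z],M)\cong M$ in \emph{each} of those degrees. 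The Tate sequence instead yields the symmetric--hyperquadratic--quadratic comparison $0\to W^\%\Sigma C\to\widehat W^\%\Sigma C\to\Sigma W_\%\Sigma C\to 0$, which is a different animal.

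The sequence you want comes from the much simpler degreewise-split filtration $0\to W_0\to W\to W/W_0\to 0$ of the resolution $W$ by its degree-$0$ part. Applying $\Hom_{\Z[\Z/2\Z]}(-,\Sigma C\otimes\Sigma C)$ contravariantly, the quotient map picks off the $s=0$ component $\Hom_{\Z[\Z/2\Z]}(W_0,\Sigma C\otimes\Sigma C)\cong\Sigma C\otimes\Sigma C$, and the kernel (the $s\geq 1$ components) is identified with $\Sigma W^\% C$ via the shift $s\mapsto s-1$ together with $\Sigma C\otimes\Sigma C\cong\Sigma^2(C\otimes C)$. The connecting map is then computed by the snake lemma exactly as you describe, and does come out to $1+T_\eps$. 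So your second paragraph is essentially correct once the first paragraph is replaced by the right filtration.
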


In order to retain the same symmetry information upon chain complex suspension, we can take a less naive approach than above and insert a `dummy' top dimension after we suspend. Then the 0 we have introduced at the bottom of the chain complex is paired to a 0 at the top of the chain complex. Here is a simple example illustrating the idea:

\begin{example}If we equip a chain complex \[C:\qquad\qquad0\to C_0\to 0,\qquad\qquad\]with a 0-dimensional $\eps$-symmetric structure $\phi\in W^\%C\simeq \Hom_{\Z[\Z/2\Z]}(W_0,C\otimes C)$ it is completely defined by $\phi_0:C^0\to C_0$. From this, we may define a 2-dimensional $(-\eps)$-symmetric structure on \[\Sigma C:\qquad\qquad 0\to 0\to C_0\to0\to0\qquad\qquad\]by \[(\overline{S}\phi)_0=\pm\phi_0:(\Sigma C)^1\to (\Sigma C)_1,\] and $(\overline{S}\phi)_s=0$ otherwise. This structure is $(-\eps)$-symmetric as transposing elements in $(\Sigma C)_1$ means we pick up a minus sign.
\end{example}

More generally, given a chain complex $C$ in $\B(A)$, there is a homotopy equivalence defined by\[\overline{S}:\Sigma^2(C^t\otimes_A C)\xrightarrow{\simeq} (\Sigma C)^t\otimes_A(\Sigma C);\qquad x\otimes y\mapsto (-1)^{|x|}x\otimes y\]in $\Ch(\Z[\Z/2\Z])$, where it is understood that the involution on $C^t\otimes_A C$ uses $T_\eps$ and the involution on $(\Sigma C)^t\otimes_A(\Sigma C)$ uses $T_{-\eps}$. Applying the $W^\%$ functor to this chain equivalence we obtain the homotopy equivalence\[\overline{S}:\Sigma^{2}W_\eps ^\%C\xrightarrow{\simeq} W^\%_{-\eps}\Sigma C\]in $\Ch(\Z)$. By similar reasoning, given a morphism $f:C\to D$ in $\B(A)$ there is a homotopy equivalence in $\Ch(\Z)$\[\overline{S}:\Sigma^2C(f^\%:W_\eps^\%C\to W^\%_\eps D)\xrightarrow{\simeq}C((\Sigma f)^\%:W_{-\eps}^\%(\Sigma C)\to W^\%_{-\eps}(\Sigma D)).\]

\begin{definition}If $(C,\phi\in W_\eps^\%C_n)$ is an $n$-dimensional, $\eps$-symmetric (Poincar\'{e}) complex over $A$ then the \textit{skew-suspension} of $(C,\phi)$ is the $(n+2)$-dimensional, $(-\eps)$-symmetric (Poincar\'{e}) complex $\overline{S}(C,\phi)=(\Sigma C,\overline{S}\phi\in W_{-\eps}^\%\Sigma C_{n+2})$.

If $(f:C\to D,(\delta\phi,\phi)\in C(f^\%)_{n+1})$ is an $(n+1)$-dimensional, $\eps$-symmetric (Poincar\'{e}) pair over $A$ then the \textit{skew-suspension} of $(f:C\to D,(\delta\phi,\phi))$ is the $(n+3)$-dimensional, $(-\eps)$-symmetric (Poincar\'{e}) pair \[\overline{S}(f:C\to D,(\delta\phi,\phi)):=(\Sigma f:\Sigma C\to \Sigma D,\overline{S}(\delta\phi,\phi)\in C((\Sigma f)^\%)_{n+3}).\]
\end{definition}

\section{The symmetric constructions}\label{sec:symmetric}

We now show how the structures described so far arise very naturally from geometry. In the following account, $X$ will always denote a topological space, $C(X)$ its singular chain complex and $\widetilde{X}$ its universal covering space. Given a group $\pi$, the group ring $\Z[\pi]$ is considered to have the involution extended linearly from $g\mapsto g^{-1}$\text{.} 

\medskip

First we recall two basic chain-level constructions.

\medskip

Given $X$ and a natural choice of chain homotopy equivalence $\alpha:C(X\times X)\xrightarrow{\simeq} C(X)\otimes C(X)$, the diagonal map $\Delta:X\to X\times X$ induces a natural morphism\[\Delta_0:C(X)\xrightarrow{\Delta_*}C(X\times X)\xrightarrow{\alpha}C(X)\otimes C(X).\]

Suppose $m=p+q$. Fixing a choice of $\alpha$, an $m$-dimensional cycle of the form $x\otimes y\in C^p(X)\otimes C^q(Y)$ is pulled back to an $m$-cycle $x\cup y:=\Delta_0^*(x\otimes y)\in C^m(X)$ called the \textit{(chain-level) cup product of $x$ and $y$}. Also, for a fixed choice of $\alpha$, the composition \[C(X)\xrightarrow{\Delta_0}C(X)\otimes C(X)\xrightarrow{\sm}\Hom(C(X)^{-*},C(X))\] sends an $n$-cycle $x\in C_n(X)$ to a chain map called the \textit{(chain-level) cap product with $x$} and written\[x\cap-:C^{n-*}(X)\to C(X).\] 

\subsection*{Chain diagonal approximation (idea)}

We now sketch a standard extension of these chain-level constructions that can be found in many textbooks, for instance we base our explanation on \cite[VI.16]{MR1224675}. We will need the equivariant version to deal with covering spaces, particularly the infinite cyclic cover when we move on to knot theory. We will not develop these entirely but will rely on our references.

The existence of a natural chain equivalence $\alpha:C(X\times X)\xrightarrow{\simeq} C(X)\otimes C(X)$ is ensured by the method of acyclic models (see e.g. \cite[IV.16]{MR1224675}), and a choice of $\alpha$ is unique up to natural homotopy equivalence (again, by the method of acyclic models). In particular, fix a choice of $\alpha$ and denote by $T$ the involution on $C(X)\otimes C(X)$ defined by $T(x\otimes y)=(-1)^{pq}(y\otimes x)$ for $x\in C^p(X), y\in C^q(X)$. Then $T\circ\alpha:C(X\times X)\to C(X)\otimes C(X)$ is also natural chain homotopy equivalence and hence there exists a natural chain homotopy equivalence $\alpha\simeq T\circ\alpha$.

Denote by $\Delta_1:C(X)\to C(X)\otimes C(X)$ a natural degree 1 chain map such that $(T-1)\Delta_0=\Delta_1d+d\Delta_1$ (i.e. $\Delta_1$ is a nullhomotopy of $(T-1)\Delta_0$), induced by a choice of natural chain homotopy equivalence $\alpha\simeq T\circ\alpha$. We have \[(T+1)\Delta_1d+d(T+1)\Delta_1=(T+1)(\Delta_1d+d\Delta_1)=0,\]as $T^2=1$. So that $(T+1)\Delta_1$ is a natural chain map of degree 1. 

Denote by $\Delta_2:C(X)\to C(X)\otimes C(X)$ a natural degree 2 chain map that is a nullhomotopy of the degree 1 chain map $(T+1)\Delta_1$ (in the sense that $(T+1)\Delta_1=d\Delta_2-\Delta_2d$). Again this is assured to exist by the method of acyclic models. 

Repeating this process we obtain a sequence $\Delta_s:C(X)\to C(X)\otimes C(X)$ of degree $s$ chain maps for $s\geq0$ that are natural in $X$ and interrelated by \[(T+(-1)^{s+1})\Delta_s=d\Delta_{s+1}+(-1)^s\Delta_{s+1}d.\]This has a more slick expression in the form of a natural (degree 0) chain map\[\Delta_X:W\otimes C(X)\to C(X)\otimes C(X);\qquad 1_s\otimes x\mapsto \Delta_s(x).\]

Now assume $\pi$ is a group and $\overline{X}$ is a $\pi$-space so that $C(\overline{X})$ admits the structure of a $\Z[\pi]$-module chain complex. By extending the $\pi$-action trivially to $W$ over the tensor $W\otimes C(\overline{X})$ and by equipping $C(\overline{X})\otimes C(\overline{X})$ with the diagonal $\pi$-action, we may regard $\Delta_{\overline{X}}$ as a morphism in $\Ch(\Z[\pi])$. The adjoint morphism to $\Delta_{\overline{X}}$ in $\Ch(\Z[\pi])$ is \begin{equation}\label{eq:adjoint}C(\overline{X})\to\Hom_{\Z[\Z/2\Z]}(W,C(\overline{X})\otimes_{\Z}C(\overline{X})).\end{equation}Note that the image of a cycle $x\in C(\overline{X})$ under the morphism \ref{eq:adjoint}, followed by the slant map is the chain level cap product with $x$. 

Now, using the trivial $\pi$-action on $\Z$, apply $\Z^t\otimes_{\Z[\pi]}-$ to both sides of the morphism \ref{eq:adjoint} to obtain a morphism in $\Ch(\Z)$\[\phi_{\overline{X}}:\Z^t\otimes_{\Z[\pi]}C(\overline{X})\to\Hom_{\Z[\Z/2\Z]}(W,C(\overline{X})^t\otimes_{\Z[\pi]}C(\overline{X})),\]called the \textit{Alexander-Whitney diagonal approximation} of the $\pi$-space $\overline{X}$. In particular, if $\overline{X}$ is a covering space for $X$ with group of covering translations $\pi$, we have $\Z^t\otimes_{\Z[\pi]}C(\overline{X})=C(X)$ in $\Ch(\Z)$ so that \[\phi_{\overline{X}}:C(X)\to W^\%C(\overline{X}).\]

\begin{remark}The Alexander-Whitney diagonal approximation described above is most commonly used to construct the classical Steenrod square cohomology operations on a $\pi$-space $\overline{X}$. Indeed, suppose for simplicity that we are working with $\Z/2\Z$-coefficients, that $\pi$ is trivial and that for each $s\geq0$ we have constructed the degree $s$ chain map $\Delta_s:C(X)\to C(X)\otimes C(X)$. The $s$th Steenrod square is the homomorphism defined by \[\text{Sq}^s:H^r(X;\Z/2\Z)\to H^{r+s}(X;\Z/2\Z);\qquad a\mapsto \Delta_s^*(a\otimes a).\]See \cite[VI.16]{MR1224675} for details.
\end{remark}

\begin{remark}It will be very important later on when we define the algebraic $L$-groups that the chain complexes we use are homotopy equivalent to chain complexes in $\B(A)$, this is necessary to avoid certain pathologies related to the Eilenberg swindle. We have been trying to work in the fullest generality and have hence used the singular chain complex of a topological space $C(X)$. In the case of compact manifolds we will be able to fix this problem by choosing a finite $CW$ structure.
\end{remark}

\subsubsection*{Pair and triad versions}

A continuous map of $\pi$-spaces $f:\overline{X}\to \overline{Y}$ is a $\pi$-map if $f(gx)=g(f(x))$ for each $g\in \pi$ and $x\in \overline{X}$. Given a $\pi$-map of $\pi$-spaces, the \textit{Alexander-Whitney diagonal approximation for the $\pi$-map} is defined simply by taking the algebraic mapping cone $\phi_f:=C(\phi_{\overline{Y}},\phi_{\overline{X}})$, giving\[\phi_f:C(\Z^t\otimes_{\Z[\pi]}f)\to C(f^\%:W^\%C(\overline{X})\to W^\%C(\overline{Y})).\]Similarly, suppose we have a homotopy commuting diagram $\Gamma$ of $\pi$-spaces and $\pi$-maps \[\xymatrix{\overline{X}\ar[r]^-{f}\ar[d]_-{f'}&\overline{Y}\ar[d]^-{g}\\\overline{Y'}\ar[r]^-{g'}&\overline{Z}}\]such that the induced diagram of singular chain complexes in $\Ch(\Z[\pi])$ homotopy commutes. Then the Alexander-Whitney diagonal approximations so far defined determine a morphism of homotopy commutative squares in $\Ch(\Z)$\[\begin{array}{rcl}\xymatrix{\Z^t\otimes C(\overline{X})\ar[r]^-{f}\ar[d]_-{f'}&\Z^t\otimes C(\overline{Y})\ar[d]^-{g}\\ \Z^t\otimes C(\overline{Y'})\ar[r]^-{g'}&\Z^t\otimes C(\overline{Z})}&\quad\to\quad&\xymatrix{W^\%C(\overline{X})\ar[r]^-{f^\%}\ar[d]_-{(f')^\%}&W^\%C(\overline{Y})\ar[d]^-{g^\%}\\W^\%C(\overline{Y'})\ar[r]^-{(g')^\%}&W^\%C(\overline{Z})}\end{array}\]inducing a morphism of iterated cones\[\phi_\Gamma:C(C(g,f'))\to C(C(g^\%,f'^\%)),\]called the \textit{Alexander-Whitney chain diagonal approximation for the homotopy commuting square $\Gamma$}.

\subsection*{Implicit choice of $CW$ structure}

If $M$ is a compact oriented manifold and $\overline{M}\to M$ is an oriented covering with group of covering translations $\pi$ then we may choose a finite $CW$ complex $K\simeq M$. We may also choose a lift of the $CW$ structure to $\overline{M}$ so that there is a $CW$ complex $\overline{K}\simeq \overline{M}$ with $C^{cell}(\overline{K})$ a finite f.g.\ free $\Z[\pi]$-module chain complex. As such there is a chain homotopy equivalence $f:C(M;\Z[\pi])\simeq C(\overline{K})$ to an object of $\B(\Z[\pi])$. This induces a chain homotopy equivalence $f^{\%}:W^\%C(M;\Z[\pi])\simeq W^\%C(\overline{K})$

This procedure works equally well in the cases of compact manifolds with boundary and compact manifold triads which are considered later. 

\textbf{From now on, all compact manifolds are implicitly equipped with a choice of finite $CW$ structure. When there is a covering of the manifold, a choice of lift of the $CW$ structure is implicit.}

\subsection*{Compact, oriented manifolds and Umkehr maps}

We now recall the definition of universal Poincar\'{e} duality from \ref{def:universal} and \ref{ex:compactman}. Suppose $M$ is a compact, oriented, $n$-dimensional topological manifold. Then the universal cover $\widetilde{M}$ has singular chain complex $C(\widetilde{M};\Z)=:C(M;\Z[\pi_1(M)])$, a finite chain complex of free $\Z[\pi_1(X)]$-modules. If $\overline{M}\to M$ is an oriented covering with group of covering translations $\pi$ then there is a natural chain homotopy equivalence\begin{equation}\label{eq:univpd}\Z[\pi]\otimes_{\Z[\pi_1(M)]}C(\widetilde{M})\simeq C(\overline{M};\Z)=:C(M;\Z[\pi]).\end{equation}The orientation on $M$ determines a choice of \textit{fundamental class} $[M]\in H_n(\widetilde{M})$ inducing \[[M]\cap-:H^r(\widetilde{M};\Z)\xrightarrow{\cong}H_{n-r}(\widetilde{M};\Z)\]for each $r\geq 0$. Moreover $M$ satisfies Poincar\'{e} duality with respect to any oriented covering $\overline{M}\to M$ in the sense that if the group of covering translations is $\pi$ then the equation \ref{eq:univpd} maps the fundamental class $[M]\in H_n(\widetilde{M};\Z)$ to some class $[M]\in H_n(\overline{M};\Z)$, inducing $\Z[\pi]$-module isomorphisms \[[M]\cap-:H^r(\overline{M};\Z)\xrightarrow{\cong}H_{n-r}(\overline{M};\Z)\]for each $r\geq 0$.

\begin{proposition}[{\cite[2.1]{MR566491}}]\label{prop:umkehr1}If $M$ is a compact, oriented, $n$-dimensional manifold and $\overline{M}\to M$ is an oriented covering of $M$ with group of covering translations $\pi$ then a choice of finite $CW$ structure defines, in a natural way up to homotopy equivalence, an $n$-dimensional symmetric Poincar\'{e} complex in $\B(\Z[\pi])$\[\sigma^*(\overline{M})\simeq(C(M;\Z[\pi]),\phi_{\overline{M}}([M])).\]
\end{proposition}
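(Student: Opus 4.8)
The plan is to verify directly that the chain complex $C(M;\Z[\pi])$ equipped with the symmetric structure $\phi_{\overline M}([M])$ satisfies the required properties, namely: (1) it is chain homotopy equivalent to an object of $\B(\Z[\pi])$, (2) $\phi_{\overline M}([M])$ is a genuine $n$-dimensional cycle in $W^\%C(M;\Z[\pi])$, so $(C(M;\Z[\pi]),\phi_{\overline M}([M]))$ is an $n$-dimensional $\eps$-symmetric complex with $\eps=1$, (3) the duality map $\phi_0 = \sm(\ev(\phi_{\overline M}([M])))\colon C(M;\Z[\pi])^{n-*}\to C(M;\Z[\pi])$ is a chain homotopy equivalence, i.e.\ the complex is Poincar\'e, and (4) the whole assignment is natural up to homotopy equivalence.

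First I would dispose of (1) using the remarks already in place in the subsection ``Implicit choice of $CW$ structure'': since $M$ is compact it has the homotopy type of a finite $CW$ complex $K$, and the covering $\overline M\to M$ lifts to a finite $CW$ complex $\overline K$ whose cellular chain complex $C^{cell}(\overline K)$ is a finite complex of f.g.\ free $\Z[\pi]$-modules, hence an object of $\B(\Z[\pi])$; the chain homotopy equivalence $C(M;\Z[\pi])\simeq C^{cell}(\overline K)$ transports everything across. For (2), I would invoke the construction of the Alexander--Whitney diagonal approximation from Section~\ref{sec:symmetric}: $\phi_{\overline M}$ is a chain map $C(M;\Z)\to W^\%C(\overline M;\Z)$ in $\Ch(\Z)$, so it carries the $n$-cycle $[M]\in C_n(M;\Z)$ representing the fundamental class to an $n$-cycle $\phi_{\overline M}([M])\in W^\%C(M;\Z[\pi])_n$. (Here one uses that $M$ is a \emph{closed} manifold so $[M]$ is an absolute cycle, not merely a relative one.)

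The substantive point is (3): identifying $\phi_0$ with the cap product $[M]\cap-$ and appealing to universal Poincar\'e duality. Tracing the definitions in Section~\ref{sec:symmetric}, the slant of the evaluation of $\phi_{\overline M}([M])$ is precisely the chain-level cap product $[M]\cap-\colon C(M;\Z[\pi])^{n-*}\to C(M;\Z[\pi])$ (this is exactly the content of the remark there that ``the image of a cycle $x$ under the morphism~\ref{eq:adjoint} followed by the slant map is the chain level cap product with $x$''). By the discussion of universal Poincar\'e duality recalled just before the proposition (and the naturality isomorphism~\ref{eq:univpd} sending the class $[M]\in H_n(\widetilde M)$ to $[M]\in H_n(\overline M)$), capping with $[M]$ induces $\Z[\pi]$-module isomorphisms $H^r(\overline M;\Z)\xrightarrow{\cong}H_{n-r}(\overline M;\Z)$ for all $r$. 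Since both source and target of $\phi_0$ are homotopy equivalent to objects of $\B(\Z[\pi])$ and $\phi_0$ induces an isomorphism on all homology, $\phi_0$ is a chain homotopy equivalence; this is the main obstacle in that it is where the topological input (Poincar\'e duality) is genuinely used, though the verification itself is essentially a translation between the homological statement and the chain-level statement.

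Finally, for (4) I would note that the Alexander--Whitney construction $\phi_{\overline M}$ is natural in the $\pi$-space $\overline M$ up to natural chain homotopy (by the method of acyclic models, as recalled in Section~\ref{sec:symmetric}), and that the choice of finite $CW$ structure is unique up to homotopy equivalence; combining these gives that $\sigma^*(\overline M)$ is well defined up to homotopy equivalence of $n$-dimensional symmetric Poincar\'e complexes, and is natural with respect to orientation-preserving maps of coverings. I would then simply cite \cite[2.1]{MR566491} for the details of this naturality bookkeeping rather than reproduce it.
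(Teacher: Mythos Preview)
The paper does not supply its own proof of this proposition: it is stated with a citation to \cite[2.1]{MR566491} and no argument is given. Your proposal is correct and is exactly the standard unpacking of that citation---identifying $\phi_0$ with the chain-level cap product via the Alexander--Whitney approximation, then invoking the universal Poincar\'e duality already recalled in the text to conclude that $\phi_0$ is a homology isomorphism and hence a chain homotopy equivalence between bounded complexes of f.g.\ projectives. One small remark: you are right to flag that $[M]$ must be an absolute (not relative) cycle, i.e.\ that $M$ is closed; the proposition's hypothesis ``compact, oriented'' should be read this way, consistent with Example~\ref{ex:compactman} which supplies the fundamental class only for closed $M$ (the case with boundary is handled separately in the pair version that follows).
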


A continuous map $f:M\to N$ of compact, oriented, $n$-dimensional manifolds is \textit{degree 1} if $f_*([M])=[N]\in H_n(\overline{N})$. If $\overline{N}\to N$ is an oriented covering of $N$ with group of covering transformations $\pi$ then there is an induced cover $\overline{M}:=f^*\overline N$ of $M$ and a $\pi$-map of $\pi$-spaces $\overline{f}:\overline{M}\to \overline{N}$. If $f$ is degree 1, there is then defined an \textit{Umkehr} chain map \[\xymatrix{\overline{f}^!:C(\overline{N})\ar[rr]^-{[N]\cap-} &&C^{n-*}(\overline{N})\ar[r]^-{\overline{f}^*} &C^{n-*}(\overline{M})\ar[rr]^-{[M]\cap-}&&C(\overline{M}).}\]

\begin{claim}[{\cite[2.2]{MR566491}}]Denoting by $e$ the inclusion into the cone, there is a split short exact sequence of chain complexes\[0\to C(\overline{N})\xrightarrow{\overline{f}^!} C(\overline{M})\xrightarrow{e} C(\overline{f}^!)\to 0.\]
\end{claim}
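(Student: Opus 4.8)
The claim is that for a degree~$1$ map $f:M\to N$ of compact oriented $n$-manifolds, with $\overline f:\overline M\to\overline N$ the induced $\pi$-map of covers, the Umkehr map $\overline f^!$ fits into a split short exact sequence $0\to C(\overline N)\xrightarrow{\overline f^!}C(\overline M)\xrightarrow{e}C(\overline f^!)\to 0$, where $C(\overline f^!)$ is the algebraic mapping cone. The plan is to exhibit a chain-level splitting of $\overline f^!$, so that degreewise the sequence becomes a split short exact sequence of $\Z[\pi]$-modules, which then automatically extends to the mapping cone by the standard construction recalled in Section~\ref{subsec:res}.

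First I would recall that $\overline f^!$ is by definition the composite
\[
C(\overline N)\xrightarrow{[N]\cap-}C^{n-*}(\overline N)\xrightarrow{\overline f^*}C^{n-*}(\overline M)\xrightarrow{[M]\cap-}C(\overline M).
\]
Both cap product maps $[N]\cap-$ and $[M]\cap-$ are chain homotopy equivalences, by universal Poincar\'e duality (Proposition~\ref{prop:umkehr1} and the discussion preceding it, together with the natural chain equivalence~\ref{eq:univpd}). Hence, up to chain homotopy equivalence, $\overline f^!$ is conjugate to the dual map $\overline f^*:C^{n-*}(\overline N)\to C^{n-*}(\overline M)$. Now the key point is that $\overline f^*$ is a \emph{degreewise split injection}: since $\overline f:\overline M\to\overline N$ is induced by pullback of a cover, after choosing lifted finite $CW$ structures the cellular chain map $C(\overline f):C^{cell}(\overline M)\to C^{cell}(\overline N)$ is, up to homotopy, such that its dual is split; more directly, the degree~$1$ condition $f_*[M]=[N]$ forces $f^*$ to admit a one-sided inverse. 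The cleanest route is: the degree~$1$ hypothesis says $(\overline f^!)$ composed with the appropriate projection is chain homotopic to the identity on $C(\overline N)$ — in fact the composite $C(\overline N)\xrightarrow{\overline f^!}C(\overline M)\xrightarrow{\overline f_*}\cdots$ recovers $[N]\cap[N]\cap$-type data which is a homotopy equivalence — and then one upgrades this homotopy splitting to an honest degreewise splitting by the standard trick of replacing $C(\overline M)$ within its chain homotopy type. I would cite \cite[2.2]{MR566491} for the precise bookkeeping, but reconstruct the argument as above.

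The concrete steps in order: (1) record that $[M]\cap-$ and $[N]\cap-$ are chain equivalences, so $\overline f^!\simeq\overline f^*$ up to conjugation; (2) show $\overline f^*$ is degreewise split injective — this is where the degree~$1$ hypothesis enters, via $f_*[M]=[N]$, giving a left inverse $g$ to $\overline f^*$ on the nose after a suitable choice of cellular models; (3) conclude that $\overline f^!$ is degreewise split injective with cokernel degreewise isomorphic to $C(\overline f^!)_r\cong C(\overline M)_r\oplus C(\overline N)_{r-1}\big/\,\mathrm{im}$; (4) observe that for a degreewise split injection $i:A\to B$ of chain complexes, the sequence $0\to A\xrightarrow{i}B\xrightarrow{e}C(i)\to 0$ is exact and degreewise split, with $e$ the canonical inclusion into the cone — this is pure homological algebra from the definition of $C(i)$ in Section~\ref{subsec:res}. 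The main obstacle is step~(2): arranging that $\overline f^*$ is split injective \emph{on the nose} rather than merely up to homotopy, which requires care with the choice of finite $CW$ structures on $M$, $N$ and their lifts (made implicit in the conventions of Section~\ref{sec:symmetric}), and genuine use of the degree~$1$ condition rather than just the existence of fundamental classes. Everything else is formal manipulation of cones, caps, and duals, so I would keep it brief and lean on the cited reference for the detailed chain-level formulas.
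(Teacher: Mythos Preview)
The paper does not give its own proof of this claim; it simply cites \cite[2.2]{MR566491}. So the comparison is with Ranicki's standard argument, and your route diverges from it in a way that introduces a genuine error.

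Your step~(4) is false as stated. For any chain map $i:A\to B$, the inclusion $e:B\to C(i)$ into the cone sends $b\mapsto(b,0)$, so the composite $e\circ i$ sends $a\mapsto(i(a),0)$, which is nullhomotopic but not zero. Hence the sequence $0\to A\xrightarrow{i}B\xrightarrow{e}C(i)\to 0$ is \emph{never} short exact at $B$ unless $i=0$; degreewise split injectivity of $i$ does not help. What is true is that for $i$ a cofibration one has $0\to A\to B\to B/A\to 0$ split exact and $B/A\simeq C(i)$, but that is a different sequence with a different map out of $B$. The claim as written in the paper should therefore be read as a homotopy cofibration sequence equipped with a chain-homotopy retraction, giving $C(\overline M)\simeq C(\overline N)\oplus C(\overline f^{\,!})$ in the homotopy category; this is what Proposition~\ref{prop:umkehr2} actually uses.

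There is a second problem earlier: even if you arrange $\overline f^*$ to be degreewise split injective on some cellular model, this does not transfer to $\overline f^{\,!}$, because the cap-product maps $[M]\cap-$ and $[N]\cap-$ are chain \emph{homotopy} equivalences, not isomorphisms, and conjugating a degreewise split injection by homotopy equivalences need not produce one.

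The standard argument is much shorter and avoids both issues. The projection formula for cap products gives, on the chain level, $\overline f_*\big(\overline f^*(x)\cap[M]\big)=x\cap\overline f_*[M]=x\cap[N]$ since $f$ has degree~$1$. Unwinding the definition of $\overline f^{\,!}$, this says exactly that $\overline f_*\circ\overline f^{\,!}\simeq\mathrm{id}_{C(\overline N)}$. So $\overline f_*$ is a chain-homotopy left inverse to $\overline f^{\,!}$, and the desired homotopy splitting follows immediately. You actually gesture at this in your parenthetical about ``$\overline f_*$ recovers $[N]\cap$-type data'', and that is the whole proof; the subsequent attempt to upgrade to on-the-nose degreewise splitting is both unnecessary and where the argument breaks.
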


\begin{proposition}[{\cite[2.2]{MR566491}}]\label{prop:umkehr2}If $f:M\to N$ is a continuous, degree 1 map of compact, oriented, $n$-dimensional manifolds and $\overline{N}\to N$ is an oriented covering of $N$ with group of covering transformations $\pi$ then choices of finite $CW$ structure define, in a natural way up to homotopy equivalence, an $n$-dimensional symmetric Poincar\'{e} complex in $\B(\Z[\pi])$ called the \textit{kernel complex}\[\sigma^*(f)\simeq(C(\overline{f}^!),e^\%\phi_{\overline{M}}([M])).\]Moreover, there is an equivalence of symmetric complexes\[\left(\begin{matrix} e\\\overline{f}\end{matrix}\right):\sigma^*(\overline{M})\xrightarrow{\simeq}\sigma^*(f)\oplus \sigma^*(\overline{N}).\]\end{proposition}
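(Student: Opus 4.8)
The plan is to reduce the statement about the kernel complex $\sigma^*(f)$ to a purely algebraic fact about symmetric complexes built from the Umkehr short exact sequence. First I would invoke the split short exact sequence of chain complexes $0\to C(\overline N)\xrightarrow{\overline f^!} C(\overline M)\xrightarrow{e} C(\overline f^!)\to 0$ from the preceding claim (\cite[2.2]{MR566491}), which shows that $C(\overline M)\simeq C(\overline f^!)\oplus C(\overline N)$ as chain complexes in $\B(\Z[\pi])$, with the splitting morphism given by $(e\;\;\overline f)$ where the second component is the chain map $\overline f:C(\overline M)\to C(\overline N)$ coming from the continuous map (or rather its section-like dual). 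The key point is that $\overline f\circ\overline f^! \simeq \id_{C(\overline N)}$: this follows from the defining formula $\overline f^! = ([M]\cap-)\circ \overline f^*\circ([N]\cap-)^{-1}$ together with the fact that $\overline f^*$ followed by $\overline f_*$ (via the cap-product identifications) recovers the identity on $C(\overline N)$ because $f$ has degree 1, i.e.\ $f_*([M])=[N]$; this is essentially the naturality of the cap product under a degree-1 map.

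Next I would verify that under this chain equivalence the symmetric structures match up. The symmetric structure on $\sigma^*(\overline M)$ is $\phi_{\overline M}([M])\in W^\%C(\overline M)$, and by naturality of the Alexander--Whitney diagonal approximation (the functor $W^\%$ applied to the chain map $(e\;\;\overline f)$), the pushforward of $\phi_{\overline M}([M])$ along $(e\;\;\overline f)^\%$ decomposes into the sum of $e^\%\phi_{\overline M}([M])$, which is by definition the structure on $\sigma^*(f)$, and $\overline f^\%\phi_{\overline M}([M]) = \phi_{\overline N}(\overline f_*[M]) = \phi_{\overline N}([N])$, using degree-1-ness again, which is the structure on $\sigma^*(\overline N)$. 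The only subtlety is the cross-term: because $W^\%$ is not additive, $W^\%(C(\overline f^!)\oplus C(\overline N))$ differs from $W^\%C(\overline f^!)\oplus W^\%C(\overline N)$ by a term $C(\overline f^!)\otimes C(\overline N)$, so I must argue that the relevant component of $\phi_{\overline M}([M])$ lands in the direct-sum part and the cross-term contribution vanishes up to homotopy --- this is where I would appeal either to \cite[2.2]{MR566491} directly or to the homotopy-functoriality of $W^\%$ together with the splitting to absorb the cross term into a boundary. (Using the $\sqcup$-formalism introduced earlier, one can phrase this cleanly: $W^\%(C(\overline f^!)\sqcup C(\overline N))=W^\%C(\overline f^!)\oplus W^\%C(\overline N)$, and the equivalence $C(\overline M)\simeq C(\overline f^!)\oplus C(\overline N)$ must be promoted to this $\sqcup$-splitting, which is exactly the content being cited.)

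Finally I would confirm that the equivalence is one of \emph{Poincar\'e} complexes and is natural up to homotopy. Poincar\'e duality for $\sigma^*(\overline M)$ is the statement that $[M]\cap-$ is a chain equivalence, which holds by the universal Poincar\'e duality recalled just above (\ref{prop:umkehr1} and equation \eqref{eq:univpd}); duality for $\sigma^*(\overline N)$ is identical; and duality for $\sigma^*(f)$ then follows formally from the two-out-of-three property applied to the direct sum decomposition, since a direct summand of an equivalence (with equivalence complement) is an equivalence. Naturality up to homotopy is inherited from the naturality of the diagonal approximation $\phi_{(-)}$ and of the Umkehr construction, both already established.

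The main obstacle I anticipate is precisely the non-additivity of $W^\%$ and the resulting cross-term $C(\overline f^!)\otimes C(\overline N)$: making rigorous that the splitting of chain complexes lifts to a splitting compatible with the symmetric structures --- rather than merely a splitting that holds after forgetting the structure --- requires care, and is the technical heart of Proposition \ref{prop:umkehr2}. In practice I would lean on the cited reference \cite[2.2]{MR566491} for this, and in the present exposition simply record the statement, since the $\sqcup$-construction introduced in Section~\ref{subsec:res} is designed exactly to make this bookkeeping transparent.
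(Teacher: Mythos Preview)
The paper does not supply its own proof of this proposition: it is quoted directly from \cite[2.2]{MR566491} and stated without argument, with the subsequent text simply remarking that the pair and triad versions follow formally by functoriality. So there is nothing in the paper to compare your proposal against.

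That said, your sketch is essentially the argument one finds in the cited source. The ingredients you identify --- the splitting $\overline f\circ\overline f^!\simeq\id$ coming from degree~1 and naturality of the cap product, the naturality of the symmetric construction giving $\overline f^\%\phi_{\overline M}([M])=\phi_{\overline N}([N])$, and the cross-term in $W^\%$ of a direct sum --- are exactly the right ones. Your honest acknowledgement that the cross-term vanishing is the technical crux, and your deferral to \cite[2.2]{MR566491} for it, matches what the paper itself does. One small correction: the $\sqcup$-formalism from Section~\ref{subsec:res} would not help here, since $C(\overline f^!)$ and $C(\overline N)$ are both complexes over the \emph{same} ring $\Z[\pi]$, not over a product ring; the cross-term really must be shown to be nullhomotopic by a direct argument using the splitting, which is what Ranicki does in the reference.
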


The pair and triad versions will follow formally from Propositions \ref{prop:umkehr1} and \ref{prop:umkehr2} using the functoriality of the symmetric construction and of the $W^\%$ functor in the homotopy categories of finite CW structures and of $\B(\Z[\pi]))$ respectively. We will now spell out the triad and pair versions, for use in subsequent chapters.

\subsubsection*{Pair versions}

Suppose $(M,\partial M)$ is a compact, oriented $n$-dimensional topological manifold with boundary. Write the singular chain complex $C(\widetilde{M},\overline{\partial M};\Z)=:C(M,\partial M;\Z[\pi_1(M)])$ in $\B(\Z[\pi_1(M)])$ where $\widetilde{M}$ is the universal cover of $M$ and $\overline{\partial M}$ is the induced oriented cover of $\partial M$. The orientation determines a \textit{fundamental class} $[M]\in H_n(M,\partial M;\Z[\pi_1(M)])$ inducing Poincar\'{e}-Lefschetz duality isomorphisms\[[M]:H^r(M,\partial M;\Z[\pi_1(M)])\xrightarrow{\cong} H_{n-r}(M;\Z[\pi_1(M)]).\]A covering $\overline{M}\to M$ induces a covering $\overline{\partial M}\to \partial M$, pulled back via the inclusion $i_M:\partial M\to M$, and there is a \textit{universal Poincar\'{e}-Lefschetz duality} analogously to the universal Poincar\'{e} duality described above.

\begin{proposition}If $(M,\partial M)$ is a compact, oriented, $n$-dimensional manifold with boundary and $\overline{M}\to M$ is an oriented covering of $M$ with group of covering translations $\pi$ then a choice of finite $CW$ structure defines, in a natural way up to homotopy equivalence, an $n$-dimensional symmetric Poincar\'{e} pair in $\B(\Z[\pi])$\[\sigma^*(\overline{M},\overline{\partial M})=(\overline{i_M}:C(\overline{\partial M})\to C(\overline{M}),\phi_{\overline{i_M}}([M])).\]
\end{proposition}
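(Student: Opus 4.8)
The statement is the pair (relative) analogue of Proposition \ref{prop:umkehr1}, so the plan is to reduce it to that absolute case together with the functoriality of the symmetric construction already established. First I would fix a finite $CW$ structure on $(M,\partial M)$, which (as in the subsection on the implicit choice of $CW$ structure) produces a finite f.g.\ free $\Z[\pi]$-module chain complex model for $C(\overline{M};\Z)$ and for $C(\overline{\partial M};\Z)$, with the inclusion realised by a genuine chain map $\overline{i_M}:C(\overline{\partial M})\to C(\overline{M})$ whose algebraic mapping cone is chain homotopy equivalent to the relative chain complex $C(\overline{M},\overline{\partial M})$. Thus all three complexes lie in $\B(\Z[\pi])$ and the data $\overline{i_M}:C(\overline{\partial M})\to C(\overline{M})$ is a morphism in $\B(\Z[\pi])$ as required.

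Next I would produce the $\eps$-symmetric structure. Applying the Alexander--Whitney diagonal approximation for the $\pi$-map $\overline{i_M}:\overline{\partial M}\to\overline{M}$ (the pair version of the symmetric construction, giving $\phi_{\overline{i_M}}:C(\Z^t\otimes_{\Z[\pi]}\overline{i_M})\to C(\overline{i_M}^\%)$ as the mapping cone of $\phi_{\overline{M}}$ over $\phi_{\overline{\partial M}}$), I would feed in the relative fundamental class $[M]\in H_n(M,\partial M;\Z[\pi])$. Since $\partial[M]=[\partial M]$ under the connecting map in the long exact sequence of the pair, the image $(\delta\phi,\phi):=\phi_{\overline{i_M}}([M])$ is a cycle in $C(\overline{i_M}^\%)_n$ with $\phi=\phi_{\overline{\partial M}}([\partial M])$; this is exactly the $n$-dimensional $\eps$-symmetric pair $(\overline{i_M}:C(\overline{\partial M})\to C(\overline{M}),\phi_{\overline{i_M}}([M]))$ claimed.

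It then remains to check the Poincar\'{e} property, i.e.\ that the slant image $(\delta\phi_0\ \ \overline{i_M}\phi_0):C(\overline{i_M})^{n-*}\to C(\overline{M})$ is a chain homotopy equivalence. This is precisely the chain-level incarnation of universal Poincar\'{e}--Lefschetz duality $[M]\cap-:H^r(\overline{M};\Z)\xrightarrow{\cong}H_{n-r}(\overline{M},\overline{\partial M};\Z)$ together with $[\partial M]\cap-$ being an equivalence on the boundary; combining these via the five lemma on the relevant ladder of long exact sequences gives the claim. Finally, naturality up to homotopy equivalence follows from the naturality of the chain diagonal approximation and of the $W^\%$ functor in the homotopy category of finite $CW$ pairs: a $CW$ map inducing equivalences on $\overline{M}$ and $\overline{\partial M}$ induces an equivalence of the resulting symmetric pairs, and independence of the $CW$ choice is the special case of the identity map.

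\textbf{Main obstacle.} I expect the genuinely delicate point to be bookkeeping the relative version of Poincar\'{e} duality at the chain level --- matching the algebraic mapping cone $C(\overline{i_M})$ with the relative singular chain complex, and verifying that $\ev_l$ of $\phi_{\overline{i_M}}([M])$ really does recover the cap product with the fundamental class in the correct degrees and with the correct signs --- rather than any conceptual difficulty; all the conceptual content is already packaged in Proposition \ref{prop:umkehr1} and the functoriality of $\sigma^*$.
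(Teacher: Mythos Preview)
Your proposal is correct and matches the paper's approach exactly: the paper gives no detailed proof of this proposition, stating only (just before the pair version is written out) that ``the pair and triad versions will follow formally from Propositions \ref{prop:umkehr1} and \ref{prop:umkehr2} using the functoriality of the symmetric construction and of the $W^\%$ functor in the homotopy categories of finite CW structures and of $\B(\Z[\pi])$ respectively.'' Your plan simply unpacks what ``follow formally'' means---reducing to the absolute case, applying the relative diagonal approximation $\phi_{\overline{i_M}}$ to the relative fundamental class, and reading off Poincar\'{e}--Lefschetz duality at the chain level---which is precisely the intended argument.
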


A continuous map of compact, oriented, $n$-dimensional, manifolds with boundary $(f,\partial f): (M,\partial M)\to (N,\partial N)$ is \textit{degree 1} if $f_*([M])=[N]\in H_n(N,\partial N;\Z)$. If $(f,\partial f)$ is degree 1 then $\partial f: \partial M\to \partial N$ is a degree 1 map of compact, oriented, $n$-dimensional manifolds. If $\overline{N}\to N$ is an oriented cover of $N$ then there is defined an \textit{Umkehr} chain map\[\overline{f}^!: C(\overline{N})\xrightarrow{[N]\cap-} C^{n-*}(\overline{N},\overline{\partial N})\xrightarrow{\overline{f}^*}C^{n-*}(\overline{M},\overline{\partial M})\xrightarrow{[M]\cap-}C(\overline{M}).\]

\begin{proposition}\label{prop:pairdiag}If $(f,\partial f):(M,\partial M)\to (N,\partial N)$ is a degree 1 map of compact, oriented, $n$-dimensional manifolds with boundary and $\overline{N}\to N$ is an oriented covering of $N$ with group of covering translations $\pi$ then choices of finite $CW$ structure define, in a natural way up to homotopy equivalence, an $n$-dimensional symmetric Poincar\'{e} pair in $\B(\Z[\pi])$ called the \textit{kernel pair}\[\sigma^*(\overline{f},\overline{\partial f})\simeq(C(\overline{\partial f}^!)\to C(\overline{f}^!),C(e^\%,\partial e^\%)\phi_{\overline{i_M}}([M])).\]Moreover, there is an equivalence of symmetric pairs\[\sigma^*(\overline{M},\overline{\partial N})\xrightarrow{\simeq}\sigma^*(\overline{f},\overline{\partial f})\oplus \sigma^*(\overline{N},\overline{\partial N}).\]
\end{proposition}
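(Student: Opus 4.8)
\textbf{Proof strategy for Proposition \ref{prop:pairdiag}.} The plan is to mirror exactly the argument used in the closed-manifold case (Proposition \ref{prop:umkehr2}), but carried out one categorical level up, treating a manifold with boundary as a morphism of spaces (the inclusion $i_M:\partial M\hookrightarrow M$) rather than a single space. First I would record the basic input: by the previous proposition we already have the $n$-dimensional symmetric Poincar\'{e} pairs $\sigma^*(\overline{M},\overline{\partial M})$ and $\sigma^*(\overline{N},\overline{\partial N})$, obtained by applying the relative Alexander--Whitney diagonal approximation $\phi_{\overline{i_M}}$ to the fundamental class $[M]\in H_n(M,\partial M;\Z[\pi])$, and similarly for $N$. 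The degree $1$ hypothesis $f_*[M]=[N]$ ensures that $\partial f$ is also degree $1$, so both Umkehr chain maps $\overline{f}^!:C(\overline{N})\to C(\overline{M})$ and $\overline{\partial f}^!:C(\overline{\partial N})\to C(\overline{\partial M})$ are defined, and they fit into a commuting square with the inclusions $\overline{i_N},\overline{i_M}$ (this commutativity is a cap-product naturality statement, identical to the one underlying the pair-level claim preceding Proposition \ref{prop:umkehr2}).

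Next I would split. The square-of-Umkehr-maps together with the inclusions $\overline{i}$ degreewise splits: in the closed case one has the split short exact sequence $0\to C(\overline N)\xrightarrow{\overline{f}^!} C(\overline M)\xrightarrow{e} C(\overline{f}^!)\to 0$, and applying this to both the absolute and boundary chain complexes gives a commuting ladder of split short exact sequences, hence a degreewise-split decomposition of the morphism $\big(\overline{i_M}:C(\overline{\partial M})\to C(\overline M)\big)$ as the sum of $\big(C(\overline{\partial f}^!)\to C(\overline{f}^!)\big)$ and $\big(\overline{i_N}:C(\overline{\partial N})\to C(\overline N)\big)$ in the category of morphisms of $\B(\Z[\pi])$. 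This is the pair-level analogue of the equivalence $\big(\begin{smallmatrix}e\\\overline f\end{smallmatrix}\big):\sigma^*(\overline M)\xrightarrow{\simeq}\sigma^*(f)\oplus\sigma^*(\overline N)$. Then I would push the symmetric structure through: functoriality of $W^\%$ and of the symmetric construction $\phi_{(-)}$ in the homotopy categories of finite $CW$-structures and of $\B(\Z[\pi])$ respectively — together with the fact (used throughout Section \ref{sec:symmetric}) that the symmetric construction for a commuting square factors through the iterated cone — identifies the image of $\phi_{\overline{i_M}}([M])$ under $C(e^\%,\partial e^\%)$ as the structure cycle of $\sigma^*(\overline f,\overline{\partial f})$, and the splitting above as an equivalence of $n$-dimensional symmetric pairs. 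Poincar\'{e}-ness of the kernel pair then follows formally: the boundary $\overline{\partial f}^!$-kernel complex is the kernel complex of the degree $1$ map $\partial f$, hence Poincar\'{e} by Proposition \ref{prop:umkehr2}, and the relative duality chain map for the kernel pair is a chain equivalence because it is a direct summand, up to the equivalence just established, of the (Poincar\'{e}-Lefschetz) duality equivalence for $(\overline M,\overline{\partial M})$ with a Poincar\'{e}-Lefschetz summand for $(\overline N,\overline{\partial N})$ removed.

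Concretely, the steps in order: (1) verify $\partial f$ is degree $1$ and assemble the commuting square of Umkehr maps and boundary inclusions; (2) produce the ladder of split short exact sequences and the resulting summand decomposition of the morphism $\overline{i_M}$ in the arrow category of $\B(\Z[\pi])$; (3) apply $W^\%$ and the naturality of the relative Alexander--Whitney diagonal approximation to transport $\phi_{\overline{i_M}}([M])$, identifying the kernel-pair structure cycle $C(e^\%,\partial e^\%)\phi_{\overline{i_M}}([M])$ and the equivalence $\sigma^*(\overline M,\overline{\partial M})\simeq\sigma^*(\overline f,\overline{\partial f})\oplus\sigma^*(\overline N,\overline{\partial N})$; (4) deduce the Poincar\'{e} property of $\sigma^*(\overline f,\overline{\partial f})$ from that of $\sigma^*(\overline M,\overline{\partial M})$ and $\sigma^*(\overline N,\overline{\partial N})$ by the summand argument, invoking also the absolute kernel-complex statement for $\partial f$.

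\textbf{Main obstacle.} The delicate point is bookkeeping the relative symmetric construction: $\phi_{\overline{i_M}}$ lives on the cone $C(\Z^t\otimes\overline{i_M})\to C(\overline{i_M}^\%)$, and $W^\%$ does \emph{not} preserve cofibration sequences (Propositions \ref{prop:pullback}, \ref{cor:pullback}), so one cannot blithely say ``the cone of the split pieces is the split piece of the cone.'' The work is to check that the particular splitting maps $e,\partial e$ (inclusions into mapping cones) interact correctly with the $W^\%$-cross-terms, i.e.\ that $C(e^\%,\partial e^\%)$ really does carry $\phi_{\overline{i_M}}([M])$ to a \emph{cycle} defining an $n$-dimensional symmetric pair and that the decomposition is compatible with all the evaluation/slant maps witnessing the Poincar\'{e} property. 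This is exactly the relative version of the verification done in \cite{MR566491} for Proposition \ref{prop:umkehr2}, and I expect it to go through by the same diagram chase, using Proposition \ref{prop:pairev} to control the relative evaluation maps; but it is the one place where genuine care, rather than pure formalism, is required.
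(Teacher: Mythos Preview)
Your proposal is correct and matches the paper's approach exactly: the paper does not give a standalone proof of this proposition, but states just before the pair and triad subsections that ``the pair and triad versions will follow formally from Propositions \ref{prop:umkehr1} and \ref{prop:umkehr2} using the functoriality of the symmetric construction and of the $W^\%$ functor in the homotopy categories of finite CW structures and of $\B(\Z[\pi])$ respectively.'' Your write-up is in fact considerably more detailed than what the paper offers, and your flagged obstacle (the failure of $W^\%$ to preserve cofibration sequences) is a legitimate point of care that the paper's one-line justification elides.
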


\subsubsection*{Triad versions}

Now suppose $(W,\partial W)$, $(M,N)$ and $(M',N)$ are compact oriented topological manifolds with boundary and that $\partial W\cong M\cup_NM'$ is an homeomorphism of compact oriented $(n-1)$-dimensional manifolds. The quadruple $(W;M,M';N)$ is called a \textit{compact, oriented manifold triad} and the compatible orientations determine a \textit{fundamental class} $[W]\in H_n(W;M\cup_N M';\Z[\pi_1(W)])$ compatible with $[M]\in H_{n-1}(M,N;\Z[\pi_1(W))$ and $[M']\in H_{n-1}(M',N;\Z[\pi_1(W)])$. A compact oriented manifold triad together with a covering $\overline{W}\to W$ with group of covering translations $\pi$ determines (via pullback) a homotopy commuting diagram $\Gamma$ of $\pi$-spaces and $\pi$-maps \[\xymatrix{\overline{N}\ar[r]^-{i_{\overline{N}'}}\ar[d]_-{i_{\overline{N}}}&\overline{M'}\ar[d]^-{i_{\overline{M'}}}\\\overline{M}\ar[r]^-{i_{\overline{M}}}&\overline{W}}\]Choosing a finite $CW$ structure, this induces a homotopy commuting diagram of homotopy cofibration sequences of chain complexes of f.g.\ free modules in $\B(\Z[\pi])$, the top-left square of which we denote $C(\Gamma)$ \begin{equation}\label{eq:relcob}\xymatrix{C(\overline{N})\ar[r]\ar[d]&C(\overline{M'})\ar[r]\ar[d]&C(\overline{M'},\overline{N})\ar[d]\\C(\overline{M})\ar[r]\ar[d]&C(\overline{W})\ar[r]\ar[d]&C(\overline{W},\overline{M})\ar[d]\\C(\overline{M},\overline{N})\ar[r]&C(\overline{W},\overline{M'})\ar[r]&C(\overline{W},\overline{M}\cup_{\overline{N}}\overline{M'})}\end{equation}

\begin{proposition}If $(W;M,M';N)$ is a compact, oriented, $n$-dimensional manifold triad and $\overline{W}\to W$ is an oriented covering of $W$ with group of covering translations $\pi$ then a choice of finite $CW$ structure defines, in a natural way up to homotopy equivalence, an $n$-dimensional symmetric Poincar\'{e} triad in $\B(\Z[\pi])$\[\sigma^*(\overline{W};\overline{M},\overline{M'};\overline{N})=(C(\Gamma),\phi_\Gamma([W])).\]
\end{proposition}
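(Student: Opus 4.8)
The plan is to deduce the triad case formally from the complex and pair cases already established, via functoriality of the symmetric construction and of the $W^\%$ functor, exactly as flagged in the text. First I would pull the covering $\overline{W}\to W$ back along the inclusions $N,M,M'\hookrightarrow W$ to obtain the homotopy commuting square $\Gamma$ of $\pi$-spaces and $\pi$-maps displayed just before \ref{eq:relcob}, and fix a finite $CW$ structure on $W$ restricting to ones on $M$, $M'$, $N$, together with a compatible lift to $\overline{W}$; this yields the homotopy commuting diagram \ref{eq:relcob} of finite f.g.\ free $\Z[\pi]$-module chain complexes, whose top-left square is $C(\Gamma)$. Applying the Alexander--Whitney chain diagonal approximation for a homotopy commuting square gives a morphism $\phi_\Gamma\colon \Z^t\otimes_{\Z[\pi]}C(\overline{W},\overline{M}\cup_{\overline{N}}\overline{M'})\to C(F)$ in the notation of \ref{eq:lift}, and the compatible orientations of the triad determine, via universal Poincar\'{e}--Lefschetz duality, a class $[W]\in H_n(\overline{W},\overline{M}\cup_{\overline{N}}\overline{M'};\Z)$. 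The candidate symmetric triad is then $(C(\Gamma),\phi_\Gamma([W]))$, with $\eps=1$.

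Second, I would verify that $\phi_\Gamma([W])$ is a genuine $n$-cycle in $C(F)_n$, i.e.\ a symmetric structure on $\Gamma$. The cleanest route is Proposition \ref{prop:triple}: unpacking $\phi_\Gamma([W])$ produces two $(n-1)$-dimensional symmetric pairs and one $n$-dimensional symmetric pair. The two side pairs are, by naturality up to homotopy of the Alexander--Whitney diagonal approximation, homotopy equivalent to the pairs $\sigma^*(\overline{M},\overline{N})$ and $\sigma^*(\overline{M'},\overline{N})$ furnished by the pair construction applied to the manifolds-with-boundary $(M,N)$ and $(M',N)$, since the left and right faces of $\Gamma$ are precisely the homotopy commuting squares underlying those constructions and the boundary restrictions of $[W]$ are $[M]$ and $[M']$. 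Under the Mayer--Vietoris chain equivalence $C(\overline{M})\cup_{C(\overline{N})}C(\overline{M'})\xrightarrow{\simeq}C(\overline{M}\cup_{\overline{N}}\overline{M'})=C(\overline{\partial W})$, the third pair is identified with $\sigma^*(\overline{W},\overline{\partial W})$, the compatibility of $[W]$ with $[M]\cup_{[N]}[M']$ as fundamental classes being exactly what makes the glued structure $\delta\phi\cup_\phi\delta'\phi$ of \ref{def:glue1} match the boundary structure of $\sigma^*(\overline{W},\overline{\partial W})$. Granting this, $\phi_\Gamma([W])$ is a cycle; naturality up to homotopy equivalence of the whole triad is inherited from that of $\phi_\Gamma$, of $W^\%$, and of the pair construction.

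Third, for the Poincar\'{e} property I would simply invoke the definition: an $n$-dimensional symmetric triad is Poincar\'{e} iff each of its three associated pairs is Poincar\'{e}. The side pairs $\sigma^*(\overline{M},\overline{N})$ and $\sigma^*(\overline{M'},\overline{N})$ are Poincar\'{e} by Poincar\'{e}--Lefschetz duality for compact oriented manifolds with boundary (the content of the pair version of the symmetric construction), and the remaining pair, being homotopy equivalent to $\sigma^*(\overline{W},\overline{\partial W})$, is Poincar\'{e} because $(W,\partial W)$ satisfies Poincar\'{e}--Lefschetz duality with respect to every oriented cover. Hence the triad is Poincar\'{e}.

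The real work is concentrated in a single place: showing that the chain-level Mayer--Vietoris equivalence $C(\overline{M})\cup_{C(\overline{N})}C(\overline{M'})\xrightarrow{\simeq}C(\overline{\partial W})$ intertwines the algebraically glued symmetric structure of \ref{def:glue1} with the geometric structure $\phi_{\overline{\partial W}}([\partial W])$ --- the chain-level statement that geometric glueing induces algebraic glueing. This requires tracking the diagonal approximations $\Delta_{(-)}$ through the subdivision/excision equivalences; since these approximations are natural only up to coherent homotopy, one must choose a subdivision chain equivalence and a homotopy comparing the diagonal on $W$ with the glued diagonal on $M$ and $M'$, and then feed that homotopy into the map $G$ and the glued structure $\delta\phi\cup_\phi\delta'\phi$. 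I would handle this exactly as in Ranicki \cite{MR566491} and Macko \cite{TIBOR}; all the remaining manipulations --- iterated cones and the bookkeeping around \ref{eq:relcob}, \ref{eq:lift} and \ref{eq:level1} --- are formal, and since the absolute and relative versions of this compatibility are already used implicitly to make the pair construction well-defined, the triad case introduces no new difficulty of principle, only one further layer of cones.
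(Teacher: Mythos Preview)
Your proposal is correct and takes exactly the approach the paper indicates: the paper does not give a proof of this proposition, instead remarking just before the pair and triad statements that ``the pair and triad versions will follow formally from Propositions \ref{prop:umkehr1} and \ref{prop:umkehr2} using the functoriality of the symmetric construction and of the $W^\%$ functor''. You have spelled out precisely this formal deduction, correctly identified the one genuine technical point (compatibility of algebraic and geometric glueing under the Mayer--Vietoris equivalence), and referred it to the same sources the paper relies on throughout.
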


A continuous map of compact, oriented $n$-dimensional manifold triads\[F=(f,\partial f,\partial' f,\partial \partial f):(W_1;M_1,M_1';N_1)\to (W_2;M_2,M_2';N_2),\]is \textit{degree 1} if \[(f,\partial f\cup_{\partial \partial f}\partial' f):(W_1,M_1\cup_{N_1 }M_1')\to (W_2,M_2\cup_{N_2}M_2')\] is a degree 1 map of manifolds with boundary. If $(f,\partial f,\partial'f,\partial\partial f)$ is degree 1 and there is a covering $\overline{W_2}\to W_2$ with group of covering translations $\pi$ then the associated Umkehr maps of the 3 maps of pairs $(f,\partial f\cup \partial' f)$, $(\partial f, \partial \partial f)$ and $(\partial' f,\partial \partial f)$, define a homotopy commuting square $C(\overline{F}^!)$ in $\B(\Z[\pi])$ given by\[\xymatrix{C((\overline{\partial\partial f})^!)\ar[r]\ar[d]&C((\overline{\partial' f})^!)\ar[d]\\C((\overline{\partial f})^!)\ar[r]&C(\overline{f}^!)}\]Using the 3 maps of pairs $(f,\partial f\cup \partial' f)$, $(\partial f, \partial \partial f)$ and $(\partial' f,\partial \partial f)$, we may construct:

\begin{proposition}\label{prop:triaddiag}If $F:(W_1;M_1,M_1';N_1)\to (W_2;M_2,M_2';N_2)$ is a degree 1 map of compact, oriented, $n$-dimensional manifold triads and $\overline{W_2}\to W_2$ is a covering with group of covering translations $\pi$ then choices of finite $CW$ structures define, in a natural way up to homotopy equivalence, an $n$-dimensional symmetric Poincar\'{e} triad in $\B(\Z[\pi])$ called the \textit{kernel triad}\[\sigma^*(\overline{F}^!)=(C(\overline{F}^!),C(e^\%;\partial e^\%,\partial' e^\%;\partial\partial e^\%)\phi_\Gamma([W])).\]Moreover, there is a homotopy equivalence of symmetric triads\[\sigma^*(\overline{W_1};\overline{M_1},\overline{M_1'};\overline{N_1})\xrightarrow{\simeq} \sigma^*(\overline{F}^!)\oplus \sigma^*(\overline{W_2};\overline{M_2},\overline{M_2'};\overline{N_2}).\]
\end{proposition}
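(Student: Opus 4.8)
The statement to prove is Proposition \ref{prop:triaddiag}, the triad version of the symmetric construction and its splitting under a degree-1 map. The plan is to deduce it formally from the already-established pair version (Proposition \ref{prop:pairdiag}) and from the naturality of the symmetric construction, exactly as the text advertises (``will follow formally from Propositions \ref{prop:umkehr1} and \ref{prop:umkehr2} using the functoriality of the symmetric construction and of the $W^\%$ functor''). So this is an assembly argument rather than a genuinely new computation.

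First I would unwind the data. A degree-1 map of manifold triads $F=(f,\partial f,\partial' f,\partial\partial f)$ is by definition a degree-1 map of manifolds-with-boundary $(f,\partial f\cup_{\partial\partial f}\partial' f):(W_1,M_1\cup_{N_1}M_1')\to(W_2,M_2\cup_{N_2}M_2')$ whose restrictions $(\partial f,\partial\partial f):(M_1,N_1)\to(M_2,N_2)$ and $(\partial' f,\partial\partial f):(M_1',N_1)\to(M_2',N_2)$ and $(\partial\partial f):N_1\to N_2$ are themselves degree-1 maps of manifolds-with-boundary (resp.\ closed manifolds). Applying Proposition \ref{prop:pairdiag} to each of the three maps of pairs produces three symmetric Poincar\'e pairs together with the splitting equivalences $\sigma^*(\overline{M_1},\overline{\partial})\simeq\sigma^*(\overline{\partial f},\overline{\partial\partial f})\oplus\sigma^*(\overline{M_2},\overline{\partial})$, and similarly for the other two. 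The Umkehr maps of these three maps of pairs are compatible (they all arise by capping with the respective fundamental classes, which are compatible: $[M_1],[M_1']$ restrict from $[W_1]$ along the boundary, with $[N_1]$ their common boundary class), so they assemble into the homotopy-commuting square $C(\overline{F}^!)$ displayed before the proposition, i.e.\ into a map of manifold triads' chain-level data. One then defines $\sigma^*(\overline{F}^!)=(C(\overline{F}^!),C(e^\%;\partial e^\%,\partial' e^\%;\partial\partial e^\%)\phi_\Gamma([W_1]))$, where $\phi_\Gamma$ is the Alexander--Whitney diagonal approximation for the homotopy-commuting square $\Gamma$ constructed in Section \ref{sec:symmetric}, and $e,\partial e,\partial'e,\partial\partial e$ are the inclusions into the respective mapping cones supplied by the pair-level claim.

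The verification that $\sigma^*(\overline{F}^!)$ is a genuine $n$-dimensional symmetric \emph{Poincar\'e} triad reduces, via Proposition \ref{prop:triple}, to checking that its three constituent pairs --- namely $(C((\overline{\partial\partial f})^!)\to C((\overline{\partial f})^!),\ldots)$, $(C((\overline{\partial\partial f})^!)\to C((\overline{\partial' f})^!),\ldots)$, and the pair on $C((\overline{\partial f})^!)\cup C((\overline{\partial' f})^!)\to C(\overline{f}^!)$ --- are each Poincar\'e. The first two are precisely the kernel pairs output by Proposition \ref{prop:pairdiag} (applied to $(\partial f,\partial\partial f)$ and to $(\partial' f,\partial\partial f)$), hence Poincar\'e by that proposition; the third is the kernel pair of $(f,\partial f\cup_{\partial\partial f}\partial' f)$ together with the identification of its boundary complex as the algebraic glue (Definition \ref{def:glue2}) of the first two, which is again furnished by Proposition \ref{prop:pairdiag} after rewriting $C(\overline{\partial f\cup\partial'f}^!)\simeq C((\overline{\partial f})^!)\cup_{C((\overline{\partial\partial f})^!)}C((\overline{\partial' f})^!)$. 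That last chain-level identification --- that the Umkehr construction commutes with the glueing of a manifold boundary $\partial W=M\cup_N M'$ --- is the step I expect to be the main obstacle: it requires tracking the mapping-cone models, the homotopy-pushout squares \ref{eq:relcob}, and the compatibility of the several choices of nullhomotopy (and of $\phi_\Gamma$ versus the glued $\phi_{\overline{i_M}}\cup_{\phi}\phi_{\overline{i_{M'}}}$), all up to coherent homotopy. Finally, the splitting equivalence of symmetric triads is obtained by applying the three pair-level splittings of Proposition \ref{prop:pairdiag} simultaneously and observing that the splitting maps are compatible with the restriction maps in $\Gamma$ (they are natural in the map of pairs), so they induce a homotopy equivalence of the iterated-cone data; concretely one checks that $\left(\begin{smallmatrix} e\\ \overline{F}\end{smallmatrix}\right)$ assembled over the triad gives $\sigma^*(\overline{W_1};\overline{M_1},\overline{M_1'};\overline{N_1})\xrightarrow{\simeq}\sigma^*(\overline{F}^!)\oplus\sigma^*(\overline{W_2};\overline{M_2},\overline{M_2'};\overline{N_2})$, with Poincar\'e-ness of all summands already known.
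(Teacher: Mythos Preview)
Your proposal is correct and matches the paper's approach: the paper does not give an explicit proof of this proposition, stating only that ``the pair and triad versions will follow formally from Propositions \ref{prop:umkehr1} and \ref{prop:umkehr2} using the functoriality of the symmetric construction and of the $W^\%$ functor'' and then noting that one uses ``the 3 maps of pairs $(f,\partial f\cup \partial' f)$, $(\partial f, \partial \partial f)$ and $(\partial' f,\partial \partial f)$'' to construct the kernel triad. Your write-up simply unpacks this sketch --- reducing to Proposition \ref{prop:pairdiag} applied to the three constituent maps of pairs, checking Poincar\'e-ness via Proposition \ref{prop:triple}, and assembling the splittings --- which is exactly what the paper intends.
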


\section{Algebraic Thom constructions and thickenings}

One beautiful flexibility of the Algebraic Theory of Surgery is the dual perspective on an object described by `thickening' or conversely by an algebraic `Thom construction'. This will allow us to change  perspective between complexes/pairs or pairs/triads later - in particular when we construct the Blanchfield complex of an $n$-knot exterior in Chapter \ref{chap:blanchfield} it will be a symmetric complex, rather than a pair as one might expect from a manifold with boundary. The cost of the change in perspective is that, while thickening always improves an object to a Poincar\'{e} object, the Thom construction will in general destroy the property of being Poincar\'{e}.

\subsection{Symmetric Poincar\'{e} pairs vs. symmetric complexes}\label{subsec:cxpair}

\subsubsection*{Thom construction}

Given an $(n+1)$-dimensional $\eps$-symmetric pair $(f:C\to D,(\delta\phi,\phi))$ recall that a choice of nullhomotopy \[\xymatrix{C\ar@/_1pc/[rr]_{j:ef\simeq 0}\ar[r]^f&D\ar[r]^e&C(f)}\] induces a morphism $\Phi_{j^\%}:C(f^\%)\to W^\%C(f)$. Define the $(n+1)$-dimensional cycle $\delta\phi/\phi:=\Phi_{j^\%}(\delta\phi,\phi)\in W^\%C(f)_{n+1}$. The \textit{algebraic Thom construction} for $(f:C\to D, (\delta\phi,\phi))$ is the $(n+1)$-dimensional $\eps$-symmetric complex $(C(f),\delta\phi/\phi)$.

\subsubsection*{Algebraic thickening}

We wish to reverse the algebraic Thom construction. Fix an $(n+1)$-dimensional $\eps$-symmetric complex $(C,\phi)$ and consider the Puppe sequence of chain complexes in $\B(A)$ associated to the evaluation $\setminus(\ev(\phi))=\phi_0$ with a fixed choice of nullhomotopy $j:\phi_0i\simeq 0$\[\xymatrix{\dots\ar[r]&\Sigma^{-1}C(\phi_0)\ar@/_1pc/[rr]_{j:\phi_oi\simeq 0}\ar[r]^i& C^{n+1-*}\ar[r]^-{\phi_0}&C\ar[r]&C(\phi_0)\ar[r]&\dots}\]so that $C\simeq C(i)$. Define the chain complex $\partial C:=\Sigma^{-1}C(\phi_0)$. There is then a Puppe sequence of chain complexes of abelian groups \[\xymatrix{\dots\ar[r]&W^\%(\partial C)\ar[r]^-{i^\%}& W^\%(C^{n+1-*})\ar[r]&C(i^\%)\ar[r]^-{\text{proj}}&\Sigma W^\%\partial C\ar[r]&\dots}\]We must now lift $\phi\in W^\%C\simeq W^\%C(i)$ to $C(i^\%)$ via $\Phi_{j^\%}$. Recall the homotopy pullback square of Proposition \ref{prop:pullback} \[\xymatrix{C(i^\%)\ar[r]^-{\ev_r}\ar[d]_{\Phi_{j^\%}}&C^{n+1-*}\otimes C\ar[d]\cong \Hom_A(C_{*-(n+1)},C)\\W^\%C\ar[r]_-{\ev}&C\otimes C\cong\Hom_A(C^{-*},C)}\]We thus have a distinguished choice of cycle \[(\phi,\phi_0,\id)\in C(i^\%)\simeq W^\%C\times_{C\otimes C}C^{n+1-*}\otimes C,\]lifting $\phi\in W^\%C$. If we like, we can push this cycle along the cofibration sequence and formally desuspend to obtain an $n$-dimensional cycle $\partial \phi:=\Sigma^{-1}\text{proj}(\phi,\phi_0,\id)$ and a nullhomotopy $\overline{\phi}:=(\Sigma i^\%)(\text{proj}(\phi,\phi_0,\id))$. Using these, we can describe our distinguished choice of cycle in the more standard form and write an $(n+1)$-dimensional $\eps$-symmetric pair \[(i:\partial C\to C^{n+1-*},(\overline{\phi},\partial\phi))\] called the \textit{algebraic thickening} of $(C,\phi)$. In fact it is straightforward to show that the algebraic thickening of $(C,\phi)$ is always Poincar\'{e} (even when $(C,\phi)$ was not!) so that there is an $n$-dimensional $\eps$-symmetric Poincar\'{e} complex $(\partial C,\partial\phi)$ called the \textit{boundary of $(C,\phi)$}. With a little more work there is:

\begin{proposition}[{\cite[1.15]{MR1211640}}]\label{prop:thomthick1}The algebraic Thom construction and algebraic thickening describe a natural 1:1 correspondence between homotopy equivalence classes of $(n+1)$-dimensional $\eps$-symmetric complexes and homotopy equivalence classes of $(n+1)$-dimensional $\eps$-symmetric Poincar\'{e} pairs. An $\eps$-symmetric complex is Poincar\'{e} if and only if it is homotopy equivalent to the Thom complex of a pair of the form\[(0:0\to D,(\phi,0)).\]
\end{proposition}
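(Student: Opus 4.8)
The plan is to treat the two constructions and their mutual inverseness in turn, distinguishing carefully between the underlying chain complexes --- where everything reduces to formal Puppe-sequence manipulations --- and the $\eps$-symmetric structures, where the real content lies.

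First I would check that both constructions are well defined on homotopy equivalence classes. The cycle $\delta\phi/\phi=\Phi_{j^\%}(\delta\phi,\phi)$ depends a priori on the choice of nullhomotopy $j$ of $ef$, but any two such are homotopic, and homotopic nullhomotopies induce homotopic maps $\Phi_{j^\%}$ of cones (with $W^\%$ a homotopy functor carrying these homotopies along); a homotopy equivalence of pairs then induces one of Thom complexes by functoriality of cone and of $W^\%$. The same remarks apply to the thickening, built functorially from the Puppe sequence of $\phi_0$ and the distinguished cycle $(\phi,\phi_0,\id)\in C(i^\%)$ coming from the homotopy pullback square of Proposition \ref{prop:pullback}. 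I would also recall the observation made before the statement that the thickening of $(C,\phi)$ is always Poincar\'{e}: in that pullback square the distinguished lift has an identity component, which forces the Poincar\'{e}-duality map of the pair $(i:\partial C\to C^{n+1-*},(\overline\phi,\partial\phi))$ to be a chain homotopy equivalence, so the thickening does land among Poincar\'{e} pairs.

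Next, the two constructions are mutually inverse up to homotopy equivalence. The ``thicken then Thom'' direction is essentially a matter of definitions: the thickening of $(C,\phi)$ has underlying morphism $i:\partial C\to C^{n+1-*}$ with $\partial C=\Sigma^{-1}C(\phi_0)$, so the Puppe sequence $\partial C\xrightarrow{i}C^{n+1-*}\xrightarrow{\phi_0}C$ gives $C(i)\simeq C$, and under this equivalence the Thom structure $\Phi_{j^\%}(\overline\phi,\partial\phi)$ is by construction the image of the lift $(\phi,\phi_0,\id)$ in $W^\%C(i)\simeq W^\%C$, namely $\phi$. The ``Thom then thicken'' direction is where the Poincar\'{e} hypothesis on the pair is needed. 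Given an $(n+1)$-dimensional $\eps$-symmetric Poincar\'{e} pair $(f:C\to D,(\delta\phi,\phi))$ with Thom complex $(C(f),\delta\phi/\phi)$, Propositions \ref{prop:pairev} and \ref{prop:pullback} identify the slant image $(\delta\phi/\phi)_0:C(f)^{n+1-*}\to C(f)$, up to chain homotopy, with the composite of the pair's Poincar\'{e}-duality equivalence $(\delta\phi_0\,\,f\phi_0):C(f)^{n+1-*}\xrightarrow{\simeq}D$ and the inclusion $e:D\to C(f)$; hence $C\bigl((\delta\phi/\phi)_0\bigr)\simeq C(e)\simeq\Sigma C$ by the Puppe sequence of $C\xrightarrow{f}D\xrightarrow{e}C(f)$, and desuspending gives $\partial(C(f))\simeq C$ with the thickening inclusion corresponding to $f:C\to D$. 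Tracking the $\eps$-symmetric structure through the same pullback square (via a diagram analogous to \eqref{eq:ladder}) then shows the thickening of $(C(f),\delta\phi/\phi)$ is homotopy equivalent to $(f:C\to D,(\delta\phi,\phi))$. Naturality of the resulting bijection is inherited from the functoriality, throughout, of $W^\%$, mapping cone and Puppe sequence.

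Finally, for the Poincar\'{e} characterization: $(C,\phi)$ is Poincar\'{e} iff $\phi_0:C^{n+1-*}\to C$ is a chain homotopy equivalence, iff $\partial C=\Sigma^{-1}C(\phi_0)$ is contractible, iff its thickening is homotopy equivalent to a pair of the form $(0:0\to D,(\psi,0))$ with $D=C^{n+1-*}$; applying the Thom construction then exhibits $(C,\phi)$, up to homotopy equivalence, as the Thom complex of such a pair. Conversely, the Thom complex of $(0:0\to D,(\psi,0))$ is $(D,\psi)$, and --- since this pair occurs in the correspondence and is therefore Poincar\'{e} --- its Poincar\'{e}-duality map $\psi_0:D^{n+1-*}\to D$ is a chain homotopy equivalence, so $(D,\psi)$ is Poincar\'{e}; hence any complex homotopy equivalent to such a Thom complex is Poincar\'{e}. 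The main obstacle is the ``Thom then thicken'' step: beyond the chain-level identification $\partial(C(f))\simeq C$, one must verify that the $\eps$-symmetric structure produced by thickening, transported along this equivalence and along $C(f)^{n+1-*}\simeq D$, recovers $(\delta\phi,\phi)$ up to homotopy. This bookkeeping through nested mapping cones is the technical heart of the argument, and is precisely what Propositions \ref{prop:pairev} and \ref{prop:pullback} (and the ladder \eqref{eq:ladder}) are set up to control.
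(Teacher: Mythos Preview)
The paper does not actually prove this proposition: it describes the Thom construction and algebraic thickening in detail, asserts that the thickening is always Poincar\'{e}, and then states the 1:1 correspondence with the phrase ``With a little more work there is:'' followed by a citation to \cite[1.15]{MR1211640}. Your proposal is a reasonable and essentially correct expansion of that ``little more work'' using exactly the machinery the paper has set up (Propositions~\ref{prop:pairev} and~\ref{prop:pullback}, the ladder~\eqref{eq:ladder}), so there is nothing to compare against beyond noting that you have supplied what the paper deliberately omits.
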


\begin{remark}The result of Proposition \ref{prop:thomthick1} is designed to mimic the process of embedding a finite CW complex in a high dimensional sphere, taking a regular neighbourhood (i.e. manifold with boundary) and then applying the Pontryagin-Thom collapse map. As such it is not so shocking that the algebraic thickening of a symmetric complex should have Poincar\'{e} symmetric structure.
\end{remark}

\subsection{Symmetric Poincar\'{e} triads vs. symmetric surgery dual pairs}

It is important for knot-theoretic applications in Chapter \ref{chap:blanchfield} and various technical constructions in Chapter \ref{chap:DLtheory} to have a firm grasp of the connections between triads and pairs. Along the way, we will need to define what it means for two $(n+2)$-dimensional $\eps$-symmetric pairs to be \textit{surgery dual} to one another up to homotopy. The goal of this subsection is then to prove the triad/pair version of Proposition \ref{prop:thomthick1}:

\begin{proposition}\label{prop:triad}The \textit{relative algebraic Thom construction} and the \textit{algebraic thickening} (definitions below) describe a 1:1 correspondence between sets $\{x,x'\}$ of $(n+2)$-dimensional $\eps$-symmetric pairs \[\begin{array}{rcl}x&=&(f:C\to D,(\delta\phi,\phi))\\x'&=&(f':C'\to D',(\delta\phi',\phi'))\end{array}\]that are \textit{surgery dual} to one another up to homotopy (precise definition below) and homotopy equivalence classes of $(n+2)$-dimensional $\eps$-symmetric Poincar\'{e} triads.
\end{proposition}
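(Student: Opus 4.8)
\textbf{Proof proposal for Proposition \ref{prop:triad}.}

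The plan is to bootstrap from the complex/pair correspondence of Proposition \ref{prop:thomthick1} by a "relative" version, essentially packaging the data of a triad as a single $\eps$-symmetric pair-of-pairs and applying the Thom construction/thickening one dimension down. First I would unwind the definition of an $(n+2)$-dimensional $\eps$-symmetric triad $(\Gamma,(\Phi,\delta\phi,\delta'\phi,\phi))$ using Proposition \ref{prop:triple}: it is equivalent to a triple of two $(n+1)$-dimensional $\eps$-symmetric pairs $(f:C\to D,(\delta\phi,\phi))$, $(f':C\to D',(\delta'\phi,\phi))$ together with an $(n+2)$-dimensional $\eps$-symmetric \emph{Poincar\'{e}} pair $(g'':D\cup_C D'\to E,(\Phi,\delta\phi\cup_\phi\delta'\phi))$. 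The Poincar\'{e} condition on the triad forces the middle pair to be Poincar\'{e}, so in particular its Thom construction produces an $(n+2)$-dimensional $\eps$-symmetric complex $(C(g''),\Phi/(\delta\phi\cup_\phi\delta'\phi))$ whose boundary is $(D\cup_C D',\delta\phi\cup_\phi\delta'\phi)$.

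Next I would define the \textbf{relative algebraic Thom construction} of the triad: apply the ordinary Thom construction (Proposition \ref{prop:thomthick1}) to the two pairs $x,x'$, obtaining $(n+1)$-dimensional $\eps$-symmetric complexes $(C(f),\delta\phi/\phi)$ and $(C(f'),\delta'\phi/\phi)$; the shared structure $\phi$ on $C$ glues these into a pair whose total complex is $C(g'')\simeq C(f)\cup_{C}C(f')$ and whose symmetric structure is read off from $\Phi$. The pairs $x,x'$ carry the same underlying symmetric complex $(C,\phi)$ in dimension $n$, and after Thom-ifying they produce two $(n+2)$-dimensional $\eps$-symmetric pairs $(C\to C(f),\dots)$ and $(C\to C(f'),\dots)$; \emph{surgery dual} should then be defined to mean precisely that there is a homotopy equivalence identifying $C(f)^{n+2-*}$ with $C(f')$ compatibly with the structures, equivalently that they are the two "halves" obtained by thickening a single $(n+2)$-dimensional Poincar\'{e} triad. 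Conversely, the \textbf{algebraic thickening} of a surgery-dual set $\{x,x'\}$ is built by: thickening each of $x,x'$ individually to get $(n+1)$-dimensional Poincar\'{e} pairs, checking that the surgery-duality identification makes the resulting boundary complexes agree, and then assembling the square $\Gamma$ via the universal property of the homotopy pushout, lifting the glued cycle $\delta\phi\cup_\phi\delta'\phi$ to $C(F)$ exactly as in the proof of Proposition \ref{prop:triple} (using that it lies in the image of $G$). The Poincar\'{e} property of the resulting triad follows, component by component, from the fact that each algebraic thickening is automatically Poincar\'{e}, just as in Proposition \ref{prop:thomthick1}.

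The two constructions are mutually inverse up to homotopy: one direction is the statement that thickening undoes Thom-ification, which reduces to Proposition \ref{prop:thomthick1} applied to the constituent pairs plus a diagram chase through the iterated-cone description \eqref{eq:level1} and the homotopy-pullback square of Proposition \ref{prop:pullback}; the other direction is the analogous chase in the opposite order. Naturality is inherited from the naturality statements already quoted for the non-relative case and for the $W^\%$ functor. The main obstacle I expect is \emph{bookkeeping of the symmetric structures under gluing}: making precise, and checking the well-definedness of, the "surgery dual" relation so that the boundary complexes produced by thickening $x$ and $x'$ genuinely coincide on the nose (or canonically up to a specified homotopy), and verifying that the lift to $C(F)$ in the thickening direction is again a cycle. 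This requires careful use of Definitions \ref{def:glue1} and \ref{def:glue2} and the homotopy-pullback identifications $X \simeq C(f^\%)\times_{\Sigma W^\%C}C((f')^\%)$, together with the map $G:X\to W^\%D''$ measuring the failure of $W^\%$ to preserve pushouts; the sign conventions in the skew-suspension and in $d^\%$ will need to be tracked explicitly. Everything else is a formal consequence of the already-established complex/pair correspondence.
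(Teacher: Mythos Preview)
Your proposal has a genuine conceptual gap: you have misidentified what the pairs $x,x'$ in the statement are, and consequently your definition of ``surgery dual'' is circular.

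You treat $x,x'$ as the two $(n+1)$-dimensional pairs $(f:C\to D,(\delta\phi,\phi))$ and $(f':C\to D',(\delta'\phi,\phi))$ extracted from the triad via Proposition~\ref{prop:triple}, sharing a common boundary $(C,\phi)$. But the statement is about $(n+2)$-dimensional pairs, and the paper's relative Thom construction does something different. From the triad one forms the induced maps of cones $\nu=C(g,f'):C(f)\to C(g')$ and $\nu'=C(g',f):C(f')\to C(g)$, and then pushes the cycle $(\Phi,\delta\phi,\delta'\phi,\phi)\in C(F)$ through $C((g'')^\%)$ to $C(\nu^\%)$ and $C((\nu')^\%)$. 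The output pairs are
\[
x=(\nu:C(f)\to C(g'),(\Phi/\delta\phi,\delta'\phi/\phi)),\qquad x'=(\nu':C(f')\to C(g),(\Phi/\delta'\phi,\delta\phi/\phi)),
\]
which do \emph{not} share a domain; what they share is the iterated cone $C(\nu)\simeq C(\nu')\simeq C(\Gamma)$. Your proposed pairs ``$(C\to C(f),\dots)$'' are the wrong objects and the wrong dimension.

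More seriously, your definition of surgery dual --- ``precisely that they are the two halves obtained by thickening a single Poincar\'e triad'' --- presupposes the bijection you are trying to establish. The paper instead gives an \emph{intrinsic} construction: starting from a single $(n+2)$-dimensional pair $x=(f:C\to D,(\delta\phi,\phi))$, set $C':=\Sigma^{-1}C(\setminus\ev_r(\delta\phi,\phi):D^{n+2-*}\to C(f))$ via the Puppe sequence, and lift the Thom structure $\delta\phi/\phi\in W^\%C(f)$ to a distinguished cycle in $C((f')^\%)$ using the homotopy pullback of Proposition~\ref{prop:pullback}, obtaining $x'=(f':C'\to D^{n+2-*},(\delta\phi',\phi'))$. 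This is the ``effect of algebraic surgery with data $x$'', and one checks directly that the surgery dual of $x'$ recovers $x$ up to homotopy. Your condition ``$C(f)^{n+2-*}\simeq C(f')$'' is also off: what actually holds is $C(f)\simeq C(f')$ and $D'\simeq D^{n+2-*}$.

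For the thickening direction, the paper assembles the homotopy commuting square $\Gamma$ from the interlocking Puppe sequences of $x$ and its surgery dual $x'$ (diagram~\eqref{eq:thickening}), observes $\partial D\simeq\partial D'$, and then the hard step is to lift the cycle $(\overline{\Phi},\partial\Phi)\in C((g'')^\%)$ to $C(F)$. The obstruction lives in $\Sigma C(G)$ and is killed by showing $(\partial D,\partial\phi)\simeq(\partial D',\partial\phi')$ as symmetric complexes. Your outline gestures at this (``verifying that the lift to $C(F)$ is again a cycle'') but without the intrinsic surgery-dual construction you have no candidate square $\Gamma$ to begin with.
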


While the content of Chapter \ref{chap:algLtheory} is otherwise only a review, Proposition \ref{prop:triad} is original (although it was anticipated to some extent by Weiss in \cite[Section 4]{MR794111}).

\subsubsection*{Relative Thom construction}

Given an $(n+2)$-dimensional $\eps$-symmetric triad $(\Gamma,(\Phi,\delta\phi,\delta'\phi,\phi))$ consider the induced maps of cones\[\begin{array}{rrcl}\nu=C(g,f'):&C(f)&\to&C(g')\\\nu'=C(g',f):&C(f')&\to &C(g)\end{array}\]Using diagrams \ref{eq:lift}, \ref{eq:level1} and Proposition \ref{prop:pullback} we obtain a series of morphisms\[\xymatrix{&C(F)\ar[d]&\\C(\nu^\%)&C((g'')^\%)\ar[r]\ar[l]&C((\nu')^\%)}\]Using the two images of $(\Phi,\delta\phi,\delta'\phi,\phi)\in C(F)$ under these two composites, a triad defines two $(n+2)$-dimensional $\eps$-symmetric pairs \[\begin{array}{rcl}x&=&(\nu:C(f)\to C(g'),(\Phi/\delta\phi,\delta\phi'/\phi))\\x'&=&(\nu':C(f')\to C(g),(\Phi/\delta\phi,\delta\phi'/\phi))\end{array}\]The \textit{relative algebraic Thom construction} for $(\Gamma,(\Phi,\delta'\phi,\delta\phi,\phi))$ is defined to be the set $\{x,x'\}$.

\subsubsection*{Algebraic Surgery}

The pairs $x$ and $x'$ of the relative algebraic Thom construction are related to each other in the following way, that is best motivated geometrically. Recall the setup of diagram \ref{eq:relcob} and for this motivating section assume $\overline{W}=W$ is the trivial cover. Suppose $N=\emptyset$ so that $W$ is a cobordism from $M$ to $M'$ (the case where $N\neq\emptyset$ is more fiddly but the idea is the same). Up to homotopy, the mapping cone on the inclusion $M'\hookrightarrow W/M$ is the space $W/(M\sqcup M')$, but this is also the cone on the inclusion $M\hookrightarrow W/M'$.  A Poincar\'{e} pair $(f:C\to D,(\delta\phi,\phi))$ represents the pair of spaces $(W/M',M)$, which should be thought of as the data for one `end' of the cobordism. There is a well-known correspondence between cobordism of manifolds and `(geometric) surgery' so that doing geometric surgery on $M$ corresponding to the trace $W$ finds $(W/M,M')$, the other `end'. On the chain level, Poincar\'{e}-Lefschetz duality tells us that there is a chain equivalence of singular chain complexes $C^{n+1-*}(W,M')\simeq C(W,M)$, so that there is a homotopy cofibration sequence \[C(M')\to C^{n+1-*}(W,M')\xrightarrow{[W]\cap-} C(W,M\sqcup M').\]Hence it possible to find the chain homotopy type of $M'$ by just working with the Puppe sequence of the morphism\[[W]\cap-:C^{n+1-*}(W,M')\to C(W,M\sqcup M').\]We now take this Puppe sequence approach as a general strategy on the chain level for finding the ``other end'' of a cobordism. This is the basic concept of algebraic surgery.

Purely algebraically, then, begin with a (possibly non-Poincar\'{e}) $(n+2)$-dimensional $\eps$-symmetric pair \[x=(f:C\to D,(\delta\phi,\phi))\] with homotopy cofibration sequence \[\xymatrix{C\ar[r]^-{f}&D\ar[r]^-{e} &C(f)}.\]Recall we have $\setminus(\ev_r(\delta\phi,\phi)):D^{n+2-*}\to C(f)$ and define $C':=\Sigma^{-1}C(\setminus(\ev_r(\delta\phi,\phi)))$ so that there is a homotopy cofibration sequence\[\xymatrix{\dots\ar[r]&C'\ar[r]^-{f'}& D^{n+2-*}\ar[rr]^-{\setminus(\ev_r(\delta\phi,\phi))}&& C(f)\ar[r]^-{e'}& \Sigma C'\ar[r]&\dots}\]with $f'$ the inclusion and $e'$ the projection. Next we wish to lift the $\eps$-symmetric structure $\Phi:=\delta\phi/\phi\in W^\%C(f)$ to $C((f')^\%)$. But now there is a distinguished choice of cycle\[(\Phi,\Phi_0,e)\in C((f'^\%))\simeq W^\%C(f)\times_{C(f)\otimes C(f)}(D^{n+2-*}\otimes C(f)).\]If we like, we can describe this cycle in our more standard fashion by pushing this element along the homotopy cofibration sequence \[\dots\to W^\%C'\xrightarrow{(f')^\%} W^\%(D^{n+2-*})\to C((f')^\%)\xrightarrow{\text{proj}}\Sigma W^\%C'\xrightarrow{\Sigma (f')^\%}\Sigma W^\%(D^{n+2-*})\to\dots\]defining $\phi':=\Sigma^{-1}\text{proj}(\Phi,\Phi_0,e)$and $\delta\phi':=\Sigma(f')^\%(\Sigma\phi')$ to give a pair\[x':=(f':C'\to D^{n+2-*},(\delta\phi',\phi')).\]The pair $x'$ is called the \textit{surgery dual pair} to $x$ and the $n$-dimensional $\eps$-symmetric complex $(C',\phi')$ is called the \textit{effect of surgery on $(C,\phi)$ with data $x$}. The following is clear from the construction:

\begin{proposition}$x'$, the surgery dual pair to $x$, is well defined up to homotopy equivalence by the homotopy class of $x$. The surgery dual pair to $x'$ is homotopy equivalent to $x$.
\end{proposition}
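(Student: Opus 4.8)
The statement has two halves: first, that the surgery dual pair $x'$ depends (up to homotopy equivalence) only on the homotopy class of $x$; second, that applying the surgery dual construction twice returns us to something homotopy equivalent to $x$. The plan is to run both arguments purely formally from the construction of $x'$ via the Puppe sequence of $\sm(\ev_r(\delta\phi,\phi)):D^{n+2-*}\to C(f)$, using only the functoriality and homotopy-invariance properties of the algebraic mapping cone, of the $W^\%$ functor, and of the homotopy pullback square of Proposition \ref{prop:pullback}.

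For well-definedness, I would start from a homotopy equivalence of $(n+2)$-dimensional $\eps$-symmetric pairs $x\simeq \tilde x$, i.e.\ a homotopy commuting square inducing $C(f)\simeq C(\tilde f)$ together with a compatible identification of relative symmetric structures via $C(k^\%,h^\%)$. First I would check that this data induces a homotopy equivalence $\sm(\ev_r(\delta\phi,\phi))\simeq \sm(\ev_r(\delta\tilde\phi,\tilde\phi))$ as morphisms in $\B(A)$ up to the relevant homotopy-commuting diagram — this is where the naturality of the relative evaluation $\ev_r$ (as set up before Proposition \ref{prop:pairev}) does the work. Since the mapping cone is a homotopy functor, the desuspended cones $C'=\Sigma^{-1}C(\sm\ev_r(\delta\phi,\phi))$ and $\tilde C'$ are then homotopy equivalent, and the induced maps $f':C'\to D^{n+2-*}$, $\tilde f':\tilde C'\to \tilde D^{n+2-*}$ fit into a homotopy commuting square. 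For the symmetric structures, I would use that the distinguished lift $(\Phi,\Phi_0,e)\in C((f')^\%)$ sits inside the homotopy pullback $W^\%C(f)\times_{C(f)\otimes C(f)}(D^{n+2-*}\otimes C(f))$, and that all three terms of this pullback are carried to their tilded counterparts by the equivalence; hence the distinguished lift is carried to the distinguished lift (up to the prescribed homotopy), giving $(\delta\phi',\phi')\mapsto(\delta\tilde\phi',\tilde\phi')$ under $C((\tilde f')^\%,(\cdot)^\%)$. This establishes $x'\simeq\tilde x'$.

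For the involutivity, I would compute the surgery dual of $x'=(f':C'\to D^{n+2-*},(\delta\phi',\phi'))$ directly. Its construction requires the map $\sm(\ev_r(\delta\phi',\phi')):(D^{n+2-*})^{n+2-*}\to C(f')$. The first observation is the canonical identification $(D^{n+2-*})^{n+2-*}\simeq D$ (double dual and double desuspension of a finite complex of f.g.\ projectives cancel, using $\overline{\eps}=\eps^{-1}$ so the involution bookkeeping closes up). The second, and genuinely the crux, is that under this identification the morphism $\sm(\ev_r(\delta\phi',\phi'))$ agrees up to homotopy with the original structure map $f:C\to D$ reinterpreted through the Puppe sequence — more precisely, that the homotopy cofibration sequence $C'\xrightarrow{f'} D^{n+2-*}\xrightarrow{\sm\ev_r(\delta\phi,\phi)} C(f)$ is, after applying $C(f')\simeq\Sigma C$ and dualising, the same cofibration sequence that defines $x$. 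This is exactly the chain-level shadow of the geometric fact that surgery on the ``other end'' $(W/M,M')$ recovers $(W/M',M)$, and it is the step I expect to be the main obstacle: it requires carefully tracking the signs and the evaluation/slant-map identifications through two nested cone-and-desuspend operations, and checking that the distinguished lift of the symmetric structure round-trips to the original $(\delta\phi,\phi)$ under $\Phi_{j^\%}$ and its inverse. Once the underlying chain-level identification $C''\simeq C$, $f''\simeq f$ is in hand, the symmetric structures match by the same homotopy pullback argument as in the well-definedness half, and one concludes $x''\simeq x$. I would lean on Proposition \ref{prop:pullback} and Corollary \ref{cor:pullback} to control the comparison of $C((f'')^\%)$ with $W^\%C(f'')$ throughout, and I would remark that the whole argument is the ``absolute'' analogue, internal to pairs, of the complex/pair duality of Proposition \ref{prop:thomthick1}, whose proof strategy (thickening followed by Thom construction is the identity up to homotopy) it directly imitates.
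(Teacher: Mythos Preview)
The paper offers no argument here beyond ``clear from the construction,'' so your proposal is not so much a different route as it is the route the paper declines to write out. Your outline is correct and the ingredients you invoke (homotopy functoriality of cones, naturality of $\ev_r$, the homotopy pullback of Proposition~\ref{prop:pullback}) are exactly the right ones.

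One streamlining remark on the involutivity half: the cleanest organising principle is that $C(f')\simeq C(f)$ canonically (this is immediate from the Puppe sequence defining $C'$) and that the Thom structure $\Phi=\delta\phi/\phi\in W^\%C(f)$ is \emph{literally the same element} whether you start from $x$ or from $x'$. So the surgery-dual construction applied to $x'$ is again lifting this same $\Phi$ through the homotopy pullback of Proposition~\ref{prop:pullback}; the only thing that changes is the third coordinate in the distinguished lift, which now records $e:D\to C(f)$ rather than $\sm\ev_r(\delta\phi,\phi):D^{n+2-*}\to C(f)$. The identification $(D^{n+2-*})^{n+2-*}\cong D$ then forces $C''\simeq\Sigma^{-1}C(e)\simeq C$ and $f''\simeq f$ by the Puppe sequence $C\to D\xrightarrow{e}C(f)$. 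Seen this way, the ``sign and double-dual bookkeeping'' you flag as the main obstacle is largely absorbed into the observation that $x$ and $x'$ share the Thom complex $(C(f),\Phi)$ and differ only in which of the two evaluation maps into $C(f)$ one privileges. (A small slip: you write $C(f')\simeq\Sigma C$ at one point, but it is $C(f')\simeq C(f)$; the $\Sigma C$ appears one step further along the Puppe sequence.) This shared-Thom-complex picture is presumably what the paper means by ``clear from the construction,'' and it is made fully explicit a few lines later in the algebraic thickening discussion around diagram~\eqref{eq:thickening}.
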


This justifies the definition of a \textit{surgery dual set} $\{x,x'\}$ of $(n+2)$-dimensional $\eps$-symmetric pairs that are surgery dual to each other (up to homotopy).

\begin{corollary}The relative algebraic Thom construction defines a surgery dual set $\{x,x'\}$.
\end{corollary}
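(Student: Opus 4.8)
The plan is to unwind the relative algebraic Thom construction and the algebraic surgery construction and identify their outputs. I carry this out for an $(n+2)$-dimensional $\eps$-symmetric \emph{Poincar\'e} triad $(\Gamma,(\Phi,\delta\phi,\delta'\phi,\phi))$, which is the case the corollary is used for (it feeds into Proposition \ref{prop:triad}); for a general triad the two pairs produced need not be surgery dual — already a zero square carrying a non-Poincar\'e cycle $\Phi\in W^\%E_{n+2}$ fails — so the Poincar\'e hypothesis is essential here. Write the triad as $f\colon C\to D$, $f'\colon C\to D'$, $g\colon D\to E$, $g'\colon D'\to E$, with cone maps $\nu=C(g,f')\colon C(f)\to C(g')$ and $\nu'=C(g',f)\colon C(f')\to C(g)$ and iterated cone $C(\nu)=C(\nu')=C(F)$, so that the relative Thom construction produces $x=(\nu\colon C(f)\to C(g'),\cdot)$ and $x'=(\nu'\colon C(f')\to C(g),\cdot)$. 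Since the surgery dual of $x'$ is homotopy equivalent to $x$, it suffices to show that $x'$ is homotopy equivalent to $\tilde x'$, the surgery dual of $x$.

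The first point is that \emph{surgery preserves the algebraic Thom complex}. For an $(n+2)$-dimensional pair $y=(h\colon B\to A',(\delta\theta,\theta))$, the base map of the surgery dual $y'$ is the desuspended projection $h'$ in the Puppe sequence of $\setminus\ev_r(\delta\theta,\theta)\colon (A')^{n+2-*}\to C(h)$; three consecutive terms of that sequence form a cofibration sequence, so $C(h')\simeq C(h)$, and tracking the distinguished lift $(\Phi,\Phi_0,e)$ used to define the structure of $y'$ through the homotopy pullback square of Proposition \ref{prop:pullback} shows this equivalence carries $\delta\theta'/\theta'$ to $\delta\theta/\theta$. Applied to $x$ this says the Thom complex of $\tilde x'$ is $(C(F),\Psi)$, where $\Psi\in W^\%C(F)$ is the image of the cycle $(\Phi,\delta\phi,\delta'\phi,\phi)$; the Thom complexes of $x$ and of $x'$ are likewise $(C(F),\Psi)$, since $C(\nu)=C(\nu')=C(F)$ and both structures descend from the same cycle through the two composites of diagram \ref{eq:lift}. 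A pair is recovered from its Thom complex together with the inclusion of the base into it; for $x'$ this "cut" is the canonical $p'\colon C(g)\to C(F)$ coming from the cofibration sequence $C(f')\xrightarrow{\nu'}C(g)\xrightarrow{p'}C(F)$, while for $\tilde x'$ it is $\setminus\ev_r$ of the $\eps$-symmetric structure of $x$, a map $C(g')^{n+2-*}\to C(F)$.

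So everything reduces to identifying these cuts: $C(g')^{n+2-*}\simeq C(g)$ compatibly with the maps into $C(F)$, after which $C(f')\simeq\tilde B'$ and $x'\simeq\tilde x'$ as pairs. For this I would use that the triad is Poincar\'e, i.e.\ each of its three associated pairs (Proposition \ref{prop:triple}) is Poincar\'e: from $(f\colon C\to D,(\delta\phi,\phi))$ one gets $D^{n+1-*}\xrightarrow{\simeq}C(f)$ and from $(g''\colon D\cup_C D'\to E,(\Phi,\delta\phi\cup_\phi\delta'\phi))$ one gets $E^{n+2-*}\xrightarrow{\simeq}C(g'')\simeq C(F)$ (using that the iterated cone is the cone of $g''$ out of the homotopy pushout $D\cup_C D'$). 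Substituting these into the cofibration sequence $C(f)\xrightarrow{\nu}C(g')\to C(F)$ and rotating gives a cofibration sequence $E^{n+1-*}\to D^{n+1-*}\to C(g')$, and a diagram chase — comparing the evaluations $\ev_r$ of $\phi$, of $(\delta\phi,\phi)$ and of $(\Phi,\delta\phi\cup_\phi\delta'\phi)$ through the homotopy-commuting ladders of type \ref{eq:ladder} and the square of Proposition \ref{prop:pairev}, all of which come from the one cycle $(\Phi,\delta\phi,\delta'\phi,\phi)$ — identifies the first map with $g^{n+1-*}$. Hence $C(g')\simeq C(g^{n+1-*})$, and dualizing once more gives $C(g')^{n+2-*}\simeq\Sigma^{-1}C(\Sigma g)=C(g)$, carrying $\setminus\ev_r$ of the structure of $x$ to $p'$; the leftover $\eps$-symmetric pair data match because both are the single cycle transported through these homotopy-commutative identifications and the pullback squares of Propositions \ref{prop:pullback}, \ref{prop:pairev} and Corollary \ref{cor:pullback}.

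The main obstacle is precisely this last diagram chase: the abstract equivalences $C(g')^{n+2-*}\simeq C(g)$ and $\tilde B'\simeq C(f')$ are formal, but one needs a single homotopy equivalence \emph{of pairs}, i.e.\ the connecting maps and residual $\eps$-symmetric data must be shown to correspond, not merely the underlying complexes. The cleanest way to keep the bookkeeping under control is to exhibit $x$ and $\tilde x'$, together with a collar, as the non-collar faces of an $\eps$-symmetric Poincar\'e triad homotopy equivalent to $(\Gamma,(\Phi,\delta\phi,\delta'\phi,\phi))$ itself — whose non-collar faces are $x$ and $x'$ by construction — so that the comparison becomes the routine homotopy-uniqueness of a triad with prescribed faces, reduced entirely to the $W^\%$-homotopy-pullback diagrams already in hand.
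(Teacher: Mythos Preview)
The paper offers no proof for this corollary: it is stated bare, immediately after the preceding proposition, and the reader is evidently meant to accept it as following from inspection of the two constructions. You correctly observe that as literally stated---for an arbitrary, not necessarily Poincar\'e, triad---it is false: your zero-square example with a non-Poincar\'e $(E,\Phi)$ gives $x=x'=(0\to E,(\Phi,0))$, whose surgery dual is the algebraic thickening $(\partial E\to E^{n+2-*},(\overline\Phi,\partial\Phi))\not\simeq x$. The paper only ever uses the corollary in the Poincar\'e setting (Propositions \ref{prop:triad} and \ref{prop:triad2}), so your restriction to that case is the right reading of the intended content, and the corollary as printed should really carry that hypothesis.

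Your argument for the Poincar\'e case is more than the paper provides and is essentially the correct one. Once you have pinned the common Thom complex $C(\nu)=C(\nu')=C(\Gamma)$ carrying the single structure $\Psi$, the remaining work is to identify the canonical inclusion $C(g)\hookrightarrow C(\Gamma)$ with the evaluation $\setminus\ev_r\colon C(g')^{n+2-*}\to C(\Gamma)$ coming from the structure of $x$, and that identification is precisely the Poincar\'e duality of the glued pair $(g''\colon D\cup_C D'\to E,(\Phi,\delta\phi\cup_\phi\delta'\phi))$, transported through ladders of the type in diagram \ref{eq:ladder} and the pullback squares of Propositions \ref{prop:pairev} and \ref{prop:pullback}. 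That is the verification you sketch; the paper would presumably regard it as implicit in those diagrams. Your closing suggestion to repackage everything as a homotopy-uniqueness statement for triads is a clean way to organise the bookkeeping, but it is not a different argument in substance---it is the same diagram chase dressed differently.
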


\begin{remark}The effect of surgery $(C',\phi')$ on an $n$-dimensional $\eps$-symmetric complex $(C,\phi)$ was introduced in \cite[\textsection 4]{MR560997} where a formula for a choice of $n$-dimensional $\eps$-symmetric structure $\phi'$ is specified by a matrix of morphisms involving the collection of higher chain maps $\phi_s$. While the homotopy theory we have presented gives a pleasing explanation for the concept of algebraic surgery, if the reader actually wants to calculate a surgery effect then \cite[\textsection 4]{MR560997} is more useful.
\end{remark}

\subsubsection*{Algebraic thickening}

We wish to reverse the relative algebraic Thom construction on a triad. That is, given an $(n+2)$-dimensional $\eps$-symmetric pair we will rebuild a triad. The strategy will be to build the appropriate homotopy commuting square using interlocking homotopy cofibration sequences. We then show there is an $(n+2)$-dimensional $\eps$-symmetric structure on the square by using diagram $\ref{eq:lift}$ to lift the structure from the algebraic thickening of the algebraic Thom construction on the original $(n+2)$-dimensional $\eps$-symmetric pair.

Fix an $(n+2)$-dimensional $\eps$-symmetric pair $x=(g:D\to E,(\delta\phi,\phi))$. Then $x$ and the surgery dual pair $x'=(g':D'\to E^{n+2-*},(\delta\phi',\phi'))$ fit into a homotopy commutative diagram\begin{equation}\label{eq:thickening}\xymatrix{\partial D\ar[d]_-{i}\ar[r]^-{i'}&(D')^{n+1-*}\ar[rr]^-{\ev(\phi')}\ar[d]^-{(e')^*}&&D'\ar[d]^-{g'}\\D^{n+1-*}\ar[d]_-{\ev(\phi)}\ar[r]^-{\text{proj}^*}&C(g)^{n+2-*}\ar[d]^-{\ev_l(\delta\phi,\phi)}\ar[rr]^-{e^*}&&E^{n+2-*}\ar[d]^-{\ev_r(\delta\phi,\phi)}\\ D\ar[r]^-{g}&E\ar[rr]^-{e}&&C(g)}\end{equation}where the bottom right square was observed to homotopy commute in diagram \ref{eq:ladder} and the rest is defined up to homotopy by desuspending iterated cones.

\begin{corollary}\label{cor:fudge}If $x'$ is the surgery dual pair to $x$ then $\partial D\simeq \partial D'$.
\end{corollary}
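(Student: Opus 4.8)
The plan is to read off the claim directly from the homotopy commutative diagram \ref{eq:thickening}. By definition, $\partial D := \Sigma^{-1}C(\phi_0)$ where $\phi_0 = \setminus(\ev(\phi)) : D^{n+2-*} \to D$ is the slant evaluation of the $\eps$-symmetric structure on the complex $(C(g),\delta\phi/\phi)$ obtained by the algebraic Thom construction on $x$. Similarly $\partial D' := \Sigma^{-1}C(\phi'_0)$ where $\phi'_0 = \setminus(\ev(\phi'))$ is the analogous map for the surgery dual pair $x'$. So the statement amounts to exhibiting a chain homotopy equivalence between these two desuspended mapping cones.

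First I would set up the left-hand column and bottom row of diagram \ref{eq:thickening} carefully: the bottom row is the homotopy cofibration sequence $D \xrightarrow{g} E \xrightarrow{e} C(g)$, and desuspending the Puppe sequence of $\ev_r(\delta\phi,\phi) : E^{n+2-*} \to C(g)$ produces $D' \xrightarrow{g'} E^{n+2-*} \to C(g) \to \Sigma D'$, which is exactly how $x'$ was constructed. Then the three columns of \ref{eq:thickening} are themselves homotopy cofibration sequences: reading the leftmost column, $\partial D \xrightarrow{i} D^{n+1-*} \xrightarrow{\ev(\phi)} D$ is (a shift of) the defining Puppe sequence for $\partial D$; reading the rightmost column, $D' \xrightarrow{g'} E^{n+2-*} \xrightarrow{\ev_r(\delta\phi,\phi)} C(g)$ is the cofibration sequence just mentioned; and the middle column fits $C(g)^{n+2-*}$ between. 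The key point is then the standard $3\times 3$ (nine-lemma) argument for homotopy cofibration sequences in $\Ch(A)$: once the bottom two rows and all three columns of a homotopy commutative $3\times 3$ diagram are homotopy cofibration sequences, the top row is automatically a homotopy cofibration sequence as well. Applying this to \ref{eq:thickening}, the top row reads $\partial D \xrightarrow{i'} (D')^{n+1-*} \to D'$ up to the evident shift, i.e.\ it identifies $(D')^{n+1-*} \to D' \to \Sigma \partial D$ as a cofibration sequence. But $(D')^{n+1-*} \xrightarrow{\phi'_0} D' \to C(\phi'_0) = \Sigma \partial D'$ is the defining cofibration sequence for $\partial D'$, and a map of complexes determines its mapping cone up to homotopy equivalence, so $\Sigma \partial D \simeq \Sigma \partial D'$, hence $\partial D \simeq \partial D'$.

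The main obstacle is bookkeeping rather than conceptual: I must check that the top row of \ref{eq:thickening} really does present $D'$ (up to suspension) as the cone on the map $(D')^{n+1-*} \to D'$ induced by $\phi'_0$, i.e.\ that the map labelled $\ev(\phi')$ in the diagram agrees up to homotopy with the slant evaluation $\phi'_0$ of the $\eps$-symmetric structure $\phi'$ produced by the surgery construction. This is where one has to track the lift $(\Phi,\Phi_0,e) \in C((f')^\%)$ through the homotopy pullback square of Proposition \ref{prop:pullback} and recall that, exactly as in the discussion preceding Proposition \ref{prop:thomthick1}, the surgery dual pair is built so that its ``$\ev_0$ component'' is the identity-slotted map; the desuspended projection then reproduces $\phi'_0$. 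Granting the commutativity assertions already made in diagram \ref{eq:thickening} (which the excerpt states hold up to homotopy by desuspending iterated cones, with the bottom-right square handled in diagram \ref{eq:ladder}), the nine-lemma for homotopy cofibration sequences does the rest. I would phrase the final step as: the leftmost and rightmost columns and the bottom two rows of \ref{eq:thickening} are homotopy (co)fibration sequences, so the top row is too, and comparing it with the Puppe sequence defining $\partial D'$ yields the chain homotopy equivalence $\partial D \simeq \partial D'$.
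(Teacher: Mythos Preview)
Your approach is correct and is exactly the argument the paper intends: the corollary is stated without proof immediately after diagram~\eqref{eq:thickening}, and the reader is meant to read it off via the $3\times 3$ cofibration argument you describe. One small slip: $(D,\phi)$ is $(n+1)$-dimensional (it is the boundary of the $(n+2)$-dimensional pair $x$), so $\phi_0:D^{(n+1)-*}\to D$, not $D^{n+2-*}\to D$; and $\phi$ is the structure on $D$, not on the Thom complex $(C(g),\delta\phi/\phi)$. With those indices corrected your outline is fine, and your identification of the only real content---checking that the top-row map labelled $\ev(\phi')$ agrees with $\phi'_0$---is exactly the point.
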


Write the pushout $D'':=D^{n+1-*}\cup_{\partial D}(D')^{n+1-*}$ and observe that there is a morphism\[g'':D''\to C(g)^{n+2-*}\]such that $C(g'')\simeq C(g)$. As we have \[C(g)\simeq C(g'')\simeq C(g')\]we can lift the same element $\Phi:=\delta\phi/\phi\in W^\%C(g)_{n+2}$ to the three distinguished choices:\[\begin{array}{rclcl}(\Phi,\Phi_0,\id)&=&(\overline{\Phi},\partial\Phi)&\in&C((g'')^\%)\\(\Phi,\Phi_0,\setminus(\ev_r(\delta\phi,\phi)))&=&(\delta\phi,\phi)&\in&C((g)^\%)\\(\Phi,\Phi_0,e)&=&(\delta\phi',\phi')&\in&C((g')^\%)\end{array}\]i.e. the same element recovers $x$, its surgery dual $x'$ and the algebraic Thom construction $(i'':\partial C(g)\to C(g)^{n+2-*},(\overline{\Phi}.\partial\Phi))$ on the $(n+2)$-dimensional $\eps$-symmetric complex $(C(g),\Phi)$.

\begin{claim}The cycle $(\overline{\Phi},\partial\Phi)\in C((g'')^\%)$ lifts to an $(n+2)$-dimensional $\eps$-symmetric structure on the homotopy commuting square $\Gamma$, where $\Gamma$ is given by the top left square of diagram \ref{eq:thickening}.
\end{claim}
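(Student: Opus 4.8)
The plan is to reverse the relative algebraic Thom construction by exhibiting, from the cycle $(\overline{\Phi},\partial\Phi)\in C((g'')^\%)$, a cycle in $C(F)_{n+2}$ where $F:X\to W^\%E'$ (here $E'=C(g)^{n+2-*}$) is the map associated to the homotopy commuting square $\Gamma$ that occupies the top-left corner of diagram \ref{eq:thickening}, exactly as in diagram \ref{eq:lift}. First I would recall that $\Gamma$ has corners $\partial D$, $(D')^{n+1-*}$, $D^{n+1-*}$ and $C(g)^{n+2-*}$, with the indicated maps $i$, $i'$, $(e')^*$, $\text{proj}^*$, and that (using Corollary \ref{cor:fudge} to identify the two boundaries $\partial D\simeq\partial D'$) the homotopy pushout of the two maps out of $\partial D$ is $D''=D^{n+1-*}\cup_{\partial D}(D')^{n+1-*}$, which maps to $C(g)^{n+2-*}$ by $g''$ with $C(g'')\simeq C(g)$. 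So the square $\Gamma$ is a homotopy commuting square whose iterated-cone total object is (up to equivalence) $C(g)$ itself.

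Next, I would run the machinery of diagram \ref{eq:lift} for this particular $\Gamma$: applying $W^\%$ and iterating cones produces $X = W^\%D^{n+1-*}\cup_{W^\%\partial D} W^\%(D')^{n+1-*}$ together with a map $G:X\to W^\%D''$, a map $F:X\to W^\%(C(g)^{n+2-*})$, and a homotopy commuting square relating $C(F)$ to $C((g'')^\%)$. The point is that giving an $(n+2)$-dimensional $\eps$-symmetric structure on $\Gamma$ is by definition a cycle in $C(F)_{n+2}$, and by the homotopy commutativity of \ref{eq:lift} such a cycle maps to a cycle in $C((g'')^\%)_{n+2}$; conversely a cycle in $C((g'')^\%)$ lifts to $C(F)$ precisely when its image in $W^\%D''$ lies in $\im(G)$, and any such lift is automatically a cycle. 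So the content of the claim reduces to the assertion that $\partial\Phi\cup_{(\cdots)}$-type glue datum defining $(\overline{\Phi},\partial\Phi)$ actually comes from $X$ — i.e. that $\overline{\Phi}\in\im(G)$.

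To see this I would use the distinguished-lift description that produced $(\overline{\Phi},\partial\Phi)$ in the first place. Recall from the algebraic thickening construction (the paragraph preceding Proposition \ref{prop:thomthick1}, applied to the $(n+2)$-dimensional complex $(C(g),\Phi)$) that $(\overline{\Phi},\partial\Phi)$ is precisely the canonical cycle $(\Phi,\Phi_0,\id)\in C((g'')^\%)\simeq W^\%C(g)\times_{C(g)\otimes C(g)}(E'{}^{t}\otimes C(g))$ under Proposition \ref{prop:pullback}. The two surgery-dual pairs $x=(\delta\phi,\phi)$ and $x'=(\delta\phi',\phi')$ arise as the lifts $(\Phi,\Phi_0,\setminus(\ev_r(\delta\phi,\phi)))$ and $(\Phi,\Phi_0,e)$ of the same $\Phi$ along the maps $C(g)^\%\to C((g'')^\%)$ and $C(g')^\%\to C((g'')^\%)$ respectively. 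Thus the restrictions of $(\overline{\Phi},\partial\Phi)$ to the edges $W^\%D^{n+1-*}$ and $W^\%(D')^{n+1-*}$ of $\Gamma$ are exactly the underlying symmetric structures of the thickenings of $(D,\phi)$-type and $(D',\phi')$-type data, which share a common restriction to $W^\%\partial D$ (using $\partial D\simeq\partial D'$). By the defining property of the homotopy pushout $X$ and the naturality of the slant/evaluation maps, these compatible edge data assemble to a cycle $\xi\in X_{n+2}$ with $G(\xi)=\overline{\Phi}$, and chasing diagram \ref{eq:lift} shows the induced $C(F)$-lift of $\xi$ restricts on the four corners to $\Phi$, $\delta\phi$-part, $\delta\phi'$-part and $\partial\Phi$, i.e. it is the sought $(n+2)$-dimensional $\eps$-symmetric structure on $\Gamma$ whose relative Thom construction returns $\{x,x'\}$.

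The main obstacle I anticipate is purely bookkeeping: verifying that the two distinguished lifts genuinely restrict to the \emph{same} cycle on $W^\%\partial D$ — this requires unwinding Corollary \ref{cor:fudge}'s identification $\partial D\simeq\partial D'$ and checking it is compatible with the symmetric structures $\partial\phi$ and $\partial\phi'$, not merely the underlying complexes. Concretely one must show that under $\partial D\simeq\partial D'$ the boundary $(\partial D,\partial\phi)$ of $(C,\phi)$ and the boundary $(\partial D',\partial\phi')$ of $(C',\phi')$ agree as $n$-dimensional $\eps$-symmetric Poincar\'{e} complexes, which follows by applying Proposition \ref{prop:thomthick1} and the uniqueness clause there to the common Thom complex $(C(g),\Phi)$, but tracking the homotopies carefully through diagram \ref{eq:thickening} is where the real work lies. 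Everything else — that the lift is a cycle, that the relative Thom construction inverts the thickening up to homotopy, that homotopy equivalences of triads correspond to homotopy equivalences of surgery-dual sets — is then formal from the universal properties of mapping cones and the homotopy pullback square of Proposition \ref{prop:pullback}, completing the proof of both the Claim and Proposition \ref{prop:triad}.
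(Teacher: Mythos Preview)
Your proposal is correct and follows essentially the same route as the paper: both reduce the lifting problem along $C(F)\to C((g'')^\%)$ to lifting the $D''$-component along $G:X\to W^\%D''$, and both identify the crucial step as verifying that $(\partial D,\partial\phi)\simeq(\partial D',\partial\phi')$ as $\eps$-symmetric Poincar\'e complexes via Corollary~\ref{cor:fudge}. Note one notational slip --- the $D''$-component of $(\overline{\Phi},\partial\Phi)$ is $\partial\Phi$, not $\overline{\Phi}$ (the latter lives on the codomain $C(g)^{n+2-*}$), so it is $\partial\Phi$ that must lie in $\im(G)$ --- and that the paper makes the reduction concrete by constructing an explicit $3\times 4$ grid of homotopy cofibration sequences to analyse $C(G)$, whereas you invoke the universal property of the homotopy pushout $X$ directly.
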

\begin{proof}
Diagram \ref{eq:lift} now looks like\[\xymatrix{X\ar[r]^-{F}\ar[d]_-{G}&W^\%(C(g)^{n+2-*})\ar[d]^-{=}\ar[r]&C(F)\ar[d]\\W^\%(D'')\ar[r]^-{(g'')^\%}&W^\%(C(g)^{n+2-*})\ar[r]&C((g'')^\%)}\]with $D''$ and $X$ defined by the homotopy pullback squares\[\xymatrix{D''\ar[r]^-{k}\ar[d]_-{k'}&D\ar[d]^-{l}\\D'\ar[r]^{l'}&\Sigma\partial D}\qquad\qquad\xymatrix{X\ar[r]\ar[d]&W^\%D\ar[d]^-{l^\%}\\ W^\% D'\ar[r]^{(l')^\%}&\Sigma W^\%\partial D}\]

The obstruction to lifting $(\overline{\Phi},\partial\Phi)\in C((g'')^\%)$ to a cycle in $C(F)$ is given by the corresponding element in $C(\Sigma G)=\Sigma C(G)$. There is an obvious map of diagrams\[(W^\%\Sigma\partial D\to\Sigma\partial D\otimes\Sigma\partial D\xleftarrow{}0)\quad\longrightarrow\quad (W^\%\Sigma\partial D\to\Sigma\partial D\otimes\Sigma\partial D\xleftarrow{} (D\oplus D')\otimes \Sigma\partial D)\]determining a map on the homotopy pullbacks \[H:\Sigma W^\%\partial D\to C\left(\lmat -k^\%\\(k')^\%\rmat \right)\]with mapping cone $C(H)\simeq (D\oplus D')\otimes \Sigma\partial D$. Using this and \ref{prop:pullback}, \ref{cor:pullback}, there is a homotopy commutative diagram of homotopy cofibration sequences\[\xymatrix{W^\%\partial D\ar[r]\ar[d]_-{\Sigma^{-1}H}&X\ar[r]\ar[d]^-{G}&C(i^\%)\oplus C((i')^\%)\ar[r]\ar[d]&\Sigma W^\%\partial D\ar[d]^-{H}\\
\Sigma^{-1}C\left({\lmat -k^\%\\(k')^\%\rmat} \right)\ar[r]^-{(0\,\,1)}\ar[d]&W^\% D''\ar[r]^-{{\lmat-k^\%\\(k')^\%\rmat}}\ar[d]&W^\%D\oplus W^\% D'\ar[r]\ar[d]&C\left({\lmat -k^\%\\(k')^\%\rmat }\right)\ar[d]\\
\Sigma^{-1}((D\oplus D')\otimes \Sigma\partial D)\ar[r]&C(G)\ar[r]&(\Sigma\partial D\otimes D)\oplus(\Sigma\partial D\otimes D')\ar[r]&(D\oplus D')\otimes \Sigma\partial D}\]

Considering the lower row of diagram \ref{eq:lift} and the definition of $\partial \Phi$ as a formal desuspension, it is sufficient to check that there is no obstruction to lifting $\partial\Phi\in C(G)$ to $X$.

Firstly, we have already seen in the algebraic Thom construction that $k^\%\partial \Phi=\phi$ and $(k')^\%\partial \Phi=\phi'$ lift to $(\overline{\phi},\partial\phi)$ and $(\overline{\phi'},\partial\phi')$ respectively. So it is enough to show that there is a homotopy equivalence $(\partial D,\partial \phi)\simeq (\partial D',\partial \phi')$. But indeed the homotopy equivalence of \ref{cor:fudge} induces a homotopy equivalence of $\eps$-symmetric Poincar\'{e} complexes as required.

\end{proof}

\begin{corollary}\label{corr:cobordism}If $x$ and $x'$ as in diagram \ref{eq:thickening} and $\partial D\simeq 0$, then $D''\simeq D\oplus D'$ and there is an $(n+2)$-dimensional $\eps$-symmetric Poincar\'{e} pair given by\[(g''=(f\,\,f'):D\oplus D'\to C(g)^{n+2-*},(\overline{\Phi},\phi\oplus-\phi')),\]with $f$ and $f'$ maps induced by diagram \ref{eq:thickening}.
\end{corollary}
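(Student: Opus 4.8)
The plan is to specialise the general machinery of the preceding Claim (and the diagram \ref{eq:thickening}) to the case $\partial D\simeq 0$, and simply read off the conclusion. First I would observe that if $\partial D\simeq 0$ then, by Corollary \ref{cor:fudge}, also $\partial D'\simeq 0$, so the homotopy pullback square
\[\xymatrix{D''\ar[r]^-{k}\ar[d]_-{k'}&D\ar[d]^-{l}\\D'\ar[r]^{l'}&\Sigma\partial D}\]
appearing in the proof of the Claim degenerates: since $\Sigma\partial D\simeq 0$, the homotopy pullback $D'' = D\times_{\Sigma\partial D}D'$ is just the product, i.e.\ there is a homotopy equivalence $D''\simeq D\oplus D'$, under which $k$ and $k'$ become the projections and the two inclusions $f:D\to D''$ and $f':D'\to D''$ are the canonical ones. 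This identifies the morphism $g'':D''\to C(g)^{n+2-*}$ constructed above with a morphism of the form $(f\,\,f'):D\oplus D'\to C(g)^{n+2-*}$ (abusing notation so that $f,f'$ now denote the composites into $C(g)^{n+2-*}$), still with $C(g'')\simeq C(g)$.

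Next I would invoke the Claim itself: the distinguished lift $(\overline{\Phi},\partial\Phi)\in C((g'')^\%)$ lifts to an $(n+2)$-dimensional $\eps$-symmetric structure on the homotopy commuting square $\Gamma$ given by the top-left square of \ref{eq:thickening}. When $\partial D\simeq 0$ the first three terms of that square ($\partial D$, $(D')^{n+1-*}$ after the relevant identifications, $D^{n+1-*}$) collapse appropriately and the square reduces to the single morphism $g'' = (f\,\,f'):D\oplus D'\to C(g)^{n+2-*}$; the symmetric structure on the triad becomes, by Proposition \ref{prop:triple}, simply an $(n+2)$-dimensional $\eps$-symmetric pair, namely the algebraic thickening of $(C(g),\Phi=\delta\phi/\phi)$. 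Since any algebraic thickening is automatically Poincar\'{e} (Proposition \ref{prop:thomthick1}, in its pair form \ref{prop:triad}), this pair is Poincar\'{e}.

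It then remains to pin down the symmetric structure as $(\overline\Phi,\phi\oplus-\phi')$. For this I would trace through the three distinguished lifts displayed just before the Claim: the same element $\Phi\in W^\%C(g)_{n+2}$ recovers $x = (g:D\to E,(\delta\phi,\phi))$ via the lift $(\Phi,\Phi_0,\setminus(\ev_r(\delta\phi,\phi)))$ and recovers the surgery dual pair $x' = (g':D'\to E^{n+2-*},(\delta\phi',\phi'))$ via $(\Phi,\Phi_0,e)$; restricting the thickened structure $(\overline\Phi,\partial\Phi)$ along $f:D\to D''$ returns $\phi$ and along $f':D'\to D''$ returns $\phi'$ — but the sign in the homotopy pullback presentation $D''\simeq D\times_{\Sigma\partial D}D'$ (the column of \ref{eq:thickening} carries $\ev(\phi)$ and $\ev(\phi')$ into a difference, exactly as in the homotopy pushout conventions of Section \ref{subsec:res} and in diagram \ref{eq:thickening}) forces the $D'$-component to appear with the opposite sign, giving $\phi\oplus-\phi'$ on the nose. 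The boundary term $\overline\Phi$ is unchanged. The main obstacle I anticipate is precisely this sign/bookkeeping step — verifying that the homotopy equivalence $D''\simeq D\oplus D'$ is compatible with the lifted symmetric structure in such a way that the relative term is $-\phi'$ and not $+\phi'$ — together with checking that $f,f'$ as extracted from \ref{eq:thickening} are genuinely the maps claimed; everything else is a straightforward degeneration of the $\partial D\not\simeq 0$ argument already carried out.
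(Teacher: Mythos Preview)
Your proposal is correct and follows exactly the route the paper intends: the corollary is stated in the paper without proof, as an immediate specialisation of the preceding Claim and the thickening construction to the case $\partial D\simeq 0$, and you have supplied precisely those details (degeneration of $D''$ to $D\oplus D'$, Poincar\'{e}-ness from the thickening, and the sign on $\phi'$ from the pushout/pullback conventions). Your flagged concern about the sign bookkeeping is the only genuine thing to check, and the paper handles it by fiat in the subsequent remark, where an explicit model with structure $(0,\phi\oplus-\phi')$ is written down.
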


\begin{remark}More concretely than Corollary \ref{corr:cobordism}, but under the same hypotheses, we can write an $(n+2)$-dimensional $\eps$-symmetric Poincar\'{e} pair as in \cite[4.1(ii)]{MR560997}\[((h\,\,h'):D\oplus D'\to \tilde{E},(0,\phi\oplus-\phi'))\]where\[d_{\tilde{E}}=\left(\begin{matrix}d_D&(-1)^{n+2}\phi_0 f^*\\0&(-1)^rd^*_E\end{matrix}\right):\tilde{E}_r=D_r\oplus E^{n+2-r}\to\tilde{E}_{r-1}=D_{r-1}\oplus E^{n+2-(r-1)},\]\[h=\left(\begin{matrix}1\\0\end{matrix}\right):D_r\to D_r\oplus E^{n+2-r},\]\[h'=\left(\begin{matrix}1&0&0\\0&0&1\end{matrix}\right):D'_r=D_r\oplus E_{r+1}\oplus E^{n+2-r}\to D_r\oplus E^{n+2-r}.\]Actually $\tilde{E}\simeq C(g)^{n+2-*}$ but $\tilde{E}$ is carefully chosen so that $h, h'$ are split, which has the obvious advantage that the $\eps$-symmetric structure $(0,\phi\oplus-\phi')$ will be easier to calculate and manipulate.
\end{remark}

\begin{remark}Just as a manifold triad can be thought of as a nullcobordism of a manifold with boundary, so a symmetric Poincar\'{e} triad should be thought of as a nullcobordism of a symmetric Poincar\'{e} pair. There is a corresponding higher notion; a manifold $n$-ad can be thought of as a nullcobordism of a manifold $(n-1)$-ad and correspondingly one can make a definition of a symmetric Poincar\'{e} $n$-ad which is a nullcobordism of a symmetric Poincar\'{e} $(n-1)$-ad. Presumably this dual perspective afforded by the operations of Thom construction vs.\ thickening has an analogue here. Although I can't see what use it would be right now.
\end{remark}

\section{Connective $L$-groups, $\Gamma$-groups and torsion $L$-groups}\label{sec:connective}

A chain complex $C$ in $\B(A)$ is \textit{positive} if $H_r(C)=0$ for $r<0$. The subcategory of $\B(A)$ consisting of finite, positive chain complexes is denoted $\B_+(A)$. In Chapter \ref{chap:DLtheory} we will need to work with positive chain complexes in order to relate $DL$-groups to double Witt groups without the need for algebraic surgery. These are the relevant definitions and modifications required before we pass to the $L$-theory of $\B_+(A)$.

\begin{definition}An $n$-dimensional $\eps$-symmetric complex $(C,\phi)$ is \textit{positive} if $C$ is in $\B_+(A)$. An $(n+1)$-dimensional $\eps$-symmetric pair $(f:C\to D,(\delta\phi,\phi))$ is \textit{positive} if $f:C\to D$ is in $\B_+(A)$. An $(n+2)$-dimensional $\eps$-symmetric triad $(\Gamma, (\Phi,\delta\phi,\delta'\phi,\phi))$ is \textit{positive} if $\Gamma$ is a homotopy commuting square in $\B_+(A)$.
\end{definition}

If an $(n+1)$-dimensional $\eps$-symmetric pair $(f:C\to D,(\delta\phi,\phi))$ is positive then the $(n+1)$-dimensional $\eps$-symmetric complex defined by the algebraic Thom construction is positive. The converse is not true in general. 

An $n$-dimensional $\eps$-symmetric complex $(C,\phi)$ is called \textit{connected} if the boundary $(\partial C,\partial \phi)$ is positive. If $(C,\phi)$ is connected and positive then the algebraic thickening of $(C,\phi)$ is positive.

The version of the Proposition \ref{prop:thomthick1} using positivity conditions is:

\begin{proposition}[{\cite[Proposition 3.4]{MR560997}}]The algebraic Thom construction and algebraic thickening describe a natural 1:1 correspondence between homotopy equivalence classes of positive, connected $(n+1)$-dimensional $\eps$-symmetric complexes and homotopy equivalence classes of positive $(n+1)$-dimensional $\eps$-symmetric Poincar\'{e} pairs.
\end{proposition}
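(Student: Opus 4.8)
The plan is to carefully track positivity and connectivity conditions through the two constructions already established in Proposition \ref{prop:thomthick1}, verifying that the correspondence there restricts appropriately. Recall that the non-positive version of this statement is already available to us, so the only new content is the bookkeeping of the extra hypotheses. First I would take a positive $(n+1)$-dimensional $\eps$-symmetric Poincar\'{e} pair $(f:C\to D,(\delta\phi,\phi))$ with $f:C\to D$ in $\B_+(A)$, and observe that its algebraic Thom construction is the $(n+1)$-dimensional $\eps$-symmetric complex $(C(f),\delta\phi/\phi)$. Since $C$ and $D$ are both positive, the mapping cone $C(f)$ has $C(f)_r=D_r\oplus C_{r-1}$ and a short computation with the long exact homology sequence of the cofibration $C\to D\to C(f)$ shows $H_r(C(f))=0$ for $r<0$, so $(C(f),\delta\phi/\phi)$ is positive. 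For connectedness I would use the fact that when the pair is Poincar\'{e}, the boundary of the Thom complex $(\partial C(f),\partial(\delta\phi/\phi))$ is homotopy equivalent to the (zero) boundary complex — this is essentially the content of the ``$\eps$-symmetric complex is Poincar\'{e} iff it is the Thom complex of a pair $(0:0\to D,(\phi,0))$'' clause of Proposition \ref{prop:thomthick1}, combined with the observation that the Thom complex of a \emph{Poincar\'{e}} pair is itself Poincar\'{e}, hence connected.

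Conversely, I would start from a positive, connected $(n+1)$-dimensional $\eps$-symmetric complex $(C,\phi)$, so $C$ is in $\B_+(A)$ and by definition of ``connected'' the boundary $(\partial C,\partial\phi)$ is positive, i.e.\ $\partial C=\Sigma^{-1}C(\phi_0)$ is in $\B_+(A)$. Its algebraic thickening is the $(n+1)$-dimensional $\eps$-symmetric Poincar\'{e} pair $(i:\partial C\to C^{n+1-*},(\overline{\phi},\partial\phi))$. The issue here is that $C^{n+1-*}$ need not be positive merely because $C$ is; but the Poincar\'{e} property of the thickening gives a chain homotopy equivalence $C\simeq C(i)$, and positivity of $C$ together with positivity of $\partial C$ forces $C^{n+1-*}$ to be homotopy equivalent to a positive complex via the cofibration sequence $\partial C\to C^{n+1-*}\to C$. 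So after replacing $C^{n+1-*}$ by a homotopy equivalent positive complex (which is harmless up to the homotopy equivalence relation on pairs), the map $i:\partial C\to C^{n+1-*}$ is a morphism in $\B_+(A)$ and the thickening is a positive pair. The two constructions are mutually inverse up to homotopy by Proposition \ref{prop:thomthick1}, and the preceding remarks show each sends the positive/connected class into the positive class, so they restrict to the claimed bijection.

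The main obstacle I anticipate is the connectivity condition, specifically the two-way implication between ``$(C,\phi)$ is connected and positive'' on one side and ``the thickening $(\partial C\to C^{n+1-*},(\overline{\phi},\partial\phi))$ is a positive \emph{Poincar\'{e}} pair'' on the other. Positivity of cones and desuspensions is routine diagram-chasing with long exact sequences, but one has to be careful that ``connected'' is exactly the hypothesis needed to guarantee $\partial C\in\B_+(A)$ (no more, no less), and conversely that a positive Poincar\'{e} pair always Thom-constructs to a \emph{connected} complex — this uses that the boundary of a Poincar\'{e} complex is contractible, which is the only place the Poincar\'{e} hypothesis really enters. I would lean on \cite[Proposition 3.4]{MR560997} for the precise form of these equivalences rather than rederiving them, citing it as the source; the rest is a matter of assembling the pieces and checking that the homotopy equivalences produced in Proposition \ref{prop:thomthick1} respect the subcategory $\B_+(A)$, which they do because mapping cones, suspensions and desuspensions of positive complexes (under the relevant connectivity hypotheses) remain positive.
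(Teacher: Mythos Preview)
Your overall strategy---restrict the correspondence of Proposition \ref{prop:thomthick1} by tracking positivity and connectivity---is exactly right, and it matches the paper's approach: the paper does not give a self-contained proof but simply records the two observations immediately preceding the statement (positive pair $\Rightarrow$ positive Thom complex; positive and connected complex $\Rightarrow$ positive thickening) and then cites \cite[Proposition 3.4]{MR560997} for the full result.

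However, your argument for why the Thom complex of a positive Poincar\'{e} pair is \emph{connected} contains a genuine error. You claim that ``the Thom complex of a Poincar\'{e} pair is itself Poincar\'{e}, hence connected'', but this is false. The Thom complex $(C(f),\delta\phi/\phi)$ of a Poincar\'{e} pair $(f:C\to D,(\delta\phi,\phi))$ is Poincar\'{e} if and only if $C\simeq 0$; this is precisely the content of the last clause of Proposition \ref{prop:thomthick1} that you cite. In general the Thom complex is not Poincar\'{e}, and its boundary is not contractible.

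The correct argument is simpler and uses the bijection itself: since algebraic thickening inverts the Thom construction, the boundary $(\partial C(f),\partial(\delta\phi/\phi))$ of the Thom complex is homotopy equivalent to the original boundary complex $(C,\phi)$ of the pair. As the pair was assumed positive, $C$ lies in $\B_+(A)$, so $\partial C(f)\simeq C$ is positive and the Thom complex is connected by definition. Your argument for the reverse direction (positivity of $C^{n+1-*}$ via the cofibration $\partial C\to C^{n+1-*}\to C$ and a long exact sequence chase) is correct.
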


If an $(n+2)$-dimensional $\eps$-symmetric triad is positive, then the $(n+2)$-dimensional $\eps$-symmetric pair defined by the relative algebraic Thom construction is positive. The converse is not true in general. It is also not generally true that if an $(n+1)$-dimensional pair $x$ is positive then its surgery dual $x'$ is positive.

An $(n+1)$-dimensional $\eps$-symmetric pair $x=(f:C\to D,(\delta\phi,\phi))$ is \textit{connected} if the effect of surgery on $C$ with data $x$ is positive. If $x$ is positive and connected then consequently $x'$ is positive and connected. Unravelling the definitions, this means in particular that if $x$ is positive and connected, then the $(n+2)$-dimensional $\eps$-symmetric Poincar\'{e} triad given by the algebraic thickening of $x$ is positive.

The version of Proposition \ref{prop:triad} using positivity conditions is

\begin{proposition}\label{prop:triad2}The relative algebraic Thom construction and the algebraic thickening describe a natural 1:1 correspondence between surgery dual sets $\{x,x'\}$ (defined up to homotopy) of positive, connected $(n+2)$-dimensional $\eps$-symmetric pairs and homotopy equivalence classes of positive $(n+2)$-dimensional $\eps$-symmetric Poincar\'{e} triads.
\end{proposition}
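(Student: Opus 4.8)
The plan is to deduce this correspondence from the unrestricted one of Proposition \ref{prop:triad} by checking that the relative algebraic Thom construction and the algebraic thickening restrict to mutually inverse bijections between the two subclasses named in the statement. Since the bijection itself, and its naturality up to homotopy, are already in hand from Proposition \ref{prop:triad}, and the two constructions there are inverse to one another, I would only need to establish two closure facts: (i) the relative algebraic Thom construction of a positive $(n+2)$-dimensional $\eps$-symmetric Poincar\'{e} triad is a surgery dual set $\{x,x'\}$ of pairs that are positive \emph{and connected}; and (ii) the algebraic thickening of such a surgery dual set is a \emph{positive} Poincar\'{e} triad.

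For (i) I would start from a positive Poincar\'{e} triad $(\Gamma,(\Phi,\delta\phi,\delta'\phi,\phi))$, with $\Gamma$ a homotopy commuting square in $\B_+(A)$ having corners $C,D,D',E$ and structure maps $f\colon C\to D$, $f'\colon C\to D'$, $g\colon D\to E$, $g'\colon D'\to E$. The relative Thom construction produces $x=(\nu\colon C(f)\to C(g'),\ldots)$ with $\nu=C(g,f')$, and $x'=(\nu'\colon C(f')\to C(g),\ldots)$ with $\nu'=C(g',f)$. Each of $C(f),C(f'),C(g),C(g')$ is the mapping cone of a morphism between positive complexes, so the long exact homology sequence of a mapping cone puts each of them in $\B_+(A)$; hence $\nu,\nu'$ are morphisms in $\B_+(A)$ and $x,x'$ are positive, which is just the observation recorded before the statement. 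Connectedness should then come for free: $x$ is connected precisely when the effect of surgery on its source complex with data $x$ is positive, and since $\{x,x'\}$ is a surgery dual set this effect of surgery is homotopy equivalent to the source complex $C(f')$ of $x'$, which we have just seen is positive; the symmetric argument handles $x'$.

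For (ii) I would take a surgery dual set $\{x,x'\}$ of positive, connected $(n+2)$-dimensional $\eps$-symmetric pairs, arrange it as in diagram \ref{eq:thickening}, and apply Proposition \ref{prop:triad} to obtain an $(n+2)$-dimensional $\eps$-symmetric Poincar\'{e} triad. That this triad is moreover positive is exactly the fact recorded just before the statement that the algebraic thickening of a positive connected pair is a positive Poincar\'{e} triad --- applied to $x$, and to $x'$ via the remark that the surgery dual of a positive connected pair is again positive and connected. With (i) and (ii) in place, the Thom construction and the thickening preserve the subclasses, so, being inverse on the larger classes, they restrict to mutually inverse bijections between the subclasses; naturality is inherited from Proposition \ref{prop:triad}.

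The real work sits inside the two facts quoted from the preamble: one has to trace positivity and connectedness through the webs of iterated mapping cones, desuspensions, and Poincar\'{e}--Lefschetz duality identifications of diagram \ref{eq:thickening} --- in particular through $\partial D=\Sigma^{-1}C(\phi_0)$ and the equivalence $\partial D\simeq\partial D'$ of Corollary \ref{cor:fudge} --- and confirm that the connectedness hypothesis on the pairs is precisely what makes the corners $D^{n+1-*}$ and $(D')^{n+1-*}$ of the thickened square positive, with no extra hypothesis slipping in. Granting those unravellings, which are sketched in the surrounding text, the argument above is purely formal, so that is where the only real obstacle lies.
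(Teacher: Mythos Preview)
Your proposal is correct and matches the paper's approach: the paper gives no explicit proof of Proposition~\ref{prop:triad2}, instead treating it as an immediate consequence of Proposition~\ref{prop:triad} together with the positivity/connectedness closure facts recorded in the preceding paragraphs, which is precisely the formal reduction you carry out. Your observation that connectedness of $x$ follows because the effect of surgery with data $x$ is the source $C(f')$ of the dual pair $x'$, which is positive since the triad is, makes explicit the one step the paper leaves to the reader.
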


\subsection{Projective $L$-groups}

\begin{definition}A \textit{cobordism} between $n$-dimensional $\eps$-symmetric complexes $(C,\phi)$ and $(C',\phi')$ is an $(n+1)$-dimensional $\eps$-symmetric Poincar\'{e} pair\[((f\,\,f'):C\oplus C'\to D,(\delta\phi,\phi\oplus-\phi')).\]If such a pair exists we say $(C,\phi)$, $(C',\phi')$ are \textit{cobordant}.
\end{definition}

\begin{remark}Without assuming our chain complexes are at least homotopy equivalent to complexes in $\B(A)$ we run up against the Eilenberg swindle pathology. In $L$-theory this takes the form of \textit{every} $\eps$-symmetric Poincar\'{e} complex $(C,\phi)$ being nullcobordant! Specifically, take an algebraic glue of countably infinitely many copies of the cobordism $((1\,\,1):C\oplus C\to C,(0,\phi\oplus-\phi))$. We will keep track of the glue by labelling ends as $t^jC$ and glueing $t^kC$ to $t^lC$ if $k=l$ so that \[\bigcup_{j=0}^\infty(t^{j}C\oplus t^{j+1}C\to t^jC,(0,\phi\oplus -\phi))\simeq (C\to C[t],(0,\phi)),\] by the Eilenberg swindle.
\end{remark}

\medskip

\textbf{\textit{From this point onwards the use of the terms $\eps$-symmetric complex, $\eps$-symmetric pair and $\eps$-symmetric triad includes the assumption of all chain complexes being in $\B_+(A)$.}}

\medskip

\begin{theorem}[\cite{MR560997}]\label{thm:Lgroup}For $n\geq0$, cobordism is an equivalence relation on the set of homotopy equivalence classes of $n$-dimensional $\eps$-symmetric Poincar\'{e} complexes over $A$. Two homotopy equivalence classes are cobordant if and only if there exist representatives $(C,\phi)$ and $(C',\phi')$ respectively such that $(C',\phi)$ is the effect of surgery on $(C,\phi)$ with respect to some (connected) pair. The cobordism equivalence classes form a group $L^n(A,\eps)$, the \textit{$n$-dimensional $\eps$-symmetric $L$-group of $A$} with addition and inverses given by \[(C,\phi)+(C',\phi')=(C\oplus C',\phi\oplus\phi'),\qquad -(C,\phi)=(C,-\phi)\in L^{n}(A,\eps).\]

\end{theorem}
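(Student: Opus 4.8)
The plan is to verify the four separate assertions of Theorem~\ref{thm:Lgroup} in turn: (1) cobordism is an equivalence relation; (2) the characterisation of cobordism via algebraic surgery; (3) the operations $\oplus$ and $\phi\mapsto-\phi$ descend to a well-defined group structure; and (4) the identification of inverses. Throughout I would lean heavily on the machinery already assembled in Sections \ref{sec:symmetric}--\ref{sec:connective}, in particular the algebraic glueing constructions (Definitions \ref{def:glue1} and \ref{def:glue2}), the Thom construction / thickening correspondences (Propositions \ref{prop:thomthick1}, \ref{prop:triad}, \ref{prop:triad2}), and the triple description of triads (Proposition \ref{prop:triple}).

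First I would establish reflexivity and symmetry, which are essentially formal: for reflexivity one uses the trivial cobordism $((1\ \ 1):C\oplus C\to C,(0,\phi\oplus-\phi))$, whose Poincar\'e property is immediate from the definition of the evaluation maps, and for symmetry one simply swaps the two boundary components of a cobordism pair and negates as needed. Transitivity is the first place where real content enters: given a cobordism from $(C,\phi)$ to $(C',\phi')$ and one from $(C',\phi')$ to $(C'',\phi'')$, I would glue them along the common boundary component $(C',\phi')$ using Definition \ref{def:glue2}; the fact that the glue of two $\eps$-symmetric \emph{Poincar\'e} pairs along a component which is itself Poincar\'e is again Poincar\'e follows from a Mayer--Vietoris / homotopy-pushout argument on the evaluation maps (the homotopy pushout square of Proposition \ref{prop:pairev} is the key input), together with the observation that positivity is preserved since $\B_+(A)$ is closed under the relevant mapping cones. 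Here one must be a little careful that the glued pair lands in $\B_+(A)$ and is connected in the sense of Section \ref{sec:connective}, so that the surgery-theoretic statements remain available.

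For assertion (2) — that $(C,\phi)$ and $(C',\phi')$ are cobordant iff $(C',\phi')$ is (homotopy equivalent to) the effect of surgery on $(C,\phi)$ along a connected pair — I would run the correspondence of Proposition \ref{prop:triad2} (equivalently \ref{prop:triad}): an $\eps$-symmetric Poincar\'e triad is the same data as a surgery-dual pair $\{x,x'\}$, and the algebraic thickening of such a pair produces precisely a cobordism whose two ends are $(C,\phi)$ and the surgery effect $(C',\phi')$. Conversely, given a cobordism, the relative algebraic Thom construction converts it into a surgery-dual set, exhibiting $(C',\phi')$ as a surgery effect. This is the step I expect to be the \textbf{main obstacle}: one has to track the positivity and connectivity hypotheses through the Thom/thickening correspondence carefully (a positive pair need not have positive surgery dual in general, so connectedness must be invoked exactly where Proposition \ref{prop:triad2} requires it), and one must check that the $\eps$-symmetric structure produced by thickening really does restrict to the given structures on the two ends — this is the content of the claim proved inside the proof of Proposition \ref{prop:triad}, specialised to the situation $\partial D\simeq 0$ as in Corollary \ref{corr:cobordism}.

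Finally, for the group structure: well-definedness of $+$ on cobordism classes follows by glueing cobordisms fibrewise (Definition \ref{def:glue1} applied summand-wise), associativity and commutativity of $\oplus$ are inherited from direct sum of chain complexes and of symmetric structures (using the $W^\%(C\sqcup C')=W^\%C\oplus W^\%C'$ identity to avoid cross-terms), and the class of the zero complex is the identity. For inverses I would exhibit, for any $(C,\phi)$, the cobordism $((1\ \ 1):C\oplus C\to C,(0,\phi\oplus-\phi))$ from $(C,\phi)\oplus(C,-\phi)$ to the zero complex $0$; its Poincar\'e property is a direct check that $(\setminus\ev_l)$ of the structure is a homotopy equivalence $C(\,(1\ 1)\,)^{n+1-*}\simeq C$, which holds because $C((1\ 1):C\oplus C\to C)\simeq\Sigma C$. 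This shows $-[(C,\phi)]=[(C,-\phi)]$ and completes the verification that $L^n(A,\eps)$ is an abelian group for $n\ge0$. I would remark that the standing hypothesis $n\geq0$ together with the $\B_+(A)$ convention is exactly what rules out the Eilenberg-swindle degeneracy noted in the remark preceding the theorem.
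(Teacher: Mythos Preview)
The paper does not actually prove Theorem~\ref{thm:Lgroup}: it is stated with attribution to \cite{MR560997} and no proof is given in the text. Your proposal is a correct outline of the standard argument (and indeed is essentially the one in \cite[\S4]{MR560997}): reflexivity and symmetry are formal, transitivity is by glueing along a common boundary component, the surgery characterisation is exactly the content of the Thom/thickening correspondence as realised in Corollary~\ref{corr:cobordism}, and the inverse is witnessed by the diagonal nullcobordism. Your identification of the connectivity bookkeeping in Proposition~\ref{prop:triad2} as the delicate point is accurate, though in practice this is handled in \cite{MR560997} by working with explicit chain-level formulae rather than the homotopy-theoretic packaging used here.
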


\subsection{$\Gamma$-groups}

\begin{definition}For $n\geq0$ let $(P,\theta)$ be an $n$-dimensional $\eps$-symmetric complex over $A$. If $S^{-1}A\otimes_A(P,\theta)$ is an Poincar\'{e} complex over $S^{-1}A$ then we say $(P,\theta)$ is \emph{$S$-Poincar\'{e} over $A$}. An \emph{$S^{-1}A$-cobordism} between two such complexes, $(P,\theta)$ and $(P',\theta')$, is an $(n+1)$-dimensional $\eps$-symmetric pair over $A$\[(f:P\oplus P'\to Q,(\delta\theta,\theta\oplus-\theta)),\]such that \[S^{-1}A\otimes_A(f:P\oplus P'\to Q,(\delta\theta,\theta\oplus-\theta))\]is an Poincar\'{e} pair over $S^{-1}A$. If such a pair exists we say that $(P,\theta)$ and $(P',\theta')$ are \textit{$S^{-1}A$-cobordant}.
\end{definition}

\begin{theorem}[\cite{MR620795}]For $n\geq0$, $S^{-1}A$-cobordism is an equivalence relation on the set of homotopy equivalence classes of $n$-dimensional $\eps$-symmetric $S$-Poincar\'{e} complexes over $A$. The cobordism equivalence classes form a group $\Gamma^n(A\to S^{-1}A,\eps)$, the \textit{$n$-dimensional $\eps$-symmetric $\Gamma$-group of $(A,S)$} with addition and inverses given by \[(P,\theta)+(P',\theta')=(P\oplus P',\theta\oplus\theta'),\qquad -(P,\theta)=(P,-\theta)\in \Gamma^{n}(A\to S^{-1}A,\eps).\]

\end{theorem}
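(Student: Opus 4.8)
The plan is to follow the same template that establishes Theorem \ref{thm:Lgroup} for the projective $L$-groups, inserting the extra ``$S$-Poincar\'e'' and ``$S^{-1}A$-cobordant'' conditions throughout and checking that they are preserved by every construction that appears. First I would verify that $S^{-1}A$-cobordism is reflexive and symmetric: reflexivity is witnessed by the pair $((1\,\,1):P\oplus P\to P,(0,\theta\oplus-\theta))$, whose localisation $S^{-1}A\otimes_A-$ is Poincar\'e over $S^{-1}A$ because $S^{-1}(P,\theta)$ is Poincar\'e by hypothesis and localisation is exact (Lemma \ref{lem:locisexact}); symmetry is immediate from reversing the orientation $(\delta\theta,\theta\oplus-\theta)\mapsto(-\delta\theta,\theta'\oplus-\theta)$ and relabelling. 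The content is transitivity, which is handled by the algebraic glueing of \ref{def:glue1}/\ref{def:glue2}: given $S^{-1}A$-cobordisms from $(P,\theta)$ to $(P',\theta')$ and from $(P',\theta')$ to $(P'',\theta'')$, one glues them along the common boundary $(P',\theta')$ to produce a pair $((f\,\,f''):P\oplus P''\to D\cup_{P'}D',\ldots)$. The key point is that glueing commutes with the localisation functor $S^{-1}A\otimes_A-$ (since this functor is additive and exact, it preserves homotopy pushouts and the $W^\%$-cross-terms), so the localised glued pair is the glue of two Poincar\'e pairs over $S^{-1}A$, hence Poincar\'e over $S^{-1}A$ by the corresponding fact in the projective $L$-theory of $S^{-1}A$; this shows the glued pair is an $S^{-1}A$-cobordism. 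One must also note that positivity is preserved by glueing (the pushout of positive complexes along a positive complex is positive), so we stay inside $\B_+(A)$.

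Next I would equip the set $\Gamma^n(A\to S^{-1}A,\eps)$ of $S^{-1}A$-cobordism classes of $n$-dimensional $\eps$-symmetric $S$-Poincar\'e complexes with the monoid structure $(P,\theta)+(P',\theta')=(P\oplus P',\theta\oplus\theta')$. Well-definedness on classes follows once more from the compatibility of $\oplus$ with glueing and with localisation. That $0$ is represented by any contractible complex (or the zero complex) is clear, and associativity and commutativity are inherited from $\oplus$. Finally, for the group structure I would show $-(P,\theta)=(P,-\theta)$: the pair $((1\,\,1):P\oplus P\to P,(0,\theta\oplus-\theta))$ is an $S^{-1}A$-cobordism from $(P,\theta)\oplus(P,-\theta)$ to the zero complex, exactly as in Theorem \ref{thm:Lgroup}, with the $S$-Poincar\'e/$S^{-1}A$-Poincar\'e condition on the pair verified by localising and invoking the projective case over $S^{-1}A$.

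The main obstacle I anticipate is the transitivity step, and specifically the bookkeeping needed to confirm that ``$S^{-1}A\otimes_A-$ of the glued pair is Poincar\'e over $S^{-1}A$''. This requires knowing that the evaluation/slant map that tests the Poincar\'e property of a glued pair is compatible with localisation, which in turn rests on the behaviour of $W^\%$ under $\sqcup$ (Proposition following the disjoint-union definition) and under homotopy pushouts (Propositions \ref{prop:pairev}, \ref{prop:pullback}); all of these are stated earlier but must be threaded together carefully, and one has to be a little careful that the relevant chain equivalences over $S^{-1}A$ are the localisations of the chain maps over $A$ rather than merely abstractly existing. A secondary, more routine nuisance is checking that all the connectivity/positivity hypotheses needed to make the glueing and surgery constructions land back in $\B_+(A)$ are in force; this is exactly analogous to the projective case treated in \cite{MR560997} and in Theorem \ref{thm:Lgroup}, so I would simply cite that argument and point out that localisation does not disturb positivity since $H_r(S^{-1}A\otimes_A C)\cong S^{-1}H_r(C)$ vanishes for $r<0$ whenever $H_r(C)$ does.
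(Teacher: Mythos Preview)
The paper does not actually prove this theorem: it is stated with a citation to Ranicki \cite{MR620795} and no proof is supplied, since it is quoted as a known result. Your outline is the standard argument and is essentially correct; it follows the template of Theorem~\ref{thm:Lgroup} (itself cited from \cite{MR560997}) with the observation that the exact functor $S^{-1}A\otimes_A-$ preserves all the relevant constructions, which is the same approach taken in the original source.
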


\begin{theorem}[{\cite[Proof of Prop.\ $3.2.3$(i)]{MR620795}}]
For $n\geq 0$ there is an isomorphism \[\Gamma^n(A\to S^{-1}A,\eps)\cong L^n(S^{-1}A,\eps)\] given by sending an $n$-dimensional $\eps$-symmetric $S$-Poincar\'{e} complex $(C,\phi)$ over $A$ to the $n$-dimensional $\eps$-symmetric Poincar\'{e} complex $S^{-1}A\otimes_A(C,\phi)$ over $S^{-1}A$.
\end{theorem}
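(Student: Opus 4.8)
The plan is to construct an explicit inverse to the claimed isomorphism and check both composites are the identity on cobordism classes. First I would observe that the assignment $(C,\phi)\mapsto S^{-1}A\otimes_A(C,\phi)$ is well-defined on cobordism classes: if $(C,\phi)$ is $S$-Poincar\'{e} over $A$ then by definition $S^{-1}A\otimes_A(C,\phi)$ is Poincar\'{e} over $S^{-1}A$, and $S^{-1}A\otimes_A-$ is an additive functor taking $\B_+(A)$ into $\B_+(S^{-1}A)$ and taking an $S^{-1}A$-cobordism (over $A$) to an honest cobordism over $S^{-1}A$; it also respects the $W^\%$ construction since $S^{-1}A\otimes_A-$ is exact (Lemma \ref{lem:locisexact}) and commutes with duals of f.g.\ projectives. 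Hence $S^{-1}A\otimes_A-$ descends to a group homomorphism $\Gamma^n(A\to S^{-1}A,\eps)\to L^n(S^{-1}A,\eps)$, and additivity is immediate from $(C\oplus C',\phi\oplus\phi')\mapsto S^{-1}(C\oplus C')$ etc.

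The key step is surjectivity, i.e.\ realising every $n$-dimensional $\eps$-symmetric Poincar\'{e} complex $(D,\psi)$ over $S^{-1}A$ by an $A$-lattice. Here I would use that $D$ is a finite positive complex of f.g.\ projective $S^{-1}A$-modules, so each $D_r$ is a summand of a free module $(S^{-1}A)^{k_r}=S^{-1}(A^{k_r})$; choosing f.g.\ projective $A$-modules mapping onto the relevant summands and clearing denominators in the differentials and in the components of $\psi$ (only finitely many denominators appear, all in $S$, and $S$ is multiplicatively closed), I obtain a finite positive complex $C$ of f.g.\ projective $A$-modules with $S^{-1}C\simeq D$ and a cycle $\phi\in W^\%C_n$ with $S^{-1}\phi=\psi$. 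Since $S^{-1}\phi_0=\psi_0$ is a chain equivalence over $S^{-1}A$, the complex $(C,\phi)$ is $S$-Poincar\'{e} over $A$ by construction, and it maps to the homotopy equivalence class of $(D,\psi)$. This shows the homomorphism is onto.

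For injectivity, suppose $(C,\phi)$ is $S$-Poincar\'{e} over $A$ and $S^{-1}(C,\phi)$ is nullcobordant over $S^{-1}A$, say via a Poincar\'{e} pair $(g:S^{-1}C\to E,(\delta\psi,S^{-1}\phi))$ over $S^{-1}A$. Running the same lattice/clearing-denominators argument on $E$, on $g$, and on $\delta\psi$ — being careful to use positivity and the connectivity conditions of Section \ref{sec:connective} so the algebraic Thom/thickening correspondence of Proposition \ref{prop:triad2} applies and the pair stays in $\B_+$ — produces an $(n+1)$-dimensional $\eps$-symmetric pair $(f:C\to Q,(\delta\phi,\phi))$ over $A$ with $S^{-1}f=g$ up to homotopy, which is then an $S^{-1}A$-cobordism exhibiting $(C,\phi)=0$ in $\Gamma^n(A\to S^{-1}A,\eps)$. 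Thus the map is injective, hence an isomorphism; I would remark this is exactly the argument extracted from the proof of \cite[3.2.3(i)]{MR620795}.

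The main obstacle I anticipate is the bookkeeping in the two ``clearing denominators'' steps: one must simultaneously realise \emph{a chain complex}, \emph{its dual}, and \emph{the symmetric structure cycle} $\phi$ (a collection of higher chain maps $\phi_s$, only finitely many nonzero) over $A$ in a way that is compatible with the differentials and with the $W^\%$-cycle condition $d^\%\phi=0$, and that remains inside $\B_+(A)$ and satisfies the connectivity hypotheses needed for surgery. Because $S$ is multiplicatively closed and all the modules are finitely generated, only finitely many elements of $S$ occur as denominators, so a single common denominator $s\in S$ can be used throughout; the delicate point is checking that after scaling by $s$ the relations defining a chain map and a symmetric structure are preserved integrally (they are, since each relation is $A$-linear), and that $S$-Poincar\'{e} duality of the lattice follows automatically because localising the scaled $\phi_0$ recovers the original chain equivalence. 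Once this is set up carefully the rest is formal.
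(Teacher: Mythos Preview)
The paper does not supply its own proof of this statement; it is simply quoted from \cite[Proof of Prop.~3.2.3(i)]{MR620795} without argument. Your clearing-denominators/lattice approach is essentially the standard argument from that reference: localisation gives a well-defined homomorphism, and both surjectivity and injectivity are obtained by choosing f.g.\ projective $A$-sublattices inside the finitely many modules, differentials, and symmetric structure maps occurring over $S^{-1}A$, using a single common denominator $s\in S$. Your identification of the only delicate point --- that the finitely many $A$-linear relations defining a chain complex and a $W^\%$-cycle survive the scaling, and that $S$-Poincar\'{e} duality is automatic since localising recovers the original equivalence --- is exactly right, so the proposal is correct and matches the cited source.
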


\subsection{Torsion $L$-groups}\label{subsec:Ltorsion}

As we are concerned in Chapter \ref{chap:DLtheory} with developing the $L$-theoretic analogue of the double Witt group of linking forms for a localisation $(A,S)$ we need to understand how torsion is handled in $L$-theory. A chain complex $C$ in $\B_+(A)$ is \textit{$S$-acyclic} with respect to a localisation $(A,S)$ if $S^{-1}A\otimes_AH^r(C)=0$ for all $r$. The subcategory of complexes $C$ in $\B_+(A)$ that are $S$-acyclic is denoted $\C_+(A,S)$. We say an $\eps$-symmetric complex or pair is \textit{in $\C_+(A,S)$} if the underlying complex or morphism (resp.) is in $\C_+(A,S)$.

\begin{remark}The triple\[\Gamma_+(A,S):=(\A(A),\B_+(A),\C_+(A,S))\]defines an (non-stable) algebraic bordism category, in the sense of \cite[\textsection 3]{MR1211640}.
\end{remark}

\begin{definition}An \textit{$(A,S)$-cobordism} between $n$-dimensional $\eps$-symmetric complexes $(C,\phi)$ and $(C',\phi')$ in $\C_+(A,S)$ is an $(n+1)$-dimensional $\eps$-symmetric Poincar\'{e} pair\[((f\,\,f'):C\oplus C'\to D,(\delta\phi,\phi\oplus-\phi'))\]in $\C_+(A,S)$. If such a pair exists we say $(C,\phi)$, $(C',\phi')$ are \textit{$(A,S)$-cobordant}.
\end{definition}

\begin{theorem}[\cite{MR620795}]For $n\geq0$, $(A,S)$-cobordism is an equivalence relation on the set of homotopy equivalence classes of $(n+1)$-dimensional $(-\eps)$-symmetric, Poincar\'{e} complexes in $\C_+(A,S)$. The equivalence classes form a group $L^n(A,S,\eps)$, the \emph{$n$-dimensional $\eps$-symmetric $L$-group of $(A,S)$}, with addition and inverses given by\[(C,\phi)+(C',\phi')=(C\oplus C',\phi\oplus\phi'),\qquad -(C,\phi)=(C,-\phi)\in L^n(A,S,\eps).\]
\end{theorem}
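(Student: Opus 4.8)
The plan is to check the three standard properties that make any "cobordism modulo a subcategory" construction into a group: that $(A,S)$-cobordism is reflexive, symmetric and transitive on homotopy equivalence classes of $n$-dimensional $\eps$-symmetric Poincar\'{e} complexes in $\C_+(A,S)$; that the operation $(C,\phi)+(C',\phi')=(C\oplus C',\phi\oplus \phi')$ is well-defined on equivalence classes and associative and commutative up to the cobordism relation; and that $-(C,\phi)=(C,-\phi)$ is an inverse. The key point throughout is that the three ingredients --- the algebraic glueing constructions of Definitions \ref{def:glue1} and \ref{def:glue2}, the Poincar\'{e} property, and membership of $\C_+(A,S)$ --- are all preserved under the relevant manipulations, so that the argument is formally identical to the proof of Theorem \ref{thm:Lgroup} for $L^n(A,\eps)$ and of the $\Gamma$-group and $\Gamma_+$-bordism statements, simply carried out inside the algebraic bordism category $\Gamma_+(A,S)=(\A(A),\B_+(A),\C_+(A,S))$ of the preceding remark.

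First I would establish reflexivity: given $(C,\phi)$ in $\C_+(A,S)$, the pair $((1\,\,1):C\oplus C\to C,(0,\phi\oplus-\phi))$ is an $(n+1)$-dimensional $\eps$-symmetric Poincar\'{e} pair (the evaluation $(\delta\phi_0\,\,f\phi_0)$ is a homotopy equivalence because $C(f)\simeq C$ via the fold map and the relative evaluation reduces to $\phi_0$), and it lies in $\C_+(A,S)$ because $C$ does and $\C_+(A,S)$ is closed under the mapping cone of the fold map. Symmetry is immediate by relabelling: a Poincar\'{e} pair $((f\,\,f'):C\oplus C'\to D,(\delta\phi,\phi\oplus-\phi'))$ in $\C_+(A,S)$ also witnesses $(C',\phi')\sim(C,\phi)$ after swapping factors and negating, and negation preserves the Poincar\'{e} and $S$-acyclicity properties. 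For transitivity I would glue two cobordisms $((f\,\,f'):C\oplus C'\to D,(\delta\phi,\phi\oplus-\phi'))$ and $((\tilde f'\,\,f''):C'\oplus C''\to D',(\delta'\phi,\phi'\oplus-\phi''))$ along the common component $(C',\phi')$ using Definition \ref{def:glue2}; the resulting pair $((g\,\,g''):C\oplus C''\to D\cup_{C'}D',(\delta\phi\cup_{\phi'}\delta'\phi,\phi\oplus-\phi''))$ is Poincar\'{e} because glueing along a boundary component of Poincar\'{e} pairs yields a Poincar\'{e} pair (this is the standard consequence of the homotopy pushout description $Y=C((f^\%\,f'^\%))\times_{\Sigma W^\%C'}C((\tilde f'^\%\,f''^\%))$ together with the Poincar\'{e} duality chain equivalences, as in \cite[\textsection 3]{MR560997},\cite{MR620795}), and it is in $\C_+(A,S)$ because $D\cup_{C'}D'$ is a homotopy pushout of complexes in $\B_+(A)$ that are $S$-acyclic, and $\C_+(A,S)$ is closed under such pushouts --- here I would invoke that localisation is exact (Lemma \ref{lem:locisexact}) so that $S^{-1}A\otimes_A(D\cup_{C'}D')$ is the corresponding homotopy pushout of acyclic complexes, hence acyclic, and positivity is preserved by the Mayer--Vietoris sequence.

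Next I would verify the group operations. The sum $(C\oplus C',\phi\oplus\phi')$ is again in $\C_+(A,S)$ since $\B_+(A)$ and $\C_+(A,S)$ are closed under direct sum, and it is Poincar\'{e} since $\phi_0\oplus\phi'_0$ is a homotopy equivalence. Well-definedness on equivalence classes, commutativity and associativity all follow by exhibiting explicit Poincar\'{e} cobordisms in $\C_+(A,S)$ --- for instance the swap isomorphism $C\oplus C'\to C'\oplus C$ induces a cobordism via reflexivity, and associativity via the evident reassociation isomorphism --- and by glueing a cobordism $(C,\phi)\sim(C',\phi')$ to the identity cobordism on a third complex. Finally, for inverses, the reflexivity pair $((1\,\,1):C\oplus C\to C,(0,\phi\oplus-\phi))$ is precisely a cobordism from $(C,\phi)\oplus(C,-\phi)$ to the zero complex, so $-[(C,\phi)]=[(C,-\phi)]$. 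The main obstacle, and the only step requiring genuine care rather than bookkeeping, is checking that glueing Poincar\'{e} pairs along a boundary component again produces a Poincar\'{e} pair while staying inside the connective category $\B_+(A)$ --- that is, confirming that the positivity hypotheses are not destroyed by the mapping-cone and homotopy-pushout operations used in Definition \ref{def:glue2}. This is exactly the technical role of the connectivity conditions flagged in Section \ref{sec:connective}, and I would handle it by the Mayer--Vietoris/Puppe sequence argument indicated above, citing \cite[\textsection 3]{MR560997} and \cite{MR620795} for the Poincar\'{e}-duality part of the verification, which is identical to the absolute and $\Gamma$-group cases already quoted.
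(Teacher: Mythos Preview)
The paper does not supply its own proof of this theorem; it is stated as a review result with a bare citation to Ranicki \cite{MR620795}, in keeping with the declared purpose of Chapter~\ref{chap:algLtheory} as a recap of known $L$-theory. Your sketch is correct and is precisely the standard argument found in \cite{MR620795} (and in \cite{MR560997} for the projective case Theorem~\ref{thm:Lgroup}): reflexivity via the fold pair, transitivity via algebraic glueing along a common boundary component (Definition~\ref{def:glue2}), and inverses again via the fold pair, with closure of $\C_+(A,S)$ under the relevant homotopy pushouts following from exactness of localisation.
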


It is described in \cite[3.1]{MR620795} that, in a precise way we shall not develop here, an $(n+1)$-dimensional $\eps$-symmetric Poincar\'{e} complex in $C_+(A,S)$ can be thought of as a `resolution' of $(-\eps)$-symmetric Poincar\'{e} complex in the category $\H(A,S)$. This is an explanation of the dimension index convention of the group $L^n(A,S,\eps)$ and the sign convention for the $\eps$.

\subsection{$\widehat{L}$-groups of Seifert complexes}\label{subsec:Lseifert}

We now introduce a slightly different flavour of $L$-theory that will be significant for our later knot-theoretic applications. It is the category of chain complex with symmetric structure that corresponds to the Seifert forms of Chapter \ref{chap:laurent}. This description will be fairly reliant on the references from the original source \cite[pp814]{MR620795}.

\begin{definition}For $\eps^{-1}=\overline{\eps}$, an \textit{$n$-dimensional $\eps$-ultraquadratic complex} is a pair $(C,\hat{\psi})$, where $C$ is in $\B(A)$ and $\hat{\psi}\in (\Hom_A(C^{-*},C))_n$ is an $n$-cycle. An \textit{$(n+1)$-dimensional $\eps$-ultraquadratic pair} is $(f:C\to D,(\delta\hat{\psi},\hat{\psi}))$ where $f$ is a morphism in $\B(A)$ and $(\delta\hat{\psi},\hat{\psi})\in C(f\otimes f)_{n+1}$ is an $(n+1)$-cycle.
\end{definition}

By Proposition \ref{prop:ultra} there is the following morphism and relative morphism\[1+T_\eps:\Hom_A(C^{-*},C)\to W^\%C,\qquad 1+T_\eps:C(f\otimes f)_n\to C(f^\%).\]

\begin{definition}An $n$-dimensional $\eps$-ultraquadratic complex $(C,\hat{\psi})$ is \textit{Poincar\'{e}} if the associated $n$-dimensional $\eps$-symmetric complex $(C,(1+T_\eps)\hat{\psi})$ is Poincar\'{e}. Similarly for pairs. If $(C,\phi)$ is Poincar\'{e} then we call it an \textit{$n$-dimensional $\eps$-symmetric Seifert complex}.
\end{definition}

As we have a notion of pairs and Poincar\'{e} pairs in the $\eps$-ultraquadratic category we have an associated notion of cobordism.

\begin{definition}A \textit{Seifert cobordism} between $n$-dimensional $\eps$-ultraquadratic complexes $(C,\hat{\psi})$ and $(C',\hat{\psi}')$ is an $(n+1)$-dimensional $\eps$-ultraquadratic Poincar\'{e} pair\[((f\,\,f'):C\oplus C'\to D,(\delta\hat{\psi},\hat{\psi}\oplus-\hat{\psi})).\]If such a pair exists we say $(C,\hat{\psi})$ and $(C',\hat{\psi}')$ are \textit{Seifert cobordant}.
\end{definition}

\begin{theorem}[{\cite[p.814]{MR620795}}]For $n\geq0$, Seifert cobordism is an equivalence relation on the set of homotopy equivalence classes of $n$-dimensional $\eps$-symmetric Seifert complexes over $A$. Two homotopy equivalence classes are Seifert cobordant if and only if there exist representatives $(C,\hat{\psi})$ and $(C',\hat{\psi}')$ respectively such that $(C',\hat{\psi})$ is the effect of surgery on $(C,\hat{\psi})$ with respect to some (connected) $\eps$-ultraquadratic pair. The Seifert cobordism equivalence classes form a group $\widehat{L}^n(A,\eps)$, the \textit{$n$-dimensional $\eps$-symmetric $L$-group of Seifert forms over $A$} with addition and inverses given by \[(C,\hat{\psi})+(C',\hat{\psi}')=(C\oplus C',\hat{\psi}\oplus\hat{\psi}'),\qquad -(C,\hat{\psi})=(C,-\hat{\psi})\in \widehat{L}^{n}(A,\eps).\]There are obvious forgetful maps\[1+T_\eps:\widehat{L}^n(A,\eps)\to L^n(A,\eps).\]

\end{theorem}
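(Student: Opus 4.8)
The plan is to follow the standard template for this kind of theorem (as in Theorem \ref{thm:Lgroup} and the subsequent $\Gamma$- and torsion $L$-group theorems): verify that Seifert cobordism is reflexive, symmetric and transitive, check that the monoid operation descends to well-defined addition with the stated inverses, and finally note the compatibility with the symmetrisation map $1+T_\eps$. Reflexivity uses the product $\eps$-ultraquadratic pair $(f:C\oplus C\to C, (0,\hat\psi\oplus-\hat\psi))$ with $f=(1\ 1)$; one must check this is Poincar\'e in the $\eps$-ultraquadratic sense, which reduces via $1+T_\eps$ to the corresponding symmetric fact already used in Theorem \ref{thm:Lgroup}. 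Symmetry is immediate from $-(C,\hat\psi)=(C,-\hat\psi)$. For transitivity one glues two Seifert cobordisms along a common boundary complex using the algebraic glueing along a boundary component of Definition \ref{def:glue2}, applied in the $\eps$-ultraquadratic category; here the key point is that the glue of two $\eps$-ultraquadratic Poincar\'e pairs is again an $\eps$-ultraquadratic Poincar\'e pair, which one checks either directly on the ultraquadratic structures or by pushing the whole construction through $1+T_\eps$ and quoting the symmetric statement. The well-definedness of addition on cobordism classes follows formally once transitivity is in place, since the direct sum of two Seifert cobordisms is a Seifert cobordism.

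The second assertion — that two classes are Seifert cobordant iff one is obtained from the other by algebraic surgery on a connected $\eps$-ultraquadratic pair — is the part that requires a little more care, and I expect it to be the main obstacle. The forward direction (surgery data yields a cobordism) comes from the algebraic thickening/Thom machinery of Section \ref{sec:connective}, specialised to the ultraquadratic setting: given the surgery dual set $\{x,x'\}$ with $\partial$-complex contractible, Corollary \ref{corr:cobordism} produces the required $(n+1)$-dimensional Poincar\'e pair, and one transports this through $1+T_\eps$ to see it is ultraquadratic-Poincar\'e. The reverse direction — that any Seifert cobordism can be replaced, after a homotopy equivalence of one end, by the trace of a sequence of surgeries — is the algebraic analogue of the fact that a cobordism is a composite of elementary cobordisms; in the symmetric case this is handled in \cite[\S4]{MR560997}, and the ultraquadratic version is recorded in \cite[p.814]{MR620795}. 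I would simply cite the ultraquadratic surgery theory of \cite{MR620795} here rather than redevelop it, since the positivity/connectivity bookkeeping that makes the desuspensions well-defined is exactly as in the symmetric case and carries over verbatim once one knows $1+T_\eps$ is compatible with cones and with the evaluation maps (Proposition \ref{prop:ultra}).

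Finally, the group axioms: associativity and commutativity of $\oplus$ are inherited, the zero element is the class of the trivial complex, and the inverse of $(C,\hat\psi)$ is $(C,-\hat\psi)$ — the reflexivity cobordism above exhibits $(C,\hat\psi)\oplus(C,-\hat\psi)$ as nullcobordant. The forgetful homomorphism $1+T_\eps:\widehat L^n(A,\eps)\to L^n(A,\eps)$ is well-defined because, as used throughout, $1+T_\eps$ sends $\eps$-ultraquadratic (Poincar\'e) complexes and pairs to $\eps$-symmetric (Poincar\'e) complexes and pairs, hence sends Seifert cobordisms to cobordisms; additivity is clear. The one genuinely new verification in the whole argument is that the ultraquadratic Poincar\'e condition is preserved under glueing along a boundary component, so I would state and dispatch that as a lemma first, before assembling the equivalence-relation and group-structure arguments around it.
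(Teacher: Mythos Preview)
Your proposal is a reasonable and essentially correct outline of how the proof goes, but note that the paper itself gives \emph{no} proof of this statement: it is stated with the citation \cite[p.814]{MR620795} in the theorem header and is simply imported wholesale from Ranicki's book, just as the analogous Theorems for $L^n(A,\eps)$, $\Gamma^n(A\to S^{-1}A,\eps)$ and $L^n(A,S,\eps)$ are all cited without proof in Section~\ref{sec:connective}. The paper's ``approach'' is therefore pure citation, and the subsection \ref{subsec:Lseifert} explicitly says it ``will be fairly reliant on the references from the original source \cite[pp814]{MR620795}''. So there is nothing to compare your argument against in this paper; your sketch is precisely the kind of argument that \cite{MR620795} carries out, transported to the ultraquadratic setting, and your instinct to cite that source for the surgery-cobordism equivalence rather than redevelop it is exactly what the paper does too.
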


\subsection{$L$-theory localisation exact sequence}\label{subsec:localisation}

We now state and prove a version of the Ranicki-Vogel $L$-theory localisation exact sequence. By assuming there is a half-unit in the ring we are able to make certain simplifications that make the proof easier. However, \ref{thm:LES} is true without this assumption. We will need the following theorem now and for later applications.

\begin{theorem}\label{thm:algsurgery}When there is a half-unit in the ring $A$, skew suspension defines isomorphisms for each $n\geq0$\[\begin{array}{rrcl}\overline{S}:&L^n(A,\eps)&\xrightarrow{\cong}& L^{n+2}(A,-\eps),\\
\overline{S}:&\Gamma^n(A\to S^{-1}A,\eps)&\xrightarrow{\cong}& \Gamma^{n+2}(A\to S^{-1}A,-\eps),\\
\overline{S}:&L^n(A,S,\eps)&\xrightarrow{\cong}& L^{n+2}(A,S,-\eps).\end{array}\]The inverse of the skew-suspension is given by \textit{algebraic surgery}, a process which takes an $\eps$-symmetric complex in some category and returns a cobordant $\eps$-symmetric complex in the same category, that is in the image of the skew-suspension morphism. (See \cite[\textsection 4]{MR560997} or \cite[\textsection 1.5]{MR620795} for a description of this.)
\end{theorem}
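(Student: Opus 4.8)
\textbf{Proof proposal for Theorem \ref{thm:algsurgery}.}

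The plan is to treat the three isomorphisms uniformly, since the proof is formally identical in each of the three algebraic bordism categories $\Lambda = (\A(A),\B_+(A),\B_+(A))$, $\Gamma_+(A,S)$-with-$S^{-1}A$-Poincar\'{e}-condition, and $\Gamma_+(A,S) = (\A(A),\B_+(A),\C_+(A,S))$. First I would verify that skew-suspension $\overline{S}$ is a well-defined map on cobordism classes at all: skew-suspension of an $\eps$-symmetric Poincar\'{e} complex $(C,\phi)$ over $A$ is the $(-\eps)$-symmetric Poincar\'{e} complex $(\Sigma C, \overline{S}\phi)$ by the discussion in \ref{subsec:skewsusp}, and positivity is preserved since $\Sigma C$ is positive when $C$ is; moreover skew-suspension of a Poincar\'{e} pair (resp.\ $S^{-1}A$-Poincar\'{e} pair, resp.\ $\C_+(A,S)$-Poincar\'{e} pair) is again of the same type, because $\Sigma$ commutes with localisation $S^{-1}A\otimes_A-$ and with taking mapping cones, and preserves the connectivity conditions. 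This shows $\overline{S}$ descends to a group homomorphism on each $L$-group, and additivity is immediate from $\Sigma(C\oplus C') = \Sigma C\oplus\Sigma C'$ and the compatibility of $\overline{S}$ with $\oplus$.

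Next, surjectivity. Given an $(n+2)$-dimensional $(-\eps)$-symmetric Poincar\'{e} complex $(D,\theta)$ in the relevant category, I want to find a cobordant complex of the form $(\Sigma C, \overline{S}\phi)$. This is exactly the content of \emph{algebraic surgery below the middle dimension}: using the half-unit (which, by \ref{subsec:chainmaps}, reduces the symmetric structure to its level-$0$ part $\theta_0$ up to chain homotopy equivalence, so $W^\%D\simeq W^\%_0 D$ and surgery data is governed purely by morphisms of the underlying modules), one performs successive algebraic surgeries on $(D,\theta)$ to kill $H_0(D)$ and make $D$ chain homotopy equivalent to a complex concentrated in degrees $\geq 1$. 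Each such surgery replaces $(D,\theta)$ by a cobordant complex in the same category — this is where I must check that the surgery data can be chosen inside $\B_+(A)$ (resp.\ $S$-acyclic, resp.\ compatible with the $S^{-1}A$-Poincar\'{e} condition), which is possible precisely because the cobordism categories $\Gamma_+(A,S)$ etc.\ are closed under the relevant mapping cone and desuspension operations, as recorded in the connectivity discussion preceding \ref{prop:triad2}. After finitely many steps we obtain a cobordant complex supported in degrees $\geq 1$, which is therefore (up to homotopy equivalence) of the form $\Sigma C$ with $C$ positive, and one checks that the induced symmetric structure is $\overline{S}$ of an $(-(-\eps))=\eps$-symmetric structure on $C$ — this uses the short exact sequence of Proposition \ref{prop:ultra} together with the vanishing of $\Hom_{\Z[\Z/2\Z]}(\widehat W, C\otimes C)$ in the presence of a half-unit to see that the obstruction to desuspending the symmetric structure vanishes. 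Hence $[\,(D,\theta)\,] = \overline{S}[\,(C,\phi)\,]$.

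Finally, injectivity: if $\overline{S}(C,\phi)$ is nullcobordant in the $(n+2)$-dimensional group, I need $(C,\phi)$ nullcobordant in the $n$-dimensional group. A nullcobordism is an $(n+3)$-dimensional Poincar\'{e} pair $(g:\Sigma C\to E,(\delta\theta,\overline{S}\phi))$ in the appropriate category. Running algebraic surgery on the pair — again below the middle dimension, using the half-unit — produces a cobordant pair whose underlying data is itself a skew-suspension $\overline{S}(f:C\to E',(\delta\phi,\phi))$, and desuspending gives the required $(n+1)$-dimensional nullcobordism of $(C,\phi)$; the skew-suspension/surgery machinery here is exactly the $p=1$ case of the periodicity argument in \cite[\textsection 1.5]{MR620795}, specialised to the connective setting and simplified by the half-unit. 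The main obstacle throughout is bookkeeping rather than conceptual: one must carefully track the connectivity and $S$-acyclicity conditions through each algebraic surgery step to be sure one never leaves the bordism category $\Gamma_+(A,S)$ (in the connective theory this is genuinely a constraint, unlike the stable theory of \cite{MR1211640} where periodicity is automatic), and one must confirm that the half-unit hypothesis is what makes all the desuspension obstructions — which a priori live in groups built from $\Hom_{\Z[\Z/2\Z]}(\widehat W,-)$ — actually vanish, so that the surgered complexes land in the image of $\overline{S}$ on the nose. I would therefore organise the write-up so that the three cases are proved by a single lemma about surgery below the middle dimension in an arbitrary connective algebraic bordism category with a half-unit, then apply it three times.
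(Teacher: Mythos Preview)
Your proposal is correct and follows the standard Ranicki approach; indeed, the paper does not give an independent proof but simply cites \cite[4.4]{MR560997} for the projective case and states that the $\Gamma$- and torsion cases are proved analogously. Your sketch is therefore more detailed than what appears in the paper and accurately reconstructs the cited argument.
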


\begin{proof}See \cite[4.4]{MR560997} for the general case of skew-suspension and algebraic surgery in the projective symmetric $L$-groups when there is a half-unit in the ring. The other two isomorphism are proved analogously in the respective categories of chain complexes with structure.
\end{proof}

As $\C_+(A,S)$ is a subcategory of $\B_+(A)$ there is a forgetful homomorphism $L^{n}(A,S,\eps)\to L^{n+1}(A,-\eps)$. Now algebraic surgery defines an isomorphism $L^{n+1}(A,-\eps)\cong L^{n-1}(A,\eps)$ (with inverse given by skew-suspension). The forgetful map together with algebraic surgery result in a well-defined homomorphism\[i:L^n(A,S,\eps)\to L^{n-1}(A,\eps);\qquad(C,\phi)\mapsto (C',\phi'),\]such that $\overline{S}(C',\phi')$ is cobordant to $(C,\phi)$ in $\B_+(A)$.

If an $\eps$-symmetric complex or pair in $\B_+(A)$ is Poincar\'{e} then it is certainly $S$-Poincar\'{e}. Hence there is another well defined homomorphism:\[j:L^n(A,\eps)\to \Gamma^n(A\to S^{-1}A,\eps).\]

%
%
%
%
%

If $(P,\theta)$ is an $n$-dimensional $\eps$-symmetric $S$-Poincar\'{e} complex in $\B_+(A)$ then the boundary of the $n$-dimensional $\eps$-symmetric complex $S^{-1}A\otimes_A (P,\theta)$ is \textit{not} in general in $\B_+(A)$ as $(P,\theta)$ is not in general connected. However, the skew-suspension $\overline{S}\partial(P,\theta)$ of this boundary is easily seen to be in $\B_+(A)$. Moreover, $\partial(P,\theta)$, and hence $\overline{S}\partial(P,\theta)$, is $S$-acyclic as $(P,\theta)$ is $S$-Poincar\'{e}. Hence $\overline{S}(\partial P,\partial\theta)$ is an $(n+1)$-dimensional $\eps$-symmetric Poincar\'{e} complex in $\C_+(A,S)$. A relative version of this argument means there is moreover a well-defined homomorphism\[\partial:\Gamma^n(A\to S^{-1}A,\eps)\to L^{n}(A,S,\eps);\qquad (P,\theta)\mapsto \overline{S}\partial(P,\theta).\]

\begin{theorem}[{\cite{MR620795}}]\label{thm:LES}For $n\geq0$, the sequence of group homomorphisms\[...\to L^n(A,\eps)\xrightarrow{i} \Gamma^n(A\to S^{-1}A,\eps)\xrightarrow{\partial} L^n(A,S,\eps)\xrightarrow{j} L^{n-1}(A,\eps)\]is exact. When $n=0$, we interpret the term $L^{-1}(A,\eps)$ via the skew-suspension isomorphism as $L^1(A,-\eps)$.\end{theorem}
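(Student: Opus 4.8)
The plan is to prove exactness at each of the three terms $\Gamma^n(A\to S^{-1}A,\eps)$, $L^n(A,S,\eps)$ and $L^n(A,\eps)$ by the standard chain-complex bordism arguments, using the half-unit to simplify the constructions and invoking the skew-suspension/algebraic surgery isomorphisms of Theorem \ref{thm:algsurgery} freely. First, $\partial\circ i = 0$, $j\circ\partial=0$ and $i\circ j=0$ are the easy inclusions: each follows because the relevant composite produces an $\eps$-symmetric complex that bounds in the appropriate category. For instance $\partial i (C,\phi) = \overline{S}\partial(C,\phi)$ and $(C,\phi)$ being Poincar\'{e} means its boundary is contractible, so $\overline{S}\partial(C,\phi)\simeq 0$; similarly $j\partial(P,\theta)$ forgets that $\overline{S}\partial(P,\theta)$ is $S$-acyclic and then surgeries back, but the $S$-Poincar\'{e} pair $(i:\partial P\to P^{n+1-*},\dots)$ witnessing the thickening of $(P,\theta)$ provides a nullcobordism over $A$; and $i j(C,\phi)$ is nullcobordant in $\B_+(A)$ because a Poincar\'{e} complex over $A$ tautologically has a trivial boundary.

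For the reverse inclusions I would argue as follows. \emph{Exactness at $\Gamma^n$:} given an $n$-dimensional $\eps$-symmetric $S$-Poincar\'{e} complex $(P,\theta)$ with $[\partial(P,\theta)]=0\in L^n(A,S,\eps)$, choose an $(A,S)$-nullcobordism of $\overline{S}(\partial P,\partial\theta)$; glueing this nullcobordism onto the canonical $S$-Poincar\'{e} thickening $(i:\partial P\to P^{n+1-*},(\overline{\theta},\partial\theta))$ along $(\partial P,\partial\theta)$ produces (after desuspending, using the half-unit to control connectivity) a genuine Poincar\'{e} pair over $A$ exhibiting $(P,\theta)$ as $S^{-1}A$-cobordant to a Poincar\'{e} complex over $A$, i.e.\ $[(P,\theta)]\in\im(i)$. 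This is essentially the algebraic analogue of `capping off a homology sphere boundary'. \emph{Exactness at $L^n(A,S,\eps)$:} given $(C,\phi)$ in $\C_+(A,S)$ with $j(C,\phi)=0$, i.e.\ $(C,\phi)$ becomes nullcobordant as a complex over $A$ (after a skew-desuspension), take such a nullcobordism $(f:C\to D,(\delta\phi,0))$; the underlying pair need not be $S$-Poincar\'{e}, but its boundary construction yields an $S$-Poincar\'{e} complex $(P,\theta)$ over $A$ with $\overline{S}\partial(P,\theta)$ cobordant to $(C,\phi)$ in $\C_+(A,S)$, so $[(C,\phi)]\in\im(\partial)$. \emph{Exactness at $L^n(A,\eps)$:} given $(C,\phi)$ over $A$ that is $S^{-1}A$-nullcobordant via $(f:C\to D,(\delta\theta,\theta))$, the relative boundary of this pair is an $\eps$-symmetric Poincar\'{e} complex in $\C_+(A,S)$ whose image under $i$ is $(C,\phi)$, up to skew-suspension; this uses the algebraic Thom/thickening correspondence (Proposition \ref{prop:triad2}) to pass between the pair and its boundary complex.

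Throughout, the role of the half-unit is twofold: it lets me replace `metabolic' by `hyperbolic' style simplifications wherever a splitting is needed (Lemmas \ref{lem:splitishyp1}, \ref{lem:halfunithyp} and their chain-level avatars in \ref{subsec:chainmaps}), and crucially it makes the skew-suspension maps isomorphisms (Theorem \ref{thm:algsurgery}), which is what allows the boundary and surgery operations to be well-defined homomorphisms between the correctly-indexed groups and makes the three-term periodicity $L^{-1}(A,\eps)\cong L^1(A,-\eps)$ legitimate. The honest reference is \cite{MR620795}, and I would cite the general (non-half-unit) statement there while giving the streamlined half-unit proof here.

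I expect the main obstacle to be the bookkeeping of \textbf{connectivity and positivity conditions} when glueing and desuspending: the category $\B_+(A)$ is not closed under the naive boundary construction (as the text already flags — $\partial(P,\theta)$ need not be positive, only its skew-suspension is), so every cobordism produced by glueing must be checked to lie in $\C_+(A,S)$ or $\B_+(A)$ as appropriate, and one must repeatedly apply algebraic surgery `above and below the middle dimension' to bring complexes into the image of skew-suspension before desuspending. Making these connectivity verifications precise, rather than the homotopy-theoretic skeleton of the argument, is where the real work lies; the half-unit hypothesis is exactly what keeps this manageable, since it collapses the distinction between the symmetric and (skew-)suspended symmetric structures.
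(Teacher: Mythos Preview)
Your proposal is essentially the same approach as the paper's proof: the easy inclusions via contractible boundary/thickening/zero-map nullcobordism, then the three reverse inclusions via (respectively) glueing an $(A,S)$-nullcobordism onto the thickening, taking the algebraic Thom complex of an $A$-nullcobordism, and observing that the surgery effect along an $S^{-1}A$-nullcobordism is $S$-acyclic. Your identification of the connectivity/positivity bookkeeping as the real work is exactly right, and the paper handles it the same way (by invoking Theorem \ref{thm:algsurgery} to desuspend after surgery on the pair).

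One small slip to fix: your reason for $i\circ j = 0$ is wrong. You say a Poincar\'{e} complex over $A$ ``tautologically has a trivial boundary'', but that is the argument for $\partial\circ i = 0$, not $i\circ j = 0$. The correct (and equally easy) argument is that $j(C,\phi) = (C',\phi')$ is still $S$-acyclic, so $(C'\to 0,(0,\phi'))$ is an $S^{-1}A$-nullcobordism in $\Gamma^n$. Also, in your exactness-at-$L^n(A,\eps)$ paragraph you write ``whose image under $i$'' where you mean $j$; the content is correct once the names are straightened out.
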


In fact, in the version we have stated, Theorem \ref{thm:LES} is not difficult to prove. A solid understanding of Theorem \ref{thm:LES} and the proof we offer will be helpful in understanding the technical work needed to extend this idea in Chapter \ref{chap:DLtheory}.

\begin{proof}In the proof we will assume there is a half unit in the ring $A$ and hence the skew-suspensions are isomorphisms by Theorem \ref{thm:algsurgery}, with inverses given by algebraic surgery. This assumption is not necessary but does simplify the proof slightly. For the full proof we refer the reader to \cite[3.2.3]{MR620795}.

We must show exactness in three places:

\medskip

\noindent\underline{$\Gamma^{n}(A\to S^{-1}A,\eps)$:} Clearly $\partial\circ i=0$ as $(\partial P,\partial\theta)=0$ for $(P,\theta)$ Poincar\'{e} over $A$.

Now suppose $[(P,\theta)]\in\ker(\partial)$ so that $\overline{S}(\partial P,\partial\theta)$ admits an $(A,S)$-nullcobordism which, we may assume via algebraic surgery if necessary, is a skew-suspension $\overline{S}(f:\partial P\to D,(\nu,\partial \theta)\in C(f^\%)_{n})$. The algebraic union $(D\cup_{\partial P} P^{n-*},\nu\cup_{\partial\theta}\overline{\theta})$ is Poincar\'{e} over $A$ and $S^{-1}A\otimes (D\cup_{\partial P} P^{n-*},\nu\cup_{\partial\theta}\overline{\theta})$ is chain equivalent over $S^{-1}A$ to $S^{-1}A\otimes(P,\theta)$.

\medskip

\noindent\underline{$L^{n}(A,S,\eps)$:} $j\circ \partial([P,\theta])=[(\partial P,\partial \theta)]\in L^{n-1}(A,\eps)$ and the algebraic thickening of $(P,\theta)$ defines a nullcobordism.

Now suppose that $j(C,\phi)$ is nullcobordant over $A$. The skew-suspension of this nullcobordism defines a nullcobordism $(f:C\to D,(\delta\phi,\phi))$ over $A$. By surgery on the pair in $\C_+(A,S)$, we may assume it is a skew-suspension $(f:C\to D,(\delta\phi,\phi))=\overline{S}(f':C'\to D',(\delta\phi',\phi'))$. Take the algebraic Thom complex $(D'/C',\delta\phi'/\phi')$. The boundary $(\partial (D'/C'),\partial(\delta\phi'/\phi'))\simeq(C',\phi')$ is $S^{-1}A$-acyclic and hence the Thom complex is $S$-Poincar\'{e}. So that $(D'/C',\delta\phi'/\phi')\in \Gamma^{n}(A\to S^{-1}A,\eps)$ and $\partial[(D'/C',\delta\phi'/\phi')]=[(C,\phi)]\in L^{n}(A,S,\eps)$.

\medskip

\noindent\underline{$L^n(A,\eps)$:} $i\circ j([C,\phi])=[(C',\phi')]\sim0\in \Gamma^{n}(A\to S^{-1}A,\eps)$ by observing the $S^{-1}A$-nullcobordism $(C'\to 0,(0,\phi'))$.

Now suppose $(P,\theta)\sim 0\in \Gamma^n(A\to S^{-1}A,\eps)$ and $(P,\theta)$ is Poincar\'{e} over $A$. Hence there exists an $S^{-1}A$-nullcobordism $(f:P\to Q,(\delta\theta,\theta))$. Write the effect of surgery on $(P,\theta)$ with data $(f:P\to Q,(\delta\theta,\theta))$ as $(P',\theta')$. As $\partial P\simeq0$, we have $P' \simeq C(\partial P\to \partial(Q,P))\simeq \partial(Q,P)$, so that $(P',\theta')$ is $S$-acyclic. Therefore the cobordism class of $(P,\theta)$ is in the image of $j$.

\end{proof}

\chapter{Double $L$-theory}\label{chap:DLtheory}

We now use the Algebraic Theory of Surgery to generalise the double Witt groups introduced in Chapter \ref{chap:linking} to the setting of chain complexes. Symmetric linking forms will be replaced by $S$-acyclic chain complexes with symmetric structure. Double Witt equivalence of forms will be replaced by a `double-cobordism' relation between $S$-acyclic chain complexes with structure. In $L$-theory, the cobordism groups of $S$-acyclic chain complexes with symmetric structure are isomorphic to their corresponding Witt groups of linking forms via algebraic surgery below the middle dimension. By assuming our localisation $(A,S)$ has dimension 0 we are able to show that there is `algebraic surgery above and below the middle dimension'. However, we also show that this is not the case for general rings and that generally the double $L$-groups capture more algebraic information than their double Witt group counterparts.

The techniques and concepts of algebraic $L$-theory necessary to understand Chapter \ref{chap:DLtheory} were developed in Chapter \ref{chap:algLtheory} in as self-contained an account as we were able to produce. The reader new to $L$-theory should hopefully find all necessary information there. For the more experienced reader we note also that Chapter \ref{chap:algLtheory} makes clear the chain complex conventions with which we work. Our assumptions on connectivity of the underlying complexes and the algebraic bordism category are also discussed.

\medskip

For this chapter, as usual, set $A$ and $R$ to be commutative Noetherian rings with involution and unit. Set $(A,S)$ be a localisation of $A$. Furthermore, assume throughout this chapter that $A$ contains a half unit $s$. This assumption is included as it is necessary for our definitions of the double $L$-groups. It has the additional consequence of dramatically simplifying the $L$-theory of Chapter \ref{chap:algLtheory} in the manner already described in \ref{subsec:chainmaps}.

\section{Double $L$-theory}

Recall the category of projective, finitely generated $A$-modules with $A$-module morphisms is written $\A(A)$. A chain complex $C$ over $\A(A)$ concentrated in finitely many dimensions is called \textit{finite}. If $H_r(C)=0$ for $r<0$, $C$ is called \textit{positive}. The category of finite, positive chain complexes $C$ over $\A(A)$ with morphisms (degree 0) chain maps is written $\B_+(A)$. The subcategory of $\B_+(A)$ consisting of chain complexes $C$ in $\B_+(A)$ such that $S^{-1}A\otimes_A H_r(C)=0$ for all $r$ is written $\C_+(A,S)$ (and n.b.\ $S^{-1}A\otimes_AH_r(C)\cong H_r(S^{-1}A\otimes_AC)$). The triple\[\Lambda_+(A,S):=(\A(A),\B_+(A),\C_+(A,S))\]  defines an algebraic bordism category (see Chapter \ref{sec:connective}) over which we will work. Much is known about the $L$-theory of this algebraic bordism category and in particular the relative $L$-groups of this algebraic bordism category are isomorphic to $L^n(A,S,\eps)$, the torsion $L$-groups of the localisation $(A,S)$, which are 4-periodic in $n$ with $L^{2k}(A,S,(-1)^k\eps)\cong W^{\eps}(A,S)$.

We will develop a deeper understanding of $S$-acyclic $\eps$-symmetric chain complexes than was previously available using the groups $L^n(A,S,\eps)$. The $(A,S)$-cobordism relationship from $L$-theory will be replaced with a new relationship called `$(A,S)$-double-cobordism'. This relation will be two cobordisms fitting together in a `complementary' way (of course if it were just two cobordisms with no complementary condition, we could use the same one twice and would not get a different group). This will form the $DL$-group $DL^n(A,S,\eps)$.

\begin{figure}[h]\[\def\picsingle{\resizebox{0.4\textwidth}{!}{ \includegraphics{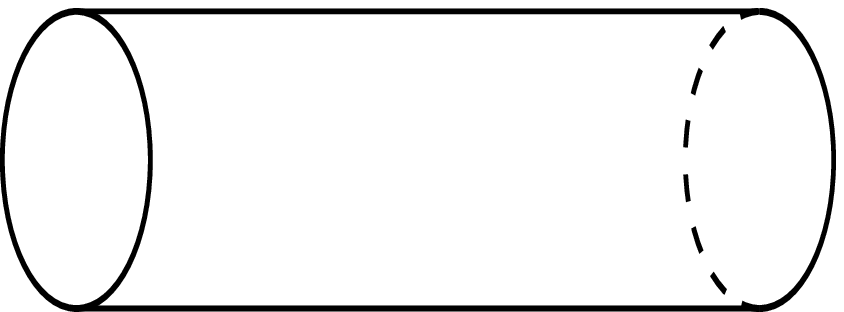}}}
\begin{xy} \xyimport(300,300){\picsingle}
,!+<7.2pc,2pc>*+!\txt{A cobordism from $C$ to $C'$.}
,(21,148)*!L{C}
,(145,148)*!L{D}
,(266,148)*!L{C'}
\end{xy}
\qquad\qquad
\def\picdouble{\resizebox{0.4\textwidth}{!}{ \includegraphics{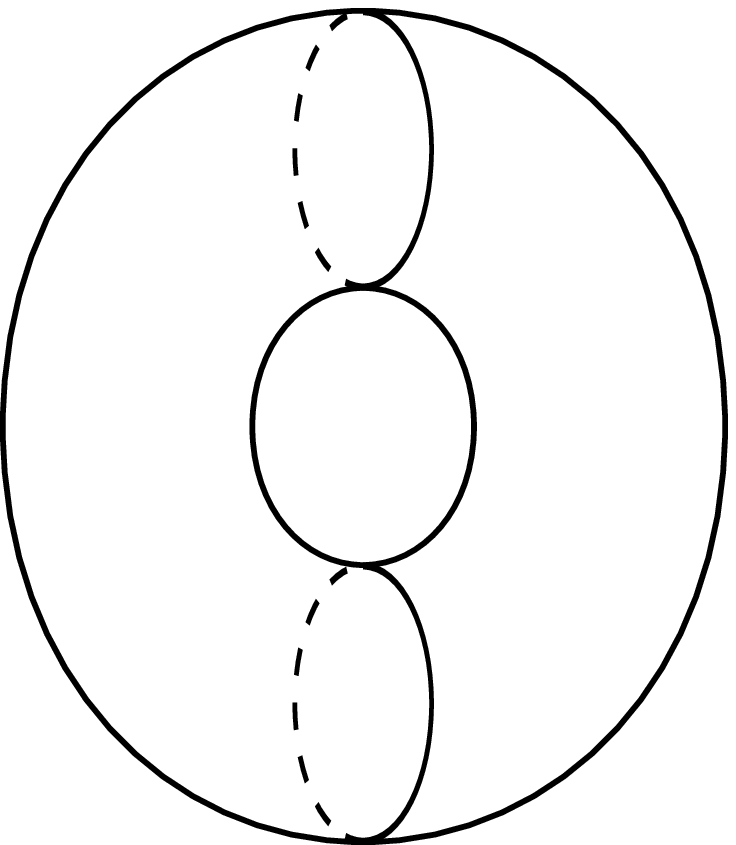}}}
\begin{xy} \xyimport(300,300){\picdouble}
,!CD+<0.3pc,-1pc>*+!CU\txt{A double-cobordism from $C$ to $C'$.}
,(145,250)*!L{C}
,(55,148)*!L{D_+}
,(225,148)*!L{D_-}
,(145,50)*!L{C'}\end{xy}\]
\caption{Schematic of cobordism vs.\ double-cobordism}
\label{pic:singvsdoub}
\end{figure}

To analyse the difference between the $DL$-groups of the localisation $(A,S)$ and the projective $L$-groups of $A$ we will define the `double $\Gamma$-group' $D\Gamma^n(A\to S^{-1}A,\eps)$ of $(A,S)$. The group $D\Gamma^n(A\to S^{-1}A,\eps)$ turns out to depend on the localisation $(A,S)$ in a stronger way than the corresponding group $\Gamma^n(A\to S^{-1}A,\eps)\cong L^n(S^{-1}A,\eps)$ in $L$-theory.

\subsection{The projective $DL$-groups}

\begin{definition}For $n\geq 0$, two cobordisms between $n$-dimensional $\eps$-symmetric Poincar\'{e} complexes $(C,\phi)$ and $(C',\phi')$ \[x_\pm:=(f_\pm:C\oplus C'\to D_\pm,(\delta_\pm\phi,\phi\oplus-\phi')\in C(f^\%_\pm)_{n+1})\](labelled ``$+$'' and ``$-$'') are \emph{complementary} if the chain map\[\left(\begin{smallmatrix}f_+\\f_-\end{smallmatrix}\right):C\oplus C'\to D_+\oplus D_-\]is a homotopy equivalence. In which case we say $(C,\phi)$ and $(C',\phi')$ are \emph{double-cobordant} and that the set $\{x_+,x_-\}$ is a \emph{double-cobordism} between them.
\end{definition}

The following is easily checked:

\begin{lemma}If $s+\overline{s}=1$ for $s\in A$ and $((f\,\,f'):C\oplus C'\to D,(\delta\phi,\phi\oplus -\phi'))$ is an $(n+1)$-dimensional $\eps$-symmetric Poincar\'{e} pair then so is $((\overline{s}f\,\,-sf'):C\oplus C'\to D,((s\overline{s})\delta\phi,\phi\oplus -\phi'))$.
\end{lemma}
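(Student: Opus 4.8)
The claim is that scaling one leg of a symmetric Poincaré pair by the half-unit $\overline{s}$ and the other leg by $-s$, while simultaneously scaling the relative symmetric structure $\delta\phi$ by $s\overline{s}$, again yields a symmetric Poincaré pair. The plan is to check the three things required: (1) the scaled pair is a well-formed $(n+1)$-dimensional $\eps$-symmetric pair, i.e.\ the scaled structure is still a cycle in the relative $W^\%$-complex; (2) the boundary complex $C\oplus C'$ together with $\phi\oplus-\phi'$ is unchanged, which is automatic since that datum is not being scaled; and (3) the Poincaré (homotopy equivalence) condition still holds. The heart of the matter is points (1) and (3), and both follow from a single observation about how the evaluation maps transform under the rescaling.

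First I would set up notation: write $g=\left(\begin{smallmatrix}\overline{s}f & -sf'\end{smallmatrix}\right):C\oplus C'\to D$ for the new map and $\delta\psi = (s\overline{s})\,\delta\phi$ for the new relative structure, and observe that on the boundary we keep $\phi\oplus-\phi'$. For (1), I would use the chain-map description of symmetric structures from \ref{subsec:chainmaps}: a relative $\eps$-symmetric structure is a collection $(\delta\phi_r,\phi_r)$ satisfying the relative cycle conditions spelled out there, and one checks directly that if $(\delta\phi,\phi\oplus-\phi')$ satisfies them for $f$, then $(s\overline{s}\,\delta\phi,\phi\oplus-\phi')$ satisfies them for $g$ — the key point being that on the boundary piece the map $\left(\begin{smallmatrix}\overline{s}f\\-sf'\end{smallmatrix}\right)$ applied to $\phi\oplus-\phi'$ produces $\overline{s}\,s\,f^\%(\phi) - s\,\overline{s}\,(f')^\%(\phi') = s\overline{s}\,f^\%(\phi\oplus-\phi')$, so the boundary of $s\overline{s}\,\delta\phi$ matches. (Here I use commutativity of $A$ and $\overline{s\overline{s}}=\overline{s}\,s = s\overline{s}$, so the involution symmetry is preserved.)

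For (3), the Poincaré condition, I would invoke the slant-map/evaluation machinery of \ref{subsec:definitions}. The original pair being Poincaré means $\setminus(\ev_l(\delta\phi,\phi\oplus-\phi')):C(f)^{n+1-*}\to D$ is a chain homotopy equivalence, where $C(f)$ is the cone on $\left(\begin{smallmatrix}f\\f'\end{smallmatrix}\right):C\oplus C'\to D$... wait — more precisely on $(f\ f')$. The rescaled data give $\ev_l(s\overline{s}\,\delta\phi,\phi\oplus-\phi')$ on the cone of $g=(\overline{s}f\ -sf')$. I would show that the isomorphism of chain complexes $C((f\ f'))\cong C((\overline{s}f\ -sf'))$ given degreewise by $\mathrm{diag}(\overline{s},-s)$ on $C\oplus C'$ and the identity on $D$ — which is an isomorphism precisely because $s,\overline{s}$ are units — intertwines the old and new evaluation maps up to the unit factor $s\overline{s}$. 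Concretely, pre- and post-composing the new slant map with these isomorphisms recovers $s\overline{s}$ times the old one, and since $s\overline{s}$ is a unit, the new slant map is a chain homotopy equivalence iff the old one is. The same argument applied to $\ev_r$ handles the equivalent right-hand Poincaré condition.

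The main obstacle I anticipate is purely bookkeeping: keeping straight the signs and the placement of $s$ versus $\overline{s}$ on the two summands as one passes through the cone differential $\left(\begin{smallmatrix}d_D & (-1)^{r-1}(f\ f')\\0 & d_{C\oplus C'}\end{smallmatrix}\right)$ and through the $T_\eps$-involution in $W^\%$, and verifying that the factor that falls out is genuinely $s\overline{s}$ (a central, involution-fixed unit) rather than some noncommutative mismatch — but since $A$ is commutative and $\overline{s\overline{s}} = s\overline{s}$, no such mismatch can occur, and the lemma reduces to the observation that rescaling a Poincaré pair by a diagonal matrix of units on the boundary changes the duality map by a unit. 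This is why the statement is flagged as ``easily checked''; I would present it as a short verification rather than a detailed computation, citing \ref{subsec:chainmaps} and \ref{subsec:definitions} for the underlying formalism.
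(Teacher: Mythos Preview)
The paper gives no proof of this lemma whatsoever --- it simply prefaces the statement with ``The following is easily checked'' and moves on. Your proposal supplies exactly the direct verification the author has in mind: checking that $g^\%(\phi\oplus-\phi') = s\overline{s}\,(f\ f')^\%(\phi\oplus-\phi')$ so the rescaled relative structure is again a cycle, and then observing that the diagonal automorphism $\mathrm{diag}(1,\overline{s},-s)$ of the cone intertwines the old and new evaluation maps up to the unit $s\overline{s}$, so the Poincar\'{e} condition is preserved. This is correct and is the intended argument; note that your appeal to $s,\overline{s}$ being units is justified by the standing hypothesis of Chapter~\ref{chap:DLtheory} that $A$ contains a half-unit (which is a unit by definition), even though the lemma as stated only writes $s+\overline{s}=1$.
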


\begin{lemma}\label{welldef}For $n\geq0$ let $(C,\phi)\simeq(C',\phi')$ be homotopy equivalent $n$-dimensional, $\eps$-symmetric Poincar\'{e} complexes over $A$, then $(C,\phi)$ and $(C',\phi')$ are double-cobordant.
\end{lemma}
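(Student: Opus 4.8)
The plan is to build an explicit double-cobordism between $(C,\phi)$ and $(C',\phi')$ using the mapping cylinder of a homotopy equivalence $h\colon C\xrightarrow{\simeq} C'$ with $h^\%\phi=\phi'$, together with the half-unit $s$. The guiding picture is Figure \ref{pic:singvsdoub}: we want two cobordisms $x_\pm$ from $C$ to $C'$ whose underlying chain maps assemble into a homotopy equivalence $C\oplus C'\to D_+\oplus D_-$. The natural first guess is to take both $D_\pm$ to be (a chain-level model of) the mapping cylinder $M(h)\simeq C'$, with the two inclusions $C\oplus C'\hookrightarrow M(h)$. Each such inclusion, equipped with the appropriate relative symmetric structure coming from $\phi$ (and the fact that $h^\%\phi=\phi'$), is an $(n+1)$-dimensional $\eps$-symmetric Poincar\'e pair — this is essentially the statement that a homotopy equivalence of symmetric Poincar\'e complexes gives a cobordism, which is standard (and in the positive/connective setting is covered by the material of Section \ref{sec:connective}).

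First I would make precise the model for $D_\pm$. Since $h\colon C\to C'$ is a homotopy equivalence, the algebraic mapping cone $C(h)$ is contractible, and the obvious pair $((1\ \ -h)\colon C\oplus C'\to C', (0,\phi\oplus-\phi'))$ (or rather its corrected version to make it genuinely Poincar\'e, using $h^\%\phi = \phi'$) is an $(n+1)$-dimensional $\eps$-symmetric Poincar\'e pair; call it $x_+$. Here the Poincar\'e condition on the pair reduces, after evaluating and taking slant maps, to $C(h)$ being contractible, which holds. This is exactly the cobordism-from-homotopy-equivalence construction, and it is $(A,S)$-compatible / positive because $h$ is a chain equivalence in $\B_+(A)$.

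Next, to get the \emph{complementary} partner $x_-$, I would invoke the half-unit. By the Lemma immediately preceding the statement, if $((f\ \ f')\colon C\oplus C'\to D,(\delta\phi,\phi\oplus-\phi'))$ is an $(n+1)$-dimensional $\eps$-symmetric Poincar\'e pair then so is $((\overline s f\ \ -s f')\colon C\oplus C'\to D,((s\overline s)\delta\phi,\phi\oplus-\phi'))$. Applying this to $x_+$ with $(f,f')=(1,-h)$ yields the pair $x_-$ with underlying map $(\overline s\ \ sh)\colon C\oplus C'\to C'$. Then the combined chain map is
\[
\begin{pmatrix} f_+\\ f_-\end{pmatrix}=\begin{pmatrix}1 & -h\\ \overline s & s h\end{pmatrix}\colon C\oplus C'\to C'\oplus C',
\]
and I claim this is a homotopy equivalence. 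Indeed, composing with the (genuine, not just homotopy) isomorphism $\left(\begin{smallmatrix}1&0\\ -\overline s&1\end{smallmatrix}\right)$ on the target gives $\left(\begin{smallmatrix}1&-h\\0&(s+\overline s)h\end{smallmatrix}\right)=\left(\begin{smallmatrix}1&-h\\0&h\end{smallmatrix}\right)$, which is visibly a homotopy equivalence (it is a chain map built from the identity and the equivalence $h$, with contractible cone), using $s+\overline s=1$. Hence $\{x_+,x_-\}$ is a double-cobordism between $(C,\phi)$ and $(C',\phi')$.

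The one point requiring genuine care — and the step I expect to be the main obstacle — is verifying that $x_+$ (hence $x_-$, via the preceding Lemma) really is a symmetric \emph{Poincar\'e} pair in the connective category $\B_+(A)$ with the \emph{correct} relative symmetric structure, rather than merely a pair. This is where the hypothesis $h^\%\phi=\phi'$ is used: one must exhibit a nullhomotopy $\delta\phi\in W^\%C'$ of $(f_+)^\%(\phi\oplus-\phi')\in W^\%(C\sqcup C')$ and check, via the evaluation and slant maps of Section \ref{sec:symmetric}, that the resulting $\ev_l$ (resp.\ $\ev_r$) is a chain homotopy equivalence. Concretely this amounts to checking that $\setminus\ev_l(\delta\phi,\phi\oplus-\phi')\colon C(f_+)^{n+1-*}\to C'$ is an equivalence, and since $C(f_+)\simeq C(h)\simeq 0$ this is automatic once the structures are matched up; the matching is precisely the content of $h^\%\phi=\phi'$ together with the homotopy-invariance of $W^\%$ (so a chain homotopy $h^\%\phi\simeq\phi'$ suffices, by choosing $\delta\phi$ to incorporate the induced $h^\%$-homotopy). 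I would also note that positivity/connectivity is inherited throughout because all complexes in sight are in $\B_+(A)$ and $h$ is a chain equivalence there, so no connectivity obstruction arises. Finally, the same construction works verbatim if instead of an honest homotopy equivalence we only have the data defining homotopy equivalence of $\eps$-symmetric Poincar\'e complexes, which is exactly the hypothesis of the Lemma.
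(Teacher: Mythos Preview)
Your proposal is correct and follows essentially the same approach as the paper: take $D_\pm=C'$, let one cobordism be the standard one coming from the homotopy equivalence $h$, obtain the second by applying the half-unit lemma, and verify complementarity by a $2\times 2$ matrix calculation using $s+\overline{s}=1$. Note only that your map $(1\ \ -h)\colon C\oplus C'\to C'$ has its entries transposed (for the domains and codomains to match you want $(h\ \ 1)$ or the analogous variant with the summands swapped); the paper uses $(h\ \ 1)$ and $(h\overline{s}\ \ -s)$ and writes down an explicit homotopy inverse rather than your triangular reduction, but the content is the same.
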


\begin{proof}Here is where it is necessary to assume there exists $s\in A$ such that $s+\bar{s}=1$. 

Let $h:(C,\phi)\to(C',\phi')$ be given homotopy equivalence with homotopy inverse $g$. Then there are defined two cobordisms:\[\begin{array}{lrl}
((h\,\,&1):&C\oplus C'\to C',(0,\phi\oplus-\phi')),\\
((h\bar{s}\,&-s):&C\oplus C'\to C',(0,\phi\oplus-\phi')),
\end{array}\] and they are complementary as \[\begin{array}{rcl}\left(\begin{array}{cc}gs&g\\\bar{s}&-1\end{array}\right)\left(\begin{array}{cc}h&1\\h\bar{s}&-s\end{array}\right)\simeq\left(\begin{array}{cc}1&0\\0&1\end{array}\right)&:&C\oplus C'\to C\oplus C',\\\left(\begin{array}{cc}h&1\\h\bar{s}&-s\end{array}\right)\left(\begin{array}{cc}gs&g\\\bar{s}&-1\end{array}\right)\simeq \left(\begin{array}{cc}1&0\\0&1\end{array}\right)&:&C'\oplus C'\to C'\oplus C'.\end{array}\]
\end{proof}

\begin{proposition}\label{DLA}For $n\geq0$, double-cobordism is an equivalence relation on the set of homotopy equivalence classes of $n$-dimensional, $\eps$-symmetric, Poincar\'{e} complexes over $A$. The equivalence classes form a group $DL^{n}(A,\eps)$, the \emph{$n$-dimensional, $\eps$-symmetric $DL$-group of $A$}, with addition and inverses given by\[(C,\phi)+(C',\phi')=(C\oplus C',\phi\oplus\phi'),\qquad -(C,\phi)=(C,-\phi)\in DL^n(A,\eps).\]
\end{proposition}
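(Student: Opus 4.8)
The plan is to verify the three defining properties of an equivalence relation and then check that the monoid operation descends to a well-defined group structure. The reflexivity is handled by Lemma \ref{welldef}: any $n$-dimensional $\eps$-symmetric Poincar\'{e} complex is homotopy equivalent to itself, hence double-cobordant to itself. (It is worth remarking that reflexivity is exactly where the half-unit hypothesis is used, which is why we set up Lemma \ref{welldef} first.) Symmetry is immediate: if $\{x_+,x_-\}$ is a double-cobordism between $(C,\phi)$ and $(C',\phi')$ with complementary cobordisms $x_\pm=(f_\pm:C\oplus C'\to D_\pm,(\delta_\pm\phi,\phi\oplus-\phi'))$, then reversing the roles of $C$ and $C'$ (i.e. composing the $f_\pm$ with the swap isomorphism $C\oplus C'\xrightarrow{\cong}C'\oplus C$ and negating the $\eps$-symmetric structures to turn $\phi\oplus-\phi'$ into $\phi'\oplus-\phi$) gives complementary cobordisms from $(C',\phi')$ to $(C,\phi)$; the homotopy-equivalence condition on the column matrix $\lmat f_+\\f_-\rmat$ is unaffected by pre-composition with an isomorphism.

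The transitivity step is the main obstacle, and it is the place where the real work lies. Suppose $\{x_+,x_-\}$ is a double-cobordism from $(C,\phi)$ to $(C',\phi')$ and $\{y_+,y_-\}$ is a double-cobordism from $(C',\phi')$ to $(C'',\phi'')$. I would form the glued cobordisms $z_\pm := x_\pm \cup y_\pm$ along the common boundary component $(C',\phi')$, using algebraic glueing along a boundary component (Definition \ref{def:glue2}); each $z_\pm$ is an $(n+1)$-dimensional $\eps$-symmetric Poincar\'{e} pair from $(C,\phi)$ to $(C'',\phi'')$, with underlying chain map $g_\pm:C\oplus C''\to D_\pm\cup_{C'}D'_\pm$. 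The content of transitivity is then to show that the pair $\{z_+,z_-\}$ is complementary, i.e. that $\lmat g_+\\g_-\rmat:C\oplus C''\to (D_+\cup_{C'}D'_+)\oplus(D_-\cup_{C'}D'_-)$ is a homotopy equivalence. The natural approach is a Mayer--Vietoris / homotopy-pushout argument: each homotopy pushout $D_\pm\cup_{C'}D'_\pm$ sits in a homotopy cofibration sequence built from $C'$, and one arranges the two pushouts together with the hypotheses that $\lmat f_+\\f_-\rmat$ and its $y$-analogue are equivalences into a commuting ladder whose vertical maps are then forced to be equivalences by the five-lemma for mapping cones (a repeated application of the Puppe-sequence formalism of Section \ref{subsec:res}). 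One has to be careful that the $\eps$-symmetric structures glue compatibly on both the $+$ and $-$ sides simultaneously, but the structures play no role in the homotopy-equivalence verification, which is purely about the underlying chain maps.

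Finally I would check the group structure. Direct sum of complexes and of double-cobordisms is clearly compatible, so $+$ descends to a well-defined, associative, commutative operation on double-cobordism classes, with identity the class of the zero complex (the empty complex is double-cobordant to $0$ via the trivial cobordisms). For inverses, I would show that $(C,\phi)\oplus(C,-\phi)$ is double-cobordant to $0$: the diagonal $\lmat 1\\1\rmat:C\to C$ gives a nullcobordism $((1\,\,1):C\oplus C\to C,(0,\phi\oplus-\phi))$, and using the half-unit $s$ exactly as in Lemma \ref{lem:halfunithyp} and Lemma \ref{welldef} the second nullcobordism $((\overline{s}\,\,-s):C\oplus C\to C,((s\overline{s})\cdot 0,\phi\oplus-\phi))$ is complementary to the first (the relevant $2\times 2$ matrices $\lmat s&1\\\overline{s}&-1\rmat$ and $\lmat 1&1\\\overline{s}&-s\rmat$ are mutually inverse, as in Lemma \ref{lem:halfunithyp}). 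Hence $-[(C,\phi)]=[(C,-\phi)]$, completing the proof that $DL^n(A,\eps)$ is a group. Throughout, the verifications that the constructed objects are genuinely $\eps$-symmetric Poincar\'{e} pairs (rather than merely pairs) follow from the glueing and Thom/thickening machinery of Chapter \ref{chap:algLtheory}, so no new $L$-theoretic input is required.
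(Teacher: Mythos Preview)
Your proposal is correct and follows essentially the same approach as the paper: reflexivity from Lemma~\ref{welldef}, symmetry immediate, transitivity by glueing $x_\pm\cup y_\pm$ along $(C',\phi')$ and then a cone/five-lemma argument to verify complementarity, with inverses coming from the two complementary nullcobordisms of Lemma~\ref{welldef} applied to the identity. The only difference is presentational: where you describe the transitivity step as a Mayer--Vietoris/five-lemma ladder, the paper writes down the specific diagram identifying $C\oplus C''$ as the cone on the inclusion $C'\oplus C'\hookrightarrow C\oplus C'\oplus C'\oplus C''$ and $D''_+\oplus D''_-$ as the cone on the corresponding map into $D_+\oplus D_-\oplus D'_+\oplus D'_-$, so that the induced map on cones is a homotopy equivalence; and the paper leaves the group-axiom verification implicit whereas you spell it out.
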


\begin{proof}The previous lemma shows in particular that double-cobordism is well-defined and reflexive. It is clearly symmetric. To show transitivity consider two double-cobordisms\[\begin{array}{lcl}
	c_\pm&=&((f_\pm\,\,f'_\pm):C\oplus C'\to D_\pm,(\delta_\pm\phi,\phi\oplus-\phi')),\\
	c'_\pm&=&((\tilde{f}'_\pm\,\,f''_\pm):C'\oplus C''\to D_\pm,(\delta_\pm\phi',\phi'\oplus-\phi'')).\\
\end{array}\]We intend to re-glue the 4 cobordisms according to the schematic in Figure \ref{transitivity}.

\begin{figure}[h]
\def\pictransitivity{\resizebox{0.8\textwidth}{!}{ \includegraphics{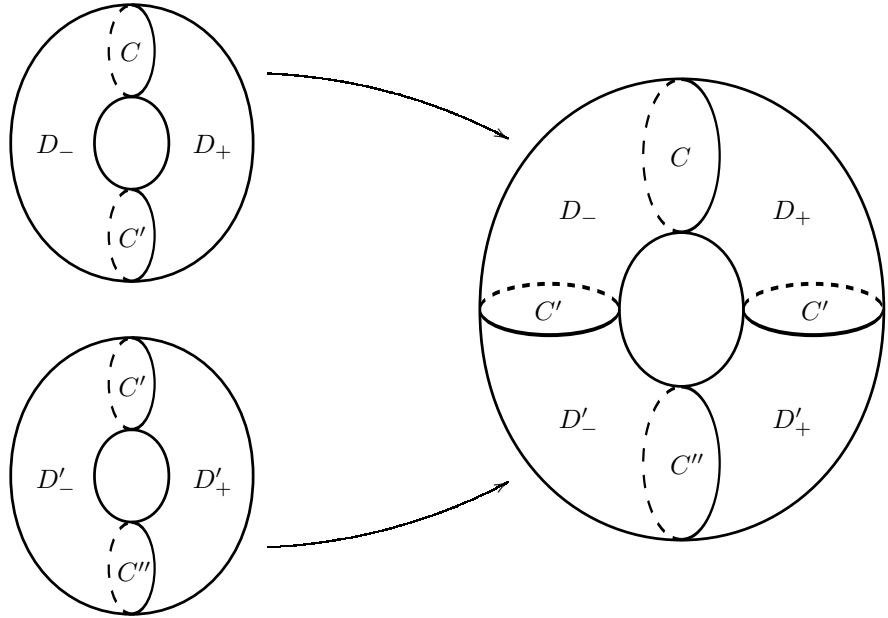}}}

\[\begin{xy} \xyimport(619.29,440){\pictransitivity}
,(82,405)*!L{C}
,(81,273)*!L{C'}
,(81,166)*!L{C'}
,(80,33)*!L{C''}
,(23,337)*!L{D_-}
,(133,337)*!L{D_+}
,(23,97)*!L{D_-'}
,(133,97)*!L{D_+'}
,(468,330)*!L{C}
,(468,110)*!L{C''}
,(373,220)*!L{C'}
,(560,220)*!L{C'}
,(390,290)*!L{D_-}
,(540,290)*!L{D_+}
,(390,140)*!L{D_-'}
,(540,140)*!L{D_+'}
,(180,390)*+{}="A";(360,340)*+{}="B"
,{"A"\ar@/^/"B"}
,(180,50)*+{}="C";(360,100)*+{}="D"
,{"C"\ar@/_/"D"}
\end{xy}\]
                \caption{Combining the double-nullcobordisms to show transitivity}
	       \label{transitivity}
\end{figure}

Recall the algebraic glueing operation of \ref{def:glue2}. As $c_+$ and $c'_+$ share a boundary component, likewise $c_-$ and $c'_-$, we form the two algebraic glues \[c_\pm\cup c'_\pm=(C\oplus C''\to D_\pm'',(\delta_\pm\phi'',\phi\oplus-\phi''))\]where $D''_\pm=D_\pm\cup_{C'}D'_\pm$ is the mapping cone\[C\left({\left(\begin{array}{c}f'_\pm\\\tilde{f}'_\pm\end{array}\right):C'\to D_\pm\oplus D'_\pm}\right).\]To see that these two new cobordisms are complementary, first note that as our initial two double-cobordisms were complementary we have\[\left(\begin{array}{cc}f_+&f'_+\\f_-&f'_-\end{array}\right)\oplus\left(\begin{array}{cc}\tilde{f}'_+&f''_+\\\tilde{f}'_-&f''_-\end{array}\right):(C\oplus C')\oplus (C'\oplus C'')\xrightarrow{\simeq} (D_+\oplus D_-)\oplus (D'_+\oplus D'_-).\]But as $C\oplus C''$ is homotopy equivalent to the cone on the obvious inclusion $i:C'\oplus C'\to C\oplus C'\oplus C'\oplus C''$ there is a homotopy commutative diagram with the map $g$ defined to make the left-hand square homotopy commute \[\xymatrix{
C'\oplus C' \ar[r]^-i\ar[d]^{=} & C\oplus C'\oplus C'\oplus C'' \ar[d]^{\simeq}\ar[r] &C(i)\simeq C\oplus C'' \ar[d]\\
C'\oplus C'\ar[r]^-g&D_+\oplus D_-\oplus D'_+\oplus D'_-\ar[r] &C(g)=D''_+\oplus D''_-}\] and hence the induced vertical map on the cones is a homotopy equivalence.
\end{proof}

\subsection{The torsion $DL$-groups}

Recall from \ref{subsec:Ltorsion} that for $n\geq0$, an $(A,S)$-cobordism of $n$-dimensional, $\eps$-symmetric, $S$-acyclic Poincar\'{e} complexes $(C,\phi)$, $(C',\phi')$ over $A$ is an $(n+1)$-dimensional, $\eps$-symmetric, $S^{-1}A$-acyclic Poincar\'{e} pair over $A$\[(f:C\oplus C'\to D,(\delta\phi,\phi\oplus -\phi')\in C(f^\%)_{n+1}).\]

\begin{definition}For $n\geq 0$, two $(A,S)$-cobordisms from $(C,\phi)$ to $(C',\phi')$ \[x_\pm:=(f_\pm:C\oplus C'\to D_\pm,(\delta_\pm\phi,\phi\oplus-\phi')\in C(f^\%_\pm)_{n+1})\] are \emph{complementary} if the chain map\[\left(\begin{smallmatrix}f_+\\f_-\end{smallmatrix}\right):C\oplus C'\to D_+\oplus D_-\]is a homotopy equivalence over $A$. In which case we say $(C,\phi)$ and $(C',\phi')$ are \emph{$(A,S)$-double-cobordant} and that the set $\{x_+,x_-\}$ is a \emph{$(A,S)$-double-cobordism} between them.
\end{definition}

Entirely analogously to Lemma \ref{welldef} and Proposition \ref{DLA} we obtain:

\begin{lemma}For $n\geq0$ let $(C,\phi)\simeq(C',\phi')$ be homotopy equivalent $n$-dimensional, $\eps$-symmetric $S^{-1}A$-acyclic Poincar\'{e} complexes over $A$, then $(C,\phi)$ and $(C',\phi')$ are $(A,S)$-double-cobordant.
\end{lemma}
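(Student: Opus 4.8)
The plan is to mimic exactly the proof of Lemma \ref{welldef}, since the torsion statement is the $S$-acyclic analogue of the projective one and nothing essential changes. First I would let $h:(C,\phi)\to(C',\phi')$ be the given homotopy equivalence of $n$-dimensional $\eps$-symmetric $S^{-1}A$-acyclic Poincar\'{e} complexes, with homotopy inverse $g$, and then write down the two explicit candidate $(A,S)$-cobordisms
\[
\begin{array}{lrl}
((h\,\,&1):&C\oplus C'\to C',(0,\phi\oplus-\phi')),\\
((h\bar{s}\,&-s):&C\oplus C'\to C',(0,\phi\oplus-\phi')),
\end{array}
\]
exactly as in the projective case. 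These are $(n+1)$-dimensional $\eps$-symmetric Poincar\'{e} pairs over $A$ because $h$ is a homotopy equivalence and $s+\bar s=1$ (using the half-unit hypothesis standing in this chapter); the verification is the same matrix computation as in Lemma \ref{welldef}.

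The only genuinely new point to check is that these two cobordisms lie in $\C_+(A,S)$, i.e.\ that the underlying morphisms are $S$-acyclic in the sense needed for an $(A,S)$-cobordism. But this is immediate: the target chain complex $C'$ is $S^{-1}A$-acyclic by hypothesis, and the source $C\oplus C'$ is $S^{-1}A$-acyclic since both $C$ and $C'$ are; hence the mapping cone of either $(h\,\,1)$ or $(h\bar s\,\,-s)$ is $S^{-1}A$-acyclic, because $S^{-1}A\otimes_A-$ is exact (Lemma \ref{lem:locisexact}) and an extension of $S^{-1}A$-acyclic complexes is $S^{-1}A$-acyclic. Likewise the algebraic Thom complexes and all the modules appearing are finite and positive, so everything stays inside $\C_+(A,S)$.

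Finally I would observe that complementarity is exactly the same homotopy-level identity as in Lemma \ref{welldef}: the chain maps
\[
\left(\begin{array}{cc}h&1\\h\bar{s}&-s\end{array}\right):C\oplus C'\to C'\oplus C',\qquad
\left(\begin{array}{cc}gs&g\\\bar{s}&-1\end{array}\right):C'\oplus C'\to C\oplus C'
\]
are mutually inverse up to homotopy (the compositions are homotopic to the identity, using $gh\simeq 1$, $hg\simeq 1$ and $s+\bar s=1$), so
\[
\left(\begin{array}{c}h\\h\bar s\end{array}\right)\ \text{and}\ \left(\begin{array}{c}1\\-s\end{array}\right)
\]
assemble to a homotopy equivalence $C\oplus C'\xrightarrow{\simeq}C'\oplus C'$ over $A$. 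Hence $\{x_+,x_-\}$ is an $(A,S)$-double-cobordism between $(C,\phi)$ and $(C',\phi')$. I do not expect any real obstacle here; the subtlety, such as it is, is purely bookkeeping — confirming that passing from the category $\B_+(A)$ to the subcategory $\C_+(A,S)$ does not break the construction, which it does not because $S^{-1}A$-acyclicity is closed under mapping cones and direct sums.
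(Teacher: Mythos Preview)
Your proposal is correct and matches the paper's approach exactly: the paper states that this lemma follows ``entirely analogously to Lemma \ref{welldef}'' and gives no further details, so your explicit write-up (the same two cobordisms $(h\,\,1)$ and $(h\bar{s}\,\,-s)$, the same matrix inverse, plus the bookkeeping check that $S^{-1}A$-acyclicity is preserved) is precisely what the paper has in mind.
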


\begin{proposition}For $n\geq0$, $(A,S)$-double-cobordism is an equivalence relation on the set of homotopy equivalence classes of $(n+1)$-dimensional, $(-\eps)$-symmetric, $S$-acyclic Poincar\'{e} complexes over $A$. The equivalence classes form a group $DL^{n}(A,S,\eps)$, the \emph{$n$-dimensional, $\eps$-symmetric $DL$-group of $(A,S)$}, with addition and inverses given by\[(C,\phi)+(C',\phi')=(C\oplus C',\phi\oplus\phi'),\qquad -(C,\phi)=(C,-\phi)\in DL^n(A,S,\eps).\]
\end{proposition}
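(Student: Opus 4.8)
The plan is to mirror exactly the structure used for the projective case (Lemma \ref{welldef} and Proposition \ref{DLA}), carrying the extra $S^{-1}A$-acyclicity condition along at each stage. First I would verify that $(A,S)$-double-cobordism is well-defined on homotopy equivalence classes and reflexive: given a homotopy equivalence $h:(C,\phi)\xrightarrow{\simeq}(C',\phi')$ of $S$-acyclic Poincar\'{e} complexes with homotopy inverse $g$, I would reuse the two cobordisms
\[
\bigl((h\;1):C\oplus C'\to C',(0,\phi\oplus-\phi')\bigr),\qquad
\bigl((h\overline{s}\;-s):C\oplus C'\to C',(0,\phi\oplus-\phi')\bigr),
\]
noting that the target $C'$ is $S$-acyclic so these are $S^{-1}A$-acyclic pairs, and that the $2\times2$-matrix computation showing complementarity from the proof of Lemma \ref{welldef} goes through verbatim (it only uses the half-unit $s+\overline{s}=1$ and homotopy invertibility of $h$). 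Symmetry is immediate by swapping the labels ``$+$'' and ``$-$''.

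Next I would establish transitivity, which is the substance of the argument, following the diagram of Figure \ref{transitivity} and the proof of Proposition \ref{DLA}. Given $(A,S)$-double-cobordisms $c_\pm$ from $(C,\phi)$ to $(C',\phi')$ and $c'_\pm$ from $(C',\phi')$ to $(C'',\phi'')$, I would form the algebraic glues $c_\pm\cup c'_\pm$ along the common boundary component $(C',\phi')$ using Definition \ref{def:glue2}, obtaining $(n+1)$-dimensional $(-\eps)$-symmetric pairs with underlying complexes $D''_\pm = D_\pm\cup_{C'}D'_\pm$. The two points to check here are (i) that each glued pair is still $S^{-1}A$-acyclic, and (ii) that the two glued cobordisms are complementary. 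For (i), since localisation is exact (Lemma \ref{lem:locisexact}) and $D''_\pm$ is by definition the mapping cone $C\bigl(\bigl(\begin{smallmatrix}f'_\pm\\ \tilde f'_\pm\end{smallmatrix}\bigr):C'\to D_\pm\oplus D'_\pm\bigr)$, applying $S^{-1}A\otimes_A-$ and using that $C'$, $D_\pm$, $D'_\pm$ are all $S$-acyclic shows $D''_\pm$ is $S$-acyclic (one also notes the pair $C\oplus C''\to D''_\pm$ is Poincar\'{e} after localisation, as glueing preserves the Poincar\'{e} property, and that the Thom-construction / boundary arguments of Chapter \ref{chap:algLtheory} respect acyclicity). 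For (ii), the homotopy-equivalence argument from Proposition \ref{DLA} applies unchanged: complementarity of the original pairs gives the homotopy equivalence
\[
\Bigl(\begin{smallmatrix}f_+&f'_+\\ f_-&f'_-\end{smallmatrix}\Bigr)\oplus
\Bigl(\begin{smallmatrix}\tilde f'_+&f''_+\\ \tilde f'_-&f''_-\end{smallmatrix}\Bigr):
(C\oplus C')\oplus(C'\oplus C'')\xrightarrow{\simeq}(D_+\oplus D_-)\oplus(D'_+\oplus D'_-),
\]
and then the homotopy commuting square with $C\oplus C''\simeq C(i)$ where $i:C'\oplus C'\hookrightarrow C\oplus C'\oplus C'\oplus C''$ induces the desired homotopy equivalence on cones $C\oplus C''\to D''_+\oplus D''_-$.

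Finally I would record that the monoid operation $(C,\phi)+(C',\phi')=(C\oplus C',\phi\oplus\phi')$ descends to the set of equivalence classes (direct sums of $(A,S)$-double-cobordisms are $(A,S)$-double-cobordisms, and the class of the zero complex is a two-sided identity), is commutative, and that $(C,-\phi)$ is an inverse: the pair $\bigl((1\;1):C\oplus C\to C,(0,\phi\oplus-\phi)\bigr)$ together with its half-unit-twisted complement $\bigl((\overline{s}\;-s):C\oplus C\to C,((s\overline{s})\cdot 0,\phi\oplus-\phi)\bigr)$ (using the lemma that half-unit scaling preserves the Poincar\'{e}-pair property, and noting $C$ is $S$-acyclic so this is an $(A,S)$-double-cobordism) exhibits $(C,\phi)+(C,-\phi)\sim 0$. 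The main obstacle in all of this is book-keeping in point (i) of the transitivity step: one must be careful that algebraic glueing along a boundary component, performed inside the bordism category $\Lambda_+(A,S)$, genuinely stays within $\C_+(A,S)$ (positivity together with $S$-acyclicity of the mapping cone), rather than merely in $\B_+(A)$ — but this follows from exactness of localisation and the connectivity bookkeeping already set up in Chapter \ref{chap:algLtheory}. Everything else is a direct transcription of the projective case.
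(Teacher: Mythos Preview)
Your proposal is correct and follows exactly the approach the paper takes: the paper simply states that the lemma and proposition are obtained ``entirely analogously to Lemma \ref{welldef} and Proposition \ref{DLA}'' and gives no further argument, so your detailed write-up is precisely the unpacking of that analogy. Your additional care in checking that algebraic glueing stays within $\C_+(A,S)$ via exactness of localisation is appropriate bookkeeping that the paper leaves implicit.
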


\subsection{The $\widehat{DL}$-groups of Seifert forms}

Recall $R$ is a commutative Noetherian ring with involution and unit. For this subsection, assume $\eps^{-1}=\overline{\eps}$.

\medskip

Recall from \ref{subsec:Lseifert} that for $n\geq0$, a Seifert cobordism of $n$-dimensional, $\eps$-symmetric Seifert complexes $(C,\hat{\psi})$, $(C',\hat{\psi}')$ over $R$ is an $(n+1)$-dimensional, $\eps$-ultraquadratic Poincar\'{e} pair over $R$\[(f:C\oplus C'\to D,(\delta\hat{\psi},\hat{\psi}\oplus -\hat{\psi}')\in C(f\otimes f)_{n+1}).\]

\begin{definition}For $n\geq 0$, two Seifert cobordisms from $(C,\hat{\psi})$ to $(C',\hat{\psi}')$ \[x_\pm:=(f_\pm:C\oplus C'\to D_\pm,(\delta_\pm\hat{\psi},\hat{\psi}\oplus-\hat{\psi}')\in C(f_\pm\otimes f_\pm)_{n+1})\] are \emph{complementary} if the chain map\[\left(\begin{smallmatrix}f_+\\f_-\end{smallmatrix}\right):C\oplus C'\to D_+\oplus D_-\]is a chain homotopy equivalence over $R$. In which case we say $(C,\hat{\psi})$ and $(C',\hat{\psi}')$ are \emph{Seifert double-cobordant} and that the set $\{x_+,x_-\}$ is a \emph{ Seifert double-cobordism} between them.
\end{definition}

\begin{lemma}For $n\geq0$ let $(C,\hat{\psi})\simeq(C',\hat{\psi}')$ be homotopy equivalent $n$-dimensional, $\eps$-symmetric Seifert complexes over $R$, then $(C,\phi)$ and $(C',\phi')$ are Seifert double-cobordant.
\end{lemma}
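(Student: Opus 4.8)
The plan is to mimic the proof of Lemma~\ref{welldef} almost verbatim, since Seifert double-cobordism is governed by the same formal pattern. Given a homotopy equivalence $h:(C,\hat{\psi})\xrightarrow{\simeq}(C',\hat{\psi}')$ of $n$-dimensional $\eps$-symmetric Seifert complexes over $R$, with homotopy inverse $g$, I would write down the two Seifert cobordisms
\[
x_+=\bigl((h\,\,1):C\oplus C'\to C',(0,\hat{\psi}\oplus-\hat{\psi}')\bigr),\qquad
x_-=\bigl((h\bar{s}\,\,-s):C\oplus C'\to C',(0,\hat{\psi}\oplus-\hat{\psi}')\bigr),
\]
where $s\in R[z,z^{-1}]$ is a half-unit (here the presence of a half-unit is essential, exactly as in the $DL^n(A,\eps)$ case, and I note $R[z,z^{-1}]$ admits the half-unit $(1-z)^{-1}$ after excision in the cases relevant to knot theory). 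First I would check that each of these is a genuine $(n+1)$-dimensional $\eps$-ultraquadratic Poincar\'{e} pair over $R$: the underlying morphism is a fibration onto $C'$, the $\eps$-ultraquadratic structure $(0,\hat{\psi}\oplus-\hat{\psi}')$ is a cycle in $C((h\,\,1)\otimes(h\,\,1))_{n+1}$ since $h^*\hat{\psi}'h\simeq\hat{\psi}$, and the Poincar\'{e} condition amounts to the symmetrisation $(1+T_\eps)$ of this data being a Poincar\'{e} pair, which follows from $(C,\hat{\psi})$ being a Seifert complex and $h$ being a homotopy equivalence (cf.\ the discussion after Proposition~\ref{prop:ultra} and the definition of $\widehat{L}^n(R,\eps)$).

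The second and main step is verifying complementarity, i.e.\ that
\[
\left(\begin{smallmatrix}h&1\\h\bar{s}&-s\end{smallmatrix}\right):C\oplus C'\to C'\oplus C'
\]
is a chain homotopy equivalence over $R$. I would exhibit the explicit homotopy inverse $\left(\begin{smallmatrix}gs&g\\\bar{s}&-1\end{smallmatrix}\right)$ and compute both composites: using $gh\simeq 1_C$, $hg\simeq 1_{C'}$ and $s+\bar{s}=1$ one gets $\left(\begin{smallmatrix}gs&g\\\bar{s}&-1\end{smallmatrix}\right)\left(\begin{smallmatrix}h&1\\h\bar{s}&-s\end{smallmatrix}\right)\simeq\left(\begin{smallmatrix}1&0\\0&1\end{smallmatrix}\right)$ on $C\oplus C'$ and $\left(\begin{smallmatrix}h&1\\h\bar{s}&-s\end{smallmatrix}\right)\left(\begin{smallmatrix}gs&g\\\bar{s}&-1\end{smallmatrix}\right)\simeq\left(\begin{smallmatrix}1&0\\0&1\end{smallmatrix}\right)$ on $C'\oplus C'$, exactly the two displayed identities in the proof of Lemma~\ref{welldef}. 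This is a routine matrix chain-homotopy calculation and I would not write it out in full in the final text.

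The only genuinely new point to keep an eye on --- and hence what I expect to be the (mild) main obstacle --- is making sure the constructions $x_\pm$ live in the \emph{$\eps$-ultraquadratic} category rather than merely the $\eps$-symmetric one: one must check that $(0,\hat{\psi}\oplus-\hat{\psi}')$ is a cycle in the mapping cone $C((h\,\,1)\otimes(h\,\,1))$ built from the \emph{tensor} product complexes (not the $W^\%$ complexes), and that the Poincar\'{e} condition for ultraquadratic pairs --- namely that the symmetrisation be a symmetric Poincar\'{e} pair --- is satisfied. Since $1+T_\eps$ is functorial and the symmetrisations of $x_\pm$ are precisely the cobordisms appearing in the proof of Lemma~\ref{welldef} applied to $(C,(1+T_\eps)\hat{\psi})$, this reduces immediately to the symmetric case already handled. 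So in the writeup I would simply remark that the argument of Lemma~\ref{welldef} carries over, replacing $W^\%$-structures by ultraquadratic structures and symmetric Poincar\'{e} pairs by ultraquadratic Poincar\'{e} pairs, with the half-unit $s\in R[z,z^{-1}]$ playing the same role.
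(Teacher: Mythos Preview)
Your approach has a genuine gap: the Seifert cobordisms you write down are $(n+1)$-dimensional $\eps$-ultraquadratic pairs \emph{over $R$}, so the scalar $s$ appearing in $(h\bar{s}\,\,-s)$ must be an element of $R$, not of $R[z,z^{-1}]$. The half-unit $(1-z)^{-1}$ you invoke lives in the Laurent polynomial setting (after excision), but it is simply not available when working with Seifert complexes over $R$. In particular your argument fails for $R=\Z$, which is the case of primary interest. The whole point of the $\widehat{DL}$-groups of Seifert complexes is that they are defined and well-behaved over an arbitrary commutative Noetherian ring with involution, with no half-unit hypothesis; the paper remarks explicitly after this lemma that ``for Seifert complexes we have not had to assume the underlying ring contains a half-unit.''

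The paper's proof takes a genuinely different route. Rather than importing an external half-unit, it exploits the Seifert structure itself: since $(C,\hat{\psi})$ is Poincar\'{e} one may choose a chain homotopy inverse $(\hat{\psi}+\eps\hat{\psi}^*)^{-1}$ and set
\[
e=\hat{\psi}(\hat{\psi}+\eps\hat{\psi}^*)^{-1}:C\to C.
\]
The pair of endomorphisms $e$ and $1-e$ then play the role that $s$ and $\bar{s}$ played in Lemma~\ref{welldef}, and the complementary Seifert nullcobordisms are
\[
\bigl((1\,\,g):C\oplus C'\to C,(0,\hat{\psi}\oplus-\hat{\psi}')\bigr),\qquad
\bigl((1-e\,\,-eg):C\oplus C'\to C,(0,\hat{\psi}\oplus-\hat{\psi}')\bigr).
\]
This is the chain-complex analogue of Lemma~\ref{doublyslice} for Seifert \emph{forms}, where the same trick with $e=(\psi+\eps\psi^*)^{-1}\psi$ produced complementary lagrangians without any half-unit in $R$. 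So the missing idea in your proposal is precisely this: the ultraquadratic structure supplies its own internal substitute for a half-unit, and that is what makes the Seifert theory work over $\Z$.
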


\begin{proof}Let $h:(C,\hat{\psi})\to(C',\hat{\psi}')$ be a given chain homotopy equivalence with chain homotopy inverse $g$. Choose $(\hat{\psi}+\eps \hat{\psi}^*)^{-1}$ to be a chain homotopy inverse to $(\hat{\psi}+\eps \hat{\psi}^*):C^{n-*}\xrightarrow{\simeq}C$ and set\[e=\hat{\psi}(\hat{\psi}+\eps\hat{\psi}^*)^{-1}:C\to C.\]There there are complementary lagrangians\[\begin{array}{lccrl}
((&1&g&):&C\oplus C'\to C,(0,\hat{\psi}\oplus-\hat{\psi}')),\\
((&1-e&-eg&):&C\oplus C'\to C,(0,\hat{\psi}\oplus-\hat{\psi}')).
\end{array}\] (cf.\ Lemmas \ref{welldef} and \ref{doublyslice}).
\end{proof}

Note that for Seifert complexes we have not had to assume the underlying ring contains a half-unit.

\begin{proposition}For $n\geq0$, Seifert double-cobordism is an equivalence relation on the set of homotopy equivalence classes of $n$-dimensional, $\eps$-symmetric, Seifert complexes over $R$. The equivalence classes form a group $\widehat{DL}^n(R,\eps)$, the \emph{$n$-dimensional, $\eps$-symmetric $DL$-group of Seifert complexes over $R$}, with addition and inverses given by\[(C,\hat{\psi})+(C',\hat{\psi}')=(C\oplus C',\hat{\psi}\oplus\hat{\psi}'),\qquad -(C,\hat{\psi})=(C,-\hat{\psi})\in \widehat{DL}^{n}(R,\eps).\]
\end{proposition}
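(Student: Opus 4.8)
The plan is to follow the template already laid down three times in this section (Proposition \ref{DLA} and its torsion and Seifert analogues): establish that Seifert double-cobordism is well-defined, reflexive, symmetric and transitive, and then identify the resulting quotient with an abelian group under the direct-sum operation. Well-definedness on homotopy equivalence classes and reflexivity are precisely the content of the preceding Lemma (the Seifert version of Lemma \ref{welldef}), so those come for free; symmetry is immediate by swapping the labels ``$+$'' and ``$-$'' of the two complementary Seifert cobordisms. The only real work is transitivity, and then a short verification of the group axioms.

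For transitivity I would copy the argument of Proposition \ref{DLA} essentially verbatim, working in the $\eps$-ultraquadratic category instead of the $\eps$-symmetric one. Given Seifert double-cobordisms $\{c_+,c_-\}$ from $(C,\hat\psi)$ to $(C',\hat\psi')$ and $\{c'_+,c'_-\}$ from $(C',\hat\psi')$ to $(C'',\hat\psi'')$, I would form the two algebraic glues $c_\pm\cup c'_\pm$ along the shared boundary component $(C',\hat\psi')$, using the algebraic glueing operation along a boundary component of Definition \ref{def:glue2}. This requires knowing that algebraic glueing is available in the ultraquadratic setting and that it preserves the Poincar\'e property; this is standard (it is how $\widehat L^n$ is made a group in \ref{subsec:Lseifert}, since the glue of two ultraquadratic Poincar\'e pairs along a Poincar\'e boundary is again an ultraquadratic Poincar\'e pair — one applies $1+T_\eps$ and reduces to the symmetric statement of Proposition \ref{prop:pairev}). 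The resulting pair $c_\pm\cup c'_\pm$ is an $(n+1)$-dimensional $\eps$-ultraquadratic Poincar\'e pair from $(C,\hat\psi)$ to $(C'',\hat\psi'')$ with glued-up chain complex $D''_\pm = D_\pm\cup_{C'}D'_\pm$. To see that $c_+\cup c'_+$ and $c_-\cup c'_-$ are complementary, one uses exactly the homotopy-commutative square of Proposition \ref{DLA}: the hypothesis that $\{c_+,c_-\}$ and $\{c'_+,c'_-\}$ are complementary gives a chain homotopy equivalence $(C\oplus C')\oplus(C'\oplus C'')\xrightarrow{\simeq}(D_+\oplus D_-)\oplus(D'_+\oplus D'_-)$ over $R$, and since $C\oplus C''$ is homotopy equivalent to the cone on the inclusion $i:C'\oplus C'\to C\oplus C'\oplus C'\oplus C''$, the induced map on cones $C\oplus C''\to D''_+\oplus D''_-$ is a homotopy equivalence over $R$. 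Note that no half-unit is used anywhere here — as remarked before the statement, the Seifert case never needs one — so this step goes through over the arbitrary commutative Noetherian ring $R$.

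The group structure is then routine and again mirrors the earlier propositions: direct sum $\oplus$ is clearly associative and commutative up to isomorphism of Seifert complexes, the zero complex represents the identity, and $-(C,\hat\psi)=(C,-\hat\psi)$ is an inverse because $(C\oplus C,\hat\psi\oplus-\hat\psi)$ is Seifert double-cobordant to $0$ — indeed this is exactly Lemma \ref{doublyslice} (or rather its chain-complex incarnation: the diagonal cobordism and the $(1-e,-e)$-cobordism with $e=\hat\psi(\hat\psi+\eps\hat\psi^*)^{-1}$ are complementary, which is what the preceding Lemma's proof already records). I expect the main obstacle to be purely bookkeeping: carefully checking that the algebraic glueing construction of Definition \ref{def:glue2}, which was phrased for $\eps$-symmetric pairs, transports to $\eps$-ultraquadratic pairs and respects the Poincar\'e condition there. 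This is not deep — it follows by applying $1+T_\eps$ and invoking the symmetric statements of Propositions \ref{prop:pairev} and \ref{prop:pullback} together with the fact (Proposition \ref{prop:ultra}) that the ultraquadratic symmetrisation map is compatible with cones — but it is the one place where one cannot simply cite Proposition \ref{DLA} word for word.
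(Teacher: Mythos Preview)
Your proposal is correct and follows exactly the approach the paper intends: the paper gives no explicit proof of this proposition, treating it (like the torsion $DL$-group proposition just before it) as ``entirely analogous'' to Proposition \ref{DLA}, with the preceding Lemma supplying reflexivity and the observation that no half-unit is needed in the Seifert setting. Your write-up simply spells out what that analogy amounts to---the same glueing-and-cone argument for transitivity, and the same verification of the group axioms---and correctly flags the one genuine bookkeeping point, namely that Definition \ref{def:glue2} must be transported to the ultraquadratic setting.
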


Recall that $P$ is the set of Alexander polynomials with coefficients in the ring $R$. We will need the double $L$ group of Seifert forms to eventually prove some results in Chapter \ref{chap:knots} concerning Seifert forms of knots. However, we do not intend to develop the theory much further as most information about them can be obtained another way, for the following (sketched) reason.

\begin{theorem}There is an isomorphism of abelian groups\[B:\widehat{DL}^n(R,\eps)\xrightarrow{\cong}DL^{n}(R[z,z^{-1}],P,\eps).\]
\end{theorem}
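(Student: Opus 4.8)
The plan is to mimic the strategy already used for the excision-and-covering theorem (Theorem~\ref{algtrans}) and for the form-level isomorphisms $\widehat{W}_\eps(R)\cong W^\eps(R[z,z^{-1}],P)$ and $\widehat{DW}_\eps(R)\cong DW^\eps(R[z,z^{-1}],P)$, but now carried out one categorical level up, on chain complexes with symmetric/ultraquadratic structure. First I would upgrade the covering operation $B$ from Seifert $R$-modules to chain complexes: given an $n$-dimensional $\eps$-symmetric Seifert complex $(C,\hat\psi)$ over $R$ (so $C$ in $\B(R)$ and $\hat\psi\in(\Hom_R(C^{-*},C))_n$ a cycle), form the chain complex of $R[z,z^{-1}]$-modules resolved degreewise by $(1-e)+ez$ (where $e=\hat\psi(\hat\psi+\eps\hat\psi^*)^{-1}$ as in the module case), and equip it with the linking-type $(-\eps)$-symmetric structure built from $-(1-z^{-1})(\hat\psi+\eps\hat\psi^*)$ exactly as in Definition~\ref{def:coveringsief}. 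The output is a chain complex over $A=R[z,z^{-1}]$ that is $P$-acyclic (it vanishes under augmentation $z\mapsto1$) and whose homology is the covering of the Seifert module data, hence represents a class in $\C_+(A,P)$; Poincar\'e-ness of $(C,\hat\psi)$ passes through because $(\hat\psi+\eps\hat\psi^*)$ being a chain equivalence makes the covered structure map a chain equivalence.

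Next I would check that $B$ is well-defined on double-cobordism classes and is a homomorphism. Additivity is immediate from $B(C\oplus C')\simeq B(C)\oplus B(C')$. For well-definedness, the key point is that $B$ sends a Seifert (null/double-)cobordism to a cobordism of the coverings; this is the chain-complex analogue of Lemma~\ref{lagrangians} (and its doubled version, Corollary of Theorem~\ref{thm:multisignature}), and the proof is the same: apply $B$ to the whole interlocking diagram, insert the isomorphism $\xi$ making the resolution square commute, and observe that $j^*$ (or the two complementary maps $\bigl(\begin{smallmatrix}f_+\\f_-\end{smallmatrix}\bigr)$) commute with the maps $(1-z)$ and $-(1-z^{-1})$, so exactness — equivalently, the homotopy-equivalence condition defining complementarity — is preserved. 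This shows $B:\widehat{DL}^n(R,\eps)\to DL^n(A,P,\eps)$ is a well-defined group homomorphism.

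For surjectivity I would invoke the chain-level realisation result of Ranicki \cite[1.8(i), 3.10]{MR2058802}: up to chain homotopy equivalence every $P$-acyclic complex over $A$ with an $\eps$-symmetric structure arises as the covering of a Seifert complex over $R$. (If one prefers to stay entirely self-contained, this can be reproved by resolving the $A$-module chain complex using the bionic-polynomial argument from the Claim preceding Proposition~\ref{prop:covmonod1}.) Thus $B$ is onto. For injectivity I would show that a Seifert complex whose covering is (double-)null-cobordant is already (double-)null-cobordant over $R$: this is the chain analogue of the computation in the proof of Theorem~\ref{algtrans}(ii) that Seifert \emph{forms} with vanishing covering are hyperbolic (the near-projection / $K_+\oplus K_-$ splitting argument), combined with the observation that the restriction functor $i^!$ together with a trace-function identification $\chi_*$ (Appendix~\ref{chap:trace}, as used in Definition~\ref{def:monodromy}) converts a cobordism of coverings back into a Seifert cobordism over $R$ after desuspension; here the presence of the half-unit $(1-z)^{-1}$ in $R[z,z^{-1},(1-z)^{-1}]$ — and the excision equivalence $\H(R[z,z^{-1}],P)\simeq\H(R[z,z^{-1},(1-z)^{-1}],P)$ of Lemma~\ref{halfunit} — is what guarantees the double-cobordisms assemble correctly.

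\textbf{Main obstacle.} The genuinely delicate step is injectivity: producing, from an $(A,P)$-double-cobordism of coverings, an honest \emph{Seifert} double-cobordism over $R$. Unlike the form case, where the near-projection splitting is an algebraic identity, in the chain-complex setting one must control the symmetric/ultraquadratic structures up to chain homotopy through the restriction-and-trace passage, and one must verify that the two halves of the double-cobordism remain complementary (i.e.\ the relevant chain map stays a homotopy equivalence) after this passage — the analogue of ``modifying one submodule without affecting the other'' in the proof of the double Witt multisignature corollary. I expect this to require either a careful chain-homotopy bookkeeping argument or, more cheaply, a reduction to the module case via algebraic surgery below and above the middle dimension (valid here because $P$ has dimension $0$, so $\C_+(A,P)$-complexes can be surgered to short complexes), after which Theorem~\ref{algtrans}(ii) and its doubled version apply directly.
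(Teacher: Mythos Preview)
Your broad strategy---define a chain-level covering $B$, check it preserves (double-)nullcobordisms, then prove surjectivity and injectivity---matches the paper's sketch, but two of your specific mechanisms do not work as stated.

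For surjectivity, the paper invokes Ranicki's \emph{algebraic transversality} \cite[32.10]{MR1713074}, a genuinely chain-complex-level construction showing that every $P$-acyclic symmetric Poincar\'e complex over $R[z,z^{-1}]$ arises (up to chain equivalence) as the covering of a Seifert complex. The references you cite, \cite[1.8(i), 3.10]{MR2058802}, are module- and form-level statements and do not immediately lift to complexes; your fallback via the bionic-polynomial argument is likewise module-level.

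For injectivity, your proposed route---use the restriction functor $i^!$ together with the trace map $\chi_*$ to convert an $(R[z,z^{-1}],P)$-cobordism of coverings back into a Seifert cobordism over $R$---is not available: the trace/monodromy machinery of Appendix~\ref{chap:trace} and Definition~\ref{def:monodromy} is set up for the \emph{autometric} localisation $(R[z,z^{-1}],Q)$, where monodromy genuinely inverts covering, not for the Seifert/Blanchfield localisation $(R[z,z^{-1}],P)$, where $B$ has a large kernel (the near-projections) and no monodromy inverse exists. The paper instead follows the pattern of Theorem~\ref{algtrans}(ii) directly: show that Seifert \emph{complexes} with contractible covering are already Seifert double-nullcobordant (the chain-level near-projection splitting), and combine this with algebraic transversality---which also lifts \emph{pairs}---to handle the general case. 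Your backup plan, reducing to short complexes by surgery above and below the middle dimension, also fails: $(R[z,z^{-1}],P)$ need not have dimension $0$ for general $R$ (it does not for $R=\Z$), and even when it does, Section~\ref{sec:surgabove} shows that this surgery does not establish periodicity in double $L$-theory.
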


\begin{proof}[Proof (sketch)]The morphism $B$ is the $(\pm\eps)$-symmetric chain complex level \textit{covering} morphism of Ranicki (see \cite[p.237]{MR620795}). The `algebraic transversality' construction of Ranicki \cite[32.10]{MR1713074} shows that the covering is surjective on the level of chain homotopy equivalence classes. The covering construction sends Seifert (double-)nullcobordisms to $(R[z,z^{-1}],P)$-(double-)nullcobordisms and the kernel of the covering map on the level of chain homotopy equivalence classes consists of Seifert double-nullcobordant complexes in analogy to Theorem \ref{algtrans}.
\end{proof}

\subsection{The $D\Gamma$-groups}

Recall the following definition given in \ref{sec:connective}:

\begin{definition}For $n\geq0$ let $(P,\theta)$ be an $n$-dimensional $\eps$-symmetric complex over $A$. If $S^{-1}A\otimes(P,\theta)$ is a Poincar\'{e} complex over $S^{-1}A$ then we say $(P,\theta)$ is \emph{$S$-Poincar\'{e} over $A$}.

Let $(f:P\to Q,(\delta\theta,\theta))$ be an $(n+1)$-dimensional $\eps$-symmetric pair over $A$. If $S^{-1}A\otimes_A(f:P\to Q,(\delta\theta,\theta))$ is a Poincar\'{e} pair over $S^{-1}A$ then we say $(f:P\to Q,(\delta\theta,\theta))$ is \textit{$S$-Poincar\'{e} over $A$}.

An \emph{$S^{-1}A$-cobordism} of two $n$-dimensional $\eps$-symmetric $S$-Poincar\'{e} complexes, $(P,\theta)$ and $(P',\theta')$, is an $(n+1)$-dimensional, $\eps$-symmetric $S$-Poincar\'{e} pair \[(f:P\oplus P'\to Q,(\delta\theta,\theta\oplus-\theta')),\]over $A$.
\end{definition}

If $(P,\theta)$ is $S$-Poincar\'{e} over $A$ then the skew-suspension of the boundary $(\partial P,\partial\theta)$ is well-defined in $\C_+(A,S)$. If $(f:P\to Q,(\delta\theta,\theta))$ is $S$-Poincar\'{e} over $A$ then the skew-suspension of the surgery dual pair $(f':P'\to Q^{n+1-*},(\delta\theta',\theta'))$ is well-defined in $\C_+(A,S)$ and is connected.

\begin{definition}\label{def:relboundary}The \textit{relative boundary} of a connected $(n+1)$-dimensional $\eps$-symmetric pair $(f:P\to Q,(\delta\theta,\theta)$ is the chain complex\[\partial(Q,P):=\Sigma^{-1}C(\ev_l(\delta\theta,\theta):C(f)^{n+1-*}\to Q).\]Up to homotopy, it is given by $(P')^{n-*}\simeq \partial(Q,P)$, with $(P',\theta')$, as always, denoting the effect of surgery on $(f:P\to Q,(\delta\theta,\theta)$. As such, there is an $\eps$-symmetric Poincar\'{e} pair $(\partial f:\partial P\to \partial(Q,P),(\overline{\theta'},\partial\theta'))$ where $\partial f$  is the map $\partial P\to (P')^{n-*}\simeq \partial(Q,P)$ and the symmetric structure is the usual symmetric structure of the algebraic thickening $(P',\theta')$ modified by the homotopy equivalence $(P')^{n-*}\simeq \partial(Q,P)$. (It might be useful to consult diagram \ref{eq:thickening}.)
\end{definition}

\begin{figure}[h]
\def\picrelboundary{\resizebox{0.3\textwidth}{!}{ \includegraphics{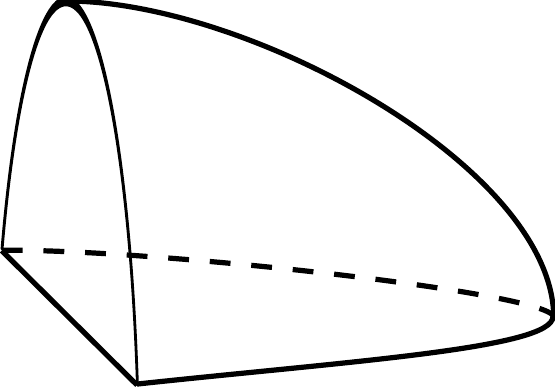}}}

\[\begin{xy} \xyimport(219.73,159.08){\picrelboundary}
,(25,85)*!L{P}
,(100,85)*!L{Q}
,(0,20)*!L{\partial P}
,(80,28)*!L{\partial (Q,P)}

\end{xy}\]
                \caption{A pair and relative boundary}
	       \label{transitivity}
\end{figure}

\subsection*{What is $\partial$-complementary?}
We intend to define a condition for two $S^{-1}A$-cobordisms to be `$\partial$-complementary'. To make sense of the subsequent definition, here is a discussion of the idea:

\medskip

Recall from Chapter \ref{chap:linking} that a suggested naive approach to the definition of the double Witt $\Gamma$-groups $D\Gamma^\eps(A\to S^{-1}A)$ was to take the monoid of $S$-non-singular $\eps$-symmetric forms over $A$ and to declare forms with $S$-complementary $S$-lagrangians to be the 0-objects. This approach is not the one ultimately used because the boundary linking form of such a 0-object is not necessarily hyperbolic, which is a flaw if we are interested in double Witt groups of linking forms and a well-defined localisation sequence. Our solution was instead to impose a joint condition on a pair of lagrangians that ensured the boundary linking form \textit{was} hyperbolic. What follows is the chain-level construction that is an extension of this idea.

\medskip

For this discussion let's restrict ourselves to thinking about $S^{-1}A$-cobordisms to 0 i.e. $S^{-1}A$-nullcobordisms. Suppose an $n$-dimensional, $\eps$-symmetric $S$-Poincar\'{e} complex over $A$ $(P,\theta)$ has two $S^{-1}A$-nullcobordisms $(f_\pm:P\to Q_\pm,(\delta_\pm\theta,\theta))$.

\[\begin{xy} 
\def\picSAnullcob{\resizebox{0.8\textwidth}{!}{ \includegraphics{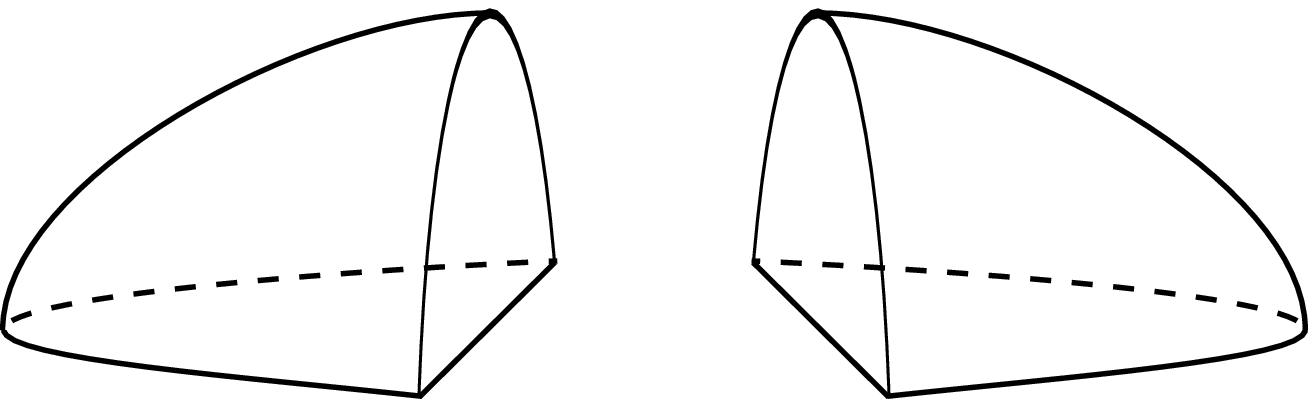}}}
\xyimport(219.73,159.08){\picSAnullcob}
,(32,38)*!L{\partial (Q_-,P)}
,(160,38)*!L{\partial (Q_+,P)}
,(172,90)*!L{Q_+}
,(43,90)*!L{Q_-}
,(85,30)*!L{\partial P}
,(128,30)*!L{\partial P}
,(80,90)*!L{P}
,(135,90)*!L{P}
\end{xy}\]

There is a homotopy commutative diagram, where the rows are cofibration sequences analogous to Meier-Vietoris sequences \[\xymatrix{
P^{n-*}\ar[dd]^{\theta_0}\ar[r]&C(f_+)^{n+1-*}\oplus C(f_-)^{n+1-*}\ar[r]\ar[dd]^-{\ev_l(\delta_+\theta,\theta)\oplus\ev_l(\delta_-\theta,\theta)}&(Q_+\cup_P Q_-)^{n+1-*}\ar[dd]^-{(\delta_+\theta\cup_\theta\delta_-\theta)_0}\\
&&\\
P\ar[r]^-{\lmat f_+\\-f_-\rmat}&Q_+\oplus Q_-\ar[r]&Q_+\cup_P Q_-}\]To measure the failure of $(P,\theta)$ to be Poincar\'{e} over $A$, we consider the desuspension of the mapping cone (also known as the boundary):\[\partial P:=\Sigma^{-1}C(\theta_0:P^{n-*}\to P)\]To measure the failure of $(f_\pm:P\to Q_\pm,(\delta_\pm\theta,\theta))$ to be Poincar\'{e} over $A$, we consider the desuspension of the mapping cone (also known as the relative boundary)\[\partial(Q_\pm,P):=\Sigma^{-1}C(\ev_l(\delta_\pm\theta,\theta):C(f_\pm)^{n+1-*}\to Q_\pm)\]

By definition, the algebraic union $(Q_+\cup_{P} Q_-,\delta_+\theta\cup_{\theta}\delta_-\theta)$ is Poincar\'{e} if and only if $(\delta_+\theta\cup_{\theta}\delta_-\theta)_0$ is a homotopy equivalence. We interpret this as a joint condition on the two $S^{-1}A$-nullcobordisms $(f_\pm:P\to Q_\pm,(\delta_\pm\theta,\theta))$. If it is met we say the two $S^{-1}A$-nullcobordisms $(f_\pm:P\to Q_\pm,(\delta_\pm\theta,\theta))$ are \emph{$\partial$-complementary} (read: `boundary' complementary). The following are clearly equivalent ways to state this condition:\begin{itemize}
\item $(Q_+\cup_{P} Q_-,\delta_+\theta\cup_{\theta}\delta_-\theta)$ is Poincar\'{e} over $A$,
\item $\partial(Q_+,P)\cup_{\partial P}\partial(Q_-,P)\simeq 0$,
\item The induced map $\partial P\to \partial(Q_+,P)\oplus\partial(Q_-,P)$ is a homotopy equivalence.
\end{itemize}

As stated, this condition only makes sense for $S^{-1}A$-nullcobordisms $(f_\pm:P\to Q_\pm,(\delta_\pm\theta,\theta))$ where $(P,\theta)$ is $n$-dimensional and $n>0$. In the special case $n=0$, we will say two such $S^{-1}A$-nullcobordisms are $\partial$-complementary if the $S^{-1}A$-nullcobordisms $\overline{S}(f_\pm:P\to Q_\pm,(\delta_\pm\theta,\theta))$ of the 2-dimensional $(-\eps)$-symmetric complex $\overline{S}(P,\theta)$ over $A$ are $\partial$-complementary.

\begin{definition}For $n\geq0$ let $(P,\theta)$, $(P',\theta')$ be $n$-dimensional, $\eps$-symmetric $S$-Poincar\'{e} complexes over $A$. We say $(P,\theta)$ and $(P',\theta')$ are \emph{$\partial$-double-cobordant} if there exist two $S^{-1}A$-cobordisms: \[(f_\pm:P\oplus P'\to Q_\pm,(\delta_\pm\theta,\theta\oplus-\theta')\in C(f^\%_\pm)_{n+1})\]that are $\partial$-complementary. In which case we call the two $S^{-1}A$-cobordisms $(f_\pm:P\oplus P'\to Q_\pm,(\delta_\pm\theta,\theta\oplus-\theta'))$ a \emph{$\partial$-double-cobordism} (Fig \ref{fig:bounddoub}).
\end{definition}

\begin{figure}[h]
\[\begin{xy} 
\def\picboundarydoubcob{\resizebox{0.7\textwidth}{!}{ \includegraphics{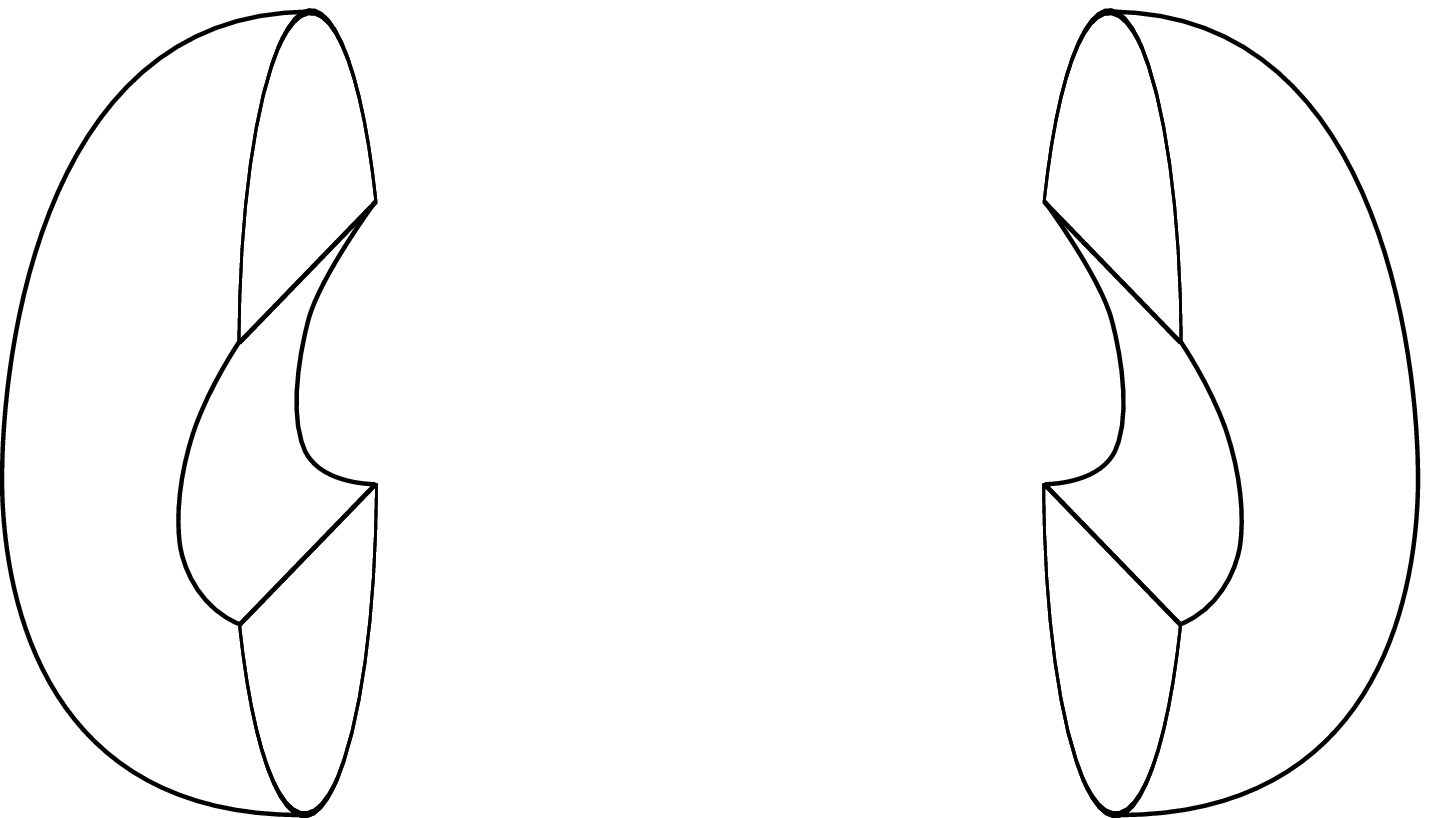}}}
\xyimport(510,290){\picboundarydoubcob}
,(107,235)*!L{P}
,(392,235)*!L{P}
,(25,140)*!L{Q_-}
,(120,160)*!L{\partial(Q_-,P\oplus P')}
,(145,225)*!L{\partial P}
,(345,225)*!L{\partial P}
,(107,55)*!L{P'}
,(392,55)*!L{P'}
,(462,140)*!L{Q_+}
,(280,160)*!L{\partial (Q_+,P\oplus P')}
,(145,110)*!L{\partial P'}
,(340,110)*!L{\partial P'}
\end{xy}\]
\caption{A $\partial$-double-cobordism between $P$ and $P'$.}
\label{fig:bounddoub}
\end{figure}

\begin{lemma}For $n\geq0$ let $(P,\theta)$, $(P',\theta')$ be $n$-dimensional, $\eps$-symmetric $S$-Poincar\'{e} complexes over $A$ and suppose $(P,\theta)\simeq (P',\theta')$ over $A$, then $(P,\theta)$ and $(P',\theta')$ are $\partial$-double-cobordant.
\end{lemma}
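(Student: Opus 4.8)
The plan is to mimic the proof of Lemma \ref{welldef} in the projective $DL$-theory (and its analogue for the ordinary $\widehat{DL}$-groups), producing an explicit $\partial$-double-cobordism out of a given homotopy equivalence $h:(P,\theta)\xrightarrow{\simeq}(P',\theta')$ together with the half-unit $s\in A$. Recall that Lemma \ref{welldef} gave the complementary pair of cobordisms $((h\,\,1):P\oplus P'\to P',(0,\theta\oplus-\theta'))$ and $((h\overline{s}\,\,-s):P\oplus P'\to P',(0,\theta\oplus-\theta'))$; here I would take exactly these two $S^{-1}A$-cobordisms $Q_\pm=P'$, with $f_+=(h\,\,1)$, $f_-=(h\overline{s}\,\,-s)$ and zero relative symmetric structure. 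Since $h$ is a homotopy equivalence over $A$ (not just over $S^{-1}A$), these are certainly $S^{-1}A$-cobordisms, and the matrix $\left(\begin{smallmatrix}f_+\\f_-\end{smallmatrix}\right):P\oplus P'\to P'\oplus P'$ is a homotopy equivalence over $A$ by the same $2\times2$ matrix computation as in Lemma \ref{welldef} (using $s+\overline{s}=1$).

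The new content is checking the $\partial$-complementary condition, i.e.\ that $(Q_+\cup_{P\oplus P'}Q_-,\,\delta_+\theta\cup\delta_-\theta)$ is Poincar\'{e} over $A$, equivalently that the induced map $\partial(P\oplus P')\to\partial(Q_+,P\oplus P')\oplus\partial(Q_-,P\oplus P')$ is a homotopy equivalence. The key observation is that when $\left(\begin{smallmatrix}f_+\\f_-\end{smallmatrix}\right)$ is already a homotopy equivalence, the homotopy pushout $Q_+\cup_{P\oplus P'}Q_-$ is contractible, so its algebraic Thom complex carries the zero symmetric structure and is trivially Poincar\'{e}; this is precisely the analogue of the fact that two complementary lagrangians make a form hyperbolic rather than merely split metabolic. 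Concretely I would compute $Q_+\cup_{P\oplus P'}Q_-=C\!\left(\left(\begin{smallmatrix}1\\ \overline{s}\end{smallmatrix}\right):P'\to P'\oplus P'\right)$ after simplifying by $h$, which is contractible since $\left(\begin{smallmatrix}1\\\overline{s}\end{smallmatrix}\right)$ is split injective with contractible cokernel $P'$ — wait, more carefully: the relevant cone is on $\left(\begin{smallmatrix}f_+'\\-f_-'\end{smallmatrix}\right):P'\to P'\oplus P'$ where $f_\pm'$ are the restrictions to the common boundary component $P'$, and this is $\left(\begin{smallmatrix}1\\ -\overline{s}\end{smallmatrix}\right)$ (up to the equivalence $h$), whose cone is homotopy equivalent to $P'$ via the retraction using $s$; one then checks the glued complex is contractible. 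Since a contractible complex with any symmetric structure is Poincar\'{e}, the $\partial$-complementary condition holds. In the degenerate case $n=0$ one first skew-suspends, applies the above to $\overline{S}(P,\theta)\simeq\overline{S}(P',\theta')$, and invokes the $n=0$ clause of the definition of $\partial$-complementary.

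The main obstacle will be bookkeeping the symmetric structures on the glued pair: although $\delta_\pm\theta=0$, the algebraic glue $\delta_+\theta\cup_\theta\delta_-\theta$ is obtained via the comparison map $G$ of Definition \ref{def:glue1}, and one must verify that it maps to a structure on the contractible complex $Q_+\cup Q_-$ for which the evaluation $(\delta_+\theta\cup_\theta\delta_-\theta)_0$ is a homotopy equivalence — but since the target is contractible both source $(Q_+\cup Q_-)^{n+1-*}$ and target $Q_+\cup Q_-$ are contractible, so any chain map between them is automatically a homotopy equivalence, and this step is formal. The only genuinely delicate point is ensuring that the two cobordisms are honest $(n+1)$-dimensional $\eps$-symmetric \emph{pairs} (not merely Poincar\'{e} up to $S^{-1}A$), i.e.\ that $(0,\theta\oplus-\theta')$ is a cycle in $C(f_\pm^\%)_{n+1}$; this follows because $f_\pm^\%(\theta\oplus-\theta')$ is nullhomotopic via a choice of nullhomotopy built from $h^\%$ and the half-unit, exactly as in the cited lemma. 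I would therefore structure the proof as: (1) define $Q_\pm$, $f_\pm$, and the zero relative structures; (2) quote Lemma \ref{welldef}'s matrix computation to get homotopy equivalence over $A$; (3) show $Q_+\cup_{P\oplus P'}Q_-\simeq0$; (4) conclude $\partial$-complementarity and hence $\partial$-double-cobordism; (5) dispatch the $n=0$ case by skew-suspension.
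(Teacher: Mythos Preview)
Your approach is essentially identical to the paper's: take the two $S^{-1}A$-cobordisms $((h\,\,1),(0,\theta\oplus-\theta'))$ and $((h\bar{s}\,\,-s),(0,\theta\oplus-\theta'))$, observe that $\left(\begin{smallmatrix}h&1\\h\bar{s}&-s\end{smallmatrix}\right)$ is a homotopy equivalence (as in Lemma~\ref{welldef}), and conclude that the glued complex $P'\cup_{P\oplus P'}P'$ is contractible and hence trivially Poincar\'{e}. The paper's proof is exactly this, stated in three lines; your extra bookkeeping (the explicit pushout computation, the symmetric-structure check on a contractible complex, the $n=0$ skew-suspension) is correct but unnecessary, since contractibility of the glue follows immediately from the cone on a homotopy equivalence being contractible.
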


\begin{proof}Let $f:(P,\theta)\xrightarrow{\simeq} (P',\theta')$ be a homotopy equivalence over $A$ and take the $S^{-1}A$-cobordisms \[\begin{array}{lrl}
((f\,\,&1):&P\oplus P'\to P',(0,\theta\oplus-\theta')),\\
((f\bar{s}\,&-s):&P\oplus P'\to P',(0,\theta\oplus-\theta')).
\end{array}\]The induced map $\left(\begin{smallmatrix}f&1\\f\bar{s}&-s\end{smallmatrix}\right):P\oplus P'\to P'\oplus P'$ is a homotopy equivalence (cf. Lemma \ref{welldef}). Hence the cone $P'\cup_{P\oplus P} P'$ is contractible and thus trivially Poincar\'{e}.
\end{proof}

\begin{proposition}For $n\geq0$, $\partial$-double-cobordism is an equivalence relation on the set of $A$-homotopy equivalence classes of $n$-dimensional, $\eps$-symmetric $S$-Poincar\'{e} complexes over $A$. The equivalence classes form a group $D\Gamma^n(A\to S^{-1}A,\eps)$, the \emph{$n$-dimensional, $\eps$-symmetric $D\Gamma$-group of $(A,S)$}, with addition and inverses given by\[(P,\theta)+(P',\theta')=(P\oplus P',\theta\oplus\theta'),\qquad -(P,\theta)=(P,-\theta)\in D\Gamma^n(A\to S^{-1}A,\eps).\]
\end{proposition}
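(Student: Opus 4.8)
The plan is to mirror the structure of the proof of Proposition \ref{DLA}, carrying along at each stage the extra $\partial$-complementarity bookkeeping. There are three things to verify: that $\partial$-double-cobordism is reflexive, symmetric and transitive, and then that the resulting set of equivalence classes is a group under direct sum. Reflexivity is exactly the preceding lemma (the homotopy equivalence $(P,\theta)\simeq(P,\theta)$ gives a $\partial$-double-cobordism, using the half-unit $s$). Symmetry is immediate: swapping the roles of $(P,\theta)$ and $(P',\theta')$ turns the $S^{-1}A$-cobordisms $(f_\pm\colon P\oplus P'\to Q_\pm,(\delta_\pm\theta,\theta\oplus-\theta'))$ into $(f_\pm\colon P'\oplus P\to Q_\pm,(-\delta_\pm\theta,\theta'\oplus-\theta))$, and the $\partial$-complementarity condition — phrased as `the induced map $\partial P\to\partial(Q_+,-)\oplus\partial(Q_-,-)$ is a homotopy equivalence' — is unaffected, since the relative boundaries only depend on the underlying chain maps and the localised structure, not on the sign of $\theta$.

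The substance is transitivity, for which I would run the glueing argument of Proposition \ref{DLA}, Figure \ref{transitivity}, in the $S$-Poincar\'{e} setting and then check that $\partial$-complementarity is preserved under this glueing. Concretely, given $\partial$-double-cobordisms $\{c_\pm\}$ from $(P,\theta)$ to $(P',\theta')$ and $\{c'_\pm\}$ from $(P',\theta')$ to $(P'',\theta'')$, I would form the two algebraic glues $c_\pm\cup c'_\pm$ along the common boundary component $(P',\theta')$ using Definition \ref{def:glue2}. That these are $S^{-1}A$-cobordisms from $(P,\theta)$ to $(P'',\theta'')$ is formal: $S^{-1}A\otimes_A-$ is exact, commutes with mapping cones and with $W^\%$ up to the usual cross-terms, and the glued Poincar\'{e}--Lefschetz duality map over $S^{-1}A$ is a homotopy equivalence because each piece was. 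That the glued cobordisms $c_+\cup c'_+$ and $c_-\cup c'_-$ are again complementary \emph{as chain maps over $A$} is exactly the homotopy-commutative-cube argument already given in Proposition \ref{DLA} (the map $g$ on cones is a homotopy equivalence), so I would simply invoke that.

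The genuinely new point, and the one I expect to be the main obstacle, is verifying that $c_+\cup c'_+$ and $c_-\cup c'_-$ are \emph{$\partial$-complementary}, i.e.\ that $\partial(P)\to\partial(Q_+'',P\oplus P'')\oplus\partial(Q_-'',P\oplus P'')$ is a homotopy equivalence, where $Q_\pm''=Q_\pm\cup_{P'}Q'_\pm$. The strategy is to compute the relative boundary of a glued pair. Using Definition \ref{def:relboundary} and the homotopy pushout/pullback formalism of \ref{subsec:res}, one shows there is a homotopy cofibration sequence relating $\partial(Q_\pm\cup_{P'}Q'_\pm,\,P\oplus P'')$ to $\partial(Q_\pm,P\oplus P')$, $\partial(Q'_\pm,P'\oplus P'')$ and $\partial(P')$ — essentially because glueing along $(P',\theta')$ adds a copy of $P'$ with its two relative boundaries, and the relative boundary functor sends the glueing pushout to a pushout. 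Taking the direct sum over $\pm$ and using that $\{c_\pm\}$ is a $\partial$-double-cobordism (so $\partial P'\xrightarrow{\simeq}\partial(Q_+,P\oplus P')\oplus\partial(Q_-,P\oplus P')$ — wait, more precisely the condition gives $\partial(Q_+,-)\cup_{\partial(P\oplus P')}\partial(Q_-,-)\simeq 0$) together with the analogous statement for $\{c'_\pm\}$, a diagram chase (or a Mayer--Vietoris spectral-sequence argument in the homotopy category of $\B_+(A)$) yields that the glued relative boundaries still glue to something contractible. The fiddly part is keeping the homotopies coherent across the three glueings simultaneously; I would handle this by working entirely with the iterated-cone descriptions and the universal properties of homotopy pushouts rather than with explicit chains. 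Finally, once transitivity is in hand, the group structure is routine: direct sum is well-defined on classes (glue $\partial$-double-cobordisms summand-wise), associativity and commutativity are inherited from $\oplus$, the class of $0$ is the identity, and $-(P,\theta)=(P,-\theta)$ because $((1\ 1)\colon (P,\theta)\oplus(P,-\theta)\to 0,(0,\theta\oplus-\theta))$ together with the half-unit-twisted cobordism $((\bar s\ -s)\colon\cdots)$ is a $\partial$-double-cobordism to $0$ — here one checks $\partial$-complementarity directly, noting that both relative boundaries are contractible since the target complex is $0$, exactly as in the $n=0$ clause of the definition after skew-suspension if necessary.
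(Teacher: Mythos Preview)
Your overall strategy matches the paper's: reflexivity from the preceding lemma, symmetry immediate, transitivity by glueing as in Proposition~\ref{DLA}. The paper's treatment of transitivity is shorter than what you propose, however. The key observation you are missing is that the $\partial$-complementary condition, written as ``$\partial(P\oplus P')\to\partial(Q_+,P\oplus P')\oplus\partial(Q_-,P\oplus P')$ is a homotopy equivalence'', has \emph{exactly} the same form as the ordinary complementary condition in Proposition~\ref{DLA}. So rather than computing the relative boundary of the glued pair via a cofibration sequence and then running a separate Mayer--Vietoris argument, one simply feeds the objects $\partial P$, $\partial P'$, $\partial P''$, $\partial(Q_\pm,P\oplus P')$, $\partial(Q'_\pm,P'\oplus P'')$ into the cone argument of Proposition~\ref{DLA} verbatim. (This implicitly uses that the relative boundary of the glue is the glue of the relative boundaries along $\partial P'$, which is what your cofibration sequence would establish; but once you have that, no further diagram chase is needed.) Your route would work, but it is more laborious than necessary.

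Your treatment of inverses contains an error. The pair $((1\ 1)\colon P\oplus P\to 0,\,(0,\theta\oplus-\theta))$ is not an $S^{-1}A$-nullcobordism: with target $0$ the Poincar\'e--Lefschetz condition fails unless $S^{-1}P\simeq 0$, and the relative boundary is certainly not contractible just because the target is. The correct nullcobordisms of $(P,\theta)\oplus(P,-\theta)$ have target $P$, namely $((1\ 1)\colon P\oplus P\to P,\,(0,\theta\oplus-\theta))$ and $((\bar s\ -s)\colon P\oplus P\to P,\,(0,\theta\oplus-\theta))$, exactly as in the reflexivity lemma; their $\partial$-complementarity then follows because $\left(\begin{smallmatrix}1&1\\\bar s&-s\end{smallmatrix}\right)$ is a homotopy equivalence, making the union contractible and hence trivially Poincar\'e.
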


\begin{proof}The previous lemma shows in particular that $\partial$-double-cobordism is well defined and reflexive. It is clearly symmetric. For transitivity, re-glue the two $\partial$-double-cobordisms \[\begin{array}{rcl}
c_\pm&=&((f_\pm\,\,f'_\pm):P\oplus P'\to Q_\pm,(\delta_\pm\theta,\theta\oplus-\theta')),\\
	c'_\pm&=&((\tilde{f}'_\pm\,\,f''_\pm):P'\oplus P''\to Q_\pm,(\delta_\pm\theta',\theta'\oplus-\theta'')).\end{array}\]analogously to Figure \ref{transitivity} (cf. Proposition \ref{DLA}). Using the characterisation of the $\partial$-complementary condition in terms of the boundaries and relative boundaries: $\partial (P\oplus P')$, $\partial(P'\oplus P'')$, $\partial(Q_\pm,P\oplus P')$ and $\partial(Q'_\pm,P'\oplus P'')$, the transitivity argument used in \ref{DLA} can now be used again.
\end{proof}

\section{Double $L$-theory localisation exact sequence}\label{sec:localisation2}

\subsection{Some technical lemmas}

The following technical results are required to prepare us for the double $L$-theory localisation exact sequence of Theorem \ref{thm:DLLES}. One of the lemmas concerns so-called `algebraic Poincar\'{e} trinities' and is borrowed from {\cite[\textsection 4.5]{Borodzik:2012fk}}.

\begin{lemma}[``Folding the boundary'']\label{lem:technical}For $n\geq1$, if an $n$-dimensional $\eps$-symmetric Poincar\'{e} complex $(D'',\phi'')\simeq(D\cup_C D',\phi\cup_{\partial\phi}\phi')$ is the algebraic glue of two $n$-dimensional $\eps$-symmetric pairs and there is defined an $(n+1)$-dimensional $\eps$-symmetric Poincar\'{e} pair \[((f''\,\,\tilde{f}):D''\oplus \tilde{D}\to E,(\Phi,\phi\oplus-\tilde{\phi}))\]then there is an $(n+1)$-dimensional $\eps$-symmetric pair\[((D/C)\oplus \tilde{D}\to E/D',(\Phi/\phi',(\phi/\partial\phi)\oplus -\tilde{\phi})).\]
\end{lemma}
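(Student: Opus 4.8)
\textbf{Plan for the proof of Lemma \ref{lem:technical} (``Folding the boundary'').}

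The plan is to unpack the hypotheses into their constituent maps and symmetric structures, and then perform the required algebraic Thom-style rearrangement explicitly, keeping careful track of the symmetric structures and their mapping cones. First I would recall that by Definition \ref{def:glue1} the Poincar\'{e} complex $(D'',\phi'')\simeq(D\cup_C D',\phi\cup_{\partial\phi}\phi')$ is assembled from two $n$-dimensional $\eps$-symmetric pairs $(C\to D,(\phi,\partial\phi))$ and $(C\to D',(\phi',\partial\phi))$ sharing the boundary complex $C$ with its structure $\partial\phi$; here $D''$ is the homotopy pushout $C(\,\lmat f\\-f'\rmat : C\to D\oplus D'\,)$. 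The key structural observation is that the quotient $D''/D' \simeq D/C = C(f)$ up to homotopy equivalence (this is the standard ``excision'' fact for iterated cones, visible from the Puppe sequences), and similarly $E/D'$ will be the cone on the composite $D'\to D''\xrightarrow{f''} E$. So the target pair $((D/C)\oplus \tilde D\to E/D',\ldots)$ should be obtained from the given $(n+1)$-dimensional pair $((f''\,\,\tilde f):D''\oplus\tilde D\to E,(\Phi,\phi''\oplus-\tilde\phi))$ by ``dividing out'' the sub-pair corresponding to $D'$ and its structure $\phi'$.

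The order of steps I would carry out: (1) set up the homotopy commutative diagram expressing $D''$ as a pushout and identify the inclusion $D'\hookrightarrow D''$, together with the induced map $D'\to E$ via $f''$; (2) take the relative algebraic Thom construction of the given $(n+1)$-dimensional pair with respect to this $D'$-direction, i.e. form the mapping cones $C(D'\to D'') \simeq D/C$ and $C(D'\to E)=E/D'$, and check that the $\eps$-symmetric structure $(\Phi,\phi''\oplus-\tilde\phi)$ pushes forward along these quotient maps; (3) verify that the structure on $D''\simeq D\cup_C D'$, namely $\phi\cup_{\partial\phi}\phi'$, restricts on the $D'$-summand to $\phi'$ so that the quotient by $(D'\to D'',\phi')$ leaves precisely the structure $\phi/\partial\phi$ on $D/C=C(f)$ --- this uses the algebraic thickening/Thom correspondence of Proposition \ref{prop:thomthick1} and the fact (from Definition \ref{def:glue1}) that $\phi\cup_{\partial\phi}\phi'$ evaluated on $C(f)$ recovers $\phi/\partial\phi$; (4) assemble the quotiented data into the $(n+1)$-dimensional $\eps$-symmetric pair $((D/C)\oplus\tilde D\to E/D',(\Phi/\phi',(\phi/\partial\phi)\oplus-\tilde\phi))$ and confirm it is a well-defined symmetric pair (the Poincar\'{e} condition is \emph{not} claimed in the statement, so I would not need to verify it, though it is worth remarking which pieces of Poincar\'{e}-ness survive).

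The main obstacle I expect is step (3): bookkeeping the symmetric structures through the cascade of mapping cones. The functor $W^\%$ does not preserve cofibration sequences on the nose (Propositions \ref{prop:pullback}, \ref{cor:pullback}), so passing from $\Phi\in W^\%E$ to $\Phi/\phi' \in W^\%(E/D')$ requires a choice of nullhomotopy and a lift through the homotopy pullback square of Proposition \ref{prop:pullback}; I would need to check this lift exists and is a cycle, exactly as in the algebraic Thom construction of \ref{subsec:cxpair}. Compatibility with the glueing structure $\phi\cup_{\partial\phi}\phi'$ is the delicate point, and I would handle it by working at the level of the iterated-cone description (diagram \ref{eq:level1}) rather than with explicit chain-level formulae, invoking the homotopy-invariance of $W^\%$ to move between the various models of $D''$ and $E/D'$. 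Signs and the sign on $\tilde\phi$ I would fix by following the conventions in \ref{subsec:definitions} and the glueing definitions; these are routine once the homotopy-theoretic framework is in place. Finally, I would cite \cite[\textsection 4.5]{Borodzik:2012fk} for the analogous ``algebraic Poincar\'{e} trinity'' argument, adapting it to the present relative setting.
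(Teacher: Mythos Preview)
Your approach is correct in spirit and would work, but it is more laborious than necessary and misses the conceptual shortcut the paper takes. The paper's proof is a one-liner: it observes that the hypotheses are exactly the data of an $(n+1)$-dimensional $\eps$-symmetric triad via Proposition~\ref{prop:triple} (the triple-of-pairs characterisation), namely the square
\[\xymatrix{C\ar[r]^{f'}\ar[d]_-{\lmat f\\0\rmat}&D'\ar[d]\\D\oplus \tilde{D}\ar[r]&E}\]
with structure $(\Phi,\phi\oplus\tilde\phi,\phi',\partial\phi)$, and then simply applies the relative algebraic Thom construction on triads already set up in \S\ref{subsec:cxpair}. One of the two pairs in the resulting surgery dual set is precisely $((D/C)\oplus\tilde D\to E/D',(\Phi/\phi',(\phi/\partial\phi)\oplus-\tilde\phi))$, since $C(\lmat f\\0\rmat)\simeq C(f)\oplus\tilde D=(D/C)\oplus\tilde D$ and $C(D'\to E)=E/D'$.

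What you propose to do by hand in steps (2)--(3) --- quotienting by $D'$ and lifting the symmetric structures through the homotopy pullback of Proposition~\ref{prop:pullback} --- is exactly the content of the relative Thom construction on the triad, which was established once and for all in Chapter~\ref{chap:algLtheory}. So the ``delicate bookkeeping'' you anticipate in step (3) has already been done; you need only recognise that the input data assembles into a triad (two $n$-dimensional pairs sharing $(C,\partial\phi)$, plus an $(n+1)$-dimensional pair on their glue) and invoke the existing machinery. Your route would reprove part of that machinery in this special case.
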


\[\begin{xy} 
\def\picfolding{\resizebox{0.3\textwidth}{!}{ \includegraphics{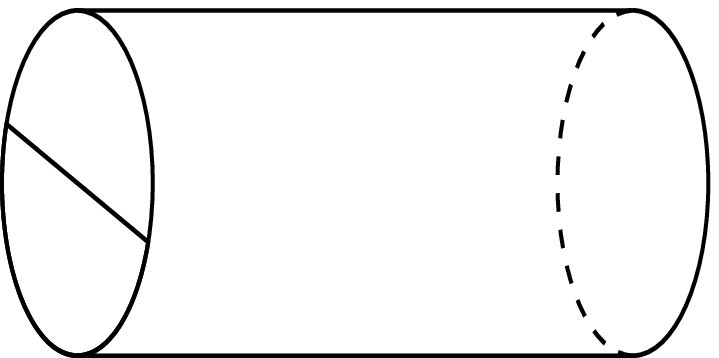}}}
\xyimport(254,139){\picfolding}
,(25,105)*!L{D}
,(18,52)*!L{D'}
,(-25,100)*!L{C}
,(124,77)*!L{E}
,(217,78)*!L{\tilde{D}}
\end{xy}
\begin{xy}
(0,0)*+{}
,(2,13)*+{}="A";(25,13)*+{}="B"
,{"A"\ar@/^/"B"}
\end{xy}
\begin{xy} 
\def\picfoldingtwo{\resizebox{0.3\textwidth}{!}{ \includegraphics{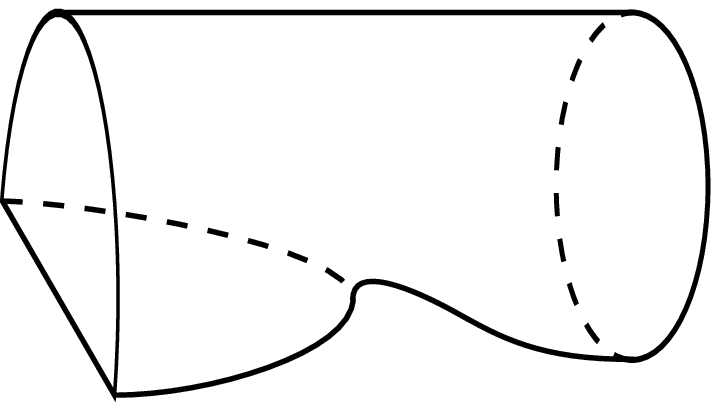}}}
\xyimport(291,139){\picfoldingtwo}
,(17,90)*!L{D}
,(68,40)*!L{D'}
,(-5,35)*!L{C}
,(150,79)*!L{E}
,(250,79)*!L{\tilde{D}}

\end{xy}\]

\begin{proof}
Apply the relative algebraic Thom construction to the $(n+1)$-dimensional $\eps$-symmetric triad given by the homotopy commuting square\[\xymatrix{C\ar[r]^{f'}\ar[d]_-{\lmat f\\0\rmat}&D'\ar[d]\\D\oplus \tilde{D}\ar[r]&E}\]with $(n+1)$-dimensional $\eps$-symmetric structure $(\Phi,\phi\oplus \tilde{\phi},\phi',\partial\phi)$.
\end{proof}

\begin{lemma}[``Algebraic trinities'' {\cite[\textsection 4.5]{Borodzik:2012fk}}]\label{lem:trinity}For $n\geq1$, suppose $(C,\phi)$ is an $(n-1)$-dimensional $\eps$-symmetric Poincar\'{e} complex over $A$ and there are three $n$-dimensional Poincar\'{e} pairs over $A$ \[(f_i:C\to D_i,(\delta_i\phi,\phi)\in C(f^\%_i)_n),\qquad i=1,2,3.\]Then there exists $(\partial E\to E,(\delta\nu,\nu))$, an $(n+1)$-dimensional, $\eps$-symmetric Poincar\'{e} pair over $A$ (cf. Figure \ref{fig:trinity}) such that \[\begin{array}{rcl}(\partial E,\nu)&\simeq& \bigoplus_{i\neq j}(D_i\cup_C D_j,\delta_i\phi\cup_{\phi}\delta_j\phi).\end{array}\]
\end{lemma}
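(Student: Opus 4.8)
\textbf{Proof proposal for Lemma \ref{lem:trinity} (``Algebraic trinities'').}

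The plan is to realise the three pairs $(f_i:C\to D_i,(\delta_i\phi,\phi))$ as the faces of a single higher-dimensional symmetric Poincar\'{e} object and then read off the claimed boundary by the relative algebraic Thom/thickening correspondence of Proposition \ref{prop:triad2}. Concretely, I would first build, for each ordered pair $i\neq j$, the algebraic glue $(D_i\cup_C D_j,\delta_i\phi\cup_\phi\delta_j\phi)$ along $(C,\phi)$ using Definition \ref{def:glue1}; since each $(f_i:C\to D_i,(\delta_i\phi,\phi))$ is Poincar\'{e}, the glued complex is an $n$-dimensional $\eps$-symmetric Poincar\'{e} complex over $A$ (this is the standard "two nullcobordisms glue to a closed object" fact, exactly as in the double-cobordism constructions of this chapter). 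Taking the disjoint sum $\bigoplus_{i\neq j}(D_i\cup_C D_j,\delta_i\phi\cup_\phi\delta_j\phi)$ gives the target complex $(\partial E,\nu)$; what remains is to exhibit a nullcobordism of it, i.e.\ the pair $(\partial E\to E,(\delta\nu,\nu))$.

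The key step is to organise the three pairs into a symmetric Poincar\'{e} $3$-ad (a "trinity"). Following Borodzik--N\'{e}methi--Ranicki \cite[\textsection 4.5]{Borodzik:2012fk}, I would form the homotopy-commuting cube/triad whose three "edge" faces are the given pairs $(f_i:C\to D_i,(\delta_i\phi,\phi))$ and apply the relative algebraic Thom construction to get, for each $i$, a face pair; the point is that the three pairs sharing the common boundary complex $(C,\phi)$ can be assembled so that the total $\eps$-symmetric structure on the resulting $(n+1)$-dimensional triad restricts on its boundary precisely to the sum over $i\neq j$ of the glued complexes $(D_i\cup_C D_j,\delta_i\phi\cup_\phi\delta_j\phi)$. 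This is where "folding the boundary" (Lemma \ref{lem:technical}) does the real work: it lets me repeatedly fold the glued faces against one another, converting the boundary of the trinity into the disjoint sum of the six glued complexes while keeping everything Poincar\'{e}. After folding, the algebraic thickening of the resulting $(n+1)$-dimensional $\eps$-symmetric complex (Proposition \ref{prop:thomthick1}, or its positive/connected version) produces the desired $(n+1)$-dimensional $\eps$-symmetric Poincar\'{e} pair $(\partial E\to E,(\delta\nu,\nu))$ with $(\partial E,\nu)\simeq\bigoplus_{i\neq j}(D_i\cup_C D_j,\delta_i\phi\cup_\phi\delta_j\phi)$.

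The main obstacle I expect is purely bookkeeping: tracking the $\eps$-symmetric structures (and the various iterated-cone homotopies $h$ that are suppressed from the notation) through the glueing, folding, and thickening operations, and in particular making sure that the signs and the labelling of the six faces $D_i\cup_C D_j$ for $i\neq j$ come out consistently so that no face is double-counted or omitted. A secondary point is verifying the connectivity/positivity hypotheses needed to apply Proposition \ref{prop:triad2} and the positive version of Proposition \ref{prop:thomthick1} in the range $n\geq 1$; since all of $C, D_1, D_2, D_3$ lie in $\B_+(A)$ and the pairs are Poincar\'{e}, the glued complexes are positive and connected, so these hypotheses should be met, but I would check this explicitly. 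Once the symmetric structure is correctly propagated, exactness-type arguments are not needed — the statement is just the existence of the thickening — so the heart of the proof is the careful assembly of the trinity and the fold, precisely as in \cite[\textsection 4.5]{Borodzik:2012fk}.
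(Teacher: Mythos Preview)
Your proposal invokes the right ingredients (folding via Lemma \ref{lem:technical}, glueing, and the Thom/thickening correspondence) but leaves the central construction unspecified: you never say what $(n+1)$-dimensional object you intend to thicken, nor how a ``symmetric Poincar\'{e} $3$-ad'' (a notion not actually developed in this paper, only mentioned in a remark) is to be built from three $n$-dimensional pairs sharing only their common boundary $(C,\phi)$. Three nullcobordisms of an $(n-1)$-dimensional complex do not by themselves carry any $(n+1)$-dimensional data, so ``assembling the cube and thickening'' is not a step one can perform without first producing an honest $(n+1)$-dimensional complex or pair. That production is the whole content of the lemma.

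The paper's construction is concrete and avoids $3$-ads entirely. It starts from the \emph{trivial} (cylinder) cobordisms of two of the closed glued complexes, say $D_1\cup_C D_2$ and $D_1\cup_C D_3$, and applies the folding Lemma \ref{lem:technical} to each to expose the Thom complex $(D_1/C,\delta_1\phi/\phi)$ as a boundary component of both. Glueing the two folded cylinders along this common boundary component (Definition \ref{def:glue2}) yields an $(n+1)$-dimensional $\eps$-symmetric pair $x_{12}\cup x_{13}$ with boundary $(D_1\cup_C D_2)\oplus(D_1\cup_C D_3)$ which is \emph{not} Poincar\'{e}. The key point---absent from your outline---is that its relative boundary (the failure to be Poincar\'{e}, in the sense of Definition \ref{def:relboundary}) is exactly $D_2\cup_C D_3$; adjoining this as a third boundary component is what improves the pair to the desired Poincar\'{e} pair $(\partial E\to E,(\delta\nu,\nu))$. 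So the third face arises not from an abstract $3$-ad assembly but as the measured Poincar\'{e} defect of two glued cylinders. (Incidentally, the sum $\bigoplus_{i\neq j}$ in the statement is over the three unordered pairs, not six ordered ones.)
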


\begin{figure}[h]
\[\begin{xy} 
\def\pictrinity{\resizebox{0.4\textwidth}{!}{ \includegraphics{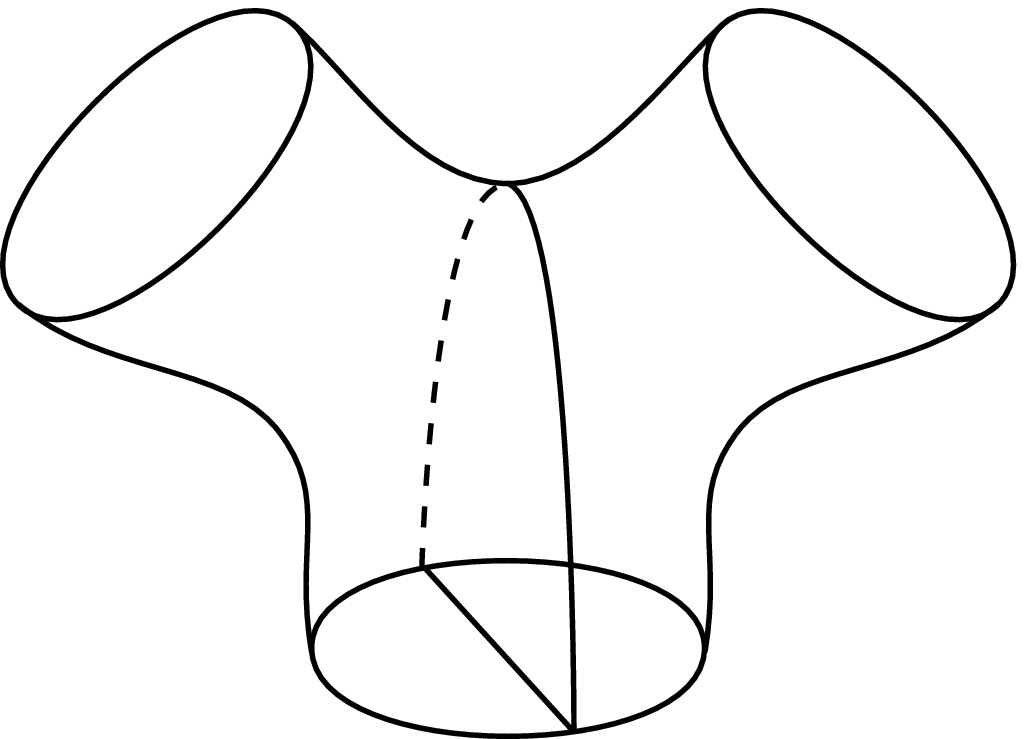}}}
\xyimport(366,264){\pictrinity}
,(169,105)*!LD{D_1}
,(140,23)*!LD{D_2}
,(215,23)*!LD{D_3}
,(360,210)*!LD{D_1\cup_C D_3}
,(-100,210)*!LD{D_1\cup_C D_2}
,(140,-40)*!LD{D_2\cup_C D_3}
,(280,90)*!L{\,\, C}="C";(182,43)*{}="line"
,,{"C"\ar@/_/"line"}
\end{xy}\]
        \caption{The three nullcobordisms of Lemma \ref{lem:trinity}, resulting in a Poincar\'{e} pair.}\label{fig:trinity}
\end{figure}

\begin{proof}[Proof (sketch)]Apply Lemma \ref{lem:technical} to each of the $(n+1)$-dimensional $\eps$-symmetric Poincar\'{e} pairs (these pairs should be thought of as `cylinder' or `trivial' cobordisms)
\[((1\,\,1):(D_1\cup_C D_2)\oplus (D_1\cup_C D_2)\to D_1\cup_C D_2,(0,(\delta_1\phi\cup_\phi\delta_2\phi)\oplus-(\delta_1\phi\cup_\phi\delta_2\phi))),\]
\[((1\,\,1):(D_1\cup_C D_3)\oplus (D_1\cup_C D_3)\to D_1\cup_C D_3,(0,(\delta_1\phi\cup_\phi\delta_3\phi)\oplus-(\delta_1\phi\cup_\phi\delta_3\phi))),\]
to obtain two $(n+1)$-dimensional $\eps$-symmetric pairs with a common boundary component $(D_1/C,\delta_1\phi/\phi)$, which we write as:\[\begin{array}{rcl}x_{12}&:=&((D_1/C)\oplus(D_1\cup_C D_2)\to (D_1\cup_C D_2)/D_2,(0/\delta_2\phi,(\delta_1\phi/\phi)\oplus -(\delta_1\cup_{\phi}\delta_2\phi))),\\
x_{13}&:=&((D_1/C)\oplus(D_1\cup_C D_3)\to (D_1\cup_C D_3)/D_3,(0/\delta_3\phi,(\delta_1\phi/\phi)\oplus -(\delta_1\phi\cup_{\phi}\delta_3\phi))).\end{array}\]Glue $x_{12}$ to $x_{13}$ along the common boundary component (cf.\ \ref{def:glue2}). The glued object\[x_{12}\cup x_{13}=((D_1\cup_C D_2)\oplus (D_1\cup_C D_3)\to E,(\delta\nu, (\delta_1\phi\cup_{\phi}\delta_2\phi)\oplus (\delta_1\phi\cup_{\phi}\delta_3\phi)))\](where $\delta\nu$ is defined by the glueing operation) is an $(n+1)$-dimensional $\eps$-symmetric pair, but it is not generally Poincar\'{e}. The failure to be Poincar\'{e} is \[\partial(E,(D_1\cup_C D_2)\oplus (D_1\cup_C D_3))\simeq D_2\cup_C D_3,\]and we can use this chain complex to improve $x_{12}\cup x_{13}$ to an $(n+1)$-dimensional $\eps$-symmetric Poincar\'{e} pair $(\partial E\to E,(\delta\nu,\nu))$. By analysing the symmetric structure of this improvement we see that \[\begin{array}{rcl}(\partial E,\nu)&\simeq& \bigoplus_{i\neq j}(D_i\cup_C D_j,\delta_i\phi\cup_{\phi}\delta_j\phi).\end{array}\]
\end{proof}

\begin{proposition}\label{prop:technical}For $n\geq 0$, suppose the boundary $(\partial P,\partial\theta)$ of an $n$-dimensional, $\eps$-symmetric $S$-Poincar\'{e} complex $(P,\theta)$ over $A$ admits two complementary $(A,S)$-nullcobordisms. Then $(P,\theta)$ is $\partial$-double-cobordant to an $n$-dimensional $\eps$-symmetric Poincar\'{e} complex.
\end{proposition}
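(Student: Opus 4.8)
The plan is to exploit the geometric picture: an $n$-dimensional $S$-Poincar\'e complex $(P,\theta)$ over $A$ is, via its algebraic thickening (Definition \ref{def:relboundary}, diagram \ref{eq:thickening}), captured by the $\eps$-symmetric Poincar\'e pair $(i\colon\partial P\to P^{n-*},(\overline\theta,\partial\theta))$, so that $(P,\theta)$ is recovered as an algebraic Thom construction on that pair. The hypothesis gives two complementary $(A,S)$-nullcobordisms of $(\partial P,\partial\theta)$, say $g_\pm\colon\partial P\to E_\pm$ with $\eps$-symmetric Poincar\'e structures $(\delta_\pm\theta,\partial\theta)$, such that $\left(\begin{smallmatrix}g_+\\g_-\end{smallmatrix}\right)\colon\partial P\to E_+\oplus E_-$ is a homotopy equivalence over $A$. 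First I would set up, for the $(n-1)$-dimensional $\eps$-symmetric Poincar\'e complex $(\partial P,\partial\theta)$, three $n$-dimensional Poincar\'e pairs with boundary $(\partial P,\partial\theta)$: the two nullcobordisms $(g_\pm\colon\partial P\to E_\pm,(\delta_\pm\theta,\partial\theta))$ just named, together with the algebraic-thickening pair $(i\colon\partial P\to P^{n-*},(\overline\theta,\partial\theta))$ itself. (When $n=0$ one first skew-suspends everything by $\overline S$, exactly as in the definition of $\partial$-complementary in dimension $0$, so that $\partial P$ lives in positive degrees; I will note this reduction at the outset.)

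Next I would feed these three Poincar\'e pairs into Lemma \ref{lem:trinity} (``algebraic trinities''). This produces an $(n+1)$-dimensional $\eps$-symmetric Poincar\'e pair $(\partial E\to E,(\delta\nu,\nu))$ whose boundary is the direct sum of the three pairwise glued complexes, i.e.
\[(\partial E,\nu)\simeq (E_+\cup_{\partial P}E_-,\;\delta_+\theta\cup_{\partial\theta}\delta_-\theta)\ \oplus\ (E_+\cup_{\partial P}P^{n-*},\ \delta_+\theta\cup_{\partial\theta}\overline\theta)\ \oplus\ (E_-\cup_{\partial P}P^{n-*},\ \delta_-\theta\cup_{\partial\theta}\overline\theta).\]
The first summand is the union of the two $(A,S)$-nullcobordisms along $\partial P$; because $g_\pm$ are \emph{complementary}, the induced map $\partial(\partial P)\to\partial(E_+,\partial P)\oplus\partial(E_-,\partial P)$ is a homotopy equivalence, so this first summand is an $\eps$-symmetric Poincar\'e complex $(\partial P',\theta'')$ — this is precisely the ``complementary $\Rightarrow$ Poincar\'e'' mechanism recorded in the bulleted characterisation of $\partial$-complementary. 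Crucially it is $S$-acyclic over $A$, since both $E_\pm$ and $\partial P$ are $S$-acyclic. Meanwhile the other two summands are the algebraic glues $(E_\pm\cup_{\partial P}P^{n-*})$ which, by the Thom/thickening correspondence, are $\eps$-symmetric Poincar\'e complexes homotopy equivalent (after a Thom construction) to the $(A,S)$-nullcobordisms of $(P,\theta)$ themselves viewed as pairs $P\to E_\pm$ with appropriate relative structure; I would now reread $(\partial E\to E,(\delta\nu,\nu))$ through Lemma \ref{lem:technical} (``folding the boundary'') applied to the splitting $\partial E\simeq(\partial P',\theta'')\oplus(\text{two pieces})$, folding off the two $P^{n-*}$-glued summands. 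The output of that folding is a single $(A,S)$-pair exhibiting $(P,\theta)$ and $(\partial P',\theta'')$ — an honest Poincar\'e complex over $A$ — as the two ends of an $S^{-1}A$-cobordism, and the two ends $E_+,E_-$ play the two complementary roles, giving the required $\partial$-double-cobordism of $(P,\theta)$ to the Poincar\'e complex $(\partial P',\theta'')$.

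The main obstacle I anticipate is bookkeeping of the $\eps$-symmetric \emph{structures} through the three-fold application of Lemma \ref{lem:trinity}, Lemma \ref{lem:technical} and the Thom/thickening correspondence of Proposition \ref{prop:triad2}: one must check not merely that the underlying chain complexes glue up correctly, but that the $\partial$-complementary condition in the sense of the definition (the induced boundary map being a homotopy equivalence over $A$, equivalently $\partial(Q_+,P\oplus P')\cup_{\partial(P\oplus P')}\partial(Q_-,P\oplus P')\simeq 0$) is genuinely inherited from the hypothesis that $g_\pm$ are complementary. Concretely, one has to trace the relative boundaries $\partial(Q_\pm,P\oplus(\partial P'))$ of the two cobordisms produced by the folding and verify they split as $\partial P\to\partial(E_\pm,\partial P)$ plus acyclic pieces, so that their union is contractible exactly when $\left(\begin{smallmatrix}g_+\\g_-\end{smallmatrix}\right)$ is an equivalence. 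I would organise this by working throughout with the homotopy-pullback descriptions of the relevant $W^\%$-cones (Propositions \ref{prop:pairev}, \ref{prop:pullback} and their corollaries), which reduce each such verification to a diagram chase on mapping cones; the positivity/connectivity side-conditions needed to stay inside $\C_+(A,S)$ are handled by skew-suspending when necessary, exactly as in the proof of Theorem \ref{thm:LES}.
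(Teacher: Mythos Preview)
Your ingredients are right---the algebraic thickening pair, the two given $(A,S)$-nullcobordisms, Lemma~\ref{lem:trinity}, and Lemma~\ref{lem:technical}---and the paper uses exactly these. But you have misidentified the target Poincar\'e complex and, as a consequence, never actually construct the two $\partial$-complementary $S^{-1}A$-cobordisms.

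Your target is $E_+\cup_{\partial P}E_-$. But the hypothesis that $\left(\begin{smallmatrix}g_+\\g_-\end{smallmatrix}\right)\colon\partial P\to E_+\oplus E_-$ is a homotopy equivalence means precisely that this union is \emph{contractible}: it is the cone on a homotopy equivalence. So your plan would prove that $(P,\theta)$ is $\partial$-double-cobordant to $0$, which is strictly stronger than the proposition and false in general (take any $(P,\theta)$ that is already Poincar\'e over $A$ but nonzero in $L^n(A,\eps)$). Relatedly, your sentence about ``the induced map $\partial(\partial P)\to\partial(E_+,\partial P)\oplus\partial(E_-,\partial P)$'' is confused: $(\partial P,\partial\theta)$ is already Poincar\'e, so $\partial(\partial P)\simeq 0$, and the complementary condition you need is simply the one you stated correctly at the outset.

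The paper's construction is \emph{asymmetric} in $\pm$, and that asymmetry is the point. The target is one of the other two trinity summands, say $(P^{n-*}\cup_{\partial P}E_+,\overline\theta\cup_{\partial\theta}\delta_+\theta)$, which is genuinely Poincar\'e over $A$ and not contractible. The two cobordisms are built differently: the first is the folded-cylinder pair $x_+$ (from the proof of Lemma~\ref{lem:trinity}) giving directly a nullcobordism of $P\oplus(P^{n-*}\cup_{\partial P}E_+)$; the second is the other folded cylinder $x_-$ \emph{glued along} $(P^{n-*}\cup_{\partial P}E_-)$ to the trinity output $x_Q$, which also has boundary $P\oplus(P^{n-*}\cup_{\partial P}E_+)$ after the contractible summand $E_+\cup_{\partial P}E_-$ is discarded. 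One then checks that the relative boundaries of these two $S^{-1}A$-cobordisms are $E_+$ and $E_-$ respectively, so the $\partial$-complementary condition reduces to exactly the hypothesis. Your symmetric folding of ``the two $P^{n-*}$-glued summands'' does not produce two separate cobordisms with these relative boundaries; you need one short path and one long path around the trinity.
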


\begin{proof}Write the two complementary $(A,S)$-nullcobordisms of $(\partial P,\partial \theta)$ as\[(f_\pm:\partial P\to D_\pm,(\nu_\pm,\partial \theta)\in C(f_\pm^\%)_{n}).\]Recall that as $(P,\theta)$ is a connected $n$-dimensional $\eps$-symmetric complex over $A$ then the algebraic thickening is the $n$-dimensional $\eps$-symmetric Poincar\'{e} pair $(i:\partial P\to P^{n-*},(\overline{\theta},\partial\theta))$ where $\partial P:=\Sigma^{-1}C(\theta_0)$ and $(\partial P,\partial\theta)$ is an $(n-1)$-dimensional $\eps$-symmetric Poincar\'{e} complex over $A$. It follows that both algebraic unions $(P^{n-*},\overline{\theta})\cup_{(\partial P,\partial\theta)}(D_\pm,\nu_\pm)$ are Poincar\'{e} complexes over $A$. 

\medskip

We will show that $(P,\theta)$ is $\partial$-double-cobordant to $(P^{n-*},\overline{\theta})\cup_{(\partial P,\partial\theta)}(D_+,\nu_+)$.

\medskip

Applying Lemma \ref{lem:trinity} to the three nullcobordisms\[(i:\partial P\to P^{n-*},(\overline{\theta},\partial\theta)),\quad(f_\pm:\partial P\to D_\pm,(\nu_\pm,\partial\theta)),\]we obtain an $(n+1)$-dimensional, $\eps$-symmetric Poincar\'{e} pair over $A$ which we write as \[x_Q:=(\partial Q\to Q,(\delta\Theta,\Theta))\] where \[\begin{array}{rcl}\partial Q&\simeq& (P^{n-*}\cup_\partial D_+)\oplus (P^{n-*}\cup_\partial D_-)\oplus (D_-\cup_\partial D_+)\\
&\simeq& (P^{n-*}\cup_\partial D_+)\oplus (P^{n-*}\cup_\partial D_-)\end{array} \]

Following the construction in the proof of Lemma \ref{lem:trinity}, by folding two `cylinder' cobordisms we also obtain two $(n+1)$-dimensional $\eps$-symmetric pairs:\[x_\pm:=((P^{n-*}/\partial P)\oplus (P^{n-*}\cup_\partial D_\pm)\to  (P^{n-*}\cup_\partial D_\pm)/D_\pm,(0/\nu_\pm,\overline{\theta}/\partial\theta\oplus -(\overline{\theta}\cup_{\partial\theta}\nu_\pm)))\]
\[\simeq(P\oplus (P^{n-*}\cup_\partial D_\pm)\to  (P^{n-*}\cup_\partial D_\pm)/D_\pm,(0/\nu_\pm,\theta\oplus -(\overline{\theta}\cup_{\partial\theta}\nu_\pm)))\]

Noting that $x_-$ and $x_Q$ have a common boundary component $(P^{n-*}\cup_{\partial P}D_-,(\overline{\theta}\cup_{\partial\theta}\nu_-))$, we will now glue pairs according to the schematic shown in Figure \ref{fig:fever}.

\begin{figure}[h]
\[\begin{xy} 
\def\picfever{\resizebox{\textwidth}{!}{ \includegraphics{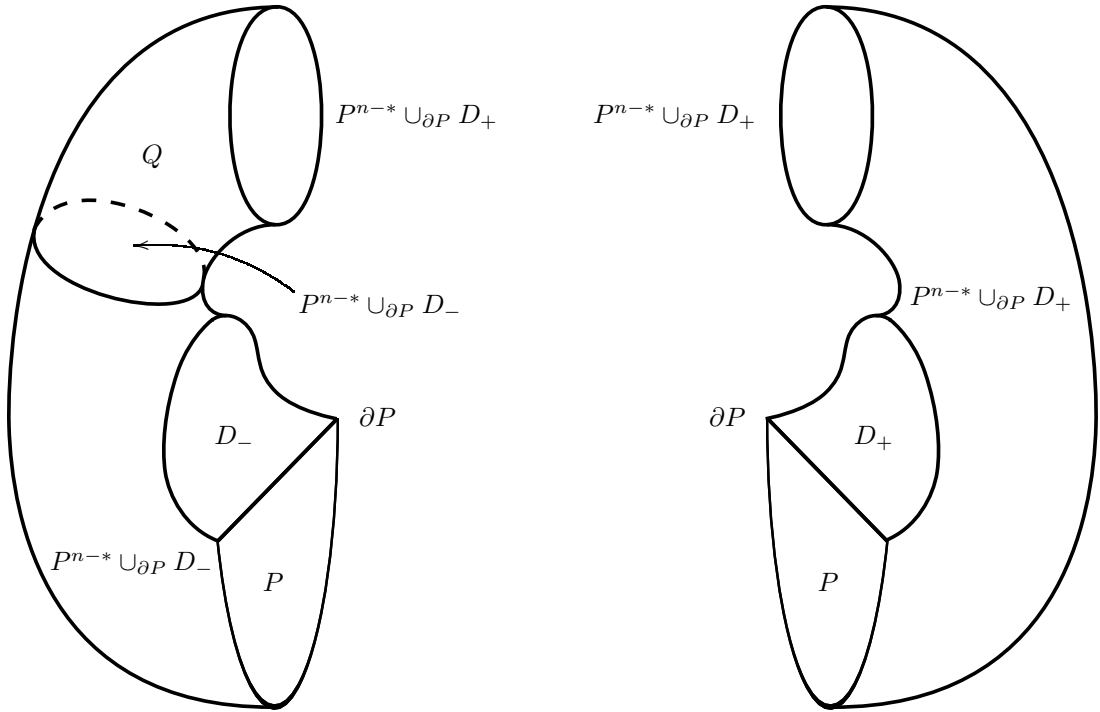}}}
\xyimport(456.86,291.11){\picfever}
,(110,50)*!LD{P}
,(340,50)*!LD{P}
,(355,108)*!LD{D_+}
,(90,108)*!LD{D_-}
,(60,224)*!LD{Q}
,(150,118)*!LD{\partial P}
,(295,118)*!LD{\partial P}
,(140,240)*!LD{P^{n-*}\cup_{\partial P}D_+}
,(247,240)*!LD{P^{n-*}\cup_{\partial P}D_+}
,(378,165)*!LD{P^{n-*}\cup_{\partial P}D_+}
,(22,56)*!LD{P^{n-*}\cup_{\partial P}D_-}
,(123,173)*!LU{\,P^{n-*}\cup_{\partial P}D_-}="A";(57,192)*{}="B"
,{"A"\ar@/_/"B"}
\end{xy}\]
                \caption{The $\partial$-double-cobordism from $(P,\theta)$ to $(P^{n+1-*},\overline{\theta})\cup_{(\partial P,\partial \theta)} (D_+,\nu_+)$}
	       \label{fig:fever}

\end{figure}

Hence there are defined two pairs $x_+$ and $x_-\cup x_Q$. For the sake of being explicit, we write these two pairs out fully (but for understanding, the pictures together with the yoga of glueing operations is more useful!):\[(P\oplus (P^{n-*}\cup_\partial D_+)\to  (P^{n-*}\cup_\partial D_+)/D_+,(0/\nu_+,\theta\oplus -(\overline{\theta}\cup_{\partial\theta}\nu_+))),\]
\[(P\oplus (P^{n-*}\cup_\partial D_+)\to  ((P^{n-*}\cup_\partial D_\pm)/D_-)\cup_{P^{n-*}\cup_{\partial P}D_-} Q,((0/\nu_-)\cup_{\overline{\theta}\cup_{\partial\theta}\nu_-}\delta\Theta,\theta\oplus -(\overline{\theta}\cup_{\partial\theta}\nu_\pm))).\]

Firstly, note these pairs are both $S$-Poincar\'{e} as the effect of surgery on $(P\oplus (P^{n-*}\cup_\partial D_+),\theta\oplus -(\overline{\theta}\cup_{\partial\theta}\nu_+))$ with data either pair is $S$-acyclic (in fact the effects are respectively homotopy equivalent to one of $(D_\pm/\partial P,\nu_\pm/\partial\theta)$, both of which are contractible over $S^{-1}A$). So each pair is an $S^{-1}A$-cobordism from $(P,\theta)$ to $(P^{n-*}\cup_\partial D_+,(\overline{\theta}\cup_{\partial\theta}\nu_+))$.

Next we claim that the $S^{-1}A$-cobordisms are $\partial$-complementary. To see this we observe\[\begin{array}{rcl}
D_+&\simeq&\partial((P^{n-*}\cup_\partial D_+)/D_+,P\oplus (P^{n-*}\cup_\partial D_+))\\
D_-&\simeq&\partial (((P^{n-*}\cup_\partial D_\pm)/D_-)\cup_{P^{n-*}\cup_{\partial P}D_-} Q,(P\oplus (P^{n-*}\cup_\partial D_+)))\\
\partial P&\simeq&\partial (P\oplus (P^{n-*}\cup_{\partial P} D_+))\end{array}\]and the induced map:\[\partial P\to D_+\oplus D_-\]was a homotopy equivalence by hypothesis.

Hence we have described two $\partial$-complementary $S^{-1}A$-cobordisms from $(P,\theta)$ to $(P^{n-*}\cup_\partial D_+,(\overline{\theta}\cup_{\partial\theta}\nu_+))$ as was required.
\end{proof}

\subsection{Double $L$-theory localisation exact sequence}

For $n\geq0$ we will define some group homomorphisms between the cobordism and double-cobordism groups we have constructed (checking along the way that these homomorphisms are well-defined). The morphisms are equal on the level of monoids of $\eps$-symmetric Poincar\'{e} complexes to those morphisms in the long exact sequence of Ranicki from Theorem \ref{thm:LES}.


If $(P,\theta)\sim0\in L^n(A,\eps)$ then by definition $(\partial P,\partial\theta)\simeq 0$ and there exists a nullcobordism $(f:P\to Q,(\delta\theta,\theta))$ with $\partial (Q, P)\simeq 0$ (as the pair is Poincar\'{e} over $A$). Hence two copies of this same nullcobordism are $\partial$-complementary (trivially). Therefore we define a homomorphism
\[Di:L^n(A,\eps)\to D\Gamma^n(A\to S^{-1}A,\eps);\quad Di([P,\theta]):=[(P,\theta)].\]

If $(P,\theta)\sim0\in D\Gamma^n(A\to S^{-1}A,\eps)$ then there exist $\partial$-complementary $S^{-1}A$-nullcobordisms $(f_\pm:P\to Q_\pm,(\delta\theta,\theta))$. Define $n$-dimensional $\eps$-symmetric pairs \[(\partial f_\pm:\partial P\to \partial(Q_\pm,P),(\overline{\theta},\partial\theta))\] using the algebraic surgery and algebraic thickening constructions defined in Chapter \ref{chap:algLtheory}.  These two pairs are complementary $(A,S)$-nullcobordisms as they come from $\partial$-complementary $S^{-1}A$-cobordisms. Hence we define a homomorphism

\[D\partial:D\Gamma^n(A\to S^{-1}A,\eps)\to DL^n(A,S,\eps);\quad D\partial([P,\theta]):=[\overline{S}(\partial P,\partial\theta)].\]

As a result of these definitions we can state our main technical result of double $L$-theory.

\begin{theorem}[Double $L$-theory localisation exact sequence]\label{thm:DLLES}For $n\geq0$, the sequence of group homomorphisms\[0\to L^{n}(A,\eps)\xrightarrow{Di} D\Gamma^{n}(A\to S^{-1}A,\eps)\xrightarrow{D\partial} DL^{n}(A,S,\eps)\]is exact.\end{theorem}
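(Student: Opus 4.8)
The plan is to prove exactness at the two interior places, since the injectivity of $Di$ (exactness at $L^n(A,\eps)$) follows immediately from the definitions: if $Di[(P,\theta)] = 0 \in D\Gamma^n(A\to S^{-1}A,\eps)$ then $(P,\theta)$ is $\partial$-double-cobordant to $0$ over $A$, which in particular exhibits an $(A,S)$-cobordism of $(P,\theta)$ to $0$ through Poincar\'{e} pairs over $A$ (forget the $S$-acyclicity and the complementarity of one of the two cobordisms), hence $(P,\theta)\sim 0\in L^n(A,\eps)$. So the content is exactness at $D\Gamma^n(A\to S^{-1}A,\eps)$.

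First I would check $D\partial\circ Di = 0$. Given $[(P,\theta)]$ in the image of $Di$, the complex $(P,\theta)$ is already Poincar\'{e} over $A$, so its boundary $(\partial P,\partial\theta)\simeq 0$; hence $\overline{S}(\partial P,\partial\theta)\simeq 0$ and $D\partial\circ Di[(P,\theta)] = [\overline{S}(\partial P,\partial\theta)] = 0\in DL^n(A,S,\eps)$. (Strictly one must observe $D\partial$ is independent of the choice of $S^{-1}A$-nullcobordisms used to compute it; two copies of the trivial nullcobordism $(P\to Q, (\delta\theta,\theta))$ with $\partial(Q,P)\simeq 0$ are $\partial$-complementary, and any two such choices differ by glueing on a cobordism, which does not change the class.)

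The core of the argument is: if $[(P,\theta)]\in\ker(D\partial)$ then $[(P,\theta)]\in\operatorname{im}(Di)$. So suppose $(P,\theta)$ is an $n$-dimensional $\eps$-symmetric $S$-Poincar\'{e} complex over $A$ with $\overline{S}(\partial P,\partial\theta)$ an $(A,S)$-double-nullcobordant $(n+1)$-dimensional $(-\eps)$-symmetric Poincar\'{e} complex in $\C_+(A,S)$, i.e.\ $(\partial P,\partial\theta)$ admits two complementary $(A,S)$-nullcobordisms after the appropriate skew-suspension. Desuspending (using Theorem~\ref{thm:algsurgery}) and applying Proposition~\ref{prop:technical} (``folding the boundary'' and the algebraic Poincar\'{e} trinity Lemma~\ref{lem:trinity}), $(P,\theta)$ is $\partial$-double-cobordant to the $n$-dimensional $\eps$-symmetric \emph{Poincar\'{e}} complex $(P^{n-*},\overline{\theta})\cup_{(\partial P,\partial\theta)}(D_+,\nu_+)$. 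Call this complex $(P',\theta')$; it is Poincar\'{e} over $A$, so $[(P',\theta')] = Di[(P',\theta')]$, and since $[(P,\theta)] = [(P',\theta')]\in D\Gamma^n(A\to S^{-1}A,\eps)$ we conclude $[(P,\theta)]\in\operatorname{im}(Di)$, as required. The case $n=0$ is handled by the skew-suspension convention built into the definition of $\partial$-complementary, reducing it to the $n=2$ case where Proposition~\ref{prop:technical} applies directly.

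The main obstacle is Proposition~\ref{prop:technical} itself — specifically, verifying that the two $S^{-1}A$-cobordisms from $(P,\theta)$ to $(P',\theta')$ produced by the glueing-and-folding construction of Figure~\ref{fig:fever} are genuinely $\partial$-complementary, i.e.\ that the induced map $\partial P\to D_+\oplus D_-$ on the relative boundaries is a homotopy equivalence. This requires carefully tracking the relative boundaries of each glued pair through the iterated pushout/pullback identifications ($D_+\simeq\partial((P^{n-*}\cup_\partial D_+)/D_+,\,P\oplus(P^{n-*}\cup_\partial D_+))$, and similarly for $D_-$ and $\partial P$), and then recognising that the resulting map is precisely the homotopy equivalence assumed in the hypothesis that the two $(A,S)$-nullcobordisms of $(\partial P,\partial\theta)$ were complementary. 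Everything else is bookkeeping with algebraic glueing (Definitions~\ref{def:glue1}, \ref{def:glue2}) and the Thom/thickening correspondences of Section~\ref{subsec:cxpair} and Proposition~\ref{prop:triad}, together with the observation that all the intermediate pairs are $S$-Poincar\'{e} because the relevant surgery effects are $S$-acyclic (being homotopy equivalent to $(D_\pm/\partial P,\nu_\pm/\partial\theta)$, which are contractible over $S^{-1}A$).
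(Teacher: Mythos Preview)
Your treatment of exactness at $D\Gamma^{n}(A\to S^{-1}A,\eps)$ is essentially the paper's argument: $D\partial\circ Di=0$ because a Poincar\'{e} complex has contractible boundary, and $\ker(D\partial)\subseteq\operatorname{im}(Di)$ by invoking Proposition~\ref{prop:technical}. Your identification of that proposition as the technical heart is correct, and your summary of what must be checked inside it (tracking the relative boundaries through the glueing) is accurate. One small wrong reference: the desuspension of the complementary $(A,S)$-nullcobordisms of $\overline{S}(\partial P,\partial\theta)$ comes from the injectivity argument in Proposition~\ref{skewsusp} (the complementarity forces $H_r(\Sigma\partial P)\cong H_r(D_+)\oplus H_r(D_-)$, so $\Sigma^{-1}D_\pm$ lie in $\B_+(A)$), not from Theorem~\ref{thm:algsurgery}.

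However, your argument for injectivity of $Di$ has a genuine gap. You write that a $\partial$-double-nullcobordism of $(P,\theta)$ ``exhibits an $(A,S)$-cobordism of $(P,\theta)$ to $0$ through Poincar\'{e} pairs over $A$'' and then say one should ``forget \ldots\ the complementarity of one of the two cobordisms''. This is backwards. A $\partial$-double-nullcobordism consists of two $S^{-1}A$-nullcobordisms $(f_\pm:P\to Q_\pm,(\delta_\pm\theta,\theta))$, and these are only $S$-Poincar\'{e} pairs, not Poincar\'{e} over $A$; neither is, a priori, a nullcobordism in $L^n(A,\eps)$. It is precisely the $\partial$-complementary condition that saves you: since $(P,\theta)$ is Poincar\'{e} over $A$ we have $\partial P\simeq 0$, and $\partial$-complementarity says the induced map $\partial P\to\partial(Q_+,P)\oplus\partial(Q_-,P)$ is a homotopy equivalence, hence both $\partial(Q_\pm,P)\simeq 0$. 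That is exactly the statement that each $(f_\pm:P\to Q_\pm,(\delta_\pm\theta,\theta))$ is Poincar\'{e} over $A$, and either one then witnesses $(P,\theta)\sim 0\in L^n(A,\eps)$. So the complementarity is not something to forget; it is the mechanism that upgrades the $S$-Poincar\'{e} pairs to genuine $A$-Poincar\'{e} pairs. This is the paper's argument, and without it your injectivity step does not go through.
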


\begin{proof}
We must show exactness in two places:

\medskip

\noindent\underline{$D\Gamma^{n}(A\to S^{-1}A,\eps)$:} Clearly $D\partial\circ Di=0$ as $(\partial P,\partial\theta)=0$ for $(P,\theta)$ Poincar\'{e} over $A$.

Now suppose $[(P,\theta)]\in\ker(D\partial)$ so that $\overline{S}(\partial P,\partial\theta)$ has a pair of complementary nullcobordisms $\overline{S}(f_\pm:\partial P\to D_\pm,(\nu_\pm,\partial \theta)\in C(f^\%_\pm)_{n+1})$. Apply Proposition \ref{prop:technical} to show that $[(P,\theta)]\in\im(Di)$.

%
%

\medskip

\noindent\underline{$L^n(A,\eps)$:} Suppose $(P,\theta)\sim 0\in D\Gamma^n(A\to S^{-1}A,\eps)$ and $(P,\theta)$ is Poincar\'{e} over $A$. There exist two $\partial$-complementary $S^{-1}A$-nullcobordisms $(f_\pm:P\to Q_\pm,(\delta_\pm\theta,\theta))$. Write the effect of surgery on $(P,\theta)$ with data $(f_\pm:P\to Q_\pm,(\delta_\pm\theta,\theta))$ as $(P_\pm',\theta'_\pm)$. As $(P'_\pm)^{n-*}\simeq C(\partial P\to \partial(Q_\pm,P))\simeq \partial(Q_\pm,P)$, both $(P'_\pm,\theta'_\pm)$ are $S$-acyclic. Therefore the cobordism class of $(P,\theta)$ is in the image of $Dj$. In fact, more than this, as $\partial P\simeq \partial(Q_+,P)\oplus \partial(Q_-,P)$ by the $\partial$-complementary condition, both of $(f_\pm:P\to Q_\pm,(\delta_\pm\theta,\theta))$ are nullcobordisms over $A$.

\begin{figure}[h]
\[\begin{xy}
\def\pictwoend{\resizebox{0.35\textwidth}{!}{ \includegraphics{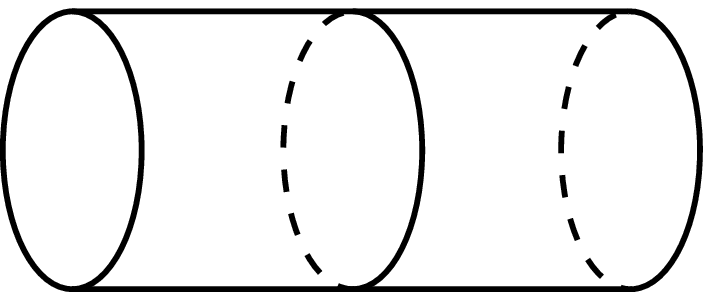}}}
\xyimport(253.02,102){\pictwoend}
,(170,43)*!LD{Q_+}
,(68,43)*!LD{Q_-}
,(265,43)*!LD{\partial(Q_+,P)\simeq 0}
,(-109,43)*!LD{0\simeq\partial(Q_-,P)}
,(120,46)*!LD{P}
\end{xy}\]
\end{figure}

\end{proof}

\begin{remark}Double $L$-theory can be developed by similar methods without the connectivity conditions on the underlying symmetric chain complexes - in the terminology of \cite[\textsection 3]{MR1211640} this is working over the \textit{stable} algebraic bordism category $(\A(A),\B(A),\C(A,S))$. In this version there is a double $L$-theory localisation exact sequence proved in exactly the same way, and in the stable version it is possible to extend the sequence:\[0\to L^{n}(A,\eps)\xrightarrow{Di} D\Gamma^{n}(A\to S^{-1}A,\eps)\xrightarrow{D\partial} DL^{n}(A,S,\eps)\xrightarrow{Dj} L^{n-1}(A,\eps).\]However, in the stable version of double $L$-theory, the connection between the double $L$-groups and the double Witt groups is not clear.
\end{remark}

\begin{remark}The reader may be wondering why there is no \textit{long} exact sequence for localisation in double $L$-theory as in the case of classical $L$-theory. Recall that for the classical $L$-theory localisation exact sequence of Theorem \ref{thm:LES}, the composition $j\circ i$ was shown to be 0 by observing that there was always an $S^{-1}A$-nullcobordism of an $n$-dimensional $\eps$-symmetric $S$-acyclic Poincar\'{e} complex $(C,\phi)$ given by the pair $(0:C\to 0,(0,\phi))$. There are no obvious candidates for a similar result showing that $(C,\phi)$ admits a $\partial$-double-nullcobordism.

Staying in the vein of desirable results which we lack, there is no obvious candidate for an exact sequence comparing the projective $DL$-groups and torsion $DL$-groups via the well-defined forgetful map\[DL^{n+1}(A,S,\eps)\to DL^n(A,\eps).\]The major obstacle here is the lack of something like Proposition \ref{prop:technical}, which is a non-obvious result that would be very challenging to extend.

We show in Chapter \ref{chap:knots} that a double knot-nullcobordism determines two complementary nullcobordisms of the Blanchfield complex of the knot, so that such a sequence would be desirable due to its geometric significance.
\end{remark}

\section{Surgery above and below the middle dimension}\label{sec:surgabove}

Recall that as there is assumed to be a half-unit in the ring, Theorem \ref{thm:algsurgery} tells us there are isomorphisms for $n\geq0$ \[\begin{array}{rrcl}\overline{S}:&L^n(A,\eps)&\xrightarrow{\cong}& L^{n+2}(A,-\eps),\\
\overline{S}:&\Gamma^n(A\to S^{-1}A,\eps)&\xrightarrow{\cong}& \Gamma^{n+2}(A\to S^{-1}A,-\eps),\\
\overline{S}:&L^n(A,S,\eps)&\xrightarrow{\cong}& L^{n+2}(A,S,-\eps).\end{array}\]Applying the skew-suspension twice thus establishes 4-periodicity in each of the above groups. This is a deep and extremely useful result. We would like to know to what extent this result is true in the double $L$- and $\Gamma$-groups.

For the standard $L$- and $\Gamma$-groups, surjectivity of the skew suspension map is shown by performing an appropriate surgery on an $(n+2)$-dimensional object to reduce its length to that of a skew-suspended $n$-dimensional object. Injectivity is shown by proving that if a skew-suspension is nullcobordant then we may pick the nullcobordism to be a skew suspension. Again, this is shown by surgery, but now in a relative setting.

We will now attempt to use the skew-suspension map to analyse the extent to which there exists periodicity in double $L$-theory. This will involve an investigation of how to shorten symmetric complexes within their double-cobordism classes in double $L$-theory using some kind of surgery. Under certain conditions on the ring we are able to achieve the required effect by `surgery above and below the middle dimension', however this process is difficult and the ring conditions we use are \textit{very} restrictive. The difficulty of performing these simplifications on a chain complex add algebraic ballast to the opinion that the `doubly-slice problem' considered in Chapters \ref{chap:blanchfield} and \ref{chap:knots} is very hard!

\subsection{Periodicity in double $L$-theory?}\label{subsec:periodicity}

\begin{proposition}\label{skewsusp}For $n\geq 0$, the skew-suspension gives well-defined injective homomorphisms\[\begin{array}{rrclll}\overline{S}:&DL^n(A,\eps)&\hookrightarrow &DL^{n+2}(A,-\eps);&\qquad&[(C,\phi)]\mapsto [\overline{S}(C,\phi)],\\
\overline{S}:&DL^{n}(A,S,\eps)&\hookrightarrow &DL^{n+2}(A,S,-\eps);&\qquad&[(C,\phi)]\mapsto [\overline{S}(C,\phi)].\end{array}\]
\end{proposition}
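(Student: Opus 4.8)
\textbf{Proof proposal for Proposition \ref{skewsusp}.}

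The plan is to verify that $\overline{S}$ is well-defined on double-cobordism classes and then that it is injective, treating the two cases (projective $DL$ and torsion $DL$) in parallel since the arguments are formally identical once one replaces ``Poincar\'{e} pair'' by ``$S^{-1}A$-acyclic Poincar\'{e} pair'' and ``homotopy equivalence'' by ``homotopy equivalence over $A$''. First I would recall that skew-suspension $\overline{S}$ is already defined as a functor on $\eps$-symmetric (Poincar\'{e}) complexes and pairs in \ref{subsec:skewsusp}, and that it preserves the Poincar\'{e} condition; in the torsion case it moreover preserves $S$-acyclicity since $\Sigma C$ is $S$-acyclic iff $C$ is. So $\overline{S}$ carries an $n$-dimensional $\eps$-symmetric (respectively $S$-acyclic) Poincar\'{e} complex to an $(n+2)$-dimensional $(-\eps)$-symmetric one of the appropriate type, and sends homotopy equivalences to homotopy equivalences.

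For well-definedness, suppose $\{x_+,x_-\}$ is a double-cobordism between $(C,\phi)$ and $(C',\phi')$, where $x_\pm = (f_\pm : C\oplus C'\to D_\pm, (\delta_\pm\phi,\phi\oplus-\phi'))$. Applying $\overline{S}$ to each pair produces $(n+3)$-dimensional $(-\eps)$-symmetric Poincar\'{e} pairs $\overline{S}x_\pm = (\Sigma f_\pm : \Sigma C\oplus \Sigma C'\to \Sigma D_\pm, \overline{S}(\delta_\pm\phi,\phi\oplus-\phi'))$ (using $\Sigma(C\oplus C')=\Sigma C\oplus\Sigma C'$ and that $\overline{S}$ commutes with direct sum of pairs). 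Each $\overline{S}x_\pm$ is a (suitably acyclic) cobordism from $\overline{S}(C,\phi)$ to $\overline{S}(C',\phi')$. The complementarity condition for $\{\overline{S}x_+,\overline{S}x_-\}$ asks that $\left(\begin{smallmatrix}\Sigma f_+\\\Sigma f_-\end{smallmatrix}\right):\Sigma C\oplus\Sigma C'\to \Sigma D_+\oplus\Sigma D_-$ be a homotopy equivalence (over $A$ in the torsion case); but this map is exactly $\Sigma\left(\begin{smallmatrix}f_+\\f_-\end{smallmatrix}\right)$, and suspension of a homotopy equivalence is a homotopy equivalence. Hence $\{\overline{S}x_+,\overline{S}x_-\}$ is a double-cobordism, so $\overline{S}$ descends to a map of double-cobordism classes; additivity is immediate from $\overline{S}(C\oplus C',\phi\oplus\phi')=\overline{S}(C,\phi)\oplus\overline{S}(C',\phi')$.

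For injectivity, suppose $\overline{S}(C,\phi)$ is double-null\-cobordant, i.e.\ there are complementary (suitably acyclic) null\-cobordisms $y_\pm = (g_\pm:\Sigma C\to E_\pm, (\eta_\pm,\overline{S}\phi))$ with $\left(\begin{smallmatrix}g_+\\g_-\end{smallmatrix}\right):\Sigma C\to E_+\oplus E_-$ a homotopy equivalence. The key step is to show that each $y_\pm$ may be replaced, within its cobordism class rel $\overline{S}(C,\phi)$, by a skew-suspension $\overline{S}(g'_\pm:C\to E'_\pm,(\eta'_\pm,\phi))$, and crucially that this replacement can be done so as to preserve the complementarity of the pair. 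Performing a single desuspension of one pair is just the injectivity argument for the ordinary skew-suspension map (Theorem \ref{thm:algsurgery}), carried out by algebraic surgery in the relative setting below the middle dimension of the pair; the new subtlety is that we must carry out the two surgeries simultaneously and compatibly, so that the resulting map $\left(\begin{smallmatrix}g'_+\\g'_-\end{smallmatrix}\right):C\to E'_+\oplus E'_-$ remains a homotopy equivalence. One handles this by viewing $\left(\begin{smallmatrix}g_+\\g_-\end{smallmatrix}\right):\Sigma C\xrightarrow{\simeq} E_+\oplus E_-$ as a single homotopy equivalence of pairs $(\Sigma C\to E_+\oplus E_-)$, which is itself (up to homotopy) a skew-suspension of the pair $(C\to E'_+\oplus E'_-)$ obtained by one desuspension; doing the surgery on the disjoint union and then splitting off the two summands via the $\sqcup$ formalism of \ref{subsec:res} keeps the direct-sum structure, hence the homotopy equivalence, intact. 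Once both $y_\pm$ are replaced by $\overline{S}y'_\pm$ with $y'_\pm$ a complementary pair of (acyclic) null\-cobordisms of $(C,\phi)$, we conclude $[(C,\phi)]=0\in DL^n(A,\eps)$ (resp.\ $DL^n(A,S,\eps)$), proving injectivity.

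The main obstacle is the simultaneous desuspension step: one must check that the algebraic surgery used to shorten the pair $y_+$ does not interfere with that used for $y_-$, and that the complementarity homotopy equivalence is genuinely a skew-suspension rather than merely homotopic to one in a way that breaks the splitting. I expect this is handled cleanly by performing all the surgeries on the homotopy pushout/split-off package $\left(\begin{smallmatrix}g_+\\g_-\end{smallmatrix}\right)$ at once and invoking the uniqueness of desuspension up to homotopy, rather than by desuspending each $y_\pm$ separately and hoping the results fit together.
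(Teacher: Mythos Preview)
Your well-definedness argument is correct and matches the paper.

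For injectivity, however, you are working much harder than necessary, and the ``main obstacle'' you identify is a phantom. The paper's argument is elementary and requires no algebraic surgery whatsoever. Given complementary nullcobordisms $y_\pm=(g_\pm:\Sigma C\to E_\pm,(\eta_\pm,\overline{S}\phi))$, the complementarity condition $\left(\begin{smallmatrix}g_+\\g_-\end{smallmatrix}\right):\Sigma C\xrightarrow{\simeq}E_+\oplus E_-$ immediately gives $H_r(\Sigma C)\cong H_r(E_+)\oplus H_r(E_-)$ for every $r$. Since $C\in\B_+(A)$ we have $H_0(\Sigma C)=H_{-1}(C)=0$, and therefore $H_0(E_\pm)=0$ as direct summands of $0$. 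This is exactly the positivity condition needed for $\Sigma^{-1}E_\pm\in\B_+(A)$, so the formal skew-desuspensions $(g_\pm:C\to\Sigma^{-1}E_\pm,\overline{S}^{-1}(\eta_\pm,\overline{S}\phi))$ are honest $(n+1)$-dimensional $\eps$-symmetric Poincar\'{e} pairs, and they are manifestly complementary since $\Sigma^{-1}$ of a homotopy equivalence is a homotopy equivalence.

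The point you missed is that in ordinary $L$-theory one needs algebraic surgery for injectivity of $\overline{S}$ precisely because a \emph{single} nullcobordism $E$ of $\overline{S}(C,\phi)$ need not have $H_0(E)=0$. In double $L$-theory the complementarity hypothesis forces this vanishing for free on \emph{both} $E_\pm$ simultaneously, so there is nothing to surger and no compatibility issue between the two sides. Your proposed route via simultaneous surgery on the disjoint union might eventually be made to work, but it is solving a problem that does not exist here.
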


\begin{proof}We consider only the statement for the projective $DL$-groups $DL^n(A,\eps)$, the torsion $DL$-group case being entirely similar.

If $(C,\phi)\in DL^n(A,\eps)$ admits complementary nullcobordisms $(f_\pm:C\to D_\pm,(\delta_\pm\phi,\phi))$ then the skew-suspensions \[\overline{S}(f_\pm:C\to D_\pm,(\delta_\pm\phi,\phi)):=(\Sigma f_\pm:\Sigma C\to \Sigma D,\overline{S}(\delta\phi,\phi))\] are complementary nullcobordisms for $\overline{S}(C,\phi)\in DL^{n+2}(A,-\eps)$. Therefore the homomorphism is well defined.

To show injective, consider the general situation of a pair $x$ given by formal skew-desuspension an $(n+3)$-dimensional $(-\eps)$-symmetric pair \[x:=(\Sigma^{-1} f:\Sigma^{-1} C\to \Sigma^{-1} D,(\overline{S})^{-1}(\delta\phi,\phi)).\] Such a formal skew-desuspension is an $(n+1)$-dimensional $\eps$-symmetric pair if and only if the morphism $\Sigma^{-1}f$ is in $\B_+(A)$.

Now, more specifically, suppose that $(C,\phi)$ is an $n$-dimensional $\eps$-symmetric complex and that there are complementary nullcobordisms $(f_\pm:\Sigma C\to D_\pm,\overline{S}(\delta_\pm\phi,\phi))$. Then the condition of being complementary gives $H_r(\Sigma C)\cong H_r(D_+)\oplus H_r(D_-)$ so that the desuspensions $\Sigma^{-1}D_\pm$ are in $\B_+(A)$. Hence $(f_\pm:C\to \Sigma^{-1} D_\pm,(\delta_\pm\phi,\phi))$ are complementary nullcobordisms of $(C,\phi)$.

\end{proof}

\begin{corollary}For $n\geq1$, the skew suspension gives a well-defined injective homomorphism\[\overline{S}:D\Gamma^{n}(A\to S^{-1}A,\eps)\to D\Gamma^{n+2}(A\to S^{-1}A,-\eps);\qquad [(P,\theta)]\mapsto [\overline{S}(P,\theta)].\]
\end{corollary}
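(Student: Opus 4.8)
The plan is to deduce this from the already-established injectivity of skew-suspension on the projective $DL$-groups (Proposition \ref{skewsusp}) together with the structural transitivity/re-glueing machinery developed for $D\Gamma$. Recall that an element of $D\Gamma^n(A\to S^{-1}A,\eps)$ is an $n$-dimensional $\eps$-symmetric $S$-Poincar\'{e} complex $(P,\theta)$ over $A$, and two such are identified by a $\partial$-double-cobordism, i.e.\ two $S^{-1}A$-cobordisms that are $\partial$-complementary. The skew-suspension $\overline{S}(P,\theta)=(\Sigma P,\overline{S}\theta)$ is clearly again $S$-Poincar\'{e} (localisation commutes with $\overline{S}$), so the map is well-defined at the level of objects; and if $(f_\pm:P\oplus P'\to Q_\pm,(\delta_\pm\theta,\theta\oplus-\theta'))$ is a $\partial$-double-cobordism then applying $\overline{S}$ throughout gives $S^{-1}A$-cobordisms $\overline{S}(f_\pm:\cdots)$, which are again $\partial$-complementary because $\overline{S}$ is a chain equivalence on each of the relevant boundary and relative-boundary complexes (it commutes with the mapping-cone and desuspension constructions of Section \ref{subsec:skewsusp}, and with algebraic glue). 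So $\overline{S}:D\Gamma^n\to D\Gamma^{n+2}$ is a well-defined group homomorphism; this is the routine half.

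For injectivity, the plan is to run the argument of Proposition \ref{skewsusp} in the $D\Gamma$-setting, using the $\partial$-complementary condition to control connectivity. Suppose $(P,\theta)$ is an $n$-dimensional $\eps$-symmetric $S$-Poincar\'{e} complex with $\overline{S}(P,\theta)\sim 0\in D\Gamma^{n+2}(A\to S^{-1}A,-\eps)$, witnessed by two $\partial$-complementary $S^{-1}A$-nullcobordisms $(g_\pm:\Sigma P\to Q_\pm,(\delta_\pm\overline{S}\theta,\overline{S}\theta))$. I would first observe that each $Q_\pm$ can be taken to be a skew-suspension: after algebraic surgery on the pair within the relevant category (Theorem \ref{thm:algsurgery} and its relative version), we may assume $(g_\pm:\Sigma P\to Q_\pm,\cdots)=\overline{S}(g_\pm':P\to Q_\pm',\cdots)$ for some $(n+1)$-dimensional pairs, where the desuspended morphisms $\Sigma^{-1}g_\pm$ lie in $\B_+(A)$. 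The point making this legitimate is precisely the $\partial$-complementary hypothesis: it forces the boundary $\partial P$ to split as $\partial(Q_+,P)\oplus\partial(Q_-,P)$ up to homotopy, and — together with the $S^{-1}A$-acyclicity of the relative boundaries — this pins down enough connectivity of the $Q_\pm$ (and of the $\partial(Q_\pm,P)$) to carry out the surgeries without leaving $\B_+(A)$, exactly as the Hurewicz/connectivity bookkeeping in the proof of Proposition \ref{skewsusp} does for $DL$.

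Having desuspended, I would then check that the resulting pairs $(g_\pm':P\to Q_\pm',(\delta_\pm\theta,\theta))$ are themselves $\partial$-complementary $S^{-1}A$-nullcobordisms of $(P,\theta)$. This is because $\overline{S}$ is a homotopy equivalence on the relevant mapping cones, so $\partial(Q_+',P)\cup_{\partial P}\partial(Q_-',P)\simeq 0$ follows from the corresponding statement after skew-suspension by formally desuspending the chain equivalence; equivalently, the induced map $\partial P\to\partial(Q_+',P)\oplus\partial(Q_-',P)$ is a homotopy equivalence because it becomes one after applying $\overline{S}$, and $\overline{S}$ reflects homotopy equivalences. Hence $(P,\theta)\sim 0\in D\Gamma^n(A\to S^{-1}A,\eps)$, giving injectivity.

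The main obstacle I anticipate is the connectivity management in the desuspension step: making precise that the $\partial$-complementary condition supplies exactly the connectivity of $Q_\pm$ and of the relative boundaries $\partial(Q_\pm,P)$ needed so that the algebraic surgeries reducing the pairs to skew-suspensions stay within the positive (connective) bordism category $\Lambda_+(A,S)$. In the projective case this was handled by the clean splitting $H_r(\Sigma C)\cong H_r(D_+)\oplus H_r(D_-)$; here one must additionally track the $S^{-1}A$-acyclicity of the relative boundaries and interface it with the positivity conditions discussed in Section \ref{sec:connective}. I expect this to be a matter of carefully re-running the surgery-below/above-the-middle-dimension arguments relative to the two complementary cobordisms simultaneously, rather than requiring any genuinely new idea beyond those already used for $DL^n$ in Proposition \ref{skewsusp}. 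The remaining verifications (well-definedness, compatibility with direct sum, and that $-(P,\theta)=(P,-\theta)$ is respected) are immediate from the corresponding facts for $\overline{S}$ established in Section \ref{subsec:skewsusp}.
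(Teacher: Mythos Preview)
Your approach differs substantially from the paper's, and the gap you yourself flag is real and not resolved.

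The paper's proof is a two-line diagram chase. It uses the double $L$-theory localisation exact sequence (Theorem \ref{thm:DLLES})
\[0\to L^n(A,\eps)\xrightarrow{Di}D\Gamma^n(A\to S^{-1}A,\eps)\xrightarrow{D\partial}DL^n(A,S,\eps)\]
in both dimensions $n$ and $n+2$, together with the facts that $\overline{S}$ is an \emph{isomorphism} on the $L$-groups (Theorem \ref{thm:algsurgery}, using the half-unit) and an \emph{injection} on the torsion $DL$-groups (Proposition \ref{skewsusp}). A standard four-lemma chase on the resulting commutative ladder then forces injectivity of the middle vertical map. No desuspension of the $S^{-1}A$-nullcobordisms themselves is ever attempted.

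Your direct desuspension argument runs into exactly the obstacle you name, and it is not dispatched by the connectivity bookkeeping you sketch. The $\partial$-complementary condition controls $\partial(\Sigma P)\simeq\partial(Q_+,\Sigma P)\oplus\partial(Q_-,\Sigma P)$, i.e.\ the \emph{relative boundaries}, not the homology of $Q_\pm$ themselves; there is no analogue here of the clean splitting $H_r(\Sigma C)\cong H_r(D_+)\oplus H_r(D_-)$ that made the $DL$ case work. Your appeal to ``Theorem \ref{thm:algsurgery} and its relative version'' to surger each pair down to a skew-suspension is also problematic: such surgery alters the pair only up to cobordism of pairs (triads), and there is no mechanism offered for performing these surgeries on $Q_+$ and $Q_-$ \emph{simultaneously} while preserving the joint $\partial$-complementary condition. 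Indeed, the paper's later discussion in Section \ref{sec:surgabove} emphasises that coordinating two surgeries in the double setting is delicate and only succeeds under strong hypotheses on the ring. The localisation exact sequence route sidesteps all of this: it transports the problem to $L^n$ and $DL^n$, where the skew-suspension is already understood.
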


\begin{proof}As there is a half unit in the ring $A$, for each $k\geq 0$, the skew suspension is an isomorphism \[\overline{S}:L^{k}(A,\eps)\xrightarrow{\cong} L^{k+2}(A,-\eps).\]The result now follows from the double $L$-theory localisation exact sequence and a diagram chase of the commutative diagram with exact rows\[\xymatrix{0\ar[r]&L^{n}(A,\eps)\ar[r]^-{Dj}\ar[d]_-{\cong}^-{\overline{S}}&D\Gamma^n(A\to S^{-1}A,\eps)\ar[r]^-{D\partial}\ar[d]^-{\overline{S}}&DL^n(A,S,\eps)\ar@{^{(}->}[d]^-{\overline{S}}&\\
0\ar[r]&L^{n+2}(A,-\eps)\ar[r]^-{Dj}&D\Gamma^{n+2}(A\to S^{-1}A,-\eps)\ar[r]^-{D\partial}&DL^{n+2}(A,S,-\eps)&}\]
\end{proof}

Various different standard algebraic surgeries are defined in the literature for the purpose of giving an inverse to the skew-suspension map in $L$-theory, and if your intention is to merely make a chain complex shorter, then any of these surgeries (used in a valid context) will give the desired result. To define an inverse to the skew-suspension map in double $L$-theory we have the additional complication that we must precisely control the chain homotopy type of the effect of two (potentially different) surgeries in order to achieve the same effect with each. Moreover, the cobordisms resulting from the two surgeries will need to be complementary. In order to get the necessary control, we will surger with data given by projective resolutions of the top- and bottom-dimensional homology modules respectively so that the effect of surgery in each case will (up to homology) have both the top and bottom missing, by Poincar\'{e} duality. For a `top' surgery and a `bottom' surgery to have the same effect on the chain level, we will also need to impose fairly strict dimensionality conditions on the underlying ring with involution.

\begin{theorem}[Surgery above and below the middle dimension: projective case]\label{thm:doubproj} Suppose that $A$ has h.d.\ 0 and the element $s\in A$ with $s+\overline{s}=1$ is moreover a unit in $A$. Then for each $n\geq 0$, the skew-suspension defines an isomorphism\[\overline{S}:DL^n(A,\eps)\xrightarrow{\cong}DL^{n+2}(A,-\eps).\]
\end{theorem}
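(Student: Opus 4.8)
By Proposition \ref{skewsusp} the skew-suspension $\overline{S}:DL^n(A,\eps)\to DL^{n+2}(A,-\eps)$ is already a well-defined injective homomorphism, so the entire content of the theorem is \emph{surjectivity}: every class in $DL^{n+2}(A,-\eps)$ has a representative that is a skew-suspension of an $n$-dimensional complex. The plan is to take an arbitrary $(n+2)$-dimensional $(-\eps)$-symmetric Poincar\'{e} complex $(C,\phi)$ over $A$ and produce, \emph{within its double-cobordism class}, a complex homotopy equivalent to $\overline{S}(C',\phi')$ for some $n$-dimensional $(C',\phi')$. Since $A$ has homological dimension $0$, every $A$-module is projective, so $H_0(C)$ and $H_{n+2}(C)$ are f.g.\ projective and can be resolved by length-$0$ complexes; Poincar\'{e} duality identifies $H_{n+2}(C)\cong H^0(C)\cong H_0(C)^*$. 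The idea is to perform one ``surgery below the middle dimension'' killing $H_0$ using the projective resolution $H_0(C)\to 0$ as data, and one ``surgery above the middle dimension'' killing $H_{n+2}$ using the Poincar\'{e}-dual data; each surgery produces a cobordism, and the effect of each is (up to homotopy) a complex concentrated in degrees $1,\dots,n+1$, hence a skew-desuspension of something of the right length. The half-unit being a unit in $A$ itself (not just in some residue field) is exactly what lets us run the argument of Lemma \ref{welldef}/Proposition \ref{DLA} to conclude that these two surgery cobordisms can be arranged to be \emph{complementary}, so that $(C,\phi)$ is double-cobordant to the common effect of surgery.

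More concretely, I would proceed as follows. First, recall from Chapter \ref{chap:algLtheory} (the algebraic surgery and algebraic thickening machinery, and \cite[\textsection 4]{MR560997}) that surgery on $(C,\phi)$ with data a connected $(n+3)$-dimensional pair $x=(f:C\to D,(\delta\phi,\phi))$ produces an effect $(C',\phi')$ together with a cobordism, and that when the data is a resolution of a homology group the effect has that homology group removed. Taking $x_-$ built from $H_0(C)$ (killing the bottom) gives a cobordism $(f_-:C\to D_-,(\delta_-\phi,\phi))$ whose effect $(C'_-,\phi'_-)$ has $H_0(C'_-)=0$, hence by Poincar\'{e} duality also $H_{n+2}(C'_-)=0$, so $C'_-$ is (homotopy equivalent to) a complex in degrees $1,\dots,n+1$ and $(C'_-,\phi'_-)\simeq \overline{S}(C',\phi')$ for some $n$-dimensional complex. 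Dually, taking $x_+$ built from the Poincar\'{e}-dual data $H_{n+2}(C)\to 0$ gives a cobordism $(f_+:C\to D_+,(\delta_+\phi,\phi))$ with effect homotopy equivalent to the same $\overline{S}(C',\phi')$ --- this is the step where h.d.\ $0$ and the precise duality are needed so that ``remove the top'' and ``remove the bottom'' land on homotopy equivalent complexes. Finally, replace $x_+$ by the modified cobordism $(\overline{s}f_+, -sf_-)$-style combination, exactly as in Lemma \ref{welldef}: using that $s$ is a unit in $A$ one checks $\left(\begin{smallmatrix}f_+\\f_-\end{smallmatrix}\right):C\to D_+\oplus D_-$, suitably adjusted, is a homotopy equivalence, so $\{x_+,x_-\}$ is a double-cobordism from $(C,\phi)$ to $\overline{S}(C',\phi')$. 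Hence $[(C,\phi)]=[\overline{S}(C',\phi')]\in\im(\overline{S})$, and combined with injectivity from Proposition \ref{skewsusp} we get the claimed isomorphism.

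The main obstacle I anticipate is not the surgery itself but the bookkeeping needed to show the \emph{two} surgery cobordisms --- the ``below'' one and the ``above'' one --- have effects that are genuinely homotopy equivalent (not merely cobordant) \emph{and} that the pair of cobordisms is complementary in the sense of the definition of double-cobordism. In ordinary $L$-theory one only ever performs one surgery and only cares about the cobordism class of the effect, so either a ``top'' or a ``bottom'' surgery suffices and their equality is never needed; here the complementarity condition forces us to realise both simultaneously on the chain level. The severe hypotheses (h.d.\ $0$, and $s$ a unit in $A$ rather than in a quotient) are precisely the tools that make this rigidity available: h.d.\ $0$ collapses all the projectivity/resolution subtleties and makes Poincar\'{e} duality match the two surgeries degreewise, while $s\in A^\times$ makes the complementarity matrices invertible as in Lemma \ref{welldef}. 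I would expect the write-up to lean on the explicit surgery formulas of \cite[\textsection 4]{MR560997} for the symmetric structures and on diagrams \ref{eq:thickening} and \ref{eq:ladder} to identify the relative boundaries; the torsion analogue (stated separately later) should then follow by the identical argument carried out in $\C_+(A,S)$.
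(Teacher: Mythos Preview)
Your approach is essentially the paper's: reduce to surjectivity via Proposition \ref{skewsusp}, perform one surgery at the top and one at the bottom, identify the two effects, and use the half-unit to make the resulting pair of cobordisms complementary. Two refinements are worth noting. First, the paper exploits h.d.\ $0$ more aggressively than you indicate: since every module is projective, one may take $(C,\phi)$ to have \emph{all differentials equal to zero}, so that $D_+=C_{n+2}[n+2]$ and $D_-=C_0[0]$ are literally the top and bottom modules and the surgery effects are computed explicitly rather than via abstract resolution arguments. Second, you conflate the surgery \emph{data} $f_\pm:C\to D_\pm$ with the resulting \emph{cobordisms} $((g_\pm\,\,g'_\pm):C\oplus C'_\pm\to\tilde D_\pm,(0,\phi\oplus-\phi'_\pm))$ of Corollary~\ref{corr:cobordism}; it is the latter pair, with one end composed through the homotopy equivalence $h:C'_-\xrightarrow{\simeq}C'_+$ and the other scaled by $(\overline{s},-s)$, whose combined map $C\oplus C'_-\to\tilde D_+\oplus\tilde D_-$ must be checked to be a homology isomorphism---and here is where $s\in A^\times$ (not merely $s+\overline{s}=1$) is genuinely used. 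Finally, your closing expectation that the torsion analogue ``should then follow by the identical argument'' is too optimistic: the paper's Proposition \ref{prop:doubtors} shows that with $(A,S)$ of dimension $0$ one only obtains the complementarity \emph{on homology}, because the two effects $C'_\pm$ are merely quasi-isomorphic rather than chain-homotopy-equivalent, and the question of periodicity in the torsion $DL$-groups is left open.
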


\begin{proof}Because $A$ has h.d.\ 0, all $A$-modules are projective, so without loss of generality we may assume an $(n+2)$-dimensional $(-\eps)$-symmetric Poincar\'{e} complex $(C,\phi)$ over $A$ is of the form\[0\to C_{n+2}\xrightarrow{0} C_{n+1}\to\dots\to C_1\xrightarrow{0} C_0\to 0.\] Consequently, $\phi\in W^\%C_n$ is described entirely by the chain equivalence $\phi_0:C^{n+2-*}\xrightarrow{\simeq}C$ which moreover defines an isomorphism of projective $A$-modules $\phi_0:C^{n+2-r}\xrightarrow{\cong}C_r$ for each $r$.

Define \[(D_+)_r=\left\{\begin{array}{lcl}C_{n+2}&\quad&r=n+2,\\0&&\text{otherwise,}\end{array}\right. \]and the obvious morphism $f_+:C\to D_+$. By inspecting the differential in $C(f_+^\%)$, the element $(0,\phi)\in C(f_+^\%)_{n+3}$ is clearly seen to be a cycle. Hence there is an $(n+3)$-dimensional $(-\eps)$-symmetric pair $x_+:=(f_+:C\to D_+,(0,\phi))$. 

Define \[(D_-)_r=\left\{\begin{array}{lcl}C_0&\quad&r=0,\\0&&\text{otherwise,}\end{array}\right. \]and the obvious morphism $f_-:C\to D_-$. There is then an $(n+3)$-dimensional $(-\eps)$-symmetric Poincar\'{e} pair $x_-:=(f_-:C\to D_-,(0,\phi))$.

Surgery on $(C,\phi)$ with data $x_\pm$ has effect $(C'_\pm,\phi'_\pm)$ which is given, up to homotopy, by \[(C'_\pm)_r=\left\{\begin{array}{lcl}C_r&\quad&1\leq r\leq n,\\0&&\text{otherwise,}\end{array}\right. \]with all differentials equal to 0, in which description $\phi'$ is determined entirely by $\phi_0'$ which is given (according to the choice of structure in \cite[p.145]{MR560997}) by $\phi_s$$(\phi_\pm')_0=\phi_0:C^{n+2-r}\to C$ for $1\leq r\leq n+1$ and is 0 otherwise. So there exists a homotopy equivalence $h:(C_-',\phi_-')\xrightarrow{\simeq}(C_+',\phi_+')$. We write the respective cobordisms associated to the two surgeries (see Corollary \ref{corr:cobordism} and the subsequent remark for definition) as \[((g_\pm\,\,g'_\pm):C\oplus C_\pm'\to \tilde{D}_\pm,(0,\phi\oplus-\phi'_\pm))\] whose underlying morphisms are given on the level of homology by \[
(g_+\,\,g'_+)_*=\left\{\begin{array}{ccl}(1\,\, 1):H_r(C)\oplus H_r(C)\to H_{r}(C)&\quad&1\leq r\leq n,\\ 1:H_r(C)\to H_{r}(C)&&r=0\\0&&\text{otherwise}\end{array}\right.
\]\[
(g_-\,\,g'_-)_*=\left\{\begin{array}{ccl}(1\,\, 1):H_r(C)\oplus H_r(C)\to H_{r}(C)&\quad&1\leq r\leq n,\\ 1:H_r(C)\to H_{r}(C)&&r=n+2\\0&&\text{otherwise}\end{array}\right.
\]Define two cobordisms from $(C,\phi)$ to $(C'_\pm,\phi_\pm')$ by\[\begin{array}{lccrlr}((&g_+&g_+'h&):&C\oplus C'_-\to \tilde{D}_+,&(0,\phi\oplus-\phi'))\\((&\overline{s}g_-&-sg_-'&):&C\oplus C'_-\to \tilde{D}_-,&(0,\phi\oplus-\phi')).\end{array}\]Using the facts that $s$ is a unit in $A$ and that $s+\overline{s}=1$, it is easily checked that \[\left(\begin{matrix}g_+&g_+'h\\\overline{s}g_-&-sg'_-\end{matrix}\right):C\oplus C_-'\to \tilde{D}_+\oplus \tilde{D}_-\]is an isomorphism on the level of homology. Hence we have shown that $(C,\phi)$ is double-cobordant to the skew-suspension of the $n$-dimensional $\eps$-symmetric Poincar\'{e} complex given by $(\Sigma^{-1}C_-',\overline{S}^{-1}\phi_-')$. In other words, the skew suspension map $\overline{S}:DL^n(A,\eps)\to DL^{n+2}(A,-\eps)$ is surjective and therefore an isomorphism as was required to be shown.
\end{proof}

\begin{corollary}Under the hypotheses of Theorem \ref{thm:doubproj} and for $k\geq 0$ \[\begin{array}{rcl}DL^{2k+1}(A,\eps)&=&0,\\DL^{2k}(A,\eps)&\cong&W^{(-1)^k\eps}(A).\end{array}\]
\end{corollary}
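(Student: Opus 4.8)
The plan is to feed the isomorphism $\overline{S}:DL^{n}(A,\eps)\xrightarrow{\cong}DL^{n+2}(A,-\eps)$ of Theorem~\ref{thm:doubproj} into an explicit low-dimensional base-case computation and then iterate. First I would establish the base cases $n=0$ and $n=1$. For $n=0$, an object of $DL^{0}(A,\eps)$ is (after choosing a representative concentrated in degree $0$, using that $A$ has homological dimension $0$) a $0$-dimensional $\eps$-symmetric Poincar\'{e} complex over $A$, which is precisely a non-singular $\eps$-symmetric form $(P,\theta)$ over $A$. Two such are double-cobordant exactly when they differ by a hyperbolic form: a complementary pair of nullcobordisms of $(P,\theta)$ is, after identifying the mapping cones with the underlying modules (again using $\B_+(A)$ consists of projectives and the complexes are short), exactly a pair of complementary lagrangians. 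Hence $DL^{0}(A,\eps)\cong DW^{\eps}(A)\cong W^{\eps}(A)$, where the second isomorphism is Claim~\ref{clm:DWvsW} (using the half-unit). For $n=1$, an object of $DL^{1}(A,\eps)$ is a $1$-dimensional $\eps$-symmetric Poincar\'{e} complex $0\to C_{1}\xrightarrow{0} C_{0}\to 0$ with $\phi_{0}:C^{1-*}\xrightarrow{\cong} C$; the standard argument (surgery collapsing such a complex, exactly as in the proof of Theorem~\ref{thm:doubproj} with $D_{\pm}$ concentrated in the single nonzero degree) shows any such is double-null-cobordant, so $DL^{1}(A,\eps)=0$.

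Next I would run the induction using skew-suspension. Since $\overline{S}$ sends $\eps$ to $-\eps$ and raises dimension by $2$, we get $DL^{n+2}(A,-\eps)\cong DL^{n}(A,\eps)$ for all $n\geq 0$ by Theorem~\ref{thm:doubproj}. Iterating from the base cases: $DL^{2k+1}(A,\eps)\cong DL^{1}(A,(-1)^{k}\eps)=0$ for all $k\geq 0$, and $DL^{2k}(A,\eps)\cong DL^{0}(A,(-1)^{k}\eps)\cong W^{(-1)^{k}\eps}(A)$ for all $k\geq 0$. Here one must be slightly careful tracking the sign twist: each application of $\overline{S}$ negates the symmetry parameter, so $k$ applications produce the factor $(-1)^{k}$, which is exactly the sign appearing in the statement; I would spell this bookkeeping out once and then apply it. The only genuinely new input beyond Theorem~\ref{thm:doubproj} is the identification of the two base cases, and of these the $n=0$ identification $DL^{0}(A,\eps)\cong DW^{\eps}(A)$ is the crux.

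The main obstacle I anticipate is making the base-case identification $DL^{0}(A,\eps)\cong DW^{\eps}(A)$ fully rigorous: one must verify not only that a $0$-dimensional $\eps$-symmetric Poincar\'{e} complex over a homological-dimension-$0$ ring is the same data as a non-singular $\eps$-symmetric form, but also that the chain-level notion of a \emph{complementary pair of nullcobordisms} unwinds exactly to a \emph{complementary pair of lagrangians} in the sense of the double Witt group. This requires checking that the condition that $\left(\begin{smallmatrix}f_{+}\\ f_{-}\end{smallmatrix}\right):C\oplus C'\to D_{+}\oplus D_{-}$ be a homotopy equivalence, combined with the Poincar\'{e}-pair condition on each $x_{\pm}$, translates (via the algebraic Thom construction / thickening correspondence of Proposition~\ref{prop:thomthick1}, applied degreewise) into the exactness of $0\to L_{\pm}\to P\to L_{\pm}^{*}\to 0$ together with $\left(\begin{smallmatrix}j_{+}\\ j_{-}\end{smallmatrix}\right)$ being an isomorphism. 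This is routine in spirit but needs the conventions of Chapter~\ref{chap:algLtheory} handled carefully; I would phrase it as a lemma and prove it by direct inspection of the short complexes, then invoke Claim~\ref{clm:DWvsW} to replace $DW^{\eps}(A)$ by $W^{\eps}(A)$. Everything else is formal iteration of an established isomorphism.
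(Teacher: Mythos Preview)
Your proposal is correct and takes essentially the same approach as the paper: establish the base cases $DL^{1}(A,(-1)^{k}\eps)=0$ (via the explicit complementary nullcobordisms from the proof of Theorem~\ref{thm:doubproj}) and $DL^{0}(A,(-1)^{k}\eps)\cong DW^{(-1)^{k}\eps}(A)\cong W^{(-1)^{k}\eps}(A)$ (by unwinding complementary nullcobordisms as complementary lagrangians, then applying Claim~\ref{clm:DWvsW}), and then iterate the skew-suspension isomorphism. The paper's argument for the $n=0$ base case is terser than yours---it observes that the complementary condition forces $H_{r}(D_{\pm})=0$ for $r\neq 0$, so the $D_{\pm}$ are strictly $0$-dimensional and hence correspond to complementary lagrangians---but the content is the same.
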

\begin{proof}For any 1-dimensional $(-1)^k\eps$-symmetric Poincar\'{e} complex $(C,\phi)$, the pairs $(f_\pm:C\to D_\pm,(0,\phi))$ defined in the proof of Theorem \ref{thm:doubproj} are complementary nullcobordisms, proving that $DL^1(A,(-1)^k\eps)=0$. Now apply Theorem \ref{thm:doubproj}.

A double-nullcobordism of a 0-dimensional $(-1)^k\eps$-symmetric Poincar\'{e} complex $(C,\phi)$ is given by $(f_\pm:C\to D_\pm,(\delta_\pm\phi,\phi))$ and $H_r(C)\cong H_r(D_+)\oplus H_r(D_-)$, which vanishes except when $r=0$. Hence up to chain homotopy, $D_\pm$ are strictly 0-dimensional and correspond precisely to complementary lagrangians of the non-singular $(-1)^k\eps$-symmetric intersection form associated to $(C,\phi)$. Now apply Theorem \ref{thm:doubproj} and Claim \ref{clm:DWvsW}.
\end{proof}

\begin{remark}This is the best result we could hope for in the projective $DL$-groups using these particular surgeries `above and below the middle dimension' that work by surgering resolutions of $H_{n+1}(C)$ and $H_0(C)$ respectively. If we hypothesise that $A$ has h.d.\ 1, for example, the two surgeries described in the proof will both still be valid but we no longer expect the respective effects of surgery to be homotopic to one another (different $\Ext$ terms appear in the respective effects of surgery). Of course, this does not preclude some other strategy from producing surgeries above and below the middle dimension! At present we have no counterexample to the statement that the skew-suspension maps are always isomorphisms in the projective double $L$-groups.
\end{remark}

The hypothesis in Theorem \ref{thm:doubproj} that the homological dimension is 0 is a strict one. However, there is a slogan that what can be done for projective complexes over a ring with h.d.\ $m$ can be done for $S$-acyclic complexes over a ring with h.d.\ $m+1$. In this spirit, we now obtain a similar result for $S$-acyclic complexes. Sadly, this result will not be strong enough to invert the skew-suspension maps for the corresponding double $L$-groups.

\begin{proposition}[Surgery above and below the middle dimension: torsion case up to homology]\label{prop:doubtors} Suppose the element $s\in A$ with $s+\overline{s}=1$ is a unit in $A$ and that $(A,S)$ has dimension 0. Then for each $n\geq 2$ and each $(n+1)$-dimensional $S$-acyclic $(-\eps)$-symmetric Poincar\'{e} complex $(C,\phi)$ over $A$, there exist two $(A,S)$ cobordisms \[((g_\pm\,\,g'_\pm):C\oplus C_\pm'\to \tilde{D}_\pm,(0,\phi\oplus-\phi_\pm'))\]such that $(C'_\pm,\phi'_\pm)$ are skew-suspensions, $H_*(C'_+)= H_*(C'_-)$ and for each $r=0,\dots,n+1$ \[\left(\begin{matrix}g_+&g_+'\\\overline{s}g_-&-sg'_-\end{matrix}\right):H_r(C)\oplus H_r(C'_\pm)\xrightarrow{\cong} H_r(\tilde{D}_+)\oplus H_r(\tilde{D}_-).\]
\end{proposition}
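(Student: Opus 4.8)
The plan is to mimic the proof of Theorem \ref{thm:doubproj} (the projective case), but working with $S$-acyclic complexes over a ring $A$ with $(A,S)$ of dimension $0$, so that the homological-dimension-$0$ hypothesis is replaced by the statement that every finitely generated $S$-torsion $A$-module admits a projective resolution of length $1$. First I would take an $(n+1)$-dimensional $S$-acyclic $(-\eps)$-symmetric Poincar\'{e} complex $(C,\phi)$ over $A$ and observe that $\phi_0:C^{n+1-*}\xrightarrow{\simeq}C$ is a chain equivalence. I want to perform a `surgery below the middle dimension' killing (a resolution of) $H_0(C)$ and a `surgery above the middle dimension' killing (a resolution of) $H_{n+1}(C)$, in each case using the $(A,S)$-dimension-$0$ hypothesis to guarantee that the data complex needed is a short ($1$-dimensional) projective complex, hence the resulting pairs live in $\C_+(A,S)$. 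Concretely: for the bottom surgery, choose a projective resolution $0\to P_1\to P_0\to H_0(C)\to 0$ and use it to build an $(n+2)$-dimensional $(-\eps)$-symmetric pair $x_-=(f_-:C\to D_-,(0,\phi))$ with $D_-$ concentrated in degrees $0,1$; by Poincar\'{e} duality the effect of surgery $(C'_-,\phi'_-)$ has vanishing homology outside degrees $1,\dots,n$, i.e. is a skew-suspension. For the top surgery, dually use a resolution of $H_{n+1}(C)$ (equivalently, by duality, of $H_0(C^{n+1-*})$) to build $x_+=(f_+:C\to D_+,(0,\phi))$ with $D_+$ concentrated in degrees $n+1,n+2$, and again the effect of surgery $(C'_+,\phi'_+)$ is a skew-suspension with homology concentrated in degrees $1,\dots,n$.

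Next I would arrange that $H_*(C'_+)=H_*(C'_-)$: by Poincar\'{e} duality for $(C,\phi)$, both $H_0(C)$ and $H_{n+1}(C)$ are determined by the same data (they are torsion-dual), and the two surgeries each simply delete the corresponding boundary/cyclic homology, so the middle homology $H_r(C)$ for $1\le r\le n$ survives unchanged in both cases — this gives the equality of homology groups (I should check the identification of the surgery effects' symmetric structures via the explicit formulas in \cite[\S4]{MR560997}, exactly as in the projective case, to see that both $(C'_\pm,\phi'_\pm)$ are genuinely skew-suspensions of $n$-dimensional $\eps$-symmetric $S$-acyclic Poincar\'{e} complexes). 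Then, following the projective proof, I would write out the cobordisms $((g_\pm\,\,g'_\pm):C\oplus C'_\pm\to\tilde D_\pm,(0,\phi\oplus-\phi'_\pm))$ produced by Corollary \ref{corr:cobordism} and its subsequent remark, compute their effect on homology (the map is $(1\,\,1)$ in the middle degrees $1,\dots,n$, the identity in the top degree for $g_+$ and in the bottom degree for $g_-$, and zero elsewhere), and then form the two cobordisms from $(C,\phi)$ to $(C'_\pm,\phi'_\pm)$ using the half-unit $s$ as in the projective argument. The key linear-algebra computation is that
\[
\begin{pmatrix} g_+ & g_+' \\ \overline{s}g_- & -sg'_- \end{pmatrix}
\]
induces an isomorphism on each $H_r$ for $r=0,\dots,n+1$, which follows because $s$ is a unit and $s+\overline{s}=1$, exactly mirroring the matrix identity in the proof of Lemma \ref{lem:halfunithyp} and the proof of Theorem \ref{thm:doubproj}.

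The main obstacle — and the reason this proposition stops short of an actual periodicity isomorphism for the torsion $DL$-groups — is precisely that the two cobordisms are shown to be complementary only \emph{on the level of homology}, not on the level of chain homotopy equivalence. Over a ring with $(A,S)$ of dimension $0$ the short resolutions involved behave well, but the surgery effect complexes $C'_+$ and $C'_-$ need not be \emph{chain homotopy equivalent} (only homology-equivalent), since a chain equivalence between them would require controlling the differentials in degrees where $\Ext^1$-terms appear, and these arise asymmetrically from the top versus bottom surgery unless further vanishing hypotheses are imposed (this is the analogue of the remark after Theorem \ref{thm:doubproj} about what goes wrong in homological dimension $1$). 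So the honest statement I can prove is the homology-level one in the proposition; upgrading the conclusion from $H_r$-isomorphisms to a chain homotopy equivalence $\lmat g_+&g_+'\\\overline{s}g_-&-sg'_-\rmat:C\oplus C'_\pm\xrightarrow{\simeq}\tilde D_+\oplus\tilde D_-$, which is what a genuine $(A,S)$-double-cobordism demands, is exactly the step that fails in general, and I would flag this explicitly rather than attempt to force it.
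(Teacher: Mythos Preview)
Your overall strategy is the same as the paper's, and your bottom surgery---resolving $H_0(C)$ by $D_-$ concentrated in degrees $0,1$---is exactly right. But there is a genuine gap in your top surgery. Since $(A,S)$ has dimension $0$ you may take $C$ concentrated in degrees $0,\dots,n+1$; then $H_{n+1}(C)=\ker(d:C_{n+1}\to C_n)$ is a submodule of the projective module $C_{n+1}$, hence injects into its $S$-localisation and is $S$-torsion-free. But $C$ is $S$-acyclic, so $H_{n+1}(C)$ is also $S$-torsion, and therefore $H_{n+1}(C)=0$. Your proposed top surgery, with $D_+$ a resolution of $H_{n+1}(C)$ concentrated in degrees $n+1,n+2$, is thus surgery with contractible data: the effect is $C'_+\simeq C$, which is certainly not a skew-suspension, and nothing has been achieved.

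The paper instead takes $f_+:C\to D_+$ to resolve the identity on $H_n(C)$, with $D_+$ concentrated in degrees $n,n+1$. One then needs a Universal Coefficient argument to compute the homology of the effects: for $C'_-$, the relevant portion of the surgery long exact sequence identifies the map $H_{n+1}(D_-^{n+2-*})\to H_{n+1}(C(f_-))$ with the composite of the UCT isomorphism $f_-^*:\Ext^1_A(H_0(C),A)\xrightarrow{\cong} H^1(C)$ and the Poincar\'{e} duality isomorphism $\phi_0:H^1(C)\xrightarrow{\cong} H_n(C)$, so that both $H_n(C'_-)$ and $H_{n+1}(C'_-)$ vanish (and dually for $C'_+$). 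This is where the $(A,S)$-dimension-$0$ hypothesis actually enters on the surgery side---not merely in the existence of a length-$1$ resolution, but in forcing the $\Ext^1$ term to carry all of $H^1$. Your subsequent half-unit matrix argument and your diagnosis of the final obstacle---that $(C'_+,\phi'_+)$ and $(C'_-,\phi'_-)$ are only known to be homology-equivalent, not chain homotopy equivalent---are both correct and match the paper exactly.
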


\begin{proof}Because $(A,S)$ has dimension 0, we may assume without loss of generality that an $(n+1)$-dimensional $\eps$-symmetric $S$-acyclic Poincar\'{e} complex $(C,\phi)$ over $A$ is of the form \[0\to C_{n+1}\xrightarrow{d} C_{n}\to\dots\to C_1\xrightarrow{d} C_0\to 0.\]Because $C$ is $S$-acyclic we have also that $H_{n+1}(C)=0$.

Define $f_+:C\to D_+$ to be a morphism in $\C_+(A,S)$ resolving the identity morphism $1:H_{n}(C)\to H_{n}(C)$ by a chain map  from $C$ to a complex of f.g.\ projective $A$-modules $D_+$ of the form\[0\to (D_+)_{n+1}\xrightarrow{d_+} (D_+)_{n}\to 0\to \dots\to 0\to 0.\]Define $f_-:C\to D_-$ to be a morphism in $\C_+(A,S)$ resolving the identity morphism $1:H_0(C)\to H_0(C)$ by a chain map from $C$ to a complex of f.g.\ projective $A$-modules $D_-$ of the form\[0\to0\to\dots\to 0\to (D_-)_1\xrightarrow{d_-} (D_-)_{0}\to 0.\]As in the proof of Theorem \ref{thm:doubproj} there are defined $(n+2)$-dimensional $\eps$-symmetric pairs $x_\pm:=(f_\pm:C\to D_\pm,(0,\phi))$. We write the effect of surgery on $(C,\phi)$ with data $x_\pm$ as $(C'_\pm,\phi'_\pm)$. We claim that the homology of the effect is given by\[H_r(C'_\pm)=\left\{\begin{array}{lcl}H_r(C)&\quad&1\leq r\leq n-2,\\0&&\text{otherwise,}\end{array}\right. \]We prove this claim for $C'_-$ (the case of $C_+'$ being entirely similar). Consider part of the long exact sequence in homology coming from the definition of the surgery\[ 0\to \underset{=0}{\underbrace{H_{n+1}(C_-')}}\to H_{n+1}(D_-^{n+3-*})\xrightarrow{\phi_0f_-^*} H_{n+1}(f_-)\to H_{n}(C'_-)\to 0\]But the Universal Coefficient Theorem states in this context that there is a short exact sequence\[0\to \underset{=H_{n+1}(D^{n+3-*}_-)}{\underbrace{\Ext^1_A(H_0(C),A)}}\overset{\beta}{\longrightarrow} H^1(C)\longrightarrow \underset{=0}{\underbrace{H_1(C)^*}}\to 0\]and we may identify $f_-^*$ with the isomorphism $\beta$ so that $H_{n-1}(C)=0$. The rest of the claim follows easily by considering the rest of the long exact homology sequence.

Exactly as in the proof of Theorem \ref{thm:doubproj}, we can now write the respective cobordisms associated to the two surgeries (according to Corollary \ref{corr:cobordism} and the subsequent remark) as \[((g_\pm\,\,g'_\pm):C\oplus C_\pm'\to \tilde{D}_\pm,(0,\phi\oplus-\phi'_\pm))\]and we see that the maps induced on homology are\[
(g_+\,\,g'_+)_*=\left\{\begin{array}{ccl}(1\,\, 1):H_r(C)\oplus H_r(C)\to H_{r}(C)&\quad&1\leq r\leq n-1,\\ 1:H_r(C)\to H_{r}(C)&&r=n\\0&&\text{otherwise}\end{array}\right.
\]\[
(g_-\,\,g'_-)_*=\left\{\begin{array}{ccl}(1\,\, 1):H_r(C)\oplus H_r(C)\to H_{r}(C)&\quad&1\leq r\leq n-1,\\ 1:H_r(C)\to H_{r}(C)&&r=0\\0&&\text{otherwise}\end{array}\right.
\]Proceeding as in Theorem \ref{thm:doubproj}, the rest of the proposition follows.
\end{proof}

Clearly, a desirable fact that is missing from the previous proposition is a proof that the effects $(C_\pm',\phi'_\pm)$ are homotopic to one another. As the complexes $C_-'$ and $C'_+$ were shown to be quasi-isomorphic, and the symmetric structures $\phi_\pm'$ are described by (degree 0) chain maps that induce the same map on homology, one might hope there exists a chain-level equivalence between these maps and hence a chain homotopy equivalence between $(C_+',\phi_+')$ and $(C_-',\phi_-')$. However, such a general result does not exist.

We stress that we have no counter examples to the statement that `all double $L$-groups are 4-periodic'. But we conjecture that it is not true in general and that for each $n\geq2$, there is an obstruction to inverting the skew-suspension map\[\overline{S}:DL^n(A,S,\eps)\hookrightarrow DL^{n+2}(A,S,-\eps).\]

\section{Comparison with double Witt groups}\label{sec:comparison}

Our general philosophy of $\eps$-symmetric forms is that they are best understood as short chain complexes with extra structure.

\subsection*{Double Witt groups of forms and Seifert forms}

The following is evidently true.

\begin{proposition}\label{prop:correspondenceproj}There are contravariant 1:1 correspondences that preserve the respective monoid structures.

\[\begin{array}{rcl}\left\{  \begin{array}{c}\text{0-dimensional, $\eps$-symmetric} \\ \text{(Poincar\'{e}) complexes over $R$}\end{array} \right\}_{\text{\big{/htpy.}}}&\longleftrightarrow &
\left\{  \begin{array}{c}\text{(non-singular) $\eps$-symmetric} \\ \text{ forms over $R$}\end{array} \right\}_{\text{\big{/iso}}} \\ &&\\ (C,\phi)&\mapsto& (H^0(C),\phi_0).\end{array}\]

and

\[\begin{array}{rcl}\left\{  \begin{array}{c}\text{0-dimensional, $\eps$-symmetric} \\ \text{Seifert complexes over $R$}\end{array} \right\}_{\text{\big{/htpy.}}}&\longleftrightarrow &
\left\{  \begin{array}{c}\text{non-singular $\eps$-symmetric} \\ \text{Seifert forms over $R$}\end{array} \right\}_{\text{\big{/iso}}} \\ &&\\ (C,\hat{\psi})&\mapsto& (H^0(C),\hat{\psi}).\end{array}\]
\end{proposition}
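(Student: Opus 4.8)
The statement asserts two contravariant bijections: one between homotopy classes of $0$-dimensional $\eps$-symmetric (Poincar\'{e}) complexes over $R$ and isomorphism classes of (non-singular) $\eps$-symmetric forms over $R$; and one between homotopy classes of $0$-dimensional $\eps$-symmetric Seifert complexes over $R$ and isomorphism classes of non-singular $\eps$-symmetric Seifert forms over $R$. The plan is to unwind the definitions in the degenerate case of a chain complex concentrated in dimension $0$, and check the stated assignments are mutually inverse and monoid-preserving. Since the underlying chain complex in each case is $(C: 0\to C_0\to 0)$, we have $C^{-*}$ concentrated in dimension $0$ with $(C^{-*})_0 = C_0^* = C^0$, and $H^0(C) = C_0$. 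I would treat the symmetric and Seifert cases in parallel, emphasising the symmetric case and noting the Seifert case differs only in which functor we take homotopy-fixed-points of.

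\textbf{First I would handle the symmetric correspondence.} For $C$ concentrated in degree $0$, the example ``$\eps$-symmetric forms are short $\eps$-symmetric chain complexes'' in \ref{subsec:definitions} already records that $(W^\%C)_0 \cong \Hom_{\Z[\Z/2\Z]}(W_0, C\otimes C)_0$ and that a $0$-cycle $\phi$ is precisely a chain-homotopy class of degree-$0$ chain maps $\phi_0 : C^{-*}\to C$ satisfying $T_\eps\phi_0 = \phi_0\circ(\text{switch})$, equivalently a single morphism $\phi_0 : C_0^* = C^0 \to C_0$ with $\phi_0 = \eps\phi_0^*$. Since $C$ is concentrated in one degree, the differential vanishes, so cycle conditions and homotopy equivalences degenerate: a homotopy equivalence $h : (C,\phi)\simeq(C',\phi')$ is exactly an $R$-module isomorphism $h : C_0\to C_0'$ with $h^\%\phi = \phi'$, i.e. $(h^*)^{-1}\phi_0 h^{-1} = \phi_0'$ — dually, $h^{-1}$ is an isomorphism of forms $(C^0,\phi_0)\to ((C')^0,\phi_0')$ (explaining the contravariance). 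The assignment $(C,\phi)\mapsto (H^0(C),\phi_0) = (C^0,\phi_0)$ is then visibly a bijection on iso/homotopy classes, with inverse sending a form $(P,\theta)$ to the complex $C$ with $C^0 = P$, i.e. $C_0 = P^*$, and $\phi_0 = \theta : P\to P^*$ reinterpreted via $C_0^* = P^{**}\cong P$. The Poincar\'{e} condition — $\phi_0 : C^{0-*}\to C$ a chain homotopy equivalence — becomes exactly the condition that $\theta$ is an isomorphism, i.e. the form is non-singular. Monoid structure: direct sum of complexes corresponds to orthogonal direct sum of forms, both on the nose.

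\textbf{Then the Seifert case.} Here one uses instead the $\eps$-ultraquadratic structure: by the definition in \ref{subsec:Lseifert}, a $0$-dimensional $\eps$-ultraquadratic complex is $(C,\hat\psi)$ with $\hat\psi\in(\Hom_R(C^{-*},C))_0$ a $0$-cycle, which for $C$ concentrated in degree $0$ is just a morphism $\hat\psi : C^0\to C_0$, with no symmetry constraint; Poincar\'{e}-ness asks $(1+T_\eps)\hat\psi = \hat\psi + \eps\hat\psi^*$ to be an isomorphism. Comparing with the definition of an $\eps$-symmetric Seifert form $(K,e,\psi)$ — where non-singularity lets us recover $e = (\psi+\eps\psi^*)^{-1}\psi$ and hence specify the form by $(K,\psi)$ — the assignment $(C,\hat\psi)\mapsto (H^0(C),\hat\psi) = (C^0,\hat\psi)$ (with the Seifert module structure $e$ reconstructed from $\hat\psi$) is the desired bijection, again contravariant by the same adjoint-of-isomorphism bookkeeping, and additive. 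The main obstacle — really the only point requiring care rather than bookkeeping — is keeping the $\Hom$/dual and the ${}^{-*}$ conventions consistent so that the double-dual identification $C_0^{**}\cong C_0$ is used coherently and the contravariance (rather than covariance) genuinely falls out; beyond that, everything is forced by the definitions, which is why the proposition is stated as ``evidently true''.
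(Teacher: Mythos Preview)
Your proposal is correct and matches the paper's treatment: the paper states the proposition as ``evidently true'' with no proof, and your argument is precisely the definition-unwinding that justifies this. One small slip: the identity coming from $h^\%\phi=\phi'$ is $h\,\phi_0\,h^*=\phi_0'$ (not $(h^*)^{-1}\phi_0 h^{-1}=\phi_0'$), so the induced isomorphism of forms is $(h^*)^{-1}:(C^0,\phi_0)\to((C')^0,\phi_0')$ --- but this is exactly the bookkeeping issue you already flagged, and the contravariance conclusion stands.
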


Here is a precise characterisation of metabolic Seifert forms considered as chain complexes.

\begin{proposition}An $\eps$-symmetric Seifert form over $R$ admits a lagrangian if and only if the associated $\eps$-symmetric homotopy equivalence class of 0-dimensional $\eps$-symmetric Seifert complexes over $R$ contains $(C,\hat{\psi})$ such that there is a Seifert nullcobordism $(f:C\to D,(\delta\hat{\psi},\hat{\psi}))$ with $H^1(D)=0$.
\end{proposition}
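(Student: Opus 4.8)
The plan is to set up a dictionary between the two sides using Proposition~\ref{prop:correspondenceproj}, and then translate the purely algebraic condition ``$\psi$ admits a lagrangian'' into the chain-level statement about Seifert nullcobordisms. First I would fix notation: a non-singular $\eps$-symmetric Seifert form $(K,\psi)$ over $R$ corresponds, via Proposition~\ref{prop:correspondenceproj}, to a $0$-dimensional $\eps$-symmetric Seifert complex $(C,\hat\psi)$ with $C$ concentrated in degree $0$, $C_0 = C^0 = K^*$ (up to the usual dualisation built into the correspondence), $\hat\psi = \psi$, and the associated symmetrisation $(1+T_\eps)\hat\psi = \psi+\eps\psi^*$. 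Recall that by Proposition~\ref{algtrans}(i)--type bookkeeping and the definition of the Seifert category, a Seifert submodule $j\colon(L,e_L)\hookrightarrow(K,e_K)$ is a lagrangian exactly when $0\to(L,e_L)\xrightarrow{j}(K,e_K)\xrightarrow{j^*\psi}(L,e_L)^*\to0$ is exact in the Seifert category (the characterisation just before Lemma~\ref{doublyslice}).

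Next I would prove the ``only if'' direction. Given a lagrangian $j\colon L\hookrightarrow K$, I would build the obvious $1$-dimensional $\eps$-ultraquadratic pair whose underlying map is (the dual of) $j$. Concretely, take $D$ to be the complex with $D_1 = L^*$, $D_0 = 0$ — or, more precisely, arrange the Thom/thickening bookkeeping so that $f\colon C\to D$ has $C$ in degree $0$, $D$ with a single nonzero module in degree $1$, and the relative structure $(\delta\hat\psi,\hat\psi)\in C(f\otimes f)_1$ is the cycle forced by $j$ being a lagrangian (the exact sequence above is precisely what makes $(\delta\hat\psi,\hat\psi)$ a Poincar\'e structure on the pair). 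Then $H^1(D)=0$ is immediate from $D$ being concentrated in degree $1$ with the prescribed module, while Poincar\'e-ness of the pair is exactly the exactness of the lagrangian sequence. I should double-check the degree conventions against Section~\ref{subsec:cxpair} and the algebraic Thom construction so that ``$H^1(D)=0$'' really is the right cohomological statement; this is the one genuinely fiddly bookkeeping point, but it is routine.

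For the ``if'' direction, I would start from a homotopy-equivalence-class representative $(C,\hat\psi)$ (with $C$ concentrated in degree $0$, $C^0 = K^*$) together with a Seifert nullcobordism $(f\colon C\to D,(\delta\hat\psi,\hat\psi))$ satisfying $H^1(D)=0$. Using Poincar\'e duality for the pair (Section~\ref{subsec:definitions}), the duality map $C(f)^{1-*}\to D$ is a chain equivalence; combining this with $H^1(D)=0$ forces $D$ to be homotopy equivalent to a complex concentrated in degree $1$, say $D\simeq (L^*\text{ in degree }1)$ for a f.g.\ projective $R$-module $L$ carrying a Seifert endomorphism inherited from the ultraquadratic structure. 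The map $f$ then yields a Seifert submodule $L\hookrightarrow K$, and unravelling the Poincar\'e pair condition (i.e.\ the chain-level exactness of $0\to C\to D\to C(f)\to 0$ together with the duality equivalence) reproduces exactly the exact sequence $0\to(L,e_L)\to(K,e_K)\xrightarrow{j^*\psi}(L,e_L)^*\to 0$, so $L$ is a lagrangian.

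The main obstacle I anticipate is purely one of translation rather than of genuine difficulty: making sure the cohomological connectivity hypothesis $H^1(D)=0$ is matched to the correct place in the Thom--thickening / surgery-dual dictionary, so that it corresponds on the nose to ``the metaboliser $L$ is a module, not a complex''. Once the degree conventions are pinned down, both directions are essentially a restatement of the exact-sequence characterisation of a Seifert lagrangian, and the $1$-dimensional ultraquadratic pair attached to a lagrangian is the chain-complex avatar of the metabolic relation (exactly parallel to Lemma~\ref{lem:splitishyp1} and Lemma~\ref{doublyslice} in the form setting). No surgery above/below the middle dimension is needed here because everything lives in a single degree; the statement is really the $0$-dimensional shadow of Proposition~\ref{prop:thomthick1}.
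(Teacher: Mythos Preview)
The paper does not give an explicit proof of this proposition; it is stated without proof as the Seifert-form analogue of Proposition~\ref{prop:3.4.5} (which is cited from \cite[3.4.5(ii)]{MR620795}). Your approach---translating the lagrangian exact sequence $0\to(L,e_L)\xrightarrow{j}(K,e_K)\xrightarrow{j^*\psi}(L,e_L)^*\to 0$ into the Poincar\'e condition for a $1$-dimensional ultraquadratic pair via the dictionary of Proposition~\ref{prop:correspondenceproj}---is exactly the intended argument, and is correct in outline.

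There is one concrete slip in your degree bookkeeping that you should fix rather than defer. With $C$ concentrated in degree $0$ and $C_0=K^*$, the nullcobordism attached to a lagrangian $j\colon L\hookrightarrow K$ should have $D$ concentrated in degree $0$ with $D_0=L^*$ (not $D_1=L^*$, $D_0=0$ as you wrote). The chain map is $f=j^*\colon K^*\to L^*$, and then $H^1(D)=0$ is immediate since $D_1=0$. With this choice the mapping cone $C(f)$ is the two-term complex $K^*\to L^*$ in degrees $1,0$, so $C(f)^{1-*}$ is $L\to K$ in degrees $1,0$; the Poincar\'e duality map $\ev_l\colon C(f)^{1-*}\to D$ being an equivalence is then exactly the statement that $j^*(\psi+\eps\psi^*)\colon K\to L^*$ induces an isomorphism $K/j(L)\cong L^*$, i.e.\ the lagrangian exact sequence. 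Your ``if'' direction is fine as written once you carry this degree convention through: the hypothesis $H^1(D)=0$ together with Poincar\'e duality for the pair forces $D$ (up to homotopy) into degree $0$, and reading off $f^*$ on $H^0$ produces the lagrangian.
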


Hence a precise characterisation of hyperbolic Seifert forms considered as chain complexes

\begin{proposition}\label{hypcxseif}An $\eps$-symmetric Seifert form over $R$ is hyperbolic if and only if the associated $\eps$-symmetric homotopy equivalence class of 0-dimensional $\eps$-symmetric Seifert complexes over $R$ contains $(C,\hat{\psi})$ such that there are two complementary Seifert nullcobordisms $(f_\pm:C\to D_\pm,(\delta\hat{\psi},\hat{\psi}))$ with $H^1(D_\pm)=0$.
\end{proposition}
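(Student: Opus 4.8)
The plan is to chain together the two preceding propositions. By Proposition \ref{prop:correspondenceproj} we have a monoid-preserving contravariant $1$:$1$ correspondence between non-singular $\eps$-symmetric Seifert forms over $R$ (up to isomorphism) and $0$-dimensional $\eps$-symmetric Seifert complexes over $R$ (up to homotopy), sending a form $(K,\psi)$ with $H^0(C)=K$. So the statement to prove is purely a translation of the notion ``hyperbolic'' through this dictionary, and the real content was already isolated in the two earlier propositions: the characterisation of a lagrangian as a Seifert nullcobordism $(f:C\to D,(\delta\hat\psi,\hat\psi))$ with $H^1(D)=0$ (the proposition immediately preceding the metabolic/hyperbolic pair), and the fact that a Seifert form is hyperbolic iff it admits two complementary lagrangians. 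I would structure the proof as: (i) recall the correspondence; (ii) recall the lagrangian $\Leftrightarrow$ nullcobordism-with-$H^1(D)=0$ dictionary; (iii) check that \emph{complementarity} of two lagrangians on the form side matches \emph{complementarity} of the two associated Seifert nullcobordisms on the chain-complex side.

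First I would spell out step (ii) in the direction I need. Given a non-singular $\eps$-symmetric Seifert form $(K,e,\psi)$ with associated $0$-dimensional complex $(C,\hat\psi)$ (concentrated in degree $0$, $C_0=K^*$ say, with $\hat\psi$ the chosen $n$-cycle in $\Hom_A(C^{-*},C)_0$), a (split) lagrangian $j:(L,e_L,0)\hookrightarrow(K,e,\psi)$ gives by definition an exact sequence of Seifert modules $0\to (L,e_L)\xrightarrow{j}(K,e_K)\xrightarrow{j^*\psi}(L,e_L)^*\to 0$ (using the characterisation in the proposition attributed to \cite{MR1004605}). Dualising and reindexing, this is exactly the data of a $1$-dimensional $\eps$-ultraquadratic pair $(f:C\to D,(\delta\hat\psi,\hat\psi))$ where $D$ is the $1$-term complex with $D_1=L^*$ (so $H^1(D)=0$ since $D$ is concentrated in degree $1$, hence $D^1=0$... more precisely $D$ has no degree-$1$ cochain cohomology in the relevant spot), Poincar\'e because the symmetrisation $(L,\psi+\eps\psi^*)$-restriction is non-singular. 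Conversely, a Seifert nullcobordism of $(C,\hat\psi)$ with $H^1(D)=0$ is, after passing to homology and taking duals, precisely a lagrangian of $(K,e,\psi)$. This is already essentially the statement of the proposition preceding the one I must prove, so I would cite it rather than redo it; I only need to be careful that the $H^1(D)=0$ condition is literally what cuts down a general nullcobordism to one representing a lagrangian.

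The one genuinely new point — step (iii), and the place where I expect the only real friction — is that ``complementary lagrangians'' on the form side and ``complementary Seifert nullcobordisms'' (i.e.\ $\left(\begin{smallmatrix}f_+\\f_-\end{smallmatrix}\right):C\to D_+\oplus D_-$ a chain homotopy equivalence) translate into one another. In one direction: if $j_\pm:(L_\pm,e_\pm)\hookrightarrow(K,e)$ are complementary lagrangians, meaning $\left(\begin{smallmatrix}j_+\\j_-\end{smallmatrix}\right):L_+\oplus L_-\xrightarrow{\cong}K$, then the associated nullcobordisms $(f_\pm:C\to D_\pm,(\delta\hat\psi_\pm,\hat\psi))$ with $(D_\pm)_1=L_\pm^*$ assemble into $\left(\begin{smallmatrix}f_+\\f_-\end{smallmatrix}\right):C\to D_+\oplus D_-$, which on the degree-$0$/degree-$1$ level is (dual to) the isomorphism $L_+\oplus L_-\cong K$, hence a homotopy equivalence; note $H^1(D_\pm)=0$ for each, so each is a genuine lagrangian-nullcobordism. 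The converse needs slightly more care: given two complementary Seifert nullcobordisms with $H^1(D_\pm)=0$, I would first replace $(C,\hat\psi)$ within its homotopy class so that $C$ and $D_\pm$ are strictly concentrated in a single degree (possible since everything is homotopy equivalent to a $0$- resp.\ $1$-dimensional complex, using the freedom already built into ``the associated homotopy equivalence class contains $(C,\hat\psi)$ such that...''), at which point $H^*(D_\pm)$ just \emph{is} $D_\pm$, and the homotopy equivalence $\left(\begin{smallmatrix}f_+\\f_-\end{smallmatrix}\right)$ becomes an honest isomorphism $H^0(C)\cong H^1(D_+)\oplus H^1(D_-)$, dual to the required splitting $K\cong L_+\oplus L_-$ of Seifert modules (compatibility with the endomorphisms $e$, $e_\pm$ coming for free from the fact that $f_\pm$ are Seifert chain maps). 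I would double-check the $e$-equivariance of the splitting and that the two pieces $L_\pm^*$ inherit the zero form — both follow formally from the $\eps$-ultraquadratic structure maps being chain maps — and that is the whole argument. Since Lemma \ref{doublyslice} already shows $(K\oplus K,\psi\oplus-\psi)$ is hyperbolic without a half-unit, no half-unit hypothesis is needed here either, consistent with the surrounding material.
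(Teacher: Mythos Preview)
Your approach is correct and is exactly what the paper intends: it states Proposition~\ref{hypcxseif} with no proof at all, merely prefacing it with ``Hence'' after the metabolic characterisation, so the intended argument is precisely your (i)--(iii): cite the correspondence, cite the lagrangian~$\Leftrightarrow$~nullcobordism-with-$H^1(D)=0$ dictionary, and observe that complementarity translates.

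One bookkeeping wobble to clean up: in your step~(ii) aside and again in step~(iii) you place $D$ in degree~$1$ and then claim $H^1(D)=0$, and later write the induced isomorphism as $H^0(C)\cong H^1(D_+)\oplus H^1(D_-)$. This is muddled. With $C$ concentrated in degree~$0$ (so $H^0(C)=K$), the nullcobordism target $D$ is also (up to homotopy) concentrated in degree~$0$, the chain map $f:C\to D$ is degree~$0$, and the complementary condition gives $H^0(C)\cong H^0(D_+)\oplus H^0(D_-)$ via $(f_+^*,f_-^*)$; the condition $H^1(D_\pm)=0$ is separate (and indeed automatic here, since $H^1(C)=0$ --- compare the proof of Proposition~\ref{prop:isogroups}). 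The lagrangians are the images $f_\pm^*(H^0(D_\pm))\hookrightarrow H^0(C)=K$, and the $H^0$-splitting is exactly their complementarity. You were right to defer the details of (ii) to the preceding proposition rather than redo them; just correct the indices in (iii) and the argument is complete.
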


\begin{proposition}\label{prop:isogroups}There is an isomorphism of groups\[
\widehat{DW}^\eps(R)\xrightarrow{\cong} \widehat{DL}^0(R,\eps).\]
\end{proposition}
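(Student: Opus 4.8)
\textbf{Proof proposal for Proposition \ref{prop:isogroups}.}

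The plan is to exploit Proposition \ref{prop:correspondenceproj}, which gives a monoid isomorphism between (non-singular) $\eps$-symmetric Seifert forms over $R$ up to isomorphism and $0$-dimensional $\eps$-symmetric Seifert complexes over $R$ up to homotopy equivalence, together with Proposition \ref{hypcxseif}, which characterises the hyperbolic Seifert forms on the chain complex side. Since $\widehat{DW}^\eps(R)$ is the monoid construction (Definition \ref{def:monoid}) of non-singular $\eps$-symmetric Seifert forms modulo hyperbolic forms, and $\widehat{DL}^0(R,\eps)$ is the set of $0$-dimensional $\eps$-symmetric Seifert complexes modulo Seifert double-cobordism, it suffices to show that the correspondence of Proposition \ref{prop:correspondenceproj} matches up the two equivalence relations, and then invoke the general fact about monoid constructions. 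The monoid structures already agree by Proposition \ref{prop:correspondenceproj}, so the map $(C,\hat\psi)\mapsto (H^0(C),\hat\psi)$ descends to a monoid homomorphism on equivalence classes once we verify both directions of the following claim: two non-singular $\eps$-symmetric Seifert forms over $R$ are Witt-double-equivalent (i.e.\ their difference is hyperbolic, or more precisely they become equal after adding hyperbolics) if and only if the associated $0$-dimensional Seifert complexes are Seifert double-cobordant.

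First I would treat the $(\Leftarrow)$ direction, which is the substantive one. Given a Seifert double-cobordism between $0$-dimensional Seifert complexes $(C,\hat\psi)$ and $(C',\hat\psi')$, consisting of complementary Seifert nullcobordisms of $(C,-\hat\psi')\oplus(C,\hat\psi)$ — more precisely two complementary Seifert cobordisms $x_\pm = (f_\pm : C\oplus C'\to D_\pm, (\delta_\pm\hat\psi,\hat\psi\oplus-\hat\psi'))$ — I would perform surgery below the middle dimension on each $D_\pm$ to make them $0$-dimensional in the sense needed by Proposition \ref{hypcxseif}, i.e.\ arrange $H^1(D_\pm)=0$ without destroying the complementary condition. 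Here the key point is that surgery below the bottom dimension changes a chain complex within its homotopy class of cobordisms without affecting homology in the relevant degree, and the complementary condition is a homology-level condition on the map $\lmat f_+\\ f_-\rmat$, so it is preserved. This reduces the data to precisely that described in Proposition \ref{hypcxseif}, witnessing that $(C,\hat\psi)\oplus(C',-\hat\psi')$ (viewed as a form) is hyperbolic, hence $[(H^0(C),\hat\psi)] = [(H^0(C'),\hat\psi')]$ in $\widehat{DW}^\eps(R)$. For the $(\Rightarrow)$ direction, given forms $\alpha,\alpha'$ with $\alpha\oplus(\text{hyp}) \cong \alpha'\oplus(\text{hyp}')$, I would use Proposition \ref{hypcxseif} in reverse: a hyperbolic form corresponds to a Seifert complex admitting two complementary Seifert nullcobordisms with $H^1(D_\pm)=0$; adding such a complex to $(C,\hat\psi)$ (using direct sums of nullcobordisms, which remain complementary) exhibits the required Seifert double-cobordism. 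Finally I would check that the map is a bijection: surjectivity is immediate from the first correspondence in Proposition \ref{prop:correspondenceproj} (every Seifert complex is homotopy equivalent to a $0$-dimensional one), and injectivity is exactly the $(\Rightarrow)$ implication just described. The argument that $\widehat{DW}^\eps(R)$ and $\widehat{DL}^0(R,\eps)$ are both genuinely groups (not just monoids) follows from Lemma \ref{doublyslice} on the form side and the corresponding lemma for Seifert double-cobordism on the complex side, both already established.

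The main obstacle I anticipate is the $(\Leftarrow)$ step: one must be careful that the surgeries used to collapse $D_\pm$ to the correct dimension really do preserve the homotopy-equivalence class of the cobordism \emph{and} the complementary condition simultaneously, since these are controlled by slightly different data (chain homotopy type of the pair versus homology of $\lmat f_+\\ f_-\rmat$). The cleanest route is probably to note that since $C, C'$ are already concentrated in degree $0$, the cobordisms $D_\pm$ can be taken concentrated in degrees $0$ and $1$ from the outset (a $0$-dimensional Seifert complex has a nullcobordism of length at most $1$), so that the surgery to kill $H^1(D_\pm)$ is essentially the chain-complex incarnation of passing from a metaboliser to its saturation; this is precisely the manipulation carried out in the proof of Theorem \ref{thm:multisignature} and its Corollary, and I would cite that reasoning rather than redo it. No genuinely new technique beyond what is already in the excerpt should be required.
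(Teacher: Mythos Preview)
Your overall strategy is right and uses the same ingredients as the paper (Propositions \ref{prop:correspondenceproj} and \ref{hypcxseif}), but you are making the key step harder than it is. The surgery you propose to arrange $H^1(D_\pm)=0$ is unnecessary: it comes for free from the complementary condition. Since $C\oplus C'$ is $0$-dimensional we have $H^1(C\oplus C')=0$, and the complementary condition says precisely that $\lmat f_+\\f_-\rmat:C\oplus C'\to D_+\oplus D_-$ is a chain homotopy equivalence, hence $0=H^1(C\oplus C')\cong H^1(D_+)\oplus H^1(D_-)$, so $H^1(D_\pm)=0$. Now Proposition \ref{hypcxseif} applies directly and the corresponding form is hyperbolic. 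This is the entire content of the paper's injectivity argument, and it sidesteps exactly the obstacle you flagged. Your worry about whether surgery on $D_\pm$ preserves both the homotopy class of the pair and the complementary condition simultaneously is therefore moot; you should delete that discussion and replace it with the one-line homology observation above. The $(\Rightarrow)$ direction and surjectivity are fine as you describe them.
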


\begin{proof}Clearly well-defined and surjective by the previous 3 propositions. So show injective, suppose that if $(C,\hat{\psi})$ is a $\eps$-symmetric, 0-dimensional Poincar\'{e} complex associated to the non-singular, $\eps$-symmetric Seifert form $(K,\alpha)$. If there exists a pair of complementary Seifert nullcobordisms $(f_\pm:C\to D_\pm,(\delta_\pm\hat{\psi},\hat{\psi}))$ then in particular $0=H^1(C)=H^1(D_+)\oplus H^1(D_-)$ so that $H^1(D_\pm)=0$ but then $(K,\alpha)$ must be hyperbolic by Proposition \ref{hypcxseif}.
\end{proof}

\begin{definition}A non-singular $\eps$-symmetric Seifert form $(K,\alpha)$ over $R$ is called \textit{stably-hyperbolic} if there exist hyperbolic $\eps$-symmetric Seifert forms $H,H'$ such that $(K,\alpha)\oplus H\cong H'$.
\end{definition}

Note that \textit{a priori} the stably-hyperbolic Seifert forms are precisely the representatives of the 0 class in the double Witt group of Seifert forms. However, we obtain the following characterisation as a corollary of Proposition \ref{prop:isogroups}.

\begin{corollary}[`Stably hyperbolic $=$ hyperbolic']\label{stablyhypishypseif}A non-singular $\eps$-symmetric Seifert form $(K,\alpha)$ over $R$ is hyperbolic if and only if it is stably hyperbolic.
\end{corollary}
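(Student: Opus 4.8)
The plan is to deduce this immediately from Proposition \ref{prop:isogroups}, which identifies the double Witt group of Seifert forms $\widehat{DW}^\eps(R)$ with the double $L$-group $\widehat{DL}^0(R,\eps)$. First I would recall the setup: by Proposition \ref{prop:correspondenceproj} a non-singular $\eps$-symmetric Seifert form $(K,\alpha)$ corresponds (contravariantly, preserving direct sums) to a homotopy equivalence class of $0$-dimensional $\eps$-symmetric Seifert complexes $(C,\hat\psi)$ over $R$, with $H^0(C)=K$. Under this correspondence, by Proposition \ref{hypcxseif}, $(K,\alpha)$ is hyperbolic precisely when the associated class contains a representative admitting two complementary Seifert nullcobordisms $(f_\pm : C\to D_\pm,(\delta_\pm\hat\psi,\hat\psi))$ with $H^1(D_\pm)=0$ — in other words precisely when $[(C,\hat\psi)] = 0 \in \widehat{DL}^0(R,\eps)$, which is what Proposition \ref{prop:isogroups} and its proof establish.

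The second and essential step is to observe that "stably hyperbolic" is, by definition, exactly the statement that $(K,\alpha)$ represents $0$ in the Grothendieck-style monoid construction underlying $\widehat{DW}^\eps(R)$. Indeed, the monoid construction of Definition \ref{def:monoid} applied to the monoid of non-singular $\eps$-symmetric Seifert forms modulo the submonoid of hyperbolic forms has the property that $[(K,\alpha)] = 0$ if and only if there exist hyperbolic forms $H, H'$ with $(K,\alpha)\oplus H \cong H'$; this is precisely the defining condition of being stably hyperbolic. So I would spell out: $(K,\alpha)$ stably hyperbolic $\iff$ $[(K,\alpha)] = 0 \in \widehat{DW}^\eps(R)$, using only the elementary structure of the monoid construction (one needs that hyperbolic forms form a submonoid, which follows since a direct sum of hyperbolics is hyperbolic, and that $(K,\alpha)\oplus(K,-\alpha)$ is hyperbolic, which is Lemma \ref{doublyslice}).

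Combining the two steps: $(K,\alpha)$ is hyperbolic $\iff$ $[(C,\hat\psi)] = 0 \in \widehat{DL}^0(R,\eps)$ (Propositions \ref{hypcxseif}, \ref{prop:isogroups}) $\iff$ $[(K,\alpha)] = 0 \in \widehat{DW}^\eps(R)$ (the isomorphism $\widehat{DW}^\eps(R)\cong\widehat{DL}^0(R,\eps)$) $\iff$ $(K,\alpha)$ is stably hyperbolic (elementary monoid reasoning above). The forward direction (hyperbolic $\Rightarrow$ stably hyperbolic) is trivial, so the content is the reverse implication, and the whole argument is really just tracking the definitions through the chain of identifications already set up in the excerpt. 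I do not expect a genuine obstacle here — the work was already done in proving Proposition \ref{prop:isogroups} and in establishing Lemma \ref{doublyslice}; the only care needed is to be explicit that the $0$-class in the double Witt group is literally the stably-hyperbolic class, so that the corollary is not circular. If one wanted to be fully self-contained one could instead argue directly: given $(K,\alpha)\oplus H \cong H'$ with $H,H'$ hyperbolic, pass to $0$-dimensional Seifert complexes, note $H, H'$ give nullcobordisms with vanishing $H^1$ of the cobounding complex, and assemble complementary nullcobordisms of $(C,\hat\psi)$ directly (using the half-unit-free argument of Lemma \ref{doublyslice} to split off the $\oplus(K,-\alpha)$ pieces) — but invoking Proposition \ref{prop:isogroups} is cleaner.
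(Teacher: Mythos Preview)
Your proposal is correct and follows essentially the same route as the paper: stably hyperbolic means vanishing in $\widehat{DW}^\eps(R)$, the isomorphism of Proposition~\ref{prop:isogroups} transports this to vanishing in $\widehat{DL}^0(R,\eps)$, and the injectivity argument in the proof of Proposition~\ref{prop:isogroups} (that a Seifert double-nullcobordism in dimension~$0$ forces $H^1(D_\pm)=0$ and hence gives complementary lagrangians via Proposition~\ref{hypcxseif}) then yields hyperbolicity. Your care about non-circularity---making explicit that the $0$-class in the monoid construction is exactly the stably-hyperbolic condition---is well placed and matches the paper's implicit reasoning.
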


\begin{proof}``Only if'' is clear. Conversely, a stably hyperbolic $\eps$-symmetric Seifert form determines the 0 class in $\widehat{DW}^\eps(R)\cong \widehat{DL}^0(R,\eps)$. Therefore there is a Seifert double-nullcobordism of the corresponding 0-dimensional $\eps$-symmetric chain complex. But by the proof of the previous proposition, these nullcobordisms correspond to complementary lagrangians.
\end{proof}

\subsection*{Double Witt groups of linking forms}

\begin{proposition}[{\cite[3.4.1]{MR620795}}]\label{prop:correspondence}The following is a contravariant 1:1 correspondence that preserves the respective monoid structures.
\[\begin{array}{rcl}\left\{  \begin{array}{c}\text{1-dimensional, $(-\eps)$-symmetric} \\ \text{$S$-acyclic (Poincar\'{e}) complexes over $A$}\end{array} \right\}_{\text{\big{/htpy.}}}&\longleftrightarrow &
\left\{  \begin{array}{c}\text{(non-singular) $\eps$-symmetric} \\ \text{linking forms over $(A,S)$}\end{array} \right\}_{\text{\big{/iso}}} \\ &&\\ (C,\phi)&\mapsto& (H^1(C),\lambda_\phi), \end{array}\]where $\lambda_\phi([x],[y])=s^{-1}\phi_0(x,z)$ for $x,y\in C^1$, $z\in C^0$ and $s\in S$ such that $d^*z=sy$.
\end{proposition}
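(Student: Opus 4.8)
\textbf{Proof proposal for Proposition \ref{prop:correspondence}.} The plan is to build the correspondence in both directions and check that the two constructions are mutually inverse up to homotopy equivalence (resp.\ isomorphism), and that each respects direct sums. This is essentially a chain-complex repackaging of the classical dual-lattice / boundary constructions, so the two halves of the argument closely mirror Definition \ref{def:algbound} and the $W^\%$-machinery of \ref{subsec:definitions}; indeed the statement is quoted from \cite[3.4.1]{MR620795}, so our job is to lay out the construction cleanly rather than to discover anything new.

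\textbf{From complexes to linking forms.} Starting from a $1$-dimensional $(-\eps)$-symmetric $S$-acyclic complex $(C,\phi)$ over $A$, with $C$ of the form $0\to C_1\xrightarrow{d} C_0\to 0$ (after replacing $(C,\phi)$ by a homotopy equivalent representative in $\B_+(A)$ concentrated in degrees $0,1$), I would first record that $S$-acyclicity forces $H^0(C)=\coker(d^*)$ to be $S$-torsion of homological dimension $\leq 1$, hence an object of $\H(A,S)$; likewise $H^1(C)=\ker(d^*)$ is $S$-torsion — and when $(C,\phi)$ is Poincar\'{e}, $\phi_0:C^{1-*}\xrightarrow{\simeq} C$ identifies $H^1(C)\cong H_0(C)=\coker(d)$ so it too lies in $\H(A,S)$. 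I would then define $\lambda_\phi$ by the stated formula $\lambda_\phi([x],[y])=s^{-1}\phi_0(x)(z)\in S^{-1}A/A$ where $d^*z=sy$, and verify: (i) well-definedness — independence of the choice of $s\in S$ and of the lift $z\in C^0$ uses that $\phi_0$ is a chain map so $\phi_0\circ d^* = \pm d\circ\phi_0$ together with the definition of the $A$-module structure on $S^{-1}A/A$, exactly as in \ref{def:algbound}; (ii) the $(-\eps)$-symmetry relation $\phi_0=-\eps\phi_0^*+d(\ldots)$ (the condition $d^\%\phi=0$ in the notation of \ref{subsec:chainmaps}) translates to $\lambda_\phi(x,y)=\eps\overline{\lambda_\phi(y,x)}$ after the sign shift built into the indexing convention (this is where the `$-\eps$' in the hypothesis matches the `$\eps$' in the conclusion); (iii) injectivity of $\lambda_\phi:H^1(C)\hookrightarrow H^1(C)^\wedge$, and, in the Poincar\'{e} case, that $\lambda_\phi$ is an isomorphism — this is precisely the statement that $\phi_0$ being a chain homotopy equivalence implies the induced map on the relevant $\Ext^1$/torsion-dual is an isomorphism, via Lemmas \ref{lem:ext1}, \ref{lem:ext2}. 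Homotopy equivalent $(C,\phi)\simeq(C',\phi')$ induce isomorphic linking forms because a homotopy equivalence of $1$-dimensional complexes induces an isomorphism on $H^1$ intertwining the pairings.

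\textbf{From linking forms to complexes.} Conversely, given a linking form $(T,\lambda)$ with $T$ in $\H(A,S)$, choose a length-$1$ f.g.\ projective resolution $0\to P_1\xrightarrow{d} P_0\to T\to 0$; set $C$ to be the dual complex so that $H^1(C)=T$. The resolution, together with $\lambda$ and the identification $T^\wedge\cong\Ext^1_A(T,A)$ of Lemma \ref{lem:ext2}, produces a chain map lifting $\lambda$ between $C$ and its dual up to degree $1$, which is exactly the data of a $(-\eps)$-symmetric structure $\phi$ with $\phi_0$ a chain equivalence when $\lambda$ is non-singular (and merely a chain map otherwise); the higher $\phi_s$ are not needed since there is a half-unit in $A$ and \ref{subsec:chainmaps} collapses the structure to $\phi_0$. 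I would then check that different choices of resolution give homotopy equivalent complexes (standard: two projective resolutions of the same module are chain homotopy equivalent, and the structures correspond), and that the composite $(T,\lambda)\mapsto(C,\phi)\mapsto(H^1(C),\lambda_\phi)$ returns $(T,\lambda)$ up to isomorphism — a direct unwinding of the two formulas — while $(C,\phi)\mapsto(H^1(C),\lambda_\phi)\mapsto(C',\phi')$ returns $(C,\phi)$ up to homotopy equivalence, using that any $S$-acyclic complex in degrees $0,1$ is homotopy equivalent to the dual of a projective resolution of its $H_0$. Contravariance of the correspondence and compatibility with $\oplus$ / direct sum of forms are immediate from the construction.

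\textbf{Main obstacle.} The routine-but-delicate point is bookkeeping the signs and the dimension/$\eps$-shift: getting the $(-\eps)$ versus $\eps$ matching right, and confirming that the symmetry of $\lambda_\phi$ comes out as $\eps$-symmetry (not $(-\eps)$-symmetry) requires carefully tracking the sign $(-1)^{r(n+1-r)}$-type factors coming from the $T_\eps$-involution on $C^t\otimes_A C$ and from the desuspension implicit in passing from a $1$-dimensional complex to a form on $H^1$. The conceptual heart — that $\phi_0$ is a chain equivalence iff $\lambda_\phi$ is an isomorphism — is already supplied by Lemmas \ref{lem:ext1} and \ref{lem:ext2}, so beyond the sign chase there is no serious difficulty; I would simply cite \cite[3.4.1]{MR620795} for the verification of these naturality and sign compatibilities rather than reproduce them in full.
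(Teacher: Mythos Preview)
The paper does not supply its own proof of this proposition: it is stated with the citation \cite[3.4.1]{MR620795} and no proof environment follows. So there is nothing to compare against beyond the cited reference, and your sketch is precisely the standard reconstruction one would give of Ranicki's argument --- build the linking form from $\phi_0$ via the Bockstein-type formula, and in the reverse direction resolve $T$ and lift $\lambda$ through $\Ext^1_A(T,A)\cong T^\wedge$ to a chain-level $\phi_0$.

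One slip to correct: you have $H^0(C)=\coker(d^*)$ and $H^1(C)=\ker(d^*)$, but with $d^*:C^0\to C^1$ (as the formula $d^*z=sy$ in the statement makes clear) it is the other way round: $H^0(C)=\ker(d^*)$ and $H^1(C)=\coker(d^*)$. This actually helps you --- since $\ker(d^*)\subset C^0$ is a submodule of a projective and is $S$-torsion, it vanishes, so $d^*$ is injective and $0\to C^0\to C^1\to H^1(C)\to 0$ is already the length-$1$ projective resolution you need to place $H^1(C)$ in $\H(A,S)$. You should also say a word about why $\lambda_\phi$ is injective in the non-Poincar\'{e} case (the definition of linking form in Chapter~\ref{chap:linking} demands it); this is not automatic from $\phi$ merely being a cycle, and is the one place where your sketch is genuinely thin rather than just a citation away from complete.
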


Here is a precise characterisation of metabolic forms considered as chain complexes:

\begin{proposition}[{\cite[3.4.5(ii)]{MR620795}}]\label{prop:3.4.5}A non-singular, $\eps$-symmetric linking form over $(A,S)$ admits a lagrangian if and only if the associated $(-\eps)$-symmetric homotopy equivalence class of 1-dimensional $(-\eps)$-symmetric Poincar\'{e} complexes over $A$ contains $(C,\phi)$ such that there is an $S$-acyclic 2-dimensional $(-\eps)$-symmetric Poincar\'{e} pair $(f:C\to D,(\delta\phi,\phi))$ with $H^2(D)=0$.
\end{proposition}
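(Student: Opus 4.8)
The plan is to prove the statement by translating the notion of "lagrangian of a linking form" into the chain-complex language, using the algebraic Thom construction and algebraic thickening to pass between pairs and complexes, exactly as in the build-up to Proposition \ref{prop:correspondence}. The two directions are not symmetric in difficulty, so I would treat them separately.

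For the "if" direction, suppose the homotopy equivalence class of $1$-dimensional $(-\eps)$-symmetric Poincar\'{e} complexes over $A$ corresponding to $(T,\lambda)$ contains $(C,\phi)$ admitting an $S$-acyclic $2$-dimensional $(-\eps)$-symmetric Poincar\'{e} pair $(f:C\to D,(\delta\phi,\phi))$ with $H^2(D)=0$. First I would apply the algebraic Thom construction to this pair, obtaining the $2$-dimensional $(-\eps)$-symmetric complex $(C(f),\delta\phi/\phi)$; since the pair is Poincar\'{e}, I should instead work more directly with $D$ itself. Concretely, $f:C\to D$ is a chain map with $C$ concentrated in degrees $0,1$; the long exact cohomology sequence together with $H^2(D)=0$ forces $H^1(D)\to H^1(C)$ to be surjective and identifies $L:=\coker(H^1(f)^\vee)$-type data. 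The key computation is that the induced map on $H^1$, after dualising via the Poincar\'{e} structure, exhibits $j:L=\coker(H_*(f))\hookrightarrow H^1(C)=T$ as a submodule with $L=L^\perp$ with respect to $\lambda_\phi$; this is essentially the chain-level incarnation of Lemma \ref{lem:boundarylag} applied to the map $f$, and the $S$-acyclicity of the pair guarantees $L$ lies in $\H(A,S)$. I would verify that $L$ is a lagrangian by checking exactness of $0\to L\to T\xrightarrow{j^\wedge\lambda} L^\wedge\to0$, reading this off from the homology long exact sequence of the pair $(f:C\to D,(\delta\phi,\phi))$ dualised against the Poincar\'{e} structure; the condition $H^2(D)=0$ is precisely what makes the right-hand map surjective onto $L^\wedge$ rather than having a cokernel.

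For the "only if" direction, suppose $(T,\lambda)$ admits a lagrangian $j:L\hookrightarrow T$, so $0\to L\to T\xrightarrow{j^\wedge\lambda}L^\wedge\to0$ is exact with $L,L^\wedge$ in $\H(A,S)$. Since $L$ has homological dimension $1$, choose a length-$1$ f.g.\ projective resolution $0\to P_1\to P_0\to L\to 0$ and likewise for $T$; the short exact sequence of modules then lifts to a short exact sequence of chain complexes realising a chain map $f:C\to D$ where $C$ resolves $T$ (concentrated in degrees $0,1$, as in the correspondence of Proposition \ref{prop:correspondence}) and $D$ is a length-$1$ complex resolving $L^\wedge$-type data, arranged so that $H^1(D)\cong L^\wedge$ and $H^2(D)=0$ by construction (the resolution has length $1$). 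The genuine work is to produce the $2$-dimensional $(-\eps)$-symmetric Poincar\'{e} structure $(\delta\phi,\phi)$ on $f:C\to D$ refining the symmetric structure $\phi$ on $C$; here I would invoke the algebraic thickening machinery — the pair is exactly the thickening of a suitable $2$-dimensional complex obtained from the resolution of $L^\wedge$ — and use the fact that $L$ being a lagrangian, i.e.\ $L=L^\perp$, is what makes the relative evaluation map a homotopy equivalence, so the pair is Poincar\'{e}. This is the analogue for linking-form chain complexes of Lemma \ref{lem:splitishyp1}'s adjointness manipulation.

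The main obstacle I expect is the second direction: constructing the symmetric \emph{Poincar\'{e}} structure on the pair $f:C\to D$ canonically from the lagrangian, rather than just the underlying map of chain complexes. One must check that the nullhomotopy data $\delta\phi$ of $f^\%(\phi)$ can be chosen so that $\ev_l(\delta\phi,\phi):C(f)^{2-*}\to D$ is a chain homotopy equivalence, and this is where the precise interplay between the lagrangian condition $L=L^\perp$ and the derived duality $T^\wedge\cong\Ext^1_A(T,A)$ (Lemmas \ref{lem:ext1}, \ref{lem:ext2}) enters. Fortunately this is almost entirely contained in Ranicki's \cite[3.4.5(ii)]{MR620795} which the excerpt is citing for exactly this proposition, so in practice I would structure the proof to quote that result's construction and only spell out the verification that $H^2(D)=0$ can be arranged — this follows because $L$ has homological dimension $1$, so $L^\wedge$ admits a length-$1$ projective resolution, forcing the resolving complex $D$ to be concentrated in two consecutive degrees and hence $H^2(D)=0$.
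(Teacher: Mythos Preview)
The paper does not actually prove this proposition: it is stated with the citation \cite[3.4.5(ii)]{MR620795} and no argument is given, since it is quoted verbatim from Ranicki's book as an input to the subsequent chain-complex characterisation of hyperbolic linking forms (Proposition~\ref{hypcx}). Your instinct at the end of the proposal --- to ``quote that result's construction'' from Ranicki --- is therefore exactly what the paper does, and in that sense your plan matches the paper.

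That said, a couple of points in your sketch deserve tightening. In the ``if'' direction you write $L=\coker(H_*(f))$, but the lagrangian produced by a Poincar\'e pair is the \emph{image} $L=\im\bigl(f^*:H^1(D)\to H^1(C)\bigr)$, not a cokernel; compare the diagram in the proof of Proposition~\ref{prop:welldeflagrang}, which is the higher-dimensional analogue under a dimension hypothesis. The condition $H^2(D)=0$ then enters exactly as you say, to force surjectivity of $j^\wedge\lambda$ onto $L^\wedge$, since without it the long exact sequence of the pair would leave a cokernel term $\Ext^1_A(H^2(D),A)$. In the ``only if'' direction your outline is correct in spirit --- resolve $L$ by a two-term projective complex and build $D$ from it --- but the identification of the Poincar\'e structure on the pair is genuinely the content of Ranicki's \cite[3.4.5]{MR620795}, and your sketch does not supply an independent argument for why the relative evaluation map $(\delta\phi_0\,\,f\phi_0):C(f)^{2-*}\to D$ is a chain equivalence. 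This is not a gap so much as an honest acknowledgement that the result is being imported, which is precisely the paper's stance.
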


An easy consequence of this is a precise characterisation of hyperbolic forms considered as chain complexes:

\begin{proposition}\label{hypcx}A non-singular, $\eps$-symmetric linking form over $(A,S)$ is hyperbolic if and only if the associated $(-\eps)$-symmetric homotopy equivalence class of 1-dimensional $(-\eps)$-symmetric Poincar\'{e} complexes over $A$ contains $(C,\phi)$ such that there are two complementary $S$-acyclic 2-dimensional $(-\eps)$-symmetric pairs $(f_\pm:C\to D_\pm,(\delta_\pm\phi,\phi))$ with $H^2(D_\pm)=0$.
\end{proposition}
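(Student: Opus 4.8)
The plan is to deduce Proposition \ref{hypcx} from Proposition \ref{prop:3.4.5} (the metabolic case) in essentially the same way that Claim \ref{clm:DWvsW} and the definition of hyperbolic linking forms relate `two complementary lagrangians' to `a single lagrangian'. I would begin with the ``if'' direction: suppose $(H^1(C),\lambda_\phi)$ is the non-singular $\eps$-symmetric linking form associated to a $1$-dimensional $(-\eps)$-symmetric Poincar\'e complex $(C,\phi)$ over $A$, and that there are two complementary $S$-acyclic $2$-dimensional $(-\eps)$-symmetric Poincar\'e pairs $(f_\pm : C\to D_\pm, (\delta_\pm\phi,\phi))$ with $H^2(D_\pm)=0$. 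Applying the correspondence of Proposition \ref{prop:correspondence} and the characterisation in Proposition \ref{prop:3.4.5}, each pair $(f_\pm : C\to D_\pm,(\delta_\pm\phi,\phi))$ gives rise to a lagrangian submodule $j_\pm : L_\pm \hookrightarrow H^1(C)$; concretely $L_\pm = \im(H^1(f_\pm^{\,2-*})\colon H^1(D_\pm^{2-*})\to H^1(C))$ or, more invariantly, $L_\pm = \ker(H^1(C)\to H^1(D_\pm))$ arising from the long exact cohomology sequence of the pair together with Poincar\'e--Lefschetz duality and $H^2(D_\pm)=0$. The complementarity of the pairs, i.e. that $\left(\begin{smallmatrix}f_+\\ f_-\end{smallmatrix}\right): C\to D_+\oplus D_-$ is a chain homotopy equivalence, then translates under $H^1$ into the statement that $\left(\begin{smallmatrix}j_+\\ j_-\end{smallmatrix}\right): L_+\oplus L_-\xrightarrow{\cong} H^1(C)$, so the linking form is hyperbolic by definition. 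The main point to check carefully here is that the identification $H^1(D_+\oplus D_-)\cong H^1(D_+)\oplus H^1(D_-)$ and the induced maps really do assemble $L_+$ and $L_-$ into a direct sum decomposition of $H^1(C)$; this is a diagram chase using the five lemma applied to the chain homotopy equivalence $\left(\begin{smallmatrix}f_+\\ f_-\end{smallmatrix}\right)$ and the naturality of the long exact sequences.

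For the ``only if'' direction, suppose $(T,\lambda)$ is a non-singular $\eps$-symmetric linking form over $(A,S)$ that is hyperbolic, with complementary lagrangians $j_\pm : L_\pm\hookrightarrow T$ so that $\left(\begin{smallmatrix}j_+\\ j_-\end{smallmatrix}\right): L_+\oplus L_-\xrightarrow{\cong} T$. By Proposition \ref{prop:correspondence} choose a $1$-dimensional $(-\eps)$-symmetric Poincar\'e complex $(C,\phi)$ over $A$ with $(H^1(C),\lambda_\phi)\cong(T,\lambda)$. Each individual lagrangian $j_\pm$ is in particular a lagrangian of $(T,\lambda)$, so Proposition \ref{prop:3.4.5} provides, for each of $+$ and $-$, an $S$-acyclic $2$-dimensional $(-\eps)$-symmetric Poincar\'e pair $(f_\pm : C\to D_\pm,(\delta_\pm\phi,\phi))$ with $H^2(D_\pm)=0$ realising that lagrangian, \emph{possibly after replacing $(C,\phi)$ by a homotopy equivalent complex}. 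The subtlety is that Proposition \ref{prop:3.4.5} only asserts the existence of \emph{some} representative in the homotopy equivalence class; a priori the two applications (for $L_+$ and for $L_-$) might land on different representatives. So the first real step of this direction is to argue that one can choose a single representative $(C,\phi)$ that simultaneously realises both pairs — this follows by inspecting the construction behind Proposition \ref{prop:3.4.5} (the algebraic thickening / surgery machinery of Chapter \ref{chap:algLtheory}, cf. Proposition \ref{prop:thomthick1}), since the ambiguity in the representative is controlled and, using the presence of a half-unit, one can absorb it into a single chain homotopy. With both pairs defined on the same $(C,\phi)$, complementarity of the lagrangians $L_+\oplus L_- \cong T = H^1(C)$ forces $\left(\begin{smallmatrix}f_+\\ f_-\end{smallmatrix}\right): C\to D_+\oplus D_-$ to be a homology isomorphism: on $H^1$ it is the given isomorphism, on $H^0$ and $H^2$ it is an isomorphism because all the relevant modules vanish or are identified by Poincar\'e duality ($H^2(D_\pm)=0$, $H^0(C)=H^2(C)=0$ by $S$-acyclicity and $1$-dimensionality). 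Since $C$, $D_+\oplus D_-$ are bounded complexes of projectives, a homology isomorphism is a chain homotopy equivalence, so the two pairs are complementary, as required.

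The step I expect to be the main obstacle is exactly this matching-up of representatives in the ``only if'' direction: ensuring that the two $2$-dimensional pairs witnessing the two lagrangians can be arranged to have the \emph{same} underlying $1$-dimensional complex $(C,\phi)$, rather than merely homotopy equivalent ones. Everything else is a diagram chase with long exact cohomology sequences, Poincar\'e--Lefschetz duality, and the five lemma. The resolution should come from a careful reading of the proof of Proposition \ref{prop:3.4.5} in \cite{MR620795} (3.4.5(ii)) — the lagrangian determines the pair up to the ambiguity inherent in the algebraic thickening, and one can use the flexibility provided by the half-unit (available throughout Chapter \ref{chap:DLtheory}) to normalise both thickenings against a fixed $(C,\phi)$. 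Once that is settled, the proposition drops out with no further calculation, and in fact the argument is the chain-complex shadow of the purely form-theoretic statement `two complementary lagrangian submodules $\Leftrightarrow$ hyperbolic', so I would present it in parallel with the remark following Proposition \ref{hypcxseif} for Seifert forms.
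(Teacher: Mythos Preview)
Your approach is correct and matches the paper's: the paper records Proposition~\ref{hypcx} simply as ``an easy consequence'' of Proposition~\ref{prop:3.4.5}, and what you have written is precisely the expansion of that remark, running Proposition~\ref{prop:3.4.5} twice and then comparing on cohomology.

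One small simplification: the ``matching of representatives'' you flag as the main obstacle is not really one, and in particular does not need the half-unit or any inspection of the internal construction in \cite[3.4.5(ii)]{MR620795}. If Proposition~\ref{prop:3.4.5} hands you pairs $(f'_\pm: C'_\pm \to D_\pm,(\delta_\pm\phi',\phi'_\pm))$ on possibly different representatives $(C'_\pm,\phi'_\pm)$ in the homotopy equivalence class, just choose homotopy equivalences $h_\pm: (C,\phi)\xrightarrow{\simeq}(C'_\pm,\phi'_\pm)$ from a single fixed representative and precompose: $(f'_\pm h_\pm: C\to D_\pm,(\delta_\pm\phi',\phi))$ is again an $S$-acyclic $2$-dimensional $(-\eps)$-symmetric Poincar\'e pair with $H^2(D_\pm)=0$, realising the same lagrangian $L_\pm$. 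After that, your homology check goes through exactly as you say (noting $H^0(C)=0$ and $H^0(D_\pm)=0$ since these are torsion-free $S$-torsion modules), and a quasi-isomorphism between bounded complexes of projectives is a chain homotopy equivalence.
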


\begin{proposition}\label{iso}There is an isomorphism of groups\[DW^\eps(A,S)\xrightarrow{\cong} DL^0(A,S,\eps).\]
\end{proposition}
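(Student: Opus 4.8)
The plan is to mimic the argument already given for the Seifert case in Proposition \ref{prop:isogroups} and its accompanying string of propositions, but now using the torsion versions: Proposition \ref{prop:correspondence} in place of Proposition \ref{prop:correspondenceproj}, Proposition \ref{prop:3.4.5} in place of the Seifert characterisation of metabolic forms, and Proposition \ref{hypcx} in place of Proposition \ref{hypcxseif}. Concretely, I would first observe that by Proposition \ref{prop:correspondence} the assignment $(T,\lambda)\mapsto (C,\phi)$ with $(H^1(C),\lambda_\phi)\cong(T,\lambda)$ gives a bijection, preserving the monoid structure, between isomorphism classes of non-singular $\eps$-symmetric linking forms over $(A,S)$ and homotopy equivalence classes of $1$-dimensional $(-\eps)$-symmetric $S$-acyclic Poincar\'e complexes over $A$. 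Composing with the natural surjection onto $DL^0(A,S,\eps)$ (recalling that the $DL$-group in dimension $0$ is built from $1$-dimensional $(-\eps)$-symmetric $S$-acyclic Poincar\'e complexes, by the index/sign conventions of \ref{subsec:Ltorsion}) gives a well-defined surjective homomorphism $DW^\eps(A,S)\to DL^0(A,S,\eps)$, provided I check that hyperbolic linking forms map to zero.

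For that last point I would use Proposition \ref{hypcx}: a hyperbolic linking form is represented by a $1$-dimensional $(-\eps)$-symmetric Poincar\'e complex $(C,\phi)$ admitting two complementary $S$-acyclic $2$-dimensional $(-\eps)$-symmetric pairs $(f_\pm:C\to D_\pm,(\delta_\pm\phi,\phi))$ with $H^2(D_\pm)=0$. The homotopy equivalence $\left(\begin{smallmatrix}f_+\\f_-\end{smallmatrix}\right):C\to D_+\oplus D_-$ is exactly the complementarity condition defining an $(A,S)$-double-cobordism to $0$ (after the formal skew-(de)suspension bookkeeping, matching the $n=0$ convention for $\partial$-complementary in Section \ref{sec:localisation2}); hence $[(C,\phi)]=0\in DL^0(A,S,\eps)$, and the map is well-defined. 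Surjectivity is immediate since every $1$-dimensional $(-\eps)$-symmetric $S$-acyclic Poincar\'e complex is, up to homotopy, in the image of the correspondence in Proposition \ref{prop:correspondence} (algebraic surgery below the middle dimension on a general $S$-acyclic Poincar\'e complex reduces it to such a short complex; this is the standard devissage-type fact underlying the isomorphism $L^0(A,S,\eps)\cong W^\eps(A,S)$, and the double-cobordism version follows because surgery below the middle dimension carries through the double-cobordism relation as in Proposition \ref{skewsusp} and the discussion in Section \ref{sec:surgabove}).

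For injectivity I would argue exactly as in Proposition \ref{prop:isogroups}: suppose $(C,\phi)$ is a $1$-dimensional $(-\eps)$-symmetric $S$-acyclic Poincar\'e complex associated to the non-singular linking form $(T,\lambda)$, and suppose $[(C,\phi)]=0\in DL^0(A,S,\eps)$, i.e.\ there is a pair of complementary $(A,S)$-nullcobordisms $(f_\pm:C\to D_\pm,(\delta_\pm\phi,\phi))$. Complementarity forces $0=H^2(C)\cong H^2(D_+)\oplus H^2(D_-)$ (using that the $S$-acyclic Poincar\'e complex $C$ has the cohomology of its linking form concentrated in degree $1$), so $H^2(D_\pm)=0$; Proposition \ref{hypcx} then says $(T,\lambda)$ is hyperbolic, hence $[(T,\lambda)]=0\in DW^\eps(A,S)$. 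The main obstacle I expect is not injectivity but cleanly handling the dimension-shift and skew-suspension conventions so that a ``double-cobordism'' in the sense of $DL^0(A,S,\eps)$ matches, on the nose, a pair of complementary $S$-acyclic $2$-dimensional $(-\eps)$-symmetric pairs with vanishing top cohomology, and in particular verifying that the $n=0$ definition of $\partial$-complementary / complementary (which passes through $\overline{S}$) does not introduce any extra slack; this is exactly the bookkeeping that Proposition \ref{hypcx} and Proposition \ref{prop:3.4.5} were designed to absorb, so the proof should be short once those are invoked.
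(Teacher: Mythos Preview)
Your proposal is correct and follows essentially the same approach as the paper, whose proof is literally ``As in Proposition \ref{prop:isogroups}.'' Your injectivity argument is exactly the intended one: complementarity forces $0=H^2(C)\cong H^2(D_+)\oplus H^2(D_-)$, hence $H^2(D_\pm)=0$, and Proposition \ref{hypcx} gives hyperbolicity. Two minor remarks: the parenthetical about algebraic surgery below the middle dimension in your surjectivity step is unnecessary---$DL^0(A,S,\eps)$ is by definition built from $1$-dimensional $(-\eps)$-symmetric $S$-acyclic Poincar\'e complexes, and Proposition \ref{prop:correspondence} already tells you every such complex arises from a linking form; and the ``$\partial$-complementary'' terminology belongs to the $D\Gamma$-groups, not the torsion $DL$-groups, where the condition is simply called \emph{complementary}.
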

\begin{proof}As in Proposition \ref{prop:isogroups}.
\end{proof}

\begin{definition}A non-singular $\eps$-symmetric linking form $(T,\lambda)$ over $(A,S)$ is called \textit{stably-hyperbolic} if there exist hyperbolic $\eps$-symmetric linking forms $H,H'$ such that $(T,\lambda)\oplus H\cong H'$.
\end{definition}

\begin{corollary}[`Stably hyperbolic $=$ hyperbolic']\label{stablyhypishyp}Suppose there exists $s\in A$ such that $s+\bar{s}=1$. A non-singular $\eps$-symmetric linking form $(T,\lambda)$ over $(A,S)$ is hyperbolic if and only if it is stably hyperbolic.
\end{corollary}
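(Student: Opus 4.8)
The plan is to deduce this corollary in essentially the same way as its Seifert-form analogue, Corollary \ref{stablyhypishypseif}, by passing through the identification of the double Witt group with the $0$-dimensional double $L$-group established in Proposition \ref{iso}. The ``only if'' direction is trivial: a hyperbolic form $(T,\lambda)$ satisfies $(T,\lambda)\oplus H\cong H'$ by taking $H=0$ and $H'=(T,\lambda)$ (or, if one insists $H,H'$ be genuinely hyperbolic, $H=(T,-\lambda)$ and $H'=(T,\lambda)\oplus(T,-\lambda)$, which is hyperbolic by the linking-form analogue of Lemma \ref{lem:halfunithyp}). So the content is the converse.

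First I would unwind what ``stably hyperbolic'' means in terms of the monoid construction of Definition \ref{def:monoid}: by definition of $DW^\eps(A,S)=\NN^\eps(A,S)/\{\text{hyperbolic}\}$, a non-singular linking form $(T,\lambda)$ with $(T,\lambda)\oplus H\cong H'$ for hyperbolic $H,H'$ represents the class $0\in DW^\eps(A,S)$. Then apply the isomorphism $DW^\eps(A,S)\xrightarrow{\cong} DL^0(A,S,\eps)$ of Proposition \ref{iso}; under this isomorphism $(T,\lambda)$ corresponds, via the contravariant correspondence of Proposition \ref{prop:correspondence}, to a homotopy equivalence class of $1$-dimensional $(-\eps)$-symmetric $S$-acyclic Poincar\'e complexes over $A$, and the hypothesis forces that class to be $0\in DL^0(A,S,\eps)$, i.e.\ the complex is $(A,S)$-double-nullcobordant.

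Next I would translate this double-nullcobordism back into a statement about the form. By definition of the torsion $DL$-groups, $[(C,\phi)]=0$ means there is a representative $(C,\phi)$ (here $1$-dimensional $(-\eps)$-symmetric, $S$-acyclic, Poincar\'e) admitting two complementary $(A,S)$-nullcobordisms $(f_\pm:C\to D_\pm,(\delta_\pm\phi,\phi))$, i.e.\ $S^{-1}A$-acyclic $2$-dimensional $(-\eps)$-symmetric Poincar\'e pairs with $\lmat f_+\\ f_-\rmat:C\to D_+\oplus D_-$ a homotopy equivalence. The complementary condition gives $H^2(C)\cong H^2(D_+)\oplus H^2(D_-)$; since $C$ is $1$-dimensional up to homotopy, $H^2(C)=0$, hence $H^2(D_\pm)=0$. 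This is precisely the hypothesis of Proposition \ref{hypcx}, which characterises hyperbolicity of the linking form in exactly these chain-complex terms, so we conclude $(T,\lambda)$ is hyperbolic. This is the same line of argument as in the proof of Corollary \ref{stablyhypishypseif}, with Proposition \ref{hypcx} playing the role that Proposition \ref{hypcxseif} played there.

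The only mild subtlety—and the step I would be most careful with—is the interplay between choosing a good chain-level representative and the fact that the double-nullcobordism is only guaranteed to exist for \emph{some} homotopy representative of the class, not necessarily the ``short'' $0$-concentrated-after-surgery one. But this is not a real obstacle here: the characterisation in Proposition \ref{hypcx} is already phrased as ``the homotopy equivalence class contains $(C,\phi)$ such that\ldots'', so it matches the output of the $DL$-group vanishing directly. The half-unit hypothesis $s+\bar s=1$ is used exactly where it is needed for $DL^0(A,S,\eps)$ to be a group and for Proposition \ref{iso} to hold, so no extra work is required. Thus the proof is genuinely short and I would present it as a two-line reduction: stably hyperbolic $\Rightarrow$ zero in $DW^\eps(A,S)\cong DL^0(A,S,\eps)$ $\Rightarrow$ double-nullcobordant $\Rightarrow$ hyperbolic by Proposition \ref{hypcx}.
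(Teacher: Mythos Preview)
Your proof is correct and follows exactly the approach the paper intends: the paper's proof simply says ``As in Corollary \ref{stablyhypishypseif}'', and you have faithfully unfolded that reference, passing through the isomorphism of Proposition \ref{iso} and then invoking Proposition \ref{hypcx} after observing $H^2(D_\pm)=0$ from the complementary condition. The subtlety you flag about representatives is handled just as you say, since Proposition \ref{hypcx} is already phrased at the level of homotopy classes.
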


\begin{proof}As in Corollary \ref{stablyhypishypseif}.
\end{proof}

\begin{remark}It is entertaining to see a proof of \ref{stablyhypishyp} entirely in the context of linking forms. This is just a formal exercise in tracing the $DL$ proof through the isomorphism of Claim \ref{iso} and we leave it to an interested reader.

\end{remark}

For any $(A,S)$, there is an isomorphism $W^\eps(A,S)\cong L^0(A,S,\eps)$ (\cite[3.4.7(ii)]{MR620795}), but it is not sufficient to prove that stably metabolic implies metabolic for linking forms in general. The reason is that an $(A,S)$-nullcobordism $(f:C\to D,(\delta\phi,\phi))$ of a 1-dimensional $(-\eps)$-symmetric $S$-acyclic Poincar\'{e} complex over $A$ might have $H^2(D)\neq0$, so that the corresponding $\eps$-symmetric linking form need not necessarily admit a lagrangian (cf.\ the torsion version of \cite[4.6]{MR560997}).

\subsection*{The linking form of a symmetric Poincar\'{e} complex}

In general just taking the middle-dimensional pairing that one hopes is the linking form of a symmetric Poincar\'{e} complex $(C,\phi)$ does not give a well-defined linking form as we saw in \ref{subsec:linkPD}. There are two issues: the cohomology modules not necessarily having length 1 resolutions and the linking form might not pair modules via the universal coefficient problem. We now make clear some circumstances in which taking the middle-dimensional pairing of a symmetric Poincar\'{e} complex $(C,\phi)$ is a valid operation.

\begin{proposition}\label{prop:welldeflagrang}Suppose $(A,S)$ has dimension 0 and $(C,\phi)$ is a $(2k+3)$-dimensional $\eps$-symmetric $S$-acyclic Poincar\'{e} complex over $A$. Then \[\lambda_\phi:H^{k+2}(C)\times H^{k+2}(C)\to S^{-1}A/A;\qquad ([x],[y])\mapsto s^{-1}\overline{\tilde{y}(\phi_0(x))},\]with $x,y\in C^{k+2}$, $\tilde{y}\in C^{k+1}$, and $s\in S$ such that $d^*\tilde{y}=sy$, is a well-defined, non-singular, $(-1)^{k}\eps$-symmetric linking form. Moreover:\begin{enumerate}[(i)]
\item If $(C,\phi)$ is $(A,S)$-nullcobordant then $(H^{k+2}(C),\lambda_\phi)$ is metabolic.
\item If $(C,\phi)$ is $(A,S)$-double-nullcobordant then $(H^{k+2}(C),\lambda_\phi)$ is hyperbolic.
\end{enumerate}
\end{proposition}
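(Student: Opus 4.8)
\textbf{Proof proposal for Proposition \ref{prop:welldeflagrang}.}

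The plan is to reduce everything to facts already established in the excerpt: the isomorphism $DW^\eps(A,S)\cong DL^0(A,S,\eps)$ of Proposition \ref{iso}, the $1{:}1$ correspondence of Proposition \ref{prop:correspondence} between $1$-dimensional $(-\eps)$-symmetric $S$-acyclic Poincar\'e complexes and $\eps$-symmetric linking forms, the skew-suspension isomorphisms $\overline{S}$ on $DL$-groups (Proposition \ref{skewsusp}), and the ``surgery above and below the middle dimension'' results (Theorem \ref{thm:doubproj}, Proposition \ref{prop:doubtors}). The key observation is that when $(A,S)$ has dimension $0$, every $\eps$-symmetric $S$-acyclic complex can be taken to have zero differentials except possibly in one spot, so a $(2k+3)$-dimensional complex is, up to the kind of surgeries already constructed, a skew-suspension of a $1$-dimensional one. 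First I would establish the well-definedness and non-singularity of $\lambda_\phi$: since $(A,S)$ has dimension $0$, the cohomology module $H^{k+2}(C)$ lies in $\H(A,S)$, and the universal coefficient spectral sequence argument of Proposition \ref{prop:UCSS} (hypothesis (H1) there is exactly ``$(A,S)$ has dimension $0$ and the lower homology is $S$-torsion'', which holds here by $S$-acyclicity) gives that the adjoint $\lambda_\phi$ is an isomorphism onto the torsion dual; the $(-1)^k\eps$-symmetry and independence of the choices $\tilde y, s$ is the same diagram chase used to prove Proposition \ref{prop:correspondence}. This is routine and I would not grind through it.

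Next, for parts (i) and (ii), I would pass to the $DL$-picture. Using Proposition \ref{prop:doubtors} (and its single-sided special case, the half of Theorem \ref{thm:doubproj} giving surjectivity of $\overline{S}$ at the chain level for $S$-acyclic complexes), I would argue that $(C,\phi)$ is $(A,S)$-cobordant (resp.\ $(A,S)$-double-cobordant, when part (ii) applies) to a complex of the form $\overline{S}^{k+1}(C',\phi')$ where $(C',\phi')$ is a $1$-dimensional $(-\eps)$-symmetric $S$-acyclic Poincar\'e complex, and moreover that the linking form $\lambda_{\phi'}$ on $H^1(C')$ is isomorphic to $\lambda_\phi$ on $H^{k+2}(C)$ — skew-suspension shifts the cohomological degree of the ``middle'' pairing by one each time but does not change the underlying form (up to the sign in $v_p$-type bookkeeping, which is already absorbed into the $(-1)^k$). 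Concretely: if $(C,\phi)$ bounds, then since $\overline{S}$ is injective on $DL^0(A,S,-\eps) \hookrightarrow \dots \hookrightarrow DL^{2k+1}(A,S,(-1)^k\eps)$ (Proposition \ref{skewsusp}, iterated), the class $[(C',\phi')] = 0 \in DL^0(A,S,-\eps) \cong DW^{-\eps}(A,S)$ (Proposition \ref{iso}), which by definition of the double Witt group means $(H^1(C'),\lambda_{\phi'})$ is stably metabolic; for part (ii) one gets $[(C',\phi')]=0$ via the double-cobordism relation, hence $\lambda_{\phi'}$ is stably hyperbolic, which equals hyperbolic by Corollary \ref{stablyhypishyp}.

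Then I would invoke the correspondence once more: $(H^1(C'),\lambda_{\phi'}) \cong (H^{k+2}(C),\lambda_\phi)$ by the degree-shift identification above, so metabolicity (resp.\ hyperbolicity) transfers back to the original form, completing (i) and (ii). For part (i) I must be slightly careful: ``$(A,S)$-nullcobordant'' only yields \emph{stably metabolic}, and stably metabolic need not imply metabolic for linking forms in general (the excerpt explicitly flags this right after Corollary \ref{stablyhypishyp}); however the statement of (i) only claims the form is metabolic, and the monoid-quotient $W^\eps(A,S)$ being a group means the class of a stably metabolic form is $0$ in $W^\eps(A,S)\cong L^0(A,S,(-1)^k\eps)$, so (i) should really be read ``Witt-trivial'' — I would phrase the proof so that (i) concludes $[\lambda_\phi]=0$ in $W^{(-1)^k\eps}(A,S)$, which is what the nullcobordism hypothesis actually delivers, and note that the literal ``metabolic'' holds when additionally the bounding pair can be chosen with $H^{k+2}(D)=0$ (Proposition \ref{prop:3.4.5}). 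The main obstacle I anticipate is precisely this gap between ``stably metabolic/hyperbolic'' and ``metabolic/hyperbolic'': for (ii) it is harmlessly closed by Corollary \ref{stablyhypishyp} (stably hyperbolic $=$ hyperbolic, given the half-unit), but for (i) it requires either weakening the claim to a Witt statement or an extra surgery argument controlling $H^{k+2}$ of the nullcobordism, and I would make sure the dimension-$0$ hypothesis on $(A,S)$ is genuinely doing the work of killing the obstructing $\Ext$ terms in the relevant effect-of-surgery complexes, exactly as in the proof of Proposition \ref{prop:doubtors}.
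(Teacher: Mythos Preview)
Your opening (well-definedness and non-singularity via the universal coefficient spectral sequence argument of Proposition~\ref{prop:UCSS}, hypothesis (H1)) is exactly what the paper does. The trouble is entirely in your plan for (i) and (ii).

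There are two genuine gaps. First, Proposition~\ref{prop:doubtors} does \emph{not} give you what you want. It produces two $(A,S)$-cobordisms from $(C,\phi)$ to two \emph{a priori different} effects $(C'_+,\phi'_+)$ and $(C'_-,\phi'_-)$ which only agree on homology; the paper explicitly flags, immediately after that proposition, that it does \emph{not} follow that $(C,\phi)$ is double-cobordant to a single skew-suspension, and indeed conjectures that skew-suspension is not surjective on $DL$-groups in general. So your Step~A --- ``$(C,\phi)$ is $(A,S)$-double-cobordant to $\overline{S}^{k+1}(C',\phi')$'' --- is unproven. (There is also the extra hypothesis in Proposition~\ref{prop:doubtors} that the half-unit be a unit, which is absent from the present statement.) Second, even granting such a reduction, a (double-)cobordism does not preserve the \emph{isomorphism type} of the middle-dimensional linking form, only its (double) Witt class; your claimed identification $(H^1(C'),\lambda_{\phi'})\cong(H^{k+2}(C),\lambda_\phi)$ needs a separate argument. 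For part (i) your route is moreover circular: the implication ``stably metabolic $\Rightarrow$ metabolic'' under the dimension-$0$ hypothesis is precisely the corollary the paper \emph{derives from} the present proposition, so you cannot invoke it here.

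The paper avoids all of this by a short direct argument. Given an $(A,S)$-nullcobordism $(g:C\to D,(\delta\phi,\phi))$, one writes down the commuting ladder
\[
\xymatrix{
H^{k+2}(D)\ar[r]^{g^*}& H^{k+2}(C)\ar[r]& H^{k+3}(D,C)\\
\Ext^1_A(H_{k+1}(D),A)\ar[u]^{\cong}\ar[r] & \Ext^1_A(H_{k+1}(C),A)\ar[u]^{\cong}\ar[r] & \Ext^1_A(H_{k+2}(D,C),A)\ar[u]^{\cong}\\
\Ext^1_A(H^{k+3}(D,C),A)\ar[u]^{\cong}\ar[r] & \Ext^1_A(H^{k+2}(C),A)\ar[u]^{\cong}\ar[r] & \Ext^1_A(H^{k+2}(D),A)\ar[u]^{\cong}
}
\]
(vertical isomorphisms from the dimension-$0$ UCSS collapse and Poincar\'e--Lefschetz duality for the pair), from which one reads off directly that $\im(g^*)\subset H^{k+2}(C)$ is a lagrangian for $\lambda_\phi$. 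This proves (i) on the nose --- no ``stably metabolic'' detour. For (ii), the complementary condition on $(f_\pm:C\to D_\pm)$ gives $H^{k+2}(C)\cong H^{k+2}(D_+)\oplus H^{k+2}(D_-)$, so the two lagrangians just constructed are complementary, and the form is hyperbolic. No surgery, no skew-suspension, no Witt-group transit required.
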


\begin{proof}The first part is standard; the chain complex $C$ has homology satisfying (H1) of Proposition \ref{prop:UCSS} so by the same argument the linking form $(H^{k+2}(C),\lambda_\phi)$ is well-defined and non-singular. The $(-1)^{k}$-symmetry must follow slightly differently than in Chapter \ref{chap:laurent} as we are no longer defining the pairing via cup-product so we cannot use its derivation properties. But the result can be recovered from a chain-level calculation requiring the higher chain homotopy $\phi_1$, see for instance the chain-level calculations in \cite[pp153]{MR2914616}.

For (i), suppose $(g:C\to D,(\delta\phi,\phi))$ is an $(A,S)$-nullcobordism of $(C,\phi)$. Write the functor $e^1(-)=\Ext^1_A(-,A)$ for brevity. Then there is a commutative diagram with exact rows \[\xymatrix{H^{k+2}(D;A)\ar[r]^-{g^*}&H^{k+2}(C;A)\ar[r]&H^{k+3}(D,C;A))\\
e^1(H_{k+1}(D;A))\ar[r]\ar[u]_-{\cong}&e^1(H_{k+1}(C;A))\ar[r]\ar[u]_-{\cong}&e^1(H_{k+2}(D,C;A))\ar[u]_-{\cong}\\ 
e^1(H^{k+3}(D,C;A))\ar[u]^-{\sm \ev_l(\delta\phi,\phi)}_-{\cong}\ar[r]&e^1(H^{k+2}(C;A))\ar[u]^-{\phi_0}_-{\cong}\ar[r]^-{e^1(g^*)}&e^1(H^{k+2}(D;A))\ar[u]^-{\sm \ev_r(\delta\phi,\phi)}_-{\cong}}\]The central column determines the linking form $(H^{k+2}(C;A),\lambda_\phi)$ adjointly. Hence the inclusion of the image $j:g^*(H^{k+2}(D;A))\hookrightarrow H^{k+2}(C;A)$ is a lagrangian submodule as the commutative diagram determines an exact sequence\[0\to g^*(H^{k+2}(D;A))\xrightarrow{j} H^{k+2}(C;A)\xrightarrow{j^\wedge \lambda_\phi}g^*(H^{k+2}(D;A))^\wedge \to 0.\]

For (ii), suppose $(f_\pm:C\to D_\pm,(\delta_\pm\phi,\phi))$ is an $(A,S)$-double-nullcobordism of $(C,\phi)$.  By the above we have that $(f_+^*\,\,\,\,f_-^*):H^{k+2}(D_+)\oplus H^{k+2}(D_+)\cong H^{k+2}(C)$ is now a direct sum decomposition by complementary lagrangians.
\end{proof}

We prove a more delicate version of the above for the knot-theoretic localisation $(A,S)=(\Z[z,z^{-1}],P)$ with $P$ the set of Alexander polynomials.

\begin{proposition}\label{prop:welldeflagrang2}Suppose $(A,S)=(\Z[z,z^{-1}],P)$ and $(C,\phi)$ is a $(2k+3)$-dimensional $\eps$-symmetric $S$-acyclic Poincar\'{e} complex over $A$. Then the Blanchfield form of Theorem \ref{thm:levblanch}:\[\lambda_\phi:f(H^{k+2}(C))\times f(H^{k+2}(C))\to P^{-1}\Z[z,z^{-1}]/\Z[z,z^{-1}];\qquad ([x],[y])\mapsto p^{-1}\overline{\tilde{y}(\phi_0(x))},\]with $x,y\in C^{k+2}$, $\tilde{y}\in C^{k+1}$, and $p\in P$ such that $d^*\tilde{y}=sy$, is a well-defined, non-singular, $(-1)^{k}\eps$-symmetric linking form. Moreover: \begin{enumerate}[(i)]
\item If $(C,\phi)$ is $(A,S)$-nullcobordant then $(f(H^{k+2}(C)),\lambda_\phi)$ is metabolic.
\item If $(C,\phi)$ is $(A,S)$-double-nullcobordant then $(f(H^{k+2}(C)),\lambda_\phi)$ is hyperbolic.
\end{enumerate}
\end{proposition}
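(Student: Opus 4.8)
\textbf{Proof proposal for Proposition \ref{prop:welldeflagrang2}.}

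The plan is to mirror the structure of the proof of Proposition \ref{prop:welldeflagrang}, replacing the role of the ``$(A,S)$ has dimension 0'' hypothesis and the two-step universal coefficient spectral sequence with Levine's computation of the relevant $\Ext$-groups over $A = \Z[z,z^{-1}]$ recalled in Section \ref{subsec:blanchdual}. First I would establish well-definedness and non-singularity of $\lambda_\phi$. Since $(C,\phi)$ is $S$-acyclic, every cohomology module $H^r(C)$ lies in $\H(A,P)$, so Levine's isomorphisms $\Ext^2_A(T,A) \cong t(T)$, $\Ext^1_A(T,A) \cong f(T)$ apply. Combined with the short exact sequence $0 \to \Ext^2_A(H_{r-2}(C),A) \to H^r(C) \to \Ext^1_A(H_{r-1}(C),A) \to 0$ coming from the universal coefficient spectral sequence concentrated in the $q=2,3$ rows, and Poincar\'{e} duality $H_{r-1}(C) \cong H^{(2k+3)-(r-1)}(C)$, I get exactly the chain of isomorphisms preceding Theorem \ref{thm:levblanch}, whose composite is adjoint to $\lambda_\phi$ on $f(H^{k+2}(C))$. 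The $(-1)^k\eps$-symmetry is a chain-level calculation involving $\phi_1$, identical to the one cited in \ref{prop:welldeflagrang} (via \cite{MR2914616}); I would not reproduce it.

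For part (i), the argument is formally the same as in \ref{prop:welldeflagrang}: given an $(A,S)$-nullcobordism $(g:C\to D,(\delta\phi,\phi))$, I would write down the commutative ladder relating $g^*: H^{k+2}(D) \to H^{k+2}(C)$ to its ``dual'' via the evaluation maps $\sm\ev_l$, $\sm\ev_r$ and the $\Ext^1$ (respectively $\Ext^2$) identifications. The subtlety over $\Z[z,z^{-1}]$ is that one must pass to the free quotients $f(-)$ throughout and check that the $\Z$-torsion parts $t(-)$, which Levine's Theorem identifies with $\Ext^2_A$, split off compatibly so that the ladder restricts to an exact sequence $0 \to f(g^*(H^{k+2}(D))) \to f(H^{k+2}(C)) \to f(g^*(H^{k+2}(D)))^\wedge \to 0$ exhibiting a lagrangian. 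Part (ii) is then immediate: an $(A,S)$-double-nullcobordism $(f_\pm: C \to D_\pm, (\delta_\pm\phi,\phi))$ gives, by the complementary condition $(\!\begin{smallmatrix} f_+\\ f_-\end{smallmatrix}\!): C \xrightarrow{\simeq} D_+\oplus D_-$ together with part (i) applied to each $f_\pm$, that $(f_+^*\ \ f_-^*): f(H^{k+2}(D_+)) \oplus f(H^{k+2}(D_-)) \to f(H^{k+2}(C))$ is an isomorphism onto, realising two complementary lagrangians, hence $(f(H^{k+2}(C)),\lambda_\phi)$ is hyperbolic.

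The main obstacle I anticipate is the bookkeeping in part (i): unlike the dimension-0 case, where $H^{k+2}(C)$ itself is the module carrying the form, here the form lives on the free quotient $f(H^{k+2}(C))$, and I must verify that applying $f(-)$ to the nullcobordism ladder preserves exactness — i.e.\ that the maps $g^*$ and the connecting maps interact well with the canonical splitting $T \mapsto (t(T), f(T))$ and do not introduce spurious torsion. Concretely this amounts to checking that $f(-)$ is exact on the relevant short exact sequences of $\H(A,P)$-modules appearing, which should follow from $\Ext^1_A(-,A) \cong f(-)$ being (the restriction of) an exact functor on the subcategory in play, but requires care because $f(-)$ is not exact on all of $\H(A,P)$. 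A secondary point to handle cleanly is the passage from ``homotopy equivalence class contains a representative with such a (double-)nullcobordism'' to the statement as phrased — but since the Blanchfield form is natural in $(C,\phi)$ up to homotopy equivalence (Theorem \ref{thm:levblanch}), and hyperbolicity/metabolicity of a linking form is an isomorphism invariant, this is automatic.
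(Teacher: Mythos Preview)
Your proposal is correct and follows essentially the same approach as the paper: cite Theorem \ref{thm:levblanch} for well-definedness and non-singularity, invoke the same chain-level symmetry calculation as in Proposition \ref{prop:welldeflagrang}, build the commutative ladder over the free quotients $f(-)$ using Levine's $\Ext$ computations from \S\ref{subsec:blanchdual} for part (i), and deduce (ii) from (i) plus complementarity. The paper's diagram uses $e^1(f(-)) = \Ext^1_A(f(-),A)$ in the middle and bottom rows and asserts exactness of the rows by appeal to \S\ref{subsec:blanchdual}, which is exactly the bookkeeping you flag as the main obstacle; your instinct that this is the substantive point is right, though the paper treats it as already handled by Levine. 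Your ``secondary point'' about homotopy equivalence classes is unnecessary --- the hypothesis is that $(C,\phi)$ itself admits the (double-)nullcobordism, so there is nothing to transport.
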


\begin{proof}The Blanchfield form is well-defined and non-singular by Theorem \ref{thm:levblanch}. As the chain-level formula is identical to that of the linking form in \ref{prop:welldeflagrang}, the $(-1)^{k}$-symmetry follows from the same calculations as in that proof.

For (i), suppose $(g:C\to D,(\delta\phi,\phi))$ is an $(A,S)$-nullcobordism of $(C,\phi)$. Then we must appeal to the results of \cite{MR0461518} described in \ref{subsec:blanchdual}. Write $e^1(-)=\Ext^1_A(-,A)$ for brevity. As the chain complexes $C$, $D$ and $C(f)$ are all $S^{-1}A$-acyclic, we obtain the following commutative diagram with exact rows from the discussion in \ref{subsec:blanchdual} \[\xymatrix{f(H^{k+2}(D;A))\ar[r]\ar[d]^-{\cong}&f(H^{k+2}(C;A))\ar[r]\ar[d]^-{\cong}&f(H^{k+3}(D,C;A))\ar[d]^-{\cong}\\
e^1(f(H_{k+1}(D;A)))\ar[r]&e^1(f(H_{k+1}(C;A)))\ar[r]&e^1(f(H_{k+2}(D,C;A)))\\ 
e^1(f(H^{k+1}(D,C;A)))\ar[u]^-{\sm \ev_l(\delta\phi,\phi)}_-{\cong}\ar[r]&e^1(f(H^{k+2}(C;A)))\ar[u]^-{\phi_0}_-{\cong}\ar[r]&e^1(f(H^{k+2}(D;A)))\ar[u]^-{\sm \ev_r(\delta\phi,\phi)}_-{\cong}}\]As in Proposition \ref{prop:welldeflagrang}, the image of \[g^*:f(H^{k+2}(D;A))\to f(H^{k+2}(C;A))\]is a lagrangian submodule.

For (ii), suppose $(f_\pm:C\to D_\pm,(\delta_\pm\phi,\phi))$ is an $(A,S)$-double-nullcobordism of $(C,\phi)$.  By the above we have that $(f_+^*\,\,\,\,f_-^*):H^{k+2}(D_+)\oplus H^{k+2}(D_+)\cong H^{k+2}(C)$ is now a direct sum decomposition by complementary lagrangians.
\end{proof}

\begin{corollary}Suppose $(A,S)$ has dimension 0 (resp.\ $(A,S)=(\Z[z,z^{-1}],P)$) and that $(T,\lambda)$ is a non-singular, $\eps$-symmetric linking form over $(A,S)$. If $(T,\lambda)$ is stably metabolic then it is metabolic. If $(T,\lambda)$ is stably hyperbolic, then it is hyperbolic.
\end{corollary}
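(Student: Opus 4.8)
The plan is to deduce the corollary directly from Propositions \ref{prop:welldeflagrang} and \ref{prop:welldeflagrang2} together with the isomorphism $DW^\eps(A,S)\cong DL^0(A,S,\eps)$ of Proposition \ref{iso} (and its single analogue $W^\eps(A,S)\cong L^0(A,S,\eps)$), exactly mirroring the argument for Corollary \ref{stablyhypishyp}. First I would treat the hyperbolic (double) case. Suppose $(T,\lambda)$ is stably hyperbolic, so there are hyperbolic forms $H,H'$ with $(T,\lambda)\oplus H\cong H'$. By definition of the monoid construction, $[(T,\lambda)]=0\in DW^\eps(A,S)$. Under the isomorphism $DW^\eps(A,S)\cong DL^0(A,S,\eps)$, the class of the $1$-dimensional $(-\eps)$-symmetric $S$-acyclic Poincar\'e complex $(C,\phi)$ corresponding to $(T,\lambda)$ (via Proposition \ref{prop:correspondence}) is therefore $0\in DL^0(A,S,\eps)$; i.e.\ $(C,\phi)$ is $(A,S)$-double-nullcobordant. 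Here I use that $(A,S)$ has dimension $0$ (resp.\ $(A,S)=(\Z[z,z^{-1}],P)$) so that the hypotheses of Proposition \ref{prop:welldeflagrang} (resp.\ \ref{prop:welldeflagrang2}) are met; in particular a $1$-dimensional $S$-acyclic Poincar\'e complex is trivially $(2k+3)$-dimensional in the relevant sense only after skew-suspending, so I would instead apply the characterisation of hyperbolic forms as chain complexes given by Proposition \ref{hypcx}: the double-nullcobordism of $(C,\phi)$ in $DL^0(A,S,\eps)$ provides two complementary $S$-acyclic $2$-dimensional $(-\eps)$-symmetric pairs with vanishing top cohomology, which by Proposition \ref{hypcx} exhibit $(T,\lambda)$ as hyperbolic.

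For the metabolic (single) case, I would argue similarly but more carefully, since stably metabolic does \emph{not} imply metabolic for general $(A,S)$, as the remark following Corollary \ref{stablyhypishyp} warns. The point of the dimension-$0$ hypothesis (resp.\ the Alexander-polynomial localisation) is precisely that it is enough to rule out the bad behaviour: under the hypothesis, given a $1$-dimensional $(-\eps)$-symmetric $S$-acyclic Poincar\'e complex $(C,\phi)$ that is $(A,S)$-nullcobordant, one can arrange (after algebraic surgery below the middle dimension, available because $(A,S)$ has dimension $0$) a nullcobordism $(g:C\to D,(\delta\phi,\phi))$ with $H^2(D)=0$, and then Proposition \ref{prop:3.4.5} gives a lagrangian for $(T,\lambda)$. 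So I would: (1) note $[(T,\lambda)]=0\in W^\eps(A,S)\cong L^0(A,S,\eps)$; (2) unwind this to an $(A,S)$-nullcobordism of $(C,\phi)$; (3) use the dimension-$0$ hypothesis to perform surgery on the nullcobordism to kill $H^2$ of its target; (4) invoke Proposition \ref{prop:3.4.5} to conclude $(T,\lambda)$ is metabolic.

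The main obstacle I anticipate is step (3) in the metabolic case: showing that the dimension-$0$ hypothesis on $(A,S)$ genuinely lets one improve an arbitrary $(A,S)$-nullcobordism to one whose target chain complex has trivial $H^2$. This is the analogue, in the single setting, of the surgery-above-and-below-the-middle-dimension arguments of Section \ref{sec:surgabove} (cf.\ Proposition \ref{prop:doubtors}), and it is exactly where the general statement fails when $(A,S)$ does not have dimension $0$. In the double case the corresponding difficulty is automatically handled, because complementarity forces $H^2(D_+)\oplus H^2(D_-)\cong H^2(C)=0$ (as in the proof of Proposition \ref{iso} via Proposition \ref{prop:isogroups}), so no further surgery is needed there. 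I would therefore spend the bulk of the write-up on the metabolic step, and the hyperbolic step would follow essentially formally from the $DW$–$DL$ dictionary and Proposition \ref{hypcx}, just as Corollary \ref{stablyhypishyp} followed from Corollary \ref{stablyhypishypseif}.
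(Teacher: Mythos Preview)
Your hyperbolic case is correct and essentially matches the paper: once you have a double-nullcobordism of the $1$-dimensional complex $(C,\phi)$, complementarity forces $H^2(D_\pm)=0$ (since $D_+\oplus D_-\simeq C$ and $C$ is $1$-dimensional), and then either Proposition~\ref{hypcx} or Proposition~\ref{prop:welldeflagrang}(ii)/\ref{prop:welldeflagrang2}(ii) finishes it.

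Your metabolic case takes an unnecessary detour and manufactures an obstacle that isn't there. You dismiss Propositions~\ref{prop:welldeflagrang}/\ref{prop:welldeflagrang2} on the grounds that a $1$-dimensional complex isn't $(2k+3)$-dimensional, but a single skew-suspension makes it $3$-dimensional, i.e.\ $(2k+3)$-dimensional with $k=0$; this is exactly why the paper phrases the Witt-to-$L$ isomorphism as $W^\eps(A,S)\cong L^2(A,S,-\eps)$. Once you accept this, Proposition~\ref{prop:welldeflagrang}(i) (resp.\ \ref{prop:welldeflagrang2}(i)) says directly: if $(C,\phi)$ admits \emph{any} $(A,S)$-nullcobordism $(g:C\to D,(\delta\phi,\phi))$, then $(H^1(C),\lambda_\phi)=(T,\lambda)$ is metabolic, with lagrangian $g^*(H^1(D))$. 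No condition on $H^2(D)$ is needed. The dimension-$0$ (resp.\ Alexander-polynomial) hypothesis enters only to supply the $\Ext^1$-isomorphisms in the commutative diagram of that proposition's proof, not to let you do surgery on $D$.

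So your step~(3) --- surgery on the nullcobordism to kill $H^2(D)$ so that Proposition~\ref{prop:3.4.5} applies --- is simply not required, and you should not spend ``the bulk of the write-up'' on it. The whole corollary is a two-line application of Propositions~\ref{prop:welldeflagrang}(i),(ii) (resp.\ \ref{prop:welldeflagrang2}(i),(ii)) to the nullcobordism (resp.\ double-nullcobordism) produced by the $W$--$L$ (resp.\ $DW$--$DL$) isomorphism.
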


\begin{proof}Under the correspondence of Proposition \ref{prop:correspondence}, $(T,\lambda)$ goes to a 1-dimensional $(-\eps)$-symmetric $S$-acyclic Poincar\'{e} complex $(C,\phi)$ over $A$. If $(T,\lambda)$ is stably metabolic, there exists an $(A,S)$-nullcobordism $(f:C\to D,(\delta\phi,\phi))$ by the isomorphism $W^\eps(A,S)\cong L^2(A,S,-\eps)$. But then by Proposition \ref{prop:welldeflagrang} (resp. Proposition \ref{prop:welldeflagrang2}), $(H^1(C),\lambda_\phi)=(T,\lambda)$ is metabolic.

If $(T,\lambda)$ is stably hyperbolic we use the isomorphism $DW^\eps(A,S)\cong DL^0(A,S,\eps)$ and follow the same proof.
\end{proof}

The following corollary is also evident.

\begin{corollary}If $(A,S)$ has dimension 0 (resp.\ $(A,S)=(\Z[z,z^{-1}],P)$) then for $k\geq0$ the correspondence of Proposition \ref{prop:correspondence} defines a surjective homomorphism\[DL^{2k+2}(A,S,(-1)^{k+1}\eps)\twoheadrightarrow DW^\eps(A,S),\]with right inverse given by the isomorphism $DW^\eps(A,S)\cong DL^0(A,S,\eps)$ followed by the $(k+1)$-fold skew-suspension $\overline{S}^{k+1}$.
\end{corollary}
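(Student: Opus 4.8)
The plan is to chain together results that have already been established in the excerpt, so that the only real work is checking that the various constructions are compatible. Recall the final statement to be proved: under the hypothesis that $(A,S)$ has dimension $0$ (or that $(A,S)=(\Z[z,z^{-1}],P)$), the correspondence of Proposition \ref{prop:correspondence} defines a surjective homomorphism $DL^{2k+2}(A,S,(-1)^{k+1}\eps)\twoheadrightarrow DW^\eps(A,S)$, with right inverse given by the isomorphism $DW^\eps(A,S)\cong DL^0(A,S,\eps)$ of Proposition \ref{iso} followed by the $(k+1)$-fold skew-suspension $\overline{S}^{k+1}$.

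First I would construct the claimed homomorphism. Given a class in $DL^{2k+2}(A,S,(-1)^{k+1}\eps)$, represent it by a $(2k+3)$-dimensional $(-1)^{k+2}\eps=(-1)^k\eps$-symmetric $S$-acyclic Poincar\'{e} complex $(C,\phi)$ over $A$ (the dimension/sign bookkeeping matches the convention of Section \ref{subsec:Ltorsion}, where $L^n(A,S,\eps)$ is built from $(n+1)$-dimensional $(-\eps)$-symmetric Poincar\'{e} complexes). Then Proposition \ref{prop:welldeflagrang} (respectively Proposition \ref{prop:welldeflagrang2} in the knot-theoretic case) produces a well-defined non-singular $(-1)^k\eps$-symmetric linking form $(H^{k+2}(C),\lambda_\phi)$ — wait, here I must be slightly careful with the $\eps$: the relevant linking form is $\eps$-symmetric precisely because the chain-level input is $(-1)^k\eps$-symmetric, so the assignment lands in $DW^\eps(A,S)$ as claimed. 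The key inputs are: (a) well-definedness on homotopy equivalence classes, which is immediate since $\lambda_\phi$ is defined from $\phi_0$ up to the chain homotopy class; (b) additivity, which follows from $\phi\oplus\phi'\mapsto\lambda_\phi\oplus\lambda_{\phi'}$; and (c) the descent to double-cobordism classes, which is exactly the content of part (ii) of Proposition \ref{prop:welldeflagrang} / \ref{prop:welldeflagrang2}: an $(A,S)$-double-nullcobordism of $(C,\phi)$ yields a hyperbolic $(H^{k+2}(C),\lambda_\phi)$, hence the map kills the relevant submonoid and factors through $DL^{2k+2}(A,S,(-1)^{k+1}\eps)$.

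Next I would verify the right-inverse statement. Start with a non-singular $\eps$-symmetric linking form $(T,\lambda)$ over $(A,S)$. Under the contravariant correspondence of Proposition \ref{prop:correspondence} it corresponds to a $1$-dimensional $(-\eps)$-symmetric $S$-acyclic Poincar\'{e} complex $(C_0,\phi_0)$, and Proposition \ref{iso} identifies $[(T,\lambda)]$ with $[(C_0,\phi_0)]\in DL^0(A,S,\eps)$. Now apply the skew-suspension $\overline{S}^{k+1}$: by Proposition \ref{skewsusp} each $\overline{S}$ is a well-defined injective homomorphism on $DL$-groups, raising dimension by $2$ and flipping the sign of $\eps$, so $\overline{S}^{k+1}(C_0,\phi_0)$ is a $(2k+3)$-dimensional $(-1)^{k+2}\eps$-symmetric complex representing a class in $DL^{2k+2}(A,S,(-1)^{k+1}\eps)$. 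The point to check is that feeding $\overline{S}^{k+1}(C_0,\phi_0)$ back through the homomorphism of the previous paragraph recovers $(T,\lambda)$. This is essentially a computation that skew-suspension does not change the middle-dimensional linking form: the underlying module shifts up in degree by $k+1$ but $\overline{S}\phi$ is defined so that $(\overline{S}\phi)_0$ is (up to sign) $\phi_0$, so $H^{k+2}(\overline{S}^{k+1}C_0)\cong H^1(C_0)\cong T$ and the pairing formula $\lambda_\phi([x],[y])=s^{-1}\overline{\tilde y(\phi_0(x))}$ is unchanged. A short diagram-chase using the description of $\overline{S}$ from Section \ref{subsec:skewsusp} nails this down; the sign conventions are the only fiddly part.

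Finally, surjectivity of the homomorphism is a formal consequence: it has a right inverse, so it is onto. The main obstacle in all of this is not conceptual but bookkeeping — tracking the $\eps$ versus $-\eps$ and the $(-1)^k$ signs consistently across the four conventions in play (the $L^n(A,S,\eps)$ dimension shift of Section \ref{subsec:Ltorsion}, the contravariant correspondences of Propositions \ref{prop:correspondence}, \ref{prop:welldeflagrang}, \ref{prop:welldeflagrang2}, and the sign flip under each $\overline{S}$), and in particular confirming that the $(k+1)$-fold skew-suspension of the $1$-dimensional complex associated to an $\eps$-symmetric linking form really does land in the $DL$-group with the symmetry parameter $(-1)^{k+1}\eps$ advertised in the statement. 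Everything else is either already proved in the excerpt or an immediate consequence of the definitions.
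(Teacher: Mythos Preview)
Your proposal is correct and is exactly the argument the paper has in mind: the paper gives no explicit proof, simply declaring the corollary ``evident'' from the immediately preceding Propositions \ref{prop:welldeflagrang}/\ref{prop:welldeflagrang2}, Proposition \ref{iso}, and Proposition \ref{skewsusp}. You have faithfully unpacked that chain of reasoning, and your sign-tracking is consistent with the paper's conventions.
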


\subsection*{Double Witt $\Gamma$-groups}

We now describe how to reconcile the groups $D\Gamma^\eps(A\to S^{-1}A)$ of Chapter \ref{chap:linking} with our symmetric chain complex groups $D\Gamma^n(A\to S^{-1}A,\eps)$.

Suppose $(K,\alpha)$ is an $S$-non-singular $\eps$-symmetric form over $A$ and that $j:L\hookrightarrow K$ is the inclusion of a split $S$-lagrangian. Then there is a 3-dimensional $(-\eps)$-symmetric $S$-Poincar\'{e} pair $(f:P\to Q,(0,\theta))$ over $A$ given by \[P_r=\left\{\begin{array}{lcl}K^*&&r=1,\\0&&r\neq 1,\end{array}\right.\qquad Q_r=\left\{\begin{array}{lcl}L^*&&r=1,\\0&&r\neq 1\end{array}\right.\]with \[\theta_0:P^1=K\xrightarrow{\alpha}K^*=P_1,\qquad f:P_1=K^*\xrightarrow{j^*} L^*=Q_1.\]

\begin{figure}[h]
\def\picrelboundary{\resizebox{0.3\textwidth}{!}{ \includegraphics{pic_relboundary}}}

\[\begin{xy} \xyimport(219.73,159.08){\picrelboundary}
,(25,85)*!L{P}
,(100,85)*!L{Q}
,(0,20)*!L{\partial P}
,(80,28)*!L{\partial (Q,P)}

\end{xy}\]
\end{figure}

The associated 2-dimensional  $(-\eps)$-symmetric Poincar\'{e} pair $(\partial f:\partial P\to \partial(Q,P),(\overline{\theta'},\partial\theta'))$ (see \ref{def:relboundary}) is given by\[\xymatrix{
\partial P\ar[d]^-{\partial f}    &    &0\ar[r]     &0\ar[d]\ar[r]     &K\ar[d]^-{1}\ar[r]^-{\alpha}    &K^*\ar[r]\ar[d]^-{j^*}    &0\\
\partial(Q,P)    &     &0\ar[r]     &L\ar[r]^-{j}    &K\ar[r]^-{j^*\alpha}     &L^*\ar[r]   &0}\]with $\partial\theta'$ entirely described by\[\xymatrix{
(\partial P)^{1-*}\ar[d]^-{\partial\theta'_0}&&0\ar[r]&K\ar[r]^-{\alpha^*}\ar[d]^-{\eps}&K^*\ar[r]\ar[d]^-{1}&0\\
\partial P&&0\ar[r]&K\ar[r]^-{\alpha}&K^*\ar[r]&0}\]and $\overline{\theta'}=0$.

\begin{remark} We now have an interesting explanation of why the proof of Proposition \ref{prop:boundarywelldef} was so complicated. It would have been altogether neater if the boundary of a split $S$-metabolic $S$-non-singular form $(K,\alpha)$ over $A$ was itself a metabolic linking form. As it was, we were able to show that it was \textit{stably} metabolic. As there is no guarantee that $H^2(\partial(Q,P))=L^*/j^*(K^*)$ vanishes, then by Proposition \ref{prop:3.4.5}, there is no guarantee that the obvious submodule $H^1(\partial(Q,P))\hookrightarrow H^1(\partial P)$ is a lagrangian. Hence for a general split $S$-metabolic $(K,\alpha)$, Proposition \ref{prop:boundarywelldef} is a sharp result.
\end{remark}

Now consider $\partial$-complementary split $S$-lagrangians $j_\pm:L_\pm\hookrightarrow K$. We wish to show that the associated $S^{-1}A$-nullcobordisms $(f_\pm:P_\pm\to Q_\pm,(0,\partial\theta'))$ are $\partial$-complementary. We must check there is a chain homotopy equivalence
\[\left(\begin{matrix}\partial f_+\\\partial f_-\end{matrix}\right):\partial P\to \partial(Q_+,P)\oplus \partial_-(Q_-,P)\]given by \[\xymatrix{
0\ar[r]     &0\ar[d]\ar[rr]     &&K\ar[d]^-{1}\ar[rr]^-{\alpha}    &&K^*\ar[r]\ar[d]^-{j^*}    &0\\
0\ar[r]     &L_+\oplus L_-\ar[rr]^-{j_+\oplus j_-}    &&K\oplus K\ar[rr]^-{(j_+^*\alpha)\oplus (j_-^*\alpha)}     &&L_+^*\oplus L_-^*\ar[r]   &0}\] But the cone $C\left(\lmat\partial f_+\\\partial f_-\rmat\right)$ is chain homotopic to\[\xymatrix{0\ar[r]&L_+\oplus L_-\ar[rr]^-{\left(\begin{smallmatrix}j_+&j_-\\0&\alpha j_-\end{smallmatrix}\right)}&&K\oplus K^*\ar[rr]^-{\left(\begin{smallmatrix}-j_+^*\alpha&j_+^*\\0&j_-^*\end{smallmatrix}\right)}&&L_+^*\oplus L_-^*\ar[r]&0}\]and this sequence is exact by the definition of $\partial$-complementary $S$-lagrangians. We have just shown:

\begin{proposition}There is a well-defined surjective group homomorphism\[G:D\Gamma^\eps(A\to S^{-1}A)\twoheadrightarrow D\Gamma^0(A\to S^{-1}A,\eps).\]
\end{proposition}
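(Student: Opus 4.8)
The plan is to show that the assignment sending an $S$-non-singular $\eps$-symmetric form $(K,\alpha)$ over $A$ to the $3$-dimensional $(-\eps)$-symmetric $S$-Poincar\'{e} pair $(f:P\to Q,(0,\theta))$ constructed above, followed by passage to its relative boundary $(\partial f:\partial P\to\partial(Q,P),(\overline{\theta'},\partial\theta'))$ and then to the underlying $2$-dimensional complex (or rather its formal skew-desuspension to a $0$-dimensional complex), descends to a well-defined group homomorphism on $D\Gamma$-groups. I have already verified above that a $\partial$-complementary pair of split $S$-lagrangians for $(K,\alpha)$ produces a $\partial$-complementary pair of $S^{-1}A$-nullcobordisms of $\partial P$; combined with the computation that a single split $S$-lagrangian yields an $S^{-1}A$-nullcobordism, this is exactly the statement that the map respects the relations defining $D\Gamma^\eps(A\to S^{-1}A)$ (modulo $\partial$-hyperbolic objects) and $D\Gamma^0(A\to S^{-1}A,\eps)$ (modulo $\partial$-double-cobordism). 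So the content of the proposition is really three things: (a) the assignment $(K,\alpha)\mapsto (P,\theta)$ is additive on direct sums, which is immediate from the block-diagonal form of the construction; (b) it is well-defined on isomorphism classes and sends $\partial$-hyperbolic forms to $\partial$-double-nullcobordant complexes, which is what the pre-proposition discussion establishes; and (c) it is surjective.

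First I would assemble (a) and (b) into the statement that $G$ is a well-defined homomorphism. For well-definedness on the level of monoids: an isomorphism $(K,\alpha)\cong(K',\alpha')$ induces an isomorphism of the associated pairs and hence of their relative boundaries, so the class in $D\Gamma^0$ is unchanged; additivity is the observation that the pair associated to $(K\oplus K',\alpha\oplus\alpha')$ is the direct sum of the two pairs, and $W^\%$ of a direct sum differs from the direct sum of $W^\%$'s only by cross-terms that vanish here because the underlying complexes are concentrated in a single degree (alternatively invoke the $\sqcup$-formalism of \S\ref{subsec:res}). For the relation: if $(K,\alpha)$ is $\partial$-hyperbolic with $\partial$-complementary split $S$-lagrangians $j_\pm:L_\pm\hookrightarrow K$, then the diagram chase already carried out shows the induced map $\partial P\to\partial(Q_+,P)\oplus\partial(Q_-,P)$ is a chain homotopy equivalence (the cone is the exact sequence defining $\partial$-complementarity), so the corresponding $0$-dimensional complex is $\partial$-double-nullcobordant, i.e.\ represents $0$ in $D\Gamma^0(A\to S^{-1}A,\eps)$. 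Here I should double-check the dimension bookkeeping: $(\partial P,\partial\theta)$ is $2$-dimensional $(-\eps)$-symmetric, and by the $n=0$ clause in the definition of $\partial$-complementarity (which refers to the skew-suspension $\overline{S}$ of a $0$-dimensional complex), the $2$-dimensional complex $(\partial P,\partial\theta)$ is exactly the skew-suspension of a $0$-dimensional $\eps$-symmetric $S$-Poincar\'{e} complex, so the image does land in $D\Gamma^0(A\to S^{-1}A,\eps)$ and the $\partial$-complementarity condition matches up correctly.

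For surjectivity (c), I would argue as follows. Given a class in $D\Gamma^0(A\to S^{-1}A,\eps)$, represented by a $0$-dimensional $\eps$-symmetric $S$-Poincar\'{e} complex $(C,\phi)$ over $A$, Proposition \ref{prop:correspondenceproj} identifies it with a (not necessarily non-singular) $\eps$-symmetric form over $A$ — but $S$-Poincar\'{e} means precisely $S$-non-singular, i.e.\ $S^{-1}A\otimes_A\alpha$ is an isomorphism. Thus $(K,\alpha):=(H^0(C),\phi_0)$ is an $S$-non-singular $\eps$-symmetric form over $A$, giving a class in $D\Gamma^\eps(A\to S^{-1}A)$. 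It remains to check that $G$ of this class recovers $(C,\phi)$ up to $\partial$-double-cobordism, i.e.\ that the relative boundary $(\partial P,\partial\theta)$ of the associated pair is homotopy equivalent (after skew-desuspension) to $(C,\phi)$; but inspecting the explicit description of $\partial\theta'$ given above, $(\partial P,\partial\theta)$ is the $2$-complex $0\to K\xrightarrow{\alpha}K^*\to 0$ with structure map $\alpha$ in the middle degree, which is exactly $\overline{S}(C,\phi)$ up to the identification $K\cong H^0(C)$. Since skew-desuspension is the inverse of $\overline{S}$ on homotopy classes, this shows $G$ is surjective (indeed it shows $G$ has a set-theoretic section on representatives).

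\textbf{Main obstacle.} The genuinely delicate step is (b), specifically making sure the $\partial$-complementarity conditions on the two sides correspond \emph{exactly} — that a pair of split $S$-lagrangians is $\partial$-complementary in the sense of Chapter \ref{chap:linking} if and only if the two resulting $S^{-1}A$-nullcobordisms are $\partial$-complementary in the chain-complex sense of \S\ref{sec:localisation2}. The pre-proposition computation does this, but one must be careful that the cone $C\bigl(\lmat\partial f_+\\\partial f_-\rmat\bigr)$ is chain homotopy equivalent to the length-$2$ complex $0\to L_+\oplus L_-\to K\oplus K^*\to L_+^*\oplus L_-^*\to 0$ appearing in the \emph{definition} of $\partial$-complementary $S$-lagrangians, with the matrices $\lmat j_+&j_-\\0&\alpha j_-\rmat$ and $\lmat -j_+^*\alpha&j_+^*\\0&j_-^*\rmat$ on the nose (up to sign conventions inherited from the mapping cone differential and the $\eps$-symmetry of $\alpha$). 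Tracking these signs — and confirming the $n=0$ skew-suspension clause in the definition of $\partial$-double-cobordism does not introduce an extra twist — is the part that needs the most care, though it is ultimately a routine verification once the diagrams are drawn.
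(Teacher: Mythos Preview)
Your proposal contains the right ingredients but misidentifies the map $G$ itself, and this confusion propagates through the argument.

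The map $G$ is simply the direct correspondence of Proposition~\ref{prop:correspondenceproj}: it sends $(K,\alpha)$ to the $0$-dimensional $\eps$-symmetric $S$-Poincar\'e complex $(C,\phi)$ with $C_0=K^*$ and $\phi_0=\alpha$. No choice of $S$-lagrangian is involved in defining $G$. The complex $(P,\theta)$ in the pre-proposition discussion is exactly $\overline{S}(C,\phi)$ --- it is concentrated in degree~$1$ with $P_1=K^*$ --- and the pair $(f:P\to Q,(0,\theta))$ is built only \emph{after} choosing a split $S$-lagrangian $j:L\hookrightarrow K$, in order to exhibit an $S^{-1}A$-nullcobordism of $(P,\theta)$. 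Your description routes $G$ through this pair and then through ``the relative boundary'', which would make $G$ depend on $L$; and you then identify the output with $(\partial P,\partial\theta)$, which is the $1$-dimensional $S$-acyclic complex $0\to K\xrightarrow{\alpha}K^*\to 0$, not $\overline{S}(C,\phi)$. These are different objects living in different categories: $(\partial P,\partial\theta)$ is Poincar\'e over $A$ and $S$-acyclic, whereas $(P,\theta)$ is only $S$-Poincar\'e.

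Once $G$ is correctly identified, your points (a)--(c) are exactly what is needed, and your reading of the paper is right that the substantive content is the pre-proposition computation: a single split $S$-lagrangian gives an $S^{-1}A$-nullcobordism $(f:P\to Q,(0,\theta))$ of $(P,\theta)=\overline{S}(C,\phi)$, and a $\partial$-complementary pair $j_\pm$ gives $\partial$-complementary nullcobordisms (the cone on $\lmat\partial f_+\\\partial f_-\rmat$ being the defining exact sequence). This verifies well-definedness via the $n=0$ clause of $\partial$-double-cobordism. Surjectivity is then immediate from Proposition~\ref{prop:correspondenceproj}, exactly as you say: every $0$-dimensional $\eps$-symmetric $S$-Poincar\'e complex is, up to homotopy, of the form $(C,\phi)$ for some $S$-non-singular form $(K,\alpha)$.
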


\begin{definition}\label{def:reduced}The \textit{reduced} $\eps$-symmetric double $\Gamma$ Witt group of a localisation $(A,S)$ is the group\[\widetilde{D\Gamma}^\eps(A\to S^{-1}A):=D\Gamma^\eps(A\to S^{-1}A)/\ker(G).\]
\end{definition}

\begin{remark}We have been unable to show in general whether the subgroup $\widetilde{D\Gamma}^\eps(A\to S^{-1}A)$ is a proper subgroup of $D\Gamma^\eps(A\to S^{-1}A)$.\end{remark}

\subsection{Double Witt group localisation exact sequence}\label{subsec:DWproof}

We can now finally return to the double Witt group localisation exact sequence of \ref{subsec:DWLES} and prove Theorem \ref{thm:DWLES}.

\begin{proof}[Proof (of Theorem \ref{thm:DWLES})]Consider the isomorphisms determined by the realisation of the forms and linking forms as chain complexes with symmetric structure\[\xymatrix{0\ar[r]&L^0(A,\eps)\ar[r]^-{Di}&D\Gamma^0(A\to S^{-1}A,-\eps)\ar[r]^-{D\partial}&DL^0(A,S,\eps)\\
&W^{\eps}(A)\ar[r]^-{Di}\ar[u]^-{\cong}&\widetilde{D\Gamma}^\eps(A\to S^{-1}A)\ar[r]^-{D\partial}\ar[u]^-{\cong}&DW^\eps(A,S)\ar[u]^-{\cong}}\]and it is straightforward to check that the maps on the bottom row defined in \ref{subsec:DWLES} make the diagram commute.
\end{proof}

\chapter{The Blanchfield complex}\label{chap:blanchfield}

In this chapter we prepare for our main geometric application of double $L$-theory by reviewing knot-cobordism from the perspective of the Algebraic Theory of Surgery. We first define the \textit{symmetric Blanchfield complex} $(C_K,\phi_K))$ of an $n$-knot $K$, an algebraic measure of the difference between $K$ and the unknot. As motivation for our approach to the `doubly-slice' problem in the subsequent chapter, we then recall Kervaire and Levine's knot-cobordism classification of knots for dimension $n\geq2$ using the language of algebraic $L$-theory.

\section{High-dimensional knots}

A \textit{topological $n$-knot}, hereafter called a \textit{knot} unless $n$ is to be specified, is an ambient isotopy class of oriented, locally flat embeddings $K:S^n\hookrightarrow S^{n+2}$ (where all spheres are considered to have a preferred orientation already). In a standard abuse of notation we will also use the word knot to mean a particular $K$ in an ambient isotopy class and the image of $K$ in $S^{n+2}$. The \textit{unknot} is the ambient isotopy class of $U:S^n\hookrightarrow S^{n+2}$, the standard unknotted $n$-sphere in the unit sphere $S^{n+2}\subset\R^{n+3}$ given by setting the last two co-ordinates to 0. The \textit{inverse knot} $-K$ of a knot $K$ is given by reversing the orientation on a mirror image of $K$ in $S^{n+2}$. Any embedding $K:S^{n}\hookrightarrow S^{n+2}$ has trivial normal bundle (2-plane bundles over spheres are trivial for $n>1$, and when $n=1$ consider that the normal bundle is oriented) and hence, by choosing a framing, we may excise a small, trivial tubular neighbourhood of the knot from $S^{n+2}$. Thus, the \textit{knot exterior} is the manifold with boundary \[(X_K,\partial X_K):=(\cl(S^{n+2}\sm (K(S^n)\times D^2)),S^n\times S^1)\]which has a preferred orientation coming from the ambient $S^{n+2}$. The knot exterior $X_K$ is homotopy equivalent to the \textit{knot complement} $S^{n+2}\sm K$ and hence has the homology of a circle $H_*(X_K)= H_*(S^1)$ by Alexander duality. The following is a key justification for studying the knot exterior rather than the knot itself:

\begin{theorem}For $n=1$ the ambient isotopy class of an $n$-knot $K$ is uniquely determined by specifying a knot exterior $X_K$ up to homeomorphism (\cite{MR972070}). For $n\geq 3$ there can be at most two inequivalent $n$-knots $K$ and $K'$ such that $X_K\cong X_{K'}$ (\cite{MR0242175}).
\end{theorem}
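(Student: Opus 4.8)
The statement has two parts, corresponding to the two cited sources. For the case $n=1$, the plan is to reduce the ambient isotopy classification of a knot $K:S^1\hookrightarrow S^3$ to the homeomorphism type of its exterior $X_K$. The key input is Gordon--Luecke's theorem \cite{MR972070} that knots in $S^3$ are determined by their complements. First I would observe that $X_K$ is homotopy equivalent to the complement $S^3\setminus K$ and that, by the solution of the Property R / knot-complement problem, a homeomorphism $X_K\to X_{K'}$ can be arranged (using peripheral structure) to extend over the glued-in solid tori to a homeomorphism $(S^3,K)\to (S^3,K')$ of pairs. The subtlety to be careful about is the peripheral system: one must show that any homeomorphism of exteriors carries meridian to meridian, which for $1$-knots follows from the fact that the meridian is the unique (up to sign and conjugacy) element of $\pi_1(\partial X_K)$ that normally generates $\pi_1(X_K)$ — this uses $H_1(X_K)\cong\Z$. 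Orientations are then tracked to conclude that $K$ and $K'$ are ambient isotopic rather than merely related by a mirror.

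For the case $n\geq 3$, the plan is to invoke the high-dimensional result of Browder--Levine or, as cited, the argument surrounding \cite{MR0242175}: given $X_K\cong X_{K'}$, one wants to fill in the boundary $S^n\times S^1$ with a copy of $S^n\times D^2$ to recover $S^{n+2}$ together with the embedded sphere $S^n\times\{0\}$. The ambiguity "at most two" arises precisely from the choice of how to glue: a homeomorphism of $\partial X_K=S^n\times S^1$ is, up to isotopy, determined by its action on $\pi_1(\partial X_K)\cong\Z$ together with the mapping class group data, and the meridian disc $D^2$ can be glued in along the meridian $\{\mathrm{pt}\}\times S^1$ in essentially one way once the meridian is pinned down — but the longitude direction $S^n\times\{\mathrm{pt}\}$ may be filled by either of two isotopy classes of framings when $n$ is such that $\pi_n(SO(2))$ or the relevant normal-bundle data allows it, and additionally the mirror-image ambiguity persists. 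I would assemble this by: (i) showing the meridian of $X_K$ is characterised homologically/homotopically exactly as in the $n=1$ case, so a homeomorphism of exteriors sends meridian to meridian up to sign; (ii) quoting that locally flat embeddings $S^n\hookrightarrow S^{n+2}$ have trivial, essentially unique normal bundle framings (as already noted in the excerpt, $2$-plane bundles over $S^n$ are trivial for $n>1$), so the filling is determined once the boundary identification is fixed; (iii) counting the residual choices — sign of the meridian (mirror) gives the factor of two.

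The main obstacle I anticipate is item (i) in the high-dimensional case, namely pinning down that an abstract homeomorphism $X_K\cong X_{K'}$ respects the peripheral structure well enough to be filled in. In dimension $3$ this is classical and rests on incompressibility of the boundary torus; in dimensions $n\geq 3$ one must instead argue via the fundamental group — the peripheral subgroup $\pi_1(\partial X_K)=\Z\oplus\Z$ (for $n=2$; it is $\Z$ for $n\geq 3$ since $\pi_1(S^n\times S^1)=\Z$) together with the meridian being the normal generator — and then handle the diffeomorphism/homeomorphism group of $S^n\times S^1$ to see that only the meridian-sign choice can vary the resulting knot. I would organise the write-up so that the homological characterisation of the meridian (using $H_*(X_K)\cong H_*(S^1)$, which the excerpt has already established via Alexander duality) does the heavy lifting uniformly in $n$, and then defer the remaining bundle-theoretic and mapping-class-group bookkeeping to the cited references \cite{MR972070}, \cite{MR0242175} rather than reproving it.
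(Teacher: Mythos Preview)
The paper does not prove this theorem: it is stated as a background result with citations to Gordon--Luecke \cite{MR972070} and Lashof--Shaneson \cite{MR0242175}, and no argument is given. So there is nothing in the paper to compare your sketch against.

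That said, your sketch for $n\geq 3$ misidentifies the source of the ``at most two''. You attribute the factor of two to the sign of the meridian (mirror), and separately float $\pi_n(SO(2))$ as a possible framing ambiguity. Neither is correct: $\pi_n(SO(2))=0$ for $n\geq 2$, so framings contribute nothing, and the meridian is pinned down homologically as you say. The genuine ambiguity comes from the Gluck twist --- the self-homeomorphism of $\partial X_K = S^n\times S^1$ given by $(x,\theta)\mapsto(\rho_\theta(x),\theta)$ for a loop $\rho$ representing the nontrivial element of $\pi_1(SO(n+1))\cong\Z/2\Z$. This homeomorphism preserves the meridian but need not extend over the reglued $S^n\times D^2$ in a way that recovers the same knot; the two possible regluings are what produce the ``non-reflexive pairs'' mentioned in the remark following the theorem (and realised by Cappell--Shaneson \cite{MR0413117} and others). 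Your step (i), the homological characterisation of the meridian, is fine and is indeed the uniform ingredient across dimensions; but step (iii) needs to be replaced by an analysis of $\pi_0$ of the homeomorphism group of $S^n\times S^1$ modulo those that extend over $S^n\times D^2$, which is where the $\Z/2\Z$ actually lives.
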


\begin{remark}These so-called `non-reflexive' pairs of inequivalent knots with homeomorphic exterior are known to exist in dimensions $n=$ 2 \cite{zbMATH03540077}, 3, 4, 5 \cite{MR0413117} and $n\equiv$ 3, 4 modulo 8 \cite{zbMATH00270255}. However, as we shall see in Section \ref{sec:knotcob}, non-reflexive pairs belong to the same `knot-cobordism' class.
\end{remark}

If there is a locally flat embedding of the manifold with boundary $(F^{n+1},S^n)\hookrightarrow S^{n+2}$ then we say the embedded $F$ is a \textit{Seifert surface} for the boundary knot. It was shown by many authors independently that every knot $K$ admits a Seifert surface $F^{n+1}$ (e.g. \cite{MR0189052}, \cite{MR0160218}). For $n\neq 2$, the unknot is characterised as the only knot which admits $D^{n+1}$ as a Seifert surface (this is due to Levine \cite{MR0179803} \cite{MR0246314}).

It is always possible to `push' a Seifert surface into the standard $D^{n+3}$ that cobounds the ambient sphere $S^{n+2}$. That is, we may modify a locally flat embedding $(F^{n+1},S^{n})\hookrightarrow S^{n+2}$ to a locally flat embedding of pairs $(F^{n+1},S^{n})\hookrightarrow (D^{n+3},S^{n+2})$, without changing the ambient isotopy class of the bounding knot $K$, and so that the embedded $F$ intersects $S^{n+2}$ in the knot $K$.

\begin{proposition}[{\cite[22.1]{MR1713074}}]If $(M,\partial M)$ is an $n$-dimensional manifold with boundary, the following sets are in natural 1:1 correspondence:
\begin{enumerate}[(i)]
\item the cohomology group $H^2(M)$,
\item the homology group $H_{n-2}(M,\partial M)$,
\item the set $[M,BSO(2)]$ of isomorphism classes of real orientable 2-plane bundles over $M$,
\item the ambient cobordism classes of codimension 2 submanifold pairs $(N,\partial N)\subset (M,\partial M)$.
\end{enumerate}
\end{proposition}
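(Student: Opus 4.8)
<br>

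The statement to prove is Proposition 6.1 (following \cite[22.1]{MR1713074}): for an $n$-dimensional manifold with boundary $(M,\partial M)$, the four sets --- $H^2(M)$, $H_{n-2}(M,\partial M)$, $[M,BSO(2)]$, and ambient cobordism classes of codimension $2$ submanifold pairs $(N,\partial N)\subset(M,\partial M)$ --- are in natural bijection.

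\textbf{Plan.} The strategy is to set up a cycle of natural maps and show each is a bijection, pivoting on the classical identification of $H^2$ with line-bundle-type data. First I would establish $(i)\Leftrightarrow(ii)$: this is just Poincar\'{e}--Lefschetz duality, $H^2(M)\cong H_{n-2}(M,\partial M)$, capping with the fundamental class $[M]\in H_n(M,\partial M)$. Since $M$ is not assumed orientable one should work with twisted coefficients or simply restrict to the oriented case as is done elsewhere in the paper (all geometric applications are oriented); I would note this and proceed in the oriented setting, so ordinary Poincar\'{e}--Lefschetz duality applies. Next, $(i)\Leftrightarrow(iii)$: $BSO(2)\simeq \mathbb{CP}^\infty = K(\mathbb{Z},2)$, so $[M,BSO(2)]=[M,K(\mathbb{Z},2)]\cong H^2(M;\mathbb{Z})$ by the representability of cohomology, with the isomorphism sending a bundle to its Euler class. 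This is the conceptual core and is essentially formal once one invokes these two standard facts.

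The substantive geometric content is $(iii)\Leftrightarrow(iv)$, linking bundles to embedded submanifolds. Given a codimension $2$ submanifold pair $(N,\partial N)\subset(M,\partial M)$ --- meaning $\partial N = N\cap\partial M$, transverse --- local flatness and orientability give a normal $2$-plane bundle $\nu_{N\subset M}$, hence a tubular neighbourhood $N\times D^2$; collapsing the complement of this neighbourhood to a point defines a map $M\to M/(M\setminus N\times\mathring{D}^2) \simeq \mathrm{Th}(\nu_N)$, and composing with the Thom class pulls back to the Poincar\'{e} dual of $[N,\partial N]\in H_{n-2}(M,\partial M)$; equivalently the bundle $\nu_N$ extends over $M$ to a classifying map $M\to BSO(2)$. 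Conversely, given $f:M\to BSO(2)=K(\mathbb{Z},2)$, I would make $f$ transverse (in the topological category, using the topological transversality of Kirby--Siebenmann \cite{MR0645390} and Quinn \cite{MR929089} already cited in the paper, and handling normal microbundles rather than vector bundles as the paper notes in Chapter~\ref{chap:laurent}) to a codimension $2$ submanifold $\mathbb{CP}^\infty \supset \mathbb{CP}^{\infty-1}$ (or more concretely to a point in $S^2 = \mathbb{CP}^1 \subset BSO(2)$ after compressing $f$ into a finite skeleton), obtaining $N=f^{-1}(\ast)$ with $\partial N$ its intersection with $\partial M$. Then I would verify these constructions are mutually inverse on the appropriate equivalence classes, and that ambient cobordism of submanifold pairs corresponds exactly to homotopy of classifying maps --- an ambient cobordism $(W,\partial W)\subset (M\times I, \partial(M\times I))$ produces, by the same Thom-collapse applied fibrewise over $I$, a homotopy of the corresponding maps, and conversely.

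\textbf{Main obstacle.} I expect the delicate point to be the $(iii)\Leftrightarrow(iv)$ step executed honestly in the \emph{topological} locally flat category with boundary: one needs topological transversality to cut out $N$ from a classifying map, one needs normal microbundles (so that a tubular neighbourhood and a Thom-collapse map exist for a locally flat codimension $2$ submanifold), and one must be careful that the transverse intersection meets $\partial M$ correctly so that $\partial N \subset \partial M$ is again codimension $2$ and locally flat --- a relative transversality statement. The orientability bookkeeping ($SO(2)$ versus $O(2)$, i.e.\ why the normal bundle is canonically oriented) should be pinned down explicitly since it is precisely what makes $BSO(2) = K(\mathbb{Z},2)$ rather than something with $\pi_1$. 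Apart from this, the argument is a routine assembly of Poincar\'{e}--Lefschetz duality, Eilenberg--MacLane representability, and the Pontryagin--Thom construction relative to the boundary; I would not grind through the transversality details but would cite \cite{MR0645390}, \cite{MR929089}, \cite{MR0161346} and refer the reader to \cite[22.1]{MR1713074} for the full treatment.
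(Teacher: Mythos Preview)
The paper does not supply its own proof of this proposition: it is stated with a citation to \cite[22.1]{MR1713074} and then immediately applied, with no argument given. So there is nothing in the paper to compare against.

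That said, your outline is the standard argument and is essentially what one finds in the cited source. The three ingredients --- Poincar\'{e}--Lefschetz duality for $(i)\leftrightarrow(ii)$, the identification $BSO(2)\simeq \C P^\infty = K(\Z,2)$ for $(i)\leftrightarrow(iii)$, and the Pontryagin--Thom construction (made transverse, with the topological-category caveats you flag) for $(iii)\leftrightarrow(iv)$ --- are exactly right, and your identification of the relative topological transversality as the only genuinely technical step is accurate. One small point: you say ``the bundle $\nu_N$ extends over $M$ to a classifying map'', but what actually happens is that the Thom collapse $M\to \mathrm{Th}(\nu_N)\to MSO(2)\simeq BSO(2)$ gives the classifying map directly; the normal bundle itself need not extend. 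Otherwise the plan is sound and would constitute a correct proof sketch.
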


As a result of this proposition, any codimension 2 submanifold pair $(N,\partial N)\subset (D^{n+3},S^{n+2})$ has trivial normal bundle, and hence by choosing a  framing we may embed $(N,\partial N)\times D^2\subset (D^{n+3},S^{n+2})$. Define the \textit{exterior} of such a submanifold pair (with respect to a choice of framing) as the compact, oriented manifold triad\[(Y_N;X_{\partial N},\partial_+Y_N;\partial X_{\partial N}):=(\cl(D^{n+3}\sm (N\times D^2));\cl(S^{n+2}\sm (\partial N\times D^2)),N\times S^1;\partial N\times S^1).\]

\begin{figure}[h]\[\def\picextYone{\resizebox{0.4\textwidth}{!}{ \includegraphics{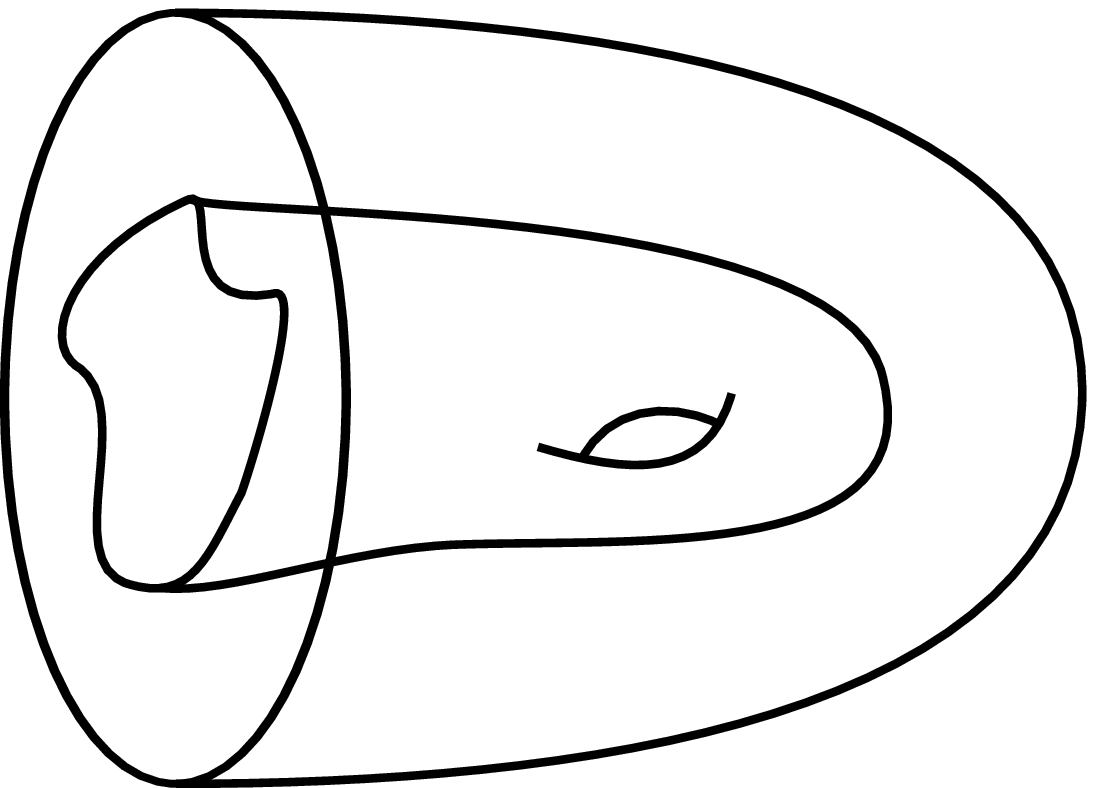}}}
\begin{xy} \xyimport(300,300){\picextYone}
,!+<7pc,-2.2pc>*+!\txt{Schematic of the framed \\codimension 2 submanifold pair.}
,(35,330)*!L{S^{n+2}}
,(190,320)*!L{D^{n+3}}
,(33,255)*!L{X_{\partial N}}
,(-70,240)*!L{\partial N\times S^1}
,(-40,225)*+{}="A";(30,200)*+{}="B"
,{"A"\ar"B"}
,(-70,70)*!L{\partial N\times D^2}
,(-30,80)*+{}="C";(55,110)*+{}="D"
,{"C"\ar"D"}
,(160,250)*!L{Y_N}
,(110,180)*!L{N\times D^2}
,(120,40)*!L{N\times S^1}
,(145,47)*+{}="A";(150,97)*+{}="B"
,{"A"\ar"B"}
\end{xy}
\qquad\quad
\def\picrelboundary{\resizebox{0.4\textwidth}{!}{ \includegraphics{pic_relboundary}}}
\begin{xy} \xyimport(300,300){\picrelboundary}
,!CD+<0.3pc,-1pc>*+!CU\txt{Schematic of the exterior \\as a triad.}
,(23,180)*!L{X_{\partial N}}
,(0,30)*!L{\partial X_{\partial N}}
,(140,170)*!L{Y_N}
,(120,50)*!L{\partial_+Y_N}
\end{xy}\]
\end{figure}

\subsubsection*{Connected sum}

Writing $x=(x_1,\dots,x_{m+1})\in S^m\subseteq\R^{m+1}$, the standard unit sphere, define the upper and lower hemispheres $D_\pm^m=\{x\in S^m\,|\,\pm x_1\geq 0\}$ so that we have the standard hemispherical decomposition\[S^m=D_+^m\cup_{S^{m-1}}D_-^m\subset \R^{m+1}.\]Define the standard orientation reversing homeomorphism of the unit sphere $S^m\subset\R^{m+1}$ by \[r_m:S^m\to S^m;\qquad (x_1,\dots,x_{m+1})\mapsto (-x_1,x_2\dots,x_m).\]

Up to ambient isotopy, we may assume a knot $K:S^n\hookrightarrow S^{n+2}$ has image \[K(S^n)=\Delta\cup D^n\subset S^{n+2}\] where $\Delta=K(S^n)\cap D_-^{n+2}$, and $D^n=K(S^n)\cap D_+^{n+2}$ is the standard unknotted $n$-disc given by $D^n=\{x_1\geq0, x_{n+2}=x_{n+3}=0\}\subset D^{n+2}_+\subset \R^{n+3}$. Consequently, $\partial\Delta=\partial D^n=S^{n-1}\subset S^{n+1}$, a standard unknotted sphere. Moreover, we may assume up to ambient isotopy that $D^n=K(D_+^n)\subset S^{n+2}$ so that $\Delta=K(D_-^n)$. The locally flat embedding of pairs\[\Delta_K=K_{D_-^n}:(D_-^n,S^{n-1})\hookrightarrow(\Delta,U(S^{n-1}))\subset(D^{n+2},S^{n+1})\] is called a \textit{disc knot} for $K$. The \textit{connected sum} $K\# K':S^n\hookrightarrow S^{n+2}$ of two knots $K, K'$ is the oriented, locally flat embedding given by glueing disc knots along the common unknotted boundary via the standard orientation reversing homeomorphism of a sphere\[K\# K':S^n\hookrightarrow S^{n+2};\qquad x\mapsto\left\{\begin{array}{ccl}\Delta_K(x)&&x\in D_-^n,\\r_{n+2}\Delta_{K'}r_n(x)&&x\in D_+^n.\end{array}\right.\]

\subsubsection*{Slice knots}

If there is a locally flat embedding of pairs $(D,K):(D^{n+1},S^n)\hookrightarrow (D^{n+3},S^{n+2})$ then we say the knot $K$ is \textit{slice} and the locally flat embedding $D$ is a \textit{slice disc} for $K$. In contrast to Levine's characterisation of unknottedness, a knot may admit a slice disc without that knot being the unknot. The terminology `slice knot' was introduced in \cite[p135]{MR0140099} (for $n=1$), where examples of non-trivial, non-slice knots are given. For instance, the `Stevedore's knot' is not slice.

\begin{lemma}If $K$ and $K'$ are slice, then $K\# K'$ is slice.
\end{lemma}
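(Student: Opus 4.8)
The plan is to give a direct geometric construction. Suppose $K$ and $K'$ are slice $n$-knots, with slice discs $(D,K):(D^{n+1},S^n)\hookrightarrow (D^{n+3},S^{n+2})$ and $(D',K'):(D^{n+1},S^n)\hookrightarrow (D^{n+3},S^{n+2})$ respectively. I want to produce a slice disc for $K\#K'$, i.e.\ a locally flat embedding of $(D^{n+1},S^n)$ into $(D^{n+3},S^{n+2})$ whose boundary knot is $K\#K'$. The idea is simply to perform the connected-sum operation ``one dimension up'': the connected sum of knots is defined by decomposing $S^{n+2}$ into hemispheres $D^{n+2}_\pm$ and glueing disc knots along the common unknotted equatorial $S^{n+1}$ via the standard orientation-reversing homeomorphism $r_{n+2}$. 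I will do the analogous decomposition of $D^{n+3}$ and of the slice disc.

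First I would set up the bounded analogue of the hemispherical picture. Up to ambient isotopy, arrange each slice disc so that near the boundary sphere $S^{n+2}=\partial D^{n+3}$ it looks standard: there is a decomposition $D^{n+3}=E_-\cup_{E_0}E_+$ where $E_\pm\cong D^{n+3}$ are ``half-balls'' meeting in a disc $E_0\cong D^{n+2}$, chosen so that $E_\pm\cap S^{n+2}=D^{n+2}_\pm$ and $E_0\cap S^{n+2}=S^{n+1}$. Arrange $D$ so that $D\cap E_+$ is a standard unknotted half-disc $D^{n+1}_+\hookrightarrow E_+$ meeting $E_0$ in a standard unknotted $D^n$ (this is where I use that slice discs of the unknot piece can be straightened — the restriction of $K$ to $D^n_+$ is already the trivial disc knot as in the definition of connected sum, and one extends this straightening over a collar). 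Then $\Delta_D:=D\cap E_-$ is a ``slice disc knot'': a locally flat embedding of $(D^{n+1}_-,S^n)$ into $(E_-,D^{n+2}_-)$ with the appropriate unknotted boundary behaviour, restricting on $S^{n+2}$ to the disc knot $\Delta_K$. Do the same for $D'$, obtaining $\Delta_{D'}$.

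Next I would glue. Define the embedding of $D^{n+3}=E_-\cup_{E_0}E_+$ (relabelling the two halves as the $-$ and $+$ copies) by using $\Delta_D$ on $E_-$ and the mirrored/reversed $r_{n+3}\,\Delta_{D'}\,r_{n+1}$ on $E_+$, exactly mimicking the formula $x\mapsto \Delta_K(x)$ for $x\in D^n_-$ and $x\mapsto r_{n+2}\Delta_{K'}r_n(x)$ for $x\in D^n_+$ in the definition of $K\#K'$ but one dimension higher. Because both slice disc knots are standard (and agree with the orientation-reversing identification) on the glueing region $E_0$, the two pieces match up to give a well-defined locally flat embedding of $(D^{n+1},S^n)$ into $(D^{n+3},S^{n+2})$; the local flatness on the seam $E_0\cap D^{n+1}$ follows since both pieces are standard there, so there is a bicollar. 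By construction the restriction to the boundary $S^{n+2}=D^{n+2}_-\cup_{S^{n+1}}D^{n+2}_+$ is precisely $K\#K'$. Hence $K\#K'$ is slice.

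The main obstacle is not the glueing itself but the preliminary straightening step: justifying that, up to ambient isotopy of $(D^{n+3},S^{n+2})$, each slice disc can be put in the standard half-disc position near a hemisphere of the boundary, compatibly with the chosen decomposition of the boundary knot into disc knots. This is a relative version of the standard ``one can isotope a locally flat submanifold to a collared standard position near a chosen unknotted piece of its boundary'' argument, using local flatness (hence bicollar neighbourhoods of $S^{n+2}$ in $D^{n+3}$ and of the knot in its exterior) together with the uniqueness of tubular/collar neighbourhoods and the characterisation of the unknot as bounding the standard disc. Once this normal-form statement is in hand, the rest is the formal glueing above, entirely parallel to the definition of connected sum of knots already recorded in the text.
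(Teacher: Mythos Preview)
Your proposal is correct and follows essentially the same approach as the paper: decompose $D^{n+3}$ into two half-balls (the paper calls them \emph{hemidiscs} $u(D^{n+3})$ and $l(D^{n+3})$), isotope each slice disc to be standard on one half, and then perform a relative connected-sum by glueing the nontrivial halves. You have in fact been more explicit than the paper's sketch about the glueing formula and about identifying the straightening step as the only nontrivial point, which the paper dispatches with a single ``by an ambient isotopy'' clause.
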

\begin{proof}[Proof (sketch)]We may cut the standard unit disc $D^{m}\subset \R^{m}$ into upper and lower \textit{hemidiscs} by setting $u(D^{m})=\{x\in D^{m}\,|\,x_1\geq0\}$ and $l(D^{m})=\{x\in D^{m}\,|\,x_1\leq 0\}$ so that we have a decomposition of pairs\[(D^{n+3},S^{n+2})=(u(D^{n+3}),D^{n+2}_+)\cup(l(D^{n+3}),D^{n+2}_-).\]The standard unknotted disc $D^{n+1}\subset D^{n+3}$ has a standard \textit{half slice disc} $u(D^{n+1})\subset u(D^{n+3})$.

By an ambient isotopy of the $D^{n+3}$, we may assume the respective slice discs for $K$ and $K'$ coincide with the standard half slice disc on $u(D^{n+3})$. We now remove the respective upper hemidiscs $u(D^{n+3})$ from $D^{n+3}$, and perform a relative version of the connected sum construction for knots. The result is the connected sum of knots $K\#K'$ bounding a connected sum of slice discs, which is again a slice disc.

\end{proof}

The study of slice knots can be rephrased in the language of cobordism.

\begin{definition}Two knots $K,K':S^n\hookrightarrow S^{n+2}$ are \textit{knot-cobordant} if there exists a locally flat embedding $f:S^{n}\times[0,1]\hookrightarrow S^{n+2}\times[0,1]$ such that $f(x,0)=(K(x),0)$ and $f(x,1)=(-K'(x),1)$. $K$ is \textit{knot-nullcobordant} if $K$ is knot-cobordant to the unknot $U$.
\end{definition}

If $K$ is slice, we may cut off a cap from a slice disc pair $(D^{n+3},D)$ to form a knot-nullcobordism of $K$. Conversely, if there is a knot-nullcobordism of $K$, we may glue a cap to the end of the knot-cobordism that contains $U$ to improve the embedded $S^n\times[0,1]$ to a slice disc. The following is a standard expansion on this idea.

\begin{proposition}\label{prop:kminusk}$K$ and $K'$ are knot-cobordant if and only if $K\#(-K')$ is slice.
\end{proposition}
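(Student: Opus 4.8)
The plan is to prove both implications by adapting the classical argument that relates slicing to cobordism, using the connected sum and cap-removal/cap-gluing constructions already set up in the excerpt. The key structural observation is that a knot-cobordism between $K$ and $K'$ (a locally flat embedding $S^n\times[0,1]\hookrightarrow S^{n+2}\times[0,1]$ restricting to $K$ and $-K'$ on the two ends) can be re-interpreted as a slice disc for $K\#(-K')$ after an appropriate modification, and conversely.

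First I would prove the ``only if'' direction. Suppose $K$ and $K'$ are knot-cobordant via $f:S^n\times[0,1]\hookrightarrow S^{n+2}\times[0,1]$. Since $S^{n+2}\times[0,1]$ is homeomorphic to $D^{n+3}$ with $\partial D^{n+3}$ decomposed as $S^{n+2}\times\{0\}\cup (S^{n+1}\times[0,1])\cup S^{n+2}\times\{1\}$, the image of $f$ is a properly embedded annulus $S^n\times[0,1]$ meeting the boundary in $K(S^n)\sqcup (-K')(S^n)$. Now one caps off: the end $(-K')(S^n)\subset S^{n+2}\times\{1\}$ bounds, inside the collar, a copy of $-K'$ that can be connect-summed into the cobordism. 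More precisely, I would realise $K\#(-K')$ as the knot living on an equatorial $S^{n+2}$ of $D^{n+3}$ obtained by tubing the two ends of the annulus together along an arc in $S^{n+1}\times[0,1]$ joining the two unknotted boundary spheres of the disc-knot decompositions of $K$ and $-K'$; this tubing turns the annulus into a disc $D^{n+1}$ properly embedded in $D^{n+3}$ with boundary $K\#(-K')$, exhibiting $K\#(-K')$ as slice. The point is that the tubing arc and the relative connected sum construction from the excerpt are compatible, using that both $K$ and $-K'$ admit disc-knot presentations with standard unknotted boundary.

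For the ``if'' direction, suppose $K\#(-K')$ is slice, with slice disc $D\subset D^{n+3}$. Using the hemidisc decomposition $(D^{n+3},S^{n+2})=(u(D^{n+3}),D^{n+2}_+)\cup(l(D^{n+3}),D^{n+2}_-)$ from the proof that connected sums of slice knots are slice, and the fact that (after an ambient isotopy) a neighbourhood of the ``summing sphere'' $S^{n-1}\subset S^{n+1}$ in the connected sum presentation of $K\#(-K')$ can be arranged to coincide with the standard unknotted piece, I would split the slice disc pair along this standard piece to recover a properly embedded annulus $S^n\times[0,1]$ in $D^{n+3}$ split as $S^{n+2}\times[0,1]$, whose two ends are $K$ and $-(-K')$. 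Wait — here one uses that $-(-K')=K'$: the mirror-and-reverse operation is an involution on ambient isotopy classes, which follows directly from the definition of the inverse knot. This annulus is precisely a knot-cobordism from $K$ to $K'$.

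The main obstacle I expect is the careful bookkeeping of orientations and of the standard unknotted pieces under the hemispherical/hemidisc decompositions: one must verify that the orientation-reversing homeomorphisms $r_m$ appearing in the connected-sum definition interact correctly with the collar structure of the cobordism so that the boundary ends genuinely come out as $K$ and $-K'$ (respectively $K$ and $K'$), rather than with some unwanted reflection. In particular, I would need to check explicitly that the tubing arc in the ``only if'' direction can be chosen in the codimension-one piece $S^{n+1}\times[0,1]$ of the boundary without creating new intersections, i.e. in general position, which is where local flatness and the standing hypothesis $n\geq 1$ (with the dimension restrictions making 2-plane normal bundles trivial) are used. Everything else is a routine---if fiddly---cut-and-paste argument of the kind sketched for the two lemmas immediately preceding the statement, and I would present it at that same level of detail rather than spelling out every isotopy.
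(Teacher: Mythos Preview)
Your ``only if'' direction rests on a false identification: $S^{n+2}\times[0,1]$ is \emph{not} homeomorphic to $D^{n+3}$, since its boundary is the disconnected space $S^{n+2}\sqcup S^{n+2}$ while $\partial D^{n+3}=S^{n+2}$ is connected. Consequently there is no ``$S^{n+1}\times[0,1]$'' piece of the boundary along which to tube, and the tubing argument as written does not make sense.

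The paper's argument supplies exactly the missing step that turns $S^{n+2}\times[0,1]$ into a ball: choose an embedded arc $\alpha$ \emph{inside the embedded annulus} $f(S^n\times[0,1])$, running from a point of $K$ to a point of $K'$, and excise an open tubular neighbourhood of $\alpha$ from $S^{n+2}\times[0,1]$. Since $\alpha$ joins the two boundary components, its neighbourhood is a copy of $D^{n+2}\times[0,1]$, and the complement is $D^{n+2}\times[0,1]\cong D^{n+3}$. The annulus with a strip removed becomes a properly embedded disc $D^n\times[0,1]\cong D^{n+1}$, giving a cobordism from $\Delta_K$ to $\Delta_{K'}$ rel the standard unknotted $(S^{n+1},U(S^{n-1}))$; collapsing that relative part realises the boundary as $K\#(-K')$ and the disc as a slice disc. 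Your converse direction --- splitting a slice disc for $K\#(-K')$ along the standard summing piece to recover a cylinder --- is the correct idea and is precisely what the paper means by ``this process is reversible''.
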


\begin{proof}[Proof (sketch)]Starting with a knot-cobordism $f:S^{n}\times[0,1]\hookrightarrow S^{n+2}\times[0,1]$ between $K$ and $K'$, take an embedded arc $\alpha:[0,1]\hookrightarrow f(S^{n}\times[0,1])$ such that $\alpha(0)\in K$ and $\alpha(1)\in K'$. Excise an open tubular neighbourhood of $\alpha\subset S^{n+2}\times[0,1]$ to obtain a cobordism from $\Delta_K$ to $\Delta_{K'}$ embedded in $D^{n+2}\times[0,1]$, all relative to the unknot $(S^{n+1},U(S^{n-1})$. Contracting the relative part results in the cobordism of disc knots becoming a slice disc for the connected sum $K\# (-K')$.

To complete the proof, note this process is reversible.
\end{proof}

\section{The symmetric Blanchfield complex of a knot}

The \textit{Blanchfield complex} of a knot will be our central object of study and is the bridge between the algebraic $L$-theory of the previous chapters and our knot-theoretic applications. The Blanchfield complex is an invariant of an ambient isotopy class of embeddings $K:S^n\hookrightarrow S^{n+2}$ that is defined for both odd- and even-dimensional knots. It is the symmetric chain complex generalisation of the classical knot invariant called the \textit{Blanchfield form}, which is defined only for odd-dimensional knots. We will define the Blanchfield form of an odd-dimensional knot below and show how it derives from the Blanchfield complex.

First we spell out the details of the construction of the Blanchfield complex of an $n$-knot $K$, originally defined in \cite[pp822]{MR620795}. This will be the payoff for the careful description of the chain-diagonal approximation of maps of pairs that led up to Proposition \ref{prop:pairdiag}. We begin with a well-known proposition.

\begin{proposition}Suppose $f:(F^{n+1},S^n)\hookrightarrow (D^{n+3},S^{n+2})$ is a locally flat embedding of pairs, and write $f|_{S^n}=K$. There is a \textit{meridian map}, that is a map\[\psi:Y_F\to S^1,\]inducing an isomorphism $\psi_*:H_*(X_K)\cong H_*(S^1)$ and restricting to projection to the second factor\[\psi|_{ \partial_+ Y}=\text{pr}_2: F\times S^1\to S^1.\]The meridian map is uniquely defined up to homotopy by the fact that it restricts to projection on $\partial_+Y$. If $F=D$ is a slice disc then the meridian map induces an isomorphism $\psi_*:H_*(Y_D)\cong H_*(S^1)$.
\end{proposition}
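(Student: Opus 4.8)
The statement asserts the existence of a meridian map $\psi: Y_F \to S^1$ with the stated properties. The plan is to construct $\psi$ from the normal bundle data, verify the homological claims via Alexander duality, and then handle the slice disc case as a special instance.

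First I would construct the map $\psi$. Recall that $Y_F = \cl(D^{n+3} \setminus (F \times D^2))$ and that the submanifold pair $(F, \partial F) \subset (D^{n+3}, S^{n+2})$ has trivial normal bundle with a chosen framing $F \times D^2 \hookrightarrow D^{n+3}$. On the boundary piece $\partial_+ Y_F = F \times S^1$ the map must be projection to the second factor. The standard way to extend this over all of $Y_F$ is to use the class in $H^1(Y_F)$ dual (via the composite $H^1(Y_F) \cong [Y_F, S^1]$, using that $S^1 = K(\Z,1)$) to the fundamental class of the framing sphere bundle; equivalently, $\psi$ is the map classifying the boundary circle of a meridian disc, which exists because the framed tubular neighbourhood $F \times D^2$ determines a trivialization of the normal circle bundle. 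I would then check that this $\psi$ restricts to $\text{pr}_2$ on $\partial_+ Y_F$ up to homotopy, and invoke the fact that $S^1$ is aspherical to conclude that $\psi$ is unique up to homotopy among maps restricting to $\text{pr}_2$ on $\partial_+ Y_F$ (obstruction theory: the obstructions to a homotopy lie in $H^k(Y_F, \partial_+ Y_F; \pi_k(S^1))$ which vanish for $k \geq 2$).

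Next I would verify the homological isomorphism. For the boundary knot exterior $X_K$ we already know $H_*(X_K) \cong H_*(S^1)$ by Alexander duality (stated in the excerpt), and $\psi|_{X_K}$ generates $H^1(X_K) \cong \Z$ since a meridian of the knot maps to the generator of $\pi_1(S^1)$. This gives $\psi_*: H_*(X_K) \xrightarrow{\cong} H_*(S^1)$. The main work is the case $F = D$, a slice disc. Here $Y_D = \cl(D^{n+3} \setminus (D \times D^2))$, and since $(D^{n+3}, D)$ is an unknotted-disc pair after excising the tubular neighbourhood, Alexander duality in the disc $D^{n+3}$ (or equivalently in the sphere $S^{n+3}$ after doubling, using $H_*(D^{n+3}) \cong H_*(\text{pt})$) shows $H_*(Y_D) \cong H_*(S^1)$: the reduced homology of the complement of an unknotted $(n+1)$-disc with trivial normal bundle inside a contractible $(n+3)$-manifold is concentrated in degree $1$. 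Then $\psi_*$ is an isomorphism onto $H_*(S^1)$ because, as before, $\psi$ sends the meridian class (the generator of $H_1$) to the generator of $\pi_1(S^1)$.

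The main obstacle I anticipate is making the Alexander duality computation for $Y_D$ fully rigorous, since $D^{n+3}$ is a manifold with boundary rather than a closed manifold. The cleanest route is probably to double $D^{n+3}$ along its boundary to get $S^{n+3}$, observe that the doubled slice disc becomes an unknotted $(n+1)$-sphere in $S^{n+3}$ (because a slice disc of $K$ doubles to an $(n+1)$-sphere that bounds on both sides, and a standard argument shows it is unknotted when $K$ is slice — this is essentially the content of the doubly-slice discussion), and then apply Alexander duality in $S^{n+3}$ directly. Alternatively one can work relative to the boundary throughout and use the long exact sequence of the pair $(D^{n+3}, S^{n+2})$ together with excision. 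I would also need to be careful that the meridian map $\psi$ on $Y_D$ restricts correctly on $\partial_+ Y_D = D \times S^1$; this follows from the construction since the framing on $D$ restricts to the framing on $\partial D = S^n$ used in building $\psi$ for $X_K$.
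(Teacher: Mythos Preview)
There are two genuine gaps in your proposal.

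First, your uniqueness argument is incomplete. You invoke obstruction theory and note that $H^k(Y_F,\partial_+ Y_F;\pi_k(S^1))$ vanishes for $k\geq 2$, but the relevant obstruction for $k=1$ lives in $H^1(Y_F,\partial_+ Y_F;\Z)$ and you give no reason for this to vanish. Equivalently, you need the restriction $H^1(Y_F)\to H^1(\partial_+ Y_F)$ to be injective, and you have not shown this. The paper handles both existence and uniqueness in one stroke: it dualises the problem via Poincar\'e--Lefschetz duality to lifting $[F]\in H_{n+1}(F,\partial F)$ to $H_{n+2}(Y,\partial Y)$, then identifies $C_*(Y,\partial Y)$ with the cone on $C_*(F\times D^2,\partial F\times D^2)\to C_*(D^{n+3},S^{n+2})$ to show that the connecting map $\delta:H_r(Y,\partial Y)\to H_{r-1}(F,\partial F)$ is an isomorphism for $r\leq n+2$. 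This gives a unique lift.

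Second, and more seriously, your treatment of the slice disc case contains an error. You write that the doubled slice disc $D\cup_K(-D)$ is an unknotted $(n+1)$-sphere in $S^{n+3}$ ``because \ldots a standard argument shows it is unknotted when $K$ is slice''. This is false: the doubled slice disc is generally knotted, and asking whether one can choose slice discs so that the double is unknotted is precisely the doubly-slice problem. You have conflated slice with doubly slice. You also refer to the slice disc as ``unknotted'', which is meaningless here since $\partial D=K$ need not be the unknot. What \emph{is} true, and what would rescue your Alexander duality approach, is that Alexander duality depends only on the homology of the subspace, and $D$ is contractible regardless of its embedding; but you do not make this point, and your doubling route does not recover $H_*(Y_D)$ from $H_*$ of the doubled exterior anyway. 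The paper instead combines the isomorphism $\delta$ already established, the long exact sequence of $(Y,\partial Y)$, the Mayer--Vietoris sequence for $\partial Y=\partial_+Y\cup_{S^1\times S^n}X_K$, and Alexander duality for $X_K$ to conclude that $Y_D$ is a homology $S^1$.
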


\begin{proof}Firstly, any map $\psi:X_K\to S^1$ that restricts to the specified projection on the boundary $\partial X_K$ will be a homology isomorphism. This is because the boundary of a fibre of the embedded normal bundle of $K$ is a representative of the Alexander dual to $K$.

Write $Y=Y_F$. To construct $\psi$, first write the projection to the second factor as $\hat{\psi}:\partial_+ Y=F\times S^1\to S^1$. We identify the functor $[-,S^1]$, returning the set  of homotopy classes of maps from a manifold to $S^1$, with the functor $H^1(-;\Z)$ (considered as a set-valued functor). Writing the inclusion $i_+:\partial_+Y\hookrightarrow Y$, we are therefore looking to lift the cohomology class corresponding to $\hat{\psi}$ along $i_+^*:H^1(Y)\to H^1(\partial_+ Y)=H^1(F\times S^1)\cong H^0(F)\oplus H^1(F)$. But the class of $\hat{\psi}$ lies entirely in $H^0(F)$, and is a generator, as $\hat{\psi}$ collapses $F$ to a point. Thus, by Poincar\'{e} duality, our lifting problem is equivalent to lifting the fundamental class $[F]\in H_{n+1}(F,\partial F)$ to $H_{n+2}(Y,\partial Y)$.

To identify the lifting obstruction, consider that the cone $\mathcal{C}=C(C_*(F\times D^2,\partial F\times D^2)\to C_*(D^{n+3},S^{n+2}))$ is chain homotopy equivalent to $C_*(Y,\partial Y)$. To see this, take this iterated cone the other way so that $\mathcal{C}=C(C_*(S^{n+2},K\times D^2)\to C_*(D^{n+3},F\times D^2))$. But now it is clear that $\mathcal{C}=C(C_*(X_K,\partial X_K)\to C_*(Y,\partial_+ Y))$ by excision. So finally our lifting problem is to lift $[F]$ along the connecting morphism $\delta$ in the long exact sequence \[\dots\to H_{n+2}(Y, \partial Y)\xrightarrow{\delta} H_{n+1}(F,\partial F)\to H_{n+1}(D^{n+3},S^{n+2})\to\dots\]But there is no obstruction to lifting along $\delta$ - indeed for $r\leq n+2$ the map $\delta:H_r(Y,\partial Y)\to H_{r-1}(F,\partial F)$ is an isomorphism, and hence there is a unique lift.

Finally, when $F$ is a slice disc $(F,\partial F)=(D^{n+1},S^n)$, it is straightforward to combine the isomorphism $\delta$, the long exact sequence of the pair $(Y,\partial Y)$, the Meier-Vietoris sequence for the boundary $\partial Y=\partial_+ Y\cup_{S^1\times S^n}X_K$ and the Alexander duality of $X_K$ to yield the fact that $Y_D$ is a homology $S^1$. The class of the meridian map in $H^1(Y)\cong \Z$ is a generator and hence $\psi_*:H_*(Y_D)\to H_*(S^1)$ exhibits this isomorphism.
\end{proof}

\begin{remark}It is possible to construct the meridian map in a more explicitly obstruction theoretic way. For example, to construct the meridian for just a knot exterior (i.e.\ the presence of a cobounding surface not assumed) proceed as follows:

Write $\hat{\psi}:S^n\times S^1\to S^1$, the projection to the second factor. Standard obstruction theory (e.g. \cite[\textsection 7]{MR1841974}) tells us that the obstructions to extending $\hat{\psi}$ cell-by-cell to all of $X_K$ lie in the groups $H^{q+1}(X_K,\partial X_K;\pi_q(S^1))$. But these coefficients vanish for $q>1$, so there is no obstruction here. For $q=1$, $n>1$ we have $H^2(X_K,\partial X_K)\cong H_n(X_K)=0$. We need only check the case $q=1$, $n=1$, which is the obstruction to extending $\hat{\psi}$ over the 1-skeleton of $X_K$ when $K$ is a classical 1-knot. In this case one may instead use the classical algorithm of Seifert for constructing a Seifert surface.
\end{remark}

\begin{corollary}\label{cor:meridian}There is a \textit{meridian map} on the knot exterior\[\psi:X_K\to S^1\]uniquely defined up to homotopy by the property that $\psi|_{\partial X_K}:S^n\times S^1\to S^1$ is projection to the second factor.
\end{corollary}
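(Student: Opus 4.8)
\textbf{Proof plan for Corollary \ref{cor:meridian}.} The statement is the special case of the preceding Proposition obtained by taking the cobounding surface to be trivial, or rather by observing that the construction only needs the knot exterior and its boundary. So the plan is to extract exactly the part of the Proposition's proof that concerns $X_K$ and $\partial X_K = S^n\times S^1$, and to note that no Seifert surface (or slice disc) is actually required for this weaker conclusion.

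First I would record why any map $\psi\colon X_K\to S^1$ restricting to the projection $\mathrm{pr}_2\colon S^n\times S^1\to S^1$ on $\partial X_K$ is automatically unique up to homotopy: identifying $[X_K,S^1]\cong H^1(X_K;\Z)$ and $[\partial X_K,S^1]\cong H^1(S^n\times S^1;\Z)\cong H^0(S^n)\oplus H^1(S^n)$, the class of $\mathrm{pr}_2$ is a generator of the $H^1(S^n)$ (equivalently the $S^1$-factor) summand, and Alexander duality gives $H^1(X_K;\Z)\cong \Z$ mapping isomorphically onto the relevant summand under restriction; hence a lift, if it exists, is unique. This also shows such a $\psi$ is a homology isomorphism onto $S^1$, since a generator of $H^1(X_K)\cong\Z$ is dual to the meridian, the Alexander dual of $K$.

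Next I would establish existence. The cleanest route is the obstruction-theoretic one already sketched in the Remark following the Proposition: extend $\mathrm{pr}_2$ cell by cell over $X_K$ rel $\partial X_K$; the obstructions lie in $H^{q+1}(X_K,\partial X_K;\pi_q(S^1))$, which vanish for $q>1$ because $\pi_q(S^1)=0$, and for $q=1$, $n>1$ because $H^2(X_K,\partial X_K)\cong H_n(X_K)=0$ by Poincar\'e--Lefschetz duality and Alexander duality. The only remaining case is $q=1$, $n=1$, where one invokes Seifert's classical algorithm to produce a Seifert surface for the classical $1$-knot and hence the required map to $S^1$. Alternatively, I would simply cite the Proposition: every knot bounds a Seifert surface $F$, push it into $D^{n+3}$, and restrict the meridian map $\psi\colon Y_F\to S^1$ to $X_K\subset \partial Y_F$; by construction $\psi|_{\partial_+Y}=\mathrm{pr}_2$ on $F\times S^1$, so in particular $\psi|_{X_K}$ restricts to $\mathrm{pr}_2$ on $\partial X_K = S^n\times S^1\subset F\times S^1$.

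The potential obstacle here is purely bookkeeping rather than mathematical: matching up the boundary identifications so that "projection to the $S^1$ factor" on $\partial_+Y_F = F\times S^1$ genuinely restricts to "projection to the $S^1$ factor" on $\partial X_K = S^n\times S^1$ under the inclusion $S^n = \partial F \hookrightarrow F$. This is immediate once one is careful that the normal framing of $K$ inside $S^{n+2}$ used to define $X_K$ is the one induced from the normal framing of $F$ inside $D^{n+3}$. After that, uniqueness up to homotopy is the formal argument of the first paragraph, and the corollary follows.
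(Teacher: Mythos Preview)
Your proposal is correct and matches the paper's intended approach: the Corollary is stated without proof, immediately following the Proposition (existence via a Seifert surface and restriction) and the Remark (the direct obstruction-theoretic argument), and you have spelled out both of these routes together with the easy uniqueness argument via $[X_K,S^1]\cong H^1(X_K;\Z)\cong\Z$. One cosmetic point: your K\"unneth decomposition should read $H^1(S^n\times S^1)\cong H^0(S^n)\otimes H^1(S^1)\oplus H^1(S^n)\otimes H^0(S^1)$ rather than $H^0(S^n)\oplus H^1(S^n)$, but the conclusion you draw from it is unaffected.
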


The homotopy class of the meridian map $\psi\in[X_K,S^1]=[X_K,D^{n+1}\times S^1]$ may be represented by a (degree 1) map of compact, oriented, $(n+2)$-dimensional manifolds with boundary\[(f,\partial f):(X_K,\partial X_K)\to (D^{n+1}\times S^1,S^n\times S^1),\]with $\partial f$ the identity map.

\begin{figure}[h]\[\def\picextkernel{\resizebox{0.8\textwidth}{!}{ \includegraphics{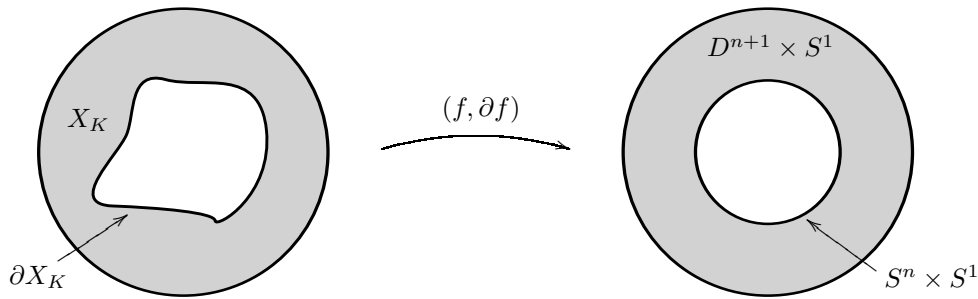}}}
\begin{xy} \xyimport(259,139){\picextkernel}
,!+<7.2pc,2pc>*+!\txt{}
,(10,85)*!L{X_K}
,(-7,10)*!L{\partial X_K}
,(5,15)*+{}="A";(30,42)*+{}="B"
,{"A"\ar"B"}
,(250,10)*!L{S^n\times S^1}
,(250,10)*+{}="A";(225,40)*+{}="B"
,{"A"\ar"B"}
,(198,120)*!L{D^{n+1}\times S^1}
,(100,70)*+{}="A";(160,70)*+{}="B"
,{"A"\ar@/^/"B"}
,(120,90)*!L{(f,\partial f)}
\end{xy}\]
  \caption{Schematic of the surgery problem determining the kernel pair.}
\end{figure}

Using the standard infinite cyclic cover $D^{n+1}\times \R\to D^{n+1}\times S^1$ with group of covering translations $\Z\cong\langle z\rangle$, we may now apply Proposition \ref{prop:pairdiag} to obtain the associated kernel pair $\sigma^*(\overline{f},\overline{\partial f})$, which is an $(n+2)$-dimensional symmetric Poincar\'{e} pair over the Laurent polynomial ring $\Z[\Z]\cong\Z[z,z^{-1}]$ with the involution $\overline{z}= z^{-1}$.

\begin{definition}The \textit{Blanchfield complex} of an $n$-knot $K:S^n\hookrightarrow S^{n+2}$ is the $(n+2)$-dimensional symmetric complex $(C_K,\phi_K)$ over $\Z[z,z^{-1}]$ defined as the algebraic Thom construction (see \ref{subsec:cxpair}) of the kernel pair $\sigma^*(\overline{f},\overline{\partial f})$.
\end{definition}

It is easy to identify $(D^{n+1}\times S^1,S^n\times S^1)\cong (X_U,\partial X_U)$ so that the Blanchfield complex of $K$ can be thought of as a measure of the difference between $K$ and the unknot $U$. In other words, we can think of the Blanchfield complex as a surgery problem trying to improve the knot exterior to an unknot exterior via codimension 2 surgery (cf.\ \cite[\textsection 7.8]{MR620795}).

The Blanchfield complex is an invariant of the ambient isotopy class of $K$ that is well-defined up to homotopy equivalence of $(n+2)$-dimensional symmetric complexes over $\Z[z,z^{-1}]$. Recall that the symmetric construction assumes we have made a choice of finite $CW$ complex structure on the compact manifolds with boundary $(X_K, \partial X_K)$, $(D^{n+1}\times S^1,S^n\times S^1)$ so that $C_K$ is an object of $\B_+(\Z[z,z^{-1}])$. In fact it is a chain complex of f.g.\ free modules.

\begin{claim}\label{clm:auto}The Blanchfield complex $(C_K,\phi_K)$ of an $n$-knot $K$ is Poincar\'{e} and such that\[C_K\oplus C_*(\overline{D^{n+1}\times S^1})\simeq C_*(\overline{X_K}).\]So in particular there is an isomorphism in reduced homology $\widetilde{H}_*(C_K)\cong \widetilde{H}_*(X_K)$. Furthermore,\[1-z:C_K\to C_K\]is an automorphism of $C_K$.
\end{claim}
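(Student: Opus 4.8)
The plan is to unpack the definition of the Blanchfield complex as the algebraic Thom construction of the kernel pair $\sigma^*(\overline{f},\overline{\partial f})$ and read off the three claims from the general properties of the symmetric construction established in Chapter \ref{chap:algLtheory}. First I would observe that the kernel pair $\sigma^*(\overline{f},\overline{\partial f})$ of the degree 1 map $(f,\partial f):(X_K,\partial X_K)\to(X_U,\partial X_U)$, where $\partial f$ is the identity, has the form $(C(\overline{\partial f}^!)\to C(\overline{f}^!),\ldots)$ with $\overline{\partial f}=\id$, so $C(\overline{\partial f}^!)\simeq 0$; hence by Proposition \ref{prop:pairdiag} the kernel pair is a Poincar\'{e} pair $(0\to C(\overline{f}^!),(\phi_K,0))$. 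Taking the algebraic Thom construction of a pair of the form $(0:0\to D,(\phi,0))$ gives the complex $(D,\phi)=(C(\overline{f}^!),\phi_K)$, which is Poincar\'{e} by Proposition \ref{prop:thomthick1} (the clause identifying Poincar\'{e} complexes with Thom complexes of such pairs). This simultaneously gives $C_K\simeq C(\overline{f}^!)$, and the splitting $\sigma^*(\overline{M})\simeq \sigma^*(f)\oplus\sigma^*(\overline{N})$ of Proposition \ref{prop:umkehr2} applied to $(f,\partial f)$ — together with the naturality of the symmetric construction under the inclusion of the boundary — yields the chain homotopy equivalence $C_*(\overline{X_K})\simeq C_K\oplus C_*(\overline{X_U})=C_K\oplus C_*(\overline{D^{n+1}\times S^1})$. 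The reduced homology statement then follows from Claim \ref{clm:auto}'s own splitting since $\widetilde{H}_*(D^{n+1}\times S^1)$ is accounted for entirely by the unknot summand: more precisely, $\widetilde H_*(\overline{X_K})\cong \widetilde H_*(X_K)$ by equation \ref{eq:univpd} and $C_*(\overline{D^{n+1}\times S^1})$ has the homology of $\R$, contractible, so $\widetilde H_*(C_K)\cong\widetilde H_*(\overline{X_K})\cong \widetilde H_*(X_K)$.

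For the final assertion that $1-z:C_K\to C_K$ is an automorphism, the strategy is to pass to homology and invoke the equivalence of exact categories of Lemma \ref{halfunit} (the Blanchfield module description), or more directly Corollary \ref{cor:Pacyclic}-style reasoning: the key input is that the knot exterior $X_K$ has the homology of a circle, $H_*(X_K)\cong H_*(S^1)$, by Alexander duality, so that $\widetilde H_*(X_K;\Z[z,z^{-1}])$ — which equals $\widetilde H_*(C_K)$ — consists of modules on which multiplication by $1-z$ acts as an isomorphism. Concretely, one shows each $H_r(C_K)$ is a finitely generated $\Z[z,z^{-1}]$-module that is $P$-torsion ($P$ the set of Alexander polynomials), i.e. annihilated by some $p(z)$ with $p(1)=\pm1$, equivalently it vanishes under the augmentation $z\mapsto 1$; for such modules $1-z$ is an automorphism. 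Since $C_K$ is a bounded complex of f.g.\ free $\Z[z,z^{-1}]$-modules and $1-z:C_K\to C_K$ induces an isomorphism on every homology module, it is a chain homotopy equivalence, hence an automorphism in the homotopy category — which is all that is being asserted.

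The main obstacle will be pinning down precisely why $H_*(C_K;\Z[z,z^{-1}])$ is $P$-acyclic, i.e.\ the ``Alexander duality of a knot'' statement referred to in Section \ref{subsec:blanchdual}. This requires relating $C_K\simeq C(\overline f^!)$ to the infinite cyclic cover $\overline{X_K}$ of the knot exterior and using that $\overline{X_K}$ has finitely generated homology over $\Z[z,z^{-1}]$ that is killed by the Alexander polynomial; the cleanest route is to cite the standard fact (e.g.\ via the Milnor exact sequence for the infinite cyclic cover, or \cite[\textsection 17]{MR1713074}) that $H_*(\overline{X_K})$ is finitely generated over $\Z$ in each degree after tensoring appropriately, forcing $P$-torsion, and then transfer this to $C_K$ through the splitting $C_*(\overline{X_K})\simeq C_K\oplus C_*(\overline{D^{n+1}\times S^1})$. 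I would present this as: (1) identify the Thom construction explicitly; (2) deduce Poincar\'{e} and the stable splitting from Propositions \ref{prop:umkehr2}, \ref{prop:pairdiag}, \ref{prop:thomthick1}; (3) deduce the reduced homology isomorphism; (4) establish $P$-acyclicity of $\widetilde H_*(C_K)$ from $H_*(X_K)\cong H_*(S^1)$; (5) conclude $1-z$ is a homology isomorphism, hence a chain equivalence of the free complex $C_K$, hence an automorphism.
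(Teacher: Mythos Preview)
Your treatment of the Poincar\'{e} property and the chain-level splitting is correct and is exactly what the paper does: since $\partial f=\id$, the Umkehr $\overline{\partial f}^!$ is a chain equivalence, $C(\overline{\partial f}^!)\simeq 0$, and Proposition \ref{prop:thomthick1} together with the splitting of Proposition \ref{prop:pairdiag} give both the Poincar\'{e} claim and $C_K\oplus C_*(\overline{D^{n+1}\times S^1})\simeq C_*(\overline{X_K})$.

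For the $1-z$ statement, however, your route is an unnecessary detour and in fact reverses the paper's logical order. You propose to first establish that $H_*(C_K)$ is $P$-torsion (citing outside facts about Alexander polynomials or the Milnor sequence), and then deduce that $1-z$ acts invertibly. But in the paper $P$-acyclicity is not an input: it is derived \emph{after} this claim, in Lemma \ref{lem:auto} and Corollary \ref{cor:Pacyclic}, precisely by using the $1-z$ automorphism you are trying to prove. The paper's argument here is much shorter and entirely self-contained: tensor the free resolution
\[
0\to \Z[z,z^{-1}]\xrightarrow{\,1-z\,}\Z[z,z^{-1}]\xrightarrow{\,\eps\,}\Z\to 0
\]
with the free complex $C_K$ to see that $1-z$ is a chain equivalence on $C_K$ if and only if $\Z\otimes_{\Z[z,z^{-1}]}C_K$ is acyclic. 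But $\Z\otimes_{\Z[z,z^{-1}]}C_K$ is exactly the mapping cone $C(f^!)$ over $\Z$ obtained by forgetting the covering action, and this is acyclic because $(f,\partial f)$ is a $\Z$-homology equivalence (Alexander duality makes $X_K$ a homology circle). No appeal to $P$, to the Alexander polynomial, or to external references is needed; you should replace your steps (4)--(5) with this two-line argument.
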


\begin{proof}By Proposition \ref{prop:thomthick1}, a symmetric complex is Poincar\'{e} if and only if it is the Thom construction of a pair that is homotopy equivalent to a pair of the form\[(0:0\to D,(\phi,0)).\]But indeed,  $\partial f=\text{id}:\partial X_K\to \partial X_U$ implies a chain homotopy equivalence $\overline{\partial f}^!$ in $\B_+(\Z[z,z^{-1}])$ so that $C(\overline{\partial f}^!)$ is contractible and the kernel pair $\sigma^*(\overline{f}^!,\overline{\partial f}^!)$ is of the required form. The direct sum decomposition of Proposition \ref{prop:pairdiag} reduces to the claimed decomposition under the Thom construction.

The augmentation $\eps:\Z[z,z^{-1}]\to \Z$ sending $z\mapsto 1$ fits into the free $\Z[z,z^{-1}]$-module resolution\[0\to \Z[z,z^{-1}]\xrightarrow{1-z}\Z[z,z^{-1}]\xrightarrow{\eps} \Z\to 0.\]Applying this coefficient sequence to $C_K$ shows that the statement that $1-z$ acts as an automorphism of $C_K$ is equivalent to saying that $\Z\otimes_{\Z[z,z^{-1}]} C_K$ is acyclic. But it is easy to see that $\Z\otimes_{\Z[z,z^{-1}]} C_K$ is acyclic as the original map $(f,\partial f)$ was a $\Z$-homology equivalence (by Alexander duality, as already noted). Taking the cone $C(\overline{f}^!)$ and forgetting the action of the covering translations results in an acyclic complex.
\end{proof}

Recall that $P$ denotes the set of Alexander polynomials.

\begin{lemma}\label{lem:auto}If $C$ is a chain complex in $\B(\Z[z,z^{-1}])$ then $(1-z):C\to C$ is an automorphism if and only if there exists $p\in P$ such that $pH_*(C)=0$.
\end{lemma}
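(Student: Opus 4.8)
The statement to prove is: for $C$ in $\B(\Z[z,z^{-1}])$, the map $(1-z)\colon C\to C$ is an automorphism if and only if there exists $p\in P$ with $pH_*(C)=0$. The plan is to reduce the claim about the automorphism property to a statement about the homology of a derived tensor product, and then interpret that statement in terms of annihilators, following the same kind of reasoning already used in the proof of Claim \ref{clm:auto}.

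First I would recall that $A=\Z[z,z^{-1}]$ is a Noetherian ring and that $C$ is a finite complex of f.g.\ projective (indeed free, up to homotopy) $A$-modules, so all the homology modules $H_r(C)$ are finitely generated. The key observation is that $1-z$ is a non-zero-divisor in $A$ (it is monic up to units with constant term $\pm 1$, so $1-z\in P$), hence there is a short exact coefficient sequence
\[
0\to A\xrightarrow{1-z} A\to A/(1-z)A\to 0,
\]
and $A/(1-z)A\cong \Z$ via the augmentation $\eps\colon z\mapsto 1$. Applying $-\otimes_A C$ and using that $C$ consists of flat modules, I would obtain a long exact sequence relating $H_*(C)$, $H_*(C)$, and $H_*(\Z\otimes_A C)$, in which the connecting map is exactly multiplication by $1-z$. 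Thus $1-z\colon H_r(C)\to H_r(C)$ is an isomorphism for every $r$ if and only if $H_*(\Z\otimes_A C)=0$, i.e.\ $\Z\otimes_A C$ is acyclic. Since $1-z$ acts on $C$ and on each $H_r(C)$ compatibly, and a chain self-map of a bounded complex of projectives is a homotopy equivalence iff it is a homology isomorphism, $1-z\colon C\to C$ is a chain homotopy equivalence (hence, being degreewise an isomorphism of free modules after a suitable choice, an automorphism up to homotopy) iff $\Z\otimes_A C$ is acyclic iff $1-z$ acts invertibly on all $H_r(C)$.

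Next I would connect this to the existence of $p\in P$ annihilating $H_*(C)$. For the ``only if'' direction: if $1-z$ acts invertibly on the f.g.\ module $H_r(C)$ for each $r$, then $H_*(C)$ is $(1-z)^\infty$-torsion; more precisely, since there are only finitely many non-zero $H_r(C)$ and each is f.g., there is a single power $(1-z)^N$ annihilating $H_*(C)$, and $(1-z)^N\in P$ since $(1-z)^N$ evaluated at $z=1$ is $0$ --- wait, that is wrong: I need $p(1)$ a unit. So instead I should argue the other way: if $1-z$ acts invertibly on each $H_r(C)$, then $H_*(C)$ is a module on which $1-z$ is invertible, so it is a module over $A[(1-z)^{-1}]$; being f.g.\ over $A$, the Cayley--Hamilton / integral dependence argument (as in the proof that $B(K,e)$ is $Q$-torsion in Chapter \ref{chap:laurent}) gives a monic polynomial $q(z)$ with $qH_*(C)=0$ and, because $1-z$ is invertible on $H_*(C)$, one can arrange $q(1)$ to be a unit (the characteristic polynomial of the action of $z$, which is invertible-minus-stuff, has unit constant term); hence $q\in P$. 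For the ``if'' direction: if $pH_*(C)=0$ for some $p\in P$, then $p(1)\in\Z$ is a unit, so $p(1)=\pm 1$, and since $p(z)-p(1)$ is divisible by $(z-1)$, we get $p(z)\equiv \pm 1 \pmod{(1-z)}$, i.e.\ $1-z$ divides $p(z)\mp 1$ in $A$; writing $p(z)=\pm 1 + (1-z)g(z)$ and using $pH_*(C)=0$ shows $(1\mp(1-z)g(z))$ acts as $0$, hence $\mp(1-z)g(z)$ acts as the identity on $H_*(C)$, exhibiting an inverse to $1-z$. Combining with the first paragraph completes the equivalence.

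The main obstacle I anticipate is the bookkeeping in the ``only if'' direction: extracting a single polynomial $p\in P$ (with $p(1)$ a unit, not merely monic) that annihilates \emph{all} the $H_r(C)$ simultaneously. The cleanest route is probably to invoke exactly the argument already used in Chapter \ref{chap:laurent} --- namely that a f.g.\ $A$-module $M$ on which $1-z$ acts invertibly is in $\H(A,P)$ --- applied to each $H_r(C)$, and then take the product of the finitely many resulting Alexander polynomials (the product of elements of $P$ is again in $P$ since $P$ is multiplicative). I would double-check that this product indeed annihilates the direct sum $H_*(C)=\bigoplus_r H_r(C)$, which is immediate. Everything else is routine diagram-chasing with the coefficient sequence above, parallel to the last paragraph of the proof of Claim \ref{clm:auto}.
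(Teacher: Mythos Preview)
Your approach is essentially the same as the paper's: both directions reduce to the equivalence between $(1-z)$ acting invertibly on $H_*(C)$ and $\Z\otimes_A C$ being acyclic, and the ``only if'' direction is the same Cayley--Hamilton/determinant-trick argument (the paper follows Levine explicitly, writing $(1-z)x_i=\sum_j\lambda_{ij}x_j$, setting $\mu_{ij}=\lambda_{ij}-\delta_{ij}(1-z)$, and observing $p=\det(\mu_{ij})$ annihilates $H$ with $\eps(p)=\eps(\det(\lambda_{ij}))$ a unit). Your ``if'' direction is slightly more elementary than the paper's --- you exhibit an explicit inverse via $p(z)=\pm 1+(1-z)g(z)$, whereas the paper factors the augmentation $\Z[z,z^{-1}]\to\Z$ through $P^{-1}\Z[z,z^{-1}]$ --- but these are equivalent observations. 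One harmless slip: your parenthetical ``$1-z\in P$'' is false (since $(1-z)|_{z=1}=0$, not a unit; you may be thinking of $Q$), but you only use that $1-z$ is a non-zero-divisor, which is fine.
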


\begin{proof}
Suppose $H_*(C)$ is $P$-torsion. Then, as localisation is exact, $H_*(P^{-1}\Z[z,z^{-1}]\otimes_{\Z[z,z^{-1}]}C)=0$. The augmentation map $\eps:\Z[z,z^{-1}]\to \Z$ from above factors as\[\eps:\Z[z,z^{-1}]\to P^{-1}\Z[z,z^{-1}]\to \Z\]because $p(1)\in\Z$ is a unit for all $p\in P$. Hence $H_*(\Z\otimes_{\Z[z,z^{-1}]}C)=0$, which has already been observed to be equivalent to saying that $(1-z):C\to C$ is an automorphism.

Conversely assume $(1-z):C\to C$ is an automorphism. We will borrow the proof of Levine \cite[Corollary 1.3]{MR0461518}. The homology $H=\bigoplus_rH_r(C)$ is a f.g.\ $\Z[z,z^{-1}]$-module. Choose generators $x_1,\dots,x_m$ and write the coefficients of the automorphism $(1-z)$ on homology as $\lambda_{ij}\in\Z[z,z^{-1}]$ so that $(1-z)x_i=\sum_{j=1}^m\lambda_{ij}x_j$. Rearranging, we have $\sum_{j=1}^m\mu_{ij}x_j=0$ with\[\mu_{ij}=\left\{\begin{array}{lcl}\lambda_{ij}&&i\neq j,\\\lambda_{ii}-(1-z)&&i=j.\end{array}\right.\]The $m\times m$ matrix with $(i,j)$th entry $\mu_{ij}$ has determinant $p(z)\in \Z[z,z^{-1}]$, and $px_i=0$ for $i=1,\dots,m$. Hence $pH=0$. Moreover, as $(1-z)$ is an automorphism of $H$, the determinant $q$ of the $m\times m$ matrix with $(i,j)$th entry $\lambda_{ij}$ is a unit $q\in\{\pm1, \pm z, \pm z^{-1}\}\subset\Z[z,z^{-1}]$. But as $\eps(\mu_{ij})=\eps(\lambda_{ij})$ we have $\eps(p)=\eps(q)=\pm1$. Therefore $p\in P$ as required.
\end{proof}

\begin{corollary}\label{cor:Pacyclic}The Blanchfield complex $(C_K,\phi_K)$ of an $n$-knot $K$ is an $(n+2)$-dimensional $P$-acyclic symmetric Poincar\'{e} complex over $\Z[z,z^{-1}]$.
\end{corollary}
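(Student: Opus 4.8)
The plan is to combine the two structural results just established -- Claim \ref{clm:auto} and Lemma \ref{lem:auto} -- and nothing else. Claim \ref{clm:auto} already tells us that the Blanchfield complex $(C_K,\phi_K)$ is an $(n+2)$-dimensional symmetric Poincar\'{e} complex over $\Z[z,z^{-1}]$ with $C_K$ in $\B_+(\Z[z,z^{-1}])$ (indeed a chain complex of finitely generated free modules), and it tells us that $1-z:C_K\to C_K$ is an automorphism. So the only thing remaining is to upgrade ``$1-z$ acts as an automorphism'' to ``$H_*(C_K)$ is $P$-torsion'', i.e.\ that $(C_K,\phi_K)$ is $P$-acyclic in the sense of Section \ref{subsec:Ltorsion} (that $P^{-1}\Z[z,z^{-1}]\otimes_{\Z[z,z^{-1}]} H_r(C_K)=0$ for all $r$).

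First I would recall that $C_K$ is finite and consists of finitely generated modules, so each $H_r(C_K)$ is a finitely generated $\Z[z,z^{-1}]$-module and $H_*(C_K)=\bigoplus_r H_r(C_K)$ is finitely generated. This is exactly the hypothesis needed to invoke the converse direction of Lemma \ref{lem:auto}: since $(1-z):C_K\to C_K$ is an automorphism (by Claim \ref{clm:auto}), Lemma \ref{lem:auto} produces a polynomial $p\in P$ with $p\,H_*(C_K)=0$. Hence $H_*(C_K)$ is $P$-torsion, and since localisation is exact (Lemma \ref{lem:locisexact}) we get $H_r(P^{-1}\Z[z,z^{-1}]\otimes_{\Z[z,z^{-1}]}C_K)\cong P^{-1}H_r(C_K)=0$ for all $r$, which is precisely the statement that $C_K$ lies in $\C_+(\Z[z,z^{-1}],P)$, i.e.\ $(C_K,\phi_K)$ is $P$-acyclic.

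To assemble the conclusion: $(C_K,\phi_K)$ is a symmetric Poincar\'{e} complex (Claim \ref{clm:auto}), it has dimension $n+2$ by construction as the algebraic Thom construction of the $(n+2)$-dimensional kernel pair $\sigma^*(\overline{f},\overline{\partial f})$, it is positive since $C_K\in\B_+(\Z[z,z^{-1}])$, and it is $P$-acyclic by the paragraph above. Therefore it is an $(n+2)$-dimensional $P$-acyclic symmetric Poincar\'{e} complex over $\Z[z,z^{-1}]$, as claimed.

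I do not expect a real obstacle here -- this corollary is genuinely a two-line deduction from the immediately preceding Claim and Lemma. If anything, the one point that deserves a word of care is the finite-generation of $H_*(C_K)$, which is needed before Lemma \ref{lem:auto} applies; but this is immediate from the fact that $C_K$ is a finite complex of finitely generated (indeed free) $\Z[z,z^{-1}]$-modules, which was noted when the Blanchfield complex was defined. So the proof is essentially: quote Claim \ref{clm:auto} for ``Poincar\'{e}'' and ``$1-z$ is an automorphism'', quote Lemma \ref{lem:auto} to convert the latter into ``$P$-torsion homology'', and combine.
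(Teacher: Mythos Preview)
Your proposal is correct and is exactly the deduction the paper intends: the corollary is stated without proof precisely because it follows immediately by combining Claim~\ref{clm:auto} (Poincar\'{e} and $1-z$ an automorphism) with Lemma~\ref{lem:auto} (the $P$-torsion characterisation). There is nothing to add.
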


\subsubsection*{Seifert and Blanchfield forms of an $(2k+1)$-knot}\label{subsec:blanseif}

Suppose $n=2k+1$. An $n$-dimensional knot has two very tractable homological invariants, called the Blanchfield and Seifert forms of the knot, which we now reconcile with the Blanchfield complex as described above.

We turn first to the Blanchfield form of an $n$-knot $K$, originally considered in \cite{MR0085512}. Recall the Blanchfield complex $(C_K,\phi_K)$, and note there is a chain homotopy equivalence \[(\phi_K)_0:C_K^{n+2-*}\xrightarrow{\simeq} (C_K)_*\] so that we may apply Theorem \ref{thm:levblanch}.

\begin{definition}The \textit{Blanchfield form for $K$} is the non-singular $(-1)^{k}$-symmetric linking form over $(\Z[z,z^{-1}],P)$ defined by Theorem \ref{thm:levblanch}\[Bl:f(H^{k+2}(C_K))\times f(H^{k+2}(C_K))\to P^{-1}\Z[z,z^{-1}]/\Z[z,z^{-1}].\]By Claim \ref{clm:auto}, this can also be stated as\[Bl:f(H^{k+2}(\overline{X_K}))\times f(H^{k+2}(\overline{X_K}))\to P^{-1}\Z[z,z^{-1}]/\Z[z,z^{-1}].\]
\end{definition}

\begin{remark}Our definition is Poincar\'{e} dual to the usual definition of a Blanchfield form as in e.g.\ \cite{MR0461518}. It is also Poincar\'{e} dual to what is called the `modified' Blanchfield pairing in \cite[p154]{MR0324706}.
\end{remark}

We look now at the Seifert form for an $n$-knot $K$. Suppose we have made a choice of Seifert surface $j:F\hookrightarrow S^{n+2}$ for the knot $K$. The cohomology linking form on $H^{k+2}(S^{n+2})$ is clearly trivial - for instance, given $a,b\in C_{k+1}(S^{n+2})$, we can move these chains within a homology class so that there is no geometric linking. But the underlying pairing given by linking is non-trivial on the chain level \[l:C^{k+2}(S^{n+2})\times C^{k+2}(S^{n+2})\to \Z;\qquad (x,y)\mapsto x(a),\]where $da=y\cap[S^{n+2}]\in C_{k+1}$.

Choose a chain homotopy inverse $D:C_{*}(S^{n+2})\to C^{n+2-*}(S^{n+2})$ to the chain-level cap-product $-\cap[S^{n+2}]$.

\begin{definition}\label{def:seifert}The \textit{Seifert form of $(F,K)$} is the (well-defined) non-singular $(-1)^{k+1}$-symmetric Seifert form $(H^{k+1}(F),\psi)$ over $\Z$ given by\[\psi:H^{k+1}(F)\times H^{k+1}(F)\to \Z;\qquad([u],[v])\mapsto l(x,y),\]where $u,v\in C^{k+1}(F)$ and $x=D(j_*(u\cap[F]))$, $\partial y=D(j_*(v\cap[F]))$. It has the property that $(H^{k+1}(F),\psi+(-1)^{k+1}\psi^*)$ is the non-singular, $(-1)^{k+1}$-symmetric middle-dimensional cohomology intersection pairing of $F$.
\end{definition}

Fixing an orientation of an embedded normal bundle to $F$, denote $i^+,i^-:F\to S^{n+2}\sm F$ small displacements in the positive and negative normal directions. A Meier-Vietoris argument shows that the reduced homology morphism \[i^+_*-i^-_*:\widetilde{H}_r(F)\to \widetilde{H}_r(S^{n+2}\sm F)\] is an isomorphism for all $r$. It is then possible (see \cite[1.1]{MR718824}) to interpret the morphism \[e:=(\psi+(-1)^k\psi^*)^{-1}\psi: H^{k+1}(F)\to H^{k+1}(F)\] as the Poincar\'{e} dual of the morphism \[(i^+_*-i^-_*)^{-1}i^+_*:H_{k+1}(F)\to H_{k+1}(F).\]

\begin{remark}The Seifert form is often defined in terms of these displacements $i^\pm$. The geometric definition of linking (cf.\ \ref{subsec:motivation}) requires that the chains involved are disjoint, and the use of $i^\pm$ is one way to ensure this. Hence setting $a=j_*(u\cap[F])$ and $b=j_*(v\cap[F])$ in Definition \ref{def:seifert}, we have\[\psi([u],[v])=\text{link}(i^+_*a,b)=\text{link}(a,i^-_*b)=\text{link}(i^+_*a,i^-_*b).\]
\end{remark}

In \cite[pp43.]{MR0461518}, the connection between the Seifert and Blanchfield forms for a knot is made clear. Suppose we are given a Seifert surface $F$ for $K$ and we remove an open normal neighbourhood of $F$ to perform a cut-and paste construction of the infinite cyclic cover of $X_K$ (this is the construction of \ref{subsec:infinite}, but relative to $\partial F=K$). Exactly as in \ref{subsec:infinite} there is a Meier-Vietoris sequence and it is shown in \cite[p43.]{MR0461518} that this breaks into short exact sequences in reduced homology\[0\to i_!\widetilde{H}_r(F)\xrightarrow{(i^+_*z-i^-_*)}i_!\widetilde{H}_r(S^{n+2}\sm F;\Z)\to \widetilde{H}_r(\overline{X_K})\to 0,\]so we may write\[0\to i_!\widetilde{H}_r(F)\xrightarrow{(i^+_*-i^-_*)^{-1}(i^+_*z-i^-_*)}i_!\widetilde{H}_r(F)\to \widetilde{H}_r(\overline{X_K})\to 0,\]which, when $r=k+1$, is Poincar\'{e} dual to the sequence\[0\to i_!H^{k+1}(F)\xrightarrow{-((1-e)+ez)}i_!H^{k+1}(F)\to \underset{\cong H^{k+2}(\overline{X_K})}{\underbrace{H^{k+2}(\overline{X_K},\overline{\partial X_K})}}\to 0,\]so that the algebraic covering of the Seifert module is indeed the Blanchfield module in the sense of Chapter \ref{chap:laurent}. What is more, the covering of the Seifert form of an $n$-knot is the Blanchfield form of an $n$-knot (in the sense of Chapter \ref{chap:laurent}), we refer the reader to \cite[14.3]{MR0461518} for the geometric argument that this is the case.

\section{Knot-cobordism}\label{sec:knotcob}

For $n\neq 1$, the classification of $n$-knots up to knot-cobordism began with Kervaire's application (\cite{MR0189052}) of the surgery techniques of Kervaire-Milnor (\cite{MR0148075}) to the study of even-dimensional knots. Kervaire used codimension 2 surgery to improve an arbitrary Seifert surface to a slice disc, showing that all even-dimensional knots are slice. Levine (\cite{MR0246314}) continued the programme by analysing the odd-dimensional case with the same techniques. In this case it is generically only possible to do surgery `below the middle dimension' of the Seifert surface.

\begin{definition}A \textit{simple knot} is an $n$-knot $K$ that admits a Seifert surface $F^{n+1}$ with $\pi_q(F)=0$ for $2q<n$. By \cite[Theorem 2]{MR0179803}, this definition is equivalent to saying that $\pi_q(X_K)\cong \pi_q(S^1)$ for $2q< n$.
\end{definition}

Using surgery `below the middle dimension' (and an application of Hirsch's Engulfing Theorem) Levine showed that all knots are knot-cobordant to simple knots. Next Levine classified simple knots up to knot-cobordism using an $L$-theoretic obstruction described by the Witt class of a Seifert form. The knot classification programme was completed by Stoltzfus (\cite{MR0467764}) who finally computed the Witt group of Seifert forms over $\Z$.

We will now recap this story in more detail, using the Blanchfield complex as our $L$-theoretic obstruction.
 
\begin{definition}The set of ambient isotopy classes of $n$-knots, equipped with the operation of connected sum of knots $K_1\# K_2:S^n\hookrightarrow K_1(S^n)\# K_2(S^n)$ is a commutative monoid called $Knots_n$, with unit given by the unknot $U$.
\end{definition}

\begin{lemma}For any $n$-knot $K$, the knot $K\#(-K)$ is slice. If $K$ and $K'$ are slice knots then $K\# K'$ is a slice knot.
\end{lemma}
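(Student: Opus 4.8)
The plan is to obtain this lemma as a formal consequence of two results already established in this chapter. The second sentence requires nothing new: it is precisely the (earlier, unnumbered) lemma stating that a connected sum of slice knots is slice, whose proof decomposed $(D^{n+3},S^{n+2})$ into hemidiscs $(u(D^{n+3}),D^{n+2}_+)\cup(l(D^{n+3}),D^{n+2}_-)$, isotoped the two given slice discs so as to coincide with the standard half slice disc over $u(D^{n+3})$, and then carried out the relative connected-sum construction over $l(D^{n+3})$. I would simply invoke that lemma and move on.

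For the first sentence I would argue that $K\#(-K)$ is knot-nullcobordant, hence slice. By Proposition \ref{prop:kminusk}, $K\#(-K')$ is slice as soon as $K$ and $K'$ are knot-cobordant; applying this with $K'=K$ reduces the claim to the reflexivity of the knot-cobordism relation. Reflexivity is witnessed by the product embedding
\[
S^n\times[0,1]\hookrightarrow S^{n+2}\times[0,1];\qquad (x,t)\mapsto(K(x),t),
\]
which is locally flat and, with the standard orientation conventions on the two boundary components, is a knot-cobordism from $K$ to $K$. This is the same reflexivity that underlies the commutative monoid structure on knot-cobordism classes, so it may be taken as known; invoking Proposition \ref{prop:kminusk} then finishes the first assertion.

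If one prefers an explicit slice disc, the reversible construction in the proof of Proposition \ref{prop:kminusk} can be run directly on this product cobordism: excising an open tubular neighbourhood of an embedded arc $\alpha$ with $\alpha(0)\in K\times\{0\}$ and $\alpha(1)\in K\times\{1\}$ produces a cobordism of disc knots from $\Delta_K$ to $\Delta_K$ inside $D^{n+2}\times[0,1]$ relative to the unknotted $(S^{n+1},U(S^{n-1}))$, and contracting the relative part yields a slice disc for $K\#(-K)$; equivalently one takes $\Delta_K\times[0,1]\subset D^{n+2}\times[0,1]\cong D^{n+3}$ and checks that its boundary --- namely $\Delta_K$ glued along $U(S^{n-1})$ to the mirrored copy $r_{n+2}\Delta_K r_n$ --- is exactly $K\#(-K)$. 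I do not expect a substantial obstacle here: the lemma merely repackages two prior results. The only point demanding genuine care, in any of these routes, is the orientation bookkeeping (the orientation reversal on the $t=1$ boundary component together with the mirror-plus-reversal in the definition of $-K$ and the roles of $r_{n+2}$, $r_n$), which one must track carefully to be certain the construction delivers the connected sum with the \emph{inverse} knot rather than with $K$ itself.
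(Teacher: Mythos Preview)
Your proposal is correct and follows essentially the same approach as the paper: the second assertion is the earlier lemma on connected sums of slice knots, and the first is obtained from Proposition~\ref{prop:kminusk} with $K'=K$ together with reflexivity of knot-cobordism witnessed by the cylinder $(S^{n+2},K(S^n))\times[0,1]$. The paper's proof is just the terse version of exactly this.
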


\begin{proof}By proposition \ref{prop:kminusk}, this is equivalent to $K$ being knot-cobordant to $K$. But we just take the cylinder on $(S^{n+2},K(S^n))$.
\end{proof}

\begin{definition}The \textit{$n$-dimensional (topological) knot-cobordism group} is the group given by the monoid construction\[\mathcal{C}_n:=Knots_n/\{\text{slice knots}\}.\]\text{A priori}, $K=0\in\mathcal{C}_n$ if and only if there exists a slice $n$-knot $J$ such that $J\# K$ is slice. However, it is easy to check that  if $K=0\in\mathcal{C}_n$ then in fact $K$ itself is slice.

The simple knots form a submonoid $Knots^{simp}_n\subseteq Knots_n$ and it is straightforward to check that the slice simple knots are a closed submonoid of $Knots^{simp}_n$. We therefore define the \textit{n-dimensional (topological) simple knot cobordism group}\[\mathcal{C}^{simp}_n=Knots^{simp}_n/\{\text{simple slice knots}\}.\]
\end{definition}

To state the main surgery classification theorem we will first need to define the Rochlin invariant. The \textit{signature} of a non-singular, symmetric Seifert form $(L,\psi)$ over $\Z$ is defined to be the signature (number of positive eigenvalues subtract number of negative eigenvalues) of the symmetric bilinear form $(L,\psi+\psi^*)\otimes_\Z\R$. This signature is divisible by 8 (essentially because the non-singular symmetric form $(L,\psi+\psi^*)$ over $\Z$ admits a quadratic extension, see \cite[\textsection 4]{MR0246314}). As the signature of a metabolic Seifert form is 0, signature is an invariant of the Witt class $[(L,\psi)]\in \widehat{W}(\Z)$.

\begin{definition}The \textit{Rochlin invariant of $[(L,\psi)]\in \widehat{W}(\Z)$} is given by dividing the signature of $(L,\psi)$ by 8 and taking the residue modulo 2.

Combining algebraic surgery below the middle dimension (Theorem \ref{thm:algsurgery}) and \cite[3.4.7(ii)]{MR620795}, there is an isomorphism $L^{4l}(\Z[z,z^{-1}],P,-1)\cong \widehat{W}(\Z)$ for $l\geq0$. The \textit{Rochlin invariant of $[(C,\phi)]\in L^{4l}(\Z[z,z^{-1}],P,-1)$} is the Rochlin invariant of a corresponding Witt class of Seifert forms.
\end{definition}

The surgery classification of knots is as follows:

\begin{theorem}[{\cite{MR0189052}, \cite{MR0246314}}]\label{thm:concordance}$\,$\begin{itemize}
\item For $n\geq 4$, \[\mathcal{C}_{n}\cong L^{n+1}(\Z[z,z^{-1}],P,-1)\cong\left\{\begin{array}{lcl}0&&n=2k,\\ \widehat{W}^{(-1)^k}(\Z)&&n=2k+1.\end{array}\right.\]

\item For $n=3$, there is a short exact sequence\[0\to \mathcal{C}_3\to L^4(\Z[z,z^{-1}],P,-1)\to \Z/2\Z\to 0\]with the map to $\Z/2\Z$ given by the Rochlin invariant.

\item For $n=2$, $\mathcal{C}_2=0$.

\item For $n=1$, there is a surjective morphism $\mathcal{C}_1\twoheadrightarrow L^2(\Z[z,z^{-1}],P,-1)$.
\end{itemize}
\end{theorem}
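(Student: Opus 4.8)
The plan is to define the map by sending a classical knot to the cobordism class of its Blanchfield complex, and then to verify in turn that this descends to knot-cobordism, is additive, and is surjective; injectivity is \emph{not} claimed (indeed the kernel is large), so only the realisation half of the argument is substantive.

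First I would set up the homomorphism. Given $K\colon S^1\hookrightarrow S^3$, the exterior $(X_K,\partial X_K)$ is a compact oriented $3$-manifold with boundary, hence carries a finite $CW$ structure, and by Corollary~\ref{cor:meridian} it admits a meridian map $\psi\colon X_K\to S^1$ restricting to the projection $S^1\times S^1\to S^1$ on $\partial X_K$. Representing $\psi$ by a degree $1$ map of manifold pairs $(X_K,\partial X_K)\to(D^2\times S^1,S^1\times S^1)$, applying the relative symmetric construction of Proposition~\ref{prop:pairdiag} over $\Z[z,z^{-1}]$ to the standard infinite cyclic cover, and then performing the algebraic Thom construction of Section~\ref{subsec:cxpair}, produces the Blanchfield complex $(C_K,\phi_K)$. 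By Corollary~\ref{cor:Pacyclic} this is a $3$-dimensional, $P$-acyclic, symmetric Poincar\'{e} complex over $\Z[z,z^{-1}]$, so with the dimension-and-sign conventions of Section~\ref{subsec:Ltorsion} it represents a class $[(C_K,\phi_K)]\in L^2(\Z[z,z^{-1}],P,-1)$ depending only on the ambient isotopy class of $K$.

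Next I would establish that $[K]\mapsto[(C_K,\phi_K)]$ is a well-defined group homomorphism $\mathcal{C}_1\to L^2(\Z[z,z^{-1}],P,-1)$. Additivity: a Mayer--Vietoris/excision argument on exteriors exhibits $X_{K_1\#K_2}$ as obtained by glueing $X_{K_1}$ and $X_{K_2}$ along a solid torus, so that $(C_{K_1\#K_2},\phi_{K_1\#K_2})$ is homotopy equivalent, after deleting a contractible summand, to $(C_{K_1},\phi_{K_1})\oplus(C_{K_2},\phi_{K_2})$, via the algebraic glueing of Definitions~\ref{def:glue1}--\ref{def:glue2}. Vanishing on slice knots: if $K$ bounds a slice disc $D\subset D^4$, then the exterior $Y_D$ is a homology circle (the meridian-map proposition and the argument in its proof), and the kernel triad of the degree $1$ meridian map $Y_D\to D^2\times S^1$, passed through the relative algebraic Thom construction and algebraic thickening of Section~\ref{subsec:cxpair}, is exactly an $(A,S)$-nullcobordism of $(C_K,\phi_K)$; this is the $n=1$ instance of \cite[\textsection 7.8]{MR620795}. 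Combining these two facts shows the assignment factors through the monoid construction defining $\mathcal{C}_1$ and respects inverses.

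Finally, surjectivity. By the excision equivalence $\H(\Z[z,z^{-1}],P)\cong\H(\Z[z,z^{-1},(1-z)^{-1}],P)$ of Lemma~\ref{halfunit} (which adjoins the half-unit $(1-z)^{-1}$) together with algebraic surgery below the middle dimension (Theorem~\ref{thm:algsurgery}) and \cite[3.4.7]{MR620795}, one identifies $L^2(\Z[z,z^{-1}],P,-1)$ with the appropriate Witt group of Seifert forms over $\Z$, in the same way as in the definition of the Rochlin invariant above and consistently with the pattern $\mathcal{C}_{2k+1}\cong\widehat{W}^{(-1)^k}(\Z)$ in the case $k=0$. Every class there is represented by an honest integral Seifert matrix $(\Z^{2g},\psi)$, and by Levine's geometric realisation (build a genus-$g$ Seifert surface $F\subset S^3$ by plumbing $2g$ bands according to $\psi$; cf.\ \cite{MR0179803}, \cite{MR0246314}) there is a knot $K\subset S^3$ with Seifert surface $F$ realising $(\Z^{2g},\psi)$. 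By the Seifert/Blanchfield correspondence recalled in Chapter~\ref{chap:laurent} and its chain-level covering operation, $(C_K,\phi_K)$ is, up to homotopy and algebraic surgery, the covering of the Seifert complex of $(\Z^{2g},\psi)$, hence maps onto $[(\Z^{2g},\psi)]$ under the identification above; so the homomorphism is onto. The main obstacle I anticipate is twofold: the sign-and-dimension bookkeeping needed to confirm that the $3$-dimensional Blanchfield complex of a $1$-knot lands in $L^2$ (rather than a skew-suspension of it) and that the chain of isomorphisms with the Seifert Witt group carries the signs of Theorem~\ref{thm:levblanch} and Proposition~\ref{prop:welldeflagrang2}; and, more seriously, the genuinely geometric realisation step --- it is here that the hypothesis $n=1$ is essential, since one must build an explicit surface in $S^3$ with prescribed Seifert matrix and check its Blanchfield complex, rather than invoke the higher-dimensional Kervaire--Levine realisation.
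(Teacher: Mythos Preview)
Your proposal addresses only the $n=1$ bullet. For that part your approach is essentially the paper's: well-definedness via the kernel triad of a slice disc exterior is exactly Lemma~\ref{lem:blanchslice}; additivity via glueing disc-knot exteriors is the content of the paper's subsequent homomorphism lemma; and surjectivity via the identification $L^2(\Z[z,z^{-1}],P,-1)\cong\widehat{W}_{-1}(\Z)$ followed by geometric realisation of Seifert forms is the paper's surjectivity lemma. The only difference is that the paper cites Levine's realisation \cite[Lemma~3]{MR0246314} for $n>3$ and does not spell out the $n=1$ band-plumbing you sketch; your version is the classical low-dimensional instance of the same step.

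The substantive gap is that the theorem has four parts and you have said nothing about $n\geq 2$. The cases $n\geq 4$ and $n=3$ require \emph{injectivity} of $\sigma^L$, and this needs two geometric ingredients you have not supplied: Theorem~\ref{thm:surgbelow} (every $n$-knot is knot-cobordant to a simple knot, via Kervaire--Levine embedded surgery below the middle dimension, with an engulfing argument to close up), and the lemma that a simple $(2k+1)$-knot with metabolic Seifert form is slice (Whitney-trick realisation of a lagrangian basis by disjoint embedded spheres, with Wall's stabilisation at $n=3$). The Rochlin cokernel at $n=3$ arises because Levine's realisation fails precisely when the Rochlin invariant is nonzero. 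The vanishing of $\mathcal{C}_{2k}$ is Kervaire's theorem that all even-dimensional knots are slice. None of this is recoverable from the algebraic machinery you have assembled; your anticipated obstacle about sign bookkeeping is minor by comparison.
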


\begin{remark}The calculations of the Witt groups $\widehat{W}^{(-1)^k}(\Z)$ were finally made by Stoltzfus \cite{MR0467764} to be, for both $k$ odd and even, of the type\[\widehat{W}^{(-1)^k}(\Z)\cong\bigoplus_\infty\Z\oplus\bigoplus_\infty(\Z/2\Z)\oplus\bigoplus_\infty(\Z/4\Z).\]Some explanation of where these factors come from is also available in \cite[\textsection 42]{MR1713074}.
\end{remark}

We will now show that the assignment\[\sigma^L:\mathcal{C}_n\to L^{n+1}(\Z[z,z^{-1}],P,-1);\qquad K\mapsto (C_K,\phi_K)\]is a well-defined group homomorphism.

\begin{lemma}\label{lem:blanchslice}The algebraic cobordism class $(C_K,\phi_K)\in L^{n+1}(\Z[z,z^{-1}],P,-1)$ of the Blanchfield complex of an $n$-knot is a well-defined invariant of the knot-cobordism class.
\end{lemma}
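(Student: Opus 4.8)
\textbf{Proof plan for Lemma \ref{lem:blanchslice}.} The plan is to show two things: first, that the Blanchfield complex of an $n$-knot is indeed an $(n+1)$-dimensional $(-1)$-symmetric $P$-acyclic Poincar\'{e} complex over $\Z[z,z^{-1}]$, so that its class in $L^{n+1}(\Z[z,z^{-1}],P,-1)$ makes sense at all; and second, that knot-cobordant knots have $(\Z[z,z^{-1}],P)$-cobordant Blanchfield complexes, so that the class is an invariant of the knot-cobordism class. The first point is almost entirely contained in Corollary \ref{cor:Pacyclic}, which states that $(C_K,\phi_K)$ is an $(n+2)$-dimensional $P$-acyclic symmetric Poincar\'{e} complex over $\Z[z,z^{-1}]$; we must only observe that since $C_K$ is $P$-acyclic it is in particular $\C_+(\Z[z,z^{-1}],P)$ and that the skew-suspension/algebraic surgery isomorphism of Theorem \ref{thm:algsurgery} lets us regard the $(n+2)$-dimensional $(+1)$-symmetric class equivalently as an $(n+1)$-dimensional $(-1)$-symmetric class (compatibly with the sign and dimension conventions of Section \ref{subsec:Ltorsion} and the identification of $L^{n+1}(\Z[z,z^{-1}],P,-1)$ used in Theorem \ref{thm:concordance}). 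I would phrase this so that $\sigma^L(K):=[(C_K,\phi_K)]$ is literally defined as the image of the Blanchfield complex class under this identification.

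The substance of the lemma is invariance under knot-cobordism. Let $K,K':S^n\hookrightarrow S^{n+2}$ be knot-cobordant via a locally flat embedding $f:S^n\times[0,1]\hookrightarrow S^{n+2}\times[0,1]$, and write $W$ for the exterior of this embedded $S^n\times[0,1]$ in $S^{n+2}\times[0,1]$ (after excising an open tubular neighbourhood, using that the normal bundle is trivial). The plan is to run exactly the relative version of the Blanchfield complex construction: the meridian map of Corollary \ref{cor:meridian} extends over $W$ (the obstruction-theoretic argument preceding that corollary applies verbatim to the cobordism exterior, as $\pi_q(S^1)=0$ for $q>1$ and the relevant relative cohomology groups vanish), giving a degree $1$ map of manifold triads from $(W;X_K,X_{K'};\dots)$ to the product $(D^{n+1}\times S^1\times[0,1];\dots)$ restricting to the identity on the $\partial_+$ part. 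Applying the symmetric construction for manifolds with boundary together with the Umkehr/kernel pair machinery of Proposition \ref{prop:pairdiag}, and then the algebraic Thom construction of Section \ref{subsec:cxpair}, produces an $(n+2)$-dimensional symmetric Poincar\'{e} pair over $\Z[z,z^{-1}]$ whose two ends are (up to homotopy equivalence) the Blanchfield complexes $(C_K,\phi_K)$ and $(C_{K'},\phi_{K'})$; the underlying chain complexes are $P$-acyclic by the same Alexander-duality argument as in Claim \ref{clm:auto} and Lemma \ref{lem:auto}, applied to the cobordism exterior, since $1-z$ acts as an automorphism there too. Desuspending this pair via algebraic surgery exhibits a $(\Z[z,z^{-1}],P)$-cobordism between the desuspended complexes, so $\sigma^L(K)=\sigma^L(K')\in L^{n+1}(\Z[z,z^{-1}],P,-1)$. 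Finally, well-definedness on the set of ambient isotopy classes (independence of the choices of finite $CW$ structure, framing, and representative meridian map) follows from the naturality up to homotopy equivalence of all the constructions involved, as recorded in Chapter \ref{chap:algLtheory}.

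The main obstacle I expect is purely bookkeeping rather than conceptual: one must check carefully that the $P$-acyclicity (equivalently, that $1-z$ is an automorphism) is preserved on the cobordism exterior $W$, not just on the individual knot exteriors, and that the ends of the resulting symmetric Poincar\'{e} pair are genuinely the Blanchfield complexes of $K$ and $K'$ with the correct (and, crucially, oppositely-signed for one end) symmetric structures, so that the pair is a bona fide cobordism $(C_K,\phi_K)\oplus(C_{K'},-\phi_{K'})\to D$ rather than merely a symmetric pair relating them abstractly. This amounts to tracking signs through the Thom construction and the manifold-triad symmetric construction of Proposition \ref{prop:triaddiag}; it is the same kind of computation that appears in the classical treatment and presents no genuine difficulty, but it is where the care is required. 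A secondary, lighter point is verifying that the skew-suspension bookkeeping matching $L^{n+2}(\ldots,+1)$ with $L^{n+1}(\ldots,-1)$ is consistent with the convention under which Theorem \ref{thm:concordance} is stated, so that $\sigma^L$ lands in the stated group.
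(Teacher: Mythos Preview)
Your proposal is correct and follows essentially the same construction as the paper: form the exterior of the embedded cobordism, extend the meridian map to a degree~$1$ map of manifold triads, pass to the kernel triad via Proposition~\ref{prop:triaddiag}, apply the relative algebraic Thom construction, and verify $P$-acyclicity using Claim~\ref{clm:auto} and Lemma~\ref{lem:auto}. The paper makes one organisational simplification you do not: rather than working with a general knot-cobordism between $K$ and $K'$, it reduces at the outset to the case of a \emph{slice disc} for $K$ (a nullcobordism to the unknot), so that the output is an algebraic nullcobordism of $(C_K,\phi_K)$ rather than a cobordism to $(C_{K'},\phi_{K'})$; invariance under general knot-cobordism then follows from this together with Proposition~\ref{prop:kminusk} and the additivity under connected sum proved in the lemma immediately afterwards. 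Your direct approach avoids that reduction at the cost of checking that the exterior of a general knot-cobordism is still a $\Z$-homology $S^1$ (it is, by the same Mayer--Vietoris argument) and of tracking the signs at the two ends rather than one --- exactly the bookkeeping you flagged.
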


\begin{proof}It suffices to show a slice disc $(D,K)$ will induce a nullcobordism of the Blanchfield complex of $K$. This is now shown by the relative version of the Blanchfield complex construction.

Represent the homotopy class of the meridian map $\psi\in[Y_D,S^1]=[Y,D^{n+3}\times S^1]$ by a (degree 1) map of compact oriented manifold triads\[F=(f,\partial f, \partial'f,\partial\partial f):(Y_D;X_K,\partial_+ Y_D;\partial X_K)\to(D^{n+3}\times S^1; D^{n+1}\times S^1,D^{n+1}\times S^1;S^n\times S^1),\]where both $\partial'f$ and $\partial\partial f$ are identity maps. (We can think of $F$ as a map from the slice disc exterior to the `trivial slice disc' exterior.)

\begin{figure}[h]\[\def\picslicetriad{\resizebox{0.85\textwidth}{!}{ \includegraphics{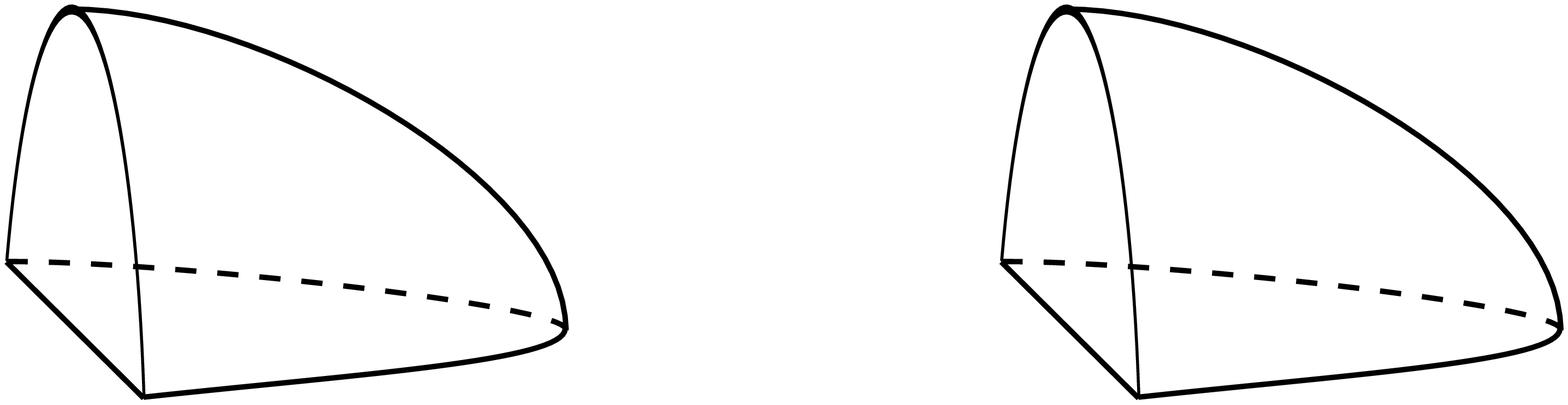}}}
\begin{xy} \xyimport(259,139){\picslicetriad}
,!+<7.2pc,2pc>*+!\txt{}
,(6,80)*!L{X_K}
,(-5,20)*!L{\partial X_K}
,(30,30)*!L{\partial_+ Y_D}
,(40,80)*!L{Y_D}
,(129,70)*!L{D^{n+1}\times S^1}
,(145,25)*!L{S^{n}\times S^1}
,(192,30)*!L{D^{n+1}\times S^1}
,(195,80)*!L{D^{n+2}\times S^1}
,(85,100)*+{}="A";(160,100)*+{}="B"
,{"A"\ar@/^/"B"}
,(120,123)*!L{F}
\end{xy}\]
  \caption{A schematic of the surgery problem determining the kernel triad.}
\end{figure}

Using the standard $\Z$-cover $D^{n+3}\times\R\to D^{n+3}\times S^1$ we obtain the kernel triad $\sigma^*(\overline{F}^!)=(\Gamma,(\Phi,\delta\phi,\delta'\phi,\phi))$, some $(n+3)$-dimensional symmetric Poincar\'{e} triad in $\B(\Z[z,z^{-1}])$. Now the relative algebraic Thom construction on this triad results in a surgery dual set $\{x,x'\}$. But as $C(\partial\partial f^!),C(\partial' f^!)\simeq 0$, we have\[x=(C(\overline{\partial f}^!)\to C(\overline{f}^!),(\Phi/0,\delta\phi/0)),\qquad x'=(0\to C(C(\overline{\partial f}^!)\to C(\overline{f}^!)),(\Phi/\delta\phi,0/0)).\]Hence we have the pair $x=(C_K\to C(\overline{f}^!),(\Phi,\phi_K))$ for $(C_K,\phi_K)$ the Blanchfield complex of $K$. We need to show $x$ is an algebraic nullcobordism, i.e.\ that $x$ is a Poincar\'{e} pair. But the kernel triad $\sigma^*(\overline{F}^!)$ is Poincar\'{e}, so by definition of a Poincar\'{e} triad $(0\cup_0 C_K\to C(\overline{f}^!),(\Phi,0\cup_0\phi_K))=x$ is Poincar\'{e}.

It remains to show the nullcobordism $x$ is in the correct category. But as the meridian map $\psi$ is a homology equivalence on $Y_D$ and on $X_K$, by the same arguments as in Claim \ref{clm:auto} and Lemma \ref{lem:auto}, the Poincar\'{e} pair $x$ is moreover in $\C_+(\Z[z,z^{-1}],P)$, the $P$-acyclic category. Hence $(C_K,\phi_K)=0\in L^{n+1}(\Z[z,z^{-1}],P,-1)$ as required.
\end{proof}

We will henceforth write the function \[\sigma^L:\mathcal{C}_n\to L^{n+1}(\Z[z,z^{-1}],P,-1);\qquad K\mapsto (C_K,\phi_K).\]

\begin{lemma}$\sigma^L$ is a homomorphism of groups.
\end{lemma}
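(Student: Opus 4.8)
The plan is to show that the knot invariant $\sigma^L$ respects the monoid operation on $\mathcal{C}_n$, i.e.\ that $\sigma^L(K_1\# K_2)=\sigma^L(K_1)+\sigma^L(K_2)$, and that it sends $U$ to $0$; since we already know from Lemma \ref{lem:blanchslice} that $\sigma^L$ is well-defined on knot-cobordism classes, and since the codomain $L^{n+1}(\Z[z,z^{-1}],P,-1)$ is a group, additivity will suffice. The first, easy, observation is that the exterior of the unknot is $D^{n+1}\times S^1$ and the meridian map is already a homology equivalence onto $D^{n+1}\times S^1$, so the kernel pair $\sigma^*(\overline{f}^!,\overline{\partial f}^!)$ is contractible and the Blanchfield complex $(C_U,\phi_U)$ is homotopy equivalent to $0$; hence $\sigma^L(U)=0$.

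For additivity I would argue geometrically on the level of exteriors. Recall the connected sum $K_1\# K_2$ is built by glueing disc knots along a standard unknotted $S^{n-1}\subset S^{n+1}$. The exterior $X_{K_1\# K_2}$ decomposes, up to homeomorphism, as $X_{\Delta_{K_1}}\cup X_{\Delta_{K_2}}$ glued along the exterior $X_{S^{n-1}}\cong S^{n-1}\times D^2\simeq S^{n-1}$ of the unknotted sphere in $S^{n+1}$, and each disc-knot exterior $X_{\Delta_{K_i}}$ is homotopy equivalent rel boundary behaviour to $X_{K_i}$ minus a collar (this is the standard Mayer--Vietoris picture for connected sum, cf.\ \cite[\S 7]{MR0189052}). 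One then chooses the meridian map for $K_1\#K_2$ compatibly with the meridian maps for $K_1$ and $K_2$ on the two pieces — possible because each restricts to projection to $S^1$ on the common boundary piece $S^{n-1}\times S^1$ — and applies the pair and triad versions of the symmetric construction (Propositions \ref{prop:pairdiag}, \ref{prop:triaddiag}) together with the algebraic glueing of symmetric pairs (Definition \ref{def:glue2}) and the Thom construction. Because the glued piece $X_{S^{n-1}}$ has the homology of $S^{n-1}$ and its contribution is split off, the upshot is a homotopy equivalence of symmetric complexes
\[
(C_{K_1\# K_2},\phi_{K_1\# K_2})\;\simeq\;(C_{K_1},\phi_{K_1})\oplus(C_{K_2},\phi_{K_2}),
\]
from which $\sigma^L(K_1\# K_2)=\sigma^L(K_1)+\sigma^L(K_2)\in L^{n+1}(\Z[z,z^{-1}],P,-1)$ is immediate, using that $P$-acyclicity is preserved under direct sum and under the glueing by Lemma \ref{lem:auto}.

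The main obstacle I expect is bookkeeping the glueing carefully: one must check that the relevant chain-level diagonal approximations for the pieces assemble, via Definition \ref{def:glue2} and Proposition \ref{prop:triple}, into the diagonal approximation for $X_{K_1\#K_2}$, and that the ``cross terms'' in $W^\%$ of a glue (the discrepancy between $X$ and $W^\%D''$ discussed around Proposition \ref{prop:pullback}) do not spoil the direct-sum splitting — here the fact that the glueing region is a homology sphere of the ``wrong'' dimension, so its middle-dimensional and higher reduced homology vanishes in the range that matters, is what makes the cross terms die up to homotopy. An alternative, perhaps cleaner, route that sidesteps the triad machinery would be to use Lemma \ref{lem:blanchslice} together with Proposition \ref{prop:kminusk}: for any two knots $K_1,K_2$ one has that $(K_1\# K_2)\#(-(K_1\#K_2))$ is slice, and more usefully that $\sigma^L$ is already a well-defined function of cobordism classes; combining this with the geometric identity that $(K_1\# K_2)\#(-K_1)$ is knot-cobordant to $K_2$ (an easy consequence of Proposition \ref{prop:kminusk} and associativity of $\#$) reduces additivity to the single splitting computation above applied once. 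I would present the geometric splitting argument as the main line and remark that $\sigma^L(-K)=-\sigma^L(K)$ follows since $-K$ has exterior the orientation-reverse of $X_K$, which reverses the sign of the symmetric structure.
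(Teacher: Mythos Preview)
Your approach is essentially the same as the paper's: decompose $X_{K_1\#K_2}$ via disc-knot exteriors glued along the exterior of the unknotted $S^{n-1}$, choose compatible meridian maps, and use naturality of the symmetric construction with algebraic glueing to obtain the direct-sum splitting of Blanchfield complexes. The paper's execution is slightly cleaner on exactly the point you flag as the obstacle: rather than arguing that cross terms vanish for homological-dimension reasons, it observes that the degree~1 map of triads $F_{K}:X_{\Delta_K}\to X_{\Delta_U}$ is homotopic to $(f_K;\text{id},\text{id};\text{id})$, so the kernel \emph{triad} has three contractible corners and the glue of two such triads along their (contractible) common boundary is literally a direct sum with no cross terms to worry about; it then identifies the resulting summand with $(C_K,\phi_K)$ by running the same construction with $K'=U$.
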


\begin{proof}We must show $\sigma^L(K\# K')\cong \sigma^L(K)\oplus \sigma^L(K')$. To do this we use the notation and constructions used in the definition of the connected sum  above\[K\# K':S^n\hookrightarrow S^{n+2};\qquad x\mapsto\left\{\begin{array}{ccl}\Delta_K(x)&&x\in D_-^n,\\r_{n+2}\Delta_{K'}r_n(x)&&x\in D_+^n.\end{array}\right.\]

Modify the beginning of that construction by picking framings of the normal bundles to $K$ and $K'$ so that we extend to normal neighbourhood embeddings $K\times D^2$ and $K'\times D^2$. Now, as in the construction of the disc knots $\Delta_K$, $\Delta_{K'}$, assume we have \[K(S^n)=\Delta_K(D_-^n)\cup D^n\qquad K'(S^n)=\Delta_{K'}(D_-^n)\cup D^n.\]

We may assume (by an ambient isotopy) that $K$ and $K'$ meet $\partial D_+^{n+2}=S^{n+1}$ transversely. Further, we may assume that on $\partial D_+^{n+2}$, the normal neighbourhoods of $K$ and $K'$ agree with the standard normal neighbourhood (of fixed radius $\eps$) of the unknotted $S^{n-1}\subset S^{n+1}$. Now when we form the connected sum $K\# K'$ the boundaries of the normal neighbourhoods are precisely identified, and hence so are the copies of $S^1$ that will define the meridian maps.

The \textit{disc-knot exterior} for $K$ is the compact connected oriented manifold triad \[\begin{array}{rcl}X_{\Delta_K}&=&(\cl(D^{n+2}\sm(\Delta_K(D^{n})\times D^2));\Delta_K\times \partial D^2,\partial \Delta_K\times D^2;\partial \Delta_K\times \partial D^2)\\&=&(\cl(D^{n+2}\sm(\Delta_K(D^{n})\times D^2));D^n\times S^1,S^{n-1}\times D^2;S^{n-1}\times S^1).\end{array}\]The restriction of the map $(f,\partial f):(X_K,\partial X_K)\to (X_U,\partial X_U)$ coming from the meridian map for $X_K$ is a degree 1 map of triads\[F_{K}:X_{\Delta_K}\to X_{\Delta_U}\]to the disc knot exterior of the unknot. It is homotopy equivalent to a map of triads \[F_K\simeq (f_K;\text{id},\text{id};\text{id}).\]

\begin{figure}[h]\[\def\pichomo{\resizebox{0.85\textwidth}{!}{ \includegraphics{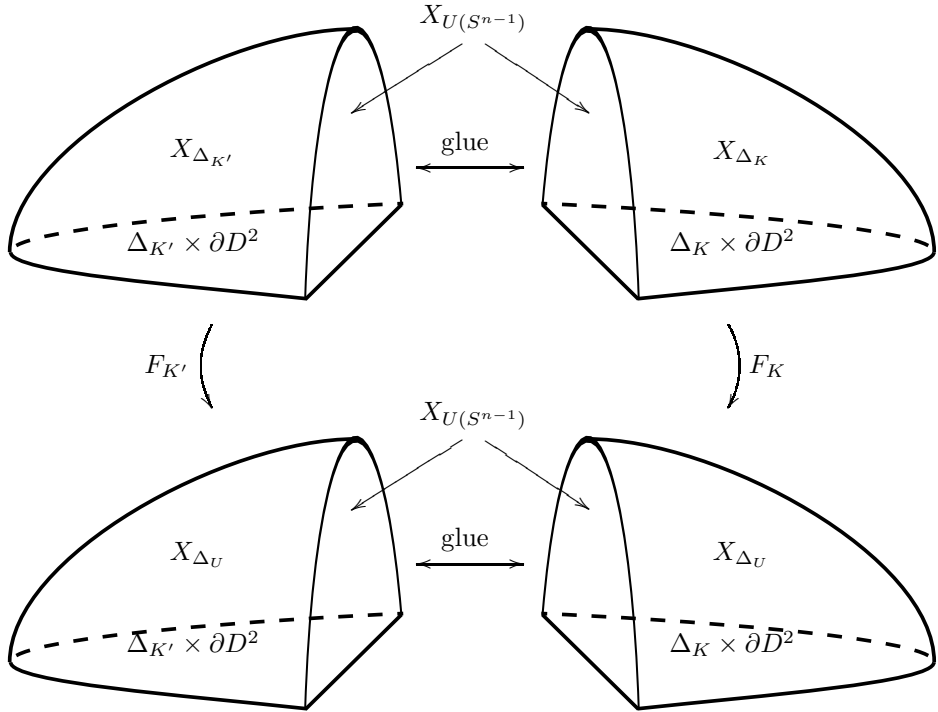}}}
\begin{xy} \xyimport(259,139){\pichomo}
,!+<7.2pc,2pc>*+!\txt{}
,(116,140)*!L{X_{U(S^{n-1})}}
,(132,136)*+{}="A";(165,120)*+{}="B"
,{"A"\ar"B"}
,(130,136)*+{}="A";(95,120)*+{}="B"
,{"A"\ar"B"}
,(185,95)*!L{\Delta_K\times\partial D^2}
,(35,95)*!L{\Delta_{K'}\times\partial D^2}
,(197,113)*!L{X_{\Delta_{K}}}
,(47,113)*!L{X_{\Delta_{K'}}}
,(113,110)*+{}="A";(147,110)*+{}="B"
,{"A"\ar"B"}
,(147,110)*+{}="A";(113,110)*+{}="B"
,{"A"\ar"B"}
,(122,115)*!L{\text{glue}}
,(200,80)*+{}="A";(200,60)*+{}="B"
,{"A"\ar@/^/"B"}
,(207,70)*!L{F_K}
,(60,80)*+{}="A";(60,60)*+{}="B"
,{"A"\ar@/_/"B"}
,(40,70)*!L{F_{K'}}
,(116,60)*!L{X_{U(S^{n-1})}}
,(132,56)*+{}="A";(165,40)*+{}="B"
,{"A"\ar"B"}
,(130,56)*+{}="A";(95,40)*+{}="B"
,{"A"\ar"B"}
,(185,14)*!L{\Delta_K\times\partial D^2}
,(35,14)*!L{\Delta_{K'}\times\partial D^2}
,(197,32)*!L{X_{\Delta_{U}}}
,(47,32)*!L{X_{\Delta_{U}}}
,(113,30)*+{}="A";(147,30)*+{}="B"
,{"A"\ar"B"}
,(147,30)*+{}="A";(113,30)*+{}="B"
,{"A"\ar"B"}
,(122,35)*!L{\text{glue}}
\end{xy}\]
  \caption{Instructions for glueing.}
  \label{pic:homo}
\end{figure}

$\sigma^L(K\# K')$ is given by the algebraic Thom complex of the kernel pair (with respect to the infinite cyclic cover coming from the meridian map) of the degree 1 map of pairs\[F_K\cup F_{K'}:(X_{K\# K'},\partial X_{K\# K'})\to (X_U,\partial X_U),\] where we have glued along the common part of the triads' boundaries $X_{U(S^{n-1})}$ as indicated in Figure \ref{pic:homo}. Algebraically, glueing is taking a cone, and building the kernel of a pair/triad is also taking a cone. The order in which we take cones does not affect the result, so we will take kernel triads first and glue afterwards. But as $F_K\simeq (f_K,\text{id},\text{id},\text{id})$, the kernel triad $\sigma^*(\overline{F}^!)$ is homotopy equivalent to \[\xymatrix{0\ar[r]\ar[d]&0\ar[d]\\0\ar[r] &C(\overline{f_K}^!)}\]and similarly with $K'$. The glue of these triads indicated in Figure \ref{pic:homo} is the cone\[C(0\to C(\overline{f_K}^!)\oplus C(\overline{f_K}^!))= C(\overline{f_K}^!)\oplus C(\overline{f_K}^!),\]with symmetric structure also given by direct sum. So \[(C_{K\# K'},\phi_{K\# K'})\simeq  (C(\overline{f_K}^!)\oplus C(\overline{f_{K'}}^!),\psi_K\oplus \psi_{K'})\] for some symmetric structures $\psi_K$, $\psi_{K'}$.

But now consider making this entire construction above with $K'=U$, the unknot. Then $f_U\simeq \text{id}$ and we have that $(C_{K},\phi_{K})=(C_{K\# U},\phi_{K\# U})\simeq (C(\overline{f_K}^!),\psi_K)$. So in fact we have that \[\sigma^L(K\# K')\cong \sigma^L(K)\oplus \sigma^L(K'),\]as required.
\end{proof}

\begin{proposition}\label{prop:realise}If $K:S^{2k+1}\hookrightarrow S^{2k+3}$ is simple then $(C_K,\phi_K)$ is chain homotopic to the $(k+1)$-fold skew-suspension of a 1-dimensional $(-1)^{k+1}$-symmetric $P$-acyclic Poincar\'{e} complex $(C,\phi)$, such that the associated $(-1)^k$-symmetric linking form $(H^1(C),\lambda_\phi)$ (see Proposition \ref{prop:correspondence}) is the Blanchfield form of the knot.
\end{proposition}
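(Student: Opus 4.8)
The plan is to use the fact that $K$ is a simple knot to control the homological connectivity of the knot exterior $X_K$, and hence of the Blanchfield complex $(C_K,\phi_K)$, so that all homology is concentrated in the middle dimension. Then the skew-suspension theory of Section~\ref{subsec:skewsusp} together with the surgery-below-the-middle-dimension isomorphisms of Theorem~\ref{thm:algsurgery} will allow me to reduce $(C_K,\phi_K)$ to a short $1$-dimensional $P$-acyclic complex within its chain homotopy type (not merely its cobordism class), and finally I will identify the resulting linking form with the Blanchfield form of the knot via Proposition~\ref{prop:correspondence} and the discussion of Section~\ref{subsec:blanseif}.

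First I would unpack the hypothesis that $K$ is simple: by the cited characterisation ($\pi_q(X_K)\cong\pi_q(S^1)$ for $2q<n=2k+1$), the infinite cyclic cover $\overline{X_K}$ is $k$-connected, and so by Claim~\ref{clm:auto} the reduced homology $\widetilde{H}_*(C_K)\cong\widetilde{H}_*(\overline{X_K})$ vanishes except in degree $k+1$. By Poincar\'e duality (Corollary~\ref{cor:Pacyclic}, $(C_K,\phi_K)$ is an $(n+2)=(2k+3)$-dimensional $P$-acyclic symmetric Poincar\'e complex) the cohomology $H^*(C_K)$ is likewise concentrated in degree $k+2$. The next step is to perform algebraic surgery below the middle dimension on $(C_K,\phi_K)$ to replace it by a chain homotopy equivalent complex which is literally concentrated in degrees $k+1,k+2$: since the homology already vanishes outside the middle, the surgeries needed to kill $H_r(C_K)$ for $r\le k$ can be chosen to produce no new homology and, by Poincar\'e duality, the complex then has nontrivial modules only in degrees $k+1$ and $k+2$. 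Such a complex is exactly a $(k+1)$-fold skew-suspension $\overline{S}^{k+1}(C,\phi)$ of a $1$-dimensional complex $(C,\phi)$ concentrated in degrees $0,1$, by the definition of skew-suspension and the chain-level homotopy equivalence $\overline{S}:\Sigma^2 W^\%_\eps C\xrightarrow{\simeq}W^\%_{-\eps}\Sigma C$; here the symmetry is $(-1)^{k+1}$ because each skew-suspension flips the sign of $\eps$ starting from the symmetric ($\eps=+1$) structure on $C_K$, and $P$-acyclicity is inherited since skew-suspension and the surgeries preserve the category $\C_+(\Z[z,z^{-1}],P)$.

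The final step is the identification of $(H^1(C),\lambda_\phi)$ with the Blanchfield form. By Proposition~\ref{prop:correspondence}, the $1$-dimensional $(-1)^{k+1}$-symmetric $P$-acyclic Poincar\'e complex $(C,\phi)$ corresponds to the non-singular $(-1)^k$-symmetric linking form $(H^1(C),\lambda_\phi)$ with $\lambda_\phi([x],[y])=p^{-1}\phi_0(x,z)$, and since $(C,\phi)\simeq\overline{S}^{-(k+1)}(C_K,\phi_K)$ we have $H^1(C)\cong H^{k+2}(C_K)\cong f(H^{k+2}(\overline{X_K}))$ (using that $H^{k+2}(C_K)$ is already $P$-torsion, so its $\Z$-torsion-free quotient is itself). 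The chain-level formula for $\lambda_\phi$ matches, under the skew-suspension identification, the formula $([x],[y])\mapsto p^{-1}\overline{\tilde{y}(\phi_0(x))}$ of Theorem~\ref{thm:levblanch}, which by the discussion at the end of Section~\ref{subsec:blanseif} (the covering of the Seifert form is the Blanchfield form, and both agree with Levine's Blanchfield pairing up to Poincar\'e duality) is precisely the Blanchfield form of the knot. I expect the main obstacle to be the second step: carefully verifying that the surgeries below the middle dimension can be carried out relative to the whole $P$-acyclic structure \emph{as a chain homotopy equivalence} of symmetric complexes (not just a cobordism), keeping track of the higher components $\phi_s$ of the symmetric structure so that the output really is a skew-suspension on the nose; this is where one must invoke the half-unit $(1-z)^{-1}\in\Z[z,z^{-1},(1-z)^{-1}]$ and the simplification $W^\%C\simeq W^\%_0C$ of Section~\ref{subsec:chainmaps}, and where the bookkeeping from \cite[\textsection 4]{MR560997} and the argument of Proposition~\ref{prop:welldeflagrang2} will be needed.
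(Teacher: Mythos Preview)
Your approach is genuinely different from the paper's. You try to reduce the Blanchfield complex to a 2-term model by abstract homological algebra (``algebraic surgery below the middle dimension''), whereas the paper never does this: it uses the fact that a simple knot has a highly-connected Seifert surface $F$ (Levine \cite[Theorem 2]{MR0179803}) together with the cut-and-paste description of $\overline{X_K}$ from \S\ref{subsec:blanseif}. Because $\widetilde{C}_*(F)$ is then concentrated in degree $k+1$, the Mayer--Vietoris cone $C((i^+_*-i^-_*)^{-1}(i^+_*z-i^-_*))$ is visibly a 2-term complex, and the explicit formulae of Definition~\ref{def:coveringsief} (the covering of the Seifert form) give $\phi_0$ and $\phi_1$ on the nose. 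The Blanchfield-form identification is then immediate from the covering theory of Chapter~\ref{chap:laurent}, with no formula-matching needed.

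Your route has a real gap exactly where you suspected. ``Algebraic surgery'' is the wrong mechanism: surgery changes the chain homotopy type and only preserves the cobordism class (Theorem~\ref{thm:Lgroup}), so invoking Theorem~\ref{thm:algsurgery} cannot give you a \emph{homotopy equivalence} to a skew-suspension. What you actually need is that a bounded projective complex with homology concentrated in one degree is chain homotopy equivalent to a 2-term complex; over $\Z[z,z^{-1}]$ (global dimension $2$) this is not automatic and amounts to the homology module having projective dimension $\le 1$. You can extract this from the Poincar\'e structure---reduce $C_K$ from below so it lives in degrees $\ge k+1$, then use $\phi_0:C_K^{2k+3-*}\simeq C_K$ to get a model in degrees $\le k+2$, then reduce that from below again---but you did not make this argument, and the sentence ``by Poincar\'e duality, the complex then has nontrivial modules only in degrees $k+1$ and $k+2$'' conflates duality on homology with duality on chain modules. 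A smaller slip: your claim that ``$H^{k+2}(C_K)$ is already $P$-torsion, so its $\Z$-torsion-free quotient is itself'' is false in general---$P$-torsion does not preclude $\Z$-torsion---so the comparison with the Blanchfield form on $f(H^{k+2})$ needs more care. The paper's Seifert-surface approach sidesteps both issues.
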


\begin{proof}By Claim \ref{clm:auto}, it is enough show that there is a chain homotopy equivalence from such a $C$ to the reduced chain complex $\widetilde{C}_*(X_K)$. But by \cite[Theorem 2]{MR0179803}, there is a Seifert surface $F$ with $\pi_i(F)=0$ for $i\neq0, k+1$. In \ref{subsec:blanseif} we showed the existence of a chain homotopy equivalence of reduced chain complexes\[C\left((i^+_*-i^-_*)^{-1}(i_*^+z-i_*^-):i_!\widetilde{C}_*(F)\to i_!\widetilde{C}_*(F)\right)\simeq \widetilde{C}_*(X_K).\]The Seifert form of this $(F,K)$ is $(C^{k+1}(F)=L,\psi)$, determining a 1-dimensional $(-1)^{k+1}$-symmetric $P$-acyclic complex $(C,\phi)$:\[\xymatrixcolsep{4pc}\xymatrix{
C^{1-*}:&0\ar[r]&L[z,z^{-1}]\ar[rr]^{(1-e)+ez}\ar[d]^{(1-z)(\psi+\eps\psi^*)}&&L[z,z^{-1}]\ar[r]\ar[d]^-{-(1-z^{-1})(\psi+\eps\psi^*)}&0\\
C_*:&0\ar[r]&L^*[z,z^{-1}]\ar[rr]^{((1-e)+ez)^*}&&L^*[z,z^{-1}]\ar[r]&0}\]where the vertical arrows indicate $\phi_0:C^{1-*}\xrightarrow{\simeq}C_*$, and the higher chain map $\phi_1:C^1\to C_1$ is determined by the $(-1)^{k+1}$-symmetry of $\psi$.
\end{proof}

\begin{remark}In \cite[5.1, 10.1]{MR0358795} Kearton obtains a handlebody decomposition of a simple $(2k+1)$-knot exterior with only 0-, $(k+1)$-, and $(k+2)$-handles. Combining this with the geometric arguments of \cite[\textsection 8]{MR0324706} and \cite[2.6]{MR0200922}, it seems likely that one can construct a handle decomposition for the knot exterior such that, by lifting this handle decomposition to the infinite cyclic cover, the handle chain complex exhibits $(C,\phi)$ above. \end{remark}

\begin{lemma}$\sigma^L$ is surjective for $n\neq 3$. For $n=3$, the Rochlin invariant of $(C,\phi)\in L^{4}(\Z[z,z^{-1}],P,-1)$ defines an isomorphism $\coker(\sigma^L)\cong\Z/2\Z$.
\end{lemma}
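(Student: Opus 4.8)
The statement claims that $\sigma^L$ is surjective when $n\neq 3$, and that for $n=3$ its cokernel is $\Z/2\Z$, detected by the Rochlin invariant. The plan is to leverage the classical surgery classification (Theorem \ref{thm:concordance}) together with the fact that $\sigma^L$ is an already-established group homomorphism, and to realize algebraic complexes geometrically via Seifert surfaces. First I would dispose of the trivial cases: for $n=2k$ even, $L^{n+1}(\Z[z,z^{-1}],P,-1)\cong 0$ by Theorem \ref{thm:concordance}, so $\sigma^L$ is vacuously surjective; for $n=1$ there is nothing to prove beyond what Theorem \ref{thm:concordance} already records about the surjection $\mathcal{C}_1\twoheadrightarrow L^2(\Z[z,z^{-1}],P,-1)$, and since $\sigma^L$ agrees with that map on the nose (both send a knot to its Blanchfield complex), surjectivity follows. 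So the real content is the odd-dimensional high case $n=2k+1\geq 5$ and the borderline case $n=3$.

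For $n=2k+1\geq 5$, the strategy is \emph{algebraic realization by simple knots}. Given a class $\xi\in L^{n+1}(\Z[z,z^{-1}],P,-1)$, use algebraic surgery below the middle dimension (Theorem \ref{thm:algsurgery}) together with the skew-suspension isomorphism to represent $\xi$ by the $(k+1)$-fold skew-suspension of a $1$-dimensional $(-1)^{k+1}$-symmetric $P$-acyclic Poincar\'e complex $(C,\phi)$; via the correspondence of Proposition \ref{prop:correspondence} (in its Blanchfield-form guise, Theorem \ref{thm:levblanch}) this $(C,\phi)$ is equivalent to a non-singular $(-1)^k$-symmetric Seifert form $(L,\psi)$ over $\Z$. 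The point, which is exactly the computation recorded in Proposition \ref{prop:realise} run in reverse, is that every such Seifert form arises as the Seifert form of a simple $(2k+1)$-knot: one embeds a parallelizable plumbing-type $(n+1)$-manifold $F$ with $\pi_i(F)=0$ for $i\neq 0,k+1$ and $H^{k+1}(F)\cong L$ carrying intersection/self-linking data $(L,\psi)$ into $S^{n+2}$ (here $n\geq 5$ is needed so that Hirsch/Haefliger embedding and Levine's handle arguments apply), and takes $K=\partial F$. Then by the computation in \ref{subsec:blanseif} and Proposition \ref{prop:realise}, the Blanchfield complex $(C_K,\phi_K)$ is homotopy equivalent to the skew-suspension of $(C,\phi)$, hence $\sigma^L(K)=\xi$. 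This proves surjectivity for $n\geq 5$; the case $n=3$ needs a separate argument for the missing $\Z/2\Z$.

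For $n=3$, Theorem \ref{thm:concordance} already gives the short exact sequence $0\to\mathcal{C}_3\to L^4(\Z[z,z^{-1}],P,-1)\to\Z/2\Z\to 0$ with the surjection onto $\Z/2\Z$ given by the Rochlin invariant, and the map $\mathcal{C}_3\to L^4$ there is precisely $\sigma^L$ (the Kervaire--Levine obstruction of a $3$-knot is the cobordism class of its Blanchfield complex). So I would identify the two sequences: $\sigma^L$ is injective with image the kernel of the Rochlin homomorphism $\rho:L^4(\Z[z,z^{-1}],P,-1)\to\Z/2\Z$, whence $\coker(\sigma^L)\cong\Z/2\Z$ induced by $\rho$. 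The only thing to check is that the Rochlin-invariant map on $L^4(\Z[z,z^{-1}],P,-1)$, which factors through the isomorphism $L^4(\Z[z,z^{-1}],P,-1)\cong\widehat{W}^{1}(\Z)$ (signature divided by $8$ mod $2$), agrees under $\sigma^L$ with the geometric Rochlin obstruction to sliceness of a $3$-knot; this is the content of the exact sequence in Theorem \ref{thm:concordance} as proved by Kervaire and Levine, so I would simply cite it.

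\textbf{Expected main obstacle.} The delicate point is the realization step for $n=2k+1\geq 5$: producing an honest simple knot whose Seifert form is a prescribed non-singular $(-1)^k$-symmetric Seifert form $(L,\psi)$ over $\Z$. This requires the geometric input of Levine--Kervaire (plumbing a parallelizable manifold realizing $(L,\psi)$, embedding it in the sphere, and checking that the resulting boundary knot is simple with exactly the expected Seifert form), and it is precisely here that the dimension restriction $n\geq 5$ (equivalently $2k+1\geq 5$, so that embedding and handle-cancellation theorems hold) enters and why $n=3$ must be handled separately through the Rochlin obstruction. I would not reprove this realization from scratch but would organize the argument so that it reduces cleanly to the statements already invoked from \cite{MR0246314} and \cite{MR0179803}, together with Proposition \ref{prop:realise} read in the realization direction.
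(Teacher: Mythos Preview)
Your approach is essentially the same as the paper's: reduce via algebraic surgery and the covering isomorphism to the Witt group of Seifert forms, invoke Levine's geometric realization of Seifert forms by simple knots (citing \cite[Lemma~3]{MR0246314}), and then use Proposition~\ref{prop:realise} to verify that the resulting knot's Blanchfield complex represents the given class; the $n=3$ case is handled by the Rochlin obstruction. One expositional difference worth noting: for the even case the paper argues directly that $L^1(\Z[z,z^{-1}],P,\pm)=0$ (a $0$-dimensional $P$-acyclic projective complex is trivial), and for $n=3$ it appeals to Levine's realization result rather than to the statement of Theorem~\ref{thm:concordance} itself---your citations of Theorem~\ref{thm:concordance} for these cases are mildly circular, since the lemmas surrounding it are precisely the paper's reproof of that theorem in the Blanchfield-complex framework.
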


\begin{proof}Algebraic surgery below the middle dimension (\cite[3.23(ii)]{MR620795}) identifies \[L^{n+1}(\Z[z,z^{-1}],P,-1)\cong\left\{\begin{array}{lcl}L^0(\Z[z,z^{-1}],P,(-1)^{k})&&n=2k+1,\\ L^1(\Z[z,z^{-1}],P,(-1)^{k+1})&&n=2k.\end{array}\right.\]But $L^1(\Z[z,z^{-1}],P,\pm)=0$ (as it consists of 0-dimensional $P$-acyclic complexes of projective modules) so the statement of the lemma is evidently true for even-dimensional knots.

For $n=2k+1$, we identify the $L$-group with Witt groups of linking forms (\cite[3.4.7(ii)]{MR620795}), then apply the inverse of the covering isomorphism of Theorem \ref{algtrans}:\[L^0(\Z[z,z^{-1}],P,(-1)^{k})\cong W^\eps(\Z[z,z^{-1}],P,(-1)^{k})\cong\widehat{W}_{(-1)^{k+1}}(\Z).\] So the problem now is to realise any given Witt class of $(-1)^{k+1}$-symmetric Seifert forms over $\Z$ by an $n$-knot. For a constructive solution to this problem, we refer the reader to \cite[Lemma 3]{MR0246314}, where it is shown that given a Witt class of non-singular $(-1)^{k+1}$-symmetric Seifert forms over $\Z$ and $n=2k+1>3$, one may build a simple $n$-knot $K:S^n\hookrightarrow S^{n+2}$ with Seifert surface $F^{n+1}\subset S^{n+2}$ so that the Seifert form of the knot realises a representative of this Witt class. When $n=3$ this is possible if and only if the Rochlin invariant of the Seifert form vanishes.

To complete the proof we must check that our knot $K$ realising the Witt class of Seifert forms does indeed realise the corresponding $(n+2)$-dimensional symmetric $P$-acyclic Poincar\'{e} complex. But we may appeal to Prop \ref{prop:realise} and the proof is complete.
\end{proof}

\begin{theorem}[Surgery below the middle dimension]\label{thm:surgbelow}For $n=2k$, all $n$-knots are slice (\cite{MR0189052}). For $n=2k+1$, $k>0$, all $n$-knots are knot-cobordant to a simple $n$-knot (\cite{MR0246314}).
\end{theorem}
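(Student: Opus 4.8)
The plan is to prove the two assertions separately, since they are genuinely different in flavour: the even-dimensional case is a complete "surgery all the way up" argument, while the odd-dimensional case only requires surgery strictly below the middle dimension. In both cases the strategy is the classical one of Kervaire and Levine: start with an arbitrary Seifert surface $F^{n+1}\hookrightarrow S^{n+2}$ for the knot $K$ (which exists by the references cited), push it into $D^{n+3}$ to get a locally flat embedding of pairs $(F,K)\hookrightarrow(D^{n+3},S^{n+2})$, and then attempt to improve $F$ by ambient surgeries on embedded framed spheres $S^q\times D^{n-q}\subset F$ representing classes in $\pi_q(F)$ that die in $\pi_q(D^{n+3})=0$ (equivalently, one works with the exterior and improves the meridian map towards a homotopy equivalence, in the spirit of Proposition \ref{prop:pairdiag} and the kernel triad of Lemma \ref{lem:blanchslice}). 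Each such surgery replaces $F$ by a new Seifert surface with a smaller homotopy group in dimension $q$; the point is to iterate, increasing $q$ by one each time, and to control exactly how far one can push.

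For the even-dimensional case $n=2k$, first I would note that a framed embedded $q$-sphere in $F$ with $q<k+1=(n+1)/2$, i.e.\ $q\leq k$, has complementary dimension $n+1-q\geq k+1>q$ inside $F^{n+1}$, so by general position the surgery can be performed ambiently in $D^{n+3}$; inductively killing $\pi_0,\pi_1,\dots,\pi_k$ this way, one reaches a Seifert surface with $\pi_q(F)=0$ for $q\leq k$. The key extra input in the even case is that one can then also do surgery \emph{in the middle dimension} $q=k$ (the surgery killing $\pi_k$): the obstruction to this is exactly a Kervaire-Milnor type obstruction which vanishes here because $F$ cobounds $D^{n+3}$ and the relevant Arf/kernel-form obstruction is trivial (this is precisely Kervaire's argument \cite{MR0189052}, and on the algebraic side it is the statement $L^1(\Z[z,z^{-1}],P,\pm1)=0$, since $\sigma^L$ is surjective and the target $L$-group vanishes). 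After killing $\pi_k$ as well, $F$ becomes a homotopy $(n+1)$-disc, hence (by the $h$-cobordism/Poincar\'e conjecture in the locally flat category, or directly for $n\geq3$) a genuine disc $D^{n+1}$, which is a slice disc for $K$; so $K$ is slice.

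For the odd-dimensional case $n=2k+1$ with $k>0$, I would run the same inductive surgery-below-the-middle-dimension argument: a framed embedded $q$-sphere in $F^{2k+2}$ with $q<k+1$, i.e.\ $2q<n$, has complementary dimension $2k+2-q>k+1>q$, so the surgery is ambient; killing $\pi_0,\dots,\pi_k$ produces a Seifert surface $F$ with $\pi_q(F)=0$ for $2q<n$, which is exactly the definition of a simple knot. One subtlety to address is that ambient surgery on an embedded sphere in $F$ changes the ambient knot only by a knot-cobordism (the trace of the surgery, realised inside $S^{n+2}\times[0,1]$, is the cobordism), so $K$ is knot-cobordant to the resulting simple knot --- exactly as claimed. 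Unlike the even case, one stops before the middle dimension: the middle-dimensional surgery on $\pi_k$ of $F^{2k+2}$ is generically obstructed (this is Ruberman's point recalled in the introduction that surgery below the middle dimension is all that is available), so no further simplification is asserted. For $n=2k+1>3$ general position is unobstructed throughout, and the Hirsch engulfing theorem (used by Levine \cite{MR0246314}) handles the last step of realising the homotopy-theoretic simplification geometrically.

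The main obstacle, and the only place where real work beyond general position and the references is needed, is the middle-dimensional surgery in the even case: one must verify that the single obstruction to killing $\pi_k(F)$ vanishes. The cleanest way to see this in the present framework is to observe that this obstruction is detected by the class of the Blanchfield complex in $L^{n+1}(\Z[z,z^{-1}],P,-1)$ via the homomorphism $\sigma^L$, that for $n=2k$ this $L$-group is isomorphic (by algebraic surgery below the middle dimension) to $L^1(\Z[z,z^{-1}],P,(-1)^{k+1})$, and that this latter group is $0$ because it consists of $0$-dimensional $P$-acyclic symmetric Poincar\'e complexes of projective modules over a ring in which every such form is metabolic --- hence the surgery can always be completed and $F$ becomes a disc. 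I would present the even case first so that this vanishing is established once, then use it; the odd case then needs no such input and follows purely from the unobstructed below-middle-dimension surgeries together with the identification of surgery traces with knot-cobordisms.
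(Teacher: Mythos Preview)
Your odd-dimensional argument has a genuine gap. The ambient surgeries on the Seifert surface $F$ are performed inside $D^{n+3}$ and leave the boundary knot $K=\partial F$ \emph{fixed} throughout: the trace $W$ is a cobordism of Seifert surfaces rel $K$, not a knot-cobordism, and it lives in $D^{n+3}$, not in $S^{n+2}\times[0,1]$. After all the surgeries you still have the \emph{same} knot $K$, now equipped with a highly-connected Seifert surface $F_0$ sitting in the interior of $D^{n+3}$. This neither exhibits $K$ as simple (that would require $F_0\subset S^{n+2}$) nor produces any other knot to be cobordant to. The Hirsch engulfing step is therefore not a technical afterthought but the entire source of the knot-cobordism: one engulfs $F_0$ in a smaller ball $\hat D^{n+3}\subset\operatorname{int}(D^{n+3})$ with $F_0\subset\partial\hat D^{n+3}$, so that $\partial F_0$ is a simple knot in that boundary sphere; the region $\cl(D^{n+3}\setminus\hat D^{n+3})$ is then an $h$-cobordism, and the $h$-cobordism theorem identifies it with $S^{n+2}\times I$, inside which $W$ restricts to the required knot-cobordism from $K$ to the simple knot $\partial F_0$. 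The paper's sketch makes exactly this point.

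Your even-dimensional argument reaches the right conclusion but the appeal to $L^1(\Z[z,z^{-1}],P,\pm1)=0$ is circular here: using the vanishing of $\sigma^L$ to conclude that $K$ is slice presupposes the injectivity of $\sigma^L$, whose proof in this chapter relies on the present theorem. The paper instead follows Kervaire directly: for $F^{2k+1}$ one kills $\pi_q(F)$ for all $q\le k$ by embedded handle attachments in $D^{n+3}$ (Kervaire shows there is no obstruction for $n\ne 2$ even, and treats $n=2$ separately), after which $F_0$ is a disc and hence a slice disc for $K$.
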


\begin{proof}[Proof (idea)]Suppose $K$ is an $n$-knot  ($n$ odd or even) and that $F\subset S^{n+2}$ is any Seifert surface. In both the even and odd cases the strategy is to perform surgeries on $F$ in such a way that the trace of the surgeries embeds in the $D^{n+3}$ cobounding the ambient $S^{n+2}$. The surgeries are performed to successively kill $\pi_1(F),\pi_2(F),\dots$ until there is an obstruction to surgery. (One method is to perform the surgeries entirely abstractly and then use a general immersion theorem, followed by a Whitney trick argument to embed the trace, as in \cite[p85]{MR0283786}. Another method is to show that each successive surgery can be performed in an embedded fashion, so that the trace embeds as you go along, as in \cite[III.6]{MR0189052}.)

More precisely, fix $q$ with $2q<n+1$. Suppose there is an embedded manifold triad $(W';F\sqcup F',K\times I;K\sqcup K)$ where $W'\hookrightarrow D^{n+3}$ such that $W'\cap S^{n+2}=F$, that $(W',F)$ only has handles of index $i\leq q$ and such that $\pi_1(F')=\pi_2(F')=\dots=\pi_{q-1}(F')=0$. (To begin with, we may take $q=0$ and $W=F\times I$.) Kervaire \cite[III.6]{MR0189052} details how to to kill $\pi_q(F')$ by performing embedded $(n+3)$-dimensional handle attachments to $F'$ in $D^{n+3}$ with attaching spheres given by embedded representatives of generators $[S^q\hookrightarrow F']\in \pi_q(F')$.

This process is repeated for increasing $q$ until at some $q$ there is an obstruction to continuing. This results is an embedded manifold triad $(W;F\sqcup F_0,K\times I;K\sqcup K)$ where $W\hookrightarrow D^{n+3}$ such that $W\cap S^{n+2}=F$.

\begin{figure}[h]\[\def\picextW{\resizebox{0.4\textwidth}{!}{ \includegraphics{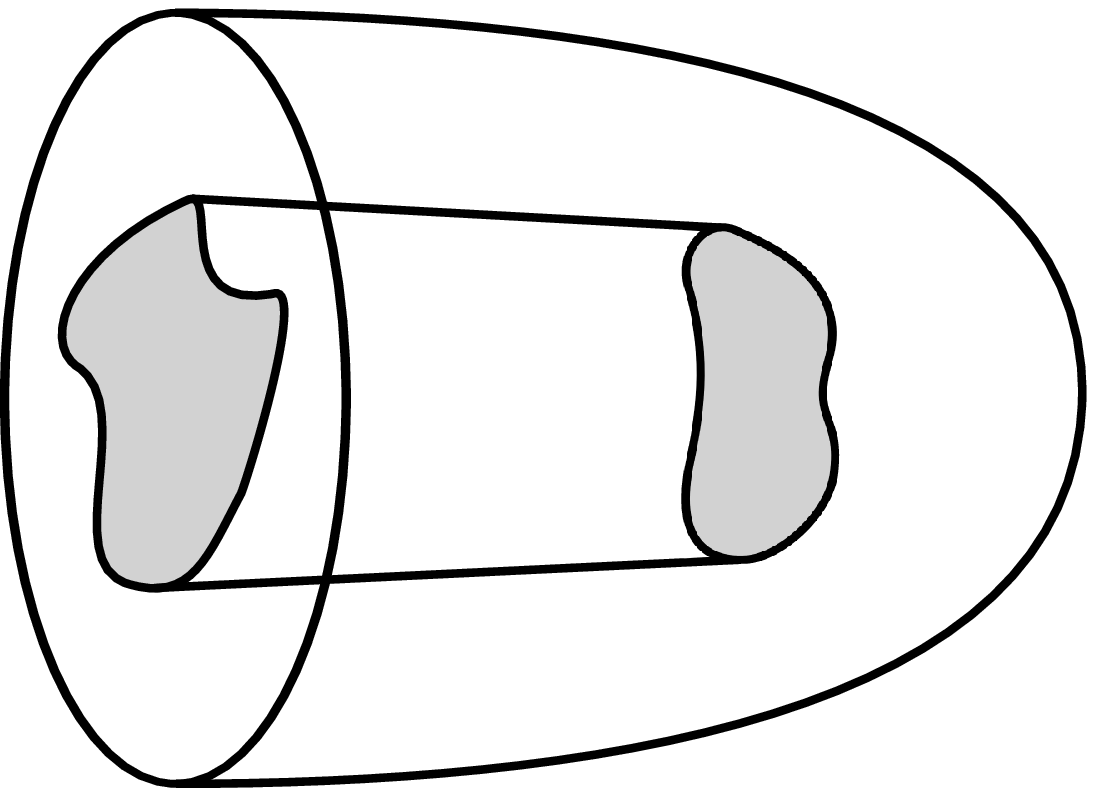}}}
\begin{xy} \xyimport(259,139){\picextW}
,!+<7.2pc,2pc>*+!\txt{}
,(35,149)*!L{S^{n+2}}
,(190,139)*!L{D^{n+3}}
,(35,75)*!L{F}
,(110,70)*!L{W}
,(173,73)*!L{F_0}
,(145,25)*!L{K\times I}
,(145,23)*+{}="A";(110,37)*+{}="B"
,{"A"\ar"B"}
\end{xy}\]
  \caption{Trace of embedded surgeries.}
  \label{pic:trace}
\end{figure}

When $n\neq 2$ is even, there is never an obstruction to attaching handles to kill all homotopy of $F$, so that $F_0$ is a disc and $K$ is seen to be slice. The case $n=2$ must be treated slightly differently (see \cite[p.~265]{MR0189052}) but the result is the same.

When $n=2k+1$, the embedded surgeries of \cite[III.6]{MR0189052} result in an $F_0$ having $\pi_i(F_0)=0$ for $i\neq 0,k+1$. There are now dimensional obstructions to killing $\pi_{k+1}(F_0)$ via embedded surgery. Instead Levine (\cite[Lemma 4]{MR0246314}) uses an engulfing theorem of Hirsch to embed a smaller $\hat{D}^{n+3}$ in the interior of $D^{n+3}$ so that $W\cap \hat{D}^{n+3}=F_0$. Excising the interior of this smaller disc results in an $h$-cobordism between $S^{n+2}$ and $\partial \hat{D}^{n+3}$ and by the $h$-cobordism theorem we then have a homeomorphism \[f:\cl(D^{n+3}\sm\hat{D}^{n+3})\xrightarrow{\cong} S^{n+2}\times I, \quad\text{with $f|_{S^{n+2}}=\text{id}$}.\]But this homeomorphism defines a knot-cobordism between $K$ and the simple knot $f(\partial F_0)=\partial f(F_0)$.
\end{proof}

\begin{corollary}The forgetful map is an isomorphism\[\mathcal{C}^{simp}_n\xrightarrow{\cong} \mathcal{C}_n.\]
\end{corollary}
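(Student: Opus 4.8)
The plan is to show the forgetful map $\mathcal{C}^{simp}_n\to\mathcal{C}_n$ is both injective and surjective, using the surgery-below-the-middle-dimension results that have just been established.

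First I would handle surjectivity. Given an arbitrary $n$-knot $K$, Theorem \ref{thm:surgbelow} produces a simple $n$-knot $K'$ that is knot-cobordant to $K$ (in the even case $K$ is already slice, hence cobordant to the unknot, which is certainly simple). Thus every class in $\mathcal{C}_n$ has a simple representative, so the forgetful map hits every element, i.e.\ it is surjective. I would be careful to note that the map is well-defined as a group homomorphism: the slice simple knots form a closed submonoid of $Knots^{simp}_n$ (already observed in the excerpt), so the monoid construction is compatible with the inclusion $Knots^{simp}_n\hookrightarrow Knots_n$, and connected sum of simple knots is simple.

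Next, injectivity. Suppose $K,K'$ are simple $n$-knots that are knot-cobordant (i.e.\ equal in $\mathcal{C}_n$); I must show they are equal in $\mathcal{C}^{simp}_n$, equivalently that $K\#(-K')$ is a slice \emph{simple} knot, or more precisely that it lies in the submonoid generated by simple slice knots. By Proposition \ref{prop:kminusk}, $K\#(-K')$ is slice, and $-K'$ is simple (reversing orientation of a mirror image preserves the simple-knot property, since it only changes the embedding by an ambient homeomorphism, hence preserves $\pi_q(X)\cong\pi_q(S^1)$ for $2q<n$), so $K\#(-K')$ is a slice simple knot. This shows $K\#(-K')=0$ in $\mathcal{C}^{simp}_n$, and since $-K'$ is the inverse of $K'$ in $\mathcal{C}^{simp}_n$ (as $K'\#(-K')$ is a simple slice knot), we conclude $K=K'$ in $\mathcal{C}^{simp}_n$. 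Hence the forgetful map is injective.

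The main obstacle — and really the only subtlety — is verifying that the simple-knot property is preserved by the operations used: mirror image and orientation reversal (needed for inverses), and connected sum (needed so the map is a monoid homomorphism and so that $K\#(-K')$ is again simple). The connected-sum case follows from the fact that one can build a Seifert surface for $K_1\# K_2$ by plumbing/boundary-connect-summing Seifert surfaces for $K_1$ and $K_2$, and $\pi_q$ of a boundary connect sum of manifolds that are $(q-1)$-connected relative to their common unknotted boundary vanishes in the required range. The real engine, however, is Theorem \ref{thm:surgbelow} (surgery below the middle dimension), which is quoted, so the proof is essentially a bookkeeping argument assembling that theorem with Proposition \ref{prop:kminusk} and the definitions of the two cobordism monoids.
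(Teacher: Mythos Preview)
Your proof is correct and is exactly the argument the paper intends: the corollary is stated without proof, as it follows immediately from Theorem~\ref{thm:surgbelow} together with the already-recorded facts that simple knots form a submonoid and that $K=0$ in $\mathcal{C}_n$ implies $K$ is itself slice. Your write-up simply spells out these two steps (surjectivity from surgery below the middle dimension, injectivity from ``slice simple $\Rightarrow$ simple slice''), and your remarks about closure of the simple-knot monoid under $\#$ and under taking $-K$ are the routine checks the paper suppresses.
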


We now furnish the necessary final details of the proof that $\sigma^L$ is injective for $n\geq 3$.

\begin{lemma}[`Stably metabolic = metabolic' ({\cite[1.6]{MR0467764}})] For $R$ a Dedekind domain, a non-singular $\eps$-symmetric Seifert form $(K,\psi)$ over $R$ vanishes in $\widehat{W}_\eps(R)$ if and only if it is metabolic. (Cf.\ Corollary \ref{stablyhypishyp} and the subsequent remark.)
\end{lemma}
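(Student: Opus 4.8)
The implication ``metabolic $\Rightarrow$ $[(K,\psi)]=0$'' is immediate from the monoid construction of Definition \ref{def:monoid}: metabolic Seifert forms are exactly the submonoid that is factored out, and the quotient is a group because $(K,\psi)\oplus(K,-\psi)$ is metabolic (the metabolic case of Lemma \ref{doublyslice}). So the plan is to prove the converse. Spelled out, one must show: if $(K,\psi)$ is \emph{stably metabolic}, i.e.\ $(K,\psi)\oplus H\cong H'$ for some metabolic Seifert forms $H,H'$ over $R$ --- equivalently, using the transitivity remark after Definition \ref{def:monoid}, $[(K,\psi)]=0\in\widehat W_\eps(R)$ --- then $(K,\psi)$ admits a lagrangian. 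I would argue this in the chain-complex picture, in the same spirit as the proof of Corollary \ref{stablyhypishypseif} for the hyperbolic case, the new ingredient being the use of the Dedekind hypothesis to repair a nullcobordism.

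First I would translate the problem. By Proposition \ref{prop:correspondenceproj}, $(K,\psi)$ corresponds to a $0$-dimensional $\eps$-symmetric Seifert complex $(C,\hat\psi)$, with $C$ concentrated in degree $0$ and $H^0(C)\cong K$. Via the single-cobordism analogue $\widehat W_\eps(R)\cong\widehat L^0(R,\eps)$ of the isomorphism $W^\eps(A)\cong L^0(A,\eps)$ --- obtained from the $\eps$-ultraquadratic $L$-theory of \S\ref{subsec:Lseifert} exactly as Proposition \ref{prop:isogroups} gives the double version --- the hypothesis becomes: $(C,\hat\psi)$ is Seifert-nullcobordant. Thus there is a $1$-dimensional $\eps$-ultraquadratic Poincar\'e pair $(f\colon C\to D,(\delta\hat\psi,\hat\psi))$ with $D$ a finite complex of f.g.\ projective $R$-modules concentrated in degrees $0$ and $1$. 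Set $L:=\operatorname{im}\bigl(f^*\colon H^0(D)\to H^0(C)=K\bigr)$; since $f$ is a morphism of Seifert complexes, $L$ is a Seifert submodule of $(K,e_K)$, and the Poincar\'e-pair condition makes it self-annihilating for $\psi+\eps\psi^*$.

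The claim is then that $L$ is a lagrangian of $(K,\psi)$, and this is where the Dedekind hypothesis is essential. It enters in two ways. First, $L$ is a submodule of the f.g.\ projective $R$-module $K$; since $R$ is a Dedekind domain every $R$-module has homological dimension $\le 1$, so $L$ is again f.g.\ projective, hence a genuine object of $\A(R)$ and $(L,e_L)$ a genuine Seifert module --- something that can fail over a general ring. Second, to verify that $0\to(L,e_L)\to(K,e_K)\to(L,e_L)^*\to 0$ is exact (in particular that $K\to L^*$ is onto) I would run the $\operatorname{Ext}$-diagram chase of the proofs of Proposition \ref{prop:welldeflagrang}(i) and Proposition \ref{prop:welldeflagrang2}(i): assemble the commutative diagram whose rows are the long exact cohomology sequence of the pair $(D,C)$ and whose verticals are the Poincar\'e--Lefschetz duality isomorphisms, and use $\operatorname{Ext}^{\ge 2}_R(-,R)=0$ to identify its central column with the map adjoint to $\psi$ restricted to $L$. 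This exhibits $L$ as a lagrangian even without knowing $H^1(D)=0$ --- exactly as Proposition \ref{prop:welldeflagrang}(i) produces a metabolic linking form from a nullcobordism without assuming $H^2(D)=0$. (Alternatively one can first perform algebraic surgery on the pair above the middle dimension, along a length-$1$ projective resolution of $H^1(D)$, to arrange $H^1(D)=0$ outright, and then quote the chain-complex characterisation of metabolic Seifert forms --- the proposition immediately before Proposition \ref{hypcxseif} --- directly.)

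The hard part will be the homological bookkeeping in this last step: ensuring that the surgery (or, in the diagram-chase route, the various induced maps) genuinely respects the $\eps$-ultraquadratic/Seifert structure --- in particular the compatibility of $L$ with the endomorphism $e=(\psi+\eps\psi^*)^{-1}\psi$ --- rather than merely the underlying chain homotopy type. It is precisely this obstruction that is absent in the double-cobordism Corollary \ref{stablyhypishypseif}: there the complementarity of the two Seifert nullcobordisms forces $0=H^1(C)=H^1(D_+)\oplus H^1(D_-)$, so that $H^1(D_\pm)=0$ for free and no hypothesis on $R$ is needed.
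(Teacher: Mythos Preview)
The paper does not give its own proof of this lemma: it is stated with a citation to Stoltzfus \cite[1.6]{MR0467764} and immediately followed by the next lemma, with no intervening argument. So there is nothing in the paper to compare your proposal against directly; what you have written is an attempt at an independent, $L$-theoretic proof in the spirit of the thesis's own methods, whereas Stoltzfus's original argument is form-theoretic (working directly with the Seifert matrix and the structure of modules over a Dedekind domain).

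Your outline is plausible and correctly identifies where the Dedekind hypothesis enters, but one point deserves a caution. You claim the isomorphism $\widehat W_\eps(R)\cong\widehat L^0(R,\eps)$ is ``obtained exactly as Proposition \ref{prop:isogroups} gives the double version''. It is not: the proof of Proposition \ref{prop:isogroups} works precisely because complementarity of the two nullcobordisms forces $H^1(D_\pm)=0$ automatically, as you yourself note in your final paragraph. In the single-cobordism case there is no such mechanism, so the isomorphism $\widehat W_\eps(R)\cong\widehat L^0(R,\eps)$ (which does hold, see \cite[p.~814]{MR620795}) requires algebraic surgery on the pair to kill $H^1(D)$ --- your ``alternative'' route is in fact the necessary one at this step, not merely an alternative. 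Once that is granted, the remainder of your sketch (take $L=\operatorname{im}f^*$, use that submodules of f.g.\ projectives over a Dedekind domain are f.g.\ projective, and run the $\Ext$-diagram chase analogous to Proposition \ref{prop:welldeflagrang}(i)) is a reasonable strategy. You are right that the delicate point is checking the Seifert-module compatibility of $L$ with $e$, though since $f$ is an ultraquadratic map this should follow from naturality.
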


\begin{lemma}[{\cite[Lemma 5]{MR0246314}}]For $n=2k+1>1$, suppose an $n$-knot $K$ has Seifert surface $F$ such that $\pi_i(F)=0$ for $i\neq 0,k+1$ and that the associated Seifert form is metabolic. Then $K$ is slice.
\end{lemma}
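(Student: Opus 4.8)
The plan is to realize a lagrangian of the (by hypothesis metabolic) Seifert form by a family of pairwise disjoint, embedded, framed $(k+1)$-spheres lying in the interior of the Seifert surface $F$, push $F$ into the bounding disc $D^{n+3}$, and do surgery on these spheres there; the trace of the surgery will be an embedded cobordism $W^{n+2}\subset D^{n+3}$ with $W\cap S^{n+2}=F$ and $\partial W=F\cup_K(K\times I)\cup F'$, exactly as in the proof of Theorem~\ref{thm:surgbelow} (cf.\ Figure pic:extW). The argument then reduces to showing the surgered surface $F'$ is a disc, whence $F'$ is a slice disc for $K$.

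First I would assemble the homological input. Since $\pi_i(F)=0$ for $i\neq 0,k+1$, the surface $F$ is $k$-connected and, by the Hurewicz theorem, $\pi_{k+1}(F)\cong H_{k+1}(F)$; Poincar\'{e}--Lefschetz duality together with $\partial F=S^{n}$ shows $H_{k+1}(F)$ is free abelian and $H^{k+1}(F)\cong\Hom(H_{k+1}(F),\mathbb{Z})$. Transporting the Seifert form of $(F,K)$ (Definition~\ref{def:seifert}) to $H_{k+1}(F)$ by Poincar\'{e} duality, metabolicity provides a direct summand $L\subset H_{k+1}(F)$ of half the rank with $\psi|_L=0$ and $L=L^{\perp}$ with respect to the intersection form $\psi+(-1)^{k+1}\psi^{*}$. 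Fix a basis $e_1,\dots,e_r$ of $L$.

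Next comes the geometric heart of the argument. Each $e_i$ is represented by an immersed $(k+1)$-sphere in $\interior(F)$; vanishing of $\psi+(-1)^{k+1}\psi^{*}$ on $L$ forces all algebraic self- and mutual intersection numbers to vanish, so the Whitney trick (for $k+1\geq 3$, i.e.\ $n\geq 5$) makes the representatives disjointly embedded, while for $n=3$ one invokes topological transversality and the Freedman--Quinn embedding results in $\mathrm{TOP}$. The stronger hypothesis that the Seifert form $\psi$ itself (not merely its symmetrisation) vanishes on $L$ is then used to trivialise the normal bundle of each embedded $S^{k+1}$ in $F$ with a preferred framing and, after pushing $F$ into $D^{n+3}$, to choose this framing so that the surgery trace embeds in $D^{n+3}$: the self-linking number $\psi(e_i,e_i)$ is precisely the obstruction to extending the normal framing over the pushed-in trace, so its vanishing is exactly what is required. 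This framing computation is the step I expect to be the main obstacle — it is where one genuinely exploits that $\psi$, rather than $\psi+(-1)^{k+1}\psi^{*}$, is metabolic, and it is also where the low-dimensional case $n=3$ forces one into the $\mathrm{TOP}$ machinery rather than smooth surgery.

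Finally I would close the argument by computing the effect of the surgery. Surgering $F$ along the $r$ framed spheres representing $L$ produces $F'$ with $\partial F'=S^{n}$, embedded as one end of $W\subset D^{n+3}$. Surgery on $(k+1)$-spheres creates no homology below dimension $k+1$, so $F'$ stays $k$-connected, and the standard middle-dimensional surgery computation (using $L\subseteq L^{\perp}$) gives $H_{k+1}(F')\cong L^{\perp}/L=0$; by Poincar\'{e}--Lefschetz duality $F'$ then has the homology of a point, hence is a contractible $(n+1)$-manifold with boundary $S^{n}$. For $n\geq 5$, removing an open $(n+1)$-disc from $F'$ and applying the $h$-cobordism theorem identifies $F'\cong D^{n+1}$; for $n=3$ one uses Freedman's theorem that a contractible topological $4$-manifold with boundary $S^{3}$ is homeomorphic to $D^{4}$. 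Thus $F'$ is a slice disc and $K$ is slice. As a consistency check in the $n=3$ case, a metabolic symmetric Seifert form has signature $0$ and hence trivial Rochlin invariant, in agreement with Theorem~\ref{thm:concordance}.
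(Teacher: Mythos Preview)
For $n>3$ your argument matches the paper's essentially line for line: realise a basis of the lagrangian by framed embedded $(k+1)$-spheres in $F$ via the Whitney trick, extend to framed $(k+2)$-discs in $D^{n+3}$, and surger to a disc.

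The divergence is at $n=3$. The paper, following Levine's original 1969 proof, does \emph{not} use Freedman--Quinn; instead it invokes Wall's stabilisation theorem, replacing $F$ by $F\#(\#_m\,S^2\times S^2)$ until the Whitney trick applies to the enlarged collection of $2$-spheres. The point is that each $S^2\times S^2$ summand only adds a hyperbolic block to the Seifert form, so metabolicity is preserved and one can enlarge the lagrangian accordingly. Your route via topological disc embedding is a legitimate alternative in $\mathrm{TOP}$, and arguably cleaner once one has that machinery, but it is anachronistic relative to the cited lemma and your invocation is too terse: to run Freedman's embedding theorem you must check the hypotheses (here $\pi_1(F)=0$ is given, and algebraically transverse spheres to the $e_i$ exist because $L$ is a summand of $H_2(F)$, but this should be said). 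The stabilisation argument has the virtue of being the one actually in the reference and of requiring only 1960s technology; yours is shorter but rests on substantially heavier input.
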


\begin{proof}[Proof (idea)]When $n>3$, a basis for the Poincar\'{e} dual $L\subset H_{k+1}(F)$ of a lagrangian for the Seifert form can be realised by framed, embedded $(k+1)$-spheres $S_i\times D^{k+1}\hookrightarrow F\subset S^{n+2}$. The vanishing of the Seifert form on (the Poincar\'{e} dual of) these homology classes, and the dimensions involved, ensure that we can use the Whitney trick to remove any geometric intersections. Now, the framed embeddings can be extended to framed embeddings of $(k+2)$-discs $D_i\times D^{k+1}\hookrightarrow D^{n+3}$ in the cobounding $D^{n+3}$. Again, by the Whitney trick these framed discs may be assumed to be disjoint. These framed discs can be thought of as the cores of handle attachments, that determine surgeries killing the basis of $L$. The effect of these surgeries can then be shown to be a slice disc for $K$.

When $n=3$, the Seifert surface is a 4-manifold and the Whitney trick no longer applies to make the embedded 2-spheres disjoint. We refer the reader to \cite[\textsection 13]{MR0246314}for the more delicate 4-manifold techniques required for this case. Essentially the mechanism is that the Whitney trick works \textit{stably} in dimension 4 (this is Wall's Stabilisation Theorem \cite{MR0163324}), so we may stabilise the Seifert surface by adding connected sums $F\#(\#_mS^2\times S^2)$ until we can use the Whitney trick. The connected sum changes the Seifert surface and hence the Seifert form, however the \textit{Witt class} of the Seifert form is not affected as the addition is by $m$ copies of a standard hyperbolic matrix.
\end{proof}

\chapter{Doubly-slice knots}\label{chap:knots}

We now apply the double $L$-theory of Chapter \ref{chap:DLtheory} to the study of high-dimensional knots. We prove that the algebraic double-cobordism class of the Blanchfield complex is an obstruction to a knot being doubly-slice (Definition \ref{def:doublyslice}). Our obstruction subsumes the previously known obstructions for odd-dimensions of the Blanchfield form and the Seifert form. Our algebraic results of previous chapters, particularly Chapter \ref{chap:DLtheory}, are then interpreted in this context. In particular we show the new result that any Seifert form for an odd-dimensional knot is hyperbolic (previously it was only known to be stably hyperbolic). We also give a new (and shorter) proof of the result of Stoltzfus and Bayer-Fluckiger (\cite{MR833015}) that for $n\neq 1$ a simple odd-dimensional $n$-knot is stably doubly-slice if and only if it is doubly slice. We discuss future applications of double $L$-theory (and extensions of it) to the doubly-slice problem.

\section{The doubly-slice problem}

If we slice an $(n+1)$-knot $J$ into two pieces using a plane $\R^{n+2}\cup\{\infty\}\subset S^{n+3}$ and the intersection $K=J\cap \R^{n+2}\subset S^{n+2}$ is an $n$-knot then necessarily it is slice with two (possibly different) slice discs. On the other hand, given an $n$-knot $K$, a slice disc $(D,K)\subset (D^{n+3},S^{n+2})$ can be doubled to form an $(n+1)$-knot\[ (D^{n+3},D)\cup_{(S^{n+2},K)}-(D^{n+3},D)=(S^{n+3},D\cup_K-D).\]So slice $n$-knots are precisely the $n$-knots which are cross sections of $(n+1)$-knots.

It is not so surprising that cross sections of a knotted sphere can be knotted spheres. Perhaps more surprising is that cross sections of an \textit{unknotted} $(n+1)$-sphere can be knotted spheres. The problem of detecting whether an $n$-knot is a cross section of the $(n+1)$-unknot was introduced by Fox \cite[pp138]{MR0140099} (the first non-trivial example of this type was constructed by John Stallings but never published, again see \cite[pp138]{MR0140099}). An equivalent way of describing this type of knot is the following:

\begin{definition}\label{def:doublyslice}A pair of slice discs $(D_\pm,K)$ for an $n$-knot $K$ are \textit{complementary} if the result of glueing the slice discs together along $K$ is an $(n+1)$-dimensional unknot\[(S^{n+3},D_+\cup_K-D_-)=(S^{n+3},U).\]An $n$-knot $K$ is \textit{doubly-slice} if it admits a pair of complementary slice discs.
\end{definition}

The first major investigation into the problem of detecting doubly-slice knots was Sumners' paper \cite{MR0290351}. In this paper, Sumners notes the following corollary to a theorem of Zeeman (\cite[\textsection 6]{MR0195085}).

\begin{proposition}[{\cite[2.9]{MR0290351}}]For $K$ any $n$-knot, $K\#(-K)$ is doubly-slice.
\end{proposition}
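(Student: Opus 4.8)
The statement is a corollary to Zeeman's twist-spinning theorem (or, more precisely, to the corollary of it recorded as \cite[2.9]{MR0290351}), so the proof should be a short geometric argument. The reference to Appendix \ref{chap:twistspin} in the introduction indicates the author intends to reprove the relevant Zeeman result there, so in the body the plan is simply to deduce the doubly-slice conclusion from that reprove. First I would recall the setup: given an $n$-knot $K$, form the connected sum $K\#(-K)$, and exhibit an explicit pair of complementary slice discs whose glued-together $(n+1)$-knot is unknotted.

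The key geometric step is the following. Take the knot exterior $(X_K,\partial X_K)$ with $\partial X_K = S^n\times S^1$, and consider the product cobordism $X_K\times I$ rel a neighbourhood of the knot. One builds a slice disc for $K\#(-K)$ by taking the ``trace'' of pushing $K$ across this cylinder, and one builds the complementary slice disc by doing the same on a mirror copy; equivalently, one realises $K\#(-K)$ as the boundary of the obvious ``pinched cylinder'' on $K$ embedded in $D^{n+3}$, and its complementary disc inside the complementary $D^{n+3}$. The precise statement is: the double of the cylinder slice disc construction for $K\#(-K)$ produces exactly the $0$-twist-spin of $K$, and Zeeman's theorem (in the degenerate $0$-twist case, which is elementary) shows the $0$-twist-spin of any knot is unknotted. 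More carefully, the connected sum $K\#(-K)$ bounds a disc $D_+$ obtained by taking $\Delta_K \times I$ and capping (the disc knot $\Delta_K$ crossed with an interval, with the two ends being $\Delta_K$ and $\Delta_{-K}=r\Delta_K$), and bounds a complementary disc $D_-$ obtained symmetrically in the other hemisphere of $S^{n+3}$; glueing $D_+\cup_{K\#(-K)}(-D_-)$ gives the spin of $S^{n+2}$ along the standard unknotted $S^{n-1}\subset S^{n+1}$ axis, which is the standard unknotted $S^{n+1}\subset S^{n+3}$.

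The main obstacle is making the identification of the glued $(n+1)$-knot with the unknot fully rigorous: one must check that the ``pinched product'' construction on the two hemispheres genuinely fits together to give a spun (not just some slice) sphere, and that spinning an unknotted codimension-$2$ pair about its unknotted boundary axis is unknotted. This is where Zeeman's work (or the elementary $0$-twist case) is invoked. Since the author has flagged a self-contained reproof in Appendix \ref{chap:twistspin}, in the main text I would keep this short: state that $K\#(-K)$ is the $0$-twist-spin of $K$ by the explicit cylinder-and-mirror construction above, cite the (re)proof that the $0$-twist-spin of any $n$-knot is the $(n+1)$-unknot, and conclude that the two halves of this cylinder construction are complementary slice discs, so $K\#(-K)$ is doubly-slice by Definition \ref{def:doublyslice}. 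Everything else — that connected sum and mirror images interact with slicing as claimed, that the hemispherical decompositions match up — is routine manipulation of the disc-knot/connected-sum formalism set up earlier in the chapter and need not be belaboured.
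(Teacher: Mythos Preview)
There is a genuine error in your proposal. You claim that the $0$-twist-spin of any knot is unknotted, and that this is ``the degenerate $0$-twist case, which is elementary''. This is false: the $0$-twist-spin is Artin's classical spun knot, and it has the same knot group as $K$, so $S_0(K)$ is nontrivial whenever $K$ is. Zeeman's theorem (Theorem~\ref{mainthm} in the appendix) requires $k\neq 0$ precisely because the fibration argument breaks down at $k=0$. Your pinched-cylinder construction $\Delta_K\times I$ does give the standard slice disc for $K\#(-K)$, and doubling it does give $S_0(K)$; but since $S_0(K)$ is typically knotted, this only reproves that $K\#(-K)$ is slice, not doubly slice.

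The paper's approach in Appendix~\ref{chap:twistspin} uses the $1$-twist-spin instead. Zeeman's theorem with $k=1$ shows $X_{S_1(K)}$ fibres over $S^1$ with fibre the once-punctured $1$-fold branched cover of $K$, which is a ball; hence $S_1(K)$ is unknotted (Corollary~\ref{cor:sk1trivial}). Separately, Lemma~\ref{lem:kplus-k} shows that for \emph{any} $k$ the equatorial cross-section $S_k(K)\cap S^{n+2}$ is isotopic to $K\#(-K)$. Combining these with $k=1$: the unknotted $(n+1)$-sphere $S_1(K)$ has $K\#(-K)$ as a cross-section, and its two hemispheres are the required complementary slice discs. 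The crucial point you are missing is that these two hemispheres are \emph{different} slice discs --- one is the untwisted cylinder, the other carries a full twist --- and it is exactly this asymmetry that makes the glued sphere unknotted.
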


\begin{proof}We give a quick reproof of Zeeman's theorem and mention this as a consequence in Appendix \ref{chap:twistspin}.
\end{proof}

\begin{lemma}If $K, K'$ are doubly-slice $n$-knots then $K\#K'$ is doubly slice. Hence the doubly-slice knots form a closed submonoid of $Knots_n$.
\end{lemma}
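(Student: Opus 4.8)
The plan is to mimic the proof of the analogous statement for slice knots given earlier in this chapter, using the relative connected-sum construction. First I would recall that a doubly-slice $n$-knot $K$ is, by Definition \ref{def:doublyslice}, equipped with a pair of complementary slice discs $(D_\pm,K)\subset(D^{n+3},S^{n+2})$ whose union $D_+\cup_K -D_-$ is the unknotted $(n+1)$-sphere in $S^{n+3}$. Similarly $K'$ comes with complementary slice discs $(D'_\pm,K')$. The goal is to build complementary slice discs for $K\#K'$ out of these data.

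The key step is to perform the connected-sum construction \emph{simultaneously on all four discs}, exactly as in the proof that the connected sum of slice knots is slice (the sketched Lemma near the end of the section on slice knots). By an ambient isotopy of each $D^{n+3}$ I would arrange that the slice discs $D_\pm$ and $D'_\pm$ all agree with the standard half-slice disc $u(D^{n+1})\subset u(D^{n+3})$ on the upper hemidisc, and correspondingly that the boundary knots agree with the standard unknotted disc there. Removing the upper hemidiscs $u(D^{n+3})$ and glueing along the resulting boundary spheres via the standard orientation-reversing homeomorphism $r_{n+3}$ produces the connected sum $K\#K'$ together with two new slice discs $D_+\natural D'_+$ and $D_-\natural D'_-$, each a connected sum of slice discs and hence itself a slice disc for $K\#K'$. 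I would then observe that the same relative connected-sum operation, applied to the \emph{doubled} picture, glues $(S^{n+3},D_+\cup_K -D_-)$ to $(S^{n+3},D'_+\cup_{K'}-D'_-)$ along a standardly embedded hemisphere pair; since connected sum of unknots is the unknot (this is the $m$-dimensional version of the standard fact), the union $(D_+\natural D'_+)\cup_{K\#K'}-(D_-\natural D'_-)$ is the $(n+1)$-dimensional unknot in $S^{n+3}$. Therefore the two new slice discs are complementary and $K\#K'$ is doubly-slice.

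Finally, the submonoid claim is immediate: the unknot $U$ is doubly-slice (its two standard slice discs glue to the standard unknotted $(n+1)$-sphere), so the doubly-slice knots form a sub\emph{monoid}; and the connected-sum closure just proved shows it is closed under the monoid operation, giving the required closed submonoid of $Knots_n$.

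The main obstacle I expect is purely expository rather than mathematical: making precise, without drowning in isotopy bookkeeping, that the four slice discs and their doubled $(n+1)$-sphere can all be simultaneously standardized near the slicing hemisphere so that a \emph{single} relative connected-sum operation performs all the glueings compatibly. This is the same delicacy already flagged as a sketch in the slice-knot case, so I would handle it by citing that construction and indicating only the modification needed to track the second pair of discs and the doubled sphere; no new idea is required beyond keeping the orientation-reversing identifications $r_{n+2},r_{n+3}$ consistent across the cut.
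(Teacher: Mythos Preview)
Your proposal is correct and follows essentially the same approach as the paper: form the boundary connected sums $D_+\natural D'_+$ and $D_-\natural D'_-$ of the respective slice discs, and observe that their union along $K\#K'$ is the connected sum of the two unknotted $(n+1)$-spheres, hence itself the unknot. The paper's proof is a terse three-line version of exactly this argument, without your additional care about standardizing the discs near the cut.
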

\begin{proof}Suppose $(D_\pm,K)$ are complementary slice discs for $K$ and $(D_\pm',K')$ are complementary slice discs for $K'$. Then the boundary connected sums $(D_+,K)\#(D'_+,K')$ and $(D_-,K)\#(D_-',K')$ glue along $K\#K'$ to form the $(n+1)$-knot $(D_+\cup_KD_-)\#(D_+'\cup_K'D_-')$. But this is the $(n+1)$-unknot as it is the connected sum of unknots.
\end{proof}

This affords the following definition:

\begin{definition}The \textit{$n$-dimensional (topological) double knot-cobordism group} is the group given by the monoid construction\[\mathcal{DC}_n:=Knots_n/\{\text{doubly-slice knots}\}.\]Simple doubly-slice knots are a submonoid of $Knots_n^\text{simp}$ and so we also define\[\mathcal{DC}_n^\text{simp}:=Knots_n^\text{simp}/\{\text{simple doubly-slice knots}\}.\]
\end{definition}

In contrast to the knot-cobordism group, it is unknown whether `stably doubly-slice' implies doubly slice. This is perhaps the most important unsolved question for doubly-slice knots:

\begin{question}\label{q:stab}Does there exist $K$ such that $K=0\in\mathcal{DC}_n$ but $K$ is not doubly-slice?
\end{question}

At present, it is only known that the double knot-cobordism class of a knot is an obstruction to the knot being doubly-slice. A negative answer to Question \ref{q:stab} would reduce the problem of detecting doubly-slice knots to the problem of determining their double knot-cobordism class.

\section{The Blanchfield complex as a doubly-slice obstruction}

We now show that the algebraic double-cobordism class of the Blanchfield complex of an $n$-knot is obstruction to the knot being doubly-slice, thereby building a new invariant of the double knot-cobordism class of a knot.

\begin{proposition}\label{prop:invariant}If $K$ is doubly-slice then the Blanchfield complex $(C_K,\phi_K)$ is algebraically double-nullcobordant.
\end{proposition}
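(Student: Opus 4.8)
The plan is to imitate the proof of Lemma~\ref{lem:blanchslice}, where a single slice disc was shown to produce an $(A,S)$-nullcobordism of the Blanchfield complex, but now to extract two such nullcobordisms from a pair of complementary slice discs and to check that they are complementary in the sense required by the definition of algebraic double-cobordism. So suppose $K$ is doubly-slice, with complementary slice discs $(D_\pm,K)\subset(D^{n+3},S^{n+2})$ satisfying $D_+\cup_K-D_-=(S^{n+3},U)$. First I would run the relative Blanchfield construction of Lemma~\ref{lem:blanchslice} separately on each slice disc. Represent the meridian map on $Y_{D_\pm}$ by a degree~$1$ map of manifold triads $F_\pm$ from the slice-disc exterior to the trivial slice-disc exterior, with all three boundary maps the identity, and form the kernel triads $\sigma^*(\overline{F_\pm}^!)$ over $\Z[z,z^{-1}]$. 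The relative algebraic Thom construction then gives, exactly as in Lemma~\ref{lem:blanchslice}, $P$-acyclic $(n+2)$-dimensional symmetric Poincar\'e pairs over $\Z[z,z^{-1}]$
\[
x_\pm = (f_\pm : C_K \to C(\overline{f_\pm}^!),\ (\Phi_\pm,\phi_K)),
\]
each of which is an $(A,S)$-nullcobordism of $(C_K,\phi_K)$ with $(A,S)=(\Z[z,z^{-1}],P)$ (using Claim~\ref{clm:auto} and Lemma~\ref{lem:auto} to verify $P$-acyclicity, since the meridian maps are $\Z$-homology equivalences).

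Next I would verify the complementary condition, i.e.\ that the chain map $\left(\begin{smallmatrix}f_+\\f_-\end{smallmatrix}\right):C_K\to C(\overline{f_+}^!)\oplus C(\overline{f_-}^!)$ is a homotopy equivalence over $\Z[z,z^{-1}]$. This is where the hypothesis that the discs are \emph{complementary} is used: glueing the exteriors $Y_{D_+}$ and $Y_{D_-}$ along their common codimension-$2$ submanifold exterior recovers the exterior of the unknotted $(n+1)$-sphere $U$, whose Blanchfield complex is contractible. Equivalently, in the language of Chapter~\ref{chap:algLtheory}, the homotopy pushout (algebraic glue) of the two kernel triads along the common face is the kernel triad of the meridian map on $Y_U = D^{n+2}\times S^1$, which is acyclic. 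Taking cones, the cofibre of $\left(\begin{smallmatrix}f_+\\f_-\end{smallmatrix}\right)$ is chain equivalent to the (reduced, desuspended) Blanchfield complex of $U$, hence contractible; so the map is a homotopy equivalence. Therefore $\{x_+,x_-\}$ is a complementary pair of $(A,S)$-nullcobordisms of $(C_K,\phi_K)$, which by definition means $(C_K,\phi_K)$ is algebraically double-nullcobordant in $DL^{n+1}(\Z[z,z^{-1}],P,-1)$.

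The main obstacle I anticipate is the bookkeeping in the second step: making precise the identification of the algebraic glue of the two kernel triads $\sigma^*(\overline{F_+}^!)$ and $\sigma^*(\overline{F_-}^!)$ with the kernel triad of the unknot exterior, and hence of the cofibre of $\left(\begin{smallmatrix}f_+\\f_-\end{smallmatrix}\right)$ with the contractible Blanchfield complex of $U$. This requires knowing that the cut-and-glue decomposition $S^{n+3}\sm U \simeq Y_{D_+}\cup_{X_K\times I \text{-piece}} Y_{D_-}$ is compatible with the meridian maps and lifts to the infinite cyclic covers, so that the Meier--Vietoris/excision identifications on singular chains are $\Z[z,z^{-1}]$-equivariant — this is the analogue of the excision step used in constructing the meridian map in Corollary~\ref{cor:meridian}, and is routine but needs care. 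Everything else (the degree-$1$ triad maps, the relative Thom construction, the $P$-acyclicity checks) is a direct transcription of Lemma~\ref{lem:blanchslice} applied twice, together with the definition of complementary cobordisms from Chapter~\ref{chap:DLtheory}.
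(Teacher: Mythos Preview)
Your proposal is correct and follows essentially the same route as the paper: run the relative Blanchfield construction of Lemma~\ref{lem:blanchslice} on each slice disc to obtain nullcobordisms $x_\pm$, then use the doubly-slice hypothesis to identify the glued object with the (contractible) kernel complex of the unknotted $(n+1)$-sphere exterior. The paper handles the bookkeeping obstacle you flag by observing that algebraic glueing, algebraic Thom construction, and forming kernel triads are all iterated mapping cones and so may be performed in any order; glueing the triad maps $\overline{F_+}^!$ and $\overline{F_-}^!$ \emph{first} (along $C(X_K)$) yields a map chain homotopic to the identity (since $D_+\cup_K D_-$ is unknotted), whence the kernel triad is contractible and $x_+\cup x_-\simeq 0$, which is exactly the complementary condition.
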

\begin{proof}We must check that Blanchfield complex of a doubly-slice knot $K$ admits complementary $(\Z[z,z^{-1}],P)$-nullcobordisms. By the proof of Lemma \ref{lem:blanchslice}, we know that a pair of complementary slice discs $(D_\pm, K)$ results in a pair of morphisms of compact oriented manifold triads\[F_\pm\simeq (f_\pm;\partial f_\pm,\text{id};\text{id}):(Y_{D_\pm};\partial X_K,\partial_0Y_{D_\pm};\partial X_{\partial K})\to (Y_{U};\partial X_U,\partial_0Y_{U};\partial X_{U}),\]which result in a pair of algebraic nullcobordisms of the Blanchfield complex \[x_\pm\simeq(C_K\to C(\overline{f}_\pm^!),(\Phi_\pm,\phi_K)).\]We wish to check that the algebraic glue $x_+\cup x_-\simeq 0$. But algebraic glueing is a mapping cone on the chain level. So is the algebraic Thom construction on a pair, and the construction of the kernel triads. We perform these mapping cones in any order and receive the same result, hence the underlying chain complex of $x_+\cup x_-$ is the result of performing these operations in the following order: glue the maps of algebraic triads $\overline{F_+}^!\cup \overline{F_-}^!$ (by glueing the triads along the knot exteriors $C(X_K)$), form the kernel triad $\sigma(\overline{F_+}^!\cup \overline{F_-}^!)$, then perform the algebraic Thom construction on this triad. But as the slice discs $(D,K)$ were complementary, we have that the glue $D_+\cup_K D_-$ is unknotted in $S^{n+3}$ and hence $\overline{F_+}^!\cup \overline{F_-}^!$ is chain homotopic to the identity. Therefore the triad $\sigma(\overline{F_+}^!\cup \overline{F_-}^!)$ is contractible and hence $x_+\cup x_-\simeq 0$ as required.
\end{proof}

\begin{remark}The transitivity of the double $L$-groups mean that we have just given a partial affirmative answer to an algebraic question of Levine \cite[3(2)]{MR718271}. There is no `product structure' in algebraic $L$-theory, so our answer is not complete. However we conjecture that the techniques of double $L$-theory could be modified to include product structure and answer this question affirmatively.
\end{remark}

\begin{corollary}\label{cor:blanchfield}The Blanchfield form of an odd-dimensional doubly-slice knot is hyperbolic.
\end{corollary}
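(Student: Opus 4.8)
The statement to be proved is a corollary, so the plan is to assemble it from results already established earlier in the excerpt. The key input is Proposition~\ref{prop:invariant}: if an $n$-knot $K$ is doubly-slice then its Blanchfield complex $(C_K,\phi_K)$ is algebraically double-nullcobordant, i.e.\ $[(C_K,\phi_K)]=0\in DL^{n+1}(\Z[z,z^{-1}],P,-1)$. Specialising to $n=2k+1$, this means $(C_K,\phi_K)$ is a $(2k+3)$-dimensional $(-1)$-symmetric $P$-acyclic Poincar\'{e} complex over $\Z[z,z^{-1}]$ (by Corollary~\ref{cor:Pacyclic}) that admits two complementary $(\Z[z,z^{-1}],P)$-nullcobordisms $(f_\pm:C_K\to D_\pm,(\delta_\pm\phi,\phi_K))$, i.e.\ the chain map $\lmat f_+\\f_-\rmat:C_K\to D_+\oplus D_-$ is a chain homotopy equivalence.

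First I would invoke Proposition~\ref{prop:welldeflagrang2}, which applies precisely to the localisation $(A,S)=(\Z[z,z^{-1}],P)$ and a $(2k+3)$-dimensional $\eps$-symmetric $S$-acyclic Poincar\'{e} complex $(C,\phi)$. That proposition tells us that the middle-dimensional pairing $\lambda_\phi$ on $f(H^{k+2}(C))$ is a well-defined, non-singular, $(-1)^k$-symmetric linking form over $(\Z[z,z^{-1}],P)$, and — crucially — that if $(C,\phi)$ is $(A,S)$-double-nullcobordant then $(f(H^{k+2}(C)),\lambda_\phi)$ is hyperbolic. Applying this to $(C,\phi)=(C_K,\phi_K)$, and recalling (the definition in Section~\ref{subsec:blanseif}, together with Theorem~\ref{thm:levblanch} and Claim~\ref{clm:auto}) that the linking form $(f(H^{k+2}(C_K)),\lambda_{\phi_K})$ \emph{is} the Blanchfield form of the knot $K$, I conclude that the Blanchfield form of $K$ is hyperbolic. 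That is essentially the whole argument: it is a direct combination of Proposition~\ref{prop:invariant}(ii)-type input with Proposition~\ref{prop:welldeflagrang2}(ii).

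\textbf{Where the real work lies.} Since the corollary itself is short, the substance is all in the lemmas it quotes, and I expect the main obstacle to be bookkeeping about \emph{which} linking form is the Blanchfield form. One must be careful that the $(-1)^k$-symmetric linking form $(f(H^{k+2}(C_K)),\lambda_{\phi_K})$ produced by Proposition~\ref{prop:welldeflagrang2} genuinely coincides (up to isomorphism of linking forms) with the classical Blanchfield form as defined in Section~\ref{subsec:blanseif}; this is exactly the identification made there via the chain-level cap-product formula and Claim~\ref{clm:auto}, which gives $\widetilde{H}_*(C_K)\cong\widetilde{H}_*(X_K)$ and hence $f(H^{k+2}(C_K))\cong f(H^{k+2}(\overline{X_K}))$. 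Modulo that identification, the proof is immediate. I would also remark that the same double-nullcobordism, read off via the covering/monodromy correspondence of Chapter~\ref{chap:laurent}, simultaneously forces the Seifert form of $K$ to be hyperbolic, recovering the classical odd-dimensional doubly-slice obstructions in one framework; but that is a remark rather than part of the proof of this particular corollary.

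\begin{proof}
Let $K:S^{2k+1}\hookrightarrow S^{2k+3}$ be doubly-slice. By Proposition~\ref{prop:invariant} the Blanchfield complex $(C_K,\phi_K)$ is algebraically double-nullcobordant, so $(C_K,\phi_K)$ is $(\Z[z,z^{-1}],P)$-double-nullcobordant as a $(2k+3)$-dimensional $(-1)$-symmetric $P$-acyclic Poincar\'{e} complex over $\Z[z,z^{-1}]$ (Corollary~\ref{cor:Pacyclic}). Apply Proposition~\ref{prop:welldeflagrang2} with $(A,S)=(\Z[z,z^{-1}],P)$ and $(C,\phi)=(C_K,\phi_K)$: since $(C_K,\phi_K)$ is $(A,S)$-double-nullcobordant, the non-singular $(-1)^k$-symmetric linking form $(f(H^{k+2}(C_K)),\lambda_{\phi_K})$ over $(\Z[z,z^{-1}],P)$ is hyperbolic. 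By the identification in Section~\ref{subsec:blanseif} (using Theorem~\ref{thm:levblanch} and Claim~\ref{clm:auto}, which gives $f(H^{k+2}(C_K))\cong f(H^{k+2}(\overline{X_K}))$), this linking form is precisely the Blanchfield form of $K$. Hence the Blanchfield form of $K$ is hyperbolic.
\end{proof}
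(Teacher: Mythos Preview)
Your proof is correct and follows the same route as the paper, which simply says ``Apply Proposition~\ref{prop:welldeflagrang2}''; you have merely spelled out the intermediate step (Proposition~\ref{prop:invariant} supplying the double-nullcobordism) and the identification of the linking form with the Blanchfield form. The extra remark about Seifert forms anticipates the next result (Theorem~\ref{thm:theorem}) but is not needed here.
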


\begin{proof}Apply Proposition \ref{prop:welldeflagrang2}.
\end{proof}

\begin{corollary}For $n\geq 1$, there is a well-defined homomorphism\[\sigma^{DL}:\mathcal{DC}_n\to DL^{n+1}(\Z[z,z^{-1}],P,-1);\qquad [K]\mapsto (C_K,\phi_K).\]When $n=2k+1$ there is a well-defined homomorphism\[\sigma^{DW}:\mathcal{DC}_n\to DW^{(-1)^k}(\Z[z,z^{-1}],P);\qquad [K]\mapsto (H^{k+2}(C),\lambda_{\phi_K}),\]and for any choice of Seifert surface $F$ there is a well-defined homomorphism\[\sigma^{\widehat{DW}}:\mathcal{DC}_n\to \widehat{DW}^{(-1)^{k+1}}(\Z);\qquad [K]\mapsto (H^{k+1}(F),\psi).\]
\end{corollary}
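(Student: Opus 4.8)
The three maps are built from the single map of the previous corollary together with the comparison results between double $L$-groups and double Witt groups established in Section \ref{sec:comparison} and the Blanchfield/Seifert correspondence of Chapter \ref{chap:laurent}. The plan is as follows. First, for $\sigma^{DL}$, I would observe that by Lemma \ref{lem:blanchslice} and Proposition \ref{prop:invariant} the assignment $K\mapsto (C_K,\phi_K)\in DL^{n+1}(\Z[z,z^{-1}],P,-1)$ sends doubly-slice knots to $0$, hence descends to the monoid quotient $\mathcal{DC}_n$; that it is a homomorphism follows because $\sigma^L$ was already shown to be a homomorphism (connected sum of knots induces direct sum of Blanchfield complexes) and the double $L$-group addition is also direct sum. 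So $\sigma^{DL}$ is just the composite of $\sigma^L$ with the forgetful map $L^{n+1}\to DL^{n+1}$, once we know the latter kills the image of doubly-slice knots — but that is exactly Proposition \ref{prop:invariant}.

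Second, for $\sigma^{DW}$ when $n=2k+1$: I would use the surjective homomorphism of the corollary at the end of Section \ref{sec:comparison}, namely $DL^{2k+2}(\Z[z,z^{-1}],P,(-1)^{k+1}\eps)\twoheadrightarrow DW^\eps(\Z[z,z^{-1}],P)$ with $\eps=(-1)^k$ (so $(-1)^{k+1}\eps=(-1)^{2k+1}=-1$ and $2k+2=n+1$, matching the dimension and sign of $\sigma^{DL}$). Composing this with $\sigma^{DL}$ gives a homomorphism $\mathcal{DC}_n\to DW^{(-1)^k}(\Z[z,z^{-1}],P)$. To identify its value on $[K]$ as the class of $(H^{k+2}(C_K),\lambda_{\phi_K})$ I would invoke the explicit description of that surjection: it is induced by the correspondence of Proposition \ref{prop:correspondence}, which extracts the middle-dimensional linking form, and by Proposition \ref{prop:welldeflagrang2} applied to $(A,S)=(\Z[z,z^{-1}],P)$ this linking form is precisely the Blanchfield form $(f(H^{k+2}(C_K)),\lambda_{\phi_K})=(f(H^{k+2}(\overline{X_K})),Bl)$. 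Well-definedness on the quotient is automatic from that of $\sigma^{DL}$, and Corollary \ref{cor:blanchfield} (hyperbolicity of the Blanchfield form of a doubly-slice knot) is the concrete manifestation of this.

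Third, for $\sigma^{\widehat{DW}}$: fix a Seifert surface $F$ for each simple knot in a cobordism class — or more robustly, note that by Corollary \ref{cor:sequiv} (and the fact that any two Seifert surfaces for the same knot give $S$-equivalent Seifert forms) the double Witt class $[(H^{k+1}(F),\psi)]\in\widehat{DW}^{(-1)^{k+1}}(\Z)$ is independent of the choice of $F$. Then apply the covering isomorphism of Theorem \ref{algtrans}(ii), $\widehat{DW}_\eps(\Z)\xrightarrow{\cong}DW^\eps(\Z[z,z^{-1}],P)$, whose inverse sends the Blanchfield form back to the Seifert form via the monodromy/Seifert correspondence recalled in Section \ref{subsec:blanseif}. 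So $\sigma^{\widehat{DW}}$ is the composite of $\sigma^{DW}$ with this inverse isomorphism, once the dimension conventions are checked ($\eps=(-1)^{k+1}$ on the Seifert side corresponds to $(-1)^k$ on the Blanchfield side under covering, by the $(-\eps)$-symmetry shift in Definition \ref{def:coveringsief}). Homomorphism property: connected sum of knots induces direct (boundary connected) sum of Seifert surfaces and hence direct sum of Seifert forms, matching the addition in $\widehat{DW}$. Doubly-slice knots map to $0$ because their Blanchfield forms are hyperbolic (Corollary \ref{cor:blanchfield}) and the covering isomorphism carries hyperbolic to hyperbolic (Lemma \ref{lem:algtrans2}).

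\textbf{Main obstacle.} The only genuinely delicate point is the independence of $\sigma^{\widehat{DW}}$ from the choice of Seifert surface — equivalently, that $S$-equivalent Seifert forms are double Witt equivalent, which is Corollary \ref{cor:sequiv}, but one must be careful that different Seifert surfaces for a \emph{doubly-slice} knot, and for knots in the same double-cobordism class, are handled consistently; the cleanest route is to define $\sigma^{\widehat{DW}}$ as the composite with the covering isomorphism from the start, so that independence of the Seifert surface is \emph{inherited} from the well-definedness of the Blanchfield form (an honest knot invariant), rather than proved directly. All the remaining verifications — compatibility of connected sum with direct sum, the dimension and sign bookkeeping in $(-1)^k$ versus $(-1)^{k+1}$, and the vanishing on doubly-slice knots — are routine given the machinery of Chapters \ref{chap:laurent} and \ref{chap:DLtheory}.
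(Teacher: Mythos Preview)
Your proposal is correct and matches the paper's (implicit) argument --- the paper states this corollary without a separate proof, treating it as an immediate consequence of Proposition~\ref{prop:invariant}, Corollary~\ref{cor:blanchfield}, the homomorphism property already established for $\sigma^L$, and the covering isomorphism of Theorem~\ref{algtrans}. One slip worth fixing: there is no ``forgetful map $L^{n+1}\to DL^{n+1}$'' (the forgetful map runs the other way, $DL\to L$, since double-cobordism is the finer relation); $\sigma^{DL}$ is defined directly on knots via $K\mapsto(C_K,\phi_K)$, exactly as you correctly do at the start of that paragraph, not by factoring through $L$.
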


We will now combine some of the algebraic results in Chapter \ref{chap:DLtheory} to prove a new result about Seifert forms for knots.

\begin{theorem}\label{thm:theorem}Every Seifert form for an odd-dimensional doubly-slice knot $K$ is hyperbolic.
\end{theorem}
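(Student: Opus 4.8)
The plan is to leverage the chain-complex machinery rather than argue directly with Seifert matrices. First I would recall that, by Proposition \ref{prop:invariant}, if $K$ is doubly-slice then its Blanchfield complex $(C_K,\phi_K)$ is algebraically double-nullcobordant in $DL^{n+1}(\Z[z,z^{-1}],P,-1)$, i.e.\ it admits a pair of complementary $(\Z[z,z^{-1}],P)$-nullcobordisms. Next, given any choice of Seifert surface $F$ for the simple knot representative (using $\mathcal{C}^{simp}_n\cong\mathcal{C}_n$, or better just working with the given $F$ directly via the covering construction), Proposition \ref{prop:realise} identifies $(C_K,\phi_K)$ up to homotopy with the $(k+1)$-fold skew-suspension $\overline{S}^{k+1}(C,\phi)$ of the $1$-dimensional $(-1)^{k+1}$-symmetric $P$-acyclic Poincar\'{e} complex built from the Seifert form $(H^{k+1}(F),\psi)$, which is precisely the chain complex $B(H^{k+1}(F),\psi)$ obtained by the covering construction of Chapter \ref{chap:laurent}.

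Then I would use the injectivity of skew-suspension on the double $L$-groups (Proposition \ref{skewsusp}): since $\overline{S}^{k+1}(C,\phi)$ is double-nullcobordant, so is $(C,\phi)$ itself. Now $(C,\phi)$ is a $1$-dimensional $(-1)^{k+1}$-symmetric $P$-acyclic Poincar\'{e} complex, so by the isomorphism $DW^{(-1)^k}(\Z[z,z^{-1}],P)\cong DL^0(\Z[z,z^{-1}],P,(-1)^{k+1})$ (Proposition \ref{iso}) together with the explicit correspondence of Proposition \ref{prop:correspondence} and the refinement Proposition \ref{prop:welldeflagrang2}, the associated Blanchfield linking form is hyperbolic over $(\Z[z,z^{-1}],P)$. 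Finally I would pull this back to the Seifert form: by the covering isomorphism $\widehat{DW}_{(-1)^{k+1}}(\Z)\xrightarrow{\cong} DW^{(-1)^k}(\Z[z,z^{-1}],P)$ of Theorem \ref{algtrans}(ii) — whose proof shows covering sends lagrangians to lagrangians and reflects hyperbolicity — the Seifert form $(H^{k+1}(F),\psi)$ represents $0$ in $\widehat{DW}_{(-1)^{k+1}}(\Z)$, i.e.\ it is stably hyperbolic. Now invoke Corollary \ref{stablyhypishypseif} (`stably hyperbolic $=$ hyperbolic' for Seifert forms), which upgrades this to the conclusion that $(H^{k+1}(F),\psi)$ is genuinely hyperbolic.

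The main obstacle I anticipate is bookkeeping rather than a genuinely hard step: I must make sure the dimension and $\eps$-sign conventions line up across the three translations (Blanchfield complex $\leftrightarrow$ skew-suspended Seifert complex $\leftrightarrow$ Blanchfield linking form $\leftrightarrow$ Seifert form), in particular that the symmetry $(-1)^{k+1}$ on the Seifert side matches the $(-1)^k$ on the Blanchfield linking-form side as in Theorem \ref{algtrans}, and that the covering functor $B$ indeed carries a Seifert double-nullcobordism to a $(\Z[z,z^{-1}],P)$-double-nullcobordism (Lemma \ref{lagrangians} handles single lagrangians; the double case follows by applying it to each of the two complementary nullcobordisms and noting the complementarity is preserved because $B$ is a functor of additive categories realised by mapping cones). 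A cleaner alternative that avoids some of this is to work entirely on the Seifert side: observe that the covering construction $B$ applied to the pair of complementary slice-disc exteriors directly produces complementary Seifert double-nullcobordisms of $(H^{k+1}(F),\psi)$ regarded as a $0$-dimensional Seifert complex, so $(H^{k+1}(F),\psi)=0\in\widehat{DL}^0(\Z,(-1)^{k+1})\cong\widehat{DW}_{(-1)^{k+1}}(\Z)$ by Proposition \ref{prop:isogroups}, and then Corollary \ref{stablyhypishypseif} finishes. I would present whichever of these two routes turns out to require the least sign-chasing, most likely the second.
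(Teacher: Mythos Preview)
Your overall strategy matches the paper's: show the Blanchfield form is hyperbolic, transport this to the Seifert side via the covering isomorphism $\widehat{DW}_{(-1)^{k+1}}(\Z)\cong DW^{(-1)^k}(\Z[z,z^{-1}],P)$ of Theorem~\ref{algtrans}(ii) to deduce the Seifert form is stably hyperbolic, then invoke Corollary~\ref{stablyhypishypseif}. That final chain of implications is exactly what the paper does, so the endgame is right.

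The genuine gap is in how you get to ``Blanchfield form hyperbolic''. Your first route invokes Proposition~\ref{prop:realise} to identify $(C_K,\phi_K)$ with an iterated skew-suspension of a $1$-dimensional complex, and then uses injectivity of skew-suspension. But Proposition~\ref{prop:realise} has the hypothesis that $K$ is \emph{simple}, and the theorem is about \emph{arbitrary} doubly-slice knots. You cannot repair this by passing to a simple representative via $\mathcal{C}^{simp}_n\cong\mathcal{C}_n$: that isomorphism is for the single knot-cobordism group, and the whole thrust of Ruberman's Theorem~\ref{thm:ruberman} and the discussion thereafter is that there is \emph{no} double surgery below the middle dimension---replacing $K$ by a simple knot in its cobordism class need not preserve doubly-sliceness. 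So steps~2--3 of your first route simply do not apply in the stated generality.

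The fix is to drop the detour entirely. Proposition~\ref{prop:welldeflagrang2} applies directly to the full $(2k+3)$-dimensional Blanchfield complex with no simplicity assumption: from $(C_K,\phi_K)$ double-nullcobordant (Proposition~\ref{prop:invariant}) you immediately get that the Blanchfield form $(f(H^{k+2}(C_K)),\lambda_{\phi_K})$ is hyperbolic. This is precisely Corollary~\ref{cor:blanchfield}, and it is where the paper starts. Since the covering of \emph{any} Seifert form for $K$ (simple or not; see \S\ref{subsec:blanseif}) is isomorphic to the Blanchfield form, the covering isomorphism of Theorem~\ref{algtrans}(ii) forces the Seifert form to vanish in $\widehat{DW}_{(-1)^{k+1}}(\Z)$, i.e.\ to be stably hyperbolic; Corollary~\ref{stablyhypishypseif} then finishes. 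Your second route is also off: the covering functor $B$ goes from Seifert data to Blanchfield data, not the reverse, so ``applying $B$ to the slice-disc exteriors'' does not produce Seifert nullcobordisms as you describe.
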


\begin{proof}Any covering of a Seifert form for $K$ is hyperbolic by Corollary \ref{cor:blanchfield}, and covering is an isomorphism $\widehat{DW}_\eps(\Z)\cong DW^{-\eps}(\Z[z,z^{-1}],P)$, so now every Seifert form for $K$ is \textit{stably} hyperbolic. But by Proposition \ref{stablyhypishypseif} stably hyperbolic Seifert forms are moreover hyperbolic.
\end{proof}

We now have several algebraic responses to Question \ref{q:stab}.

\begin{theorem}Suppose for $n\geq1$ that $K=0\in \mathcal{DC}_n$. If $K$ is not doubly slice, this cannot be detected by the Blanchfield complex. If $n=2k+1$ and $K$ is not doubly slice, this cannot be detected by the Blanchfield form or any choice of Seifert form.
\end{theorem}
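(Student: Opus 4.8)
The plan is to chase the vanishing of $[K]\in\mathcal{DC}_n$ through the various invariants we have constructed, showing that each factors through $\mathcal{DC}_n$ and therefore sees only the double knot-cobordism class, not doubly-sliceness itself. First I would observe that the three homomorphisms $\sigma^{DL}$, $\sigma^{DW}$, $\sigma^{\widehat{DW}}$ in the corollary preceding Theorem \ref{thm:theorem} are \emph{defined on $\mathcal{DC}_n$}, that is, they factor through the double knot-cobordism group. This is exactly the content of Proposition \ref{prop:invariant} together with Corollary \ref{cor:blanchfield}: a doubly-slice knot has algebraically double-nullcobordant Blanchfield complex (by Proposition \ref{prop:invariant}), hence the class $(C_K,\phi_K)\in DL^{n+1}(\Z[z,z^{-1}],P,-1)$ depends only on $[K]\in\mathcal{DC}_n$, and similarly for the derived linking form and Seifert form invariants via Proposition \ref{prop:welldeflagrang2} and Theorem \ref{thm:theorem}. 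Consequently, if $[K]=0\in\mathcal{DC}_n$ then automatically $\sigma^{DL}([K])=0$, i.e.\ $(C_K,\phi_K)$ is algebraically double-nullcobordant — so the Blanchfield complex literally \emph{cannot} be the source of any obstruction, since it already vanishes.

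Next I would handle the odd-dimensional case $n=2k+1$. Since $[K]=0\in\mathcal{DC}_n$, the Blanchfield complex $(C_K,\phi_K)$ is algebraically double-nullcobordant over $(\Z[z,z^{-1}],P)$. By Proposition \ref{prop:welldeflagrang2}(ii), the associated $(-1)^k$-symmetric linking form $(f(H^{k+2}(C_K)),\lambda_{\phi_K})$ — which by the discussion in Section \ref{subsec:blanseif} is precisely the Blanchfield form of $K$ — is therefore hyperbolic. So the Blanchfield form of $K$ is hyperbolic regardless of whether $K$ is doubly slice, and hence cannot detect failure of doubly-sliceness. For the Seifert form: fix any Seifert surface $F$ for $K$ with Seifert form $(H^{k+1}(F),\psi)$. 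By Section \ref{subsec:blanseif} the covering $B(H^{k+1}(F),\psi)$ is (up to the excision isomorphism) the Blanchfield form of $K$, which we have just shown is hyperbolic; and since covering is an isomorphism of double Witt groups $\widehat{DW}_\eps(\Z)\xrightarrow{\cong}DW^{-\eps}(\Z[z,z^{-1}],P)$ by Theorem \ref{algtrans}, this shows the Seifert form is \emph{stably} hyperbolic. Finally, by Corollary \ref{stablyhypishypseif} (`stably hyperbolic = hyperbolic' for Seifert forms), the Seifert form itself is hyperbolic. This is exactly Theorem \ref{thm:theorem} applied to the situation $[K]=0$; in fact the argument shows more directly that every Seifert form is hyperbolic whenever $[K]=0\in\mathcal{DC}_n$, not just when $K$ is doubly slice.

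The main point to articulate carefully — and the only real content beyond bookkeeping — is the word ``cannot''. The statement is not merely that these invariants vanish on a non-doubly-slice $K$ with $[K]=0$; it is the stronger assertion that any invariant which is defined as a function of $(C_K,\phi_K)$ up to algebraic double-cobordism, of the Blanchfield form up to hyperbolic equivalence, or of a Seifert form up to hyperbolic equivalence, is blind to the distinction between ``$[K]=0$'' and ``$K$ doubly slice''. I would make this precise by noting that all three of these invariants, by the factorisations above, are pulled back along the surjection $Knots_n\twoheadrightarrow\mathcal{DC}_n$, so their value on $K$ equals their value on the unknot $U$ (which represents $0\in\mathcal{DC}_n$), namely the trivial/hyperbolic class. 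Thus no amount of refinement of the Blanchfield-complex-, Blanchfield-form-, or Seifert-form-derived data can obstruct doubly-sliceness of such a $K$ — a putative counterexample to Question \ref{q:stab} is necessarily invisible to every invariant we have built in this thesis. I expect the only subtlety to be stating the hypotheses cleanly (one needs $k\geq 0$, i.e.\ $n=2k+1\geq 1$, so that the skew-suspension identifications and Proposition \ref{prop:welldeflagrang2} apply, and one should recall that for $n=1$ the half-unit hypothesis on $(\Z[z,z^{-1}],P)$ is still satisfied via $(1-z)^{-1}$, so the double Witt groups are genuinely groups).
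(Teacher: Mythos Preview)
Your proposal is correct and follows exactly the route the paper intends: the theorem is stated in the paper without an explicit proof, as an immediate consequence of the preceding corollary (that $\sigma^{DL}$, $\sigma^{DW}$, $\sigma^{\widehat{DW}}$ are well-defined homomorphisms on $\mathcal{DC}_n$) together with the two `stably hyperbolic $=$ hyperbolic' results (Corollaries \ref{stablyhypishyp} and \ref{stablyhypishypseif}). Your write-up simply makes explicit the bookkeeping the paper leaves to the reader, and your discussion of what ``cannot detect'' means is a faithful unpacking of the intended content.
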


\begin{remark}We have worked primarily with odd-dimensional knots in this thesis as these are the knots for which there is some sort of linking form. However, the Blanchfield complex yields middle-dimensional information about even-dimensional knots as well (which will correspond to the \textit{linking formations} of  \cite[3.5]{MR620795}). Much has been written about the middle-dimensional invariants of even-dimensional knots and indeed the `Farber-Levine pairing' can be derived from the Blanchfield complex (this is described in great detail in \cite{MR0461518}). But this pairing is not the full story (see Kearton's $F$-form \cite{MR741655} and Farber's $L$-quintuple \cite{MR718824}) and we hope in future work to investigate the extent to which this can be reconciled with, and derived from, the Blanchfield complex data.
\end{remark}

\subsection{Simple doubly-slice knots}

Recall the Kervaire-Levine surgery classification of knots hinged on constructing a knot-cobordism from any knot to a simple knot via surgery below the middle-dimension (see \ref{thm:surgbelow}). We discuss such a procedure for double knot-cobordism (or lack thereof) in the next subsection. If we assume we are dealing with simple knots to begin with, the algebraic results of Section \ref{sec:comparison} yield the following partial answer to Question \ref{q:stab}.

\begin{theorem}\label{thm:stoltzfus}For odd $n=2k+1>1$, a simple $n$-knot $K$ has $[K]=0\in \mathcal{DC}^{simp}_n$ if and only if $K$ is doubly slice.
\end{theorem}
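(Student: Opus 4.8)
\textbf{Proof proposal for Theorem \ref{thm:stoltzfus}.}

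The plan is to reduce the geometric statement to the algebraic statement ``stably hyperbolic Seifert forms are hyperbolic'' (Corollary \ref{stablyhypishypseif}), which is itself a shadow of the isomorphism $\widehat{DW}_\eps(R)\cong\widehat{DL}^0(R,\eps)$ of Proposition \ref{prop:isogroups}, via the Kervaire--Levine machinery for realising Seifert forms by simple knots. The ``only if'' direction is immediate from Proposition \ref{prop:invariant} (a doubly-slice knot has algebraically double-nullcobordant Blanchfield complex, hence represents $0$ in $\mathcal{DC}^{simp}_n$). So the content is the ``if'' direction: suppose $K$ is a simple $(2k+1)$-knot with $[K]=0\in\mathcal{DC}^{simp}_n$; I must produce a pair of complementary slice discs.

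First I would pass from $K$ to its Seifert picture. Since $K$ is simple, by \cite[Theorem 2]{MR0179803} choose a Seifert surface $F^{n+1}$ with $\pi_i(F)=0$ for $i\neq 0,k+1$, so the Seifert form $(L,\psi)=(H^{k+1}(F),\psi)$ is a non-singular $(-1)^{k+1}$-symmetric Seifert form over $\Z$ which, by Proposition \ref{prop:realise}, realises the Blanchfield complex $(C_K,\phi_K)$ up to $(k+1)$-fold skew-suspension. Next I would unwind what $[K]=0\in\mathcal{DC}^{simp}_n$ gives: there is a simple doubly-slice knot $J$ with $K\#J$ doubly-slice (the monoid construction of Definition \ref{def:monoid}); by Corollary \ref{cor:blanchfield} and Theorem \ref{thm:theorem}, the Seifert forms of the doubly-slice knots $J$ and $K\#J$ are hyperbolic, and since connected sum of knots induces direct sum of Seifert forms (as in the proof that $\sigma^L$ is a homomorphism, transported to the $\widehat{DW}$ setting), this says precisely that $(L,\psi)\oplus H\cong H'$ for hyperbolic Seifert forms $H,H'$; that is, $(L,\psi)$ is \emph{stably hyperbolic} in the sense of Definition preceding Corollary \ref{stablyhypishypseif}. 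Now apply Corollary \ref{stablyhypishypseif} (``stably hyperbolic $=$ hyperbolic'', a consequence of $\widehat{DW}_\eps(\Z)\cong\widehat{DL}^0(\Z,\eps)$): the Seifert form $(L,\psi)$ is itself hyperbolic, with complementary lagrangians $L=L_+\oplus L_-$.

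The final step is the geometric realisation: I would feed the two complementary lagrangians $L_\pm\subset H^{k+1}(F)$ back into the Kervaire--Levine surgery argument of \cite[Lemma 5]{MR0246314} (the one quoted at the very end of Chapter \ref{chap:blanchfield}), run \emph{twice}. Each lagrangian $L_\pm$, being self-annihilating for $\psi$, is Poincar\'{e} dual to a half-rank subspace of $H_{k+1}(F)$ whose elements can be represented by framed disjoint embedded $(k+1)$-spheres (Whitney trick in the range $2(k+1)<n+2$, or Wall stabilisation when $n=3$), bounding disjoint framed $(k+2)$-discs in the ambient $D^{n+3}$; surgering along each family produces a slice disc $D_\pm$ for $K$. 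What must be checked — and this is the crux — is that the two slice discs so produced are \emph{complementary}, i.e. $D_+\cup_K(-D_-)$ is the $(n+1)$-unknot. I expect this to be the main obstacle: one needs that the surgeries on the $L_+$-side and the $L_-$-side can be performed simultaneously inside $S^{n+3}=D^{n+3}\cup_{S^{n+2}}D^{n+3}$ (one disc in each hemisphere) so that the doubled trace has trivial complement, which should follow from the fact that the complementarity $L=L_+\oplus L_-$ makes the two sets of attaching spheres linearly independent in homology, forcing the glued $(n+1)$-sphere to have the homology of $S^{n+1}$ and, by simple-connectivity of the complement together with the $h$-cobordism / unknotting theorem in this dimension range, to be unknotted. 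An alternative, cleaner route — which I would pursue if the direct geometric argument gets bogged down — is to appeal instead to the chain-level statement: $(C_K,\phi_K)$ is $(k+1)$-fold skew-suspended from a $1$-dimensional complex whose linking form is the Blanchfield form, the hyperbolicity of the Seifert form gives complementary $(\Z[z,z^{-1}],P)$-nullcobordisms of $(C_K,\phi_K)$ via the covering isomorphism and Lemma \ref{lem:algtrans2}/Lemma \ref{lagrangians}, and then realise \emph{these} algebraic complementary nullcobordisms geometrically by the relative engulfing/surgery argument, obtaining the two slice discs directly with their complementarity built in from the algebra. Either way the theorem reduces, modulo the realisation step already present in \cite{MR0246314}, to results proved earlier in the excerpt.
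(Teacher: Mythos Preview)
Your proposal is correct and follows essentially the same strategy as the paper: reduce to ``stably hyperbolic $=$ hyperbolic'' for the Seifert form, then realise the complementary lagrangians geometrically as complementary slice discs. Two remarks are worth making.

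First, a bookkeeping slip: you have the labels ``if'' and ``only if'' reversed. For the statement ``$[K]=0$ iff $K$ is doubly slice'', the trivial direction is that doubly-slice implies $[K]=0$ (immediate from the definition of $\mathcal{DC}^{simp}_n$; no need to invoke Proposition \ref{prop:invariant}), and the substantive direction is $[K]=0 \Rightarrow K$ doubly slice, which is what you then correctly prove.

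Second, for the algebraic part the paper takes a slightly different route: rather than arguing directly at the level of Seifert forms as you do, it passes first through the Blanchfield form --- $[K]=0$ gives $\sigma^{DL}(K)=0$, hence $(T,\lambda)=0\in DW^{(-1)^k}(\Z[z,z^{-1}],P)$, hence $(T,\lambda)$ is hyperbolic by Corollary \ref{stablyhypishyp}, hence any Seifert form is hyperbolic by Proposition \ref{stablyhypishypseif}. Your route via Theorem \ref{thm:theorem} and Corollary \ref{stablyhypishypseif} is equally valid and arguably more direct. For the geometric realisation step you correctly identify the crux (complementarity of the two slice discs), but rather than reconstruct this from Levine's single-lagrangian argument, the paper simply cites the existing literature where exactly this is done: \cite[Theorem 3.1]{MR0290351} for $k>1$ and \cite{MR0380817} for $k=1$. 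These references show precisely that a simple $(2k+1)$-knot with hyperbolic Seifert form is doubly slice, so there is no need to rebuild that argument.
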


\begin{proof}`If' is clear. Conversely, if $\sigma^{DL}(K)=0$, we have that the Blanchfield form $(T,\lambda)$ for $K$ has $(T,\lambda)=0\in DW^{(-1)^k}(\Z[z,z^{-1}],P)$. But by Corollary \ref{stablyhypishyp}, this means $(T,\lambda)$ is hyperbolic. Hence any Seifert surface $F$ for $K$ has hyperbolic Seifert form by Proposition \ref{stablyhypishypseif}. Take a basis of $H^{k+1}(F;\Z)$ with respect to which the matrix of the Seifert form is hyperbolic. The Poincar\'{e} dual basis to this can be realised by framed, embedded $(k+1)$-spheres which can be used as instructions for surgery on $F$ to realise two complementary slice disks as in \cite[Theorem 3.1]{MR0290351} (case $k>1$) and \cite{MR0380817} (case $k=1$).
\end{proof}

\begin{remark}This is not the first proof of Theorem \ref{thm:stoltzfus}. In \cite{MR833015}, a less general form of Corollary \ref{stablyhypishyp} is obtained by very different methods to our own. The authors derive Theorem \ref{thm:stoltzfus} from this.
\end{remark}

\subsection{There is no `double surgery below the middle dimension'}

The doubly-slice problem at first appears very similar to the slice problem, and the main approach to it (indeed the approach taken in this thesis) has been to treat it similarly - as a high-dimensional cobordism problem. We now briefly describe the issues related to treating the doubly-slice problem in this way, and in particular how these issues are reflected in double $L$-theory. We begin with a striking theorem of Ruberman.

\begin{theorem}[{\cite[4.17]{MR709569}, \cite[3.3]{MR933307}}]\label{thm:ruberman}In every even dimension, there exist knots with hyperbolic Farber-Levine pairing (see \cite[\textsection 6]{MR0461518} and \cite[3.16]{MR521738}) but which are not doubly slice.

In every odd dimension, there exists an infinite family of knots $K$ with hyperbolic Blanchfield form but which are not doubly slice. When $n\neq 1$ all knots in the family have exteriors which are homotopy equivalent (rel.\ boundary preserving meridians) to one another and to a doubly-slice knot. When $n=1$ the exteriors have the same $\Z[\Z]$-homology type.
\end{theorem}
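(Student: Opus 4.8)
\textbf{Proof plan for Theorem \ref{thm:ruberman}.} The key point is that the Blanchfield form (respectively Farber--Levine pairing) and even the full Blanchfield complex are insufficient to detect double-sliceness, so we need geometric invariants that are \emph{not} visible to the homological data used throughout this thesis. The natural source of such invariants is the fundamental group of the knot exterior and the way it interacts with slicing. The plan is to follow Ruberman's strategy (\cite{MR709569}, \cite{MR933307}): start with a doubly-slice knot $K_0$ (for instance $J\#(-J)$ for a suitable simple knot $J$, which is doubly slice by the twist-spinning argument of Appendix \ref{chap:twistspin}), and modify it by a \textit{satellite/infection construction} along a carefully chosen curve $\gamma$ in the exterior $X_{K_0}$ with $[\gamma]=0\in H_1(X_{K_0};\Z[\Z])$ but $[\gamma]\neq 0\in \pi_1(X_{K_0})$. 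Infecting $K_0$ by a nontrivial knot $C$ tied along $\gamma$ produces a new knot $K$ whose exterior $X_K$ has the same $\Z[\Z]$-homology (indeed the same Blanchfield complex, hence the same Blanchfield form, hence hyperbolic) because $\gamma$ is nullhomologous with $\Z[\Z]$-coefficients; in the higher-dimensional cases one arranges further that $X_K$ is homotopy equivalent rel boundary to $X_{K_0}$ by choosing the dimension and the ambient framing appropriately, using the fact that in high dimensions the homotopy type of the exterior is governed by homology once $\pi_1$ is infinite cyclic away from the middle dimension.

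The main work is then to show these infected knots $K$ are \emph{not} doubly slice, and that one gets an infinite family of distinct such $K$. Here one cannot use any of the $L$-theoretic machinery of Chapters \ref{chap:DLtheory}--\ref{chap:knots}, since by construction $\sigma^{DL}(K)=\sigma^{DL}(K_0)=0$. Instead the obstruction must come from a \textit{metabelian} or \textit{nilpotent} refinement of the linking form, or from an $\eta$-type $\rho$-invariant. Following Ruberman, the cleanest route is: (a) if $K$ were doubly slice, with complementary slice discs $D_\pm$, then $D_\pm\cup_K(-D_\mp)$ would be the unknot, forcing the two slice disc exteriors $Y_{D_\pm}$ to be homology $S^1\times D^{n+2}$'s that glue to $S^1\times D^{n+3}$; (b) an Alexander-duality / Mayer--Vietoris argument on the infinite cyclic (or a prime-power metabelian) cover then forces a factorization of the infecting data through the trivial group, contradicting the nontriviality of the infection knot $C$; (c) to get infinitely many distinct classes, vary $C$ over an infinite family detected by, e.g., Casson--Gordon-type or $L^{(2)}$-signature invariants of the metabelian cover, which are additive enough to separate the $K$'s in $\mathcal{DC}_n$.

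For the \emph{even-dimensional} statement the argument is structurally the same but the relevant homological invariant that is hyperbolic yet blind is the Farber--Levine pairing (see \cite[\textsection 6]{MR0461518}, \cite[3.16]{MR521738}); one infects a doubly-slice even-dimensional knot along a $\Z[\Z]$-nullhomologous curve and uses the same cover-and-Mayer--Vietoris obstruction, noting that Kearton's $F$-form and Farber's $L$-quintuple still cannot see the infection because they too are built from $\Z[\Z]$-coefficient data. The $n=1$ case is weaker (only the $\Z[\Z]$-homology type of the exteriors agrees, not the homotopy type) because in dimension $1$ one cannot appeal to high-dimensional surgery to upgrade a homology equivalence of exteriors to a homotopy equivalence; this is why the statement is phrased asymmetrically, and no extra work is needed there beyond recording the weaker conclusion.

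\textbf{Expected main obstacle.} The hard part will be step (b)--(c): verifying that the infection genuinely changes the double-cobordism class while provably not changing the Blanchfield complex (and, in the high-dimensional cases, not changing the homotopy type of the exterior rel meridian). This requires a delicate choice of the infection curve $\gamma$ so that it is $\Z[\Z]$-nullhomologous but metabelian-detectably essential, together with a computation of a Casson--Gordon or $L^{(2)}$ $\rho$-invariant on a metabelian cover of the putative complementary slice disc exteriors --- precisely the kind of non-$L$-theoretic input flagged in the remark following Theorem \ref{thm:theorem}. Since this is exactly Ruberman's theorem, in the actual write-up I would cite \cite[4.17]{MR709569} and \cite[3.3]{MR933307} for this step rather than reproving it, and confine the original contribution to remarking how it demonstrates the limits of the double $L$-theoretic invariants developed here.
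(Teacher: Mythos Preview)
The paper does not prove this theorem at all: it is stated as a cited result of Ruberman, with the references in the theorem header, and no proof is given. The only additional content is the remark immediately following, which records that the detection mechanism is a high-dimensional application of Casson--Gordon invariants. Your final disposition --- to cite \cite[4.17]{MR709569} and \cite[3.3]{MR933307} rather than reprove the result --- is therefore exactly what the paper does.

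Your sketch of Ruberman's argument is broadly accurate and consistent with the paper's remark (Casson--Gordon invariants arising from cyclic representations of $\pi_1$ of the exterior), though you have written considerably more than is needed here. One small correction: you suggest that in the high-dimensional cases the homotopy equivalence of exteriors follows ``once $\pi_1$ is infinite cyclic away from the middle dimension'', but this is not quite the mechanism --- the point of Ruberman's examples is precisely that $\pi_1$ is \emph{not} infinite cyclic (otherwise there would be no interesting cyclic covers on which to define Casson--Gordon invariants), and the homotopy equivalence is arranged by a more careful construction. But since neither you nor the paper is reproving Ruberman, this is a minor inaccuracy in the sketch rather than a gap.
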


One consequence of this theorem is that there can be no general procedure (cf.\ \ref{thm:surgbelow}) that modifies a knot within its double knot-cobordism class to be simple. \textit{There is no double surgery below the middle dimension.} That is\[\mathcal{DC}_n\not\cong\mathcal{DC}^{simp}_n.\]

\begin{remark}The knot families of Theorem \ref{thm:ruberman} may not be typical. So the theorem does not preclude the existence of an odd-dimensional knot $K$ with hyperbolic Blanchfield form but non-vanishing $\sigma^{DL}(K)$. We believe Theorem \ref{thm:framespun} could be a fruitful source of future examples.

The mechanism for detecting non-doubly-slice knots in Theorem \ref{thm:ruberman} is a high-dimensional application of the Casson-Gordon invariants (see \cite{MR709569}, \cite{MR933307}). Necessarily, the existence of these invariants requires interesting cyclic representations of the fundamental group $\pi_1(S^{n+2}\sm K)$. If, for example, $\pi_1(S^{n+2}\sm K)\cong\Z$, then perhaps the issues arising from Ruberman's examples cannot occur. No Ruberman-type results are currently known for knots with $\pi_1(S^{n+2}\sm K)\cong\Z$.
\end{remark}

Theorem \ref{thm:ruberman} does not preclude double \textit{algebraic} surgery below the middle dimension. On the level of double $L$-theory this is intimately related to the question of whether there is periodicity in the double $L$-groups (cf.\ \ref{subsec:periodicity}).

By Proposition \ref{iso}, one consequence of the statement that `all double $L$-groups are 4-periodic' would be that all even-dimensional double $L$-groups were isomorphic to double Witt groups. If we furthermore assume the localisation $(A,S)$ has homological dimension 0 (resp.\ that we are working over $(A,S)=(\Z[z,z^{-1}],P)$) then by Proposition \ref{prop:welldeflagrang} (resp.\ \ref{prop:welldeflagrang2}) this is equivalent  to saying that the middle-dimensional linking pairing (resp.\ the Blanchfield form) of an odd-dimensional $S$-acyclic symmetric Poincar\'{e} complex contains all the double-cobordism invariants. As such, Theorem \ref{thm:ruberman} seems to lend weight to the idea that there is no periodicity in double $L$-theory.

\begin{remark}Conceivably, the `secondary obstructions' beyond the Blanchfield pairing could all be occurring at a level that homological algebraic invariants, such as the Blanchfield complex, cannot see. But in fact, \textit{homology-level} secondary obstructions are identified in \cite[Proposition p.252]{MR718271}. These homology-level obstructions involve the ring structure in cohomology. Product structures are not well accounted for in $L$-theory and are not seen by a class in double $L$-theory. Even though the Alexander-Whitney chain diagonal approximation incorporates a chain-level cup product, it is lost under equivalence of symmetric Poincar\'{e} complexes. Building this extra data into the invariant is an interesting avenue of research that we intend to pursue in the future.\end{remark}

\appendix

\chapter{A trace function}\label{chap:trace}

For this appendix, let $R$ be a commutative Noetherian ring with involution and $A=R[z,z^{-1}]$ be the ring with involution given by $\overline{z}=z^{-1}$. Let $S$ be the set of characteristic polynomials\[S=\left\{p(z)=\sum_{M}^Na_kz^k\,|\,a_{M},a_N\in R^\times\right\}\subset R[z,z^{-1}].\]We will define and explain an $R$-module morphism\[\chi:S^{-1}A/A\to R\] such that for every $T$ in $\H(A,S)$ there is induced a natural isomorphism of $A$-modules\begin{equation}\label{eq:cond1}\chi_*:\Hom_A(T,S^{-1}A/A)\to \Hom_R(i_!T,R)\end{equation}(the $A$-module structure on the target is given by setting the action of $z^{-1}$ as the action of $\zeta(T)^*$). Such a function is called a \textit{trace function} for $(A,S)$ if it additionally satisfies\begin{equation}\label{eq:cond2}\chi(\overline{x})=-\overline{\chi(x)}.\end{equation}The concept is a common tool, coming originally from algebraic number theory, akin to the `trace' of a field extension (cf.\ \cite[IV. \textsection5]{MR1878556}). This account is based on the version of the trace function considered by Trotter \cite{MR0645546} and later by Litherland \cite[A3]{MR780587}. In fact, Litherland shows that for $R$ a field, the properties \ref{eq:cond1} and \ref{eq:cond2} uniquely determine the function $\chi$. Although apparently well-known (see also \cite{MR0249519} and \cite{MR1713074}), it seems hard to find an explanation of this trace function so we hope this appendix is useful to the reader.

\begin{remark}In contrast to previous accounts (e.g.\ \cite{MR780587} or \cite{MR1713074}), we have not called $\chi$ a `universal trace function' for the following reason. Recall that if a contravariant functor $\mathcal{F}:\mathcal{C}\to \text{Set}$ is \textit{representable} then there exists a \textit{universal element} $(a,x)$ where $a$ is in $\mathcal{C}$ and $x$ is in $\mathcal{F}(a)$ with the property that for every pair $(b,y)$ with $b$ in $\mathcal{C}$ and $y$ in $\mathcal{F}(b)$ there exists a unique morphism $f:b\to a$ such that $\mathcal{F}(f)x=y$. In this language, the contravariant functor we wish to represent is \[\mathcal{F}=\Hom_R(-,R):\H(A,S)\to \text{Set};\qquad T\mapsto \Hom_R(i_!T,R),\]and our trace function is written $(S^{-1}A/A,\chi)$. But we see this is \textit{not} strictly speaking a universal element for $\mathcal{F}$ because the $A$-module $S^{-1}A/A$ is not $S$-torsion.

On the other hand, $\mathcal{F}$ is an $R$-module valued presheaf (in the sense of category theory) and hence, by the Yoneda Lemma, can always be expressed as the colimit of representable functors. In some sense the definition of the trace map is given by such a colimit.
\end{remark}

We define some $A$-modules:\[\begin{array}{ccrcl}
\text{the Novikov rings}&\qquad&A_+&=&R((z))=\left\{\sum_{r=N}^{\infty}a_rz^{ r}\,|\,a_r\in R,\,N\in\Z\right\},\\
&&&&\\
&\qquad&A_-&=&R((z^{-1}))=\left\{\sum_{r=-\infty}^{N}a_rz^{ r}\,|\,a_r\in R,\, N\in\Z\right\},\\
&&&&\\
\text{the Laurent series}&&A_\infty&=&R[[z,z^{-1}]]=\left\{\sum_{-\infty}^{\infty}a_rz^{r}\,|\,a_r\in R\right\}.\end{array}\]The Novikov ring $A_+=(Z_+)^{-1}R[[z]]$ is the localisation of the power series ring $R[[z]]$ with respect to the multiplicative subset $Z_+=\{z^k\,|\,k\geq 0\}$. If $R$ is a field, the Novikov ring $A_+$ is the fraction field of $R[[z]]$. Of course, the analogous statements are true for $R[[z^{-1}]]$, $Z_-$ and $A_-$. The Laurent series $A_\infty$ does not generally carry a ring structure at all.
\begin{lemma}\label{lem:inverse}If $p\in S$ then $p\in (A_+)^\times\cap(A_-)^\times$.
\end{lemma}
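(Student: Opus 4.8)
The statement to prove is: if $p\in S$ (a characteristic polynomial, i.e.\ $p(z)=\sum_{M}^{N}a_kz^k$ with $a_M,a_N\in R^\times$), then $p$ is a unit in both Novikov rings $A_+=R((z))$ and $A_-=R((z^{-1}))$. By the remark immediately preceding the lemma, $A_+$ is the localisation $(Z_+)^{-1}R[[z]]$ where $Z_+=\{z^k : k\geq 0\}$, so a unit of $A_+$ is just an element of $R[[z]]$ with invertible leading (lowest-degree) coefficient, times a power of $z$; symmetrically for $A_-$. So the plan is to reduce $p$ to a unit of $R[[z]]$ (resp.\ $R[[z^{-1}]]$) by factoring out the appropriate power of $z$, and then invert a power series with invertible constant term by the standard geometric-series / successive-approximation argument.

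First I would treat $A_+$. Write $p(z)=z^{M}\,q(z)$ where $q(z)=a_M+a_{M+1}z+\dots+a_Nz^{N-M}\in R[z]\subseteq R[[z]]$, and $a_M\in R^\times$ by hypothesis. Since $z^{M}$ is a unit in $A_+$ (its inverse is $z^{-M}$), it suffices to show $q\in (A_+)^\times$, and for this it suffices to show $q\in R[[z]]^\times$. Now $q=a_M(1+z\,r(z))$ for some $r(z)\in R[[z]]$ (namely $r(z)=a_M^{-1}(a_{M+1}+a_{M+2}z+\dots)$), and $1+z\,r(z)$ is invertible in $R[[z]]$ with inverse $\sum_{j\geq 0}(-1)^j (z\,r(z))^j$; this sum converges $z$-adically because the $j$-th term lies in $z^jR[[z]]$, so the coefficient of each power of $z$ is a finite sum. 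Hence $q^{-1}=a_M^{-1}\sum_{j\geq0}(-z r(z))^j\in R[[z]]$ and therefore $p^{-1}=z^{-M}q^{-1}\in A_+$.

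The argument for $A_-$ is entirely symmetric: write $p(z)=z^{N}\,\tilde q(z^{-1})$ where $\tilde q(w)=a_N+a_{N-1}w+\dots+a_Mw^{N-M}\in R[w]$ with $a_N\in R^\times$, set $w=z^{-1}$, and note that under the identification $A_-=R((z^{-1}))=(Z_-)^{-1}R[[z^{-1}]]$ the same successive-approximation argument in the variable $w=z^{-1}$ shows $\tilde q(z^{-1})\in R[[z^{-1}]]^\times$, so $p^{-1}=z^{-N}\tilde q(z^{-1})^{-1}\in A_-$.

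There is really no serious obstacle here; the only point requiring a word of care is the convergence of the geometric series, i.e.\ that $R[[z]]$ is $z$-adically complete so that $\sum_{j\geq0}(-z r(z))^j$ is a well-defined element of $R[[z]]$ — this is standard and I would simply state it. The mild bookkeeping subtlety is tracking which coefficient of $p$ is the relevant unit: $a_M$ (the lowest) for invertibility in $A_+$, and $a_N$ (the highest) for invertibility in $A_-$; the symmetry between the two cases is exactly the substitution $z\leftrightarrow z^{-1}$, which is compatible with the involution and with the definition of $S$ (which is symmetric in requiring \emph{both} $a_M$ and $a_N$ to be units, precisely so that this lemma holds on both sides).
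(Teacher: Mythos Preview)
Your proof is correct and follows essentially the same approach as the paper: factor out the appropriate power of $z$ to reduce to a power series with unit constant term, then invert via the geometric series. The paper's version is terser (it states the characterisation $(R[[z]])^\times=\{p\in R[[z]]:p(0)\in R^\times\}$ and says ``a similar argument'' for $A_-$), but your treatment of the $A_-$ case via the substitution $w=z^{-1}$ and your remark on why both extreme coefficients must be units are a bit more explicit.
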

\begin{proof}First we show that if $p\in R[z]$ is a polynomial with constant term a unit then it is invertible in $R[[z]]$. This result follows from the stronger claim\[(R[[z]])^\times = \left\{p(z)\in R[[z]]\,|\,p(0)\in\R^\times\right\}.\]To see this claim, first note that the augmentation $R[[z]]\to R$ with $z\mapsto 1$ is a ring morphism  so that a unit $p(z)\in R[[z]]^\times$ has $p(0)\in R^\times$. Conversely, if $p(z)=\sum_0^\infty a_kz^k$ with $a_0\in R^\times$, then \[p(z)^{-1}=\left(1+\sum_{j=1}^\infty\left((-(a_0)^{-1}\sum_{k=1}^\infty a_kz^k)^j\right)\right)(a_0)^{-1}\in R[[z]].\]

Now as suppose $p\in S$, then for some $M\in\Z$, $z^Mp\in R[z]$ is a bionic polynomial. Hence it is invertible in $R[[z]]$ and moreover in $(Z_+)^{-1}R[[z]]=A_+$.

A similar argument with $A_-$ completes the proof.
\end{proof}
There are natural injective $A$-module morphisms\[j_\pm:A\hookrightarrow A_\pm,\qquad k_\pm:A_\pm\hookrightarrow A_\infty,\qquad l_\pm:S^{-1}A\hookrightarrow A_\pm\]($l_\pm$ are well-defined by Lemma \ref{lem:inverse}) such that $k_+l_+=k_-l_-:S^{-1}A\to A_\infty$. Hence there is a commutative diagram of $A$-modules where the rows are exact\begin{equation}\label{eq:injectiveA}\xymatrix{0\ar[r]&A\ar[r]^-{j}\ar[d]_-{=}&S^{-1}A\ar[r]\ar[d]^-{\lmat l_+\\l_-\rmat}&S^{-1}A/A\ar[r]\ar[d]^-{l}&0\\
0\ar[r]&A\ar[r]^-{\lmat j_+\\j_-\rmat}&A_+\oplus A_-\ar[r]^-{\lmat k_+ &-k_-\rmat}&A_\infty\ar[r]&0}\end{equation}and the map $l$ is induced by the diagram. Then $\chi$ is defined as the composite:\[\xymatrix{\chi:S^{-1}A/A\ar[rr]^-{l}&&A_\infty\ar[rr]^{-\text{const.}\circ z} &&R,}\]where ``\text{const.}'' denotes taking the coefficient of $z^0$ in a Laurent series.

\medskip

To get more familiar with $\chi$, we will consider what this means generally, and then we will illustrate with an example. Let $p,q\in A$ and \[f=p/q=\sum_{-M}^N a_rz^r/\sum_{-L}^K b_rz^r\in S^{-1}A\]be a quotient of finite Laurent series, which represents a class $[f]\in S^{-1}A/A$. The map $l$ begins by using the maps $l_\pm$, which construct inverses $q^{-1}$ in the respective Novikov rings:\[l_+(q^{-1}p)=p\sum_{r=N_+}^\infty b_r^+z^r\in A_+,\qquad l_-(q^{-1}p)=p\sum_{r=-\infty}^{N_-} b_r^-z^r\in A_-.\]There is then an isomorphism of $A$-modules\[(k_+\,\,-k_-):(A_+\oplus A_-)/A\xrightarrow{\cong} A_\infty,\]so that \[l([f])=p\left(\left(\sum_{r=N_+}^\infty b_r^+z^r\right)-\left(\sum_{r=-\infty}^{N_-} b_r^-z^r\right)\right)\in A_\infty.\]So finally\[\chi([f])=\sum_{r+s=0}a_r(b_s^+-b_s^-)\in R.\]

\begin{example}Suppose $f=(a-z)^{-1}$ for some $a\in R^\times$. Then
\begin{eqnarray*}l_+(f)&=&(a^{-1}+a^{-2}z+a^{-3}z^2+\dots),\\
l_-(f)&=&-(z^{-1}+az^{-2}+a^2z^{-3}+\dots),\end{eqnarray*}and\[\chi([f])=a^{-1}.\]
\end{example}

\subsection*{Why is this trace function correct?}

The definition of the trace function is very neat and if we assume $R$ is a field as in Litherland it is not too difficult to prove that it has the properties we claim.

But why does it exist? We derive a natural function below that is equal to the trace $\chi_*$ up multiplication by $z$. This $z$ is accounted for in Chapter \ref{chap:laurent} as a slight discrepancy between the dualities of torsion modules and autometric modules under the `covering' operation.

The following is a straightforward claim we will need, easily confirmed:

\begin{claim}[{\cite[p24]{MR1048238}}]\label{clm:funnyF} If $M$ is a f.g.\ projective $R$-module then there is a natural isomorphism of $A$-modules\[\begin{array}{rcl}F:i_!\Hom_R(M,R)&\to&\Hom_A(i_!M,A);\\
z^rf&\mapsto& (z^sx\mapsto f(x)z^{s-r}).\end{array}\]with inverse given by \[\begin{array}{rcl}F^{-1}:\Hom_A(i_!M,A)&\to&i_!\Hom_R(M,R);\\
f&\mapsto& \sum_{-\infty}^\infty z^rf_r,\qquad f_r(x)=a_{0,r}\text{ where }f(z^{r}x)=\sum_{-\infty}^\infty a_{s,r}z^s.\end{array}\]
\end{claim}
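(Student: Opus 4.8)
This claim, Claim~\ref{clm:funnyF}, asks for a natural isomorphism of $A$-modules $F:i_!\Hom_R(M,R)\to\Hom_A(i_!M,A)$ for $M$ a finitely generated projective $R$-module, together with an explicit inverse. The plan is to verify the claim by a direct computation, proceeding in three stages: first check that the formula defining $F$ genuinely lands in $\Hom_A(i_!M,A)$ and is $A$-linear; second check that the proposed $F^{-1}$ is well-defined and $A$-linear; third verify the two composites are the identity; and finally remark on naturality. Since $M$ is assumed finitely generated projective, I would expect to reduce everything to the case $M=R$ (or a finite direct sum of copies of $R$) by additivity of all functors involved, but the explicit formulas are clean enough that I would probably just verify them directly without the reduction.

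First I would check $F$ is well-defined. Given $z^r f\in i_!\Hom_R(M,R)=R[z,z^{-1}]\otimes_R\Hom_R(M,R)$, the assignment $z^s x\mapsto f(x)z^{s-r}$ extends $R$-linearly in $x$ (using that $f$ is $R$-linear) and $R[z,z^{-1}]$-linearly in $z^s$, so it defines an element of $\Hom_A(i_!M,A)$; here one uses that $i_!M=R[z,z^{-1}]\otimes_R M$ is generated over $A$ by $M=1\otimes M$, so a map on $i_!M$ is determined by its values on $M$ and these may be prescribed freely as an $R$-linear map $M\to A$ provided one then extends $A$-linearly. The key point for $A$-linearity of $F$ itself is that $F(z\cdot z^r f)=F(z^{r+1}f)$ sends $z^s x\mapsto f(x)z^{s-r-1}=z^{-1}\cdot(f(x)z^{s-r})$, which is exactly $z\cdot F(z^r f)$ under the involution-twisted action; I would be careful here about the convention that the dual $\Hom_A(i_!M,A)$ carries the action $(a,g)\mapsto(x\mapsto g(x)\overline a)$, so that multiplication by $z$ on the dual corresponds to post-multiplication by $\overline z=z^{-1}$, matching the shift by $-1$ in the exponent. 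This sign/involution bookkeeping is the one place where it is easy to slip, so it is the main obstacle, though a minor one.

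Next I would check $F^{-1}$ is well-defined: given $g\in\Hom_A(i_!M,A)$ and $x\in M$, writing $g(z^r x)=\sum_s a_{s,r}z^s$, one has $a_{s,r}=a_{s-r,0}$ by $A$-linearity of $g$ (applied with the appropriate involution on the $z^r$), so in fact $f_r(x):=a_{0,r}$ depends on $x$ $R$-linearly and the sum $\sum_r z^r f_r$ is a well-defined element of $i_!\Hom_R(M,R)$ — crucially it is a \emph{finite} sum, because $g(x)\in A$ has only finitely many nonzero coefficients for each of the finitely many generators $x$ of $M$ over $R$, so only finitely many $f_r$ are nonzero. Then the composites: $F^{-1}(F(z^r f))$ recovers the coefficient extraction of $z^s x\mapsto f(x)z^{s-r}$, which has $z^0$-coefficient equal to $f(x)$ precisely when $s=r$, giving back $z^r f$; and $F(F^{-1}(g))$ applied to $z^s x$ gives $\sum_r f_r(x)z^{s-r}=\sum_r a_{0,r}z^{s-r}=\sum_r a_{r,0}z^{s-r}=g(z^s x)$ after reindexing. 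Finally, naturality in $M$ is immediate from the formulas: a morphism $h:M\to M'$ of $R$-modules induces compatible maps on both sides and $F$ commutes with them because the formulas only ever use evaluation of functionals and the $A$-module structure, both natural. This completes the proof; no genuinely hard step arises, and I would present it compactly, spending most ink on the involution-convention check in the definition of $F$.
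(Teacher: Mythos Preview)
The paper does not actually prove this claim: it introduces it as ``a straightforward claim we will need, easily confirmed'' and states it without further argument, citing \cite[p24]{MR1048238}. Your direct verification---checking that $F$ and the proposed $F^{-1}$ are well-defined $A$-module maps, that the sum defining $F^{-1}(g)$ is finite by finite generation of $M$, that the two composites are the identity, and that everything is natural in $M$---is exactly the sort of routine check the paper is leaving to the reader, and it is correct.

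Two small cosmetic points. First, in your $F^{-1}$ discussion the parenthetical ``applied with the appropriate involution on the $z^r$'' is unnecessary: $g$ is an honest $A$-module map $i_!M\to A$, so $g(z^r x)=z^r g(x)$ with no involution; the involution only enters when you check $A$-linearity of $F$ via the twisted action on the dual, which you handle correctly elsewhere. Second, from $a_{s,r}=a_{s-r,0}$ one obtains $a_{0,r}=a_{-r,0}$ rather than $a_{r,0}$; with this sign your reindexing in the composite $F\circ F^{-1}$ indeed yields $\sum_r a_{-r,0}\,z^{s-r}=z^s\sum_v a_{v,0}\,z^{v}=g(z^s x)$ as desired. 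Neither point affects the validity of the argument.
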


Now, given an $A$-module $T$ in $\H(A,S)$, we can pick a particularly nice projective $A$-module resolution. By Proposition \ref{prop:covmonod1} we may take the resolution of $T$ by f.g.\ projective $A$-modules \[\xymatrix{0\ar[r] &i_!i^!T\ar[rr]^-{z-\zeta} &&i_!i^!T\ar[r]& T\ar[r] &0}\]and there is hence the following commuting diagram of $A$-modules with exact rows:

\[\xymatrix{0\ar[r]& (i_!i^!T)^*\ar[d]_-{\cong}^-{F^{-1}}\ar[rr]^-{(z-\zeta)^*}&& (i_!i^!T)^*\ar[d]_-{\cong}^-{F^{-1}}\ar[r]& \Ext^1_A(T,A)\ar[d]^-{\cong}\ar[r]& 0\\
0\ar[r]&i_!(i^!T)^*\ar[rr]^-{z^{-1}-\zeta^*}&&i_!(i^!T)^*\ar[r]&\Hom_R(i^!T,R)\ar[r]&0}\] Here, $z^{-1}-\zeta^*=F^{-1}\circ(z-\zeta)^*\circ F$, by direct calculation.

We have shown:

\begin{proposition}\label{prop:natural}For each $T$ in $\H(A,S)$, there exists a natural isomorphism of $A$-modules\[\Hom_A(T,S^{-1}A/A)\xrightarrow{\cong}\Hom_R(i^!T,R).\]
\end{proposition}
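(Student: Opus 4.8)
\textbf{Proof proposal for Proposition \ref{prop:natural}.}

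The plan is to establish the isomorphism by exhibiting a particularly well-adapted projective resolution of $T$ coming from the covering/monodromy equivalence of categories, and then dualising it in two different but comparable ways. First I would invoke Proposition \ref{prop:covmonod1}: since $T$ is an object of $\H(A,S)$, the monodromy functor furnishes a length-$1$ resolution of $T$ by finitely generated projective $A$-modules of the special form
\[
\xymatrix{0\ar[r] & i_!i^!T\ar[rr]^-{z-\zeta(T)} && i_!i^!T\ar[r] & T\ar[r] & 0,}
\]
where $i^!T$ is finitely generated projective over $R$ (this is where the category equivalence does the work: an arbitrary $A$-module resolution would not have this shape, but the $B$/$\HH$ adjunction guarantees it). Using Lemma \ref{lem:ext2}, there is a natural isomorphism $\Hom_A(T,S^{-1}A/A)\cong \Ext^1_A(T,A)$, so it suffices to produce a natural isomorphism $\Ext^1_A(T,A)\cong\Hom_R(i^!T,R)$.

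Next I would dualise the above resolution over $A$. Since the resolution has length $1$ and $T^*=0$ (again Lemma \ref{lem:ext2}), dualising gives an exact sequence
\[
\xymatrix{0\ar[r] & (i_!i^!T)^*\ar[rr]^-{(z-\zeta)^*} && (i_!i^!T)^*\ar[r] & \Ext^1_A(T,A)\ar[r] & 0.}
\]
In parallel, I would write down the analogous presentation of $\Hom_R(i^!T,R)$ as an $A$-module: equip $\Hom_R(i^!T,R)$ with the $A$-action in which $z^{-1}$ acts by $\zeta(T)^*$ (the convention stated before equation \ref{eq:cond1}), so that the identity-plus-endomorphism sequence
\[
\xymatrix{0\ar[r] & i_!\Hom_R(i^!T,R)\ar[rr]^-{z^{-1}-\zeta^*} && i_!\Hom_R(i^!T,R)\ar[r] & \Hom_R(i^!T,R)\ar[r] & 0}
\]
is exact (this is the same computation that shows $B$ lands in $\H(A,S)$, now applied to the Seifert-type dual $(i^!T,\zeta^*)$; finite generation of $i^!T$ over $R$ is what makes $z^{-1}-\zeta^*$ injective and the cokernel $S$-torsion). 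The key technical input tying the two presentations together is Claim \ref{clm:funnyF}, which provides the natural isomorphism $F^{-1}:(i_!M)^*\xrightarrow{\cong} i_!(M^*)$ for $M$ finitely generated projective over $R$, applied with $M=i^!T$. A direct computation — the one flagged in the excerpt as ``by direct calculation'' — shows $F^{-1}\circ(z-\zeta)^*\circ F = z^{-1}-\zeta^*$, i.e.\ $F^{-1}$ intertwines the two presenting maps. Hence $F^{-1}$ descends to an isomorphism on cokernels, giving the desired natural isomorphism $\Ext^1_A(T,A)\cong\Hom_R(i^!T,R)$, and composing with Lemma \ref{lem:ext2} finishes the proof.

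The main obstacle I expect is the verification that $F^{-1}\circ(z-\zeta)^*\circ F = z^{-1}-\zeta^*$, i.e.\ that Claim \ref{clm:funnyF}'s isomorphism is genuinely equivariant for the transposed endomorphisms and converts multiplication by $z$ into multiplication by $z^{-1}$. This is the crux: it is where the "shift by $z$" inherent in passing between torsion-module duality and autometric duality shows up, and it must be checked carefully on elements using the explicit formulas for $F$ and $F^{-1}$ in Claim \ref{clm:funnyF} (tracking how a homomorphism $f\in(i_!M)^*$ with $f(z^sx)=\sum_r a_{s,r}z^r$ transforms). Naturality in $T$ is comparatively routine: each of the three functors $\Hom_A(-,S^{-1}A/A)$, $\Ext^1_A(-,A)$, and $\Hom_R(i^!-,R)$ is manifestly functorial, the special resolution is natural in $T$ by Proposition \ref{prop:covmonod1}, and $F$ is natural by Claim \ref{clm:funnyF}, so the composite isomorphism is natural by a standard diagram chase. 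One should also remark, as the surrounding text does, that the resulting map agrees with $\chi_*$ only up to the automorphism given by multiplication by $z$ — but since Proposition \ref{prop:natural} as stated only asserts existence of \emph{a} natural isomorphism, this discrepancy does not affect the proof.
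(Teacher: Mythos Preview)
Your proposal is correct and follows essentially the same route as the paper: the paper's proof is precisely the paragraph preceding the proposition, which chooses the resolution $0\to i_!i^!T\xrightarrow{z-\zeta} i_!i^!T\to T\to 0$ from Proposition \ref{prop:covmonod1}, dualises, applies $F^{-1}$ from Claim \ref{clm:funnyF}, records the identity $F^{-1}\circ(z-\zeta)^*\circ F = z^{-1}-\zeta^*$ ``by direct calculation'', and reads off the isomorphism on cokernels. Your explicit invocation of Lemma \ref{lem:ext2} to pass from $\Hom_A(T,S^{-1}A/A)$ to $\Ext^1_A(T,A)$ makes overt a step the paper leaves implicit, but otherwise the arguments coincide.
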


Suppose $T$ is in $\H(A,S)$, and take a length 1 resolution by f.g.\ projective $A$-modules\[0\to P_1\xrightarrow{d} P_0\to T\to 0.\]The connecting morphism used in the Ext long exact sequence of Lemma \ref{lem:ext2} is the natural isomorphism of $A$-modules\[\xymatrix{\Hom_A(T,S^{-1}A/A)=\ker(\Hom_A(d,S^{-1}A/A))\ar[r]^-{\delta}_-{\cong}&\coker(\Hom_A(d,A))=:\Ext_A^1(T,A).}\]The injective morphism of exact sequences in diagram \ref{eq:injectiveA} tells us in particular that the morphism $\delta$ will factor through the connecting morphism in the long exact Ext sequence associated to the coefficient sequence\[0\to A\to A_+\oplus A_-\to A_\infty\to 0.\]Namely, $\delta=\delta'\circ l_*$, where $\delta'$ is the natural $A$-module morphism\[\xymatrix{\Hom_A(T,A_\infty)=\ker(\Hom_A(d,A_\infty))\ar[r]^-{\delta'}&\coker(\Hom_A(d,A))=:\Ext_A^1(T,A).}\]

And now we identify the morphism $\delta$ with the morphism $\chi_*\circ(-z)$.

\begin{proposition}\label{prop:badtrace}For our choice of resolution and a fixed $g\in\Hom_A(T,S^{-1}A/A)$ we have \[F^{-1}\circ\delta'\circ l_*(g):i^!T\to R;\qquad x\mapsto -l(g(x))_{-1},\]the negative of the coefficient of $z^{-1}$ in $l(g(x))\in A_\infty$.
\end{proposition}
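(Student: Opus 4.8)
The plan is to chase the definitions of the various connecting morphisms through the chosen length-one resolution $0\to P_1\xrightarrow{d}P_0\to T\to 0$ and then compute the composite $F^{-1}\circ\delta'\circ l_*$ on an element explicitly at the cochain level. First I would recall that, given $g\in\Hom_A(T,S^{-1}A/A)$, the image $\delta(g)\in\Ext^1_A(T,A)$ is computed by the standard snake-lemma recipe: lift $g\circ(\text{proj}):P_0\to S^{-1}A/A$ to a map $\tilde g:P_0\to S^{-1}A$ (possible since $P_0$ is projective), then $\tilde g\circ d:P_1\to S^{-1}A$ actually lands in $A\subset S^{-1}A$ by exactness on $P_1$, and this element of $\Hom_A(P_1,A)$ represents $\delta(g)$ in $\coker(\Hom_A(d,A))$. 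The factorisation $\delta=\delta'\circ l_*$ comes directly from the commuting diagram \ref{eq:injectiveA}: $l_*(g)=l\circ g\in\Hom_A(T,A_\infty)$, and $\delta'$ uses the \emph{same} snake-lemma recipe but with the coefficient sequence $0\to A\to A_+\oplus A_-\to A_\infty\to 0$ in place of $0\to A\to S^{-1}A\to S^{-1}A/A\to 0$. So I must lift $l(g(x))\in A_\infty$ to the pair of Novikov series $(l_+(g'),l_-(g'))$ where $g'\in\Hom_A(T,S^{-1}A)$ is a preimage of $g$ (which exists because the first column of \ref{eq:injectiveA} forces $l\circ g$ to factor through $S^{-1}A$ after lifting), then apply $d^*$ and read off the resulting element of $\Hom_A(P_1,A)$.

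The key computation is then to track this through the isomorphism $F^{-1}:\Hom_A(i_!M,A)\to i_!\Hom_R(M,R)$ of Claim \ref{clm:funnyF}, where $M=i^!T$. Recall $F^{-1}$ sends a map $f:i_!M\to A$ to $\sum_r z^r f_r$ where $f_r(x)$ is the coefficient of $z^0$ in $f(z^r x)$. The point is to match the adjunction: using the standard free resolution $0\to i_!i^!T\xrightarrow{z-\zeta}i_!i^!T\to T\to 0$ in place of a general $P_\bullet$, the dual $d^*=(z-\zeta)^*$ conjugates under $F$ to $z^{-1}-\zeta^*$ as already noted in the paragraph preceding Proposition \ref{prop:natural}. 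So I would verify that the element $F^{-1}(\delta'(l_*(g)))$ of $\coker(z^{-1}-\zeta^*:i_!(i^!T)^*\to i_!(i^!T)^*)\cong\Hom_R(i^!T,R)$ is represented, at the level of the map $i^!T\to R$, by $x\mapsto -(\text{coefficient of }z^{-1}\text{ in }l(g(x)))$. The sign and the shift by one power of $z$ both arise from the interplay between: (i) the appearance of $z^{-1}$ rather than $z$ in the conjugated differential (equivalently, the $F^{-1}$ in Claim \ref{clm:funnyF} reads off $z^0$ but the differential shifts), and (ii) the sign in the snake-lemma connecting map coming from the minus sign in $\lmat k_+ & -k_-\rmat$ in \ref{eq:injectiveA}. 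It is essentially the same bookkeeping that makes $\chi$ defined as $-\,\mathrm{const}\circ z\circ l$ rather than just $\mathrm{const}\circ l$.

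The main obstacle I anticipate is the careful bookkeeping of which coefficient gets extracted and with which sign — i.e.\ reconciling the definition of $\chi$ (coefficient of $z^0$ in $z\cdot l(g(x))$, with a minus sign) with the output of the natural connecting-morphism computation (coefficient of $z^{-1}$ in $l(g(x))$, with a minus sign), and confirming these agree. Since $\mathrm{const}(z\cdot h)$ equals the coefficient of $z^{-1}$ in $h$ for any $h\in A_\infty$, these two descriptions coincide, so the content of the proposition is precisely that the \emph{natural} isomorphism of Proposition \ref{prop:natural} is computed by $g\mapsto(x\mapsto -l(g(x))_{-1})$, and hence that $\chi_*$ differs from it only by the harmless automorphism $-z$ of $A_\infty$ (explaining why one cannot quite take $\chi=\mathrm{const}\circ l$ and get property \ref{eq:cond1} with the stated $A$-module structure on the target). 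I would keep the calculation honest by working with a concrete generator and the explicit formula for $F^{-1}$, rather than trying to argue abstractly, since the whole subtlety is in the normalisation constant.
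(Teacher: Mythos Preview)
Your plan is correct and follows essentially the same route as the paper: chase the snake-lemma connecting morphism through the coefficient sequence $0\to A\to A_+\oplus A_-\to A_\infty\to 0$ using the specific resolution $i_!i^!T\xrightarrow{z-\zeta}i_!i^!T$, then apply $F^{-1}$ and pass to the cokernel of $z^{-1}-\zeta^*$. The one substantive step you have not yet articulated is that the explicit computation collapses via a telescoping sum (splitting into the $A_+$ and $A_-$ halves and using that the Novikov coefficients vanish in the appropriate limit), which is what isolates precisely the $z^{-1}$-coefficient with its sign; be prepared for that when you carry out the calculation you describe.
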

\begin{proof}We prefer to work with $(A_+\oplus A_-)/A$ via the isomorphism\[(k_+\,\,-k_-):\frac{A_+\oplus A_-}{A}\xrightarrow{\cong} A_\infty.\]Suppose $f\in\Hom_A(T,(A_+\oplus A_-)/A)$, so that for any $x\in P_0$\[f:P_0/P_1\to (A_+\oplus A_-)/A;\qquad x+P_1\mapsto \left(\sum_{N_+(x)}^\infty a_r^+(x)z^r,\sum_{-\infty}^{N_-(x)}a_r^-(x)z^r\right)+A.\]We must chase $f$ around the change of coefficients diagram\[\xymatrix{
0\ar[r]&\Hom_A(P_0,A)\ar[r]\ar[d]&\Hom(P_1,A)\ar[r]\ar[d]&0\\
0\ar[r]&\Hom_A(P_0,A_+\oplus A_-)\ar[r]\ar[d]&\Hom(P_1,A_+\oplus A_-)\ar[r]\ar[d]&0\\
0\ar[r]&\Hom_A(P_0,(A_+\oplus A_-)/A)\ar[r]&\Hom(P_1,(A_+\oplus A_-)/A)\ar[r]&0}\]

First, choose a lift to $\tilde{f}\in \Hom_A(P_0,A_+\oplus A_-)$. This results in fixing for each $x\in P_0$ a choice of $\tilde{f}(x)=\left(\sum_{N_+(x)}^\infty a_r^+(x)z^r,\sum_{-\infty}^{N_-(x)}a_r^-(x)z^r\right)$. Now fix the projective resolution to be \[\xymatrix{0\ar[r] &i_!i^!T\ar[rr]^-{z-\zeta} &&i_!i^!T\ar[r]& T\ar[r] &0}\] and apply the differential on the middle row of the change of coefficients diagram. As the original $f$ was in the kernel of the differential on the bottom row, the morphism $(z-\zeta)^*\tilde{f}$ lifts to $\tilde{\tilde{f}}\in\Hom_A(P_1,A)$ and each $(z-\zeta)^*\tilde{f}(x)$ is in the image of $\lmat j_+\\j_-\rmat:A\hookrightarrow A_+\oplus A_-$. In particular, \[a^+_{r-1}(x)-a^+_r(\zeta x)=a^-_{r-1}(x)-a^-_r(\zeta x) \qquad\text{for all $r$},\]and there exist integers $M_+ <M_-$ such that this quantity vanishes when $r>M_-$ and $r<M_+$. We have identified the connecting morphism $\delta'$, and by inspection of the morphism $F^{-1}$, we have for any $x\in P_1$:\[F^{-1}\circ\delta'(f)(x)=\sum_{M_+}^{M_-}(a_{-(r+1)}^{\pm}(x)-a_{-r}^\pm(\zeta x))z^r.\] Finally we pass to the cokernel of $z^{-1}-\zeta^*:i_!(i^!T)^*\to i_!(i^!T)^*$, so that $z^{-1}$ acts as $\zeta^*$:\[F^{-1}\circ \delta'(f)(x)=\sum_{M_+}^{M_-}a_{-(r+1)}^{\pm}(\zeta^{-r}x)-a_{-r}^\pm(\zeta^{1-r} x).\]To evaluate this sum, observe that it is part of a larger sum that can be separated into two telescoping sums\[\begin{array}{rcl}&&\sum_{r=1}^\infty(a_{-(r+1)}^+(\zeta^{-r} x)-a_{-r}^+(\zeta^{1-r} x))+\sum_{-\infty}^{r=0}(a_{-(r+1)}^-(\zeta^{-r} x)-a_{-r}^-(\zeta^{1-r} x))\\
&&\\
&=&(\lim_{r\to\infty}a_{-(r+1)}^+(\zeta^{-r} x)-a_{-1}^+(x))+(a_{-1}^-(x)-\lim_{r\to-\infty}a_{-r}^-(\zeta^{1-r} x)).\\
&&\\
&=&-(a_{-1}^+(x)-a_{-1}^-(x)).\end{array}\]To see this last equality, take a fixed generating set $\{x_1,\dots, x_n\}$ for $i^!T$ as an $R$-module. But for $x_s$, a given generator, $a_{-r}^+(x_s)\to 0$ and $a_r^-(x_s)\to 0$ as $r\to\infty$ by definition of $A_+$ and $A_-$. Now for each $r$ we have $\zeta^r x$ in the span of the generating set, and the $a_r^\pm$ are $R$-linear.

Finally, recalling our isomorphism \[(k_+\,\,-k_-):\frac{A_+\oplus A_-}{A}\xrightarrow{\cong} A_\infty,\]we see that the final quantity is indeed the negative of the coefficient of $z^{-1}$ in $(k_+\,\,-k_-)(f(x))\in A_\infty$ as required.
\end{proof}

\begin{corollary}For each $T$ in $\H(A,S)$, the trace function is a natural isomorphism of $A$-modules\[\chi_*:\Hom_A(T,S^{-1}A/A)\xrightarrow{'cong} \Hom_R(i^!T,R).\]
\end{corollary}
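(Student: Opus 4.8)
The claim asks us to upgrade the map $\chi_*$, defined by post-composing with the fixed $R$-module morphism $\chi\colon S^{-1}A/A\to R$, to a \emph{natural} isomorphism of $A$-modules $\Hom_A(T,S^{-1}A/A)\xrightarrow{\cong}\Hom_R(i^!T,R)$ for every $T$ in $\H(A,S)$, where the $A$-action on the target has $z^{-1}$ acting as $\zeta(T)^*$. The plan is to reduce everything to the already-established Proposition~\ref{prop:badtrace}, which identifies the natural connecting-homomorphism isomorphism $F^{-1}\circ\delta'\circ l_*$ explicitly with the formula $x\mapsto -\,l(g(x))_{-1}$, the negative of the $z^{-1}$-coefficient of $l(g(x))\in A_\infty$. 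So the whole argument is a bookkeeping comparison between two nearly-identical functions: the composite from Proposition~\ref{prop:badtrace}, and $\chi_*$.

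First I would write out $\chi_*$ explicitly. By the definition of $\chi$ as $\bigl(-\,\text{const}\circ z\bigr)\circ l$, for $g\in\Hom_A(T,S^{-1}A/A)$ and $x\in i^!T$ we have $\chi_*(g)(x)=\chi(g(x))=-\,\bigl(l(g(x))\cdot z\bigr)_0 = -\,l(g(x))_{-1}$, since the $z^0$-coefficient of $z\cdot w$ is the $z^{-1}$-coefficient of $w$. Thus $\chi_*$ is \emph{literally the same function} as $F^{-1}\circ\delta'\circ l_*$ from Proposition~\ref{prop:badtrace}. Since Proposition~\ref{prop:badtrace} (together with the discussion immediately preceding it, which identifies $\delta=\delta'\circ l_*$ with the Ext connecting isomorphism of Lemma~\ref{lem:ext2}, and $F^{-1}$ with the natural isomorphism of Claim~\ref{clm:funnyF}) shows this composite is a natural isomorphism of $A$-modules, the conclusion follows immediately: $\chi_*$ coincides with a composite of three natural isomorphisms, hence is itself a natural $A$-module isomorphism.

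The only genuine content beyond this identification is checking that the $A$-module structures line up correctly — i.e.\ that $\chi_*$ is $A$-linear for the prescribed action where $z^{-1}$ acts on $\Hom_R(i^!T,R)$ as $\zeta(T)^*$. This is handled by the factorization: $\delta$ is $A$-linear as a connecting map of $A$-modules, $l_*$ is $A$-linear, and $F^{-1}$ transports the action $(z-\zeta)^*$ on $(i_!i^!T)^*$ to the action $z^{-1}-\zeta^*$ on $i_!(i^!T)^*$ by the direct calculation noted in the diagram preceding Proposition~\ref{prop:natural}; passing to the cokernel of $z-\zeta$ (resp.\ $z^{-1}-\zeta^*$) makes $z$ act as $\zeta$ (resp.\ $z^{-1}$ act as $\zeta^*$), which is exactly the claimed action on $\Hom_R(i^!T,R)$. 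Naturality in $T$ is inherited from the naturality of each of $\delta$, $l_*$, $F^{-1}$, all of which was established earlier.

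\textbf{Main obstacle.} There is no serious obstacle here — the hard work was already done in Proposition~\ref{prop:badtrace} and the preceding diagram chase. The one point requiring a moment's care is the sign/degree bookkeeping: making sure that the extra factor of $z$ built into the definition of $\chi$ (the ``$\text{const}\circ z$'' rather than just ``$\text{const}$'') is precisely what reconciles $\chi_*$ with the connecting-homomorphism formula, rather than introducing a discrepancy. As remarked in the text, this stray $z$ is the same one that appears in Chapter~\ref{chap:laurent} as the mismatch between the torsion-module duality and the autometric duality under the covering functor; so the clean way to present the proof is to observe that $\chi$ was \emph{defined} so as to absorb exactly that factor, and then simply cite Proposition~\ref{prop:badtrace}.
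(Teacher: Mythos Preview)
Your proposal is correct and takes essentially the same approach as the paper, which states the corollary without proof immediately after Proposition~\ref{prop:badtrace}. You have correctly unpacked the definition of $\chi$ to compute $\chi_*(g)(x)=-l(g(x))_{-1}$ and observed this is literally the formula of Proposition~\ref{prop:badtrace}, so that $\chi_*$ coincides with the natural isomorphism $F^{-1}\circ\delta'\circ l_*=F^{-1}\circ\delta$; the paper's somewhat confusing remark about being ``equal to the trace $\chi_*$ up to multiplication by $z$'' refers to the discussion after the corollary about the choice of isomorphism in the monodromy definition, not to a discrepancy in the corollary itself.
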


Why not take the natural morphism $\chi_*\circ(-z)$ to be our isomorphism in the definition of monodromy (\ref{def:monodromy})? The reason comes from the way duality in a non-singular $\eps$-symmetric autometric form $(K,\theta,h)$, matches up to duality in the corresponding linking form via the complex\[0\to i_!i^!K\xrightarrow{z-h} i_!i^!K\to 0\] and its dual complex.

\begin{proposition}\label{prop:traceisgood}If $(K,\theta,h)$ is a non-singular $\eps$-symmetric autometric form and $B(K,\theta,h)=(T,\lambda)$ is the covering then $\HH (B(K,\theta,h))\cong B(K,\theta,h)$.
\end{proposition}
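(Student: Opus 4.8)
The plan is as follows. First note that $B$ and $\HH$ are functors in opposite directions between $\Aut(R)$ and $\H(A,Q)$, so the isomorphism asserted in the proposition must be read through Proposition \ref{prop:covmonod1}, which already identifies the underlying autometric module of $\HH(B(K,h))=(i^!T,\zeta(T))$ with $(K,h)$ via a natural isomorphism $\mu\colon i^!T\xrightarrow{\cong}K$ (coming from the defining presentation $0\to i_!K\xrightarrow{z-h}i_!K\to T\to 0$ of $T=B(K,h)$, with $\mu\circ\zeta(T)\circ\mu^{-1}=h$). Thus the remaining content is that the $\eps$-symmetric autometric form structure $\theta'$ that $\HH$ attaches to $(T,\lambda)$ — the composite $i^!T\xrightarrow{B}T\xrightarrow{\lambda}T^\wedge\xrightarrow{\chi_*}(i^!T)^*$ of Definition \ref{def:monodromy} — is carried by $\mu$ onto the given $\theta\colon K\to K^*$, so that indeed $\HH(B(K,\theta,h))\cong(K,\theta,h)$ and $\HH$, $B$ are mutually inverse as claimed in Proposition \ref{prop:covmonod2}.

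The computation proceeds at the chain level. The linking form $\lambda$ is the one resolved by the four-row chain map displayed in the definition of the covering $B(K,\theta,h)$: on the resolution $i_!K\xrightarrow{z-h}i_!K$ of $T$ it is $\theta$ followed by the isomorphisms $-z^{-1}$ and the natural identification $F\colon i_!K^*\xrightarrow{\cong}(i_!K)^*$ of Claim \ref{clm:funnyF}. On the other side, by the Corollary following Proposition \ref{prop:badtrace}, $\chi_*$ is precisely the natural isomorphism $\Hom_A(T,Q^{-1}A/A)\xrightarrow{\cong}\Hom_R(i^!T,R)$, and Proposition \ref{prop:badtrace} realises it at chain level as $F^{-1}$ composed with the connecting homomorphism of the coefficient sequence $0\to A\to A_+\oplus A_-\to A_\infty\to0$, evaluated by reading off the coefficient of $z^{-1}$. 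Splicing these two chain-level descriptions, the $F$ occurring in $\lambda$ cancels the $F^{-1}$ occurring in $\chi_*$, and — this is the key point — the auxiliary factor $-z^{-1}$ built into the covering diagram cancels exactly the factor $z$ that Proposition \ref{prop:badtrace} shows $\chi_*$ introduces (the ``slight discrepancy'' flagged in the remark immediately preceding the present proposition). What survives is $\theta$ itself, transported along $\mu$; the symmetry type is correct because $(T,\lambda)$ is $(-\eps)$-symmetric, $\HH$ returns $\eps$-symmetry, and the trace identity $\chi(\overline{x})=-\overline{\chi(x)}$ of property \ref{eq:cond2} supplies the compensating sign. Naturality in $(K,\theta,h)$ is then automatic, since $F$, the slant and evaluation maps, and the connecting homomorphisms are all natural, so a natural isomorphism of projective resolutions induces a natural isomorphism of the resulting autometric forms.

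The main obstacle is the explicit identification $\mu\colon i^!T\cong K$ together with the careful transport of both $\lambda$ and $\chi_*$ through it at the chain level: $\mu$ is not a set-level identity but arises from the Cayley–Hamilton/cokernel presentation of $T$, so one must write it down precisely and then verify that conjugating the chain-level formula for $\theta'$ by $\mu$ genuinely returns $\theta$, with the $-z^{-1}$ versus $z$ cancellation and the symmetry sign both landing correctly. Everything else — exactness of localisation, the various naturality statements, and the fact (already in Proposition \ref{prop:covmonod1}) that $\zeta(T)$ corresponds to multiplication by $h$ — is routine given the machinery of Chapter \ref{chap:laurent} and of this appendix.
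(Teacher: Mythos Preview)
Your plan is correct and leads to a valid proof, but it takes a genuinely different route from the paper. The paper's argument is a direct, bare-hands computation: it writes $\lambda([x],[y])=-z^{-1}\theta(x,(z-h)^{-1}y)$, expands $(z-h)^{-1}$ explicitly as the difference of its two Novikov-ring power series $f_+\in A_+$ and $f_-\in A_-$, thereby expressing $\lambda([x],[y])$ as a concrete element of $A_\infty$, and then simply reads off the relevant coefficient to verify $\chi_*\lambda([x],[y])=\theta(x,y)$. There is no chain-level bookkeeping, no tracking of $F$ or $F^{-1}$, and no appeal to the connecting-homomorphism description of $\chi_*$ --- just a one-line power-series identity.

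Your approach, by contrast, is structural: you splice the resolution diagram defining $\lambda$ with the chain-level realisation of $\chi_*$ from Proposition~\ref{prop:badtrace} and argue that the $F$/$F^{-1}$ and the $-z^{-1}$/$z$ factors cancel, leaving $\theta$. This is more conceptual and has the virtue of explaining \emph{why} the $-z^{-1}$ twist was inserted into the covering definition in the first place (the paper only flags this as a ``slight discrepancy'' and then verifies it by calculation). The cost is that making the claimed cancellations rigorous requires carefully transporting both maps through the identification $\mu\colon i^!T\cong K$, which --- as you yourself note --- is the main obstacle, and carrying that out in full is not far from the paper's explicit computation anyway. One small point: your appeal to the trace identity $\chi(\overline{x})=-\overline{\chi(x)}$ to fix the symmetry sign is unnecessary here; once you have shown the pairing equals $\theta$ on the nose, the $\eps$-symmetry of $\theta'$ is inherited directly from that of $\theta$.
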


\begin{proof}On the level of $(K,h)$ and $T$, this was already shown in \ref{prop:covmonod1}. We must show the definitions of the forms agree. But it is enough to show that $\chi_*\lambda([x],[y])=\theta(x,y)$ for $x,y\in K$ and $[x],[y]\in \coker(z-h:i_!i^!K\to i_!i^!K)=T$. By definition\[\lambda([x],[y])=z^{-1}\theta(x,(z-h)^{-1}y)\in S^{-1}A/A\cong A_\infty.\]But as we are working over $A_\infty$, we can express the inverse $(z-h)^{-1}=f_+-f_-$ where $f_\pm$ are the expansions\[\begin{array}{rcll}
f_+&=&h^{-1}+h^{-2}z+h^{-3}z^2+\dots&\in \Hom_A(i_!i^!K,A_+),\\
f_-&=&-(z^{-1}+hz^{-2}+h^2z^{-3}+\dots)&\in \Hom_A(i_!i^!K,A_-).\end{array}\]So in fact, recalling that $\lambda$ is conjugate linear in the second variable, we can write\[\lambda([x],[y])=z^{-1}\left(\left(\sum_1^\infty\theta(x,h^{-k}y)z^{k-1}\right)+\left(\sum_1^\infty\theta(x,h^{k-1}y)z^k\right)\right).\]So that the trace of $\lambda([x],[y])$ is indeed $\theta(x,y)$ as required.

\end{proof}

\chapter{Twist and frame spinning}\label{chap:twistspin}

Throughout this appendix, given $k<l$ we view $S^k$ as the subset of $S^l\subset\R^{l+1}$ given by setting the first $l-k$ coordinates to zero.  
Given $k$ and $l$ we furthermore pick an identification of $D^k\times D^l$ with $D^{k+l}$. We view $D^2$ also as a subset of $\C$. If $U$ is a submanifold of a manifold $V$ we use the notation $\nu U$ for an open tubular neighbourhood of $U$ in $V$.

Let $K:S^n\hookrightarrow S^{n+2}$ be an $n$-knot and recall we denote the associated disc knot by $\Delta_K$ and the respective exteriors $X_K=S^n\sm \nu K$ and $X_{\Delta_K}=D^{n+2}\sm \nu \Delta_K$. We will furthermore assume that $\Delta_K\subset D^{n+2}=D^2\times D^n$ in such a way that $\Delta_K=0\times D^n$.

\section{Twist spinning}\label{sec:twistspinning}

This section of the appendix was joint work with Stefan Friedl and is based on the preprint \cite{Friedl:2013fk}. We will give a quick reproof of Zeeman's twist-spinning result \cite[\textsection 6]{MR0195085}. As a corollary, we obtain the observation of Sumners {\cite[2.9]{MR0290351}.

\medskip

Given $z\in S^1$ we now denote by 
\[ \begin{array}{rcl} \rho_z\colon D^{n+2}=D^2\times D^n&\to & D^{n+2}=D^2\times D^n\\
(w,x)&\mapsto & (zw,x)\end{array}\]
the rotation by $z$ in the $D^2$-factor. Note that $\rho_z$ restricts to the identity on $\Delta_K\cap S^{n+1}$. 
Also note that we can and will assume that the decomposition $D^{n+2}=D^2\times D^n$ is oriented in such a way that for any $x\in S^{n+1}\sm \nu \Delta_K$ the closed curve 
\[ \begin{array}{rcl} S^1&\to & X_{\Delta_K} \\
z&\mapsto  & \rho_z(x)\end{array}\]
gives the oriented  meridian of $K$. 

\begin{figure}[h]\[\def\picdisc{\resizebox{0.3\textwidth}{!}{ \includegraphics{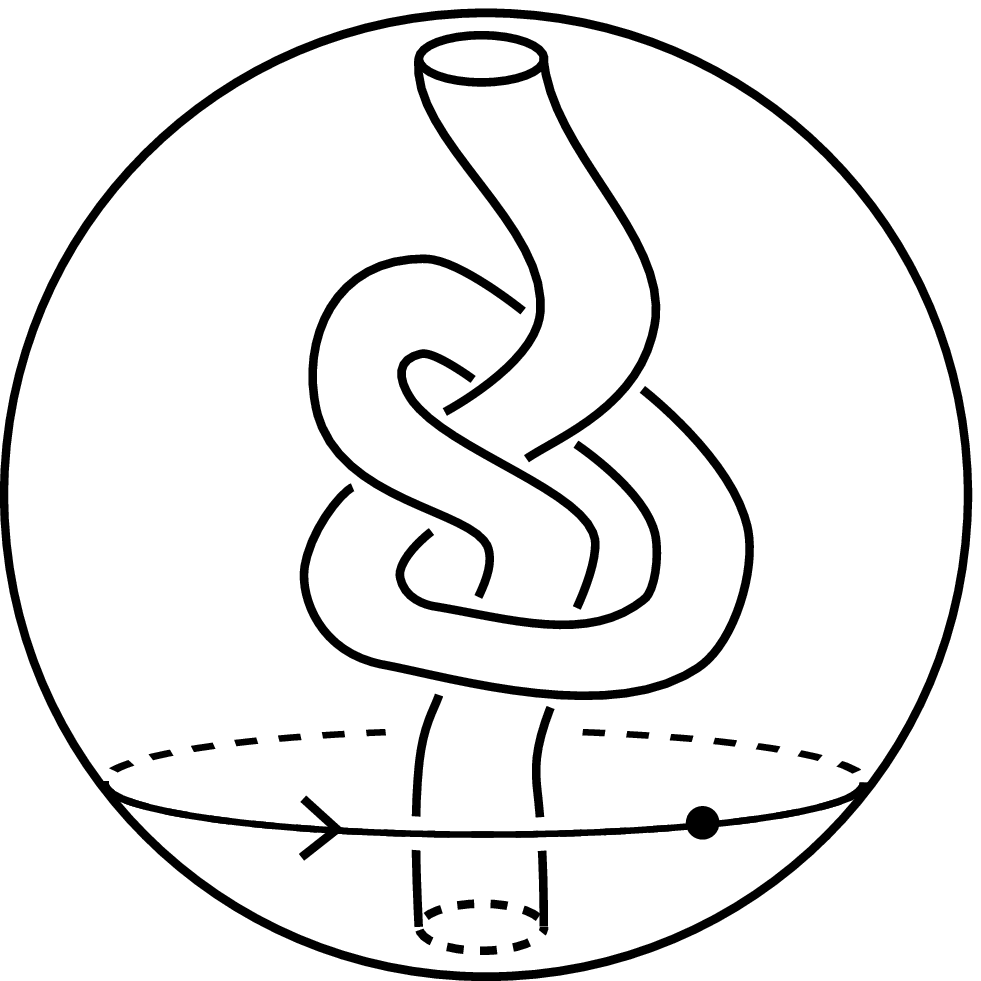}}}
\begin{xy} \xyimport(259,139){\picdisc}
,!+<7.2pc,2pc>*+!\txt{}
,(-150,100)*!L{S^{n+1}\sm\nu\Delta_{K}}
,(-35,100)*+{}="A";(20,100)*+{}="B"
,{"A"\ar"B"}
,(250,130)*!L{X_{\Delta_K}=D^{n+2}\sm \nu \Delta_K}
,(250,130)*+{}="A";(190,105)*+{}="B"
,{"A"\ar"B"}
,(25,75)*!L{\nu\Delta_K}
,(50,80)*+{}="A";(105,95)*+{}="B"
,{"A"\ar"B"}
,(173,15)*!L{x}
,(-25,25)*!L{\rho_z(x)}
\end{xy}\]
  \label{pic:disc}
\end{figure}

Now let $k\in \Z$. In order to define the $k$-twist spin of $K$, we use the following decomposition \[ S^{n+3}=S^1\times D^{n+2}\,\cup \, D^2\times S^{n+1}.\] Denote by $\Phi_k$ the homeomorphism \[ \begin{array}{rcl} \Phi_k\colon S^1\times D^{n+2}&\to & S^1\times D^{n+2}\\
(z,x)&\mapsto & (z,\rho_{z^k}(x)).\end{array}\] The \textit{$k$-twist spin $S_k(K)$} is then defined as 
\[ S_k(K):=
\underset{\subset S^1\times D^{n+2}}{\underbrace{ \Phi_k(S^1\times J)}}\,\,\cup\,\,
\underset{\subset D^2\times S^{n+1}}{\underbrace{ 
D^2\times  S^{n-1}}}.\] More informally, $S_k(K)$ is given by spinning the disc knot $\Delta_K$ around the $S^1$ direction, performing $k$ twists around $\Delta_K$ as you go, and then capping off the result by $D^2\times S^{n-1}$.

\begin{theorem}\label{mainthm}
If $k\neq 0$, then $X_{S_k(K)}$ fibres over $S^1$, where the fibre is the result of removing an open ball from the $k$-fold branched cover of $K$.
\end{theorem}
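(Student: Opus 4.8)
The plan is to analyze the exterior $X_{S_k(K)}$ directly from its explicit description and exhibit the fibration over $S^1$ by constructing the projection map and identifying the fibre. First I would observe that the decomposition $S^{n+3} = S^1 \times D^{n+2} \cup D^2 \times S^{n+1}$ induces a decomposition of the exterior: removing the tubular neighbourhood $\nu S_k(K)$ splits as removing $\Phi_k(S^1 \times \nu\Delta_K)$ from the first piece and removing $D^2 \times \nu(S^{n-1})$ from the second. The first piece contributes $\Phi_k(S^1 \times X_{\Delta_K})$ and the second contributes $D^2 \times (S^{n+1}\sm \nu S^{n-1}) = D^2 \times X_{\Delta_U}$ where $\Delta_U$ is the trivial disc knot — but $X_{\Delta_U} \cong S^1 \times D^n$, so this second piece is $D^2 \times S^1 \times D^n$. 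The gluing of the two pieces happens along $S^1 \times S^1 \times (\text{something})$, and the whole point is that the twisting homeomorphism $\Phi_k$ shears the meridian-longitude framing in a way that, when $k \neq 0$, makes the combined space fibre.

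Next I would set up the fibration map concretely. On $\Phi_k(S^1 \times X_{\Delta_K})$, use the coordinate $z \in S^1$ composed with the rotation data, together with a meridian map $\psi: X_{\Delta_K} \to S^1$ of the disc-knot exterior (analogous to Corollary \ref{cor:meridian}, noting $X_{\Delta_K}$ has the homology of $S^1$ by Alexander duality). The combination of the spinning coordinate and the meridian map gives a candidate map to $S^1$. The key computation is that because of the $k$ twists, the monodromy of this circle-valued map on $X_{\Delta_K}$ is the $k$-fold power of the meridian monodromy, so the associated infinite cyclic cover, pulled back appropriately, becomes a \emph{finite} cyclic cover — this is where the branched cover of $K$ enters. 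Specifically, the fibre of $X_{S_k(K)} \to S^1$ should be identified with $X_{\Delta_K}$ glued to $D^2 \times D^n$ along the appropriate boundary torus via a degree-$k$ map in the meridian direction, which is exactly the construction of the $k$-fold cyclic branched cover of $S^{n+2}$ along $K$ with an open ball removed (the $D^2 \times D^n$ caps off the branch locus neighbourhood, and removing a ball accounts for the fact that we took the exterior rather than the whole branched cover).

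The main obstacle will be carefully tracking the framings and the gluing maps to verify that the candidate projection is genuinely a locally trivial fibre bundle and that its fibre is precisely the punctured $k$-fold branched cover — in particular checking that when $k \neq 0$ the relevant covering is finite (degree $|k|$) rather than infinite, and handling the $k < 0$ case by orientation reversal. One must also check that the fibration on the two pieces $\Phi_k(S^1 \times X_{\Delta_K})$ and $D^2 \times S^1 \times D^n$ agree on the overlap; this amounts to a direct but fiddly verification that the twisting homeomorphism $\Phi_k$ intertwines the two circle-valued maps. A clean way to organize this is to build the fibre $F_k$ first (as the branched cover minus a ball), exhibit an explicit monodromy self-homeomorphism $h_k: F_k \to F_k$ induced by the deck transformation of the branched cover, form the mapping torus $F_k \times [0,1] / (x,1)\sim(h_k(x),0)$, and then construct an explicit homeomorphism from this mapping torus to $X_{S_k(K)}$ using the coordinates above. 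The case $k = 0$ is genuinely excluded since $S_0(K) = K \# (-K)$ (as the twisting is trivial) whose exterior does not fibre in general, which also explains why the corollary that $K \#(-K)$ is doubly slice requires a separate spinning argument rather than following from this fibration theorem.
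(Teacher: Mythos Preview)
Your overall architecture matches the paper's: decompose the exterior into $\Phi_k(S^1\times X_{\Delta_K})$ and $D^2\times S^1\times D^n$, build a circle-valued map from the spinning coordinate together with the meridian map $\psi$, and then identify the fibre. The paper does exactly this, with the explicit formula $p(z,x)=z^{-k}\psi(x)$ on $S^1\times Y$ (where $Y=X_{\Delta_K}$) and projection to $S^1$ on the other piece.

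However, your identification of the fibre contains a genuine error. You write that the fibre is ``$X_{\Delta_K}$ glued to $D^2\times D^n$ along the appropriate boundary torus via a degree-$k$ map in the meridian direction'', and you claim this is the punctured $k$-fold branched cover. It is not. Gluing the knot exterior to a solid handlebody by a degree-$k$ meridian map is a Dehn-surgery-type construction, not a branched-cover construction. What actually happens is that the fibre of $p(z,x)=z^{-k}\psi(x)$ over $1\in S^1$ is $\{(z,x):\psi(x)=z^k\}$, and the projection $(z,x)\mapsto x$ exhibits this as the $|k|$-fold cyclic cover $Y_k$ of $Y$ associated to $\pi_1(Y)\to\Z\to\Z/k\Z$. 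It is this cover $Y_k$ --- not $Y$ itself --- that gets capped off by $D^2\times D^n$, and \emph{that} is the punctured $k$-fold branched cover. Your sentence about ``the monodromy is the $k$-fold power of the meridian monodromy, so the associated infinite cyclic cover becomes a finite cyclic cover'' gestures toward the right phenomenon but then you draw the wrong conclusion from it.

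A smaller point: $S_0(K)$ is the Artin spin, an $(n+1)$-knot in $S^{n+3}$; it is not $K\#(-K)$. What is true (and what the paper proves separately) is that $S_k(K)\cap S^{n+2}\simeq K\#(-K)$ for every $k$. Your conclusion that $k=0$ is excluded because the exterior need not fibre is correct, but the stated reason is not.
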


\begin{proof}
The proof consists of two parts. We will first describe $X=X_{S_k(K)}$ in a different, more convenient, way. We will then use this new description to write down the promised fibre bundle over $S^1$. The first part is well-known, in fact this description of $X$ is also given in \cite[p.~201]{MR2179262}.

We  write $Y=X_{\Delta_{K}}$. Note that $Y\cap \partial D^{n+2}=S^{n+1}\sm \nu S^{n-1}$. 
As usual we can identify $S^{n+1}\sm \nu S^{n-1}$ with $S^1 \times D^n$.
 We now see that 
\[ \begin{array}{rcl} X&=&S^1\times D^{n+2}\,\sm\, \Phi_k(S^1\times \nu J)\,\,\,\cup \,\,\,D^2\times  (S^{n+1}\sm \nu S^{n-1})\\
&=& \Phi_k(S^1\times Y)\,\,\,\cup \,\,\,D^2\times S^1 \times D^n.\end{array}\]
Note that  ${\Phi_k}$ restricts to an automorphism of $\partial (S^1\times Y)=S^1\times S^1 \times D^n$.
We can thus glue $S^1\times Y$ and $D^2\times S^1 \times D^n$ together via the restriction of  ${\Phi_k}$ to $S^1\times S^1 \times D^n$.
The map
\[  S^1\times Y\,\,\cup_{{\Phi_k}} \,\,D^2\times S^1 \times D^n\to
\Phi_k(S^1\times Y)\,\,\cup \,\,D^2\times S^1 \times D^n,\]
which is given by $\Phi_k$ on the first subset and by the identity on the second subset, is then a well-defined homeomorphism.

We will now use this new description of $X$ to write down the fibre bundle structure over $S^1$. 
Write $\psi:X\to S^1$ for the meridian map of $X$ (see Corollary \ref{cor:meridian}).
It is straightforward to verify that the assumption that $k\ne 0$ implies that the map
\begin{equation} \label{equ:fibre} \begin{array}{rcl} p\colon S^1\times Y&\to& S^1\\
(z,x)&\mapsto & z^{-k}\psi( x)\end{array}\end{equation}
defines a fibre bundle.
It now follows from the definitions that the map 
\[  S^1\times Y\,\,\cup_{{\Phi_k}} \,\,D^2\times S^1 \times D^n\to S^1\]
which is given by $p$ on the first subset and by projection on the $S^1$-factor in the second subset is the projection of a fibre bundle. 

It remains to identify the fibre of the fibration.
The fibre `on the right' (of the decomposition) is $D^{2}\times \{1\}\times D^n$ whereas the fibre `on the left' is given by 
\[ Y_k=\{ (y,z)\in S^1\,|\, \psi(y)=z^{k}\}\]
which is just the $k$-fold cyclic cover of $Y$ corresponding to the surjective morphism
$\pi_1(Y)\to H_1(Y;\Z)\xrightarrow{\cong} \Z\to \Z/k\Z$. 

Note that $Y$ is in fact homeomorphic to the knot exterior $X$, and that hence $Y_k$ is just the $k$-fold cyclic cover of $S^{n+2}\sm \nu K$. 
It is now straightforward
 to see that the fibre
\[ Y_k\cup_{S^1\times \{1\}\times D^n} D^{2}\times \{1\}\times D^n\]
is the result of attaching a 2-handle to $Y_k$ along the preimage of a meridian under the covering map $Y_k\to Y$. Put differently, the fibre is obtained 
by removing an open ball from the $k$-fold branched cover of $K$.
\end{proof}

\begin{corollary}\label{cor:sk1trivial}
If $K\subset S^{n+2}$ is a knot, then $S_{\pm 1}(K)\subset S^{n+3}$ is a trivial knot.
\end{corollary}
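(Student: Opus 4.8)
\textbf{Proof proposal for Corollary \ref{cor:sk1trivial}.}

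The plan is to apply Theorem \ref{mainthm} with $k = \pm 1$ and then observe that the $(\pm 1)$-fold branched cover of any knot $K \subset S^{n+2}$ is the standard sphere $S^{n+2}$ itself. Indeed, the $k$-fold branched cover of $K$ is the union of the $k$-fold cyclic cover $Y_k$ of the exterior $Y = S^{n+2} \sm \nu K$ (corresponding to $\pi_1(Y) \to \Z \to \Z/k\Z$) with a copy of $S^n \times D^2$ glued back along the preimage of $\partial\nu K$; when $k = \pm 1$ the covering $Y_k \to Y$ is a homeomorphism, so the branched cover is just $Y \cup_{S^n\times S^1}(S^n\times D^2) = S^{n+2}$. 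Therefore, by Theorem \ref{mainthm}, the exterior $X_{S_{\pm1}(K)}$ fibres over $S^1$ with fibre the result of removing an open ball from $S^{n+2}$, i.e.\ with fibre $D^{n+2}$.

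The second step is to conclude that an $(n+1)$-knot whose exterior fibres over $S^1$ with fibre $D^{n+2}$ must be the unknot. This follows because such a fibre bundle $D^{n+2} \to X_{S_{\pm1}(K)} \to S^1$ is, up to bundle isomorphism, a mapping torus of a self-homeomorphism $h$ of $D^{n+2}$ which is the identity near $\partial D^{n+2}$ (the boundary restriction being the product $S^n\times S^1$ structure of $\partial X$). By the Alexander trick, any such $h$ is isotopic rel boundary to the identity, so $X_{S_{\pm1}(K)} \cong D^{n+2}\times S^1 = X_U$, the exterior of the unknot. Finally, since an $(n+1)$-knot is determined by its exterior for $n+1 = 1$ and is one of at most two knots with a given exterior for $n + 1 \geq 3$ (and the unknot is characterised among these by, e.g., having $\pi_1$ of the complement infinite cyclic together with the homotopy sphere recognition of the cap), and since the standard unknot manifestly has exterior $D^{n+2}\times S^1$, we deduce $S_{\pm1}(K)$ is trivial. (For $n+1 = 2$ one invokes $\mathcal{C}_2 = 0$ or direct low-dimensional arguments; the interesting cases are $n \geq 1$.)

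The main obstacle I anticipate is the passage from ``the exterior is homeomorphic to that of the unknot'' to ``the knot is the unknot,'' since knot exteriors do not in general determine knots (the non-reflexive pairs mentioned after the statement preceding Theorem 6.0.1). Here one must either invoke the unknotting characterisations of Levine \cite{MR0179803}, \cite{MR0246314} — noting that the fibre $D^{n+2}$ of the bundle, capped off appropriately, provides an honest $D^{n+1}$ Seifert surface for $S_{\pm1}(K)$, which by Levine's theorem forces triviality when $n+1 \neq 2$ — or else argue directly that the Alexander-trick trivialisation of the monodromy gives an ambient isotopy of $S^{n+3}$ carrying $S_{\pm1}(K)$ to $U$. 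I would favour the Seifert-disc route: the fibre over a point of $S^1$ together with the cap $D^2 \times S^{n-1}$ assembled in the proof of Theorem \ref{mainthm} exhibits precisely a bounding disc for $S_{\pm1}(K)$, and an unknotted one since the fibre is a ball, so this sidesteps the reflexivity subtlety entirely and reduces everything to Levine's characterisation of the unknot by the existence of a $D^{n+1}$ Seifert surface.
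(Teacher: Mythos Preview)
Your proposal is correct, and your final ``Seifert-disc route'' is exactly the paper's argument: the $\pm 1$-fold branched cover of $K$ is $S^{n+2}$, so by Theorem~\ref{mainthm} the fibre is $D^{n+2}$, and hence $S_{\pm 1}(K)$ bounds an $(n+2)$-ball in $S^{n+3}$, which forces it to be the unknot. The detour through the Alexander trick and the exterior-determines-knot question is unnecessary---you correctly identified the reflexivity obstacle and then correctly bypassed it by observing that the fibre \emph{is} a bounding disc, which is the direct route the paper takes from the start.
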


\begin{proof}
First note that  the $\pm 1$-fold branched cover of $S^{n+2}$ along $K$ is just $S^{n+2}$ again. It thus follows  from Theorem \ref{mainthm} that $S_{\pm 1}(K)$ bounds an $(n+2)$-ball in $S^{n+3}$, which means that  $S_{\pm 1}(K)\subset S^{n+3}$ is a trivial knot.
\end{proof}

We also make following observation concerning twist spins.

\begin{lemma}\label{lem:kplus-k}
If $K:S^n\hookrightarrow S^{n+2}$ is a knot, then for any $k\in\Z$ the knot $ S_{k}(K)\cap S^{n+2}$ is isotopic to $K\# -K$.
\end{lemma}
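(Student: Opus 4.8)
The plan is to analyse the cross-section $S_k(K) \cap S^{n+2}$ geometrically by tracking where the standard plane $S^{n+2} \subset S^{n+3}$ meets the spun-and-twisted disc knot. Recall the decomposition $S^{n+3} = S^1 \times D^{n+2} \cup D^2 \times S^{n+1}$, and that $S^{n+2}$ can be taken to be the sphere obtained by setting one pair of coordinates to zero; concretely I would arrange things so that $S^{n+2} = (S^0 \times D^{n+2}) \cup (D^1 \times S^{n+1})$, where $S^0 = \{1, -1\} \subset S^1$ and $D^1 = [-1,1] \subset D^2 \subset \C$ is a diameter through those two points. The intersection with $S^1 \times D^{n+2}$ is then $\{1,-1\} \times D^{n+2}$, and the intersection with $D^2 \times S^{n+1}$ is $D^1 \times S^{n+1}$.

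First I would compute the two pieces of $S_k(K) \cap S^{n+2}$. The spun part is $\Phi_k(S^1 \times \Delta_K)$, which meets $\{1,-1\} \times D^{n+2}$ in $\{1\} \times \rho_{1^k}(\Delta_K) \, \cup \, \{-1\} \times \rho_{(-1)^k}(\Delta_K)$. Since $\rho_z$ for $z \in S^0$ is a rotation by $0$ or $\pi$ in the $D^2$-factor, and $\Delta_K = 0 \times D^n$ lies on the rotation axis, we have $\rho_{1^k}(\Delta_K) = \rho_{(-1)^k}(\Delta_K) = \Delta_K$ regardless of $k$. So over $z=1$ we get a copy of the disc knot $\Delta_K$ and over $z=-1$ we get another copy; to see the orientations, note that the ambient orientation of $S^{n+2}$ is traversed in opposite senses across $z=1$ versus $z=-1$ (one passes through the $D^1$-arc on one side, the other on the opposite side), so one copy appears as $\Delta_K$ and the other as $\Delta_{-K} = r_{n+2}\Delta_K r_n$, i.e. a mirrored, orientation-reversed disc knot. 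The cap part $D^2 \times S^{n-1}$ meets $D^1 \times S^{n+1}$ in $D^1 \times S^{n-1}$, which is precisely the standard unknotted $(n-1)$-sphere's exterior collar along which $\Delta_K$ and $\Delta_{-K}$ have matching unknotted boundaries $S^{n-1} \subset S^{n+1}$.

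Assembling: $S_k(K) \cap S^{n+2}$ is the union of the disc knot $\Delta_K$ over $\{1\}$, the disc knot $\Delta_{-K}$ over $\{-1\}$, and the connecting band $D^1 \times S^{n-1}$, glued along the common unknotted boundary sphere via the standard orientation-reversing homeomorphism $r_{n+2}$, $r_n$. By the definition of connected sum of knots in terms of disc knots glued along an unknotted boundary via $r_{n+2}$, $r_n$ (as recalled in the excerpt: $K \# K' (x) = \Delta_K(x)$ for $x \in D^n_-$ and $r_{n+2}\Delta_{K'}r_n(x)$ for $x \in D^n_+$), this is exactly $K \# (-K)$.

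The main obstacle will be the careful bookkeeping of orientations and coordinate identifications — in particular, verifying that the cross-section really splits the two disc knots with opposite orientation conventions so that what emerges is $K \# (-K)$ rather than, say, $K \# K$ or $2K$. This requires being precise about the chosen identification $D^{n+2} = D^2 \times D^n$, the position of $\Delta_K$ on the rotation axis, which diameter $D^1$ of $D^2$ we use to define $S^{n+2}$, and how the ambient orientation restricts; the twist $\Phi_k$ itself is harmless since $\rho_z$ fixes $\Delta_K$ pointwise for $z \in S^0$, so the statement is genuinely independent of $k$ and in particular (combining with Corollary \ref{cor:sk1trivial} for $k = \pm 1$) recovers the fact that $K \# (-K)$ is doubly-slice. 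Once the orientation issue is pinned down, the rest is a direct unwinding of the definitions of spinning and of connected sum.
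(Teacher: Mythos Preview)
Your overall strategy matches the paper's: decompose the equatorial $S^{n+2}$ as $\{\pm 1\}\times D^{n+2}\cup D^1\times S^{n+1}$, identify the two disc knots over $z=\pm 1$, note the orientation reversal on one side, and recognise the connected sum. But there is a genuine gap in your computation over $z=-1$.

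You claim that since ``$\Delta_K = 0\times D^n$ lies on the rotation axis'', the rotation $\rho_{(-1)^k}$ fixes $\Delta_K$ pointwise. This is a misreading of the setup: if the disc knot were literally the axis $0\times D^n\subset D^2\times D^n$, it would be the standard unknotted disc and $K$ would be trivial. Only the \emph{boundary} $\Delta_K\cap S^{n+1}$ is arranged to be standard (this is why the paper notes that $\rho_z$ restricts to the identity on $\Delta_K\cap S^{n+1}$); in the interior of $D^{n+2}$ the disc $\Delta_K$ is genuinely knotted and is moved by $\rho_{-1}$. So for $k$ odd the slice over $z=-1$ is not $\Delta_K$ but the rotated disc $\Delta':=\rho_{(-1)^k}(\Delta_K)$, obtained by rotating $\Delta_K$ through angle $k\pi$.

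The repair is exactly what the paper supplies: observe that $\Delta'$ is isotopic to $\Delta_K$ \emph{rel boundary} in $D^{n+2}$, via the ambient isotopy $t\mapsto \rho_{e^{ik\pi t}}$ for $t\in[0,1]$ (which fixes $\Delta_K\cap S^{n+1}$ throughout). With that one extra sentence your argument is complete and coincides with the paper's; the independence from $k$ is then genuinely because rotation by $k\pi$ is ambient-isotopic to the identity rel the boundary sphere, not because it fixes the knotted disc.
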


\begin{proof}
We denote by $\Delta'$ the disc knot which is defined by $\Phi(-1\times \Delta_K)=-1\times \Delta'$. 
Put differently, $\Delta'$ is  the result of rotating $\Delta_K\subset D^2\times D^n=D^{n+2}$  by $k\pi$.  Note that $\Delta$ is isotopic in $D^{n+2}$ to $\Delta_K$ rel the boundary.
We write 
\[ S^{n+3}=S^1\times  D^{n+2}\,\,\cup \,\, D^2\times S^{n+1}\]
with equator sphere
\[ S^{n+2}=\{ \pm 1\} \times D^{n+2} \,\,\cup \,\, D^1\times S^{n+1}.\]
The above decomposition of $S^{n+3}$ gives rise to an  orientation preserving map 
\[ \Psi\colon S^1\times D^{n+2} \to S^{n+3}\]
such that 
\[ \Psi(S^1\times D^{n+2})\cap S^{n+2} = \{-1\} \times D^{n+2}\,\,\cup \{1\}\times D^{n+2}.\]
Note that the restriction of $\Psi$ to $ \{-1\} \times D^{n+2}$
is \textit{orientation reversing} and that the restriction of $\Psi$ to $ \{1\} \times D^{n+2}$
is \textit{orientation preserving}. In particular
$\Phi_k(S^1\times \Delta_K)\cap S^{n+2}$ is the union of $\Delta_K$ with the mirror image of $\Delta'$. 

Since $\Delta_K$ and $\Delta'$ are isotopic rel the boundary it now follows easily that  $S_k(K)\cap S^{n+2}$ is isotopic to the connected sum of $K$ and $-K$.
\end{proof} 

The following corollary is now an immediate consequence of 
Corollary \ref{cor:sk1trivial} and Lemma \ref{lem:kplus-k}.

\begin{corollary}
If $K\subset S^{n+2}$ is an oriented knot, then $K\#(-K)$ is doubly slice.
\end{corollary}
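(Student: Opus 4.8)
The statement to prove is: if $K \subset S^{n+2}$ is an oriented knot, then $K \# (-K)$ is doubly slice. The plan is to deduce this directly from the two preceding results, Corollary \ref{cor:sk1trivial} (which says $S_{\pm 1}(K)$ is a trivial $(n+1)$-knot in $S^{n+3}$) and Lemma \ref{lem:kplus-k} (which says $S_k(K) \cap S^{n+2}$ is isotopic to $K \# -K$ for any $k$, in particular for $k = 1$).

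First I would recall the definition of doubly slice: an $n$-knot is doubly slice if it admits complementary slice discs $(D_\pm, K)$ whose glued union $D_+ \cup_K -D_-$ is the unknot in $S^{n+3}$. The key observation is that the twist-spin construction realizes $K \# -K$ as an \emph{equatorial cross-section} of a knot in $S^{n+3}$: by Lemma \ref{lem:kplus-k} with $k=1$, we have that $S_1(K) \cap S^{n+2}$ is isotopic to $K \# -K$, where $S^{n+2} \subset S^{n+3}$ is the equatorial sphere in the decomposition $S^{n+3} = S^1 \times D^{n+2} \cup D^2 \times S^{n+1}$ used in the definition of twist spinning, with $S^{n+2} = \{\pm 1\} \times D^{n+2} \cup D^1 \times S^{n+1}$. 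This equatorial sphere splits $S^{n+3}$ into two copies of $D^{n+3}$, and $S_1(K)$ meets each in a properly embedded $D^{n+1}$ — i.e. $S_1(K) \cap S^{n+2}$ has two (a priori different) slice discs coming from the two halves of $S_1(K)$.

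Second, I would invoke Corollary \ref{cor:sk1trivial}: $S_1(K) \subset S^{n+3}$ is the trivial $(n+1)$-knot. Therefore the two slice discs for $S_1(K) \cap S^{n+2} \simeq K \# -K$ glue together along the cross-section to give precisely $S_1(K)$, which is unknotted. By the discussion at the start of the doubly-slice section (slice $n$-knots are exactly cross-sections of $(n+1)$-knots, and a cross-section of the $(n+1)$-unknot is doubly slice by definition), this exhibits $K \# -K$ as a doubly-slice knot. Making the identification of `two halves of an equatorial cross-section' with `pair of complementary slice discs' precise is the only point requiring care: one must check that the two $D^{n+3}$ halves of $S^{n+3}$ cut out by $S^{n+2}$, together with the restrictions of $S_1(K)$, really do form a pair $(D^{n+3}, D_\pm)$ with $\partial(D^{n+3}, D_\pm) = (S^{n+2}, K\#-K)$ and with $D_+ \cup_{K\#-K} -D_- = (S^{n+3}, S_1(K)) = (S^{n+3}, U)$.

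I expect the main (very mild) obstacle to be purely bookkeeping: tracking orientations through the twist-spin decomposition so that the glued object is genuinely $S_1(K)$ and not some other spun knot, and confirming local flatness of the slice discs — but both of these are already essentially handled inside the proofs of Corollary \ref{cor:sk1trivial} and Lemma \ref{lem:kplus-k}, so the corollary follows immediately by combining them. I would write the proof as a two-line deduction: ``By Lemma \ref{lem:kplus-k}, $K \# (-K)$ is the equatorial cross-section of $S_1(K)$, which bounds the two halves of $S^{n+3}$; by Corollary \ref{cor:sk1trivial} the union of these halves is the unknot $S_1(K)$, so the two discs are complementary and $K\#(-K)$ is doubly slice.''
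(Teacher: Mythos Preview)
Your proposal is correct and matches the paper's approach exactly: the paper states this corollary as ``an immediate consequence of Corollary \ref{cor:sk1trivial} and Lemma \ref{lem:kplus-k}'' without further proof, and your two-line deduction is precisely the intended argument.
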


\section{Frame spinning}

This section of the appendix was joint work (unpublished) with Mark Powell.

\medskip

Frame spinning is a process that takes an $n$-knot $K:S^n\hookrightarrow S^{n+2}$ and a closed embedded submanifold $M^m\subset S^{m+n}$ with framing $\varphi$ of the normal bundle, and returns an $(n+m)$-knot $T_M^\varphi(K)$. In \cite{MR2063133} the author shows that all frame spun knots are slice. In this section we show the following extension to that theorem.

\begin{theorem}\label{thm:framespun}All odd-dimensional frame spun knots have hyperbolic Seifert forms.
\end{theorem}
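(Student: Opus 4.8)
The plan is to reduce the statement about a frame spun knot $T_M^\varphi(K)$ to the corresponding statement about the original knot $K$ via the double $L$-theoretic framework developed in this thesis, combined with the explicit description of the Seifert surface (equivalently, Seifert form) of a frame spun knot. First I would recall the construction: given $K:S^n\hookrightarrow S^{n+2}$, an embedded closed $M^m\subset S^{m+n}$ with normal framing $\varphi:\nu M\cong M\times D^2$, one forms the frame spun knot $T_M^\varphi(K)\subset S^{m+n+2}$ by removing $M\times D^2$ from $S^{m+n}$, taking the disc-knot exterior $X_{\Delta_K}\subset D^{n+2}=D^2\times D^n$, and glueing $M\times \Delta_K$ into $M\times D^2\times D^n=\nu M\times D^n\subset S^{m+n+2}$ along the boundary, capping off appropriately. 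The key geometric input (following Roseman--Klein--Cochran-style arguments, and in particular \cite{MR2063133}) is that if $F\subset S^{n+2}$ is a Seifert surface for $K$, then $M\times F$ (suitably glued to a collar of $M$) is a Seifert surface for $T_M^\varphi(K)$ in $S^{m+n+2}$.

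Next I would compute the Seifert form of $T_M^\varphi(K)$ from that of $K$. Since the Seifert surface of the frame spun knot is (up to homotopy, and rel boundary) the product $M\times F$, the middle-dimensional cohomology $H^{*}(M\times F)$ is given by the K\"unneth theorem as $\bigoplus_{p+q=*} H^p(M)\otimes H^q(F)$, and the Seifert pairing on $M\times F$ is the tensor product of the (symmetric, nonsingular) intersection form of the closed manifold $M$ with the Seifert form $(H^{k+1}(F),\psi)$ of $K$ (this is the analogue for Seifert forms of Roseman's product formula; the framing $\varphi$ is exactly what is needed to trivialise the normal bundle so that the product structure is respected and no twisting is introduced). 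Concretely, writing $\lambda_M$ for the symmetric intersection form of $M$, the Seifert form of $T_M^\varphi(K)$ is Witt-equivalent (indeed isomorphic, up to the contribution of a standard handle coming from the collar of $M$, which only changes it by a hyperbolic summand) to $(H^*(M),\lambda_M)\otimes (H^{k+1}(F),\psi)$.

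Then I would invoke the algebraic machinery. The covering operation $B$ of Chapter \ref{chap:laurent} is an isomorphism $\widehat{DW}_\eps(R)\xrightarrow{\cong} DW^{-\eps}(R[z,z^{-1}],P)$, and by Proposition \ref{stablyhypishypseif} (`stably hyperbolic $=$ hyperbolic' for Seifert forms) it suffices to show that the Seifert form of $T_M^\varphi(K)$ vanishes in the double Witt group $\widehat{DW}$, i.e.\ is stably hyperbolic. Now the tensor-product decomposition above exhibits this Seifert form as $(H^*(M),\lambda_M)\otimes(H^{k+1}(F),\psi)$; applying $B$ and using that $B$ is a map of monoids compatible with the module structure, the covering Blanchfield form of the frame spun knot is $(H^*(M),\lambda_M)\otimes \mathrm{Bl}_K$ where $\mathrm{Bl}_K$ is the Blanchfield form of $K$. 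The point is that a symmetric form over $\Z$ (namely $(H^*(M),\lambda_M)$) acts on the double Witt group of linking forms, and tensoring a linking form with a \emph{hyperbolic} symmetric form yields a hyperbolic linking form. But $(H^*(M),\lambda_M)$, while not itself hyperbolic, becomes hyperbolic after tensoring over $\Z[z,z^{-1}]$ because of the half-unit $(1-z)^{-1}$: by Lemma \ref{lem:splitishyp1} and Lemma \ref{lem:splitishyp2}, over a ring with a half-unit metabolic implies hyperbolic, and the excision isomorphism of Theorem \ref{algtrans}(i) lets us adjoin $(1-z)^{-1}$ without changing the double Witt group. Thus I would argue: the intersection form $(H^*(M),\lambda_M)$ of a \emph{closed} odd-... no --- more carefully, I would use that for the frame spun knot to be odd-dimensional we need $m+n$ odd; the relevant symmetric form $\lambda_M$ pairs $H^p(M)$ with $H^{m-p}(M)$, and for $p\neq m/2$ this part is already hyperbolic (complementary lagrangians $H^p(M)$ and $H^{m-p}(M)$), while the middle part (if $m$ is even) contributes a symmetric form over $\Z$ which, \emph{after} tensoring with the linking form $\mathrm{Bl}_K$ over $(\Z[z,z^{-1},(1-z)^{-1}],P)$ and using the half-unit, gives a hyperbolic linking form. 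Hence $\mathrm{Bl}_{T_M^\varphi(K)}$ is hyperbolic, so by the covering isomorphism and Proposition \ref{stablyhypishypseif} the Seifert form of $T_M^\varphi(K)$ is hyperbolic.

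\textbf{The main obstacle.} The hard part will be the Seifert-form product formula: establishing rigorously that the Seifert surface of $T_M^\varphi(K)$ is the product $M\times F$ (rel the collar of $M$) and that the Seifert \emph{pairing} — which is an asymmetric pairing defined via pushoffs $i^\pm$ into the complement — is genuinely the tensor product $\lambda_M\otimes\psi$ and not merely its symmetrisation or a form Witt-equivalent to it up to uncontrolled error terms. This requires carefully tracking the framing $\varphi$ through the pushoff construction and checking the signs/gradings in the K\"unneth splitting, ideally by passing to the Blanchfield \emph{complex} level (using $\sigma^{DL}$ and the $P$-acyclic symmetric Poincar\'e complex of Proposition \ref{prop:realise}) where the product structure $C_{T_M^\varphi(K)}\simeq C_*(M)\otimes_{\Z} C_K$ is cleaner to verify, and then descending to forms via Proposition \ref{prop:welldeflagrang2}. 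Once the product formula is in hand, the double $L$-theory input (half-unit, excision, `stably hyperbolic $=$ hyperbolic') does the rest with little further effort.
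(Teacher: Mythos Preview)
Your proposal has a genuine gap in the crucial step. You correctly reduce to a product formula --- the Seifert form of $T_M^\varphi(K)$ is, up to a hyperbolic summand, $A\otimes\tau$ with $A$ a Seifert matrix of $K$ and $\tau$ the middle-dimensional intersection form of $M$ (this is exactly what the paper cites from \cite{MR2063133}). But your argument for why $A\otimes\tau$ is hyperbolic is wrong. You claim that $\tau$, ``while not itself hyperbolic, becomes hyperbolic after tensoring over $\Z[z,z^{-1}]$ because of the half-unit $(1-z)^{-1}$''. The half-unit only upgrades \emph{metabolic} to hyperbolic (Lemmas~\ref{lem:splitishyp1}, \ref{lem:splitishyp2}); it does not make an arbitrary form hyperbolic. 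If $\tau=\langle 1\rangle$, say, then $A\otimes\tau=A$ and you would be claiming every Seifert form is hyperbolic. So you need an independent reason that $\tau$ itself is metabolic (indeed hyperbolic) over $\Z$, and no amount of double $L$-theory or excision will supply that.

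The missing input is geometric: the framed embedding $M^m\subset S^{m+n}$ forces $M$ to be stably parallelisable, hence all Pontryagin and Stiefel--Whitney classes of $M$ vanish. The paper's proof then splits into two cases. If $m\equiv 2\pmod 4$, the middle intersection form is skew-symmetric and unimodular over $\Z$, so it is the standard symplectic form and $A\otimes\tau$ is visibly hyperbolic. If $m\equiv 0\pmod 4$, vanishing of the Pontryagin classes and the Hirzebruch signature formula give $\sigma(M)=0$; vanishing of the Stiefel--Whitney classes and the Wu formula give that $\tau$ is even; and then the classification of indefinite unimodular forms over $\Z$ forces $\tau$ to be hyperbolic. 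Either way $\tau$ is hyperbolic over $\Z$, so $A\otimes\tau$ is hyperbolic. This characteristic-class step is the heart of the argument, and it is entirely absent from your plan.
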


We begin by describing the frame spin $T_M^\varphi(K)$ with input $(K,M,\varphi)$ as above. It is proved in {\cite{MR2063133} that the existence of the embedding $M\subset S^{m+n}$ is enough to show $M$ is stably parallelisable. Hence it has vanishing Pontryagin classes and Stiefel Whitney classes. In particular it is orientable.

Now take the standard framed unknotted $S^{m+n}\times D^2\subset S^{m+n+2}$. Combined with the framing $\varphi$, this determines an embedding of normal disc bundles $(M\times D^{n+2},M\times D^n)\subset (S^{m+n+2},S^{m+n})$. We now excise the interiors of the disc bundles and glue back in the product of $M$ and the disc knot $\Delta_K\subset D^{n+2}$. More precisely $T_M^\varphi(K)$ is defined as \[\cl(S^{m+n}\sm(M\times D^n)\cup_{M\times S^{n-1}}M\times \Delta_K,\]embedded as\[\cl(S^{m+n+2}\sm(M\times D^{n+2})\cup_{M\times S^{n+1}}M\times D^{n+2}.\](We remark that when $M$ is an $m$-dimensional unknot, we have $T_M^\varphi(K)=S_0(K)$, the 0-twist-spin.)

We will use the following main result.

\begin{theorem}[{\cite[Proof of 4.4]{MR2063133}}]Suppose $m+n$ is odd and we are given an $n$-knot $K$ and $(M^m,\varphi)$. Then the frame spin $T_M^\varphi(K)$ admits a Seifert surface $F$ such that\begin{enumerate}[(i)]
\item If $n$ is even then the Seifert form of $F$ is hyperbolic.
\item If $n$ is odd then the Seifert form of $F$ is double Witt equivalent to a form $(K,\psi)$ where $\psi$ has matrix \[A\otimes \tau\] for $A$ a Seifert matrix of $K$ and $\tau$ an integer matrix of the middle-dimensional intersection form on $H^{m/2}(M;\Z)/\{\text{$\Z$-torsion}\}$.
\end{enumerate}
\end{theorem}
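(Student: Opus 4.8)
The plan is to reproduce the argument in the proof of \cite[4.4]{MR2063133}: build an explicit Seifert surface $F$ for $T^\varphi_M(K)$ from the framing data and a Seifert surface of $K$, identify its (co)homology as a tensor product, evaluate the Seifert linking form directly from the geometry of the spin, and then split by the parity of $m$. First I would construct $F$. Choose a Seifert surface $G_0\subset D^{n+2}$ for the disc knot $\Delta_K$ which near $\partial D^{n+2}=S^{n+1}$ restricts to the standard spanning surface of the unknotted $\partial\Delta_K=U(S^{n-1})$; capping that standard piece turns $G_0$ into an honest Seifert surface $G^{n+1}$ for $K$. Using the framing $\varphi$ to identify the normal neighbourhood of $M$ in $S^{m+n+2}$ with $M\times D^{n+2}$ and that of $M$ in $S^{m+n}$ with $M\times D^n$, set $F:=(M\times G_0)\cup F_A$, where $F_A\subset\cl(S^{m+n+2}\setminus \nu M)$ is the ``trivial'' spanning surface of $\cl(S^{m+n}\setminus(M\times D^n))$ available because the normal $D^2$-direction was spun with a standard framing. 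One checks $\partial F=T^\varphi_M(K)$, and that the construction is engineered so that there is a natural isomorphism $\widetilde H_*(F;\Z)\cong H_*(M;\Z)\otimes\widetilde H_*(G;\Z)$ (mod $\Z$-torsion) intertwining the intersection pairings; writing $m+n=2k+1$ this gives $H^{k+1}(F)\cong\bigoplus_{p+q=k+1}H^p(M)\otimes\widetilde H^q(G)$ after killing torsion.

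Next I would compute the Seifert form $\psi$ of $F$ on this decomposition. Represent a class of $H^{k+1}(F)$ Poincar\'e-dually by a product cycle $N\times c$, with $N\subset M$ a submanifold and $c\subset G_0\subset F$. The positive pushoff of $F$ off $M\times G_0$ is again a product, so a linking-number computation in $S^{m+n+2}$ shows that the Seifert pairing of $N\times c$ against the pushoff of $N'\times c'$ equals, up to sign, the product of the intersection number of $N$ and $N'$ in $M$ with the Seifert linking number of $c$ and $c'$ for the knot $K$. Consequently, with respect to bases adapted to bases of $H^*(M)$ and of $\widetilde H^*(G)$, the matrix of $\psi$ is a tensor product of blocks: the $(p,q)$--$(p',q')$ block is non-zero only when $p+p'=m$ and $q+q'=n+1$, the $M$-factor being the intersection pairing $H^p(M)\times H^{m-p}(M)\to\Z$ and the $G$-factor being the Seifert pairing of $K$ on $\widetilde H^q(G)$.

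Then I would split by parity. If $n$ is even then $m$ is odd, so there is no block with $p=p'=m/2$; writing $H^{k+1}(F)=L_-\oplus L_+$ with $L_\pm:=\bigoplus H^p(M)\otimes\widetilde H^{k+1-p}(G)$ summed over $p<m/2$, resp.\ $p>m/2$, the blocks $\psi(L_-,L_-)$ and $\psi(L_+,L_+)$ vanish (their $p+p'$ is $<m$, resp.\ $>m$) while $L_-\oplus L_+$ pairs perfectly under the symmetrisation of $\psi$ by Poincar\'e duality in $F$; hence $L_\pm$ are complementary lagrangians and the Seifert form of $F$ is hyperbolic, which is (i). If $n$ is odd then $m$ is even; by Theorem \ref{thm:surgbelow} together with Corollary \ref{cor:sequiv} I may first replace $K$ by a simple knot representing the same $S$-equivalence, hence the same double Witt, class of Seifert forms, so that $\widetilde H^*(G)$ is concentrated in degree $(n+1)/2$ with honest Seifert matrix $A$; the only surviving block of $\psi$ then has $p=p'=m/2$ and $q=q'=(n+1)/2$, giving $\psi\cong A\otimes\tau$ with $\tau$ the middle intersection form of $M$. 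Alternatively, keeping a general $G$, the same block analysis together with the $L_\pm$-splitting trick of case (i) applied to the grading by $q$ exhibits the Seifert form of $F$ as $(A\otimes\tau)$ plus a hyperbolic summand, which is (ii).

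The hard part is the middle paragraph: making the construction of $F$ yield a genuine homological product structure (controlling the K\"unneth Tor terms and the $\Z$-torsion in $H^*(M)$ and $H^*(G)$, so that the lagrangians in case (i) and the splitting in case (ii) are honest direct summands), and especially proving the tensor-product formula for the Seifert \emph{linking} form rather than merely for its symmetrisation --- i.e.\ the geometric linking-number identity --- with all signs tracked. The passage from a general Seifert surface to a simple one in case (ii), and checking it does not disturb the spun surface up to the $S$-equivalence that Corollary \ref{cor:sequiv} can absorb, is the remaining bookkeeping obstacle.
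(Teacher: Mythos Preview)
The paper does not give its own proof of this theorem: the statement is imported wholesale from Friedman's paper, as the citation in the theorem label indicates, and the proof environment immediately following it is explicitly labelled as a proof of the \emph{preceding} Theorem~\ref{thm:framespun}, which uses this cited result as a black box. So there is no proof in this paper against which to compare your proposal.

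That said, your outline is a plausible reconstruction of what such a proof should look like: build $F$ as a product piece $M\times G_0$ glued to a trivial piece, use a K\"unneth-type identification of the middle cohomology of $F$, compute the Seifert pairing block-by-block as (intersection in $M$)$\,\otimes\,$(Seifert pairing of $K$), and read off hyperbolicity or the tensor form by parity. One point to tighten: in case~(ii) your first approach (``replace $K$ by a simple knot via Theorem~\ref{thm:surgbelow}'') changes the knot, not just its Seifert surface---surgery below the middle dimension produces a knot-cobordant simple knot, which is a \emph{different} $K$, and hence a different frame spin. What you actually want there is to replace $G$ by a highly-connected Seifert surface of the \emph{same} $K$ via ambient surgery inside $S^{n+2}$, or---as you note in your alternative---simply keep a general $G$ and peel off the non-middle blocks as a hyperbolic summand. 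Your alternative route is the cleaner one and avoids the issue entirely.
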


\begin{proof}[Proof (of Theorem \ref{thm:framespun})]If $m\equiv 2$ modulo 4 then there exists a basis with respect to which $\tau$ is the standard symplectic matrix\[\tau=\left(\begin{matrix}0&I\\-I& 0\end{matrix}\right).\]Hence any matrix $A\otimes \tau$ is hyperbolic.

If $m\equiv 0$ modulo 4 then we use the vanishing of the Pontryagin classes of $M$ and the Hirzebruch signature formula to conclude that the signature of $M$ is 0. Furthermore, the vanishing of the Stiefel-Whitney classes of $M$, combined with the Wu formula for the Steenrod squares implies that the diagonal entries of any matrix for $\tau$ are even integers. As the signature is 0, any unimodular matrix representing the form is either trivial or is indefinite. But by the classification of unimodular matrices, a unimodular, indefinite matrix with signature 0 and even entries on the diagonal is hyperbolic. So as above, $A\otimes \tau$ is hyperbolic.
\end{proof}

We remark finally that the homotopy type of frame spun knots and `frame spun Seifert surfaces' is well-studied in \cite{zbMATH00270255}, and cast in terms of the homotopy type of $M$, making these frame-spun knots a particularly rich source of future examples of knots with hyperbolic Seifert forms where there is some control over the connectivity of the Seifert surface.

\bibliographystyle{amsalpha}
\providecommand{\bysame}{\leavevmode\hbox to3em{\hrulefill}\thinspace}
\providecommand{\MR}{\relax\ifhmode\unskip\space\fi MR }
\providecommand{\MRhref}[2]{%
  \href{http://www.ams.org/mathscinet-getitem?mr=#1}{#2}
}
\providecommand{\href}[2]{#2}

\end{document}